\newcommand\ignore[1]{}
\DeclareRobustCommand{\SkipTocEntry}[5]{}
\newlength{\fuzzwidth}
\newlength{\arrowlength}
\newlength{\arrowwidth}
\newlength{\pointrad}
\newlength{\linewid}
\newlength{\circlerad}
\newlength{\smcirclerad}
\newlength{\smcircleradt}
\newlength{\minirad}
\newcommand{\fillcolor}{black!60}
\newcommand{\fuzzcolor}{black!25}
\newcommand{\arrowcolor}{black!25}
\newcommand{\covercolor}{black!0}
\newcommand{\graycolor}{black!55}
\newcommand{\graylightcolor}{black!40}
\newcommand{\graytextcolor}{black!65}
\newcommand{\evlength}{.45} 
\newcommand{\evlengthv}{1} 
\newcommand{\loopsize}{.2} 
\newcommand{\coverwidthfuzz}{6pt}
\newcommand{\coverwidth}{3.5pt}
\newcommand{\coverwidththin}{3.25pt}
\newcommand{\coverwidththick}{3.75pt}
\newlength{\linewidthin}
\newlength{\linewidthick}
\newlength{\framebasewidth}
\newlength{\framewidth}
\newcommand{\framelengthshort}{.04}
\newcommand{\framelengthlong}{.1}
\newcommand{\seqstep}{1/6}
\newcommand{\framebasecolor}{black!30}
\tikzset{
	coverline/.style={
	preaction={draw,line width=\coverwidth,\covercolor}}, 
	coverlinethin/.style={
	preaction={draw,line width=\coverwidththin,\covercolor}}, 
	coverlinethick/.style={
	preaction={draw,line width=\coverwidththick,\covercolor}}, 
	coverlineleft/.style={
	preaction={draw,line width=\coverwidthfuzz,\covercolor,decorate,decoration={curveto,amplitude=0,raise=.35*\fuzzwidth}}}, 
coverlinelefttail/.style={
	preaction={draw,line width=\coverwidthfuzz,\covercolor,decorate,decoration={curveto,amplitude=0,raise=.35*\fuzzwidth,pre=moveto,pre length=2pt}}}, 
        fuzzlefttail/.style={
        preaction={draw,line width=\fuzzwidth,\fuzzcolor,decorate,decoration={curveto,pre=moveto,pre length=2pt,amplitude=0,raise=.5*\fuzzwidth}}}, 
        linestylethin/.style={line width=\linewidthin},
        linestylethick/.style={line width=\linewidthick},
        linestylegray/.style={line width=\linewid,\graycolor},
        linestylegraylight/.style={line width=\linewid,\graylightcolor}
}
\tikzset{
        fuzzright/.style={
        preaction={draw,line width=\fuzzwidth,\fuzzcolor,decorate,decoration={curveto,amplitude=0,raise=-.5*\fuzzwidth}}},
        fuzzleft/.style={
        preaction={draw,line width=\fuzzwidth,\fuzzcolor,decorate,decoration={curveto,amplitude=0,raise=.5*\fuzzwidth}}},
        fuzzrightpre/.style={ 
        preaction={draw,line width=2pt,\fuzzcolor,decorate,decoration={curveto,amplitude=0,raise=-1pt,pre=moveto,pre length=12pt}}},
        fuzzleftpre/.style={ 
        preaction={draw,line width=2pt,\fuzzcolor,decorate,decoration={curveto,post=moveto,post length=32pt,amplitude=0,raise=1pt}}},        
        outstyle/.style={\arrowcolor, line width=\arrowwidth},
        linestyle/.style={line width=\linewid},
        emptylinestyle/.style={line width=0},
        fuzzrightwide/.style={
        preaction={draw,line width=1.25*\fuzzwidth,\fuzzcolor,decorate,decoration={curveto,amplitude=0,raise=-.5*\fuzzwidth}}}
}
\newcommand{\cb}{\raisebox{.6ex-.5\height}}
\newcommand{\arxiv}[1]{\href{http://arxiv.org/abs/#1}{\tt arXiv:\nolinkurl{#1}}}
\newcommand{\googlebooks}[1]{(preview at \href{http://books.google.com/books?id=#1}{google books})}
\theoremstyle{plain} 
\newtheorem{maintheorem}{Theorem}
\newtheorem{maincor}[maintheorem]{Corollary}
\newtheorem{mainconj}[maintheorem]{Conjecture}
\newtheorem{mainquestion}[maintheorem]{Question}
\newtheorem{theorem}{Theorem}[section]
\newtheorem{lemma}[theorem]{Lemma}
\newtheorem{corollary}[theorem]{Corollary}          
\newtheorem{proposition}[theorem]{Proposition}
\newtheorem{apptheorem}{Theorem}[chapter]
\theoremstyle{definition} 
\newtheorem{definition}[theorem]{Definition}
\newtheorem{appdefinition}[apptheorem]{Definition}
\theoremstyle{remark}  
\newtheorem{remark}[theorem]{Remark}
\newtheorem{example}[theorem]{Example}
\newtheorem{conjecture}[theorem]{Conjecture}
\newtheorem{warning}[theorem]{{Warning}}
\newtheorem{question}[theorem]{{Question}}
\newtheorem*{guide*}{Guide}
\newtheorem*{outline*}{Outline}
\newtheorem*{remarkohc*}{Remark on higher categories and the cobordism hypothesis}
\newtheorem*{assumpfield*}{Assumptions on the base field}
\newtheorem{appexample}[apptheorem]{Example}
\newtheoremstyle{special_statement} 
	{\topskip}
	{\topskip}
	{\addtolength{\leftskip}{2.5em} \itshape }
	{}
	{\bfseries}
	{:}
	{.5em}
	{}
\theoremstyle{special_statement}
\newcommand{\Mod}[2]  
{
  \ifthenelse{\equal{#1}{}}{  			
		\ifthenelse{\equal{#2}{}}		
			{\mathrm{Mod}}{ 			
				{\mathrm{Mod}\textrm{-}#2}		
			}
	}{									
		\ifthenelse{\equal{#2}{}}		
			{{#1\textrm{-}\mathrm{Mod}}}{		
				{{#1\textrm{-}\mathrm{Mod}\textrm{-}#2}}	
			}
	}
}
\renewcommand{\mod}[2]  
{
  \ifthenelse{\equal{#1}{}}{  			
		\ifthenelse{\equal{#2}{}}		
			{\mathrm{mod}}{ 			
				{\mathrm{mod}\textrm{-}#2}		
			}
	}{									
		\ifthenelse{\equal{#2}{}}		
			{{#1\textrm{-}\mathrm{mod}}}{		
				{{#1\textrm{-}\mathrm{mod}\textrm{-}#2}}	
			}
	}
}
\newcommand{\bimod}[3]
{{}_{#1} {#2}_{#3}}
\newcommand{\nid}{\noindent}
\newcommand{\ra}{\rightarrow}
\newcommand{\xra}{\xrightarrow}
\DeclareMathOperator{\Hom}{Hom}
\DeclareMathOperator{\End}{End}
\DeclareMathOperator{\IHom}{\underline{Hom}}
\DeclareMathOperator{\Fun}{Fun}
\let\minusplus\mp
\renewcommand{\mp}{\mathrm{mp}}
\newcommand{\op}{\mathrm{op}}
\newcommand{\mop}{\mathrm{mop}}
\newcommand{\rev}{\mathrm{rev}}
\newcommand{\id}{\mathrm{id}}
\newcommand{\inc}{\mathrm{inc}}
\newcommand{\btimes}{\boxtimes}
\newcommand{\Set}{\mathrm{Set}}
\newcommand{\Tr}{\mathrm{Tr}}
\newcommand{\tr}{\mathrm{tr}}
\newcommand{\Bord}{\mathrm{Bord}}
\newcommand{\FrBord}{\mathrm{Bord}^\mathrm{fr}}
\newcommand{\OrBord}{\mathrm{Bord}^\mathrm{or}}
\newcommand{\Vect}{\mathrm{Vect}}
\newcommand{\Alg}{\mathrm{Alg}}
\newcommand{\Cat}{\mathrm{Cat}}
\newcommand{\TC}{\mathrm{TC}}
\newcommand{\ev}{\mathrm{ev}}
\newcommand{\coev}{\mathrm{coev}}
\newcommand{\TCsep}{{\TC^{\mathrm{sep}}}}
\newcommand{\TCss}{{\TC^{\mathrm{ss}}}}
\newcommand{\Rep}{\mathrm{Rep}}
\newcommand{\RP}{\RR \mathrm{P}}
\newcommand{\pt}{\mathrm{pt}}
\newcommand{\coZ}{\reflectbox{$\cZ$}}
\def\cA{\mathcal A}\def\cB{\mathcal B}\def\cC{\mathcal C}\def\cD{\mathcal D}
\def\cE{\mathcal E}\def\cF{\mathcal F}\def\cG{\mathcal G}
\def\cL{\mathcal L}
\def\cM{\mathcal M}\def\cN{\mathcal N}\def\cP{\mathcal P}
\def\cR{\mathcal R}\def\cS{\mathcal S}\def\cT{\mathcal T}
\def\cZ{\mathcal Z}
\def\CC{\mathbb C}
\def\RR{\mathbb R}
\def\ZZ{\mathbb Z}
\definecolor{CSPcolor}{rgb}{0.0,0.5,0.75}	
\definecolor{NScolor}{rgb}{0.5,0.0,0.5}		
\definecolor{CDcolor}{rgb}{0.8,0.0,0.2}		
\DeclareFontFamily{U}  {MnSymbolA}{}
\DeclareFontShape{U}{MnSymbolA}{m}{n}{
    <-6>  MnSymbolA5
   <6-7>  MnSymbolA6
   <7-8>  MnSymbolA7
   <8-9>  MnSymbolA8
   <9-10> MnSymbolA9
  <10-12> MnSymbolA10
  <12->   MnSymbolA12}{}
\DeclareFontShape{U}{MnSymbolA}{b}{n}{
    <-6>  MnSymbolA-Bold5
   <6-7>  MnSymbolA-Bold6
   <7-8>  MnSymbolA-Bold7
   <8-9>  MnSymbolA-Bold8
   <9-10> MnSymbolA-Bold9
  <10-12> MnSymbolA-Bold10
  <12->   MnSymbolA-Bold12}{}
\DeclareSymbolFont{MnSyA}         {U}  {MnSymbolA}{m}{n}
\DeclareMathSymbol{\leftmapsto}{\mathrel}{MnSyA}{42}
\DeclareMathSymbol{\rightmapsto}{\mathrel}{MnSyA}{40}
\begin{document}

\frontmatter

\title{Dualizable tensor categories}

\author{Christopher L. Douglas}
\address{Mathematical Institute\\ University of Oxford\\ Oxford OX2 6GG\\ United Kingdom}
\email{cdouglas@maths.ox.ac.uk}
\urladdr{http://www.christopherleedouglas.com}
      	
\author{Christopher Schommer-Pries}
\address{Department of Mathematics\\ Max Planck Institute for Mathematics \\ 53111 Bonn \\ Germany}
\email{schommerpries.chris.math@gmail.com}
\urladdr{http://sites.google.com/site/chrisschommerpriesmath}

\author{Noah Snyder}
\address{Department of Mathematics\\ Indiana University\\ Bloomington, IN 47401\\ USA}
\email{nsnyder@math.columbia.edu}
\urladdr{http://www.math.columbia.edu/\!\raisebox{-1mm}{~}nsnyder/}


\subjclass[2010]{57R56, 18D10, 55U30, 16D90 (Primary), 57M27, 17B37, 18E10 (Secondary)}
\keywords{Tensor category, fusion category, bimodule category, dualizable, topological field theory, local field theory, pivotal, spherical, framing, combing, 3-manifold, Serre automorphism, Radford equivalence}

\maketitle	

\setcounter{tocdepth}{2}
\tableofcontents

\begin{abstract}
We investigate the relationship between the algebra of tensor categories and the topology of framed 3-manifolds.  On the one hand, tensor categories with certain algebraic properties determine topological invariants.  We prove that fusion categories of nonzero global dimension are 3-dualizable, and therefore provide 3-dimensional 3-framed local field theories.  We also show that all finite tensor categories are 2-dualizable, and yield categorified 2-dimensional 3-framed local field theories.  On the other hand, topological properties of 3-framed manifolds determine algebraic equations among functors of tensor categories.  We show that the 1-dimensional loop bordism, which exhibits a single full rotation, acts as the double dual autofunctor of a tensor category.  We prove that the 2-dimensional belt-trick bordism, which unravels a double rotation, operates on any finite tensor category, and therefore supplies a trivialization of the quadruple dual.  This approach produces a quadruple-dual theorem for suitably dualizable objects in any symmetric monoidal 3-category.  There is furthermore a correspondence between algebraic structures on tensor categories and homotopy fixed point structures, which in turn provide structured field theories; we describe the expected connection between pivotal tensor categories and combed fixed point structures, and between spherical tensor categories and oriented fixed point structures.
\end{abstract}

\chapter*{Acknowledgments}

We thank Ben Balsam, Dan Freed, Mike Hopkins, Alexander Kirillov Jr., Scott Morrison, Stephan Stolz, Constantin Teleman, Dylan Thurston, Alexis Virelizier, and Kevin Walker for helpful and clarifying conversations.  We made intensive use of Google Wave in our collaboration on this project, and we would like to thank the Wave development team for their efforts and commiserate with them on Wave's untimely demise.

We are especially grateful to Pavel Etingof for several inspiring and fruitful conversations; to Andr\'e Henriques for general enlightenment and specific ideas concerning the 3-category of tensor categories and descent calculations for local field theories; to Victor Ostrik for a key suggestion concerning separability and fusion categories; and to Peter Teichner for discussions concerning the homotopy type of structure groups.

CD was partially supported by a Miller Research Fellowship and by EPSRC grant EP/K015478/1, CSP was partially supported by NSF fellowship DMS0902808 and by the Max Planck Institute for Mathematics, and NS was partially supported by NSF fellowship DMS-0902981 and by DARPA grant HR0011-11-1-0001.

\mainmatter

\tikzset{external/force remake}


\chapter*{Introduction}

\renewcommand{\thesection}{I.{\arabic{section}}}

\section{Local topological field theory}

Quantum field theories associate to a manifold, thought of as the underlying physical space of a system, a vector space of quantum field states on that manifold.  A given field will evolve in time, and for each time interval the evolution of all fields together provides an operator on the vector space of field states; these operators compose associatively under concatenation of time intervals.  A distinctive property of field theories is that the vector space of states is multiplicative: the space of states on a disjoint union of two manifolds is the tensor product of the spaces of states on the individual manifolds.  Atiyah and Segal abstracted this situation into the formal notion of a topological (quantum) field theory: an $n$-dimensional topological field theory is a symmetric monoidal functor from the category of $(n-1)$-dimensional manifolds and their bordisms to the category of vector spaces~\cite{MR1001453,Segal}.  Such a topological field theory assigns a vector space to each $(n-1)$-manifold, and a linear operator to each bordism between such manifolds, such that gluing of bordisms corresponds to composition of operators.  Examples of topological field theories include, in dimension 3, Turaev--Viro theories associated to spherical fusion categories~\cite{MR1191386, MR1292673} and Reshetikhin--Turaev theories associated to modular (braided) tensor categories~\cite{MR990772,MR1091619}; and in dimension 4, more or less conjecturally, Donaldson Floer theories for principal bundles~\cite{floer,donaldsonym,atiyah88,witten88}, Crane--Frenkel theories associated to Hopf categories~\cite{cranefrenkel}, monopole Floer homology~\cite{kronmrowka}, Heegaard Floer homology~\cite{os04,os06}, and Khovanov homology~\cite{khovanov}.

A topological field theory provides a numerical invariant of closed $n$-manifolds: by the symmetric monoidal assumption, the theory assigns the standard 1-dimen\-sional vector space to the empty manifold, and so to a closed $n$-manifold assigns an endomorphism of that vector space, which is a scalar.  Indeed, topological field theories can be viewed as numerical invariants of closed $n$-manifolds that have a particularly computable structure---the invariant can be determined by cutting the closed manifold into manifolds with boundary, and then composing the operators associated to those smaller manifolds.  Unfortunately, the operators associated to these manifolds with boundary, and even the vector spaces associated to the boundary $(n-1)$-manifolds, may themselves be difficult to compute.  This situation motivates the notion of extended topological field theories.  A once-extended topological field theory also assigns algebraic invariants to $(n-2)$-manifolds, to $(n-1)$-manifolds with boundary, and to $n$-manifolds with codimension-2 corners, in such a way that the values of the original field theory can be reconstructed by composing these invariants when the manifolds are glued together.  Of course difficulties may again arise in computing the invariants of these $(n-2)$-, $(n-1)$-, and $n$-manifolds, and so we are forced to further extend the theory.  Altogether a \emph{local topological field theory} is fully extended in that it assigns invariants to $i$-manifolds with corners of any codimension, for all $i$ between $0$ and $n$, again in a way respecting all possible gluing operations.  More specifically, a local topological field theory is a symmetric monoidal functor from the $n$-category of $0$-, $1$-, $2$-, ..., $n$-manifold bordisms with corners to a (typically algebraic) target $n$-category.

The invariants of local field theories are computable, by construction, but the locality property also provides a means of classifying these theories.  A physical field theory is determined by its local behavior, that is, its operation on arbitrarily small regions of space-time.  Similarly (provided we restrict attention to local topological field theories on framed manifolds) a local topological field theory is determined by its values on discs and in fact these values are encoded in the algebraic invariant the field theory assigns to a single point.  However, not every object of the target $n$-category is an allowable value for this algebraic invariant of a point; rather, there is a restrictive algebraic condition called \emph{full dualizability} (or \emph{$n$-dualizability}) that ensures an object of an $n$-category extends to a consistent system of invariants providing a local field theory.  Altogether this is the content of the \emph{cobordism hypothesis}, a classification result conjectured by Baez--Dolan~\cite{MR1355899} and proven by Hopkins--Lurie~\cite{lurie-ch}: an $n$-dimensional local framed topological field theory is determined by its value on a point, and any fully dualizable object of a symmetric monoidal $n$-category provides a local framed topological field theory whose point-value is that object.

\section{Three-dimensional topology and three-dimensional algebra}

We will concentrate on 3-dimensional local topological field theories, and therefore we need a 3-dimensional algebraic structure that can serve as a target for such field theories.  The simplest nontrivial candidate for such a structure is \emph{monoidal linear categories}; we might hope there is a 3-category whose objects, morphisms, 2-morphisms, and 3-morphisms are respectively monoidal linear categories, bimodule categories, bimodule functors, and bimodule natural transformations.  In order to ensure there really is a 3-category, we restrict attention to finite tensor categories (that is, finite rigid monoidal linear categories), finite bimodule categories, bimodule functors, and bimodule natural transformations---this symmetric monoidal 3-category $\TC$ exists by a construction of Johnson-Freyd--Scheimbauer~\cite{jfs}.  (Tensor categories are by no means the only possible 3-dimensional algebraic target for field theory, just the simplest.  For instance, Ben-Zvi--Nadler~\cite{0904.1247} consider 2-dimensional local field theories with target an $(\infty,3)$-category of monoids in differential graded linear categories; in work in progress, Freed--Teleman~\cite{FT} consider 3-dimensional field theories with a target 3-category of modules over braided tensor categories; in a rather different direction, Bartels--Douglas--Henriques~\cite{bdh} study 3-dimensional field theories with target the 3-category of conformal nets.)

\subsection{From algebra to topology}

Traditionally, local field theory is seen as a means for taking algebraic gadgets and producing topological invariants.  From that perspective, the task is to find fully dualizable objects of the target, in our case the 3-category $\TC$ of finite tensor categories, and then apply the cobordism hypothesis to obtain corresponding local field theories.  If the base field is characteristic zero, we prove that every finite semisimple tensor category is fully dualizable; if the base field is algebraically closed of finite characteristic, we prove that every fusion category of nonzero global dimension is fully dualizable.  (A fusion category is a finite semisimple tensor category with simple unit.)  In either case, we therefore have an associated Turaev--Viro-style 3-dimensional framed local field theory.  This construction provides a plethora of new 3-dimensional field theories (for instance in finite characteristic, over non-algebraically closed fields, for non-spherical categories, and for categories with non-simple unit).  It also establishes a conceptual framework for the relationship between the algebraic structure of the tensor category (for instance its pivotality or sphericality) and the structure group of the manifolds in the associated field theory (for instance combed or oriented)---see the section on future directions below.  

In addition to the full dualizability of semisimple finite tensor categories, we investigate the partial dualizability of non-semisimple finite tensor categories.  We prove that a non-semisimple finite tensor category is never fully, that is 3-, dualizable, but we establish that it is always 2-dualizable.  There is therefore an associated categorified 2-dimensional framed local topological field theory for every finite tensor category.  (This theory is categorified in the sense that it assigns vector spaces, rather than numbers, to closed 2-manifolds.)  In fact, these categorified 2-dimensional theories extend to take values on certain non-closed 3-manifolds---these non-closed 3-manifold invariants themselves deserve further investigation.  In the next subsection, we take the rather different perspective that we can transport our knowledge of the topology of framed 3-manifolds across these field theory invariants to establish features of the algebra of finite tensor categories.

\subsection{From topology to algebra}

Given an $n$-dimensional topological field theory, along with two distinct decompositions of the same $k$-manifold with corners, applying the field theory to these decompositions yields an algebraic equation in the target category.  Said another way, given an equivalence between two $k$-manifolds, the field theory provides an equivalence between the two seemingly distinct algebraic operations corresponding to the manifolds.

We apply this approach with the aforementioned partial 3-dimensional 3-framed local topological field theory $\cF_\cC$ associated to any finite tensor category $\cC$.  (Here ``3-framed" means that the manifolds are equipped with trivializations of the stabilizations of their tangent bundles up to dimension 3.)  The simplest nontrivial 3-framed manifold is the following 1-dimensional interval, called ``the loop bordism":
\begin{center}
\begin{tikzpicture}
\draw[linestyle] 
(.7,0) to [out=180, in=-20] (0,.1)
	to [looseness=1.6, out=160, in=180] (0,.4)
	to [looseness=1.6, out=0, in=20] (0,.1)
	to [out=-160, in=0] (-.7,0);
\end{tikzpicture}
\end{center}
(Comparing a normal framing of this immersion with the blackboard framing of the paper provides the bordism with a 2-framing, and therefore a 3-framing by stabilization.)  We prove that the field theory invariant of this bordism is the $\cC$-$\cC$ bimodule obtained by twisting the left action on the identity bimodule by the right double dual functor.  If we were studying 2-dimensional field theory, there would be little more to say, as the loop bordism has infinite order in the 2-framed bordism category.  But there is a 3-framed surface, called ``the belt bordism" trivializing the square of the loop bordism: 
\begin{center}
	\begin{tikzpicture}[
			yscale=0.28, xscale=0.56,
			decoration={border, 
				segment length = 4pt, 	
				amplitude = 2pt, 		
				angle = 0  				 
				}, 
			contour line/.style={thin, blue}
				]
				
			\colorlet{surfacecolor1}{black!15}
			\colorlet{surfacecolor2}{black!10}	
			
			\fill [color = surfacecolor1] (0, 2.5) .. controls (0, 4) and (2,6.5) .. (1.75,8)
				arc (180:0:2cm and 3cm)
				to [out = 270, in = 90] (6, 5.5)
				-- (6,0.5)
				arc (360: 180: 1cm and 0.5cm)
				-- (4, 6)
				arc (90:180:2cm and 3.5cm)
				to [out = 210, in = 0] (1,2)
				arc (270: 180: 1cm and 0.5cm);
			
			\fill [color = surfacecolor1] (7, 6.5) parabola bend (8.5, 5.8) (8.25, 5.8) 
				to [out = 45, in = 260] (9, 7)
				parabola bend (7.75, 6.4) (7, 6.5);
			
			\fill [color = surfacecolor2] (4, 6) arc (90:180:2cm and 3.5cm)
				-- (4, 1.5) -- (4,6);
			
			\fill [color = surfacecolor2] (6, 5.5) parabola (7, 6.5) parabola bend (8.5, 5.8) (11, 6)
				to [out = 240, in = 30] (8, 2.5) -- (6, 1.5) -- (6, 5.5);

			\draw (0, 2.5) .. controls (0, 4) and (2,6.5) .. (1.75,8)
				arc (180:0:2cm and 3cm)
				to [out = 270, in = 90] (6, 5.5)
				-- (6,0.5)
				arc (360: 180: 1cm and 0.5cm)
				-- (4, 6)
				arc (90:180:2cm and 3.5cm)
				to [out = 210, in = 0] (1,2)
				arc (270: 180: 1cm and 0.5cm);
			
			\draw (2,2.5) -- (4,1.5) 
				decorate {-- (5,1) to [out = -30, in = 135] (6,0.5)
					  (4,0.5) to [out = 45, in = 210] (5,1) -- (6,1.5)} 
				-- (8, 2.5) to [out = 30, in = 240] (11, 6);
			\draw decorate { (5, 1) arc (0:90:1cm and 5cm)};
			\draw decorate {(2,2.5) -- (7,5) to [out = 30, in = 225] (8.25, 5.8)} to [out = 45, in = 260] (9, 7) ;
			\draw decorate {(2,2.5) to [out = 150, in = 0] (1,3) arc (90:180:1cm and 0.5cm)};
			\draw (4, 6) to [out = 90, in = 270] (4.5, 8.5);
			\draw (6, 5.5) decorate { parabola (4.5, 8.5)} (6, 5.5) parabola (7, 6.5)
				parabola bend (7.75, 6.4) (9, 7);
			\draw (7, 6.5) parabola bend (8.5, 5.8) (11, 6);
			
		\draw [contour line] (1.75,8) arc (180:320:1.45cm and 0.5cm)
			decorate {to [out = 30,in = 135](4.65, 7.6) to [out = -45, in = 45] (4.2, 7.1) }
			arc (180:270:0.5cm and 0.2cm) to [out = 0, in = -90] (5.75,8)
		;
		\draw [contour line] decorate {(1.75,8) arc (180:0:2cm and 0.6cm)};
				
	\end{tikzpicture}
\end{center}	
(The 3-framing is determined by the comparison of a normal framing of the surface with the blackboard framing of the ambient 3-dimensional Euclidean space.)  Applying the field theory to this belt bordism provides a trivialization of the bimodule associated to the right quadruple dual functor; this recovers the Etingof--Nikshych--Ostrik generalization to finite tensor categories of Radford's quadruple antipode formula for finite-dimensional Hopf algebras~\cite{MR2097289, MR2183279, MR0407069}.  When the tensor category is semisimple, the bimodule trivialization reduces to a monoidal trivialization of the quadruple dual functor itself.  Altogether then, the Dirac belt trick provides an elegant, transparent topological explanation and proof of the a priori rather opaque quadruple dual theorem for finite tensor categories.  In fact, our topological-field-theoretic approach provides a generalization of the quadruple dual theorem for finite tensor categories to any sufficiently dualizable object of any symmetric monoidal 3-category; we give a precise definition of the relevant dualizability condition and call the resulting objects ``Radford objects".

We anticipate that this general method of transporting topological equivalences into algebraic equivalences will be especially effective in dimension 4, where nontrivial relationships among 4-framed 2- and 3-manifolds will reveal novel properties of braided tensor categories and more generally of monoidal 2-categories.

\section{Results}

\subsection{On 3-dualizability}

Dualizability is a strong finiteness condition.  A vector space $V \in \Vect$ is 1-dualizable if it is finite-dimensional.  All algebras $A \in \Alg$ are 1-dualizable; an algebra $A$ is 2-dualizable if it is finite-dimensional and $A$ is projective as an $A$--$A$-bimodule.  This projectivity condition is called separability.  In order to identify 3-dualizable objects in the 3-category $\TC$ of finite tensor categories, we need to impose separability-type conditions on bimodule categories and therefore also on tensor categories themselves.  We say that a finite semisimple tensor category $\cC$ over a perfect field is \emph{separable} if the identity $\cC$--$\cC$-bimodule category $\cC$ can be expressed as the category of modules for a separable algebra object within the tensor category $\cC \boxtimes \cC^\mp$.  (Here, $\cC^\mp$ is the category $\cC$ with the opposite monoidal structure, and $- \boxtimes -$ denotes the Deligne tensor product, that is the linear category corepresenting bilinear functors.)  This separability condition is satisfied by any finite semisimple tensor category over a field of characteristic zero, and also by any fusion category of nonzero global dimension over any algebraically closed field.

Equipped with this notion of separability, we can state our primary result:
\begin{maintheorem} \label{thm1}
Separable tensor categories are fully dualizable.
\end{maintheorem}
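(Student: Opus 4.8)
The plan is to apply the inductive recognition criterion for full dualizability \cite{lurie-ch}: an object $X$ of a symmetric monoidal $3$-category is fully dualizable provided (i) $X$ is dualizable; (ii) the evaluation and coevaluation $1$-morphisms exhibiting a dual $X^\vee$ of $X$ admit both a left and a right adjoint; and (iii) the unit and counit $2$-morphisms of the (four) adjunctions produced in (ii) each admit both a left and a right adjoint. No conditions at the $3$-morphism level arise, $\TC$ being a $3$-category. For $X = \cC$ a finite tensor category we take $\cC^\vee = \cC^\mp$, with the evaluation and coevaluation $1$-morphisms given by the regular bimodule $\cC$ regarded as a module category over $\cC \boxtimes \cC^\mp$ and, respectively, over its opposite; conditions (i) and (ii) --- equivalently, the $1$- and $2$-dualizability of $\cC$ --- we establish for \emph{every} finite tensor category, regardless of semisimplicity, so the entire content of the theorem resides in (iii), and the first move is to reduce (iii) to a concrete exactness property of bimodule functors.

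I would then record the adjoint-existence mechanism available inside $\TC$: a bimodule functor $F \colon \bimod{\cA}{\cM}{\cB} \to \bimod{\cA}{\cN}{\cB}$ between finite bimodule categories over finite tensor categories admits a left adjoint bimodule functor exactly when $F$ is left exact, and a right adjoint bimodule functor exactly when $F$ is right exact, the adjoint being computed pointwise and carrying a canonical bimodule structure inherited from finiteness. Hence condition (iii) follows once the unit and counit $2$-morphisms of the ev/coev adjunctions are shown to be \emph{exact} (equivalently, biexact) bimodule functors. By the module-category calculus, these $2$-morphisms are computed, via bar-type constructions, from the unit and the multiplication of the algebra object $A \in \cD := \cC \boxtimes \cC^\mp$ whose category of modules realizes the regular bimodule $\cC$.

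This is where separability is used. By hypothesis $\cC \simeq \Mod{A}{}$ in $\cD$ with $A$ \emph{separable}, i.e.\ the multiplication $m \colon A \otimes A \to A$ admits a section in the category of $A$-$A$-bimodules. Such a section splits the bar constructions above, collapsing the relevant composites and exhibiting the unit and counit $2$-morphisms as retracts of biexact bimodule functors, hence as biexact; the mechanism of the preceding paragraph then supplies every adjoint required in (iii). The coevaluation side is handled identically after passing to the opposite/coalgebra structure, using that separability is preserved by the relevant dualities and by the symmetry equivalence $\cC^\mp \boxtimes \cC \simeq \cC \boxtimes \cC^\mp$. One then checks that the resulting triangle and swallowtail identities cohere --- automatic at the top dimension of a $3$-category --- and concludes, via the recognition criterion, that $\cC$ is fully dualizable.

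The main obstacle is not this logical skeleton but making it rigorous internally to $\TC$ rather than in an ambient bicategory of module categories: one must verify that every candidate adjoint module category and module functor is genuinely finite and genuinely a $1$- or $2$-morphism of $\TC$, that the coherence $2$- and $3$-morphisms produced are precisely the structural ones, and --- the most delicate point --- that it is the \emph{separability} of $A$, as opposed to mere semisimplicity of $\cC$, that yields the bimodule-linear section of $m$ and hence the biexactness at the crucial step. It is exactly this distinction, together with its stability under extension of the base field, that explains why full $3$-dualizability fails for non-semisimple finite tensor categories and for fusion categories of vanishing global dimension.
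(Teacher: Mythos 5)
Your skeleton (dual object $\cC^\mp$, ev/coev given by the regular bimodule, then reduce everything to adjoints of the unit and counit 2-morphisms) matches the paper's setup, but the mechanism you use at the 2-morphism level is wrong, and it hides exactly the step where the real work lies. In $\TC$ the 2-morphisms are by convention \emph{right exact} bimodule functors, so your criterion ``$F$ admits a right adjoint bimodule functor exactly when $F$ is right exact'' would make every 2-morphism of $\TC$ right-adjunctible and hence every finite tensor category fully dualizable --- contradicting Theorem~\ref{thm:converse}. The true obstruction is not exactness of $F$ but right-exactness of the \emph{candidate adjoint}: $F^R$ exists as a plain functor whenever $F$ is right exact, and Lemma~\ref{lma:module-adjoint} gives it a bimodule structure, but $F^R$ is only a 2-morphism of $\TC$ if it is itself right exact, which is a condition on the source/target bimodule categories (exactness or semisimplicity), not on $F$. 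Consequently your reduction of (iii) to ``the unit and counit are biexact,'' and the bar-splitting argument you give for that biexactness, does not produce the required adjoints: even a biexact unit or counit need not have right-exact adjoints unless the bimodule categories it maps between are exact/semisimple.

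That is where the missing key ingredient sits: the sources and targets of these units and counits are relative Deligne tensor products of the duality data, e.g.\ $\ev_\cC \boxtimes \ev_\cC^R$ and $\ev_\cC^R \boxtimes_{\cC \boxtimes \cC^\mp} \ev_\cC$, and one must know these composites are again semisimple (indeed separable). In finite characteristic, semisimple bimodule categories do \emph{not} compose to semisimple ones, and this is precisely why separability rather than semisimplicity is the right hypothesis; the paper's proof runs through Theorem~\ref{thm:compositeOfSep} (separable bimodules compose) and Corollary~\ref{cor:tcsepexists}, so that all 1-morphisms in sight are semisimple, whence \emph{every} bimodule functor between them is exact with exact adjoints (Proposition~\ref{prop:bimodfunctadjoints}); this also verifies the subcategory form of full dualizability directly, with no need to certify a finite list of units and counits or worry about how far out the chains of adjoints one must go. Your use of the separable section of $m\colon A\otimes A\to A$ gestures at the right hypothesis but deploys it toward the wrong conclusion (biexactness of the unit/counit) instead of toward the semisimplicity of the composite bimodule categories, which is the actual point of failure for non-separable $\cC$; as written, the argument would not distinguish a fusion category of zero global dimension from a separable one at the step where it must.
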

\nid This is established by Theorem~\ref{thm:TC-dualizable} and Corollary~\ref{cor:septcisdualizable} in the text.  This result provides a profusion of field theories:
\begin{maincor} \label{cor2}
For any separable tensor category, there is a 3-dimensional 3-framed local topological field theory whose value on a point is that tensor category.  In particular there is such a field theory for any finite semisimple tensor category over a field of characteristic zero, and such a field theory for any fusion category of nonzero global dimension over an algebraically closed field of finite characteristic.
\end{maincor}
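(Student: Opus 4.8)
The plan is to derive the corollary as an essentially formal consequence of Theorem~\ref{thm1} together with the cobordism hypothesis and the two explicit sufficient conditions for separability recorded in the preceding discussion. First I would invoke the cobordism hypothesis of Baez--Dolan and Hopkins--Lurie, as stated in the introduction: for any symmetric monoidal $n$-category $\cS$, evaluation at a point induces an equivalence between the groupoid of symmetric monoidal functors $\FrBord_n \to \cS$ (that is, $n$-dimensional $n$-framed local topological field theories valued in $\cS$) and the groupoid of fully dualizable objects of $\cS$. Applying this with $n = 3$ and $\cS = \TC$, the symmetric monoidal 3-category of finite tensor categories constructed in~\cite{BTP,3TC}, we obtain: every fully dualizable (i.e.\ 3-dualizable) object of $\TC$ is the point-value of a 3-dimensional 3-framed local topological field theory, and that field theory is determined up to equivalence by the object.

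Next I would combine this with Theorem~\ref{thm1}, which asserts that every separable tensor category is fully dualizable as an object of $\TC$. Composing the two statements immediately yields the first sentence of the corollary: for any separable tensor category $\cC$, the cobordism hypothesis produces a 3-dimensional 3-framed local topological field theory $\cF_\cC$ with $\cF_\cC(\pt) \simeq \cC$. For the second and third sentences, it remains only to verify that the two classes of categories named there actually satisfy the separability hypothesis. This is exactly the content of the two assertions made when separability was defined in the excerpt: any finite semisimple tensor category over a field of characteristic zero is separable, and any fusion category of nonzero global dimension over an algebraically closed field of (finite) characteristic is separable. Invoking these two facts and feeding the resulting separable categories into the statement just proved completes the argument.

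The only genuine content beyond bookkeeping is the verification of those two separability claims, and I would expect that to be the main obstacle — though since the excerpt states them as established facts accompanying the definition of separability (to be proven in the body of the paper, presumably via an analysis of when the regular bimodule $\cC \in \cC \boxtimes \cC^{\mp}\text{-}\Mod{}{}$ arises from a separable algebra object, using dimension/semisimplicity arguments and, in the fusion case, the nonvanishing of the global dimension to control the separability of the canonical algebra), they may be assumed here. Granting them, the corollary is purely a matter of chaining Theorem~\ref{thm1}, the cobordism hypothesis, and these two class-membership statements; no further topology or category theory is required. I would also remark in passing that the field theory $\cF_\cC$ so produced is the same one whose values on the loop and belt bordisms are analyzed later in the paper, so no separate construction is needed.
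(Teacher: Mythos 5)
Your proposal is correct and follows essentially the same route as the paper: the authors deduce this corollary (their Corollaries~\ref{cor:3dtft}, \ref{cor:charzerotft}, and \ref{cor:fusiontft}) by applying the cobordism hypothesis to the full dualizability established in Theorem~\ref{thm1}, and then feeding in the separability of the two named classes via Corollary~\ref{cor:charzerosep} and Theorem~\ref{thm:NonzeroDimension}, exactly as you describe. The only nuance you elide, which the paper also treats as a side remark rather than part of the argument, is that arranging the theory to take the value $\cC$ on the point exactly (rather than up to equivalence) requires a homotopy extension property for a suitable model of $\TC$; up to equivalence, your chaining is precisely the paper's proof.
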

\nid This is recorded as Corollaries~\ref{cor:3dtft}, \ref{cor:charzerotft}, and \ref{cor:fusiontft}.  Though these field theories are certainly Turaev--Viro-type theories, they are not directly comparable to existing theories for two reciprocal reasons: our theories take values on 3-framed, not oriented, manifolds, and they do not depend on a choice or even the existence of a spherical structure.  Turaev--Viro invariants for spherical categories were originally constructed by Turaev--Viro~\cite{MR1191386, MR1292673}, Ocneanu~\cite{MR1317353}, and Barrett--Westbury~\cite{bw-invariants,MR1686423}; a modern treatment of these invariants as a once-extended field theory is given by Balsam--Kirillov~\cite{1004.1533,1010.1222,1012.0560} and as a local field theory by Walker~\cite{kw:tqft} in the hybrid topological-categorical context of disc-like $n$-categories defined by Morrison--Walker~\cite{1009.5025}.  These constructions all provide oriented invariants, but there is also a literature on framed and spin 3-manifold invariants and Reshetikhin--Turaev-style once-extended spin field theories associated to semisimple spin modular (spherical) tensor categories~\cite{MR1117149, MR1171303, MR1387228, MR1880321}.

In fact, separability precisely captures the dualizability condition for finite tensor categories:
\begin{maintheorem} \label{thm3}
Fully dualizable finite tensor categories are separable.
\end{maintheorem}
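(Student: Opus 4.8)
The plan is to establish the converse direction of Theorem~\ref{thm1} by reversing the dualizability analysis of $\TC$: assuming $\cC$ is $3$-dualizable, I would peel off consequences one categorical level at a time until reaching separability. Every finite tensor category is already $2$-dualizable, so $\cC$ is $1$-dualizable and the bimodule $1$-morphisms $\ev_\cC$ and $\coev_\cC$---indeed all bimodule categories in play---possess both left and right adjoints; thus the entire additional content of $3$-dualizability is that the units and counits of these $1$-morphism adjunctions admit both adjoints as $2$-morphisms of $\TC$. The relevant local fact is that a bimodule functor between finite module categories has a right (respectively left) adjoint exactly when it is right (respectively left) exact, the adjoint automatically carrying a compatible module structure by uniqueness of adjoints and a mate calculation; so $3$-dualizability forces these particular bimodule functors to be exact. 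A zig-zag bookkeeping, combined with the identification---from the loop-bordism and Serre-automorphism analysis---of the one-sided adjoint $\coev_\cC^R$ with $\ev_\cC$ twisted by the double-dual autofunctor, reduces the whole requirement to the exactness of a single $(\cC\boxtimes\cC^\mp)$-bimodule functor $\mu_\cC$ built from the multiplication of the canonical algebra $A_\cC\in\cC\boxtimes\cC^\mp$ that represents the identity bimodule category, so that $\cC\simeq\mathrm{Mod}_{\cC\boxtimes\cC^\mp}(A_\cC)$ with $A_\cC$ the coend $\int^{X\in\cC}X\boxtimes\ld{X}$.

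Under these identifications $\mu_\cC$ is the extension-of-scalars functor along the multiplication $A_\cC\otimes A_\cC\to A_\cC$; it is automatically right exact, and it is left exact---hence admits adjoints on both sides---precisely when $A_\cC$ is projective as an $A_\cC$-$A_\cC$-bimodule, that is, precisely when $A_\cC$ is a separable algebra object of $\cC\boxtimes\cC^\mp$. This is the exact $3$-categorical mirror of the $\Alg$-level statement recalled in the introduction, that an algebra is $2$-dualizable precisely when it is finite-dimensional and separable, i.e.\ projective as a bimodule over itself. Thus $3$-dualizability of $\cC$ yields separability of the canonical algebra $A_\cC$.

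It remains to see that this forces $\cC$ to be a \emph{separable tensor category} in the sense of the paper, which first requires $\cC$ to be semisimple. Here I would invoke the structure theory of finite tensor categories: a finite tensor category whose canonical algebra $A_\cC$ is separable is necessarily fusion, and moreover of nonvanishing global dimension---over a perfect field this is the failure of projectivity of the unit object, and over an algebraically closed field of positive characteristic it is exactly the nonvanishing-of-dimension condition for fusion categories. Granting this, $\cC$ is a finite semisimple tensor category with $\cC\simeq\mathrm{Mod}_{\cC\boxtimes\cC^\mp}(A_\cC)$ and $A_\cC$ separable, which is precisely the definition of $\cC$ being separable. As an alternative route to semisimplicity one may instead observe that $\dim(\cC)=\cC\boxtimes_{\cC\boxtimes\cC^\mp}\cC$, the cocenter of $\cC$, being the dimension of a $3$-dualizable object, is $2$-dualizable in the monoidal $2$-category $\End_\TC(\Vect)$ of finite linear categories under the Deligne tensor product, hence semisimple as a linear category---and then deduce semisimplicity of $\cC$ from that of its cocenter.

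The principal obstacle is the reduction in the first paragraph: correctly pinning down which of the unit and counit $2$-morphisms is the genuine last obstruction to $3$-dualizability, and matching its two-sided adjointability with the algebraic statement that $A_\cC\otimes A_\cC\to A_\cC$ splits as a bimodule map, passing cleanly through the double-dual twist that relates $\ev_\cC$ and $\coev_\cC^R$. A close second is the tensor-categorical input in the third paragraph---that non-semisimplicity genuinely obstructs separability of $A_\cC$---which is not formal dualizability nonsense but must be imported from the theory of finite tensor categories; the positive-characteristic distinction between "$A_\cC$ separable" and "$\cC$ semisimple", which differ by the nonvanishing of $\dim\cC$, has to be tracked with care.
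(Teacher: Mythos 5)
Your overall strategy---extract from $3$-dualizability the adjointability of a specific piece of $2$-morphism dualization data, translate that into projectivity of the canonical algebra $A_\cC\in\cC\boxtimes\cC^\mp$, and then feed semisimplicity back through Corollary~\ref{cor:Sep=semisimplecenter}---is viable, but as written it has two genuine gaps. First, the central reduction is asserted rather than carried out: you never verify that your functor $\mu_\cC$ (extension of scalars along $A_\cC\otimes A_\cC\to A_\cC$) actually occurs among, or is forced by, the units and counits whose adjointability $3$-dualizability demands, and your adjoint/exactness dictionary inside $\TC$ is stated incorrectly. Since every $2$-morphism of $\TC$ is right exact by fiat, ``has a right adjoint in $\TC$ iff right exact'' is not the right criterion: a $2$-morphism has a right adjoint in $\TC$ iff its not-necessarily-right-exact right adjoint is itself right exact. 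This is exactly the shape of the condition that detects separability (for instance, the unit $\Vect\to\cZ(\cC)$, $k\mapsto 1_{\cZ(\cC)}$, has a right adjoint $\Hom_{\cZ(\cC)}(1,-)$ in $\TC$ iff $1_{\cZ(\cC)}=A_\cC$ is projective, i.e.\ iff $A_\cC$ is separable), so blurring it is not cosmetic---it is where the theorem lives. Second, the passage from ``$A_\cC$ separable'' to ``$\cC$ semisimple'' is the real tensor-categorical content and you leave it as an import with an incorrect gloss: separability of $A_\cC$ does not make $\cC$ fusion (the unit need not be simple), and nonvanishing global dimension is a criterion only for fusion categories over an algebraically closed field. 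The correct route is available in the paper's toolkit: $A_\cC$ projective means the unit of $\cZ(\cC)\simeq\Mod{A_\cC}{A_\cC}(\cC\boxtimes\cC^\mp)$ is projective, rigidity of $\cZ(\cC)$ (from exactness of ${}_{\cC\boxtimes\cC^\mp}\cC$, Example~\ref{ex:exactness} and Theorem~\ref{Thm:ExactModCatOmnibus}) then forces $\cZ(\cC)$ semisimple, and semisimplicity of $\cZ(\cC)$ forces semisimplicity of $\cC$ because ${}_{\cZ(\cC)}\cC$ is exact. Your alternative route through $\cC\boxtimes_{\cC\boxtimes\cC^\mp}\cC$ does not work as stated: that category is the trace $\cT(\cC)$, not the cocenter, it is not even monoidal in general, and you give no mechanism by which its semisimplicity would propagate to $\cC$.

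For comparison, the paper's proof avoids all unit/counit bookkeeping: since $\cZ(\cC)$ is itself a composite of elementary dualization data (Table~\ref{table:circles}), full dualizability of $\cC$ makes $\cZ(\cC)$ a fully dualizable object of $\End_\TC(\Vect)$, which is equivalent to the $2$-category of finite-dimensional algebras; fully dualizable algebras are separable, hence (over a perfect field) semisimple, so $\cZ(\cC)$ is semisimple, then $\cC$ is semisimple by the exactness argument above, and Corollary~\ref{cor:Sep=semisimplecenter} concludes. If you want to salvage your more ``hands-on'' version, replace $\mu_\cC$ by an explicitly identified unit or counit (such as $\Vect\to\cZ(\cC)$) and prove, rather than assert, that its two-sided adjointability in $\TC$ is equivalent to projectivity of $A_\cC$; then supply the semisimplicity step as sketched.
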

\nid This is proven as Theorem~\ref{thm:converse}.  We also show, in Corollary~\ref{cor:maxfd}, that separable tensor categories, finite semisimple bimodule categories, bimodule functors, and bimodule transformations form the maximal 3-category of finite tensor categories in which all objects have duals and all morphisms and 2-morphisms have adjoints.  This provides a classification of Turaev--Viro-style local field theories with surface, line, and point defects, which is reminiscent of existing results on defects in 2-dimensional conformal field theory~\cite{ffrs-duality,frs-fusion} and 3-dimensional topological field theory~\cite{kapustinsaulina,kitaevkong,fsv}.

\subsection{On categorified 2-dimensional field theories}

The separability condition on a tensor category is needed only in establishing the very last portion of 3-dualizability.  As all algebras are 1-dualizable, so too all finite tensor categories are certainly 1-dualizable.  (Indeed, if we had available a 3-category of not-necessarily-finite tensor categories, we would still expect all such tensor categories to be 1-dualizable.)  The condition of 2-dualizability of a tensor category is by contrast a substantive restriction.  We prove that all finite bimodule categories between finite tensor categories have left and right adjoints (given for the bimodule ${}_\cC \cM_\cD$ by the functor category linear duals $\Fun_\cC(\cM,\cC)$ and $\Fun_\cD(\cM,\cD)$ respectively) and therefore that finite tensor categories are 2-dualizable.  (Though finiteness is sufficient for 2-dualizability, in a hypothetical 3-category of more general tensor categories, we would find it is more than is necessary.)  Indeed, though not 3-dualizable, finite tensor categories are better than 2-dualizable: some of the bimodule functors witnessing adjunctions between relevant bimodule categories themselves have adjoints.  We formalize this intermediate notion of dualizability as follows: a 2-dualizable object $x$ of a symmetric monoidal 3-category is \emph{Radford} if, for the evaluation map $\ev_x$ witnessing the 1-dualizability of $x$, the unit and counit of the adjunction $\ev_x \dashv \ev_x^R$ themselves both admit right adjoints.  Though it may appear obscure or technical, the Radford condition is in fact a precise algebraic analog of the geometric structure of the surface implementing the Dirac belt trick.

We can now give our main result about finite tensor categories:
\begin{maintheorem} \label{thm4}
Finite tensor categories are 2-dualizable.  In fact, they are Radford objects of the 3-category of tensor categories.
\end{maintheorem}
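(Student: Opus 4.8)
The plan is to establish the stronger claim that every finite tensor category $\cC$ is a Radford object of $\TC$; by definition this breaks into two parts --- that $\cC$ is $2$-dualizable, and that, for the evaluation $\ev_\cC$ witnessing $1$-dualizability and its right adjoint $\ev_\cC^R$, the unit $\eta\colon\id\Rightarrow\ev_\cC^R\circ\ev_\cC$ and counit $\epsilon\colon\ev_\cC\circ\ev_\cC^R\Rightarrow\id_\Vect$ of the adjunction $\ev_\cC\dashv\ev_\cC^R$ admit right adjoints. For the first part, I would recall that $\cC$ is $1$-dualizable with dual $\cC^\mp$: the evaluation and coevaluation $1$-morphisms are both modeled by the regular bimodule category $\cC$ over $\cC\boxtimes\cC^\mp$, and the two zigzag equivalences reduce to the identification $\cC\boxtimes_{\cC\boxtimes\cC^\mp}\cC\simeq\cC$ --- the categorified analog of the $1$-dualizability of algebras in $\Alg$, requiring no finiteness. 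Then, invoking the fact recalled in the introduction that every finite bimodule category ${}_\cC\cM_\cD$ has a left adjoint $\Fun_\cC(\cM,\cC)$ and a right adjoint $\Fun_\cD(\cM,\cD)$ --- whose proof amounts to verifying that these functor categories are again finite bimodule categories (here finiteness of $\cM$ and $\cD$ is used) and to writing down the evaluation and coevaluation bimodule functors together with the two triangle $2$-isomorphisms --- the $1$-morphisms $\ev_\cC$ and $\coev_\cC$, being finite bimodule categories, acquire both adjoints. Hence $\cC$ is $2$-dualizable.

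For the Radford condition, the reduction I would use is that a bimodule functor $F$ between finite bimodule categories admits a right adjoint as a $2$-morphism of $\TC$ as soon as its underlying $k$-linear functor does: the $k$-linear right adjoint, which exists whenever $F$ is right exact, inherits a canonical bimodule structure by conjugating the invertible module constraints of $F$ across the linear adjunction, and the triangle $3$-morphisms descend from the linear unit and counit. So the problem becomes the concrete one of identifying the $k$-linear functors underlying $\eta$ and $\epsilon$ and checking that each admits a right adjoint. I would do this by computing the composites in the functor-category model: using finiteness, $\ev_\cC^R\circ\ev_\cC$ is equivalent, as a bimodule over $\cC\boxtimes\cC^\mp$ on both sides, to a category of $k$-linear endofunctors of $\cC$ whose module action is pre- and post-composition with tensoring, twisted on one side by the double dual $(-)^{**}$; under this identification $\eta$ is, up to the twist, essentially the functor $c\mapsto(-\otimes c)$ out of the regular bimodule. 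Such tensoring functors on a rigid category have adjoints on both sides --- given again by tensoring with duals --- and those adjoints themselves have adjoints, so the required right adjoint of $\eta$ exists in $\TC$. A parallel computation identifies $\epsilon$ with a canonical evaluation (trace-type) functor out of $\ev_\cC\circ\ev_\cC^R$, again assembled from tensor products and duality functors of $\cC$ and hence equipped with the needed adjoints by rigidity. This yields the Radford property and proves the theorem.

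I expect the main obstacle to lie in that last computation: pinning down $\ev_\cC^R\circ\ev_\cC$ and $\ev_\cC\circ\ev_\cC^R$ precisely enough --- with the correct left/right variances, and with the double-dual twist in exactly the right slot --- that $\eta$ and $\epsilon$ acquire recognizable underlying functors whose exactness and adjointability can be read off from rigidity of $\cC$. This identification of $\ev_\cC^R\circ\ev_\cC$ is the same structural fact that realizes the loop bordism as the double-dual autofunctor, so it should be proved once and reused. It is also exactly the step in which finiteness of $\cC$ is essential --- for the functor categories in sight to be finite, for the relative Deligne tensor products defining composition in $\TC$ to exist, and for the two-sided adjointability afforded by rigidity to be available --- consistent with the introduction's remark that finiteness exceeds what mere $2$-dualizability demands but is precisely what supplies the Radford (belt-trick) structure. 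The remaining pieces --- the triangle identities for the bimodule-adjoint statement and for the adjunction $\ev_\cC\dashv\ev_\cC^R$ --- are routine coherence bookkeeping.
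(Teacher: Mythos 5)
Your 2-dualizability half is, in outline, the paper's own route (Propositions~\ref{prop:onedual} and~\ref{prop:evcoev}): $\cC^\mp$ is the dual via the two flips of the regular bimodule, and $\Fun_\cC(\cM,\cC)$, $\Fun_\cD(\cM,\cD)$ are the left and right adjoints of a finite bimodule ${}_\cC\cM_\cD$. Two caveats, both repairable: the zigzag is \emph{not} the identification $\cC\boxtimes_{\cC\boxtimes\cC^\mp}\cC\simeq\cC$ --- that relative tensor with the evident actions is the trace $\cT(\cC)$ of Corollary~\ref{cor:trace}, which is generally not $\cC$; the actual zigzag is a composite over $\cC\boxtimes\cC\boxtimes\cC^\mp$ involving a symmetric switch (Lemma~\ref{lemma:flip}). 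And constructing the units of the bimodule adjunctions is not routine, since a unit maps \emph{into} a relative Deligne tensor product, whose universal property only governs functors out of it; one needs the identification $\Fun_\cC(\cM,\cC)\boxtimes_\cC\cN\simeq\Fun_\cC(\cM,\cN)$ of Proposition~\ref{prop:FunctorsAsATensorPdt}.

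The genuine gap is in the Radford half. Your reduction --- that a bimodule functor admits a right adjoint as a 2-morphism of $\TC$ as soon as its underlying linear functor does --- is false: 2-morphisms of $\TC$ are \emph{right exact} bimodule functors, and the linear right adjoint of a right exact functor is in general only left exact. Already over $\Vect$: the right exact functor $\Vect\to\Rep(k[\epsilon]/\epsilon^2)$, $V\mapsto k\otimes V$ (trivial $\epsilon$-action), has linear right adjoint $\Hom_{k[\epsilon]}(k,-)$, which is not right exact; since right adjoints are essentially unique, this functor has no right adjoint among right exact functors at all. Lemma~\ref{lma:module-adjoint} only promotes the linear adjoint to a not-necessarily-right-exact bimodule functor; whether it lands in $\TC$ is precisely the problem. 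Nor does your fallback via rigidity apply: the unit $u:\cC\boxtimes\cC^\mp\to\ev_\cC\boxtimes\ev_\cC^R\simeq\Fun_\Vect(\cC,\cC)$ is not a tensoring endofunctor of $\cC$, and its right adjoint is an internal-hom/coend-type functor $\Fun_\Vect(\cC,\cC)\to\cC\boxtimes\cC^\mp$, not ``tensoring with duals''; rigidity says nothing about its right exactness. The paper's proof of Theorem~\ref{thm:TCisRadford} supplies the missing idea: additive module functors out of an \emph{exact} module category are automatically exact (Theorem~\ref{Thm:ExactModCatOmnibus}), so it suffices that the sources of $u^R$ and $v^R$ be exact bimodule categories. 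The source $\Vect$ of $v^R$ is trivially exact, while the source $\ev_\cC\boxtimes\ev_\cC^R$ of $u^R$ is exact because $\ev_\cC$ is exact (Example~\ref{ex:exactness}), its dual $\ev_\cC^R$ is exact (Corollary~\ref{cor:adjoint-exactness}), and --- the essential new input, absent from your proposal --- the Deligne tensor product of exact module categories is exact (Theorem~\ref{thm:tensor-exactness}, where perfectness of the base field enters). Without this exactness argument, or a substitute for it, the Radford claim is not established.
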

\nid This is proven in Theorems~\ref{thm:TC_is_2Dualizable} and~\ref{thm:TCisRadford}.  This result provides 2-dimensional field theories that are categorified in the sense that they assign vector spaces to closed surfaces:
\begin{maincor} \label{cor5}
Associated to every finite tensor category is a categorified 2-dimensional 2-framed local topological field theory.  Moreover, this theory extends to a 2-dimensional 3-framed field theory.
\end{maincor}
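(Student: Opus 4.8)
The plan is to deduce Corollary~\ref{cor5} from Theorem~\ref{thm4} by feeding it into the appropriate partial form of the cobordism hypothesis. The relevant shape of that classification is: for a symmetric monoidal $3$-category $\cS$, evaluation at the standard framed point induces an equivalence between symmetric monoidal functors from the $(\infty,2)$-category of $2$-dimensional $2$-framed bordisms with corners to $\cS$ and the space of $2$-dualizable objects of $\cS$, and likewise an equivalence between symmetric monoidal functors from the $(\infty,2)$-category of $2$-dimensional $3$-framed bordisms to $\cS$ and the space of Radford objects of $\cS$. The point is that the extra stabilization degree records exactly one more layer of adjoint data: passing from a $2$-framing to a $3$-framing of a surface supplies precisely the additional adjunctions whose existence is the Radford condition, which is the algebraic avatar of the belt-trick geometry described in the introduction. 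Granting this, the corollary is formal: by Theorem~\ref{thm4} every finite tensor category $\cC$ is a Radford, in particular $2$-dualizable, object of $\TC$, so there is a symmetric monoidal functor $\cF_\cC$ from $3$-framed $2$-dimensional bordisms to $\TC$ with $\cF_\cC(\pt)=\cC$, and restricting $\cF_\cC$ along the stabilization functor from $2$-framed to $3$-framed $2$-bordisms produces the asserted $2$-framed theory.

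First I would pin down the precise bordism $(\infty,2)$-categories and the exact statement of the classification being invoked, citing~\cite{lurie-ch} both for the full cobordism hypothesis and for the bookkeeping reducing the truncated, higher-stabilized case to it --- namely that a $j$-dualizable object of a symmetric monoidal $n$-category, for $j\le n$, canonically determines an $n$-framed $j$-dimensional local field theory. This reduction is essentially formal once Theorem~\ref{thm4} is in hand, but it should be spelled out carefully since here the target $\TC$ is a $3$-category while the field theory is only $2$-dimensional. Next I would identify the invariants $\cF_\cC$ assigns in low dimension, in order to justify the adjective ``categorified''. The monoidal unit of $\TC$ is $\Vect$, its loop $2$-category $\Omega\TC=\End_\TC(\Vect)$ is equivalent to the symmetric monoidal $2$-category of finite linear categories under Deligne product, and therefore $\Omega^2\TC=\End_{\Omega\TC}(\Vect)\simeq\Vect$ while $\Omega^3\TC\simeq k$. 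A closed surface $\Sigma$ is a $2$-endomorphism of the empty $1$-manifold, which $\cF_\cC$ sends to $\id_\Vect$, so $\cF_\cC(\Sigma)$ lies in $\Omega^2\TC\simeq\Vect$: it is a vector space, not a scalar. A circle, by contrast, is sent to an object of $\Omega\TC$ --- a finite linear category, a trace of $\cC$. This is exactly the sense in which the theory is categorified, sitting one level above an ordinary $\Vect$-valued surface invariant.

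I expect the genuine obstacle to lie upstream of this argument rather than within it: the partial cobordism hypothesis --- that a $2$-dualizable, respectively Radford, object of a symmetric monoidal $3$-category integrates to a symmetric monoidal functor out of the $2$-framed, respectively $3$-framed, $2$-dimensional bordism $(\infty,2)$-category --- is the delicate input, and one must take care to cite or supply a form of it valid for a target that is a genuine $3$-category and not merely a $2$-category. Once that framework is fixed the only new mathematical content is Theorem~\ref{thm4}, which is already proved, so the remaining steps --- matching $2$-dualizability to the $2$-framed theory, the Radford structure to its $3$-framed extension, and the computation of $\Omega^2\TC$ exhibiting categorification --- are routine verifications.
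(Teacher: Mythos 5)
Your 2-framed half is fine and matches the paper: 2-dualizability of $\TC$ (Theorem~\ref{thm:TC_is_2Dualizable}) plus the framed cobordism hypothesis gives $\cF_\cC: \FrBord_2 \ra \TC_{(3,2)}$, and the worry you raise about the target being a 3-category is resolved exactly as you suspect, by taking the target to be the maximal sub-$(3,2)$-category $\TC_{(3,2)}$ (this is Corollary~\ref{cor:2dualtft}); your computation of $\Omega^2\TC \simeq \Vect$ justifying ``categorified'' also agrees with the paper's remarks.

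The gap is in the 3-framed half. The classification you invoke --- an equivalence between symmetric monoidal functors out of the 3-framed 2-dimensional bordism category and ``the space of Radford objects'' --- is not a form of the cobordism hypothesis, and it is not correct as stated: being Radford is a \emph{property} (existence of right adjoints for the unit and counit of $\ev_x \dashv \ev_x^R$), whereas what classifies 3-framed extensions of $\cF_\cC$ is a piece of \emph{data}. The correct mechanism, and the one the paper uses in Corollary~\ref{cor:3fr2d}, is the structured cobordism hypothesis (Theorem~\ref{thm:chstruc}): 3-framed 2-dimensional theories correspond to homotopy fixed points for the group $\Omega(O(3)/O(2)) \simeq \Omega S^2$, acting on 2-dualizable objects through the Serre automorphism via $\Omega S^2 \ra \Omega BSO(2) \simeq SO(2) \ra \mathrm{Aut}(\TC)$, and such a fixed point structure on $\cC$ is precisely a trivialization (null homotopy) of the square of the Serre automorphism $\cS_\cC^2$. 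The role of Theorem~\ref{thm4} is then not to enter a classification by Radford objects but to \emph{produce} this datum: by Theorem~\ref{thm:TCisRadford} every finite tensor category is Radford, and Corollary~\ref{cor:serreinvserre} shows that for any Radford object the Radford map gives a canonical equivalence $\id_x \simeq \cS_x^2$. Your heuristic that ``the extra stabilization degree records one more layer of adjoint data'' points in the right direction but does not substitute for this identification of the descent data; as written, the step from Radford to a 3-framed theory rests on an unproved (and, as an equivalence of spaces, false) statement, so you need to replace it with the fixed-point formulation above.
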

\nid This is proven as Corollaries~\ref{cor:2dualtft} and~\ref{cor:3fr2d}.  Here a 3-framed theory is one where the manifolds have their tangent bundles trivialized after stabilization to dimension 3.  Note that a categorified field theory is a much more refined invariant than an ordinary field theory.  For instance, it provides an action of the mapping class group on the vector space associated to any closed surface.  In particular, a categorified 2-dimensional field theory encodes the data of the classical notion of a modular functor~\cite{Segal, MR1159969,MR1797619}.  The above corollary is not completely satisfactory: the fact that a finite tensor category is Radford shows that the corresponding field theory takes values not only on 3-framed 2-manifolds but also on many non-closed 3-framed 3-manifolds---we have not yet endeavored to specify the precise class of manifolds allowed.  The main antecedents regarding field theories associated to finite tensor categories are the Kuperberg 3-framed and the Hennings--Kauffman--Radford oriented manifold invariants associated to a not-necessarily semisimple Hopf algebra~\cite{MR1394749,hennings,kauffrad}; Kerler and Lyubashenko's investigation of once-extended partial field theories associated to not-necessarily semisimple, modular tensor categories~\cite{MR1862634}; and Fuchs and Schweigert's work on logarithmic conformal field theory~\cite{fuchsschweigert}.

\subsection{On quadruple duals}

The loop bordism depicted above can be decomposed into two half-loops, which may be identified respectively as an evaluation map (witnessing the 1-dualizability of a point in the bordism category) and an adjoint to that evaluation map.  (This decomposition is depicted in Figure~\ref{fig:serrebordism} in Section~\ref{sec:Serre}.)  Because any finite tensor category is 2-dualizable, each of these half-loops has an image under the associated field theory, and those images may be computed algebraically in the target category.  The result of that computation yields the fundamental connection between the topology of framed manifolds and the algebra of tensor categories:
\begin{maintheorem} \label{thm6}
The field theory $\cF_\cC$ of a finite tensor category $\cC$ takes the loop bordism to the bimodule associated to the right double dual functor $(-)^{**}: \cC \ra \cC$.
\end{maintheorem}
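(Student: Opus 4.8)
The plan is to reduce the statement to an algebraic computation inside $\TC$ by cutting the loop bordism into pieces that $\cF_\cC$ already evaluates and then recombining. First I would invoke the decomposition recorded in Figure~\ref{fig:serrebordism}: the loop bordism $L$, viewed as a $3$-framed $1$-manifold rel boundary, is a composite of two half-loops, one of which is (a stabilization of) the evaluation $\ev_\pt$ exhibiting the self-duality of the point, tensored with an identity interval, and the other a right adjoint $\ev_\pt^R$ of that evaluation, again tensored with an identity. In other words, $L$ is exactly the bordism that computes the Serre automorphism $S_\pt$ of the point. The one point requiring care is that the $3$-framing carried by $L$---obtained by comparing a normal framing of the immersed arc with the blackboard framing---matches the framing on $S_\pt$ assembled from the duality data; this is a finite, explicit check with framings of arcs in the plane.

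Since $\cF_\cC$ is symmetric monoidal and, by Theorem~\ref{thm4}, $\cC$ is $2$-dualizable---so that $\ev_\cC$ admits a right adjoint in $\TC$ and symmetric monoidal functors carry this adjunction to the corresponding adjunction---applying $\cF_\cC$ to the decomposition identifies $\cF_\cC(L)$ with the Serre automorphism $S_\cC$ of $\cC \in \TC$: the $\cC$-$\cC$-bimodule obtained by assembling $\ev_\cC$, its right adjoint $\ev_\cC^R$, and the coherence data witnessing the $1$-dualizability of $\cC$, exactly as dictated by the bordism picture. At this stage the topology is finished, and only a computation in $\TC$ remains.

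Then I would substitute the known descriptions. The evaluation $\ev_\cC\colon \cC \boxtimes \cC^\mp \to \Vect$ is the regular bimodule, namely $\cC$ with its canonical $(\cC \boxtimes \cC^\mp)$-module structure; and its right adjoint $\ev_\cC^R$, computed through the functor-category-dual description of adjoints of bimodule categories used in the proof of Theorem~\ref{thm4}, is again the regular bimodule but with module structure precomposed by a duality functor. Threading these through the Deligne tensor product and the zig-zag identifications, the two turns in the Serre assembly contribute, respectively, the passage to a dual and the passage to a further dual, so that their composite is the right double dual. I expect the upshot to be that $S_\cC$ collapses to the identity $\cC$-$\cC$-bimodule $\cC$ with its right action unchanged and its left action precomposed with $(-)^{**}$, which is the claimed bimodule.

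The main obstacle is this last identification: faithfully tracking the left and right module structures through the internal-hom computation of $\ev_\cC^R$ and through the composite defining $S_\cC$, and in particular fixing the handedness, so that one lands on the \emph{right} double dual twisting the \emph{left} action rather than on a single dual, the left double dual, or a twist of the other action. Concretely this amounts to pinning down the monoidal structure on $(-)^{**}$ produced by the adjunction data and matching it with the standard one. It is bookkeeping rather than conceptually deep, but it is the delicate step---and it is the same computation that, once squared, will later extract the Radford quadruple-dual formula from the belt bordism.
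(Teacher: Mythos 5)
Your proposal matches the paper's proof: Theorem~\ref{thm:serreisdoubledual} is obtained exactly by feeding the Figure~\ref{fig:serrebordism} decomposition (Proposition~\ref{prop:serrecalc}) through the symmetric monoidal functor $\cF_\cC$, substituting the values of $\ev_\cC$, $\ev^R_\cC$, $\ev^L_\cC$ from Table~\ref{table:intervals}, and then performing the bimodule bookkeeping inside $\TC$. The ``delicate step'' you defer is carried out there via Lemma~\ref{lemma:flip}, Proposition~\ref{prop:dual-formula-for-adjoints}, and especially Lemma~\ref{lemma:DualTwist}, whose content is precisely that flipping an action and taking the dual bimodule category fail to commute by a double-dual twist --- which is exactly where the right double dual $(-)^{**}$ twisting the left action comes from.
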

\nid This appears as Theorem~\ref{thm:serreisdoubledual}.  Here the bimodule associated to a functor is the one obtained from the identity bimodule ${}_\cC \cC_\cC$ by twisting the left $\cC$-action by the functor.

The belt bordism trivializing the square of the loop bordism, also depicted above, can itself be decomposed into two pieces, each of which can be identified as a certain adjoint or witness to an adjunction in the bordism category.  (This decomposition is depicted in Figure~\ref{fig:Radford_bordism}.)  The Radford property of a finite tensor category ensures that the associated field theory takes values on these pieces (and, quite importantly, on the 3-manifolds witnessing that the belt bordism is invertible), and therefore provides an algebraic trivialization of the bimodule associated to the quadruple dual.  That trivialization in turn controls the quadruple dual functor itself:
\begin{maincor} \label{cor7}
In any finite tensor category, there is an invertible object $D$ and a monoidal natural isomorphism between the quadruple dual functor $(-)^{****}: \cC \ra \cC$ and the conjugation functor $D \otimes (-) \otimes D^{-1}: \cC \ra \cC$.  In any semisimple finite tensor category with absolutely simple unit, the quadruple dual functor is monoidally naturally isomorphic to the identity.
\end{maincor}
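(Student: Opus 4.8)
The plan is to deduce Corollary~\ref{cor7} from the Radford property of $\cC$ (Theorem~\ref{thm4}) together with the computation of the loop bordism in Theorem~\ref{thm6}. First, because $\cC$ is a Radford object of $\TC$, the field theory $\cF_\cC$ is defined not only on the belt bordism pictured above but also on the $3$-manifolds exhibiting that this bordism is invertible---the existence of these witnesses being exactly the Radford condition (see Figure~\ref{fig:Radford_bordism}). The belt bordism trivializes the square of the loop bordism, so applying $\cF_\cC$ and invoking Theorem~\ref{thm6}---which sends the loop bordism to the identity bimodule with left action twisted by the double dual, so that its square is $\bimod{(-)^{****}}{\cC}{\cC}$---produces an equivalence of $\cC$--$\cC$-bimodules
\[
  \lambda\colon\ \bimod{(-)^{****}}{\cC}{\cC}\ \xrightarrow{\ \sim\ }\ \bimod{\cC}{\cC}{\cC}.
\]
This is precisely the general quadruple-dual theorem for Radford objects, specialized to $\cC\in\TC$.

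Next I would extract the stated monoidal isomorphism from $\lambda$. Regarded as an equivalence of \emph{right} $\cC$-module categories $\cC\ra\cC$, the functor $\lambda$ is necessarily of the form $\lambda(X)\cong D\otimes X$ with $D:=\lambda(\mathbf 1)$, and $D$ is invertible since $\lambda$ is an equivalence. Imposing compatibility of $\lambda$ with the \emph{left} $\cC$-actions---twisted by the quadruple dual on the source, untwisted on the target---then gives a natural isomorphism $D\otimes X^{****}\cong X\otimes D$, hence $X^{****}\cong D^{-1}\otimes X\otimes D$, which the coherence of a bimodule functor promotes to a monoidal natural isomorphism; replacing $D$ by $D^{-1}$ yields the first assertion. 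Granting Theorems~\ref{thm4} and~\ref{thm6} this step is essentially bookkeeping: the real content sits inside those two theorems (the construction of $\cF_\cC$ on the invertibility $3$-manifolds of the belt bordism, and the bordism-level belt-trick computation), while the remaining algebraic point---that a bimodule autoequivalence of $\bimod{\cC}{\cC}{\cC}$ is inner, i.e.\ conjugation by an invertible object---is routine.

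For the final sentence, specialize to $\cC$ semisimple with absolutely simple unit, so that $\cC\boxtimes\cC^\mp$ is again semisimple and the identity bimodule $\cC$ is a simple module category over it. Here I would show that the invertible object $D$ of the first part---the distinguished invertible object of $\cC$---is trivial, equivalently that such $\cC$ are unimodular; the belt-bordism trivialization $\lambda$ then descends to a monoidal natural isomorphism $(-)^{****}\cong\id$. This recovers the Etingof--Nikshych--Ostrik triviality of the quadruple dual for fusion categories~\cite{MR2097289}. When $\cC$ is additionally separable, hence fully dualizable by Theorem~\ref{thm1}, this vanishing is most naturally seen via the extra bordisms of the genuine $3$-dimensional field theory; in general it requires a direct argument. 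I expect this descent---from the bimodule trivialization furnished by the belt bordism to a monoidal trivialization of the quadruple dual functor itself---to be the main obstacle.
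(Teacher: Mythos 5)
Your treatment of the first assertion is correct and follows essentially the same route as the paper: Theorem~\ref{thm:quaddual} is obtained there by combining the Radford property of finite tensor categories (Theorem~\ref{thm:TCisRadford}), the identification of the loop bordism's image with the double-dual twisted bimodule (Theorem~\ref{thm:serreisdoubledual}), and the observation that a bimodule equivalence between twisted identity bimodules is conjugation by an invertible object (Lemma~\ref{lem:BimoduleToFunctor}, which is exactly your ``bookkeeping'' step, with $D=\lambda(1)$ the distinguished invertible object). As in the paper, none of this needs the cobordism hypothesis, since the Radford equivalence $\cR_\cC$ is constructed directly from the adjunction data.

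For the second assertion, however, your proposal has a genuine gap, which you yourself flag: you never actually show that $D$ is trivial when $\cC$ is semisimple with absolutely simple unit. Saying this is ``equivalently that such $\cC$ are unimodular'' and pointing to \cite{MR2097289} does not close it, because the unimodularity statement you want to import concerns Etingof--Nikshych--Ostrik's algebraically defined distinguished invertible object, whereas your $D$ is defined topologically as the image of the unit under the belt-trick equivalence $\lambda$. To transfer their result you must identify the two, and this identification is precisely the technical content of Section~\ref{sec:computeradford}: Proposition~\ref{prop:computeradford} computes the Radford equivalence handle by handle, after rewriting $\ev_\cC^L\simeq\Mod{A}{}$ and $\ev_\cC^R\simeq\Mod{A^{**}}{}$ for $A=\IHom_{\cC\boxtimes\cC^\mp}(1,1)$, and shows it is $M\mapsto A^*\otimes_A M$, i.e.\ exactly ENO's equivalence; only then can one invoke \cite[Cor.~6.4]{MR2097289} to conclude $D\cong 1$ for fusion categories, and finally use invariance of ``$D\cong 1$'' under base change (this is where absolute simplicity of the unit and semisimplicity enter) to drop algebraic closure. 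An alternative, also noted in the paper, is to compute $\lambda(1)$ directly in the semisimple case without passing through internal modules; either way, some such computation is required, and your proposal supplies neither. (Your aside that the vanishing is ``most naturally seen'' via the full 3-dimensional theory in the separable case is not a proof either: full dualizability by itself does not identify which invertible object the belt trick produces.)
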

\nid This is recorded as Theorem~\ref{thm:quaddual} and Corollary~\ref{cor:ssquaddual}.  Here `absolutely simple' means that the unit remains simple after arbitrary base change of the base field.  This result, generalizing Radford's theorem for finite-dimensional Hopf algebras~\cite{MR0407069}, is originally due to Etingof--Nikshych--Ostrik~\cite{MR2097289, MR2183279}; our approach realizes this algebraic result as a direct corollary of the topological fact that the fundamental group of the 3-dimensional orthogonal group is order 2.  The earliest connection between Radford's theorem and the topology of 3-framings occurs in Kuperberg's construction of framed manifold invariants from finite-dimensional Hopf algebras~\cite{MR1394749}.  The intuition that there ought to be a relationship between the belt trick and the quadruple dual theorem is implicit in Hagge--Hong~\cite{MR2559711} and explicit in Bartlett~\cite{0901.3975}.  (Note that, despite our exposition in terms of field theories, our proof does not depend on the cobordism hypothesis---we can and indeed do directly construct the algebraic images of the geometric components of the loop bordism and the belt bordism.)

For any 2-dualizable object $x$ of a symmetric monoidal $n$-category $\cT$, we can compose the evaluation map of $x$ with its right adjoint, according to the geometry of the loop bordism---this procedure is drawn in Figure~\ref{fig:serrebordism}; the resulting composite is an automorphism of $x$, which is called the Serre automorphism.  Similarly, for any Radford object $x \in \cT$, we can compose appropriate adjoints, units, and counits according to the geometry of the belt bordism---this procedure is drawn in Figure~\ref{fig:Radford_bordism}; the resulting composite is a 2-morphism of $\cT$, which is called the Radford map.  Because our approach is topological in essence, it does not rely on constructions specific to any one algebraic context, and so provides a generalization of the quadruple dual theorem to any algebraic 3- (or indeed higher) category:
\begin{maintheorem} \label{thm8}
For any Radford object $x$ in a symmetric monoidal $n$-category, with $n$ at least 3, the Radford map is an equivalence and so provides a trivialization of the square of the Serre automorphism of $x$.
\end{maintheorem}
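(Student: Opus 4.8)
The plan is to realize the Radford map explicitly as a composite of adjunction data, build by hand a candidate inverse modeled on the ``reverse belt trick'', and then verify two triangle-type identities by a graphical computation in a symmetric monoidal $3$-category; the topological fact $\pi_1(\mathrm{SO}(3)) = \ZZ/2$ enters only as a guide to the combinatorics. First I would fix notation: write $\ev = \ev_x$, let $\ev \dashv \ev^R$ be the adjunction witnessing $1$-dualizability, let $\eta\colon \id \Rightarrow \ev^R\circ\ev$ and $\epsilon\colon \ev\circ\ev^R \Rightarrow \id$ be its unit and counit, and let $\eta^R,\epsilon^R$ be the right adjoints furnished by the Radford hypothesis, together with their units and counits (which are $3$-morphisms of $\cT$). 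By definition (Figure~\ref{fig:serrebordism}) the Serre automorphism $S_x\colon x\to x$ is the composite of $\ev$ and $\ev^R$ read off the loop bordism, and (Figure~\ref{fig:Radford_bordism}) the Radford map $R_x\colon S_x^{2}\Rightarrow \id_x$ is the composite of $\eta^R$, $\epsilon^R$ and the $3$-cells relating $\eta$ to $\eta^R$ and $\epsilon$ to $\epsilon^R$, whiskered as prescribed by the belt bordism. Since cells of $\cT$ above dimension $3$ are irrelevant to the assertion and a $2$-morphism is an equivalence exactly when its image in the homotopy $3$-category is, it suffices to produce a $2$-morphism $\bar R_x\colon \id_x\Rightarrow S_x^{2}$ together with invertible $3$-morphisms $\bar R_x\circ R_x \cong \id_{S_x^{2}}$ and $R_x\circ\bar R_x \cong \id_{\id_x}$. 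I would phrase everything in terms of the generic data $\ev,\ev^R,\eta,\epsilon,\eta^R,\epsilon^R$ and their defining adjunction relations, so that the argument applies verbatim in any symmetric monoidal $n$-category with $n\ge 3$ (equivalently, one argues once in the free symmetric monoidal $3$-category on a Radford object and transports along the classifying functor).

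Next I would construct $\bar R_x$ as the \emph{reverse belt bordism}: the composite built from exactly the same data, but assembled in the mirror order obtained by reflecting the belt surface of the Introduction through its horizontal midplane. Reading off the reflected surface invokes the units and counits of the adjunctions $\eta\dashv\eta^R$ and $\epsilon\dashv\epsilon^R$ in both directions, which is precisely what the Radford condition supplies, so $\bar R_x$ is well defined; geometrically it is again a trivialization of $S_x^{2}$, traversed backwards.

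The hard part will be the two identities $\bar R_x\circ R_x \cong \id_{S_x^{2}}$ and $R_x\circ\bar R_x \cong \id_{\id_x}$. Geometrically these assert that gluing the belt surface to its reflection along either bounding loop produces a surface isotopic rel boundary to the trivial cylinder --- that is, that the double full twist of the framing not only bounds a membrane (which is what makes $R_x$ exist at all) but that sweeping the membrane out and then back in is isotopically trivial; this is where $\pi_1(\mathrm{SO}(3))=\ZZ/2$, refined by the vanishing of $\pi_2(\mathrm{SO}(3))$, is really used, and it is where the hypothesis $n\ge 3$ is needed, since such a trivialization is witnessed by a $3$-dimensional sweep-out. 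Algebraically I would have to produce each sweep-out explicitly as a composite of the $3$-cell data --- the cusp/swallowtail $3$-morphisms together with the units and counits of $\eta\dashv\eta^R$ and $\epsilon\dashv\epsilon^R$ --- and then check that it is invertible. This is a string-diagram manipulation in a symmetric monoidal $3$-category, and the real labour is bookkeeping: tracking the many whiskerings, invoking the triangle identities of the first- and second-level adjunctions to cancel bubbles, and using the coherence isomorphisms of the symmetric monoidal structure to bring the two sides to a common normal form. I expect this combinatorial reduction, rather than any further conceptual ingredient, to be the main obstacle.

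Finally, granting the two identities, $R_x$ is an equivalence with inverse $\bar R_x$, hence $R_x$ exhibits an equivalence $S_x^{2}\simeq\id_x$, which is the asserted trivialization of the square of the Serre automorphism; for $n>3$ the identical diagrams interpreted in $\cT$, with the higher cells carried along trivially, give the statement.
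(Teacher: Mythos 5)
Your overall shape---write down an explicit candidate inverse and then verify that the two composites are isomorphic to identities---is workable, but as written there is a genuine gap: the verification, which is the entire content of the theorem, is never carried out, and nothing in the plan explains why the deferred ``bookkeeping'' will close. In an abstract symmetric monoidal $n$-category you cannot actually use the isotopy statement behind $\pi_1(SO(3))=\ZZ/2$ or the vanishing of $\pi_2(SO(3))$; the only admissible inputs are the zigzag data of the adjunctions $\ev_x^L \dashv \ev_x$ and $\ev_x \dashv \ev_x^R$ together with the adjunctions $u_x \dashv u_x^R$ and $v_x \dashv v_x^R$, and the step you postpone is exactly where a structural fact is needed. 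The paper supplies it as Lemma~\ref{lem-ambiadjoints} (Lurie's Remark 3.4.22), in its 2-morphism-opposite form: if the unit and counit of $\ev_x \dashv \ev_x^R$ admit right adjoints, then $v_x^R$ and $u_x^R$ themselves serve as unit and counit of an adjunction $\ev_x^R \dashv \ev_x$, i.e.\ the adjoint is ambidextrous. Once this is in hand no diagram chase remains: $\ev_x^L$ and $\ev_x^R$ are then both left adjoints of $\ev_x$, and the Radford map of Definition~\ref{def:radfordequiv} is literally the standard comparison 2-morphism between two left adjoints, hence an equivalence with explicit inverse $(u_x^R \circledcirc \id_{\ev_x^L}) \circ (\id_{\ev_x^R} \circledcirc \tilde{u}_x)$; whiskering as in Corollary~\ref{cor:serreinvserre} then trivializes $\cS_x^2$. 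Your plan amounts to reproving this lemma and the uniqueness of left adjoints simultaneously by brute force, without having identified either, and there is no argument offered that the triangle-identity cancellations terminate in the claimed invertible 3-cells.

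Two imprecisions reinforce the gap. First, the Radford map is a composite of 2-morphisms only, $(\tilde{v}_x \circledcirc \id_{\ev_x^R}) \circ (\id_{\ev_x^L} \circledcirc v_x^R)$: it uses the right adjoint of the counit $v_x$ together with the counit $\tilde{v}_x$ of the left-hand adjunction, and neither $u_x^R$ nor any 3-cells enter its definition, contrary to your description. Second, your candidate inverse is specified only as ``the same data assembled in mirror order,'' which does not determine a formula; the correct inverse uses the complementary halves of the data, namely $u_x^R$ and $\tilde{u}_x$, and the assertion that this particular composite is inverse to the Radford map is precisely the ambidexterity statement above (Theorem~\ref{thm:Cat_Radford}). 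Without either that lemma or a completed string-diagram verification, the proposal does not yet prove the theorem.
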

\nid This is proven as Theorem~\ref{thm:Cat_Radford} and Corollary~\ref{cor:serreinvserre}.

\subsection{On tensor and bimodule categories}

Our work relies extensively on the theory of finite tensor categories developed by Etingof, Gelaki, Nikshych, and Ostrik~\cite{egno-book, EO-ftc, MR2183279, MR2097289,0909.3140}.  In order to have a robust 3-categorical setting for studying the algebra of finite and separable tensor categories, we prove a number of results extending the work of EGNO, some of which are of interest independent from our topological-field-theoretic purposes.

The most important ingredient in defining a 3-category of tensor categories is the composition of bimodule categories:
\begin{maintheorem} \label{thm9}
For finite bimodule categories ${}_\cB \cM_\cC$ and ${}_\cC \cN_\cD$ over finite tensor categories, the relative Deligne tensor product bimodule ${}_\cB \cM \boxtimes_\cC \cN_\cD$ exists.  Provided the base field is perfect, there is a symmetric monoidal 3-category $\TC$ of finite tensor categories, finite bimodule categories, bimodule functors, and bimodule transformations.
\end{maintheorem}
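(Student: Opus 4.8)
The plan is to split the statement into two tasks: (i) constructing, for finite module categories over a fixed finite tensor category $\cC$, the relative Deligne tensor product ${}_{\cB}\cM_{\cC}\boxtimes_{\cC}{}_{\cC}\cN_{\cD}$ together with its universal property and residual bimodule structure, and (ii) organizing these relative tensor products into the composition law of a symmetric monoidal $3$-category. The starting point for (i) is the absolute Deligne tensor product: for finite linear categories $\cA$ and $\cB$, the category $\cA\boxtimes\cB$ corepresenting functors out of $\cA\times\cB$ that are right exact in each variable exists and is again a finite linear category. This is the one place the hypothesis that the ground field $k$ is perfect is needed, to guarantee that the Deligne-type tensor products and colimits appearing below never leave the class of finite linear categories; over an imperfect field the relevant categories can fail to be finite.

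For the relative tensor product, I would \emph{define} $\cM\boxtimes_\cC\cN$ by the universal property that it corepresents $\cC$-balanced right-exact bilinear functors $\cM\times\cN\to\cE$, so that the residual left $\cB$-action and right $\cD$-action are forced by functoriality. To establish existence and finiteness I would use the structure theory of finite module categories from \cite{EGNO, EO-ftc}: choosing a generator of $\cM$ and taking its internal endomorphism object exhibits $\cM$ as the category of $A$-modules in $\cC$ for an algebra object $A\in\cC$, and similarly $\cN$ as the category of $A'$-modules; one then identifies $\cM\boxtimes_\cC\cN$ with the finite linear category of $(A',A)$-bimodule objects in $\cC$, verifies the balanced universal property by unwinding what a $\cC$-balanced bilinear functor out of module objects amounts to, and reads the $\cB$-$\cD$-bimodule structure off the residual structure carried by $A$ and $A'$. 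Equivalently and more invariantly, $\cM\boxtimes_\cC\cN$ is the realization of the two-sided bar construction $[n]\mapsto\cM\boxtimes\cC^{\boxtimes n}\boxtimes\cN$; here the rigidity of $\cC$ is essential, because it makes the tensor product on $\cC$ biexact, hence the face functors of the bar construction exact, so that the realization is computed by a small (effectively two-term) diagram and remains finite.

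For (ii), the cleanest route is to package finite linear categories over $k$, with the Deligne tensor product, into a symmetric monoidal $2$-category $\mathrm{Lin}^{\mathrm{fin}}$, and to observe that it admits the relative tensor products above---realizations of two-sided bar simplicial objects---which are moreover preserved by $\boxtimes$ in each variable, since the Deligne tensor product is cocontinuous and hence commutes with such realizations. One can then invoke the general Morita-style construction that, from a symmetric monoidal $2$-category with this much structure, produces a symmetric monoidal $3$-category whose objects are algebra objects, whose $1$-morphisms are bimodule objects with composition given by the relative tensor product, whose $2$-morphisms are bimodule $1$-morphisms, and whose $3$-morphisms are bimodule $2$-morphisms. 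Finite tensor categories are precisely the rigid algebra objects of $\mathrm{Lin}^{\mathrm{fin}}$, their bimodule objects are exactly the finite bimodule categories of the statement, and the symmetric monoidal structure induced on $\TC$ is the one coming from the Deligne tensor product of tensor categories; one checks along the way that rigidity and finiteness are inherited by all the structure maps.

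The main obstacle is the coherence bookkeeping in (ii): upgrading the relative tensor product from an operation that is merely associative and unital up to equivalence into the composition law of a genuine symmetric monoidal $3$-category requires producing all of the associators, pentagonators, interchangers, and symmetry data, and verifying the full list of axioms. I expect this to be handled either by first strictifying $\mathrm{Lin}^{\mathrm{fin}}$ to an equivalent symmetric monoidal $2$-category in which the relevant colimits become strictly functorial, or by citing and adapting a general construction of the Morita $3$-category internal to a symmetric monoidal $2$-category; this detailed verification is the content of the companion papers \cite{BTP, 3TC}. The only other place where genuine, non-formal work is required is in showing that the relative tensor product never leaves the world of finite linear categories and carries the stated finite bimodule structure---precisely the point at which the rigidity of the tensor categories and the perfectness of $k$ are indispensable.
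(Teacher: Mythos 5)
Your overall strategy is the same one the paper records: the paper does not prove this theorem in its own text, but quotes the first half as Theorem~\ref{thm:DelignePrdtOverATCExists} (proved in~\cite{BTP}) and the second half as Theorem~\ref{thm:tcexists} (proved in~\cite{3TC}). In particular, your identification of ${}_\cB\cM\boxtimes_\cC\cN_\cD$ with a category of bimodule objects, after writing $\cM$ and $\cN$ as categories of modules over internal algebra objects via Ostrik's theorem (Theorem~\ref{thm:EGNO2.11.6}), is exactly part (2) of the quoted Theorem~\ref{thm:DelignePrdtOverATCExists}, $\cM \boxtimes_\cC \cN \simeq \Mod{A}{B}(\cC)$, and your deferral of the coherence bookkeeping for the Morita-style $3$-category to the companion papers matches what the paper itself does.

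There is, however, one genuine error: your account of where and why perfectness of $k$ enters. You claim it is needed in step (i), ``to guarantee that the Deligne-type tensor products and colimits appearing below never leave the class of finite linear categories.'' That is not the issue. The relative Deligne tensor product of finite bimodule categories over a finite tensor category exists, is finite, and has the stated universal property over an \emph{arbitrary} field (the paper's assumptions on the base field state explicitly that the first half of this theorem needs no hypothesis on $k$); finiteness of absolute Deligne products is also automatic, since for finite linear categories the product is modules over a tensor product of finite-dimensional algebras. What perfectness actually buys is \emph{rigidity} of $\cC \boxtimes \cD$: by Deligne~\cite[Prop.~5.17]{deligne} the Deligne product of finite tensor categories over a perfect field is again rigid, whereas for an inseparable extension $l/k$ the product $\Vect_l \boxtimes \Vect_l$ is finite but not rigid (the paper's footnote to Theorem~\ref{thm:tcexists}). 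This is precisely what is needed so that the monoidal product of two objects of $\TC$ is again a finite tensor category, i.e.\ it is the hypothesis underlying the symmetric monoidal structure in your step (ii), where you assert that ``rigidity and finiteness are inherited by all the structure maps'' without noticing that rigidity is exactly the part that fails over imperfect fields. (Perfectness is also used in part (6) of Theorem~\ref{thm:DelignePrdtOverATCExists}, via Deligne's exactness result, but that is not needed for the statement under discussion.)
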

\nid The two parts of this result are quoted in Theorem~\ref{thm:DelignePrdtOverATCExists} and Theorem~\ref{thm:tcexists} and are established respectively in~\cite{BTP} and~\cite{jfs}.  Here the relative Deligne tensor product $- \boxtimes_\cC -$ is the linear category corepresenting $\cC$-balanced right exact bilinear functors.  

When proving that finite tensor categories are Radford, we will need to construct adjoints of certain bimodule functors, and to do that we leverage Etingof and Ostrik's theory of exact module categories~\cite{EO-ftc}.  Exact module categories have the distinguishing feature that module functors out of them are always exact, and that exactness facilitates the construction of adjoints.  We establish that this property is preserved by Deligne tensor:
\begin{maintheorem} \label{thm10}
For finite module categories ${}_\cC \cM$ and ${}_\cD \cN$ over finite tensor categories, the Deligne tensor product ${}_{\cC \boxtimes \cD} (\cM \boxtimes \cN)$ is an exact module category.
\end{maintheorem}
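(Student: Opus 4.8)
The plan is to establish the defining condition of exactness directly: for every projective object $Z\in\cC\boxtimes\cD$ and every object $X\in\cM\boxtimes\cN$, the object $Z\otimes X$ is projective in $\cM\boxtimes\cN$. The strategy is to decouple the two variables and reduce each of $Z$ and $X$ to a box product --- with the important caveat that the reduction on $X$ must proceed through \emph{extensions}, not through cokernels.

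I would begin by recording the relevant structure of Deligne tensor products: the projective objects of $\cC\boxtimes\cD$ are exactly the direct summands of finite direct sums of box products $P\boxtimes Q$ with $P\in\cC$ and $Q\in\cD$ projective; a box product of projectives $P'\in\cM$ and $Q'\in\cN$ is projective in $\cM\boxtimes\cN$; the $\cC\boxtimes\cD$-action on $\cM\boxtimes\cN$ satisfies $(P\boxtimes Q)\otimes(M\boxtimes N)\cong(P\otimes M)\boxtimes(Q\otimes N)$; and tensoring with any fixed object of the rigid category $\cC\boxtimes\cD$ is an exact endofunctor of $\cM\boxtimes\cN$. Let $\cG\subseteq\cM\boxtimes\cN$ be the class of objects $X$ such that $Z\otimes X$ is projective for every projective $Z\in\cC\boxtimes\cD$; the theorem is the statement $\cG=\cM\boxtimes\cN$.

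The heart of the argument is to verify that $\cG$ contains every box product and is closed under direct sums, direct summands, and extensions. Closure under direct sums and summands is immediate. A box product $M\boxtimes N$ belongs to $\cG$ because, $Z$ being a summand of a direct sum of objects $P\boxtimes Q$, it is enough that $(P\boxtimes Q)\otimes(M\boxtimes N)\cong(P\otimes M)\boxtimes(Q\otimes N)$ be projective; and $P\otimes M$ is projective in $\cM$ because $\cM$ is exact over $\cC$ and $P$ is projective, likewise $Q\otimes N$ is projective in $\cN$, so their box product is projective. For closure under extensions, given a short exact sequence $0\to X'\to X\to X''\to 0$ with $X',X''\in\cG$ and a projective $Z$, exactness of $Z\otimes(-)$ produces a short exact sequence $0\to Z\otimes X'\to Z\otimes X\to Z\otimes X''\to 0$ with both outer terms projective; since $Z\otimes X''$ is projective the sequence splits, so $Z\otimes X$ is a direct summand of a projective object and is therefore projective, whence $X\in\cG$. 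Since the box products generate $\cM\boxtimes\cN$ under extensions and direct summands --- every object is a finite iterated extension of its composition factors, and each simple object of $\cM\boxtimes\cN$ is a summand of a box product of simple objects --- we conclude $\cG=\cM\boxtimes\cN$.

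The step I expect to require the most care is the reduction on $X$. The tempting shortcut --- that every object of $\cM\boxtimes\cN$ is a cokernel of a map between sums of box products and that $(P\boxtimes Q)\otimes(-)$ is right exact --- fails, because a cokernel of a map between projective objects need not be projective, so one would only conclude that $(P\boxtimes Q)\otimes X$ is such a cokernel. Passing from box products to general $X$ by extensions is what succeeds, exactly because applying the exact functor $Z\otimes(-)$ to a short exact sequence with a projective quotient term yields a split sequence. The remaining ingredients --- exactness of the module action of the rigid category $\cC\boxtimes\cD$, the identity $(P\boxtimes Q)\otimes(M\boxtimes N)\cong(P\otimes M)\boxtimes(Q\otimes N)$, and the descriptions of the projective objects and of a generating family in a Deligne tensor product --- are standard in the theory of finite tensor categories and of Deligne tensor products.
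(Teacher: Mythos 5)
Your d\'evissage through extensions is a genuinely different route from the paper's proof of Theorem~\ref{thm:tensor-exactness} (which reduces projectivity of $(p\boxtimes q)\otimes x$ to exactness of a bilinear functor in each variable separately, via Deligne's result quoted as part~(6) of Theorem~\ref{thm:DelignePrdtOverATCExists}, and then runs a diagram chase against a resolution of $x$ by a primitive tensor), and your warning about the cokernel shortcut is well taken. But there is a genuine gap: the generation claim that every simple object of $\cM\boxtimes\cN$ is a direct summand of a box product of simple objects is asserted without proof, and it is exactly the step where the hypothesis that the base field is perfect has to enter. Your argument never invokes perfection, yet the theorem fails over imperfect fields: for an inseparable extension $\ell=k(a)$ with $a^p\in k$, the $\Vect_k$-module category $\Vect_\ell$ is semisimple, hence exact, but $\Vect_\ell\boxtimes\Vect_\ell\simeq\Mod{(\ell\otimes_k\ell)}{}$ is not exact (this is the remark immediately following the paper's proof). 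In that example $\ell\otimes_k\ell\cong\ell[y]/(y^p)$ is local and non-semisimple, so the unique box product of simples, $\ell\boxtimes\ell$, is indecomposable and not simple, and its simple quotient is not a summand of it; your class $\cG$ therefore does not exhaust $\cM\boxtimes\cN$ there. The perfect-field hypothesis is also hidden in your appeal to rigidity of $\cC\boxtimes\cD$, which you need so that $Z\otimes(-)$ is an exact endofunctor (Corollary~\ref{cor:biexact_action}); rigidity of the Deligne product is again Deligne's perfection-dependent theorem.

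Over a perfect field the missing claim is true, but it requires an argument of the same arithmetic nature as the input the paper imports from Deligne: writing $\cM$ and $\cN$ as categories of modules over finite-dimensional algebras $A$ and $B$, so that $\cM\boxtimes\cN$ is modules over $A\otimes B$, perfection gives $J(A\otimes B)=J(A)\otimes B+A\otimes J(B)$, and since the endomorphism division algebras of simples are then separable, $S\otimes_k T$ is a semisimple $(A\otimes B)$-module and every simple $(A\otimes B)$-module is a direct summand of $S\otimes_k T$ for suitable simples $S$ and $T$. With that lemma supplied, and the perfect-field hypothesis stated, your extension-closure argument does go through and yields a somewhat more elementary proof than the paper's. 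As written, however, the proof is incomplete, and taken at face value it would establish a statement that the paper's own counterexample refutes.
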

\nid This is proven as Theorem~\ref{thm:tensor-exactness}.

Recall that a finite semisimple tensor category $\cC$ over a perfect field is called separable if the module category ${}_{\cC \boxtimes \cC^\mp} \cC$ can be expressed as modules for a separable algebra object in $\cC \boxtimes \cC^\mp$.  More generally, any finite module category ${}_\cC \cM$ is called separable if it can be expressed as modules for a separable algebra object in $\cC$.  We characterize separable modules and separable tensor categories:
\begin{maintheorem} \label{thm11}
A finite module category ${}_\cC \cM$ over a semisimple tensor category is separable if and only if the functor category $\Fun_\cC(\cM,\cM)$ is semisimple.  A finite tensor category over a perfect field is separable if and only if its Drinfeld center is semisimple.
\end{maintheorem}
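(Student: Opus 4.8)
The plan is to establish the module-category statement first, by writing $\cM$ as a category of modules over an algebra object of $\cC$ and translating semisimplicity of $\Fun_\cC(\cM,\cM)$ into a projectivity statement about bimodule objects, and then to derive the tensor-category statement by specializing this to the regular $\cC$-bimodule viewed as a module category over $\cC \boxtimes \cC^\mp$.

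For the module-category statement I would begin by fixing a presentation $\cM \simeq \mathrm{Mod}_A(\cC)$ for an algebra object $A \in \cC$; such a presentation exists because $\cC$ is finite semisimple, with $A$ the internal endomorphism algebra of a generator of $\cM$. There is then a standard monoidal equivalence $\Fun_\cC(\cM,\cM) \simeq \mathrm{Bimod}_A(\cC)$ identifying $\cC$-module endofunctors of $\cM$ with $A$--$A$-bimodule objects of $\cC$ and composition with relative tensor product over $A$, and carrying the identity functor to the regular bimodule $A$. Since $\cC$ is semisimple the unit object is projective, so the free bimodule $A \otimes A$ is projective in $\mathrm{Bimod}_A(\cC)$; hence the multiplication $A \otimes A \to A$ splits as a bimodule map --- that is, $A$ is a separable algebra object --- exactly when the regular bimodule $A$ is projective in $\mathrm{Bimod}_A(\cC)$. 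As $\Fun_\cC(\cM,\cM)$ does not depend on the chosen presentation, the module-category statement thus reduces to the assertion that $A$ is projective in $\mathrm{Bimod}_A(\cC)$ if and only if $\mathrm{Bimod}_A(\cC)$ is semisimple.

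One direction of this assertion is immediate: in a semisimple category every object is projective, in particular the unit $A$. For the converse, suppose $A$ is a separable algebra object, so that the multiplication has a bimodule section. I would use the associated separability idempotent to show that the forgetful functor $U \colon \mathrm{Bimod}_A(\cC) \to \cC$ is a \emph{separable functor}: the idempotent produces, functorially in the bimodule $M$, a bimodule splitting of the counit $A \otimes M \otimes A \to M$ of the free--forgetful adjunction, just as in the classical theory of separable algebras. It follows that every $A$--$A$-bimodule is a retract of a free bimodule $A \otimes V \otimes A$ with $V \in \cC$; since $\cC$ is semisimple such $V$ is projective, and since $U$ is exact its left adjoint (the free functor) preserves projectives, so $A \otimes V \otimes A$ --- and hence every $A$--$A$-bimodule --- is projective in $\mathrm{Bimod}_A(\cC)$. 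A finite abelian category all of whose objects are projective is semisimple, completing the argument. I expect the construction and verification of this natural bimodule splitting (equivalently, the statement that a separable algebra object has a separable forgetful functor) to be the one genuinely computational step; the remainder is formal, and rests on the cited facts that finite module categories over finite semisimple tensor categories are categories of modules over algebra objects and that $\Fun_\cC(\cM,\cM)$ is the associated category of bimodule objects. A point to be careful about is that $\mathrm{Bimod}_A(\cC)$ need not be rigid --- this happens precisely when $\cM$ is not an exact module category --- so one cannot simply invoke the principle that a finite tensor category is semisimple if and only if its unit is projective, and must instead argue through projectivity of free bimodules as above.

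For the tensor-category statement, let $\cC$ be a finite semisimple tensor category over a perfect field. Then $\cC^\mp$ is again finite semisimple, and --- this is the one point at which perfectness of the base field is used --- the Deligne tensor product $\cC \boxtimes \cC^\mp$ is again finite semisimple. By definition $\cC$ is separable exactly when the regular bimodule ${}_{\cC \boxtimes \cC^\mp}\cC$ is a separable module category over $\cC \boxtimes \cC^\mp$, so applying the module-category statement to the finite semisimple tensor category $\cC \boxtimes \cC^\mp$ and its module category $\cC$ reduces the claim to semisimplicity of $\Fun_{\cC \boxtimes \cC^\mp}(\cC, \cC)$. Finally I would invoke the standard identification of $\cC$--$\cC$-bimodule endofunctors of the regular bimodule with the Drinfeld center, $\Fun_{\cC \boxtimes \cC^\mp}(\cC,\cC) \simeq Z(\cC)$, to conclude that $\cC$ is separable if and only if $Z(\cC)$ is semisimple.
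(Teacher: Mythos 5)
Your proposal is correct, and its overall skeleton coincides with the paper's: the paper's Theorem~\ref{thm:SepModCats} likewise writes ${}_\cC \cM$ as $\Mod{}{A}(\cC)$, identifies $\Fun_\cC(\cM,\cM)$ with $\Mod{A}{A}(\cC)$, and gets the direction ``semisimple $\Rightarrow$ separable'' exactly as you do (all objects projective, in particular the unit $A$), while the Drinfeld center statement is in both cases the specialization of the module statement to ${}_{\cC \boxtimes \cC^\mp}\cC$ together with the definition $\cZ(\cC) = \Fun_{\cC \boxtimes \cC^\mp}(\cC,\cC)$, with perfectness of the field used to keep $\cC \boxtimes \cC^\mp$ semisimple. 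Where you genuinely diverge is the converse ``separable $\Rightarrow$ $\Fun_\cC(\cM,\cM)$ semisimple.'' The paper argues via rigidity: separable module categories are semisimple (Proposition~\ref{prop:sepmodissemi}), hence exact (Example~\ref{eg:semiexact}), hence $\Fun_\cC(\cM,\cM)$ is rigid by item (5) of Theorem~\ref{Thm:ExactModCatOmnibus}, and then projectivity of the unit plus $\Hom(F,-) \cong \Hom(1,{}^*F \otimes -)$ forces every object to be projective. You instead categorify the classical separable-algebra argument: the bimodule section $s \colon A \to A \otimes A$ yields, naturally in $M$, the bimodule splitting $s \otimes_A \id_M \otimes_A s$ of the free--forgetful counit $A \otimes M \otimes A \to M$, so every bimodule is a retract of a free bimodule on the projective object $U(M) \in \cC$ and is therefore projective. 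Your route is more elementary and self-contained---it bypasses the exact-module-category machinery (Theorem~\ref{Thm:ExactModCatOmnibus}, Lemma~\ref{lma:module-adjoint}) entirely, and your caution that $\Mod{A}{A}(\cC)$ need not be rigid in general is well taken, even though in the situation at hand rigidity is in fact available (separability already forces $\cM$ semisimple, hence exact), which is precisely what the paper exploits to make its version of the step a one-liner given that machinery.
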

\nid This is established as Theorem~\ref{thm:SepModCats} and Corollary~\ref{cor:Sep=semisimplecenter}.  This result provides a generalization to arbitrary perfect fields and non-spherical categories of M\"uger's theorem that the Drinfeld center of a spherical finite semisimple tensor category, of nonzero global dimension, is semisimple~\cite{MR1966525}.

Completely crucial to our proof of the full dualizability of separable tensor categories is the fact that separable bimodules compose:
\begin{maintheorem} \label{thm12}
For separable bimodule categories ${}_\cB \cM_\cC$ and ${}_\cC \cN_\cD$ over finite semisimple tensor categories, the relative Deligne tensor product ${}_\cB \cM \boxtimes_\cC \cN_\cD$ is separable.  Over a perfect field, there is a symmetric monoidal 3-category $\TCss$ of finite semisimple tensor categories, separable bimodule categories, bimodule functors, and bimodule transformations.
\end{maintheorem}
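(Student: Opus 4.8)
The plan is to reduce both assertions to the characterization of separability through endofunctor categories supplied by Theorem~\ref{thm11}, and then to exploit the behavior of the relative Deligne tensor product recorded in Theorems~\ref{thm9} and~\ref{thm10}. By definition a bimodule ${}_\cB\cM_\cC$ is separable exactly when the module category ${}_{\cB\boxtimes\cC^\mp}\cM$ is separable, and since $\cB\boxtimes\cC^\mp$ is a finite semisimple tensor category (over a perfect field), Theorem~\ref{thm11} says this holds precisely when the tensor category $\Fun_{\cB\boxtimes\cC^\mp}(\cM,\cM)$ of $\cB$-$\cC$-bimodule endofunctors is semisimple. So the composition statement reduces to showing that $\Fun_{\cB\boxtimes\cD^\mp}(\cM\boxtimes_\cC\cN,\,\cM\boxtimes_\cC\cN)$ is semisimple whenever both $\Fun_{\cB\boxtimes\cC^\mp}(\cM,\cM)$ and $\Fun_{\cC\boxtimes\cD^\mp}(\cN,\cN)$ are.

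The crux is a \emph{gluing description} of this endofunctor category. Using the universal property of the relative Deligne product from Theorem~\ref{thm9} --- a right exact $\cB$-$\cD$-bimodule functor out of $\cM\boxtimes_\cC\cN$ is the same as a $\cC$-balanced right exact $\cB$-$\cD$-bilinear functor out of $\cM\times\cN$ --- one checks that a $\cB$-$\cD$-bimodule endofunctor of $\cM\boxtimes_\cC\cN$ is assembled from a $\cB$-$\cC$-bimodule endofunctor of $\cM$ and a $\cC$-$\cD$-bimodule endofunctor of $\cN$ matched along the $\cC$-balancing. I would package this as an equivalence of tensor categories presenting $\Fun_{\cB\boxtimes\cD^\mp}(\cM\boxtimes_\cC\cN,\,\cM\boxtimes_\cC\cN)$ as a relative Deligne product of $\Fun_{\cB\boxtimes\cC^\mp}(\cM,\cM)$ and $\Fun_{\cC\boxtimes\cD^\mp}(\cN,\cN)$ over the Drinfeld center of $\cC$, which maps centrally to both factors via the $\cC$-actions on $\cM$ and $\cN$. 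A more hands-on but equivalent route presents $\cM$ and $\cN$ as categories of modules over separable algebra objects $A\in\cB\boxtimes\cC^\mp$ and $B\in\cC\boxtimes\cD^\mp$, exhibits $\cM\boxtimes_\cC\cN$ as modules over an explicit algebra object $C\in\cB\boxtimes\cD^\mp$ obtained by balancing $A$ and $B$ over $\cC$, and verifies directly that $C$ is separable. Granting such a description, semisimplicity of $\Fun_{\cB\boxtimes\cD^\mp}(\cM\boxtimes_\cC\cN,\,\cM\boxtimes_\cC\cN)$ then follows: the two constituent categories are semisimple by hypothesis, Deligne products of semisimple finite linear categories over a perfect field stay semisimple (this is where the perfect-field hypothesis enters), and the residual balancing over $\cC$ is controlled using the separability of $\cM$ and $\cN$.

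I expect this last point to be the main obstacle. The subtlety is that $\cC$ itself need not be a separable tensor category --- its Drinfeld center may fail to be semisimple and the algebra presenting its regular bimodule may fail to be separable --- so one cannot simply appeal to semisimplicity of the "base" of the balancing; the improvement in regularity must come entirely from the separability hypotheses on $\cM$ and $\cN$. In the algebra-object formulation this amounts to showing that balancing the separable algebras $A$ and $B$ over the possibly non-separable $\cC$ nevertheless produces a separable $C$: the bimodule splittings of the multiplications of $A$ and $B$, transported through the balancing construction, must be assembled into a bimodule splitting of the multiplication of $C$. Carrying this out, and doing so compatibly with the associativity and unit constraints, is the technical heart of the matter --- and it is exactly this compatibility that upgrades the statement ``separable bimodules compose'' into the existence of a $3$-category.

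Granting the composition statement, the existence of $\TCss$ is essentially bookkeeping. One takes $\TCss$ to be the sub-$3$-category of the $3$-category $\TC$ of Theorem~\ref{thm9} on the separable finite semisimple tensor categories --- equivalently, by Theorem~\ref{thm11}, those whose Drinfeld center is semisimple, since $\Fun_{\cC\boxtimes\cC^\mp}(\cC,\cC)$ is the Drinfeld center of $\cC$; over a field of characteristic zero this is every finite semisimple tensor category --- with separable bimodule categories as $1$-morphisms and all bimodule functors and bimodule transformations as higher morphisms. Composition of $1$-morphisms stays within $\TCss$ by the first part; the identity $1$-morphism ${}_\cC\cC_\cC$ lies in $\TCss$ precisely because the objects are separable tensor categories; and the coherence data together with the symmetric monoidal structure restrict from $\TC$ once one also checks that the (non-relative) Deligne product $\cM\boxtimes\cN$ of separable bimodules is separable --- which follows by the same method as above, indeed more easily since no balancing is involved, combining Theorem~\ref{thm10} with the perfectness of the base field.
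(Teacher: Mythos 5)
There is a genuine gap, and it starts at the definitions. You take ``${}_\cB\cM_\cC$ is separable'' to mean that $\cM$ is separable as a module over the enveloping category $\cB\boxtimes\cC^\mp$, but the paper's definition is one-sided: $\cM$ must be separable as a left $\cB$-module (i.e.\ $\cM\simeq\Mod{}{A}(\cB)$ for a separable algebra $A\in\cB$) \emph{and} separable as a right $\cC$-module. These notions are not equivalent. The identity bimodule ${}_\cC\cC_\cC\simeq\Mod{}{1}(\cC)$ is always separable in the paper's sense (the unit algebra is separable), whereas it is separable over $\cC\boxtimes\cC^\mp$ only when $\cC$ is a \emph{separable tensor category}; for $\cC=\Vect[\ZZ/p]$ in characteristic $p$ the center $\cZ(\cC)\supset\Rep(\ZZ/p)$ is not semisimple, so your notion fails for the identity bimodule. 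This is exactly why your construction of $\TCss$ is forced to restrict the objects to separable tensor categories, but the theorem asserts a 3-category whose objects are \emph{all} finite semisimple tensor categories (the separable ones give the further subcategory $\TCsep$ of Corollary~\ref{cor:tcsepexists}). As written, your second assertion is a different, strictly weaker statement.

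Even granting your definition, the core composition step is not carried out. Your argument rests on a ``gluing description'' of $\Fun_{\cB\boxtimes\cD^\mp}(\cM\boxtimes_\cC\cN,\cM\boxtimes_\cC\cN)$ as a relative product of the two endofunctor categories over $\cZ(\cC)$, followed by the claim that the ``residual balancing over $\cC$'' can be controlled by the separability of $\cM$ and $\cN$; you flag this as the main obstacle but supply no proof, and since $\cZ(\cC)$ need not be semisimple this is precisely where the difficulty sits (semisimplicity of a relative product over a non-semisimple base does not follow from semisimplicity of the factors). The paper avoids this entirely by a one-sided monadic argument: write $\cM\simeq\Mod{}{A}(\cB)$ with $A\in\cB$ separable, $\cM\simeq\Mod{B}{}(\cC)$ for some algebra $B\in\cC$, and $\cN\simeq\Mod{}{C}(\cC)$ with $C\in\cC$ separable; then $\cM\boxtimes_\cC\cN$ is the category of algebras for the $\cB$-linear monad $T=-\otimes C$ on $\cM$, which is identified with $\Mod{}{T(A)}(\cB)$ for an explicit algebra object $T(A)\in\cB$, and the two bimodule splittings $s\colon A\to A\otimes A$ and $\sigma\colon C\to C\otimes C$ are spliced into a splitting of the multiplication of $T(A)$. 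No hypothesis on $\cC$ or its center is needed, precisely because the definition is one-sided; separability on the $\cD$-side is proved symmetrically. To rescue your route you would have to actually construct the balanced algebra in $\cB\boxtimes\cD^\mp$ together with its splitting (which essentially amounts to redoing this monadic computation), and you would still need to repair the definition of separable bimodule to obtain the 3-category stated in the theorem. A smaller point: closure of the monoidal structure on morphisms comes from the separability of $B\boxtimes C$ for separable $B\in\cC$, $C\in\cD$ together with part (3) of Theorem~\ref{thm:DelignePrdtOverATCExists}, rather than from Theorem~\ref{thm10}.
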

\nid This is proven as Theorem~\ref{thm:compositeOfSep} and Corollary~\ref{cor:septc}.  Because relative Deligne tensor products can always be reexpressed as functor categories, and vice versa, the first part of this result is a generalization to arbitrary fields of Etingof--Nikshych--Ostrik's theorem that a functor category between finite semisimple module categories is semisimple~\cite{MR2183279}.

In the case of a fusion category $\cC$ over an algebraically closed field, we can explicitly express the module category ${}_{\cC \boxtimes \cC^\mp} \cC$ as modules over a Frobenius algebra object, and use this description to characterize the separability condition in this case; in characteristic zero, the global dimension condition disappears:
\begin{maintheorem} \label{thm13}
A fusion category over an algebraically closed field is separable if and only if its global dimension is nonzero.  Any finite semisimple tensor category over an arbitrary field of characteristic zero is separable.
\end{maintheorem}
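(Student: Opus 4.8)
The plan is to settle the biconditional for a fusion category $\cC$ over an algebraically closed field $k$ first, and then bootstrap the characteristic-zero statement from it. Since an algebraically closed field is perfect, Theorem~\ref{thm11} already tells us that $\cC$ is separable if and only if its Drinfeld center $Z(\cC)\simeq\Fun_{\cC\boxtimes\cC^\mp}(\cC,\cC)$ is semisimple, so the whole first sentence reduces to proving that $Z(\cC)$ is semisimple exactly when $\dim(\cC)\ne0$, where $\dim(\cC)=\sum_i\dim(X_i)\dim(X_i^*)$ is the Etingof--Nikshych--Ostrik global dimension and $\{X_i\}$ is a set of representatives for the simple objects. (For algebraically closed $k$ of characteristic zero this equivalence is classical, due to Etingof--Nikshych--Ostrik and Müger; the point of what follows is a uniform, field-independent argument via the promised Frobenius-algebra description.)

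I would recall the explicit realization ${}_{\cC\boxtimes\cC^\mp}\cC\simeq{}_A(\cC\boxtimes\cC^\mp)$, where $A$ is the canonical ``regular'' algebra object of $\cC\boxtimes\cC^\mp$, concretely $A\cong\bigoplus_i X_i\boxtimes X_i^*$ (up to the duality convention on the $\cC^\mp$ factor), whose multiplication and unit are assembled from the evaluations and coevaluations of $\cC$; dualizing these data makes $A$ a Frobenius algebra object, and one has $\Hom_{\cC\boxtimes\cC^\mp}(\mathbf 1,A)=k$, i.e. $A$ is connected. Under this realization $Z(\cC)\simeq\Fun_{\cC\boxtimes\cC^\mp}(\cC,\cC)\simeq{}_A(\cC\boxtimes\cC^\mp)_A$. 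The crucial elementary input is the local computation that the ``bubble'' $z:=\bigl(A\xrightarrow{\Delta}A\otimes A\xrightarrow{\mu}A\bigr)$ equals $\dim(\cC)\cdot\id_A$ (verified summand by summand from the snake identities; already visible in $\cC=\Vec$, giving $1$, and $\cC=\Vec_G$, giving $|G|$). If $\dim(\cC)\ne0$ then $z$ is invertible, so $\dim(\cC)^{-1}\Delta$ splits $\mu$ as a map of $A$-bimodules; hence $A$ is separable, ${}_{\cC\boxtimes\cC^\mp}\cC$ is a separable module category, and $\cC$ is separable. Conversely, if $Z(\cC)\simeq{}_A(\cC\boxtimes\cC^\mp)_A$ is semisimple then the regular bimodule $A$ is projective, so the bimodule epimorphism $\mu\colon A\otimes A\to A$ splits over $A$, i.e. $A$ is separable; a routine computation with the Frobenius structure then forces $z=\mu\circ\Delta$ to be invertible, so $\dim(\cC)\ne0$. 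This proves the fusion case.

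For the second statement, let $\cC$ be a finite semisimple tensor category over a field $k$ of characteristic zero; $k$ is perfect, so by Theorem~\ref{thm11} it suffices to show $Z(\cC)$ is semisimple. First I would pass to $\bar k$: base change is exact and monoidal, so $Z(\cC)\otimes_k\bar k\simeq Z(\cC\otimes_k\bar k)$; the category $\cC\otimes_k\bar k$ is again a finite semisimple tensor category (in characteristic zero the endomorphism division algebras of the simples are separable, hence remain semisimple after base change); and a finite $k$-linear abelian category is semisimple if and only if its base change to $\bar k$ is. So we may assume $k=\bar k$, $\operatorname{char}k=0$. Now decompose the unit $\mathbf 1=\bigoplus_\alpha\mathbf 1_\alpha$ into simple summands: each diagonal block $\cC_{\alpha\alpha}=\mathbf 1_\alpha\otimes\cC\otimes\mathbf 1_\alpha$ is a fusion category over $\bar k$, the blocks are Morita equivalent, and the center is a Morita invariant, so $Z(\cC)\simeq Z(\cC_{\alpha\alpha})$. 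Finally, a fusion category over an algebraically closed field of characteristic zero has nonzero global dimension (Etingof--Nikshych--Ostrik), so by the fusion case $Z(\cC_{\alpha\alpha})$, hence $Z(\cC)$, is semisimple, and $\cC$ is separable.

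The step I expect to be the main obstacle is the ``$\dim(\cC)=0$ implies not separable'' half of the fusion case: separability cannot be refuted by exhibiting one non-separable algebra, so one is forced through Theorem~\ref{thm11} to reduce to non-semisimplicity of $Z(\cC)\simeq{}_A(\cC\boxtimes\cC^\mp)_A$, and then to argue that the vanishing of the scalar $\mu\circ\Delta$ genuinely obstructs splitting $\mu$ as an $A$-bimodule map. Concretely, the technical heart is (i) equipping the canonical algebra $A$ with the correct Frobenius structure and computing $\mu\circ\Delta=\dim(\cC)\cdot\id_A$, and (ii) the standard-but-fiddly equivalence, for a connected Frobenius algebra, between separability and invertibility of $\mu\circ\Delta$. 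The reductions in the characteristic-zero step—compatibility of the center with base change, Morita-invariance of the center, and the multi-fusion block decomposition—are routine given the literature on finite tensor categories.
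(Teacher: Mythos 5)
Your proposal is correct and takes essentially the same route as the paper: realize ${}_{\cC\boxtimes\cC^\mp}\cC$ as modules over the canonical algebra $A\cong\oplus_i L_i\boxtimes{}^*L_i$, give $A$ its normalized Frobenius structure, compute the window element $\mu\circ\Delta=\dim(\cC)\cdot\id_A$, split $\mu$ by $\Delta/\dim(\cC)$ when the dimension is nonzero, and for the converse use semisimplicity of $\cZ(\cC)\simeq\Mod{A}{A}(\cC\boxtimes\cC^\mp)$ together with the freeness of $A\otimes A$ to see that $\Hom_{\Mod{A}{A}}(A,A\otimes A)$ is one-dimensional, so any bimodule splitting is proportional to $\Delta$ and the window element is nonzero; the characteristic-zero statement is then deduced exactly as in the paper, by base change (the center commutes with base change and semisimplicity descends over a perfect field) and the block/Morita reduction to fusion categories plus Etingof--Nikshych--Ostrik's nonvanishing of the global dimension. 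Two cosmetic caveats: the formula $\sum_i\dim(X_i)\dim(X_i^*)$ presupposes a pivotal structure, so for general fusion categories one must use the ENO squared norms (which is precisely what the window-element computation produces), and the diagonal blocks $\cC_{\alpha\alpha}$ are Morita equivalent to $\cC$ only when $\cC$ is indecomposable, so one should first split into indecomposable summands and use that the center is the direct sum of the blocks' centers---neither point affects the argument.
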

\nid This is proven in Theorem~\ref{thm:NonzeroDimension} and Corollary~\ref{cor:charzerosep}.  In light of the fact that tensor categories with semisimple center are separable, the first part of this result sharpens the theorem of Brugui\`eres--Virelizier that pivotal fusion categories with semisimple center have nonzero global dimension~\cite{MR3079759}.

\section{Outlook}

Thus far, we have only considered the framed field theories associated to dualizable tensor categories.  The cobordism hypothesis also classifies field theories with structure groups weaker than a framing, in terms of dualizable objects equipped with the structure of homotopy fixed points.  The existence of these homotopy fixed point structures corresponds to the existence of certain algebraic structures on tensor categories, most notably pivotal and spherical structures.  Here and in our final Section~\ref{sec:spherical} we preview this correspondence, but we defer a detailed treatment to future work.

The cobordism hypothesis provides an $SO(k)$ action on the space of $k$-dualizable objects of any symmetric monoidal $n$-category.  (Here, the `space of $k$-dualizable objects' means the maximal sub-$n$-groupoid on those objects.)  This action arises as follows: the space of such objects is homotopy equivalent to the space of $k$-dimensional $k$-framed local field theories, and there is an $SO(k)$ action on those field theories by rotating the framing.  In particular, we obtain an $SO(2)$ action on the space of finite tensor categories, and an $SO(3)$ action on the space of separable tensor categories.  We can therefore look for finite tensor categories with $SO(2)$ homotopy fixed point structures, which would provide oriented categorified 2-dimensional field theories, or for separable tensor categories with $SO(3)$ homotopy fixed point structures, which would provide oriented 3-dimensional field theories.

The $SO(2)$ action on finite tensor categories is simple to describe on objects: the 1-cell of $SO(2)$ takes a tensor category $\cC \in \TC$ to the Serre automorphism of $\cC$.  Recall that the Serre automorphism is the bimodule associated to the double dual.  By definition, a \emph{pivotal} structure on a finite tensor category is a monoidal trivialization of the double dual.  A priori, that trivialization does not quite provide the tensor category with the structure of an $SO(2)$ homotopy fixed point, but it does provide the weaker structure of an $\Omega S^2$ homotopy fixed point, where $\Omega S^2$ acts via the map $\Omega S^2 \ra \Omega BSO(2) \simeq SO(2)$.  Nevertheless:
\begin{mainconj}
Every pivotal fusion category in characteristic zero admits the structure of an $SO(2)$ homotopy fixed point, and therefore provides the structure of a combed 3-dimensional local field theory.
\end{mainconj}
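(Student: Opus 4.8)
\medskip
\noindent\textbf{Proof proposal.}
The plan is to produce the desired $SO(2)$ homotopy fixed point by cellular obstruction theory over the classifying space $BSO(2)=\CC\mathrm P^\infty$. Write $\cZ$ for the space (maximal sub-$3$-groupoid) of separable, equivalently fully dualizable, tensor categories; by Theorems~\ref{thm1} and~\ref{thm13} every fusion category $\cC$ in characteristic zero is a point of $\cZ$, and the cobordism hypothesis endows $\cZ$ with the $SO(3)$-action whose restriction along $SO(2)\hookrightarrow SO(3)$ is the action at issue. An $SO(2)$ homotopy fixed point structure on $\cC$ is a section over $\CC\mathrm P^\infty$ of the Borel fibration with fiber $\cZ$. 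Since $\CC\mathrm P^1=S^2$ and $\CC\mathrm P^2=S^2\cup_\eta e^4$ with $\eta\colon S^3\to S^2$ the Hopf map, a section over the $2$-skeleton $S^2$ is precisely a nullhomotopy of the loop $\theta\mapsto\theta\cdot\cC$, which by Theorem~\ref{thm6} is the bimodule of the double dual; that is, the $2$-skeleton datum is exactly the $\Omega S^2$ fixed point structure coming from the pivotal structure $\psi$. Since $\cZ$ is a homotopy $3$-type and the cohomology of $\CC\mathrm P^\infty$ is concentrated in even degrees, the only obstruction to extending this section over all of $\CC\mathrm P^\infty$ is a single class $o(\cC,\psi)\in H^4(\CC\mathrm P^\infty;\pi_3(\cZ,\cC))\cong\pi_3(\cZ,\cC)$, the higher groups $H^{2k}(\CC\mathrm P^\infty;\pi_{2k-1}(\cZ))$ vanishing for $k\ge 3$ because $\pi_{\ge 4}(\cZ)=0$. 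The conjecture is thus equivalent to the vanishing of $o(\cC,\psi)$.

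The crux is to identify $o(\cC,\psi)$ explicitly. For a fusion category $\pi_3(\cZ,\cC)$ is the group of invertible bimodule natural transformations of the identity bimodule, namely $\End(\mathbf 1_\cC)^\times=k^\times$, so $o(\cC,\psi)$ is a scalar. Because the $4$-cell of $\CC\mathrm P^\infty$ is attached along the Hopf map, $o(\cC,\psi)$ measures the failure of the $S^2$-datum, after precomposition with $\eta$, to bound; concretely it should be the discrepancy—measured as a $3$-cell, i.e.\ as an element of $k^\times$—between the trivialization $\psi^{**}\circ\psi$ of the quadruple dual functor $(-)^{****}$ produced by the pivotal structure and the canonical belt-trick trivialization furnished by the Radford property (Theorem~\ref{thm8} and Corollary~\ref{cor7}). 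Making this identification rigorous requires tracking the $SO(3)$-homotopy-coherence data underlying the Radford map at the level of the Hopf map; this is exactly the kind of three-morphism bookkeeping that the present paper stops short of, and is why the statement is posed as a conjecture.

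Finally one must show $o(\cC,\psi)=1$ in characteristic zero. The guiding expectation is that the field theories $\cF_\cC$ of Corollary~\ref{cor2} are of Turaev--Viro type and hence carry no anomaly, so the framed theory $\cF_\cC$ should lift along the tangential structure $BSO(2)\to BSO(3)$ (a combing); the scalar $o(\cC,\psi)$ is precisely the obstruction to that lift. A concrete route is to argue that $o(\cC,\psi)$ is a root of unity invariant under the Galois action on the number-field invariants of $\cC$ and under the relevant autoequivalences, hence $\pm 1$, and then to invoke a positivity or self-duality argument—made available in characteristic zero by the nonvanishing of the global dimension of $\cC$—to rule out the sign. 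An alternative, more hands-on plan bypasses obstruction theory altogether: construct the combed state-sum invariant directly from the pivotal structure, using a chosen nonvanishing vector field to resolve the combinatorial ambiguities that sphericality would otherwise fix, verify invariance under combed bordism, and read off the $SO(2)$ fixed point structure via the cobordism hypothesis. In either approach the main obstacle is the same: controlling a single degree-three homotopy coherence finely enough to evaluate it, which demands an explicit description of how the Radford/belt-trick data interacts with the pivotal structure.
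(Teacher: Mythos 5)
The statement you are working on is posed in the paper as a conjecture: the paper itself offers no proof, only the observation that a pivotal structure yields the strictly weaker structure of an $\Omega S^2$ homotopy fixed point (a trivialization of the Serre automorphism), together with the remark that a genuine $SO(2)$ fixed point requires additional higher coherences because $BSO(2)$ has homology in all even degrees. So there is no proof in the paper to compare against, and the real question is whether your proposal closes that gap; it does not. Your obstruction-theoretic framing---a section of the Borel fibration over $\CC\mathrm{P}^\infty$, with the datum over the $2$-skeleton being exactly the pivotal trivialization of the Serre automorphism, and with the remaining obstruction a single class in $H^4(\CC\mathrm{P}^\infty;\pi_3)\cong k^\times$ since the space of fully dualizable objects is a $3$-type---is a sensible reduction and consistent with the paper's discussion of $\Omega S^2$ fixed points. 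But the two steps that would turn this into a proof are both left open: the identification of the obstruction class, and its vanishing. For the vanishing you offer only expectations (Galois invariance, a positivity or self-duality argument, an unconstructed combed state sum), none of which is carried out.

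Moreover, your proposed identification of the obstruction is itself doubtful. You identify $o(\cC,\psi)$ with the discrepancy between $\psi^{**}\circ\psi$ and the Radford trivialization of the quadruple dual. That discrepancy is precisely the paper's sphericality condition (Definition~\ref{def:spherical}), which the paper attaches to the $\Omega\Sigma\RP^2$ direction: the Radford equivalence is coherence data for the relation ``twice the generator,'' i.e.\ the $3$-cell of $\Sigma\RP^2$ attached along a degree-$2$ map, en route to $SO(3)$. The $4$-cell of $\CC\mathrm{P}^\infty$, by contrast, is attached along the Hopf map, and the corresponding coherence is an a priori different degree-$3$ datum (how the chosen trivialization of the Serre automorphism interacts with itself under $\eta$), not obviously involving the Radford equivalence at all. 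Indeed, if the $SO(2)$ obstruction literally were the sphericality defect, then pivotal-but-not-spherical fusion categories could not be $SO(2)$ fixed points, which is in tension with the conjecture's intended scope (the paper reserves sphericality for the $SO(3)$/oriented statement). So the crucial degree-$3$ coherence remains unidentified as well as unevaluated, and your proposal---as you yourself acknowledge---is a plan rather than a proof of the conjecture.
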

\nid (A combed 3-manifold is one equipped with a nonvanishing vector field.)

A pivotal tensor category has a chosen trivialization of the double dual; by squaring that trivialization, there is a chosen trivialization of the quadruple dual, and therefore of the bimodule associated to the quadruple dual.  But any finite tensor category has a canonical trivialization of that quadruple dual bimodule, namely the Radford equivalence.  A \emph{spherical} structure on a finite tensor category (with absolutely simple unit) is a pivotal structure whose square is the Radford equivalence.  When the tensor category is semisimple, this notion agrees with the classical notion of sphericality (a condition on the quantum trace of the pivotal structure), but this notion of sphericality as a square root of the Radford equivalence is the correct generalization to non-semisimple finite tensor categories.  

The fact that all finite tensor categories have a Radford equivalence trivializing the square of the Serre automorphism provides an extension of the $\Omega S^2$ action on $\TC$ to an action of $\Omega \Sigma \RP^2$.  A spherical structure trivializes the Serre in a manner compatible with the Radford, and so provides the tensor category with the structure of an $\Omega \Sigma \RP^2$ homotopy fixed point structure.  Note that the inclusion $\RP^2 \ra \RP^3 \simeq SO(3)$ provides a map $\Omega \Sigma \RP^2 \ra \Omega \Sigma SO(3) \ra \Omega B SO(3) \simeq SO(3)$.  The structure of an $\Omega \Sigma \RP^2$ fixed point together with the structure of an $SO(2)$ fixed point (provided by the above conjecture) is nearly enough to provide a full $SO(3)$ fixed point structure.  Indeed:
\begin{mainconj}
Every spherical fusion category in characteristic zero admits the structure of an $SO(3)$ homotopy fixed point, and therefore provides the structure of an oriented 3-dimensional local field theory.
\end{mainconj}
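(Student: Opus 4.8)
The natural setting is the structured cobordism hypothesis of Hopkins--Lurie: an oriented $3$-dimensional local topological field theory valued in $\TCss$ is the same datum as an $SO(3)$ homotopy fixed point in the maximal sub-$3$-groupoid $\cK$ of $\TCss$, with $SO(3)$ acting by rotating the $3$-framing. A spherical fusion category $\cC$ in characteristic zero is in particular a finite semisimple tensor category over a characteristic-zero field, hence separable by Theorem~\ref{thm13} and fully dualizable by Theorem~\ref{thm1}; thus $\cC$ determines a point of $\cK$, and the whole problem is to promote it to a homotopy fixed point, equivalently to produce a section over $BSO(3)$ of the Borel fibration $\cK_{hSO(3)} \to BSO(3)$ through the component of $\cC$.

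Since $\cK$ is a homotopy $3$-type, the only obstructions to building such a section lie in $H^{n+1}(BSO(3);\pi_n\cK)$ for $n \le 3$, so it suffices to extend the section over the cells of $BSO(3)$ in dimensions at most $4$; through dimension $4$ a minimal $CW$ model for $BSO(3)$ has exactly one cell in each of dimensions $0$, $2$, $3$, $4$. The $0$-cell records the object $\cC$ itself. Extending over the $2$-cell, which detects $\pi_2 BSO(3) \cong \pi_1 SO(3) \cong \ZZ/2$, requires trivializing the image of the generator under the action, which is the Brauer--Picard class of the Serre automorphism of $\cC$---by Theorem~\ref{thm6} the bimodule of the double dual---and a monoidal trivialization of the double dual, i.e.\ a pivotal structure, supplies such a trivialization $t$. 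Extending over the $3$-cell, which detects the relation $w_3 = \mathrm{Sq}^1 w_2$ realizing the belt bordism, requires that the square $t \circ t$ agree with the canonical Radford trivialization of the quadruple-dual bimodule provided by Corollary~\ref{cor7} and Theorem~\ref{thm8}; this is precisely the condition that the pivotal structure be spherical. So the pivotal and spherical data carry the section through the $3$-skeleton---this is the content of the $\Omega S^2$ and $\Omega\Sigma\RP^2$ homotopy fixed point structures identified above---with the proviso that upgrading the bare $\Omega S^2$ structure to a genuine extension over the $2$-skeleton of $BSO(3)$ invokes the preceding conjecture, that a pivotal fusion category in characteristic zero carries an $SO(2)$ homotopy fixed point, which I would take as input here.

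The remaining $4$-cell, corresponding to the first Pontryagin class and to $\pi_4 BSO(3) \cong \pi_3 SO(3) \cong \ZZ$, is seen by neither $SO(2)$ nor $\Omega\Sigma\RP^2$, and closing this gap is where I expect the real work to lie. The obstruction to extending the section over it is a class in $H^4(BSO(3);\pi_3\cK)$, where $\pi_3\cK$ at the component of $\cC$ is the group $\KK^\times$ of invertible bimodule natural transformations of the identity. The plan is to compute this obstruction explicitly---it should be built from the global dimension of $\cC$ together with the chosen pivotal structure---and to show that it vanishes when the base field has characteristic zero, using semisimplicity and the nonvanishing of the global dimension, exactly as in the companion $SO(2)$ statement; the characteristic-zero hypothesis is present in the conjecture precisely to guarantee this vanishing, which fails in finite characteristic. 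The statement only asserts the existence of a fixed point structure, so one need not settle the higher-connectivity questions about the space of fillers, although a similar analysis of $\pi_3\cK \cong \KK^\times$ in characteristic zero would also show such a structure to be essentially unique.

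Finally, once a section over $BSO(3)$ has been constructed the ``therefore'' clause is immediate: running the structured cobordism hypothesis in reverse turns the $SO(3)$ homotopy fixed point back into an oriented $3$-dimensional local topological field theory with the prescribed value on $\cC$. One could in principle avoid the cobordism hypothesis altogether and construct this oriented theory by hand from a generators-and-relations presentation of the oriented bordism $3$-category, in the spirit of the framed constructions elsewhere in this paper; but that presentation is substantially more intricate than the framed one, so the homotopy-fixed-point route is the more economical path.
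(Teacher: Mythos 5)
The statement you set out to prove is not proved in the paper at all: it is one of the paper's explicit Conjectures, and the authors defer its treatment to future work, noting in Section~\ref{sec:spherical} only that a pivotal structure yields an $\Omega S^2$ homotopy fixed point, that sphericality (the square of the pivotal structure equalling the Radford equivalence) yields an $\Omega\Sigma\RP^2$ homotopy fixed point, and that these data together with the conjectured $SO(2)$ fixed point structure are ``nearly enough'' for a full $SO(3)$ fixed point. Your skeletal analysis of $BSO(3)$ reproduces exactly this picture: the $2$-cell corresponds to trivializing the Serre automorphism, i.e.\ the double dual bimodule of Theorem~\ref{thm6}, which a pivotal structure supplies, and the $3$-cell (the $3$-skeleton of $BSO(3)$ being $\Sigma\RP^2$) corresponds to compatibility of the square of that trivialization with the Radford equivalence of Corollary~\ref{cor7}, which is sphericality. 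Up through the $3$-skeleton, then, your proposal is a faithful reformulation of what the paper already establishes, not new ground.

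The gap is the last step, and it is genuine. Extending over the $4$-cell (detecting $p_1$, $\pi_4 BSO(3)\cong\ZZ$) requires the vanishing of an obstruction in $H^4(BSO(3);\pi_3)\cong\KK^\times$, a nonzero group, and your proposal offers only the announcement that ``the plan is to compute this obstruction explicitly\dots and to show that it vanishes'': no formula for the class, no identification of it with the global dimension or the pivotal data, and no vanishing argument are given. Nor is there a ``companion $SO(2)$ statement'' whose proof you could transplant --- the pivotal-implies-$SO(2)$ assertion is itself the preceding unproven conjecture, which you additionally assume as input; and even granting it, the paper's own assessment is that $SO(2)$ plus $\Omega\Sigma\RP^2$ is only nearly enough. (A smaller imprecision: extending over the $2$-skeleton of $BSO(3)$ needs only the $\Omega S^2$ trivialization, i.e.\ the pivotal structure itself; the full $SO(2)$ structure would enter, if at all, only in controlling the higher cells.) So what you have is a sensible obstruction-theoretic strategy --- essentially the one the authors sketch --- but not a proof; the conjecture remains open after your argument for exactly the reason it is open in the paper.
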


It is a well-known open problem to determine whether all fusion categories admit pivotal or spherical structures.  In light of the above conjectures, we have the following topological analog of that problem:
\begin{mainquestion}
Does every 3-dimensional 3-framed local field theory with target tensor categories descend to a combed field theory?  Does every 3-dimensional 3-framed local field theory with target tensor categories descend to an oriented field theory?
\end{mainquestion}

\section{Overview}

\begin{guide*}
Experts in tensor categories could safely skim Sections~\ref{sec:tc-lincat} and \ref{sec:tc-exact}.  Experts in field theory could safely skim Sections~\ref{sec:notation}, \ref{sec:framed-duality}, and \ref{sec:df-objects}.  Those readers content to restrict to characteristic zero can skip Sections~\ref{sec:tc-separable} and \ref{sec:tc-fusion}.  Readers content to restrict to semisimple tensor categories could just glance at the statements of Section \ref{sec:radfordftc}.  Most readers will want to merely skim Sections~\ref{sec:tensorexact} and \ref{sec:computeradford}, which are more technical.  The expert geodesic path through the book is therefore Sections~\ref{sec:Serre}, \ref{sec:Radford}, \ref{sec:conventions}, \ref{sec:tc-bimodules}, \ref{sec:df-morphisms}, and \ref{sec:separableisfd}.  The extreme minimalist, no doubt discontinuous, path is simply Sections~\ref{sec:Radford}, \ref{sec:tc-bimodules}, and \ref{sec:separableisfd}.
\end{guide*}

\begin{outline*}
Chapter~\ref{sec:lft} concerns the algebraic structure present in the geometry of composition in the 3-framed bordism category; it begins by recalling the notions of 1-, 2-, and 3-dualizability and the statement of the cobordism hypothesis.  Section~\ref{sec:notation} describes an immersion notation for $n$-framed manifolds with corners, and gives various examples in low dimensions.  Section~\ref{sec:framed-duality} completely describes the duals and adjoints arising in the 2-framed bordism 2-category.  Section~\ref{sec:Serre} discusses the loop bordism (also called the Serre bordism) and provides a decomposition of this bordism into pieces describable in terms of elementary dualization data (that is adjoints of witnesses of dualities, and the like).  Section~\ref{sec:Radford} discusses the belt bordism (also called the Radford bordism), giving a decomposition of this bordism in terms of elementary dualization data, and determining the precise dualization structure needed to produce an algebraic analog of the belt trick in an arbitrary symmetric monoidal 3-category.

Chapter~\ref{sec:tc} investigates tensor categories, bimodule categories, tensor products of bimodule categories, exact module categories, dual module categories, categories of module functors, separable module categories, and separable tensor categories, among other things.  Section~\ref{sec:conventions} fixes consistent conventions, notation, and terminology for duals, adjoints, tensors, composition, and bimodules.  Section~\ref{sec:tc-lincat} describes linear, finite, monoidal, rigid, and tensor categories, along with module categories, functors, and transformations; it then discusses balanced (also known as relative Deligne) tensor products of module categories over finite tensor categories and the 3-category of finite tensor categories.  Section~\ref{sec:tc-exact} reviews the theory of exact module categories over finite tensor categories and proves that exactness is preserved by Deligne tensor.  Section~\ref{sec:tc-bimodules} first discusses the flip, the twist, and the dual of a bimodule category; it then shows that the dual bimodule category is equivalent to a functor dual category; next it proves, crucially, that the relative Deligne tensor product is equivalent to a category of module functors; finally, it explains how to interpret a dual module category as a module category over a double dual algebra.  Section~\ref{sec:tc-separable} introduces the theory of separable bimodules and separable tensor categories---in finite characteristic this theory replaces that of semisimple bimodules and semisimple tensor categories; in particular this section proves that the composite of separable bimodule categories is separable, and therefore there is a 3-category of separable tensor categories with morphisms the separable bimodule categories.  Section~\ref{sec:tc-fusion} proves that fusion categories of nonzero global dimension (so in particular all fusion categories in characteristic zero) are separable.

Chapter~\ref{sec:dualizability} analyzes the dualizability of finite and separable tensor categories.  Section~\ref{sec:df-objects} shows that tensor categories are 1-dualizable and discusses the associated twice categorified 1-dimensional field theory.  Section~\ref{sec:df-morphisms} establishes that the functor dual provides an adjoint bimodule category and therefore finite tensor categories are 2-dualizable; it then describes the associated categorified 2-dimensional field theory, showing in particular that the loop bordism is taken to the double dual bimodule.  Section~\ref{sec:radfordftc} proves that finite tensor categories, though not 3-dualizable, are `Radford objects', that is are dualizable enough to have an algebraic belt trick; the quadruple dual theorem follows immediately.  Section~\ref{sec:separableisfd} proves that separable tensor categories are 3-dualizable and therefore have associated 3-dimensional field theories; it also shows that 3-dualizable finite tensor categories must be separable.  Section~\ref{sec:spherical} previews the relationships between pivotal and spherical structures on tensor categories and the descent properties of the corresponding field theories.  The Appendix describes in more detail the notion of $k$-dualizability and recalls a precise form of both the framed and structured versions of the cobordism hypothesis.
\end{outline*}

\begin{remarkohc*}
For expositional convenience and clarity, we phrase some of our results in terms of symmetric monoidal 3-categories.  Though the theory of higher categories is by now on completely solid ground~\cite{luriehtt,lurieinf2,rezkcart,barkan,unicity,MR2742424,simpson,bergrezk,1308.3574}, it remains unduly technical.  To avoid those technicalities in this book, we do not pick a particular model for higher categories---every statement we make is true in any model.  
Moreover, with the exception of Corollaries~\ref{cor2} and~\ref{cor5}, every one of our main results can be reexpressed purely in terms of 2-categories (the theory of which has been long settled), and so does not depend on any theory of higher categories.  (This reexpression transforms a statement about a 3-category into an interlinked collection of statements about 2-categories of morphisms and the homotopy 2-category whose 2-morphisms are isomorphism classes of invertible 2-morphisms of the 3-category.)

In our presentation, we also freely employ the cobordism hypothesis, but we would like to emphasize that with the exception of the corollaries concerning local field theory, none of our results, about dualizability, about quadruple duals, or about tensor categories, depend on the cobordism hypothesis.  Specifically, Corollaries~\ref{cor2} and \ref{cor5} do depend on the cobordism hypothesis, but Theorems~\ref{thm1}, \ref{thm3}, \ref{thm4}, \ref{thm6}, \ref{thm8}, \ref{thm9}, \ref{thm10}, \ref{thm11}, \ref{thm12}, and \ref{thm13}, and Corollary~\ref{cor7} do not.
\end{remarkohc*}

\begin{assumpfield*}
We have endeavored to limit the restrictions on the base field, in particular to avoid the assumption of characteristic zero or algebraic closure.  The existence of the symmetric monoidal structure on the 3-category of tensor categories depends on the base field being perfect, and so perfection is assumed for all the results about dualizability of tensor categories.  (Recall that all fields of characteristic zero, all algebraically closed fields, and all finite fields are perfect, so we view it as a comparatively mild restriction.)  More specifically, Theorems~\ref{thm1} and \ref{thm3}, and the second halves of Theorems~\ref{thm4}, \ref{thm9}, \ref{thm11}, and \ref{thm12}, along with Corollaries~\ref{cor2}, \ref{cor5}, and \ref{cor7} all assume the field is perfect; as stated Theorem~\ref{thm6} and the first half of Theorem~\ref{thm4} also assume perfection, but for those results it is not essential.  The first half of Theorem~\ref{thm13} assumes algebraic closure, and the second half of Theorem~\ref{thm13} assumes characteristic zero.  That leaves Theorems~\ref{thm8} and \ref{thm10}, and the first halves of Theorems~\ref{thm9}, \ref{thm11}, and \ref{thm12}, which apply over an arbitrary field.  

The standing assumptions in the text are as follows.  No fields appear in the more geometric Chapter~\ref{sec:lft}.  In Sections~\ref{sec:conventions} through \ref{sec:tc-separable}, concerning tensor categories, the base field is arbitrary unless otherwise noted.  (A perfection assumption is made for part (6) of Theorem~\ref{thm:DelignePrdtOverATCExists}, and for Theorem~\ref{thm:tcexists}, Corollary~\ref{cor:septc}, Definition~\ref{def:sepcat}, Corollary~\ref{cor:Sep=semisimplecenter}, Proposition~\ref{prop:SSModuleCatsAreSep}, and Corollary~\ref{cor:tcsepexists}.)  In Section~\ref{sec:tc-fusion} on fusion categories, the base field is algebraically closed except where noted.  (Corollaries~\ref{cor:charzerosep} and~\ref{cor:charzeromodulesep} apply over an arbitrary field of characteristic zero.)  In Chapter~\ref{sec:dualizability} on dualizability, the base field is always perfect. (We furthermore require characteristic zero in Corollary~\ref{cor:charzerotft} and algebraically closed characteristic zero in Corollary~\ref{cor:fusiontft}.)  In the final Section~\ref{sec:descconj} on descent conjectures, we restrict to algebraically closed fields of characteristic zero.
\end{assumpfield*}

\renewcommand{\thesection}{\arabic{chapter}.{\arabic{section}}}

\chapter{The algebra of 3-framed bordisms} \label{sec:lft}

The cobordism hypothesis \cite{MR1355899, lurie-ch} classifies local field theories in terms of sufficiently dualizable objects of target higher categories.  Recall that an object $x$ of a symmetric monoidal $(\infty,3)$-category is called ``1-dualizable" if there is a dual object $\overline{x}$, with the duality between $x$ and $\overline{x}$ witnessed by evaluation and coevaluation maps $\ev: x \otimes \overline{x} \ra 1$ and $\coev: 1 \ra \overline{x} \otimes x$.  (Note well that the zigzag equations relating $\ev$ and $\coev$ are only required to hold up to equivalence, and there is no coherence condition on those equivalences.)  The object is called ``2-dualizable" if moreover the evaluation map is part of an infinite chain of adjunctions 
$$\cdots \dashv \ev^{LL} \dashv \ev^L \dashv \ev \dashv \ev^R \dashv \ev^{RR} \dashv \cdots$$
and similarly the coevaluation map is part of an infinite chain of adjunctions 
$$\cdots \dashv \coev^{LL} \dashv \coev^L \dashv \coev \dashv \coev^R \dashv \coev^{RR} \dashv \cdots.$$  
(Again, note that for each of these adjunctions, the zigzag equations are only required to hold up to isomorphism, and there are no coherence conditions on those isomorphisms.)
The object is called ``3-dualizable" if, moreover, for every adjunction $(F \dashv G, u_{F,G}, v_{F,G})$ in each of the two aforementioned infinite chains, it happens that the unit $u_{F,G}$ is part of an infinite chain of adjunctions 
$$\cdots \dashv u_{F,G}^{LL} \dashv u_{F,G}^L \dashv u_{F,G} \dashv u_{F,G}^R \dashv u_{F,G}^{RR} \dashv \cdots$$ 
and similarly the counit $v_{F,G}$ is part of an infinite chain of adjunctions 
$$\cdots \dashv v_{F,G}^{LL} \dashv v_{F,G}^L \dashv v_{F,G} \dashv v_{F,G}^R \dashv v_{F,G}^{RR} \dashv \cdots.$$  These definitions suffice to make sense of our main theorem, that every separable tensor category is 3-dualizable.  

It will be more convenient for us to use an alternative, equivalent formulation of dualizability focusing on ``$k$-dualizable $3$-categories" rather than ``$k$-dualizable objects".  A symmetric monoidal $3$-category $\cC$ is called ``$1$-dualizable" if every object has a dual---as before, here `dual' means dual in the symmetric monoidal $1$-category of objects of $\cC$ and equivalence classes of morphisms of $\cC$.  The symmetric monoidal $3$-category $\cC$ is called ``$2$-dualizable" if it is 1-dualizable and moreover every $1$-morphism has a left and a right adjoint---as before, here `adjoint' means adjoint in the $2$-category of objects, morphisms, and isomorphism classes of 2-morphisms of $\cC$.  The symmetric monoidal $3$-category $\cC$ is called ``$3$-dualizable" if it is $1$-dualizable and $2$-dualizable and if every $2$-morphism of $\cC$ has a left and a right adjoint.  An object of a symmetric monoidal $3$-category is ``$k$-dualizable" if it is in some $k$-dualizable subcategory of $\cC$.  (There are corresponding notions of $k$-dualizable objects of an $n$-category, for any $n$---see the Appendix.)

Equipped with the notion of $k$-dualizability, we can state the cobordism hypothesis classification of local field theories: \vspace{7pt}

\setlength{\leftskip}{.75cm}

\nid {\bfseries Cobordism hypothesis:} \emph{$n$-dimensional local framed topological field theories with target a symmetric monoidal $(\infty,n)$-category $\cC$ are in one-to-one correspondence with the $n$-dualizable objects of $\cC$; in fact, the space of such field theories is homotopy equivalent to the space of $n$-dualizable objects of $\cC$.} \vspace{7pt}

\setlength{\leftskip}{0cm}

\nid Note that an $n$-dualizable $(\infty,n)$-category is also called a ``fully dualizable" $(\infty,n)$-category, and similarly an $n$-dualizable object of an $(\infty,n)$-category is also called a ``fully dualizable" object.  By definition, an $n$-dimensional local framed topological field theory is a symmetric monoidal functor out of the $(\infty,n)$-category $\FrBord_n$ of $n$-framed bordisms.  (A local topological field theory is also known as a ``fully extended" topological field theory.)  In the next section we describe the notion of $n$-framing in question.  We refer the reader to the Appendix for a more detailed and precise review of the notion of full dualizability and statement of the cobordism hypothesis, and to the exposition~\cite{MR2994995} for a thorough motivation for and discussion of the cobordism hypothesis perspective on local field theory.

\section{$n$-framed manifolds and $n$-framed bordisms} \label{sec:notation}

The cobordism hypothesis classifies field theories whose source is a bordism category of $n$-framed manifolds.  In this section, we describe a convenient notation for describing $n$-framed manifolds using normally framed immersions, and explain how this notation behaves on boundaries and corners and thereby produces bordism categories.  We conclude with various low dimensional examples, many of which will also be essential in our later dualizability calculations.

\subsection{$n$-framings from normally framed immersions}

An $n$-framed $k$-manifold $(M,\tau)$ is, by definition, a $k$-manifold $M$ equipped with a trivialization $\tau$ of $TM \oplus \RR^{n-k}$, the $(n-k)$-fold stabilization of the tangent bundle of $M$.  A convenient way to present an $n$-framing on a $k$-manifold $M$ is to give an immersion $\iota: M \looparrowright \RR^n$ together with a normal framing, that is a trivialization $\phi$ of the normal bundle $\nu(\iota)$ of the immersion.  The normally framed immersion $(\iota, \phi)$ provides an $n$-framing of $M$ by the composite
\[
TM \oplus \RR^{n-k} \cong TM \oplus \nu(\iota) \cong \RR^n.
\]
Here the second isomorphism is provided by the standard, ``blackboard" framing of the sum of the tangent bundle and normal bundle of $M$.

Throughout this book, except in one noted instance, we will draw $n$-framed manifolds using this normally framed immersion notation; we leave completely implicit the induced $n$-framing.  When the immersion is codimension-1, we will usually specify the normal framing by a unidirectional gray corona on the immersed manifold.

\begin{example} \label{eg:frcircles}
The following infinite list includes, up to homotopy, all normally framed immersed circles in $\RR^2$:
\[\cdots\quad
\cb{
\begin{tikzpicture}
{ [xshift=-3cm]
\draw[linestyle,fuzzleft]
(.5,0) to [out=-90, in=25] (.3,-.3)
	to [looseness=1.6, out=-155, in=225] (.15,-.15)
	to [looseness=1.6, out=45, in=65] (.3,-.3)
	to [out=245, in=-65] (-.3,-.3)
	to [looseness=1.6, out=115, in=135] (-.15,-.15)
	to [looseness=1.6, out=-45, in=-25] (-.3,-.3)
	to [out=155, in=-90] (-.5,0)
	.. controls (-.5,.66) and (.5,.66) .. (.5,0);	
}
{ [xshift=-1.5cm]
\draw[linestyle,fuzzleft]
(.5,0) to [out=-90, in=-20] (0,-.4)
	to [looseness=1.6, out=160, in=180] (0,-.1)
	to [looseness=1.6, out=0, in=20] (0,-.4)
	to [out=-160, in=-90] (-.5,0)
	.. controls (-.5,.66) and (.5,.66) .. (.5,0);
}
{ [xshift=0cm]
\draw[linestyle,fuzzleft]
(.5,0) .. controls (.5,-.66) and (-.5,-.66) .. (-.5,0)
	.. controls (-.5,.66) and (.5,.66) .. (.5,0);
}
{ [xshift=1.5cm]
\draw[linestyle,fuzzleft,looseness=2]
(0,.5) to [out=0, in=10] (0,0)
	to [out=-170, in=180] (0,-.5)
	to [out=0, in=-10] (0,0)
	to [out=170, in=180] (0,.5);
}
{ [xshift=3cm]
\draw[linestyle,fuzzright]
(.5,0) .. controls (.5,-.66) and (-.5,-.66) .. (-.5,0)
	.. controls (-.5,.66) and (.5,.66) .. (.5,0);
}
{ [xshift=4.5cm]
\draw[linestyle,fuzzright]
(.5,0) to [out=-90, in=-20] (0,-.4)
	to [looseness=1.6, out=160, in=180] (0,-.1)
	to [looseness=1.6, out=0, in=20] (0,-.4)
	to [out=-160, in=-90] (-.5,0)
	.. controls (-.5,.66) and (.5,.66) .. (.5,0);
}
{ [xshift=6cm]
\draw[linestyle,fuzzright]
(.5,0) to [out=-90, in=25] (.3,-.3)
	to [looseness=1.6, out=-155, in=225] (.15,-.15)
	to [looseness=1.6, out=45, in=65] (.3,-.3)
	to [out=245, in=-65] (-.3,-.3)
	to [looseness=1.6, out=115, in=135] (-.15,-.15)
	to [looseness=1.6, out=-45, in=-25] (-.3,-.3)
	to [out=155, in=-90] (-.5,0)
	.. controls (-.5,.66) and (.5,.66) .. (.5,0);	
}
\end{tikzpicture}
}
\quad \cdots
\]
As described above, each such immersion specifies a $2$-framed circle.\footnote{The resulting $2$-framed circles can be directly depicted as follows.  Orient each normally framed immersed circle so that the orientation followed by the normal framing is a positive frame of the plane.  Identify the immersed circle with the standard circle $C := [0,1] / 0\!\!\sim\!\!1$, by an orientation-preserving diffeomorphism taking the top point of the immersed circle to $0 \in C$.  A $2$-framing of $C$ is a trivialization of $TC \oplus \RR = C \times \RR^2$; we may draw this trivialization as a sequence of frames in $\RR^2$.  The middle three immersed circles above correspond respectively to the following three sequences of frames.  Here the shorter bar is the first frame vector, and the longer bar is the second frame vector.  Note that the middle sequence of frames is homotopic to the constant framing.
\[
\cb{
\begin{tikzpicture}[line cap=rect,scale=2.25]
{ [xshift=-1.5cm]
\draw[line width=\framebasewidth,\framebasecolor] (0,0) to (1,0);
\draw[line width=\framewidth] (0,0) -- (0:\framelengthshort);
\draw[line width=\framewidth] (0,0) -- (90:\framelengthlong);
\draw[line width=\framewidth] (0,0) ++(\seqstep,0) -- ++(60:\framelengthshort);
\draw[line width=\framewidth] (0,0) ++(\seqstep,0) -- ++(150:\framelengthlong);
\draw[line width=\framewidth] (0,0) ++(2*\seqstep,0) -- ++(120:\framelengthshort);
\draw[line width=\framewidth] (0,0) ++(2*\seqstep,0) -- ++(210:\framelengthlong);
\draw[line width=\framewidth] (0,0) ++(3*\seqstep,0) -- ++(180:\framelengthshort);
\draw[line width=\framewidth] (0,0) ++(3*\seqstep,0) -- ++(270:\framelengthlong);
\draw[line width=\framewidth] (0,0) ++(4*\seqstep,0) -- ++(240:\framelengthshort);
\draw[line width=\framewidth] (0,0) ++(4*\seqstep,0) -- ++(330:\framelengthlong);
\draw[line width=\framewidth] (0,0) ++(5*\seqstep,0) -- ++(300:\framelengthshort);
\draw[line width=\framewidth] (0,0) ++(5*\seqstep,0) -- ++(30:\framelengthlong);
\draw[line width=\framewidth] (0,0) ++(6*\seqstep,0) -- ++(0:\framelengthshort);
\draw[line width=\framewidth] (0,0) ++(6*\seqstep,0) -- ++(90:\framelengthlong);
}
{ [xshift=0cm]
\draw[line width=\framebasewidth,\framebasecolor] (0,0) to (2,0);
\draw[line width=\framewidth] (0,0) -- (0:\framelengthshort);
\draw[line width=\framewidth] (0,0) -- (90:\framelengthlong);
\draw[line width=\framewidth] (0,0) ++(\seqstep,0) -- ++(60:\framelengthshort);
\draw[line width=\framewidth] (0,0) ++(\seqstep,0) -- ++(150:\framelengthlong);
\draw[line width=\framewidth] (0,0) ++(2*\seqstep,0) -- ++(120:\framelengthshort);
\draw[line width=\framewidth] (0,0) ++(2*\seqstep,0) -- ++(210:\framelengthlong);
\draw[line width=\framewidth] (0,0) ++(3*\seqstep,0) -- ++(180:\framelengthshort);
\draw[line width=\framewidth] (0,0) ++(3*\seqstep,0) -- ++(270:\framelengthlong);
\draw[line width=\framewidth] (0,0) ++(4*\seqstep,0) -- ++(120:\framelengthshort);
\draw[line width=\framewidth] (0,0) ++(4*\seqstep,0) -- ++(210:\framelengthlong);
\draw[line width=\framewidth] (0,0) ++(5*\seqstep,0) -- ++(60:\framelengthshort);
\draw[line width=\framewidth] (0,0) ++(5*\seqstep,0) -- ++(150:\framelengthlong);
\draw[line width=\framewidth] (0,0) ++(6*\seqstep,0) -- ++(0:\framelengthshort);
\draw[line width=\framewidth] (0,0) ++(6*\seqstep,0) -- ++(90:\framelengthlong);
\draw[line width=\framewidth] (0,0) ++(7*\seqstep,0) -- ++(-60:\framelengthshort);
\draw[line width=\framewidth] (0,0) ++(7*\seqstep,0) -- ++(30:\framelengthlong);
\draw[line width=\framewidth] (0,0) ++(8*\seqstep,0) -- ++(-120:\framelengthshort);
\draw[line width=\framewidth] (0,0) ++(8*\seqstep,0) -- ++(-30:\framelengthlong);
\draw[line width=\framewidth] (0,0) ++(9*\seqstep,0) -- ++(-180:\framelengthshort);
\draw[line width=\framewidth] (0,0) ++(9*\seqstep,0) -- ++(-90:\framelengthlong);
\draw[line width=\framewidth] (0,0) ++(10*\seqstep,0) -- ++(-120:\framelengthshort);
\draw[line width=\framewidth] (0,0) ++(10*\seqstep,0) -- ++(-30:\framelengthlong);
\draw[line width=\framewidth] (0,0) ++(11*\seqstep,0) -- ++(-60:\framelengthshort);
\draw[line width=\framewidth] (0,0) ++(11*\seqstep,0) -- ++(30:\framelengthlong);
\draw[line width=\framewidth] (0,0) ++(12*\seqstep,0) -- ++(0:\framelengthshort);
\draw[line width=\framewidth] (0,0) ++(12*\seqstep,0) -- ++(90:\framelengthlong);
}
{ [xshift=2.5cm]
\draw[line width=\framebasewidth,\framebasecolor] (0,0) to (1,0);
\draw[line width=\framewidth] (0,0) -- (180:\framelengthshort);
\draw[line width=\framewidth] (0,0) -- (270:\framelengthlong);
\draw[line width=\framewidth] (0,0) ++(\seqstep,0) -- ++(120:\framelengthshort);
\draw[line width=\framewidth] (0,0) ++(\seqstep,0) -- ++(210:\framelengthlong);
\draw[line width=\framewidth] (0,0) ++(2*\seqstep,0) -- ++(60:\framelengthshort);
\draw[line width=\framewidth] (0,0) ++(2*\seqstep,0) -- ++(150:\framelengthlong);
\draw[line width=\framewidth] (0,0) ++(3*\seqstep,0) -- ++(0:\framelengthshort);
\draw[line width=\framewidth] (0,0) ++(3*\seqstep,0) -- ++(90:\framelengthlong);
\draw[line width=\framewidth] (0,0) ++(4*\seqstep,0) -- ++(-60:\framelengthshort);
\draw[line width=\framewidth] (0,0) ++(4*\seqstep,0) -- ++(30:\framelengthlong);
\draw[line width=\framewidth] (0,0) ++(5*\seqstep,0) -- ++(-120:\framelengthshort);
\draw[line width=\framewidth] (0,0) ++(5*\seqstep,0) -- ++(-30:\framelengthlong);
\draw[line width=\framewidth] (0,0) ++(6*\seqstep,0) -- ++(-180:\framelengthshort);
\draw[line width=\framewidth] (0,0) ++(6*\seqstep,0) -- ++(-90:\framelengthlong);
}
\end{tikzpicture}
}
\]
}
In fact all $2$-framed circles are specified by such an immersed circle.  Suppose we have a $2$-framing $\tau: TM \oplus \RR \xra{\cong} \RR^2$ on a closed connected 1-manifold $M$.  Let $\jmath$ denote the orientation of $M$ such that for any point $p \in M$, the pair $(\tau(\jmath,0),\tau(0,1))$ is a positive frame of $\RR^2$.  This orientation provides another $2$-framing of $M$, namely $\sigma: TM \oplus \RR \xra{\jmath \oplus \id} \RR^2$.  The ratio $\sigma/\tau$ is a map $M \ra SO(2)$.  Because $M$ is oriented, by $\jmath$, the set of homotopy classes of maps $M \ra SO(2)$ is canonically identified with the integers.  We therefore have a $\ZZ$-valued invariant of such $2$-framed 1-manifolds, and in fact this procedure produces a bijection from the $2$-framed diffeomorphism classes of $2$-framed closed connected 1-manifolds to the integers---the above picture provides a representative from each diffeomorphism class.
\end{example}

\begin{remark}
Not every $n$-framed $n$-manifold can be specified using the immersion notation.  For example, the circle has a (unique up to diffeomorphism) $1$-framing, but it cannot be immersed in $\RR^1$.  The Hirsch--Smale immersion theorem \cite{MR0119214, MR0105117} ensures, though, that every $n$-framing of an $(n-k)$-manifold can be realized by a normally framed immersion, provided either $k>0$ or each component of the manifold is not closed.
\end{remark}

An $m$-framed $k$-manifold $(M,\tau)$ can be stabilized to an $n$-framed manifold $(M,\tau \oplus \id_{\RR^{n-m}})$.  A normally framed immersion $(\iota: M \looparrowright \RR^m, \phi : \nu(\iota) \simeq \RR^{m-k})$ can similarly be stabilized to the normally framed immersion $(\inc_{\RR^m \ra \RR^n} \circ \iota, \phi \oplus \id_{\RR^{n-m}})$.  The association of an $m$-framed manifold to a normally framed immersion is evidently compatible with these stabilization procedures.

\begin{example}
Though the 1-framing on the circle cannot be represented by a normally framed immersion, its stabilization to a 2-framing can: it is represented by the figure 8 immersion in Example~\ref{eg:frcircles}.
\end{example}

\subsection{$n$-framings with boundary and corners}

The procedure described above for associating an $n$-framed manifold to a normally framed immersion works as stated also for manifolds with boundary or manifolds with corners.  

When the immersed manifold is a bordism, we can fix conventions for how a normal framing (therefore $n$-framing) on the bulk induces a normal framing (therefore $n$-framing) on the source and target boundary.  For a bordism $M$ with boundary but without corners, each boundary component is labelled ``in" or ``out" according to whether it is part of the source or target of the bordism.  Suppose $(\iota, \phi)$ is a normally framed immersion of $M$.  An incoming boundary component $N \subset (\partial M)_{\textrm{in}}$ inherits the structure of a normally framed immersion: the immersion is simply the restriction of the immersion of $M$, and the framing is $(l,s) \subset \nu(N,\RR^n)$, where $l$ is a section of the normal bundle of $N$ pointing into the bulk manifold $M$, and $s \subset \nu(M,\RR^n)$ is the given normal frame of $M$.  Similarly an outgoing boundary component inherits the normal framing $(-l,s)$, that is the first normal frame vector points out of the bulk manifold.  

Given a higher bordism (that is a manifold $M$ with cuspidal corners representing a higher morphism in a bordism $n$-category), equipped with a normally framed immersion, the incoming and outgoing boundaries (which are now codimension-0 submanifolds of the boundary of $M$) inherit normal framings exactly as described for ordinary bordisms.  Furthermore, the boundaries of these boundaries inherit normal framings by the same procedure.  Iterating this process provides consistently defined normal framings to every corner of the higher bordism $M$.  We provide various examples of such manifolds and the associated induced framings in the next section.

In drawing $n$-framed manifolds with boundary and corners, we need to specify not only the immersion and the normal framing, but also which parts of the boundary are incoming and which outgoing.  Outgoing pieces of the boundary will be indicated by small arrows pointing out of the bulk of the manifold; incoming pieces of the boundary will be undecorated---implicitly the arrows would point into the bulk.  When the immersion is codimension zero, the outgoing boundary arrows may be replaced by a gray corona, which serves to directly record the induced normal framing of those parts of the boundary; for incoming boundaries, the implicit gray corona, recording the normal framing of those parts of the boundary, is again covered by the bulk and so cannot be seen.  At corners, a combination of these indications will be used; for instance, an arrow together with a corona on a codimension-2 corner indicates respectively the first and second vectors of the induced framing on the corner.

\subsection{Low-dimensional examples of $n$-framed bordisms}

\begin{example} \label{eg-framenot0}
The following four pictures specify, respectively, a $0$-framed, a $1$-framed, and two $2$-framed 0-manifolds:
\[
\begin{tikzpicture}
\filldraw (0,0) circle (\pointrad);
\filldraw (1,0) circle (\pointrad); 
\begin{pgfonlayer}{background}
\draw[->,outstyle] (1,0) -- +(-\arrowlength,0);
\end{pgfonlayer}
\filldraw (2,0) circle (\pointrad);
\begin{pgfonlayer}{background}
\draw[->,outstyle] (2,0) -- +(45:\arrowlength) node[anchor=south,inner sep=2pt] {\tiny 1};
\draw[->,outstyle] (2,0) -- +(135:\arrowlength) node[anchor=south,inner sep=2pt] {\tiny 2};
\end{pgfonlayer}
\filldraw (3,0) circle (\pointrad); 
\begin{pgfonlayer}{background}
\draw[->,outstyle] (3,0) -- +(45:\arrowlength) node[anchor=south,inner sep=2pt] {\tiny 2};
\draw[->,outstyle] (3,0) -- +(135:\arrowlength) node[anchor=south,inner sep=2pt] {\tiny 1};
\end{pgfonlayer}
\end{tikzpicture}
\]
\end{example}

\begin{example}
Here is a picture of a $1$-framed $1$-manifold bordism from a 1-framed point to a 1-framed point:
\[
\begin{tikzpicture}
\draw[linestyle] (0,0) -- (1.5,0);
\begin{pgfonlayer}{background}
\draw[->,outstyle] (0,0) -- +(180:\arrowlength);
\end{pgfonlayer}
\path
    ([shift={(-5\pgflinewidth,-5\pgflinewidth)}]current bounding box.south west)
    ([shift={( 5\pgflinewidth, 5\pgflinewidth)}]current bounding box.north east);
\end{tikzpicture}
\]
\nid The right point is incoming and the left point is outgoing.  Both boundary points inherit framings isomorphic to the framing of the second picture in Example~\ref{eg-framenot0}.

Next we have a picture of a $2$-framed $1$-manifold bordism from two 2-framed points to the empty set:
\[
\begin{tikzpicture}
\draw[linestyle,fuzzleft] 
(0,0) .. controls (1.1,1.1) and (1.25,.65) .. (1.25,.5)
	.. controls (1.25,.25) and (.875,.25) .. (.875,.5)
	.. controls (.875,.9) and (1.875,.9) .. (1.875,.5)
	.. controls (1.875,.25) and (1.5,.25) .. (1.5,.5)
	.. controls (1.5,.65) and (1.65,1.1) .. (2.75,0);
\end{tikzpicture}
\]
Both boundary points are incoming, and the induced framings on these points are, left to right respectively, the last two points pictured in Example~\ref{eg-framenot0}.  Note that corona in this example specifies the trivialization of the normal bundle.
\end{example}

\begin{example} \label{ex:disk_bordism_immersed}
The following is a picture of a $2$-framed $2$-manifold bordism from the empty set to a 2-framed circle:
\[
u_1 := \cb{
\begin{tikzpicture}
\filldraw[linestyle,fuzzright,fill=\fillcolor] (0,0) circle (.5);
\end{tikzpicture}
}
\quad
:
\quad
\cb{$\emptyset$}
\quad \ra \quad
\cb{
\begin{tikzpicture}
\draw[linestyle,fuzzleft]
(.5,0) .. controls (.5,-.66) and (-.5,-.66) .. (-.5,0)
	.. controls (-.5,.66) and (.5,.66) .. (.5,0);
\end{tikzpicture}
}
\]
Note that in the picture of the 2-manifold $u_1$, the corona indicates that the boundary is outgoing.  That boundary is the circle with the outward trivialization of its normal bundle, and so the two uses of the corona are consistent, as mentioned previously.
\end{example}

\begin{example} \label{ex:saddle_bordism_immersed}
We now provide an example of a 2-framed 2-manifold with cuspidal corners, representing a 2-morphism in the 2-category of 2-framed 0-, 1-, and 2-manifolds:
\[
v_1 := \cb{
\begin{tikzpicture}
\filldraw[linestyle,fill=\fillcolor] 
	(0,0) .. controls (.25,.25) and (.75,.25) .. (1,0)
		.. controls (.75,.25) and (.75,.75) .. (1,1)
		.. controls (.75,.75) and (.25,.75) .. (0,1)
		.. controls (.25,.75) and (.25,.25) .. (0,0);
\draw[linestyle,fuzzright]
	(0,0) .. controls (.25,.25) and (.75,.25) .. (1,0);
\draw[linestyle,fuzzleft]
	(0,1) .. controls (.25,.75) and (.75,.75) .. (1,1);
\begin{pgfonlayer}{background}
	\draw[->,outstyle] (1,1) -- +(45:\arrowlength);
	\draw[->,outstyle] (1,0) -- +(-45:\arrowlength);
\end{pgfonlayer}
\end{tikzpicture}
}
\quad
: 
\quad
\cb{
\begin{tikzpicture}
\draw[linestyle,fuzzright]
	(0,0) .. controls (.25,.25) and (.25,.75) .. (0,1);
\draw[linestyle,fuzzleft]
	(1,0) .. controls (.75,.25) and (.75,.75) .. (1,1);
\begin{pgfonlayer}{background}
	\draw[->,outstyle] (1,1) -- +(45:\arrowlength);
	\draw[->,outstyle] (1,0) -- +(-45:\arrowlength);
\end{pgfonlayer}
\end{tikzpicture}
}
\quad
\ra
\quad
\cb{
\begin{tikzpicture}
\draw[linestyle,fuzzright]
	(0,0) .. controls (.25,.25) and (.75,.25) .. (1,0);
\draw[linestyle,fuzzleft]
	(0,1) .. controls (.25,.75) and (.75,.75) .. (1,1);
\begin{pgfonlayer}{background}
	\draw[->,outstyle] (1,1) -- +(45:\arrowlength);
	\draw[->,outstyle] (1,0) -- +(-45:\arrowlength);
\end{pgfonlayer}
\end{tikzpicture}
}
\]
The source and target of the bordism are the pairs of intervals indicated.  Notice that the source of the source (a pair of 2-framed points) is indeed the source of the target, and similarly the target of the source is the target of the target.
\end{example}

\begin{example} \label{eg:radford}
The picture of an immersed surface in the introduction can be equipped with a normal framing (pointing out of the page on the more lightly shaded regions of the bordism), with a corona indicating the outgoing boundary (pointing downward on the lower boundary), and with arrows indicating the outgoing boundary of the boundary (pointing rightward at the two cusps), so that it represents a 3-framed 2-manifold bordism.  The source of this bordism is the stabilization of the 2-framed 1-manifold,
\[
\cb{
\begin{tikzpicture}
	\draw[linestyle,fuzzright] (4,7) to [looseness=1.6,out = 180, in = 180] (4,6);
	\begin{pgfonlayer}{background}
		\draw[->,outstyle] (4,7) -- +(0:\arrowlength);
		\draw[->,outstyle] (4,6) -- +(0:\arrowlength);
	\end{pgfonlayer}
\end{tikzpicture}
}
\]
and target of the bordism is the stabilization of the 2-framed 1-manifold
\[
\setlength{\linewid}{15pt}
\setlength{\fuzzwidth}{25pt}
\setlength{\arrowlength}{80pt}
\setlength{\arrowwidth}{7.5pt}
\cb{
\scalebox{.1}{
\begin{tikzpicture}
\draw[linestyle,fuzzright]
(15,5) to (0,5) to [out=180, in=70] (-4,2.75)
	to [looseness=1.6, out=-110, in=-90] (-7,2.75)
	to [looseness=1.6, out=90, in=110] (-4,2.75)
	to [out=-70, in=70] (-4,-2.75)
	to [looseness=1.6, out=-110, in=-90] (-7,-2.75)
	to [looseness=1.6, out=90, in=110] (-4,-2.75)
	to [out=-70, in=180] (0,-5) to (15,-5);
\begin{pgfonlayer}{background}
	\draw[-scalehead,outstyle] (15,5) -- +(0:\arrowlength);
	\draw[-scalehead,outstyle] (15,-5) -- +(0:\arrowlength);
\end{pgfonlayer}
\end{tikzpicture}
}
}
\setlength{\linewid}{1.5pt}
\setlength{\fuzzwidth}{2.5pt}
\setlength{\arrowlength}{8pt}
\setlength{\arrowwidth}{.75pt}
\]
\end{example}

\section{Duality in the 2-framed bordism category}\label{sec:framed-duality}

The 2-framed bordism 2-category is (like all reasonable bordism categories) fully dualizable.  In this section we explicitly illustrate all the duals and adjunctions in this 2-category.  For an analogous but much more involved analysis of the oriented bordism 2-category, and of the resulting classification of oriented 2-dimensional local field theories, see the thesis~\cite{schommer-pries-thesis}.

There are two 2-framed diffeomorphism classes of 2-framed points.  We focus on the following representatives of these classes:
\begin{center}
\raisebox{2.2mm}{
\begin{tikzpicture}
	\filldraw (0,0) circle (\pointrad); 
	\begin{pgfonlayer}{background}
	\draw[->,outstyle] (0,0) -- +(90:\arrowlength) node[anchor=south,inner sep=2pt] {\tiny 2};
	\draw[->,outstyle] (0,0) -- +(0:\arrowlength) node[anchor=west,inner sep=2pt] {\tiny 1};
	\draw[->,outstyle,white] (0,0) -- +(-90:\arrowlength) node[anchor=south,inner sep=2pt] {\tiny 2};
	\end{pgfonlayer}
\end{tikzpicture}
}
\hspace{1.5cm}
\raisebox{-2.2mm}{
\begin{tikzpicture}
	\filldraw (0,0) circle (\pointrad); 
	\begin{pgfonlayer}{background}
	\draw[->,outstyle] (0,0) -- +(270:\arrowlength) node[anchor=north,inner sep=2pt] {\tiny 2};
	\draw[->,outstyle] (0,0) -- +(0:\arrowlength) node[anchor=west,inner sep=2pt] {\tiny 1};
	\draw[->,outstyle,white] (0,0) -- +(-270:\arrowlength) node[anchor=north,inner sep=2pt] {\tiny 2};
	\end{pgfonlayer}
\end{tikzpicture}
}
\end{center}
We refer to the first of these points as the positively framed point, $\pt_+$, and to the second as the negatively framed point, $\pt_-$.  These two 2-framed points are dual: there is an evaluation bordism $\pt_+ \sqcup \pt_- \ra \emptyset$ and a coevaluation bordism $\emptyset \ra \pt_- \sqcup \pt_+$ satisfying the usual zigzag equations (cf. Definition~\ref{def:adjoints_in_bicat}).  We pick the following two bordisms as evaluation and coevaluation bordisms witnessing this duality:
\begin{center}
	$\ev :=$\cb{
	\begin{tikzpicture}
	\draw[linestyle,fuzzright] (0,0) arc (-90:90:\smcirclerad);
	\end{tikzpicture}
	}
	\hspace{1.5cm}
	$\coev :=$ \cb{
	\begin{tikzpicture}
	\draw[emptylinestyle, white] (0,.1) -- (0,-2*\smcirclerad) -- +(0,-.1);
	\draw[linestyle,fuzzleft] (0,0) arc (90:270:\smcirclerad);
	\begin{pgfonlayer}{background}
		\draw[->,outstyle] (0,0) -- +(0:\arrowlength);
		\draw[->,outstyle] (0,-2*\smcirclerad) -- +(0:\arrowlength);
	\end{pgfonlayer}
	\end{tikzpicture}}
\end{center}
Because the bordism category is symmetric monoidal, a right dual is also a left dual; the existence of the above duality between $\pt_+$ and $\pt_-$ therefore shows that the 2-framed bordism 2-category $\FrBord_2$ is 1-dualizable.  (Stabilizing the framings of all the manifolds in question similarly shows that the $n$-framed bordism $n$-category $\FrBord_n$ is 1-dualizable; an analogous discussion shows that the 1-framed bordism category $\FrBord_1$ is 1-, that is fully, dualizable.)

Next we illustrate the adjoints of 1-morphisms in the 2-framed bordism 2-category.  The first morphisms that need adjoints are the evaluation and coevaluation bordisms arising above in the duality between the positively and negatively framed points.

\begin{example} \label{eg:evlevadj}
A left adjoint to the evaluation bordism is provided by the following 2-framed bordism:
\begin{center}
	$\ev^L := $ \cb{
	\begin{tikzpicture}
	\draw[linestyle,fuzzright] (0,0) arc (90:270:\smcirclerad);
	\begin{pgfonlayer}{background}
		\draw[->,outstyle] (0,0) -- +(0:\arrowlength);
		\draw[->,outstyle] (0,-2*\smcirclerad) -- +(0:\arrowlength);
	\end{pgfonlayer}
    \end{tikzpicture} }
\end{center}
The unit and counit 2-morphisms of the adjunction $\ev^L \dashv \ev$ are respectively the bordisms $u_1$ and $v_1$ defined in Examples~\ref{ex:disk_bordism_immersed} and~\ref{ex:saddle_bordism_immersed}.  (Precisely, we use a deformation of $v_1$ such that all four cusps point along the $x$-axis.)
\end{example}

\begin{example} \label{eg:evrevadj}
A right adjoint $\ev^R$ to the evaluation bordism, together with a unit $u_2$ and counit $v_2$ witnessing the adjunction $\ev \dashv \ev^R$, are as follows:
\begin{center}
	$\ev^R = $\cb{
	\begin{tikzpicture}
		\draw[linestyle,fuzzright] (0,0) 
		to [looseness=1.6, out=180, in=190] +(90:1.5*\smcirclerad)
		to [looseness=1.6, out=10, in=-10] +(90:-5*\smcirclerad)
		to [looseness=1.6, out=170, in=180] +(90:1.5*\smcirclerad);
		\begin{pgfonlayer}{background}
			\draw[->,outstyle] (0,0) -- +(0:\arrowlength);
			\draw[->,outstyle] (0,-2*\smcirclerad) -- +(0:\arrowlength);
		\end{pgfonlayer}
	\end{tikzpicture} }
	\hspace{1.5cm}
	$u_2 := $\cb{
	\begin{tikzpicture}
		\draw[linestyle, fill=\fillcolor] (0.75,0) 
			to [looseness=1.6, out=180, in=190] +(90:1.5*\smcirclerad)
			to [looseness=1.6, out=10, in=-10] +(90:-5*\smcirclerad)
			to [looseness=1.6, out=170, in=180] +(90:1.5*\smcirclerad)
			-- +(-0.75,0)
			to [looseness=1.1, out=0, in=170] +(-0.15,-0.70)
			to [looseness=2, out=-10, in=10] (0.6,0.7)
			to [looseness=1.1, out=190, in=0] (0,0)
			-- (0.75,0);
		\draw[linestyle,fuzzright] (0.75,0) 
			to [looseness=1.6, out=180, in=190] +(90:1.5*\smcirclerad)
			to [looseness=1.6, out=10, in=-10] +(90:-5*\smcirclerad)
			to [looseness=1.6, out=170, in=180] +(90:1.5*\smcirclerad)
			+(-0.75,0) 
			to [looseness=1.1, out=0, in=170] +(-0.15,-0.70)
			to [looseness=2, out=-10, in=10] (0.6,0.7)
			to [looseness=1.1, out=190, in=0] (0,0);
		\begin{pgfonlayer}{background}
			\draw[->,outstyle] (0.75,0) -- +(0:\arrowlength);
			\draw[->,outstyle] (0.75,-2*\smcirclerad) -- +(0:\arrowlength);
		\end{pgfonlayer}
	\end{tikzpicture} }
	\hspace{1.5cm}
	$v_2 :=$\cb{
	\begin{tikzpicture}
		\draw[linestyle, fill=\fillcolor] (0,0) 
		to [looseness=1.6, out=180, in=190] +(90:1.5*\smcirclerad)
		to [looseness=1.6, out=10, in=-10] +(90:-5*\smcirclerad)
		to [looseness=1.6, out=170, in=180] +(90:1.5*\smcirclerad)
		to [looseness=1.55, out=0, in=0] +(90:2*\smcirclerad);
	\end{tikzpicture} }
\end{center}
As our notation for $n$-framed manifolds permits immersed rather than embedded manifolds, it is often more convenient to depict $\ev^R$ by an isotopic (rel boundary) immersion:
\[
\cb{
	\begin{tikzpicture}
		\draw[linestyle,fuzzright] (0,0) 
		to [looseness=1.6, out=180, in=190] +(90:1.5*\smcirclerad)
		to [looseness=1.6, out=10, in=-10] +(90:-5*\smcirclerad)
		to [looseness=1.6, out=170, in=180] +(90:1.5*\smcirclerad);
		\begin{pgfonlayer}{background}
			\draw[->,outstyle] (0,0) -- +(0:\arrowlength);
			\draw[->,outstyle] (0,-2*\smcirclerad) -- +(0:\arrowlength);
		\end{pgfonlayer}
	\end{tikzpicture} 
	}
\simeq
\setlength{\linewid}{15pt}
\setlength{\fuzzwidth}{25pt}
\setlength{\arrowlength}{80pt}
\setlength{\arrowwidth}{7.5pt}
\cb{
\scalebox{.1}{
\begin{tikzpicture}
\draw[linestyle,fuzzright]
(0,5) to [out=180, in=70] (-4,2.75)
	to [looseness=1.6, out=-110, in=-90] (-7,2.75)
	to [looseness=1.6, out=90, in=110] (-4,2.75)
	to [out=-70, in=70] (-4,-2.75)
	to [looseness=1.6, out=-110, in=-90] (-7,-2.75)
	to [looseness=1.6, out=90, in=110] (-4,-2.75)
	to [out=-70, in=180] (0,-5);
\begin{pgfonlayer}{background}
	\draw[-scalehead,outstyle] (0,5) -- +(0:\arrowlength);
	\draw[-scalehead,outstyle] (0,-5) -- +(0:\arrowlength);
\end{pgfonlayer}
\end{tikzpicture}
}
}
\setlength{\linewid}{1.5pt}
\setlength{\fuzzwidth}{2.5pt}
\setlength{\arrowlength}{8pt}
\setlength{\arrowwidth}{.75pt}
\]
\end{example}

The 1-morphism $\ev^L$ itself has a left adjoint $\ev^{LL}$, which itself has a left adjoint, and so on, and similarly $\ev^R$ itself has a right adjoint $\ev^{RR}$, which has a right adjoint, and so on.  The 1-morphism $\coev$ also has an infinite chain of left and right adjoints.  These various adjunctions are illustrated in Figure~\ref{fig:adjointchains}.  (The chain of adjoints of the evaluation extends as follows: the 1-morphism $\ev^{LLLL}$ appears as $\ev^{LL}$ except with four loops, while $\ev^{RRR}$ appears as $\ev^R$ except with four loops.  The chain of adjoints of the coevaluation extends similarly.) 
The existence of these two chains of adjunctions shows that the 2-framed bordism 2-category is 2-dualizable.

\begin{figure}[ht]
\[
\cdots
\hspace{.2cm}
\dashv
\hspace{.2cm}
\setlength{\linewid}{1.5pt}
\setlength{\fuzzwidth}{2.5pt}
\setlength{\arrowlength}{8pt}
\setlength{\arrowwidth}{.75pt}
\cb{
\begin{tikzpicture}[rotate=-90]
\draw[linestyle,fuzzleft]
(.5,.2) to [out=-90, in=25] (.3,-.3)
	to [looseness=1.6, out=-155, in=225] (.15,-.15)
	to [looseness=1.6, out=45, in=65] (.3,-.3)
	to [out=245, in=-65] (-.3,-.3)
	to [looseness=1.6, out=115, in=135] (-.15,-.15)
	to [looseness=1.6, out=-45, in=-25] (-.3,-.3)
	to [out=155, in=-90] (-.5,.2);
		\begin{pgfonlayer}{background}
			\draw[->,outstyle] (.5,.2) -- +(90:\arrowlength);
			\draw[->,outstyle] (-.5,.2) -- +(90:\arrowlength);
		\end{pgfonlayer}	
\end{tikzpicture}
}
\hspace{.00cm}
\dashv
\hspace{.2cm}
\setlength{\linewid}{15pt}
\setlength{\fuzzwidth}{25pt}
\setlength{\arrowlength}{80pt}
\setlength{\arrowwidth}{7.5pt}
\cb{
\scalebox{.1}{
\begin{tikzpicture}
\draw[linestyle,fuzzleft]
(0,5) to [out=0, in=110] (4,2.75)
	to [looseness=1.6, out=-70, in=-90] (7,2.75)
	to [looseness=1.6, out=90, in=70] (4,2.75)
	to [out=-110, in=110] (4,-2.75)
	to [looseness=1.6, out=-70, in=-90] (7,-2.75)
	to [looseness=1.6, out=90, in=70] (4,-2.75)
	to [out=-110, in=0] (0,-5);
\end{tikzpicture}
}
}
\setlength{\linewid}{1.5pt}
\setlength{\fuzzwidth}{2.5pt}
\setlength{\arrowlength}{8pt}
\setlength{\arrowwidth}{.75pt}
\hspace{.2cm}
\dashv
\hspace{.1cm}
\cb{
\begin{tikzpicture}
		\draw[linestyle,fuzzright] (4.25,0.5) to [looseness=1.6,out = 180, in = 180] (4.25,-0.5);
		\begin{pgfonlayer}{background}
			\draw[->,outstyle] (4.25,0.5) -- +(0:\arrowlength);
			\draw[->,outstyle] (4.25,-0.5) -- +(0:\arrowlength);
		\end{pgfonlayer}
\end{tikzpicture}
}
\hspace{.05cm}
\dashv
\hspace{.25cm}
\cb{
\begin{tikzpicture}
		\draw[linestyle,fuzzleft] (5.75,0.5) to [looseness=1.6,out = 0, in = 0] (5.75,-0.5);
\end{tikzpicture}
}
\hspace{.1cm}
\dashv
\hspace{.25cm}
\setlength{\linewid}{15pt}
\setlength{\fuzzwidth}{25pt}
\setlength{\arrowlength}{80pt}
\setlength{\arrowwidth}{7.5pt}
\cb{
\scalebox{.1}{
\begin{tikzpicture}
\draw[linestyle,fuzzright]
(0,5) to [out=180, in=70] (-4,2.75)
	to [looseness=1.6, out=-110, in=-90] (-7,2.75)
	to [looseness=1.6, out=90, in=110] (-4,2.75)
	to [out=-70, in=70] (-4,-2.75)
	to [looseness=1.6, out=-110, in=-90] (-7,-2.75)
	to [looseness=1.6, out=90, in=110] (-4,-2.75)
	to [out=-70, in=180] (0,-5);
\begin{pgfonlayer}{background}
	\draw[-scalehead,outstyle] (0,5) -- +(0:\arrowlength);
	\draw[-scalehead,outstyle] (0,-5) -- +(0:\arrowlength);
\end{pgfonlayer}
\end{tikzpicture}
}
}
\hspace{.00cm}
\dashv
\hspace{.2cm}
\setlength{\linewid}{1.5pt}
\setlength{\fuzzwidth}{2.5pt}
\setlength{\arrowlength}{8pt}
\setlength{\arrowwidth}{.75pt}
\cb{
\begin{tikzpicture}[rotate=90]
\draw[linestyle,fuzzleft]
(.5,.2) to [out=-90, in=25] (.3,-.3)
	to [looseness=1.6, out=-155, in=225] (.15,-.15)
	to [looseness=1.6, out=45, in=65] (.3,-.3)
	to [out=245, in=-65] (-.3,-.3)
	to [looseness=1.6, out=115, in=135] (-.15,-.15)
	to [looseness=1.6, out=-45, in=-25] (-.3,-.3)
	to [out=155, in=-90] (-.5,.2);
\end{tikzpicture}
}
\hspace{.1cm}
\dashv
\hspace{.2cm}
\cdots
\]\vspace{1pt}

\hspace*{.1cm}
\begin{tabularx}{.86\textwidth}{*6{>{\centering}X}}
$\ev^{LLL}$
&
$\ev^{LL}$ 
&
$\ev^L$ 
&
$\ev$ 
&
$\ev^R$
&
$\ev^{RR}$
\end{tabularx}

\[
\cdots 
\hspace{.25cm}
\dashv
\hspace{.2cm}
\setlength{\linewid}{1.5pt}
\setlength{\fuzzwidth}{2.5pt}
\setlength{\arrowlength}{8pt}
\setlength{\arrowwidth}{.75pt}
\cb{
\begin{tikzpicture}[rotate=90]
\draw[linestyle,fuzzright]
(.5,.2) to [out=-90, in=25] (.3,-.3)
	to [looseness=1.6, out=-155, in=225] (.15,-.15)
	to [looseness=1.6, out=45, in=65] (.3,-.3)
	to [out=245, in=-65] (-.3,-.3)
	to [looseness=1.6, out=115, in=135] (-.15,-.15)
	to [looseness=1.6, out=-45, in=-25] (-.3,-.3)
	to [out=155, in=-90] (-.5,.2);
\end{tikzpicture}
}
\hspace{.15cm}
\dashv
\hspace{.25cm}
\setlength{\linewid}{15pt}
\setlength{\fuzzwidth}{25pt}
\setlength{\arrowlength}{80pt}
\setlength{\arrowwidth}{7.5pt}
\cb{
\scalebox{.1}{
\begin{tikzpicture}
\draw[emptylinestyle, white] (0,5.8) -- (0,-5.8);
\draw[linestyle,fuzzleft]
(0,5) to [out=180, in=70] (-4,2.75)
	to [looseness=1.6, out=-110, in=-90] (-7,2.75)
	to [looseness=1.6, out=90, in=110] (-4,2.75)
	to [out=-70, in=70] (-4,-2.75)
	to [looseness=1.6, out=-110, in=-90] (-7,-2.75)
	to [looseness=1.6, out=90, in=110] (-4,-2.75)
	to [out=-70, in=180] (0,-5);
\begin{pgfonlayer}{background}
	\draw[-scalehead,outstyle] (0,5) -- +(0:\arrowlength);
	\draw[-scalehead,outstyle] (0,-5) -- +(0:\arrowlength);
\end{pgfonlayer}
\end{tikzpicture}
}
}
\setlength{\linewid}{1.5pt}
\setlength{\fuzzwidth}{2.5pt}
\setlength{\arrowlength}{8pt}
\setlength{\arrowwidth}{.75pt}
\hspace{-.05cm}
\dashv
\hspace{.25cm}
\cb{
\begin{tikzpicture}
		\draw[linestyle,fuzzright] (5.75,0.5) to [looseness=1.6,out = 0, in = 0] (5.75,-0.5);
\end{tikzpicture}
}
\hspace{.1cm}
\dashv
\hspace{.1cm}
\cb{
\begin{tikzpicture}
	\draw[emptylinestyle, white] (4.25,.6) -- (4.25,-.6);
		\draw[linestyle,fuzzleft] (4.25,0.5) to [looseness=1.6,out = 180, in = 180] (4.25,-0.5);
		\begin{pgfonlayer}{background}
			\draw[->,outstyle] (4.25,0.5) -- +(0:\arrowlength);
			\draw[->,outstyle] (4.25,-0.5) -- +(0:\arrowlength);
		\end{pgfonlayer}
\end{tikzpicture}
}
\hspace{.05cm}
\dashv
\hspace{.25cm}
\setlength{\linewid}{15pt}
\setlength{\fuzzwidth}{25pt}
\setlength{\arrowlength}{80pt}
\setlength{\arrowwidth}{7.5pt}
\cb{
\scalebox{.1}{
\begin{tikzpicture}
\draw[linestyle,fuzzright]
(0,5) to [out=0, in=110] (4,2.75)
	to [looseness=1.6, out=-70, in=-90] (7,2.75)
	to [looseness=1.6, out=90, in=70] (4,2.75)
	to [out=-110, in=110] (4,-2.75)
	to [looseness=1.6, out=-70, in=-90] (7,-2.75)
	to [looseness=1.6, out=90, in=70] (4,-2.75)
	to [out=-110, in=0] (0,-5);
\end{tikzpicture}
}
}
\hspace{.2cm}
\dashv
\hspace{.15cm}
\setlength{\linewid}{1.5pt}
\setlength{\fuzzwidth}{2.5pt}
\setlength{\arrowlength}{8pt}
\setlength{\arrowwidth}{.75pt}
\cb{
\begin{tikzpicture}[rotate=-90]
\draw[emptylinestyle, white] (.6,.2) -- (-.6,.2);
\draw[linestyle,fuzzright]
(.5,.2) to [out=-90, in=25] (.3,-.3)
	to [looseness=1.6, out=-155, in=225] (.15,-.15)
	to [looseness=1.6, out=45, in=65] (.3,-.3)
	to [out=245, in=-65] (-.3,-.3)
	to [looseness=1.6, out=115, in=135] (-.15,-.15)
	to [looseness=1.6, out=-45, in=-25] (-.3,-.3)
	to [out=155, in=-90] (-.5,.2);
		\begin{pgfonlayer}{background}
			\draw[->,outstyle] (.5,.2) -- +(90:\arrowlength);
			\draw[->,outstyle] (-.5,.2) -- +(90:\arrowlength);
		\end{pgfonlayer}	
\end{tikzpicture}
}
\setlength{\linewid}{1.5pt}
\setlength{\fuzzwidth}{2.5pt}
\setlength{\arrowlength}{8pt}
\setlength{\arrowwidth}{.75pt}
\hspace{-.05cm}
\dashv
\hspace{.2cm}
\cdots
\]\vspace{1pt}

\hspace*{.2cm}
\begin{tabularx}{.87\textwidth}{*6{>{\centering}X}}
$\coev^{LLL}$
&
$\coev^{LL}$ 
&
$\coev^L$ 
&
$\coev$ 
&
$\coev^R$
&
$\coev^{RR}$
\end{tabularx}
\caption{Two infinite chains of adjunctions of 2-framed bordisms.} \label{fig:adjointchains}
\end{figure}

\section{The Serre bordism and the Serre automorphism} \label{sec:Serre}

It is possible to write any 2-framed 1-manifold bordism as a composite of the elementary pieces arising in Figure~\ref{fig:adjointchains}.  We illustrate this in practice with a crucial bordism, namely the one represented by the following normally-oriented immersed 1-manifold:
\begin{center}
\begin{tikzpicture}
\draw[emptylinestyle, white] (.7,.1) -- (.7,-.1);
\draw[linestyle,fuzzright] 
(.7,0) to [out=180, in=-20] (0,.1)
	to [looseness=1.6, out=160, in=180] (0,.4)
	to [looseness=1.6, out=0, in=20] (0,.1)
	to [out=-160, in=0] (-.7,0);
\begin{pgfonlayer}{background}
	\draw[->,outstyle] (.7,0) -- +(0:\arrowlength);
\end{pgfonlayer}
\end{tikzpicture}
\end{center}
By stabilizing the framing, this manifold represents, for any $n \geq 2$, an $n$-framed bordism $\cS$ from the standard positively framed point to itself.  This bordism is called the loop bordism, and we also refer to it as the {\bfseries Serre bordism}, for reasons described in Example~\ref{eg:serrefunctor} below.

\begin{remark}
The interval $[0,1]$ can be viewed as a bordism with 0 as an incoming boundary point and 1 as an outgoing boundary point.  Consider, up to homotopy, the set of $n$-framings of this interval that restrict to the standard positive $n$-framing at both boundary points.  This set is a group under concatenation of intervals, and that group is canonically identified with $\pi_1(SO(n))$.  When $n=2$, this group of framings is therefore identified with $\ZZ$---here we take the counterclockwise rotation as the positive generator of $\pi_1(SO(2))$.  The loop bordism $\cS$ is the framing corresponding to $-1 \in \ZZ$ under this identification.  We will refer to the bordism corresponding to the $+1$ framing as the inverse loop bordism and denote it $\cS^{-1}$.  Evidently, there are isomorphisms $\cS \circ \cS^{-1} \cong \id_{\pt_+} \cong \cS^{-1} \circ \cS$.
\end{remark}

The loop bordism and the inverse loop bordism admit decompositions into more elementary pieces, as shown in Figure~\ref{fig:serrebordism}.
\begin{figure}[htbp]
\vspace{\topskip}	
	\begin{tikzpicture}
		\draw [linestyle,fuzzleft] (-.75, 3) -- (1.5, 3)
			to [looseness=1, out = 0, in = 180] (2.5,2) 
			arc (90:-90:0.5cm) -- (1.5,1) -- +(-\evlength-\loopsize,0)
			arc (-270:0:\loopsize)
			-- +(0,\evlengthv+\loopsize+\loopsize)
			arc (0:270:\loopsize)
			-- +(\evlength+\loopsize,0)
			to [looseness=1, out = 0, in = 180] (2.5, 3) -- (4.5,3);
		\begin{pgfonlayer}{background}
				\draw[->,outstyle] (4.5,3) -- +(0:\arrowlength);
		\end{pgfonlayer}	
		\draw [dashed] (2.5,2.5) rectangle (3.5,0.5);
		\node (B) at (4.5,1.5) {$\ev$};
		\draw [dashed] (3.5, 1.5) -- (B);	
		\draw [dashed] (0.25,2.75) rectangle (1.5,0.25);
		\node (A) at (-.5, 1.5) {$\ev^R$};
		\draw [dashed] (A) -- (0.25, 1.5);
	\end{tikzpicture}
\hspace{1cm}
\raisebox{7pt}{
	\begin{tikzpicture}
		\draw [linestyle,fuzzleft] (-.5, 3) -- (1.5, 3)
			to [looseness=1, out = 0, in = 180] (2.5,2)
			to [looseness=2,out=0,in=0] (2.5,1)
			to (1.5,1)
			to [looseness=2,out=180,in=180] (1.5,2)
			to [looseness=1, out = 0, in = 180] (2.5, 3) -- (4.5,3);
		\begin{pgfonlayer}{background}
				\draw[->,outstyle] (4.5,3) -- +(0:\arrowlength);
		\end{pgfonlayer}	
		\draw [dashed] (2.5,2.5) rectangle (3.5,0.5);
		\node (B) at (4.5,1.5) {$\ev$};
		\draw [dashed] (3.5, 1.5) -- (B);	
		\draw [dashed] (0.5,2.5) rectangle (1.5, 0.5);
		\node (A) at (-.5, 1.5) {$\ev^L$};
		\draw [dashed] (A) -- (0.5, 1.5);
	\end{tikzpicture}
	}
\caption{Decompositions of the Serre and inverse Serre bordisms.} \label{fig:serrebordism}
\end{figure}

\nid Written out algebraically, these two decompositions have the following form:
\begin{align*}
	\cS &\cong (\id_{\pt_+} \sqcup \ev) \circ (\tau_{\pt_+, \pt_+} \sqcup \id_{\pt_-}) \circ (\id_{\pt_+} \sqcup \ev^R) \\
	\cS^{-1} &\cong (\id_{\pt_+} \sqcup \ev) \circ (\tau_{\pt_+, \pt_+} \sqcup \id_{\pt_-}) \circ (\id_{\pt_+} \sqcup \ev^L) 
\end{align*}
Here $\tau$ denotes the symmetric monoidal switch.

Suppose we have a 2-framed 2-dimensional local field theory $\cF: \FrBord_2 \ra \cC$ with target the symmetric monoidal $(\infty,2)$-category $\cC$; denote by $x:=\cF(\pt_+) \in \cC$ the 2-dualizable object that is the image of the standard positive point.  The Serre bordism is an automorphism of the positive point, and so the value of the field theory on the Serre bordism is an automorphism $\cS_x := \cF(\cS)$ of the object $x \in \cC$---we call this automorphism the \emph{Serre automorphism} of $x$.  The terminology is motivated by the following example.

\begin{example}[{\cite[Rem. 4.2.4]{lurie-ch}}] \label{eg:serrefunctor}
Let $\cC$ be the $(\infty,2)$-category of cocomplete differential graded categories over a field.  Let $D$ be the category of quasi-coherent complexes on a smooth variety $X$.  The object $D \in \cC$ is 2-dualizable and so there is an associated field theory $\cF_D$ with $\cF_D(\pt_+) \simeq D$.  The resulting automorphism $\cS_D$ is the Serre functor that appears in Serre duality; it is given by tensoring with the canonical line bundle on $X$ and shifting by the dimension of $X$.
\end{example}

We can use the above decompositions of the loop bordism and inverse loop bordism to calculate the value of the Serre automorphism of any object in any target category.
\begin{proposition} \label{prop:serrecalc}
Let $x \in \cC$ be a 2-dualizable object of a symmetric monoidal $(\infty,2)$-category $\cC$.  Let $\ev_x$ and $\coev_x$ denote evaluation and coevaluation morphisms witnessing a duality between $x$ and an object $\overline{x}$.  Let $\ev_x^R$ and $\ev_x^L$ denote a right and a left adjoint to the evaluation morphism.  The Serre automorphism and inverse Serre automorphism of $x$ are given by the following formulas:
	\begin{align*}
		\cS_x &\simeq (\id_{x} \otimes \ev_{x}) \circ (\tau_{x, x} \otimes \id_{\overline{x}}) \circ (\id_{x} \otimes \ev_{x}^R) \\
		\cS_x^{-1} &\simeq (\id_{x} \otimes \ev_{x}) \circ (\tau_{x, x} \otimes \id_{\overline{x}}) \circ (\id_{x} \otimes \ev_{x}^L)
	\end{align*}
\end{proposition}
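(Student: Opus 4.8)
The plan is to reduce the statement to the geometry already pictured in Figure~\ref{fig:serrebordism} together with the functoriality of local field theories. Since $x$ is $2$-dualizable, it is the point-value of a $2$-framed $2$-dimensional local field theory $\cF : \FrBord_2 \ra \cC$ with $\cF(\pt_+) \simeq x$, and by definition the Serre automorphism is $\cS_x := \cF(\cS)$, where $\cS$ is the loop bordism and $\overline{x} := \cF(\pt_-)$. The first and only substantive step is therefore to verify, inside the $2$-framed bordism $2$-category, the two decompositions drawn in Figure~\ref{fig:serrebordism}, namely
\begin{align*}
	\cS &\cong (\id_{\pt_+} \sqcup \ev) \circ (\tau_{\pt_+,\pt_+} \sqcup \id_{\pt_-}) \circ (\id_{\pt_+} \sqcup \ev^R), \\
	\cS^{-1} &\cong (\id_{\pt_+} \sqcup \ev) \circ (\tau_{\pt_+,\pt_+} \sqcup \id_{\pt_-}) \circ (\id_{\pt_+} \sqcup \ev^L).
\end{align*}
To establish these one checks that the right-hand composites are made of composable $1$-morphisms (the source/target bookkeeping produces a bordism $\pt_+ \ra \pt_+$), draws the resulting normally framed immersed $1$-manifold using the conventions of Section~\ref{sec:notation}, and confirms that the total rotation of the normal framing relative to the blackboard framing is the class $-1 \in \pi_1(SO(2)) \cong \ZZ$ for the first composite and $+1$ for the second; by the remark preceding Figure~\ref{fig:serrebordism} this is exactly what characterizes $\cS$ and $\cS^{-1}$ among $2$-framed bordisms $\pt_+ \ra \pt_+$.

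With the bordism identities in hand, I would push them through the symmetric monoidal functor $\cF$. Being symmetric monoidal, $\cF$ carries $\sqcup$ to $\otimes$ and the braiding $\tau_{\pt_+,\pt_+}$ to $\tau_{x,x}$, carries identity bordisms to identity morphisms, and carries the dual pair $(\pt_+,\pt_-)$ with its chosen evaluation $\ev$ to a dual pair $(x,\overline{x})$ with evaluation $\ev_x := \cF(\ev)$; and since a symmetric monoidal functor of $(\infty,2)$-categories preserves adjunctions of $1$-morphisms, it sends $\ev^R$ and $\ev^L$ to a right and a left adjoint of $\ev_x$. Because duals, and adjoints of a fixed $1$-morphism, are unique up to canonical equivalence, the composites are unchanged up to equivalence if we replace $\cF(\ev)$, $\cF(\ev^R)$, $\cF(\ev^L)$ by any other choices $\ev_x$, $\ev_x^R$, $\ev_x^L$ as in the statement. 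Applying $\cF$ to the two displayed bordism identities then produces precisely the two claimed formulas for $\cS_x$ and $\cS_x^{-1}$.

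I expect the only real work to be the framing bookkeeping in the geometric step: each of the three elementary pieces ($\ev^R$ or $\ev^L$, the half-twist $\tau_{\pt_+,\pt_+}$, and $\ev$, each tensored with a straight strand) contributes a partial rotation of the normal framing, and one has to track these together with the rule of Section~\ref{sec:notation} for how a normal framing on a bordism restricts to its incoming and outgoing boundary, in order to see that the contributions add up to $-1$ (respectively $+1$) rather than to $0$ or $\pm 2$; everything after that is formal. Finally, I would note that the argument can be phrased with no appeal to the cobordism hypothesis: for an abstract $2$-dualizable object $x$ one may simply \emph{take} the right-hand sides of the displayed formulas as the definition of $\cS_x$ and $\cS_x^{-1}$, in which case the proposition records exactly that these composites are well defined (independent of the choices of $\ev_x$, $\ev_x^R$, $\ev_x^L$, by uniqueness of duals and adjoints) and that they agree with $\cF(\cS)$ and $\cF(\cS^{-1})$ whenever $x = \cF(\pt_+)$.
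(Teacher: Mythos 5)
Your proposal is correct and follows essentially the same route as the paper: take the decomposition of the loop and inverse loop bordisms from Figure~\ref{fig:serrebordism}, apply the symmetric monoidal field theory (which preserves duals and adjoints), and invoke the uniqueness of duals and adjoints to see the choices of $\ev_x$, $\ev_x^R$, $\ev_x^L$ are immaterial. The only difference is that you spell out the verification of the bordism decomposition via the $\pi_1(SO(2))$ winding-number bookkeeping, which the paper simply takes as already established in Section~\ref{sec:Serre}.
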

\nid Because symmetric monoidal functors take duals to duals and adjoints to adjoints, the previous decomposition of the Serre and inverse Serre bordisms immediately implies that the Serre and inverse Serre automorphisms are equivalent to the listed expressions.  (Recall also that duals and adjoints are uniquely determined---more specifically, the category of triples $(\overline{x},\ev_x,\coev_x)$ witnessing a dual to $x$ is contractible, and similarly for triples witnessing a right or left adjoint to $\ev$---and so it does not matter what choices we make for $\overline{x}$, $\ev_x$, $\coev_x$, $\ev_x^R$, and $\ev_x^L$.)  In fact, given a 2-dualizable object $x \in \cC$, a local field theory may be chosen such that the Serre and inverse Serre automorphisms of $x$ in that choice of field theory are exactly equal to the formulas listed in the proposition; ensuring that equality is somewhat more subtle and is discussed later in Remark~\ref{rem:hep}.

\section{The Radford bordism and the Radford equivalence} \label{sec:Radford}

As mentioned previously, the group, under concatenation, of homotopy classes of $n$-framings of the interval is $\pi_1(SO(n))$.  When $n$ is at least 3, this group is $\ZZ/2$, and we see that the loop bordism is an involution; said differently, the loop bordism and the inverse loop bordism are equivalent.  For our purposes, it is not enough to merely know that there exists an equivalence; we need an explicit realization of the equivalence.  In the last section we described a decomposition of the loop bordism $\cS$, respectively its inverse $\cS^{-1}$, in terms of the more elementary pieces $\ev$ and $\ev^R$, respectively $\ev$ and $\ev^L$.   To produce an equivalence between $\cS$ and $\cS^{-1}$ it therefore suffices to provide an equivalence between the bordisms $\ev^R$ and $\ev^L$.  Such an equivalence is given by the unique (up to 3-framed diffeomorphism rel boundary) 3-framed genus zero bordism $\cR$ from $\ev^L$ to $\ev^R$; this bordism was described in Example~\ref{eg:radford} and illustrated in the introduction.  We call this bordism the belt bordism---if you wrap your belt into the configuration of the outgoing boundary and pull, it will happily (and very rapidly) trace out this bordism for you.  We will also refer to this bordism as the {\bfseries Radford bordism}, because it plays a crucial role in our topological proof of Radford's quadruple dual theorem, in Section~\ref{sec:topquaddual}.

In this section we describe an explicit, Morse-style decomposition of the Radford bordism; we introduce a condition on an object $x$ of a 3-category, weaker than full dualizability, that ensures we can construct an equivalence, guided by the decomposition of the Radford bordism, from the Serre automorphism $\cS_x$ to its inverse $\cS_x^{-1}$.

\subsection{A decomposition of the Radford bordism}

\begin{figure}[bp]
\makebox[\textwidth][c]{
		\begin{tikzpicture}[
			decoration={border, 
				segment length = 4pt, 	
				amplitude = 2pt, 		
				angle = 0  				 
				}, 
			contour line/.style={thin, blue}
				]
				
			\colorlet{surfacecolor1}{black!12}
			\colorlet{surfacecolor2}{black!5}	
			
			
			\begin{scope}[scale=0.75]			
				\draw [fuzzright] (0,2.5) arc (180:270: 1cm and 0.5cm) to [out = 0, in = 210] (2,2.5)
					-- (4.5, 1.25);
				\draw [fuzzleft] (0,2.5) arc (180:90: 1cm and 0.5cm);
				\draw [fuzzright] (4, 0.5) arc (180:360: 1cm and 0.5cm);
				\draw [fuzzleft] (4, 0.5) arc (180:90: 1cm and 0.5cm);
				
				\draw [fuzzright] (6, 1.5) -- (8, 2.5) to [out = 30, in = 240] (11, 6);
				\draw [fuzzright]	 (8.25, 5.8) to [out = 45, in = 260] (9, 7);
				\begin{pgfonlayer}{background}
					\draw[->,outstyle] (11,6) -- +(30:\arrowlength);
					\draw[->,outstyle] (9,7) -- +(45:\arrowlength);
				\end{pgfonlayer}
				
			\fill [color = surfacecolor1] (0, 2.5) .. controls (0, 4) and (2,6.5) .. (1.75,8)
				arc (180:0:2cm and 3cm)
				to [out = 270, in = 90] (6, 5.5)
				-- (6,0.5)
				arc (360: 180: 1cm and 0.5cm)
				-- (4, 6)
				arc (90:180:2cm and 3.5cm)
				to [out = 210, in = 0] (1,2)
				arc (270: 180: 1cm and 0.5cm);
			
			\fill [color = surfacecolor1] (7, 6.5) parabola bend (8.5, 5.8) (8.25, 5.8) 
				to [out = 45, in = 260] (9, 7)
				parabola bend (7.75, 6.4) (7, 6.5);
			
			\fill [color = surfacecolor2] (4, 6) arc (90:180:2cm and 3.5cm)
				-- (4, 1.5) -- (4,6);
			
			\fill [color = surfacecolor2] (6, 5.5) parabola (7, 6.5) parabola bend (8.5, 5.8) (11, 6)
				to [out = 240, in = 30] (8, 2.5) -- (6, 1.5) -- (6, 5.5);

			\draw (0, 2.5) .. controls (0, 4) and (2,6.5) .. (1.75,8)
				arc (180:0:2cm and 3cm)
				to [out = 270, in = 90] (6, 5.5)
				-- (6,0.5)
				arc (360: 180: 1cm and 0.5cm)
				-- (4, 6)
				arc (90:180:2cm and 3.5cm)
				to [out = 210, in = 0] (1,2)
				arc (270: 180: 1cm and 0.5cm);
			
			\draw (2,2.5) -- (4,1.5) 
				decorate {-- (5,1) to [out = -30, in = 135] (6,0.5)
					  (4,0.5) to [out = 45, in = 210] (5,1) -- (6,1.5)} 
				-- (8, 2.5) to [out = 30, in = 240] (11, 6);
			\draw decorate { (5, 1) arc (0:90:1cm and 5cm)};
			\draw decorate {(2,2.5) -- (7,5) to [out = 30, in = 225] (8.25, 5.8)} to [out = 45, in = 260] (9, 7) ;
			\draw decorate {(2,2.5) to [out = 150, in = 0] (1,3) arc (90:180:1cm and 0.5cm)};
			\draw (4, 6) to [out = 90, in = 270] (4.5, 8.5);
			\draw (6, 5.5) decorate { parabola (4.5, 8.5)} (6, 5.5) parabola (7, 6.5)
				parabola bend (7.75, 6.4) (9, 7);
			\draw (7, 6.5) parabola bend (8.5, 5.8) (11, 6);
			
			\draw [contour line] (2.3, 10) arc (180:360:1.45cm and 0.5cm)
				decorate {(2.3, 10) arc (180:0:1.45cm and 0.5cm)};

			\draw [contour line] (1.75,8) arc (180:320:1.45cm and 0.5cm)
				decorate {to [out = 30,in = 135](4.65, 7.6) to [out = -45, in = 45] (4.2, 7.1) }
				arc (180:270:0.5cm and 0.2cm) to [out = 0, in = -90] (5.75,8)
			;
			\draw [contour line] decorate {(1.75,8) arc (180:0:2cm and 0.6cm)};
			
			\draw [contour line] 
				decorate {(1.25, 5.75) arc (180:90: 1cm and 0.5cm) to [out = 0, in = 150] (3.5,5.75)}
				 -- (4, 5.5) 
				decorate {-- (4.5,5.25) to [out = -30, in = 135] (6,5)}
				arc (360: 180: 1cm and 0.5cm)
				decorate { to [out = 45, in = 210] (4.5,5.25)};
			
			\draw [contour line] 
			 (1.25, 5.75) arc (180: 270: 1cm and 0.5cm) to [out = 0, in = 210] (3.25,5.75)
			 	 to [out = 30, in = 150] (5.25, 6.15) to [out = -30,in = 30] (4.5, 5.25);
			
			\end{scope}
					
		\begin{scope}[xshift = 7cm, yshift = 1.75cm]
			
						\draw[linestyle,fuzzright] (4,7) to [looseness=1.6,out = 180, in = 180] (4,6);
						\begin{pgfonlayer}{background}
							\draw[->,outstyle] (4,7) -- +(0:\arrowlength);
							\draw[->,outstyle] (4,6) -- +(0:\arrowlength);
						\end{pgfonlayer}
						
						\begin{scope}[yshift=-.05cm]
						\draw [thick, ->] (3, 5.7) -- (3, 4.8);
							\draw[linestyle, fill=\fillcolor, yshift = 0.5*\smcircleradt] (2.3,5.25) 
								to [looseness=1.6, out=180, in=190] +(90:0.75*\smcircleradt)
								to [looseness=1.6, out=10, in=-10] +(90:-2.5*\smcircleradt)
								to [looseness=1.6, out=170, in=180] +(90:.75*\smcircleradt)
								to [looseness=1.55, out=0, in=0] +(90:\smcircleradt);
						\node at (2.75,5.5) {$\scriptstyle R$};
						\end{scope}

						\draw[linestyle,fuzzright] (4,4.4) to [looseness=1.6,out = 180, in = 180] +(0,-1);
							\begin{pgfonlayer}{background}
								\draw[->,outstyle] (4,4.4) -- +(0:\arrowlength);
								\draw[->,outstyle] (4,3.4) -- +(0:\arrowlength);
							\end{pgfonlayer}
						\draw[yshift = \smcirclerad, linestyle, fuzzright] (2,3.9) 
							to [looseness=1.6, out=180, in=190] +(90:1.5*\smcirclerad)
							to [looseness=1.6, out=10, in=-10] +(90:-5*\smcirclerad)
							to [looseness=1.6, out=170, in=180] +(90:1.5*\smcirclerad)
							to [looseness=1.55, out=0, in=0] +(90:2*\smcirclerad);

						\draw [double equal sign distance] (3, 3.00) -- (3, 2.5);
							
						\begin{scope}[yshift=.1cm]
						\draw[linestyle,fuzzright] (4,2) to [looseness=1.6,out = 180, in = 180] +(0,-1);
								\begin{pgfonlayer}{background}
									\draw[->,outstyle] (4,2) -- +(0:\arrowlength);
									\draw[->,outstyle] (4,1) -- +(0:\arrowlength);
								\end{pgfonlayer}
						\draw[linestyle,fuzzright] (2.5, 2) 
							-- +(-\evlength+.1-\loopsize,0)
			arc (-90:-360:\loopsize)
			-- +(0,-\evlengthv-\loopsize-\loopsize)
			arc (0:-270:\loopsize)
			-- +(\evlength-.1+\loopsize,0)
			arc (-90:90:0.5cm);
						\end{scope}
						
						\begin{scope}[yshift=.05cm]
						\draw [thick, ->] (3, 0.7) -- (3, -0.2);
						\begin{scope}[xshift = 3.3cm, yshift = 0.0cm, scale = 0.57]
							\filldraw[linestyle,fill=\fillcolor] 
							(0,0) .. controls (.25,.25) and (.75,.25) .. (1,0)
								.. controls (.75,.25) and (.75,.75) .. (1,1)
								.. controls (.75,.75) and (.25,.75) .. (0,1)
								.. controls (.25,.75) and (.25,.25) .. (0,0);
						\draw[linestyle,fuzzright]
							(0,0) .. controls (.25,.25) and (.75,.25) .. (1,0);
						\draw[linestyle,fuzzleft]
							(0,1) .. controls (.25,.75) and (.75,.75) .. (1,1);
						\begin{pgfonlayer}{background}
							\draw[->,outstyle] (1,1) -- +(45:\arrowlength);
							\draw[->,outstyle] (1,0) -- +(-45:\arrowlength);
						\end{pgfonlayer}
						\end{scope}
						\end{scope}

							\draw[linestyle,fuzzright] (4, -0.5) -- (2.5, -0.5)
							-- +(-\evlength+.1-\loopsize,0)
			arc (-90:-360:\loopsize)
			-- +(0,-\evlengthv-\loopsize-\loopsize)
			arc (0:-270:\loopsize)
			-- +(\evlength-.1+\loopsize,0)
			-- (4, -1.5);
							\begin{pgfonlayer}{background}
								\draw[->,outstyle] (4,-0.5) -- +(0:\arrowlength);
								\draw[->,outstyle] (4,-1.5) -- +(0:\arrowlength);
							\end{pgfonlayer}

		\end{scope}	
			
	\begin{scope}[xshift = 0cm, yshift = -.25cm]

		\node[anchor=mid west, \graytextcolor] (A) at (11.5, 8.5) {$= \ev^L$};
		\node[anchor=mid west, \graytextcolor] (B) at (11.5, 6) {$= \ev^L \circ \ev \circ \ev^R$};		
		\node[anchor=mid west, \graytextcolor] (C) at (11.5, 1) {$= \ev^R$};
		\node[anchor=mid west, \graytextcolor] at (11.5, 7.25) {$= \id_{\ev^L} \circledcirc v_2^R$};
		\node[anchor=mid west, \graytextcolor] at (11.5, 2.25) {$= v_1 \circledcirc \id_{\ev^R}$};
		
		\node at (6, -1) {$\cR = (v_1 \circledcirc \id_{\ev^R})\circ (\id_{\ev^L} \circledcirc v_2^R)$};				
	\end{scope}				
							
		\end{tikzpicture}
}		
	\caption{A Morse decomposition of the Radford bordism.} \label{fig:Radford_bordism}
\end{figure}

Figure~\ref{fig:Radford_bordism} illustrates a (minimal) Morse decomposition of the Radford bordism.  The normal framing is not indicated, but implicitly points out of the page on the more lightly shaded regions and into the page on the more darkly shaded regions of the bordism.  The gray corona indicates that the lower boundary is outgoing, and the arrows indicate that the cusp points are outgoing for both the source and target of the bordism.  As drawn, the Morse decomposition of the bordism has one index-2 and one index-1 critical point; it is crucial to know not only the indices, but to identify those critical points as implementing particular elementary dualizability operations in the 3-framed bordism category.  The index-2 critical point is the right adjoint to the counit $v_2$ of the adjunction $\ev \dashv \ev^R$ described in Example~\ref{eg:evrevadj}.  The index-1 critical point is the counit $v_1$ of the adjunction $\ev^L \dashv \ev$ described in Example~\ref{eg:evlevadj} and illustrated in Example~\ref{ex:saddle_bordism_immersed}.  Written out algebraically, we have the following formula for the Radford bordism:
\begin{equation*}
	\cR = (v_1 \circledcirc \id_{\ev^R})\circ (\id_{\ev^L} \circledcirc v_2^R).
\end{equation*}
Here $\circ$ denotes composition of surfaces in a vertical (2-morphism) direction, and $\circledcirc$ denotes composition of surfaces in a horizontal (1-morphism) direction.  Neither of the factors in this decomposition is invertible, and so a priori it is not clear that this bordism is an equivalence---in the next section we will see that it is, as promised.

\subsection{A categorical formula for the Radford equivalence}

Suppose we have a 3-framed 3-dimen\-sional local field theory $\cF: \FrBord_3 \ra \cC$; let $x:=\cF(\pt_+) \in \cC$ denote the 3-dualizable object that is the image of the standard positive point.  We can certainly evaluate the theory $\cF$ on the Radford bordism, obtaining a 2-morphism $\cR_x := \cF(\cR) : \cF(\ev^L) \ra \cF(\ev^R)$ between the images of the left and right adjoints of the evaluation bordism; we will see shortly that this morphism is an equivalence, and therefore provides an equivalence between the Serre automorphism $\cS_x$ and its inverse $\cS_x^{-1}$.  However, the Radford bordism and its inverse only use certain elementary bordisms and their adjoints, and so we do not need the full strength of 3-dualizability to construct an equivalence between the Serre automorphism and its inverse.  We capture exactly the necessary amount of dualizability in the following definition.
\begin{definition} \label{def:Radford-Object}
Let $\cC$ be a symmetric monoidal $(\infty,n)$-category, for $n \geq 3$, and let $x \in \cC$ be a 2-dualizable object.  Let
	\begin{align*}
		\ev_x: & \; x \otimes \overline{x} \to 1 \\
		\coev_x: & \;  1 \to  \overline{x} \otimes x
	\end{align*}
be evaluation and coevaluation maps witnessing the duality between $x$ and $\overline{x}$.  Let $\ev_x^R$ be the right adjoint of the evaluation $\ev_x$, and let
	\begin{align*}
		u_x: & \;  \id_{x \otimes \overline{x}} \to \ev_x^R \circ \ev_x \\
		v_x: \; & \ev_x \circ \ev_x^R \to \id_1. 
	\end{align*}
be unit and counit maps witnessing that adjunction.  We say that $x$ is a \emph{Radford object} if $u_x$ and $v_x$ both admit right adjoints.
\end{definition}
\begin{definition} \label{def:radfordequiv}
For a Radford object $x \in \cC$, choose a dual $\overline{x}$ with evaluation and coevaluation maps $\ev_x$ and $\coev_x$; choose a right adjoint $\ev_x^R$ witnessed by unit $u_x$ and counit $v_x$, and a left adjoint $\ev_x^L$ witnessed by unit $\tilde{u}_x : \id_1 \ra \ev_x \circ \ev_x^L$ and counit $\tilde{v}_x: \ev_x^L \circ \ev_x \ra \id_{x \otimes \overline{x}}$; and choose a right adjoint $v_x^R$.  The \emph{Radford equivalence} $\cR_x$ is the composite
\[
\cR_x: \ev_x^L \xra{\id_{\ev_x^L} \circledcirc v_x^R} \ev_x^L \circ \ev_x \circ \ev_x^R \xra{\tilde{v}_x \circledcirc \id_{\ev_x^R}} \ev_x^R.
\]
\end{definition} \vspace{8pt}

Note that certainly any 3-dualizable object is Radford, and in that case the Radford equivalence is the image of the Radford bordism under the corresponding local field theory.  For any Radford object $x$, the equivalence class of the Radford equivalence depends only on the object $x$ and not on any of the other choices mentioned.  Definition~\ref{def:radfordequiv} begs the question of whether the given morphism is an equivalence---we address that issue presently.  Note that the apparent asymmetry in the definition could be resolved by considering the Radford equivalence and its inverse simultaneously.

\begin{theorem} \label{thm:Cat_Radford}
For any Radford object $x \in \cC$ in a symmetric monoidal $(\infty,n)$-category, with $n \geq 3$, the Radford map $\cR_x := (\tilde{v}_x \circledcirc \id_{\ev_x^R})\circ (\id_{\ev_x^L} \circledcirc v_x^R) : \ev_x^L \ra \ev_x^R$ is an equivalence.
\end{theorem}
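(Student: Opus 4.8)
The plan is to produce an explicit candidate inverse to $\cR_x$ out of the ``other half'' of the Radford data, and then verify the two zigzag identities by a mate calculation that reduces everything to the zigzag identities of the ambient adjunctions together with the formal functoriality of the right-adjoint operation on $2$-morphisms. Being a Radford object supplies, besides $v_x^R$, a right adjoint $u_x^R : \ev_x^R \circ \ev_x \Rightarrow \id_{x \otimes \overline{x}}$ of the unit $u_x$, and, by $2$-dualizability, a left adjoint $\ev_x^L$ with unit $\tilde u_x$ and counit $\tilde v_x$; dualizing the formula for $\cR_x$ I set
\[
\cR_x^{-1} : \ev_x^R \xra{\id_{\ev_x^R} \circledcirc \tilde u_x} \ev_x^R \circ \ev_x \circ \ev_x^L \xra{u_x^R \circledcirc \id_{\ev_x^L}} \ev_x^L ,
\]
and it remains to check $\cR_x^{-1} \circ \cR_x \simeq \id_{\ev_x^L}$ and $\cR_x \circ \cR_x^{-1} \simeq \id_{\ev_x^R}$.

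First I would handle $\cR_x^{-1} \circ \cR_x$. Under the mate equivalence $\Hom(\ev_x^L, g) \simeq \Hom(\id_1, \ev_x \circ g)$ associated to the adjunction $\ev_x^L \dashv \ev_x$, a short interchange-and-zigzag computation identifies the mate of $\cR_x$ with $v_x^R$ itself. By naturality of this mate equivalence in $g$, the mate of $\cR_x^{-1}\circ\cR_x$ is $(\ev_x \cdot \cR_x^{-1}) \circ v_x^R$, and after further interchange bookkeeping this is forced down to the single identity $(\ev_x \cdot u_x^R)\circ(v_x^R \cdot \ev_x) \simeq \id_{\ev_x}$ (whiskerings of $u_x^R$ and $v_x^R$ by $\ev_x$). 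I would establish this identity not by pushing around the units and counits of $u_x \dashv u_x^R$ and $v_x \dashv v_x^R$ directly, but by applying the general facts that the right-adjoint operation on $2$-morphisms reverses vertical composition, commutes with whiskering, and fixes identities, to the zigzag identity $(v_x \cdot \ev_x)\circ(\ev_x \cdot u_x) \simeq \id_{\ev_x}$ of the adjunction $\ev_x \dashv \ev_x^R$: taking right adjoints of both sides yields precisely $(\ev_x \cdot u_x^R)\circ(v_x^R \cdot \ev_x) \simeq \id_{\ev_x}$.

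For the second identity $\cR_x \circ \cR_x^{-1} \simeq \id_{\ev_x^R}$ I would not repeat a parallel computation but invoke a duality: the Radford-object structure on $x$ is plainly inherited by the $1$-opposite category $\cC^{\op}$ (there the left and right adjoints of $\ev_x$ trade places, while ``admits a right adjoint'' refers to the unchanged hom-$2$-categories), and under this duality $\cR_x$ and $\cR_x^{-1}$ are interchanged; applying the already-proven first identity inside $\cC^{\op}$, where composition is reversed, returns $\cR_x \circ \cR_x^{-1} \simeq \id_{\ev_x^R}$ in $\cC$. This is the sense in which, as the definition anticipates, the apparent asymmetry is resolved by treating the Radford equivalence and its inverse simultaneously. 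Combining the two identities, $\cR_x$ is an equivalence.

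I expect the main obstacle to be the interchange bookkeeping inside the first identity: expanding $\cR_x^{-1}\circ\cR_x$ as a fivefold pasting and maneuvering it, using only the interchange law, into the form from which the reduction to $(\ev_x \cdot u_x^R)\circ(v_x^R \cdot \ev_x)$ is manifest. This is the algebraic avatar of a handle cancellation --- stacking the Radford bordism $\cR$ on the analogous decomposition of $\cR^{-1}$ produces a surface with a cancelling index-$1$/index-$2$ critical pair, cf.\ Figure~\ref{fig:Radford_bordism}, and the cancellation is carried out by exactly the zigzag moves above --- so the care required is in performing it with only the bare, non-coherent zigzag equivalences that the definition of $k$-dualizability provides, rather than in any genuinely new idea.
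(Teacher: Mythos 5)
Your first half is essentially sound, and in fact it secretly reproves the paper's key lemma: the observation that the right-adjoint operation on 2-morphisms is contravariant for vertical composition, compatible with whiskering, and fixes identities, applied to the zigzag $(v_x \circledcirc \id_{\ev_x})\circ(\id_{\ev_x}\circledcirc u_x)\cong\id_{\ev_x}$, yields $(\id_{\ev_x}\circledcirc u_x^R)\circ(v_x^R\circledcirc\id_{\ev_x})\cong\id_{\ev_x}$, which is one of the two zigzags exhibiting $(v_x^R,u_x^R)$ as unit and counit of an adjunction $\ev_x^R\dashv\ev_x$ (the 2-morphism-opposite form of Lemma~\ref{lem-ambiadjoints}); your mate computation then gives $\cR_x^{-1}\circ\cR_x\cong\id_{\ev_x^L}$.

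The gap is the $\cC^{\op_1}$ argument for the other composite. In $\cC^{\op_1}$ the 1-morphism $\ev_x$ reverses direction, becoming a map $1\to x\otimes\overline{x}$; it is coevaluation-shaped and cannot serve as the evaluation of the duality there, so the Radford structure is not ``plainly inherited'' with the same evaluation. Whatever evaluation you do choose in $\cC^{\op_1}$ --- for instance $\ev_x^R$, which together with $\coev_x^R$ does witness the duality --- its right and left adjoints in $\cC^{\op_1}$ are $\ev_x$ and $\ev_x^{RR}$ respectively, so the Radford map of $x$ in $\cC^{\op_1}$ is a comparison $\ev_x^{RR}\to\ev_x$, one step along the adjoint chain, and not $\cR_x^{-1}\colon\ev_x^R\to\ev_x^L$; the already-proven identity in $\cC^{\op_1}$ therefore does not translate into $\cR_x\circ\cR_x^{-1}\cong\id_{\ev_x^R}$. (The other candidate involution, $\cC^{\op_2}$, keeps $\ev_x$ as evaluation and swaps its adjoints, but there ``admits a right adjoint'' means admits a \emph{left} adjoint in $\cC$, and the relevant unit and counit become $\tilde u_x,\tilde v_x$, so Radford-ness is not inherited either: the hypothesis is genuinely asymmetric, and no cheap opposite-category involution interchanges the two identities.) The repair is immediate from your own toolkit and lands on the paper's proof: apply the same right-adjoint operation to the \emph{other} zigzag of $\ev_x\dashv\ev_x^R$ to get $(u_x^R\circledcirc\id_{\ev_x^R})\circ(\id_{\ev_x^R}\circledcirc v_x^R)\cong\id_{\ev_x^R}$; the two identities together say that $(v_x^R,u_x^R)$ exhibit $\ev_x^R$ as a left adjoint of $\ev_x$, and then $\cR_x$ and $\cR_x^{-1}$ are the standard comparison 2-morphisms between the two left adjoints $\ev_x^L$ and $\ev_x^R$, mutually inverse by the usual uniqueness-of-adjoints argument --- which is exactly how the paper concludes, via Lemma~\ref{lem-ambiadjoints} plus uniqueness of left adjoints.
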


For the proof, we will need a general lemma about adjunctions of 1-morphisms in 3-categories.  By assumption $x$ is 2-dualizable, and so the evaluation 1-morphism $\ev_x$ is guaranteed to have both a left and a right adjoint.  When $x$ is Radford, the Radford equivalence witnesses that the evaluation morphism in fact has an \emph{ambidextrous} adjoint: the left and right adjoints are canonically equivalent.  The following lemma, stated as Remark 3.4.22 in \cite{lurie-ch}, is the essential categorical fact responsible for this ambidexterity.

\begin{lemma} \label{lem-ambiadjoints}
	Let $\cC$ be a 3-category. Let $f: x \to y$ be a 1-morphism in $\cC$, and suppose that $f$ admits a right adjoint $f^R$,  with unit and counit maps $u:\id_x \to f^R \circ f$ and $v:f \circ f^R \to \id_y$. If $u$ and $v$ admit left adjoints $u^L$ and $v^L$, then the 2-morphisms $v^L$ and $u^L$, as unit and counit maps respectively, exhibit $f^R$ as a left adjoint to $f$.\footnote{If desired, this lemma and its proof can be rephrased entirely in terms of 2-categories (namely 2-categories of the form $\Hom_\cC(x,y)$, and the `homotopy 2-category' of $\cC$, whose 2-morphisms are the isomorphism classes of invertible 2-morphisms of $\cC$); in the case where $\cC$ is the 3-category of tensor categories, the relevant 2-categories are definable directly, without reference to the 3-category.  As such this lemma, and thus any results in the book directly referencing it, do not in fact depend on the existence of the 3-category $\TC$.}
\end{lemma}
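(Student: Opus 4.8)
The plan is to verify directly that $f^R$, together with the candidate unit $v^L : \id_y \to f \circ f^R$ and candidate counit $u^L : f^R \circ f \to \id_x$, satisfies the two triangle identities required of a left adjoint to $f$ in the $3$-category $\cC$; that is, I want to produce isomorphisms
\[
(u^L \circledcirc \id_{f^R}) \circ (\id_{f^R} \circledcirc v^L) \;\cong\; \id_{f^R}, \qquad (\id_f \circledcirc u^L) \circ (v^L \circledcirc \id_f) \;\cong\; \id_f .
\]
(As always the zigzag equations for an adjunction of $1$-morphisms are required only up to isomorphism and with no coherence, so exhibiting such isomorphisms abstractly suffices.) The observation driving the proof is that the left-hand sides of these two equations are, respectively, the left adjoints --- taken inside the hom-$2$-categories $\Hom_\cC(y,x)$ and $\Hom_\cC(x,y)$ --- of the two ``snake'' composites
\[
\gamma := (\id_{f^R} \circledcirc v) \circ (u \circledcirc \id_{f^R}), \qquad \beta := (v \circledcirc \id_f) \circ (\id_f \circledcirc u),
\]
and that the given adjunction $f \dashv f^R$ already supplies isomorphisms $\gamma \cong \id_{f^R}$ and $\beta \cong \id_f$ (these \emph{are} the two triangle identities of $f \dashv f^R$).

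First I would assemble two standard facts of $2$-category theory, applied to the hom-$2$-categories of $\cC$ (whose objects are $1$-morphisms of $\cC$, $1$-morphisms are $2$-morphisms, and $2$-morphisms are $3$-morphisms). First: pre- and post-composition by a fixed $1$-morphism of $\cC$ are $2$-functors between hom-$2$-categories, and $2$-functors preserve adjunctions of their $1$-morphisms; hence whiskering a $2$-morphism on either side by an identity $2$-morphism carries a left adjoint to a left adjoint, so $(u \circledcirc \id_{f^R})^L = u^L \circledcirc \id_{f^R}$, $(\id_f \circledcirc u)^L = \id_f \circledcirc u^L$, $(v \circledcirc \id_f)^L = v^L \circledcirc \id_f$, and $(\id_{f^R} \circledcirc v)^L = \id_{f^R} \circledcirc v^L$. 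Second: a left adjoint of a vertical composite of $2$-morphisms is the vertical composite of their left adjoints, in the opposite order. Combining these, $\gamma^L = (u^L \circledcirc \id_{f^R}) \circ (\id_{f^R} \circledcirc v^L)$ and $\beta^L = (\id_f \circledcirc u^L) \circ (v^L \circledcirc \id_f)$, exactly the left-hand sides displayed above.

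Finally I would invoke uniqueness of adjoints: since $\gamma \cong \id_{f^R}$ and the identity $2$-morphism $\id_{f^R}$ is its own left adjoint, $\gamma^L \cong \id_{f^R}$; likewise $\beta^L \cong \id_f$. These are precisely the two triangle identities needed, so $v^L$ and $u^L$ exhibit $f^R$ as a left adjoint to $f$, as claimed. All of the genuine content lives in the two $2$-categorical facts above together with uniqueness of adjoints, which is why the lemma, and everything relying on it, requires only $2$-categorical input (as the footnote notes). The one place where care is needed --- and the main chance for an error --- is the variance bookkeeping: confirming that $\gamma$ and $\beta$ are indeed the two snake composites of the chosen adjunction $f \dashv f^R$, that in each whiskering the left adjoint sits on the correct side, and that vertical composition is genuinely reversed upon passage to left adjoints.
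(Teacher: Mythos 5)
Your proposal is correct, and the variance bookkeeping you flag as the danger point does in fact check out: with the paper's conventions, $\gamma^L=(u^L\circledcirc\id_{f^R})\circ(\id_{f^R}\circledcirc v^L)$ and $\beta^L=(\id_f\circledcirc u^L)\circ(v^L\circledcirc\id_f)$ are exactly the two zigzag composites needed for $f^R\dashv f$ with unit $v^L$ and counit $u^L$. Your route differs from the paper's in organization rather than in substance. The paper argues by hand inside the hom-2-categories: it takes the unit and counit 3-morphisms $\eta_u,\varepsilon_u,\eta_v,\varepsilon_v$ of the adjunctions $u^L\dashv u$ and $v^L\dashv v$, chooses isomorphisms $\alpha,\beta$ witnessing the zigzags of $f\dashv f^R$, and writes down explicit composite 3-morphisms from each new zigzag to the identity (asserting their inverses are the corresponding composites built from $\eta_u,\eta_v$). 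You instead package the same input into three standard 2-categorical lemmas --- whiskering (pre/post-composition by a 1-morphism) is a (pseudo)functor of hom-2-categories and hence preserves adjunctions, a left adjoint of a composite is the composite of left adjoints in the reverse order, and adjoints are unique up to isomorphism and transfer along isomorphisms --- and then read off $\gamma^L\cong\id_{f^R}$, $\beta^L\cong\id_f$ from $\gamma\cong\id_{f^R}$, $\beta\cong\id_f$. Unwinding your abstract lemmas recovers essentially the paper's explicit composites, so the mechanism is the same ``mate'' calculation; what your version buys is brevity and the fact that invertibility of the witnessing 3-morphisms comes for free from the uniqueness-of-adjoints lemma rather than being asserted, while the paper's version buys completely explicit formulas for those 3-morphisms and their inverses, which is in the spirit of its later concrete computations. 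One small caveat: in a weak 3-category the whiskering functors are pseudofunctors rather than strict 2-functors, but pseudofunctors still preserve adjunctions, so nothing breaks; this is consistent with the paper's model-independence remarks and with the footnote's observation that the whole argument lives in the 2-categories $\Hom_\cC(x,y)$ and the homotopy 2-category.
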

\begin{proof}
The counit and unit maps of the adjunctions $v^L \dashv v$ and $u^L \dashv u$ are 3-morphisms
\begin{align*}
	\varepsilon_v: v^L \circ v & \Rightarrow \id_{f \circ f^R} \\
	\eta_v: \id_{\id_y} & \Rightarrow v \circ v^L \\
	\varepsilon_u: u^L \circ u & \Rightarrow \id_{\id_x} \\
	\eta_u: \id_{f^R \circ f} & \Rightarrow  u \circ u^L.
\end{align*}
These 3-morphisms satisfy the zigzag identities
\begin{align*}
	 \id_v  &= (\id_v \circ \varepsilon_v) (\eta_v \circ \id_v)  \\
	 \id_{v^L}  &= (\varepsilon_v \circ \id_{v^L} ) (\id_{v^L} \circ \eta_v)  \\
	 \id_u  &= (\id_u \circ \varepsilon_u) (\eta_u \circ \id_u)  \\
	 \id_{u^L}  &= (\varepsilon_u \circ \id_{u^L} ) (\id_{u^L} \circ \eta_u).  
\end{align*}
Here, to avoid a conflict of notation, composition of 3-morphisms in the 3-morphism direction is denoted by juxtaposition, and composition of 3-morphisms in the 2-morphism direction is denoted by $\circ$.

To see that $v^L$ and $u^L$ are the unit and counit of an adjunction $f^R \dashv f$, we need to find isomorphisms ensuring the following zigzag identities:
\begin{align*}
	 (\id_{f} \circledcirc u^L) \circ (v^L \circledcirc \id_{f} ) & \cong \id_{f} \\
 	 (u^L \circledcirc \id_{f^R}) \circ (\id_{f^R} \circledcirc v^L) & \cong \id_{f^R}. 
\end{align*}
As before, here $\circ$ denotes composition of 2-morphisms in the 2-morphism direction, and $\circledcirc$ denotes composition of 2-morphisms in the 1-morphism direction.

Because $u$ and $v$ witness the adjunction $f \dashv f^R$, we may choose isomorphisms
\begin{align*}
	\alpha: \id_f &\stackrel{\cong}{\to} (v \circledcirc \id_f) \circ (\id_f \circledcirc u) \\
	\beta: \id_{f^R} &\stackrel{\cong}{\to} (\id_{f^R} \circledcirc v) \circ (u \circledcirc \id_{f^R} ).
\end{align*}
The desired isomorphisms for the adjunction $f^R \dashv f$ are given by the following composites:
\begin{align*}
	(\id_{f} \circledcirc u^L) \circ (v^L \circledcirc \id_{f} )
		& \cong (\id_{f} \circledcirc u^L) \circ (v^L \circledcirc \id_{f} ) \circ \id_f\\
		& \stackrel{\alpha}{\cong} (\id_{f} \circledcirc u^L) \circ (v^L \circledcirc \id_{f} ) \circ (v \circledcirc \id_f) \circ (\id_f \circledcirc u) \\
		&  \stackrel{\varepsilon_v }{\Rightarrow} (\id_{f} \circledcirc u^L) \circ (\id_f \circledcirc u) \\
		& \stackrel{\varepsilon_u }{\Rightarrow} \id_{f},  \\
	(u^L \circledcirc \id_{f^R}) \circ (\id_{f^R} \circledcirc v^L) 
		& \cong  (u^L \circledcirc \id_{f^R}) \circ (\id_{f^R} \circledcirc v^L) \circ \id_{f^R} \\
		& \stackrel{\beta}{\cong}  (u^L \circledcirc \id_{f^R}) \circ (\id_{f^R} \circledcirc v^L) \circ (\id_{f^R} \circledcirc v) \circ (u \circledcirc \id_{f^R} ) \\
		& \stackrel{\varepsilon_v }{\Rightarrow} (u^L \circledcirc \id_{f^R}) \circ (u \circledcirc \id_{f^R} ) \\
		& \stackrel{\varepsilon_u }{\Rightarrow} \id_{f^R}.
\end{align*}
The inverses to these composites are the composites $\alpha^{-1} \circ \eta_u \circ \eta_v$ and $\beta^{-1} \circ \eta_u \circ \eta_v$, respectively.
\end{proof}

Note that by working in the category $\cC^{\op_1}$, where 1-morphisms are reversed, the role of the right adjoint $f^R$ in this lemma is replaced by the left adjoint $f^L$ and the conclusion of the lemma is that $f^L$ is in fact a right adjoint to $f$.  Similarly, by working in the category $\cC^{\op_2}$, where 2-morphisms are reversed, the role of the left adjoints $u^L$ and $v^L$ may be replaced by the right adjoints $u^R$ and $v^R$.  We may of course reverse both the 1- and 2-morphisms of $\cC$ to obtain a version of the lemma with all instances of left and right exchanged.  In practice, we will use the form of the lemma assuming that the unit and counit maps have right adjoints and concluding that the right adjoint $f^R$ is in fact a left adjoint.

\begin{proof}[Proof of Theorem \ref{thm:Cat_Radford}]
By assumption, we have adjunctions $(\ev_x^L \dashv \ev_x, \allowbreak\tilde{u}_x, \tilde{v}_x)$ and $(\ev_x \dashv \ev_x^R, u_x, v_x)$.  By the 2-morphism opposite version of Lemma~\ref{lem-ambiadjoints}, there is another adjunction $(\ev_x^R \dashv\ev_x,v_x^R,u_x^R)$.  The standard expressions for the canonical equivalence between any two left adjoints provides the inverse equivalences
$(\tilde{v}_x \circledcirc \id_{\ev_x^R})\circ (\id_{\ev_x^L} \circledcirc v_x^R)$
and
$(u_x^R \circledcirc \id_{\ev_x^L}) \circ (\id_{\ev_x^R} \circledcirc \tilde{u}_x)$, and we recognize the first of these as the Radford map.
\end{proof}

\begin{corollary}\label{cor:serreinvserre}
Let $x \in \cC$ be a Radford object of a symmetric monoidal $(\infty,n)$-category, for $n \geq 3$.  There is a canonical equivalence (namely $\widetilde{\cR}_x := (\id_{x} \otimes \ev_{x}) \circ (\tau_{x, x} \otimes \id_{\overline{x}}) \circ (\id_{x} \otimes \cR_x)$) from the inverse Serre automorphism $\cS_x^{-1}$ to the Serre automorphism $\cS_x$.  Similarly there is a canonical equivalence (namely $\cS_x \circ \widetilde{\cR}_x$) from the identity $\id_x$ to the square of the Serre automorphism $\cS_x^2$.
\end{corollary}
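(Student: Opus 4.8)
The plan is to obtain both statements as formal consequences of Proposition~\ref{prop:serrecalc} and Theorem~\ref{thm:Cat_Radford}, with no further geometric input. Recall that Proposition~\ref{prop:serrecalc} computes
\[
\cS_x \simeq (\id_x \otimes \ev_x) \circ (\tau_{x,x} \otimes \id_{\overline{x}}) \circ (\id_x \otimes \ev_x^R), \qquad \cS_x^{-1} \simeq (\id_x \otimes \ev_x) \circ (\tau_{x,x} \otimes \id_{\overline{x}}) \circ (\id_x \otimes \ev_x^L).
\]
So both are obtained by applying one and the same operation $W(-) := (\id_x \otimes \ev_x) \circ (\tau_{x,x} \otimes \id_{\overline{x}}) \circ (\id_x \otimes -)$ — tensoring a $1$-morphism on the left by $\id_x$, then post-composing with the fixed $1$-morphism $(\id_x \otimes \ev_x) \circ (\tau_{x,x} \otimes \id_{\overline{x}})$ — to $\ev_x^R$ and $\ev_x^L$ respectively. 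The operation $W$ is functorial on $2$-morphisms: it sends a $2$-morphism $\ev_x^L \Rightarrow \ev_x^R$ to a $2$-morphism $W(\ev_x^L) \Rightarrow W(\ev_x^R)$, i.e.\ $\cS_x^{-1} \Rightarrow \cS_x$, and it carries equivalences to equivalences, since tensoring by $\id_x$, whiskering by $1$-morphisms, and whiskering by the invertible $2$-morphism underlying $\tau_{x,x}$ all preserve invertibility.

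Since $x$ is a Radford object, Theorem~\ref{thm:Cat_Radford} supplies the Radford map $\cR_x : \ev_x^L \to \ev_x^R$ and asserts it is an equivalence. Applying $W$ to $\cR_x$ yields exactly $\widetilde{\cR}_x = W(\cR_x) = (\id_x \otimes \ev_x) \circ (\tau_{x,x} \otimes \id_{\overline{x}}) \circ (\id_x \otimes \cR_x)$, which is therefore an equivalence $\cS_x^{-1} \Rightarrow \cS_x$. Its canonicity is inherited from that of $\cR_x$: by the remarks following Definition~\ref{def:radfordequiv}, the equivalence class of $\cR_x$ depends only on $x$, hence so does that of $\widetilde{\cR}_x$. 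This proves the first assertion.

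For the second assertion, whisker $\widetilde{\cR}_x$ on the left by the $1$-morphism $\cS_x$ to obtain an equivalence $\cS_x \circ \widetilde{\cR}_x : \cS_x \circ \cS_x^{-1} \Rightarrow \cS_x \circ \cS_x = \cS_x^2$, and then precompose with the canonical equivalence $\id_x \simeq \cS_x \circ \cS_x^{-1}$ coming from the fact (recorded in Section~\ref{sec:Serre}, and visible already at the bordism level from $\cS \circ \cS^{-1} \cong \id_{\pt_+}$) that $\cS_x$ is an automorphism with inverse $\cS_x^{-1}$. This produces the desired canonical equivalence $\id_x \to \cS_x^2$.

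There is no substantive obstacle: the only point requiring care is bookkeeping — checking that the two formulas of Proposition~\ref{prop:serrecalc} genuinely differ only in the final tensor slot, so that the $\widetilde{\cR}_x$ of the statement is literally $W(\cR_x)$, and confirming that $W$ is functorial on $2$-morphisms and equivalence-preserving in the $(\infty,n)$-categorical setting. One should also keep in mind, as in Remark~\ref{rem:hep}, that the identities in Proposition~\ref{prop:serrecalc} hold only up to equivalence, which is all that is used here.
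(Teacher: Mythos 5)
Your argument is correct and matches the paper's (implicit) proof: the corollary is stated without a separate argument precisely because, as you observe, the two formulas of Proposition~\ref{prop:serrecalc} differ only in the slot $\ev_x^L$ versus $\ev_x^R$, so whiskering the Radford equivalence $\cR_x$ of Theorem~\ref{thm:Cat_Radford} by $\id_x$ and the fixed outer $1$-morphisms yields $\widetilde{\cR}_x\colon \cS_x^{-1}\Rightarrow\cS_x$, and composing with $\id_x\simeq\cS_x\circ\cS_x^{-1}$ gives the second equivalence. Nothing further is needed.
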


Though we may think of the equivalences appearing in Theorem~\ref{thm:Cat_Radford} and Corollary~\ref{cor:serreinvserre} as images of certain bordisms under a local field theory, in fact neither result depends on the cobordism hypothesis.  There are two related lessons here, namely that applications of the cobordism hypothesis often depend not on full dualizability but only on partial dualizability conditions concerning the specific geometry of relevant bordisms, and that those applications can often be proven directly, perhaps with inspiration from but without appeal to the cobordism hypothesis itself.

\chapter{Tensor categories} \label{sec:tc}

We now investigate the target category for our local field theories, namely the 3-category of tensor categories.  We begin, in Section~\ref{sec:conventions}, by establishing systematic, compatible conventions for dualities of objects within tensor categories, and dualities and adjunctions within a higher monoidal category such as the 3-category of tensor categories.  We then, in Section~\ref{sec:tc-lincat}, review the theory of finite tensor categories and bimodule categories, following Etingof, Gelaki, Nikshych, and Ostrik~\cite{MR1976459,MR2183279,MR2097289, 0909.3140,  egno-book}.  We also recall the relative Deligne tensor product~\cite{0909.3140,BTP}, which provides a composition operation on bimodules between tensor categories, and the resulting 3-category of finite tensor categories~\cite{jfs}.  In the dualizability investigation in Chapter~\ref{sec:dualizability}, we will need to construct adjoints of certain functors of bimodule categories, and for that we will need conditions ensuring exactness properties of bimodule functors.  To that end, in Section~\ref{sec:tc-exact}, we summarize Etingof and Ostrik's theory of exact module categories~\cite{EO-ftc} and then prove that the Deligne tensor product of exact module categories is itself exact.  

In Section~\ref{sec:tc-bimodules}, we describe the notion of the dual of a bimodule category.  The dual category is defined as a bimodule structure on the opposite category, but it can also be realized as a category of functors into the base tensor category, thus as a kind of categorical linear dual.  We also prove that relative Deligne tensor products of bimodule categories can be reexpressed as functor categories, which will later enable us to prove that dual bimodule categories provide adjoints of morphisms in our 3-category of tensor categories.  In characteristic zero, that much is more or less sufficient groundwork for the dualizability results of Chapter~\ref{sec:dualizability}.  However, in finite characteristic there are more subtle issues concerning adjoints of bimodule functors.  In Section~\ref{sec:tc-separable}, we introduce the notions of separable tensor categories and separable module categories, based on a suggestion of Ostrik; these separability conditions provide the proper context for the theory of finite tensor categories in arbitrary characteristic.  We prove that the relative Deligne tensor product of separable bimodule categories is separable, ensuring that there is a 3-category whose morphisms are separable bimodule categories.  Finally, in Section~\ref{sec:tc-fusion} we establish a computable criterion for the separability of a fusion category, that is a finite semisimple tensor category with simple unit over an algebraically closed field: a fusion category is separable if and only if it has nonzero global dimension.

The conventions of Section~\ref{sec:conventions} and results of Section~\ref{sec:tc-bimodules} are essential for understanding the dualizability analysis in Chapter~\ref{sec:dualizability}.  Experts could safely skip or skim Sections~\ref{sec:tc-lincat} and~\ref{sec:tc-exact}, perhaps pausing at the statements of Theorem~\ref{thm:tcexists} (the existence of the 3-category of tensor categories) and Theorem~\ref{thm:tensor-exactness} (the tensor of exact module categories is exact).  Readers new to the theory of tensor categories and exact module categories are encouraged to see~\cite{BTP} for a more detailed treatment of these subjects and for proofs of various of the background results used here.  Because they are designed to work in arbitrary characteristic, Sections~\ref{sec:tc-separable} and~\ref{sec:tc-fusion} are more technical; readers exclusively concerned with characteristic zero can largely skip those two sections, consulting Corollaries~\ref{cor:charzerosep} and~\ref{cor:charzeromodulesep} (for characterizations of separable tensor categories and separable bimodules categories in characteristic zero) and noting well the statements of Theorem~\ref{thm:compositeOfSep} and Corollary~\ref{cor:septc} (that separable bimodules compose and therefore provide a sub-3-category of $\TC$).

\section{Conventions for duality} \label{sec:conventions}

In the 3-category $\TC$ of tensor categories, we will encounter various interacting notions of duality and adjunction: there is the dual of a tensor category $\cC$ as an object of the symmetric monoidal 3-category $\TC$, there are duals of objects within the tensor category $\cC$, there is the adjoint of a $\cC$--$\cD$-bimodule category viewed as a 1-morphism of $\TC$, which itself we will discover can be expressed explicitly in terms of duals within the acting categories $\cC$ and $\cD$, there are adjoints of functors of bimodule categories, and so on.  We need a sensible, consistent notation to navigate these dualities---unfortunately the literature has well established conventions that, now brought together in our context, contradict one another.  In this section we describe a suitable compromise among existing conventions and notations.

There are two conventions we take as fundamental, namely one for the names of adjoint functors and one for the notation in rigid monoidal categories.

\begin{definition}[Convention for adjoints] \label{def:Adjoints}
There is an adjunction, denoted $F \dashv G$ between the functors $F: \cB \to \cA$ and $G : \cA \to \cB$ if there are two natural transformations, the unit $\eta: \id_{\cB} \to G \circ F$ and the counit $\varepsilon: F \circ G \to \id_{\cA}$, satisfying the following pair of `zigzag' equations:
	\begin{align*}
		(\id_{G} \circledcirc \varepsilon  ) \circ (  \eta \circledcirc \id_{G}) &= \id_{G} \\
		(\varepsilon \circledcirc \id_{F}) \circ (\id_{F} \circledcirc \eta) &= \id_{F}.
	\end{align*}
(Here $B \circ A$ denotes composition of functors---that is, apply $A$ and then apply $B$---while $\beta \circledcirc \alpha$ denotes horizontal composition of natural transformations---again with $\alpha$ first and then $\beta$.  This ordering ensures, for instance, that the source $s(\beta \circledcirc \alpha)$ of $\beta \circledcirc \alpha$ is the functor $s(\beta) \circ s(\alpha)$.)  We call $F$ the \emph{left adjoint} of $G$, and $G$ the \emph{right adjoint} of $F$.
\end{definition}
\nid The left/right terminology here is guided by the existence of the usual canonical natural isomorphism
\begin{equation*}
	\Hom_\cA(F(x), y) \cong \Hom_\cB(x, G(y)).
\end{equation*}
We apply the same convention for adjoints to 1-morphisms in 2-categories other than $\Cat$; cf. Definition~\ref{def:adjoints_in_bicat} in the Appendix.

\begin{definition}[Convention for rigidity] \label{def:rigid}
	A monoidal category $(\cC, \otimes, 1)$ is {\em rigid} if, for each object $x \in \cC$, (1) there exists an object $x^*$ and morphisms, the {\em coevaluation} $\eta: 1 \to x \otimes x^*$ and the {\em evaluation} $\varepsilon: x^* \otimes x \to 1$, satisfying the pair of zigzag equations
	\begin{align*}
		(id_{x} \otimes \varepsilon  ) \circ (  \eta \otimes id_{x}) &= id_{x} \\
		(\varepsilon \otimes id_{x^*}) \circ (id_{x^*} \otimes \eta) &= id_{x^*};
	\end{align*}
and (2) there exists an object ${}^* x$ and morphisms,
the coevaluation $\eta: 1 \to {}^* x \otimes x$ and the evaluation $\varepsilon: x \otimes {}^* x \to 1$, satisfying the equations
$		(id_{({}^* x)} \otimes \varepsilon  ) \circ (  \eta \otimes id_{({}^* x)}) = id_{({}^* x)}$
and $		(\varepsilon \otimes id_{x}) \circ (id_{x} \otimes \eta) = id_{x}.$
\end{definition}

Although this convention on the meaning of rigidity is shared throughout the literature, authorities are split on whether $x^*$ should be called the right dual of $x$ (following Bakalov--Kirillov \cite{MR1797619} and the Etingof--Gelaki--Nikshych--Ostrik lecture notes \cite{EGNO})  or the left dual of $x$ (following Kassel \cite{MR1321145}, Chari--Pressley \cite{MR1358358}, and the EGNO book \cite{egno-book}).  We believe that this choice should be determined by the following compatibility desideratum: 
\begin{itemize}
\item[]
	\emph{In reasonable settings where something could be called either a dual or an adjoint, the left dual should be the left adjoint.}
\end{itemize}

\nid For example, the category of endofunctors $\Fun(\cA, \cA)$ is a monoidal category in which the dual of a functor is the adjoint functor.  More generally, any monoidal category can be viewed as a $2$-category with one object, and in this setting, the duals of objects in the monoidal category are adjoints of $1$-morphisms in the corresponding $2$-category.  

In order for the above compatibility condition to resolve the left/right terminological ambiguity, we need to decide whether, in the correspondence between monoidal categories and one-object 2-categories, the product $f \otimes g$ corresponds to the composite $f \circ g$ or $g \circ f$.  We view tensor as a geometric operation and therefore adopt the view that if $f$ and $g$ are morphisms or functions or functors or otherwise are composable, then $f \otimes g$ will always mean do $f$ first; in other words, when both expressions make sense, $f \otimes g$ corresponds to $g \circ f$.  More specifically:

\begin{definition}[Convention for $\otimes$ and composition] \label{def-tensorcomp}
If $A$, $B$, and $C$ are $i$-morphisms in an $n$-category $\cC$, and $f: A \ra B$ and $g: B \ra C$ are $(i+1)$-morphisms, then $f \otimes_B g := g \circ f$.  Often the object $B$ is implicit (or trivial) and $f \otimes_B g$ is denoted simply $f \otimes g$.
\end{definition}

This choice determines the following conventions.
\begin{definition}[Convention for duals]
For $x \in \cC$ an object of a rigid monoidal category, the object $x^*$ is the right dual and the object ${}^*x$ is the left dual.
\end{definition}
\begin{definition}[Convention for bimodules]
A bimodule ${}_A M_B$ is a $1$-morph\-ism from $A$ to $B$ in the $2$-category $\Alg$ of algebras, bimodules, and intertwiners.
\end{definition}

As simple examples of how these various conventions interact, observe the following:
\begin{itemize}
\item[1.] Let $\cC$ be a monoidal category, considered as a 2-category with one object.  There is a 2-functor from $\cC$ to $\Cat$ that takes the unique object $\ast \in \cC$ to $\cC \in \Cat$ and takes the morphism $\ast \xra{x} \ast$ of $\cC$ to the functor $\cC \xra{- \otimes x} \cC$.  That is, tensoring on the right provides a 2-functor.  Similarly, tensoring on the left ($x \otimes -$) provides a 2-functor from $\cC$ to $\Cat^{\op_1}$; here `$\op_1$' indicates that the 1-morphisms of $\Cat$ have been reversed.
\item[2.] There is a 2-functor from $\Alg$ to $\Cat$ sending an algebra $A$ to the category $\Mod{}{A}$ of right modules.  Similarly there is a 2-functor from $\Alg$ to $\Cat^{\op_1}$ taking $A$ to the category $\Mod{A}{}$ of left modules.
\end{itemize}

\section[Tensor cats, bimodule cats, and the Deligne tensor product]{Tensor categories, bimodule categories, and the Deligne tensor product} \label{sec:tc-lincat}

We review the theory of linear categories, tensor categories, finite tensor categories, and bimodule categories; an excellent source for further details on this material is the book by Etingof--Gelaki--Nikshych--Ostrik~\cite{egno-book}.  We then recall the notion of the relative Deligne tensor product of module categories over a monoidal category, and quote the main result of~\cite{BTP}, that the relative product of finite module categories over a finite tensor category exists; this provides the most important ingredient for the construction of the 3-category of finite tensor categories, whose existence follows from a construction of Johnson-Freyd--Scheimbauer~\cite{jfs}.

\subsection[Linear categories, finite categories, monoidal categories, rigid categories]{\for{toc}{Linear categories, finite categories, monoidal categories, rigid cats}\except{toc}{Linear categories, finite categories, monoidal categories, rigid categories}}

Let $k$ be an arbitrary field.  Let $\overline{\Vect}_k$ denote the category of (possibly infinite-dimensional) $k$-vector spaces, and let $\Vect_k$ denote the category of finite-dimensional $k$-vector spaces.  A \emph{linear category} is an abelian category with a compatible enrichment over $\overline{\Vect}_k$---compatible in the sense that the enriched addition of morphisms agrees with the given abelian structure.  A \emph{linear functor} is a right exact $\overline{Vect}_k$-enriched functor. 

\begin{warning} \label{warning-rightexact}
Except where explicitly indicated otherwise, \emph{all linear functors in this book are right exact}---a functor postulated as linear is assumed to be right exact, and the claim that a particular functor is linear includes the claim that it is right exact.  We realize this practice deviates from the convention in the literature.  However, the fundamental operation of composition of bimodule categories is only functorial with respect to right exact functors; right exactness is our proper context, and it streamlines the presentation to build it into the notion of linear functor.
\end{warning}

Recall the following standard terminology concerning linear categories:
\begin{itemize}
	\item[-] An object $X$ of a linear category has {\em finite length} if every strictly decreasing chain of subobjects $X = X_0 \supsetneq X_1 \supsetneq X_2 \supsetneq  \cdots$ has finite length. 
	\item[-] A linear category {\em has enough projectives} if for every object $X$, there is a projective object $P$ with a surjection $P \twoheadrightarrow X$. 
	\item[-] An object of a linear category is {\em simple} if it admits no non-trivial subobjects. The endomorphism ring of a simple object is a division algebra over $k$.
	\item[-] A linear category is {\em semisimple} if every object splits as a direct sum of simple objects.  A finite length object of a semisimple category splits as a finite direct sum of simple objects.
\end{itemize}

We can now describe a strong finiteness condition on linear categories:
\begin{definition}
	A linear category $\cC$ is {\em finite} if 
	\begin{enumerate}
		\item[1.] $\cC$ has finite-dimensional spaces of morphisms;
		\item[2.] every object of $\cC$ has finite length;
		\item[3.] $\cC$ has enough projectives; and
		\item[4.] $\cC$ has finitely many isomorphism classes of simple objects.  
	\end{enumerate}
\end{definition}

The following characterization of finite linear categories is well-known; see \cite{BTP} for a proof.
\begin{proposition} \label{prop:catismod}
A linear category is finite if and only if it is equivalent to the category of finite-dimensional modules over a finite-dimensional $k$-algebra $A$.
\end{proposition}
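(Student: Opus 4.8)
The plan is to prove the two implications separately; the substance is in the ``only if'' direction, which is a standard Morita-theoretic argument.

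For the easy direction, I would suppose $\cC$ is equivalent to the category of finite-dimensional modules over a finite-dimensional $k$-algebra $A$ and verify the four defining conditions of finiteness with a sentence each: $\Hom_A(M,N)$ is a subspace of the finite-dimensional space $\Hom_k(M,N)$ (condition 1); a finite-dimensional module has a composition series of length at most its $k$-dimension (condition 2); the free module $A$ is a projective generator (condition 3); and the simple $A$-modules coincide with the simples over the finite-dimensional semisimple quotient $A/J(A)$, of which there are finitely many (condition 4).

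For the converse, the strategy is to produce a projective generator $P \in \cC$, set $A := \End_\cC(P)^\op$, and show that $F := \Hom_\cC(P,-)$ is an equivalence from $\cC$ to the category of finite-dimensional $A$-modules. First I would construct $P$: let $S_1,\dots,S_n$ be representatives of the finitely many isomorphism classes of simple objects (condition 4); using that $\cC$ has enough projectives (condition 3) I would choose a projective surjection $P_i \twoheadrightarrow S_i$ for each $i$ (one may take a projective cover, but minimality is not needed) and set $P := \bigoplus_i P_i$. The key lemma to establish is that every projective $Q \in \cC$ is a direct summand of some $P^{\oplus m}$: since $Q$ has finite length (condition 2), its top $Q/\mathrm{rad}\,Q$ is a finite semisimple object, hence a sum of $S_i$'s, so the maps $P_i \twoheadrightarrow S_i$ assemble into a surjection $\bigoplus_i P_i^{\oplus m_i} \twoheadrightarrow Q/\mathrm{rad}\,Q$, which lifts through $Q \twoheadrightarrow Q/\mathrm{rad}\,Q$ by projectivity of the $P_i$; a Nakayama-style argument (a proper subobject containing the image of the lift would lie in some maximal subobject, which also contains $\mathrm{rad}\,Q$, contradicting surjectivity modulo the radical) shows the lift is surjective, and hence split since $Q$ is projective. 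This shows $P$ is a projective generator, and, since $F(P) = A$ and $F$ is additive and preserves direct summands, that $F$ restricts to an equivalence between the category $\mathrm{add}(P)$ of summands of the $P^{\oplus m}$ and the finitely generated projective $A$-modules.

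To finish, I would use a two-term presentation argument. Each $X \in \cC$ admits $P^{(1)} \to P^{(0)} \to X \to 0$ with $P^{(0)}, P^{(1)} \in \mathrm{add}(P)$ (surject some $P^{(0)}$ onto $X$, then some $P^{(1)}$ onto the kernel, invoking enough projectives and the summand lemma to stay inside $\mathrm{add}(P)$). Since $P$ is projective, $F$ is exact; applying it and comparing with the analogous computation on the $A$-side, using left exactness of $\Hom_\cC(-,Y)$ and $\Hom_A(-,F(Y))$ together with the equivalence already known on $\mathrm{add}(P)$, a five-lemma chase yields full faithfulness of $F$; essential surjectivity then follows by lifting a finite presentation $A^{\oplus m} \to A^{\oplus n} \to M \to 0$ of an arbitrary finite-dimensional $A$-module $M$ to a morphism in $\mathrm{add}(P)$ (using full faithfulness there) and taking its cokernel in $\cC$. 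Finiteness of $A = \End_\cC(P)^\op$ is condition 1, and the statement is insensitive to replacing $A$ by $A^\op$, so the left/right convention for modules is immaterial. The main obstacle is the summand lemma for projective objects: this is exactly the point where the finite-length hypothesis is used in an essential way, and once it is available the rest reduces to routine Morita theory.
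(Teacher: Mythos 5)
Your proof is correct: the easy direction is verified condition by condition, and the converse is the standard projective-generator argument (build $P=\bigoplus_i P_i$ from projective surjections onto the finitely many simples, prove the summand lemma via the radical and finite length, set $A=\End_\cC(P)^{\op}$, and use two-term presentations in $\mathrm{add}(P)$ to extend the equivalence from projectives to all of $\cC$). Note that the paper itself gives no proof of this proposition---it cites the companion paper~\cite{BTP} as the result is ``well-known''---and the argument there is essentially the same Morita-theoretic one you give, so there is nothing to flag beyond the (acceptable) brevity of your treatment of full faithfulness on $\mathrm{add}(P)$.
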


We next consider monoidal structures on linear categories.  A functor $F: \prod_i \cA_i \to \cB$ is \emph{multilinear} if it is linear in each factor $\cA_i$ separately.

\begin{definition}
A \emph{linear monoidal category} is a monoidal category $(\cC, \otimes, 1)$ with a linear structure on $\cC$ such that the multiplication functor $\otimes: \cC \times \cC \ra \cC$ is bilinear.  A \emph{tensor category} is a rigid linear monoidal category.
\end{definition}
\nid When we refer to a `monoidal functor' between linear monoidal categories, we let it be implicit that the functor is linear.  Also, we use the term `tensor functor' to mean a monoidal functor between tensor categories.  Though the rigidity condition imposed in a tensor category may seem incidental to a theory of linear monoidal categories and their bimodules, that is not the case: rigidity is essential to the result that module categories are categories of modules (see Theorem~\ref{thm:EGNO2.11.6} below), and that result is necessary for our approach to constructing the relative Deligne tensor product, thus the composition of bimodule categories (see Theorem~\ref{thm:DelignePrdtOverATCExists} below).

\begin{example}
When $\cM$ is a finite linear category, the category of (right exact) linear endofunctors $\Fun(\cM,\cM)$ is a finite linear monoidal category.  (Recall our convention that the monoidal structure on $\Fun(\cM,\cM)$ is defined by $\cF \otimes \cG := \cG \circ \cF$.)  If $\cM$ is semisimple, then $\Fun(\cM,\cM)$ is rigid---duals are obtained by taking adjoint functors---and hence a tensor category.
\end{example}

For a monoidal category $\cC$, there are three distinct notions of the opposite monoidal category: we can reverse the order of composition, reverse the order of tensor product, or reverse both.  We will denote these respectively by $\cC^\op$ (the opposite category), $\cC^\mp$ (the monoidally-opposite category), and $\cC^\mop$ (the monoidally-opposite, opposite category).  (Note that in the literature `$\op$' sometimes refers to the opposite category and sometimes to the monoidally-opposite category, and some authors have used `$\rev$' to refer to the monoidally-opposite category.)  Observe that when the monoidal category $\cC$ is rigid, taking the left or right dual provides a monoidal equivalence from $\cC^\op$ to $\cC^\mp$ (or vice versa) and similarly provides a monoidal equivalence from $\cC$ to $\cC^\mop$ (or vice versa).

\subsection{Module categories, functors, and transformations}

A module category is a linear category with an action by a linear monoidal category, and a bimodule category is a linear category with two commuting actions by linear monoidal categories:
\begin{definition}
Let $\cC$ and $\cD$ be linear monoidal categories.
A \emph{left $\cC$-module category} is a linear category $\cM$ together with a bilinear functor $\otimes^{\cM}: \cC \times \cM \to \cM$ and natural isomorphisms
\begin{align*}
		\alpha: & \;    \otimes^{\cM} \circ \; (\otimes^{\cC} \times id_{\cM}) \cong  \otimes^{\cM} \circ (id_{\cC} \times \otimes^{\cM}), \\
		\lambda: & \; \otimes^{\cM} (1_{\cC} \times -) \cong id_{\cM},
\end{align*}
satisfying the evident pentagon identity and triangle identities.  We will use the notation $c \otimes m := \otimes^\cM(c \times m)$.  A \emph{right $\cD$-module category} is defined similarly.  A \emph{$\cC$--$\cD$-bimodule category} is a linear category $\cM$ with the structure of a left $\cC$-module category and the structure of a right $\cD$-module category, together with a natural associator isomorphism $(c \otimes m) \otimes d \cong c \otimes (m \otimes d)$ satisfying two additional pentagon axioms and two additional triangle axioms.  
\end{definition}
\nid By a finite module or bimodule category we will mean simply a module or bimodule category whose underlying linear category is finite.  Note well that, in light of our convention that linear functors are right exact, the action functor $\cC \times \cM \ra \cM$ in the above definition is, by assumption, right exact in each variable.  Below, after we restrict the monoidal category $\cC$ to be rigid, we will see that the action functor is forced to be, in fact, exact in each variable.

The notions of functor between module categories and transformation of such functors are as expected:
\begin{definition}
A \emph{left $\cC$-module functor} from the $\cC$-module category $\cM$ to the $\cC$-module category $\cN$ is a (right exact) linear functor $\cF: \cM \ra \cN$ together with a natural isomorphism $f_{c,m}:\cF(c \otimes m) \rightarrow c \otimes \cF(m)$ satisfying the evident pentagon relation and triangle relation.  A right module functor and a bimodule functor are defined similarly.
\end{definition}
\begin{definition}
A \emph{left $\cC$-module transformation} from the $\cC$-module functor $\cF$ to the $\cC$-module functor $\cG$ is a natural transformation $\eta: \cF \ra \cG$ satisfying the condition $(\id_c \otimes \eta_m) \circ f_{c,m} = g_{c,m} \circ \eta_{c \otimes m}$.  Right module transformations and bimodule transformations are defined similarly.
\end{definition}

\nid Observe that for fixed linear monoidal categories $\cC$ and $\cD$, the collection of $\cC$--$\cD$-bimodule categories, bimodule functors, and bimodule transformations forms a strict 2-category.

We include some examples of module and bimodule categories.

\begin{example}
Every linear category can be given the structure of a $\Vect_k$--$\Vect_k$-bimodule category in an essentially unique way: such a structure always exists, and any two such structures are naturally isomorphic by a unique isomorphism.
\end{example}

\begin{example}
Let $A$ be an algebra object in a linear monoidal category $\cC$.  The category $\Mod{}{A}(\cC)$ of right $A$-modules in $\cC$ is a \emph{left} $\cC$-module category.  Similarly, the category $\Mod{A}{}(\cC)$ of left $A$-modules in $\cC$ is a \emph{right} $\cC$-module category.
\end{example}

\begin{example}
Let $\cC$ be a finite linear monoidal category, and let $\cM$ and $\cN$ be finite left $\cC$-module categories.  The categories $\Fun_{\cC}(\cM, \cM)$ and $\Fun_{\cC}(\cN, \cN)$ of (right exact) $\cC$-module endofunctors are finite linear monoidal categories; the linear category $\Fun_{\cC}(\cM, \cN)$ is a $\Fun_{\cC}(\cM, \cM)$--$\Fun_{\cC}(\cN, \cN)$-bimodule category, with the action $\phi \otimes \cF \otimes \psi := \psi \circ \cF \circ \phi$, for $\phi \in \Fun_\cC(\cM,\cM)$, $\psi \in \Fun_\cC(\cN,\cN)$, and $\cF \in \Fun_\cC(\cM,\cN)$.
\end{example}

As mentioned in Proposition~\ref{prop:catismod}, a finite linear category is a category of modules over an algebra.  In this statement we can replace the implicit ambient monoidal category $\Vect_k$ by any finite tensor category: to wit, any finite module category over a finite tensor category is a category of modules over an algebra object of the tensor category.

\begin{theorem}{\cite[Cor. 7.10.5]{egno-book}, \cite[Thm 1]{MR1976459}, \cite{BTP}} \label{thm:EGNO2.11.6}
Let $\cC$ be a finite tensor category, and let $\cM$ be a finite left $\cC$-module category.  There is an algebra object $A \in \cC$ and an equivalence of left $\cC$-module categories $\cM \simeq \Mod{}{A}(\cC)$.
\end{theorem}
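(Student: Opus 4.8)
The plan is to realize $A$ as the internal endomorphism algebra of a projective generator of $\cM$, following the Eilenberg--Watts pattern. First I would note that for each object $m \in \cM$ the action functor $- \otimes m \colon \cC \ra \cM$, $c \mapsto c \otimes m$, is a linear functor between finite linear categories, hence right exact; by Proposition~\ref{prop:catismod} and the Eilenberg--Watts description of such functors it therefore admits a right adjoint, which I denote $\IHom(m,-) \colon \cM \ra \cC$ and which is characterized by a natural isomorphism $\Hom_\cM(c \otimes m, n) \cong \Hom_\cC(c, \IHom(m,n))$. The counit $\IHom(m,n) \otimes m \ra n$ together with the ``composition'' morphisms $\IHom(n,p) \otimes \IHom(m,n) \ra \IHom(m,p)$ obtained from it by adjunction make $A := \IHom(m,m)$ into an algebra object of $\cC$, with unit $1 \ra \IHom(m,m)$ adjoint to $\id_m$, and make each $\IHom(m,n)$ a right $A$-module.

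Next I would use rigidity of $\cC$ to promote $n \mapsto \IHom(m,n)$ to a left $\cC$-module functor $\cM \ra \Mod{}{A}(\cC)$. Since $\cC$ is rigid, for $c \in \cC$ the endofunctor $c \otimes -$ of $\cC$ (and of $\cM$) has a left adjoint $c^* \otimes -$, so a short adjunction computation
\[
\Hom_\cC(x, \IHom(m, c \otimes n)) \cong \Hom_\cM(x \otimes m, c \otimes n) \cong \Hom_\cM(c^* \otimes x \otimes m, n) \cong \Hom_\cC(x, c \otimes \IHom(m,n))
\]
gives, via Yoneda, a natural isomorphism $\IHom(m, c \otimes n) \cong c \otimes \IHom(m,n)$, and one checks it is compatible with the $A$-module structures. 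This is the module-functor structure; it is precisely the point at which rigidity is needed (the statement fails for general linear monoidal categories, as already flagged in the text).

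Then I would choose $m$ to be a projective generator of $\cM$ --- it exists because $\cM$ is finite, so one may take the direct sum of the projective covers of the (representatives of the) finitely many simple objects --- and set $A = \IHom(m,m)$. To see that $\IHom(m,-) \colon \cM \ra \Mod{}{A}(\cC)$ is an equivalence I would exhibit the candidate quasi-inverse $X \mapsto X \otimes_A m$, the relative tensor product defined as the coequalizer of the two maps $X \otimes A \otimes m \rightrightarrows X \otimes m$ (this exists and is right exact because $\cC$ is a finite tensor category), and verify that the unit and counit of the adjunction $(- \otimes_A m) \dashv \IHom(m,-)$ are isomorphisms. The key inputs are: $m$ projective implies $\IHom(m,-)$ is exact (using rigidity again, since $c \otimes -$ preserves projectives), and $m$ generating implies every object of $\cM$ is a cokernel of a map $c' \otimes m \ra c \otimes m$ while every object of $\Mod{}{A}(\cC)$ is a cokernel of a map of free $A$-modules $c' \otimes A \ra c \otimes A$. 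The unit and counit are readily checked to be isomorphisms on these free and generated objects (using $\IHom(m, c \otimes m) \cong c \otimes A$ and $(c \otimes A) \otimes_A m \cong c \otimes m$), and both composite round-trip functors are right exact, so the isomorphisms propagate to all objects.

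The step I expect to be the main obstacle is exactly this last one: checking that $\IHom(m,-)$ and $- \otimes_A m$ are mutually inverse, and in particular that the unit $X \ra \IHom(m, X \otimes_A m)$ is an isomorphism for every right $A$-module $X$. This is where the projectivity and generating hypotheses on $m$ --- and the attendant exactness statements for the internal Hom and for the relative tensor product over $A$ --- genuinely enter; the special case $X = A$ (equivalently $n = m$) is the crux, and the rest follows by right exactness from free presentations. Everything before it --- the existence of the adjoint $\IHom$, the algebra structure on $A$, and the $\cC$-module-functor structure --- is a formal consequence of finiteness and rigidity.
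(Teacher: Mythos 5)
Your proposal is correct and is essentially the argument the paper relies on: the paper does not reprove this theorem but cites Ostrik/EGNO/BTP, whose proof proceeds exactly as you describe, via the internal Hom right adjoint to the action, the algebra $A = \IHom(m,m)$ of a projective generator $m$, and the equivalence $\IHom(m,-)\colon \cM \ra \Mod{}{A}(\cC)$ with quasi-inverse $-\otimes_A m$. The supporting steps you sketch (rigidity giving the module-functor structure and the projectivity of $c\otimes m$, hence exactness of $\IHom(m,-)$, plus free presentations on both sides) are precisely how those references close the argument.
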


\nid This is a crucial result in the structure theory of finite tensor categories and their modules.  The proof relies on Ostrik's observation \cite{MR1976459, EO-ftc} that when $\cC$ is a finite tensor category, any finite $\cC$-module category $\cM$ is canonically enriched over $\cC$.  (See~\cite{BTP} for a proof of Theorem~\ref{thm:EGNO2.11.6} that expressly avoids assumptions about the base field.)  The proof also uses the following lemma, which we will need directly to construct adjoints of bimodule functors arising in our investigation of the dualizability of finite tensor categories.

\begin{lemma} \label{lma:module-adjoint}
Let $\cC$ and $\cD$ be finite tensor categories, and let $\cM$ and $\cN$ be finite $\cC$--$\cD$-bimodule categories.  Let $\cF: \cM \to \cN$ be a not-necessarily-right-exact $\cC$--$\cD$-bimodule functor.  If the linear functor underlying $\cF$ has a right, respectively left, adjoint linear functor (not-necessarily-right-exact), then $\cF$ itself has a right, respectively left, adjoint $\cC$--$\cD$-bimodule functor (not-necessarily-right-exact).
\end{lemma}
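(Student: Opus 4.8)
The plan is to upgrade the ordinary linear adjoint $\cG$ of $\cF$ to a bimodule functor by transporting the module structure of $\cF$ across the adjunction, in the spirit of doctrinal adjunction; the one extra ingredient, which makes the transported structure maps invertible rather than merely lax, is the rigidity of $\cC$ and $\cD$. I will treat the case of a right adjoint in detail; the left adjoint case follows by running the same argument in the $1$-opposite category $\cC^{\op_1}$, where left and right adjoints are interchanged, or equivalently by replacing each right dual and right adjoint below with its left counterpart.

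First I would record the elementary consequence of rigidity that powers the construction: for every object $c \in \cC$, the left-action endofunctor $c \otimes - \colon \cM \to \cM$ admits $\ld{c} \otimes -$ as a right adjoint, with unit $m \to \ld{c} \otimes c \otimes m$ and counit $c \otimes \ld{c} \otimes m' \to m'$ gotten by tensoring the coevaluation $1 \to \ld{c} \otimes c$ and evaluation $c \otimes \ld{c} \to 1$ of Definition~\ref{def:rigid} with identities; the triangle identities for this adjunction are exactly the rigidity zigzags. The same holds for $c \otimes - \colon \cN \to \cN$, and, using rigidity of $\cD$, for the right-action endofunctors $- \otimes d$ on $\cM$ and $\cN$, whose right adjoint is $- \otimes \rd{d}$.

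Next I would transport the structure one side at a time. Viewing $\cF$ and $\cG$ as plain linear functors, the left $\cC$-module structure on $\cF$ is a natural isomorphism $\phi_c \colon \cF \circ (c \otimes -) \xra{\sim} (c \otimes -) \circ \cF$ of functors $\cM \to \cN$, also natural in $c$. By the previous step and $\cF \dashv \cG$, the source and target of $\phi_c$ have right adjoints $(\ld{c} \otimes -) \circ \cG$ and $\cG \circ (\ld{c} \otimes -)$ respectively, so uniqueness of adjoints produces a canonical natural isomorphism $\cG \circ (\ld{c} \otimes -) \xra{\sim} (\ld{c} \otimes -) \circ \cG$, that is, an isomorphism $\cG(\ld{c} \otimes n) \xra{\sim} \ld{c} \otimes \cG(n)$ natural in $n$. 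Substituting $c = \rd{d}$ and using $\ld{(\rd{d})} \cong d$ gives, for every $d \in \cC$, a natural isomorphism
\[
g_{d,n} \colon \cG(d \otimes n) \xra{\sim} d \otimes \cG(n),
\]
in exactly the direction required of a module-functor structure, so that no inversion is needed. The identical argument applied to the right $\cD$-action yields natural isomorphisms $\cG(n \otimes e) \xra{\sim} \cG(n) \otimes e$.

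The remaining, and main, work is verification: that $g$ and its $\cD$-analog obey the pentagon and triangle axioms of a bimodule functor and commute with one another in the bimodule-compatibility sense, and that the original unit $\eta \colon \id_\cM \to \cG\cF$ and counit $\varepsilon \colon \cF\cG \to \id_\cN$ are bimodule transformations for the module structures of $\cF$ and $(\cG, g)$ --- so that $(\cG, g)$ is genuinely a right adjoint of $\cF$ in the $2$-category of $\cC$--$\cD$-bimodule categories. Each of these is a diagram chase deducing the coherence for $\cG$ from the corresponding coherence for $\cF$ via the naturality and functoriality of the mate construction, and I expect this bookkeeping to be the only nontrivial part of the proof. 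Finally I would note that right exactness was never used: the entire argument manipulates only adjunction and rigidity data, so it applies verbatim to a not-necessarily-right-exact bimodule functor $\cF$ and its not-necessarily-right-exact linear adjoint.
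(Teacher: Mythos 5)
Your proposal is correct and takes essentially the same route as the source: the paper does not prove Lemma~\ref{lma:module-adjoint} in the text but defers it to~\cite{BTP}, where the argument is exactly this mate/doctrinal-adjunction construction, using the rigidity adjunctions $c^*\otimes(-) \dashv c\otimes(-) \dashv {}^*c\otimes(-)$ (and their right-action analogues) to transport the module structure of $\cF$ to its linear adjoint and to see that the transported structure maps are invertible, with no exactness hypotheses needed. The coherence checks and the verification that the unit and counit are bimodule transformations, which you defer, are indeed the routine bookkeeping part of that argument, so there is no gap.
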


\nid This fact appears as a comment in \cite{EO-ftc} and a proof can be found in \cite{BTP}.  

The above theorem can be used to see that the action map of any finite module category is biexact:
\begin{corollary}[\cite{BTP}] \label{cor:biexact_action}
Let $\cC$ be a finite tensor category, and let $\cM$ be a finite $\cC$-module category.  The action map $\cC \times \cM \ra \cM$ is exact in each variable; that is, for each object $c \in \cC$, the functor $c \otimes -$ is exact, and for each object $m \in \cM$, the functor $- \otimes m$ is exact.
\end{corollary}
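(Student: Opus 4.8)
The plan is to reduce the statement to the exactness of the tensor product inside $\cC$ itself, using the identification of $\cM$ with a category of modules over an algebra object provided by Theorem~\ref{thm:EGNO2.11.6}.

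I would treat the two variables separately. For a fixed object $c\in\cC$, the functor $c\otimes-:\cM\to\cM$ can be handled using only the rigidity of $\cC$ and the module structure, with no appeal to Theorem~\ref{thm:EGNO2.11.6}. In any left $\cC$-module category, the evaluation and coevaluation maps witnessing that $\ld{c}$ is the left dual of $c$ assemble, via the module-category associativity and unit constraints, into a unit and counit exhibiting an adjunction $c\otimes-\dashv\ld{c}\otimes-$, the zigzag equations of Definition~\ref{def:rigid} becoming the triangle identities; symmetrically, the maps witnessing that $\rd{c}$ is the right dual of $c$ (equivalently that $c$ is the left dual of $\rd{c}$) give an adjunction $\rd{c}\otimes-\dashv c\otimes-$. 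Hence $c\otimes-$ is simultaneously a left adjoint and a right adjoint, so it preserves all finite limits and colimits, hence is exact. Running the same argument for right module categories shows that $-\otimes c:\cC\to\cC$ is exact for every $c\in\cC$; this is the only consequence of rigidity of $\cC$ needed below.

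For the remaining variable I would genuinely use Theorem~\ref{thm:EGNO2.11.6}: choose an algebra object $A\in\cC$ and an equivalence of left $\cC$-module categories $\cM\simeq\Mod{}{A}(\cC)$. Since $-\otimes A:\cC\to\cC$ is exact by the previous paragraph, $\Mod{}{A}(\cC)$ is abelian with kernels and cokernels created by the forgetful functor $U:\cM\to\cC$, so $U$ is exact and faithful, and hence reflects exact sequences (the image under $U$ of the homology of a complex in $\cM$ is the homology of its image, and a faithful functor reflects zero objects). Because the equivalence is one of $\cC$-module categories, $U$ intertwines the left $\cC$-action on $\cM$ with left tensoring in $\cC$; in particular $U\circ(-\otimes m)\cong -\otimes U(m):\cC\to\cC$, which is exact by the previous paragraph, so $-\otimes m:\cC\to\cM$ is exact because $U$ reflects exactness. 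The same $U$ also re-derives the exactness of $c\otimes-$, since $U\circ(c\otimes-)\cong(c\otimes-)\circ U$, so one may if preferred give a uniform proof of both parts this way.

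The one load-bearing point is that $U$ is \emph{exact}, not merely faithful and right exact, which is exactly the assertion that $-\otimes A$ is exact rather than only right exact; this is where rigidity of $\cC$ is indispensable, since without it kernels in $\Mod{}{A}(\cC)$ would not be computed on underlying objects and $U$ would reflect only right exactness. The remaining ingredients are routine: identifying the transported module structure with the tautological left-tensoring action (which is part of the `equivalence of $\cC$-module categories' clause of Theorem~\ref{thm:EGNO2.11.6}), and the standard fact that an exact, faithful functor between abelian categories reflects exact sequences.
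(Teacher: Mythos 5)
Your proof is correct and follows essentially the route the paper indicates (the paper defers the proof to \cite{BTP}, flagging Theorem~\ref{thm:EGNO2.11.6} as the key input): rigidity yields the adjunctions $\rd{c}\otimes(-)\dashv c\otimes(-)\dashv \ld{c}\otimes(-)$ on any module category, and the identification $\cM\simeq\Mod{}{A}(\cC)$, with its faithful exact forgetful functor $U$ reflecting exactness, handles the variable $-\otimes m$ exactly as in the cited argument. One small correction to your closing remark: kernels in $\Mod{}{A}(\cC)$ are always created by $U$ (the forgetful functor from modules over the monad $-\otimes A$ creates limits irrespective of left exactness of $-\otimes A$), so $U$ is exact as soon as $-\otimes A$ is right exact; where rigidity is genuinely load-bearing is in the left exactness of $-\otimes U(m)$ on $\cC$ and in Theorem~\ref{thm:EGNO2.11.6} itself, not in the computation of kernels on underlying objects.
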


\subsection[Balanced tensor products and the 3-category of finite tensor categories]{\for{toc}{Balanced tensor products and the 3-category of finite tensor cats}\except{toc}{Balanced tensor products and the 3-category of finite tensor categories}}

In the 2-category of algebras, bimodules, and bimodule maps, the composition of the 1-morphisms $_A M_B$ and $_B N_C$ is given by the balanced tensor product of bimodules, $_A M \otimes_B N_C$.  A categorification of this notion, the balanced tensor product of bimodule categories over monoidal categories (also known as the relative Deligne tensor product), provides the crucial composition of 1-morphisms in the 3-category of finite tensor categories.

\begin{definition}
	Let $\cC$ be a linear monoidal category.  Let $\cM$ be a right $\cC$-module category and let $\cN$ be a left $\cC$-module category. A {\em $\cC$-balanced functor} from $\cM \times \cN$ into a linear category $\cL$ is a bilinear functor $\cF: \cM \times \cN \to \cL$ (right exact in each variable), together with a natural isomorphism $\alpha: \cF \circ (\otimes^{\cM} \times \id_{\cN}) \cong \cF \circ (\id_{\cM} \times \otimes^{\cN})$ satisfying the evident pentagon and triangle identities. A {\em $\cC$-balanced transformation} is a natural transformation $\eta:\cF \to \cG$ of $\cC$-balanced functors commuting with the isomorphism $\alpha$.
\end{definition}

The relative Deligne tensor product $\cM \boxtimes_\cC \cN$ is the initial linear category admitting a $\cC$-balanced bilinear functor from $\cM \times \cN$:

\begin{definition}
	For $\cC$ a linear monoidal category, let $\cM$ be a right $\cC$-module category and let $\cN$ be a left $\cC$-module category.  The {\em relative Deligne tensor product} is a linear category $\cM \boxtimes_{\cC} \cN$ together with a $\cC$-balanced bilinear functor $\boxtimes_{\cC}: \cM \times \cN \to \cM \boxtimes_{\cC} \cN$ that induces, for all linear categories $\cL$, an equivalence between the category of $\cC$-balanced bilinear functors $\cM \times \cN \to \cL$ and the category of (right exact) linear functors $\cM \boxtimes_{\cC} \cN \to \cL$. 
\end{definition}

\nid If it exists, the relative Deligne tensor product $\cM \boxtimes_\cC \cN$ is unique up to equivalence, and that equivalence is in turn unique up to unique natural isomorphism; in other words, the 2-category of linear categories representing the relative Deligne tensor product is either contractible or empty.  Observe that because it is defined by a universal property, when it exists the relative Deligne tensor product $\cM \boxtimes_\cC \cN$ is functorial in the module categories $\cM$ and $\cN$ and in the monoidal category $\cC$.

Provided the monoidal category $\cC$ is finite and rigid, and provided the module categories are finite, the relative Deligne tensor product does indeed exist.

\begin{theorem}[\cite{BTP}] \label{thm:DelignePrdtOverATCExists}
	Let $\cC$ and $\cD$ be finite tensor categories.  Let $\cM_{\cC}$ and ${}_{\cC}\cN$ be finite right and left $\cC$-module categories, respectively, and let ${}_{\cD}\cP$ be a finite left $\cD$-module category. 
	\begin{enumerate}
		\item The relative Deligne tensor product $\cM \boxtimes_{\cC} \cN$ exists and is a finite linear category.
		\item If $\cM = \Mod{A}{}(\cC)$ and $\cN = \Mod{}{B}(\cC)$, for algebra objects $A \in \cC$ and $B \in \cC$, then $\cM \boxtimes_{\cC} \cN \simeq \Mod{A }{B}(\cC)$. Here $\Mod{A}{B}(\cC)$ denotes the category of $A$--$B$-bimodule objects in $\cC$.
		\item If $\cN = \Mod{}{B}(\cC)$ and $\cP = \Mod{}{C}(\cD)$, for algebra objects $B \in \cC$ and $C \in \cD$, then $\cN \boxtimes \cP \simeq \Mod{}{(B \boxtimes C)}(\cC \boxtimes \cD)$ as a left $\cC \boxtimes \cD$-module category.
		\item The functor $\boxtimes_{\cC}: \cM \times \cN \ra \cM \boxtimes_\cC \cN$ is exact in each variable; when $\cC = \Vect$ this functor satisfies 
		\begin{equation*}
			\Hom_{\cM \boxtimes \cN} (m \boxtimes n, m' \boxtimes n') \cong \Hom_{\cM}(m,m') \otimes \Hom_{\cN}(n, n').
		\end{equation*}
		\item If $\cM_\cC \ra \cM'_\cC$ and ${}_\cC \cN \ra {}_\cC \cN'$ are exact $\cC$-module functors, then the corresponding functor $\cM \boxtimes_\cC \cN \ra \cM' \boxtimes_\cC \cN'$ is exact.
		\item Let $\cM$, $\cN$, and $\cL$ now be linear categories and assume the base field is perfect.  If a bilinear functor $\cM \times \cN \ra \cL$ is exact in each variable, then the corresponding functor $\cM \boxtimes \cN \ra \cL$ is exact \cite[Prop 5.13(vi)]{deligne}.
	\end{enumerate} 
\end{theorem}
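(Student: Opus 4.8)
The plan is to reduce the entire statement to categories of modules over algebra objects, where the relative product can be written down by hand, and then transport the general case through the uniqueness of universal objects. By Theorem~\ref{thm:EGNO2.11.6} (applied to $\cN$, and to $\cM$ regarded as a left $\cC^\mp$-module category) one may choose algebra objects $A, B \in \cC$ together with module equivalences $\cM \simeq \Mod{A}{}(\cC)$ and $\cN \simeq \Mod{}{B}(\cC)$. Since a relative Deligne tensor product, if it exists, is characterized by a universal property and hence unique up to a contractible choice, it suffices to construct one for these models; the construction then transports along the chosen equivalences and is independent of all choices. The candidate is $\cM \boxtimes_\cC \cN := \Mod{A}{B}(\cC)$, the category of $A$--$B$-bimodule objects in $\cC$, together with the bilinear functor $\boxtimes_\cC$ sending $(m,n)$ to the underlying object $\underline{m} \otimes \underline{n} \in \cC$, with the left $A$-action inherited from $m$, the right $B$-action inherited from $n$, and $\cC$-balancing given by the associativity constraint of $\otimes$. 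Finiteness of $\Mod{A}{B}(\cC)$ follows from Proposition~\ref{prop:catismod} together with the observation that the free bimodules $A \otimes X \otimes B$ on projectives $X \in \cC$ furnish enough projectives, while morphism spaces, lengths, and the set of isomorphism classes of simples are all controlled by those of $\cC$; and exactness of $\boxtimes_\cC$ in each variable follows from the biexactness of $\otimes$ on the rigid category $\cC$ (cf.\ Corollary~\ref{cor:biexact_action}) combined with exactness and conservativity of the forgetful functor $\Mod{A}{B}(\cC) \to \cC$.

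The crux is the universal property. Restriction along $\boxtimes_\cC$ takes a linear functor $\Mod{A}{B}(\cC) \to \cL$ to a $\cC$-balanced bilinear functor $\cM \times \cN \to \cL$, and I would prove this is an equivalence by exhibiting an inverse. Given a $\cC$-balanced bilinear (hence right exact in each variable) functor $F \colon \cM \times \cN \to \cL$, note that a free bimodule $A \otimes X \otimes B$ equals $\boxtimes_\cC(A \otimes X, B)$, and also $\boxtimes_\cC(A, X \otimes B)$, the two being identified through the balancing; so one sets $\overline{F}(A \otimes X \otimes B) := F(A \otimes X, B)$ on free bimodules and extends to morphisms between them in the evident compatible way. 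Every bimodule object $M$ admits a two-term presentation $A \otimes X_1 \otimes B \to A \otimes X_0 \otimes B \to M \to 0$ by free bimodules coming from the truncated bar resolution (so $X_0 = \underline{M}$, $X_1 = A \otimes \underline{M} \otimes B$), whence right exactness forces $\overline{F}(M)$ to be the cokernel in $\cL$ of $\overline{F}$ applied to the first map; the substance of the argument is then checking that $\overline{F}$ is well defined independently of the presentation, functorial, linear, and that $F \mapsto \overline{F}$ and $G \mapsto G \circ \boxtimes_\cC$ are mutually inverse on functor categories. This gives parts (1) and (2); the exactness and morphism-space content of part (4) then drop out by computing Hom-spaces out of free bimodules via the free--forgetful adjunctions and reducing to $\cC$, and part (5) follows from functoriality of $\boxtimes_\cC$ in its two module arguments together with the same bookkeeping over free bimodules, exactness of the given module functors passing through them.

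For part (3) I would detour through plain finite-dimensional algebras. By Proposition~\ref{prop:catismod} the linear categories underlying $\cN = \Mod{}{B}(\cC)$ and $\cP = \Mod{}{C}(\cD)$ are $\Mod{}{E}$ and $\Mod{}{F}$ for suitable finite-dimensional $k$-algebras $E$, $F$; the absolute Deligne tensor product of module categories over finite-dimensional algebras is $\Mod{}{E} \boxtimes \Mod{}{F} \simeq \Mod{}{(E \otimes_k F)}$, and a direct identification matching the actions shows that $E \otimes_k F$ represents $B \boxtimes C$ as an algebra object of $\cC \boxtimes \cD \simeq \Mod{}{(R \otimes_k S)}$, so that $\cN \boxtimes \cP \simeq \Mod{}{(B \boxtimes C)}(\cC \boxtimes \cD)$ as left $\cC \boxtimes \cD$-module categories. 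Finally, part (6) is not proven afresh: it is Deligne's statement \cite[Prop.\ 5.13(vi)]{deligne}, which holds precisely because over a perfect field the absolute Deligne tensor product is well behaved and its universal bilinear functor detects exactness. (One could alternatively build $\cM \boxtimes_\cC \cN$ directly as the geometric realization, in the $2$-category of finite linear categories and right exact functors, of the bar object $[p] \mapsto \cM \boxtimes \cC^{\boxtimes p} \boxtimes \cN$, but this first requires establishing that this $2$-category admits such colimits, after which one still has to identify the outcome with $\Mod{A}{B}(\cC)$.)

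The main obstacle is the universal property underlying part (1): showing that the cokernel-of-free-bimodules recipe for $\overline{F}$ is genuinely well defined and functorial, and that the passage $F \leftrightarrow \overline{F}$ is an equivalence of functor categories rather than a mere bijection on objects, requires a careful diagram chase with bar resolutions, and is exactly the place where the standing convention that linear functors are right exact does real work. A secondary difficulty is establishing finiteness of $\Mod{A}{B}(\cC)$ uniformly in the characteristic, without the convenience of semisimplicity.
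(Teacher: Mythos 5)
You should note at the outset that this paper does not prove Theorem~\ref{thm:DelignePrdtOverATCExists} at all: it is quoted from the companion paper \cite{BTP}, so there is no internal argument to compare against. That said, your route is essentially the one taken there: model $\cM$ and $\cN$ via Theorem~\ref{thm:EGNO2.11.6} as $\Mod{A}{}(\cC)$ and $\Mod{}{B}(\cC)$, propose $\Mod{A}{B}(\cC)$ with the balanced functor $(m,n)\mapsto \underline{m}\otimes\underline{n}$, and verify the universal property by presenting every bimodule object as a cokernel of free bimodules; your finiteness and biexactness arguments (free bimodules on projectives, exactness reflected by the forgetful functor, rigidity of $\cC$) are also the right ingredients. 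But be aware that what you defer is precisely the content of parts (1), (2): morphisms between free bimodules $A\otimes X\otimes B\to A\otimes Y\otimes B$ are maps $X\to A\otimes Y\otimes B$ in $\cC$, not tensor products of maps, so defining $\overline{F}$ on them requires the balancing data and the module structures in a coherent way, and one must then check independence of the presentation, functoriality, and that $F\leftrightarrow\overline{F}$ is an equivalence of functor categories (not a bijection on objects). As a plan this is fine, and you say so honestly, but it is a roadmap rather than a proof.

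The one step that, as described, would actually fail is your claim that the $\Hom$-space formula in part (4) ``drops out'' of the free--forgetful adjunctions for the relative product. Test it with $A=B=1$: then $\cM=\cN=\cC$, the universal balanced functor is the tensor product of $\cC$ and $\cC\boxtimes_\cC\cC\simeq\cC$, so your claimed conclusion would read $\Hom_\cC(m\otimes n,m'\otimes n')\cong\Hom_\cC(m,m')\otimes\Hom_\cC(n,n')$, which is false in general (take $\cC=\Vect[G]$, $m=k_g$, $n=k_{g^{-1}}$, $m'=n'=k_e$: the left side is $k$, the right side is $0$). The $\Hom$ formula is Deligne's characterizing property of the \emph{absolute} product $\cM\boxtimes\cN$ --- which is also how it is actually used downstream, e.g.\ in the proof of Theorem~\ref{thm:tensor-exactness} --- and in your framework it should be established there, via $\Mod{}{E}\boxtimes\Mod{}{F}\simeq\Mod{}{(E\otimes F)}$, rather than extracted from the relative construction, where no such formula holds. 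So either prove the absolute-product version separately and read (4) accordingly, or drop that deduction; the free-bimodule reduction you propose does not produce it. Relatedly, in part (3) the assertion that ``$E\otimes_k F$ represents $B\boxtimes C$'' and that the resulting equivalence respects the $\cC\boxtimes\cD$-actions is exactly the action-matching that needs to be written out, and your gloss of part (5) implicitly uses an Eilenberg--Watts-type identification of the induced functor (as $P\otimes_A-$ for a bimodule object $P$) before exactness can be checked after forgetting; both are fixable but currently missing.
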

\nid Tambara constructed an additive (rather than abelian) version of a relative Deligne tensor product for semisimple module categories over semisimple monoidal categories~\cite{tambara}.  Etingof--Nikshych--Ostrik constructed the relative Deligne tensor product for semisimple module categories over a semisimple tensor category with simple unit over an algebraically closed field of characteristic zero~\cite{0909.3140}.  

The relative Deligne tensor product provides a means of composing bimodule categories; this is the most important ingredient in constructing the 3-category of finite tensor categories.  We need not just a 3-category, though, but a symmetric monoidal 3-category.  The monoidal structure is given by the relative Deligne tensor product over the tensor category $\Vect_k$; such a product is denoted $\cM \boxtimes \cN$ and referred to simply as the Deligne tensor product.  For the Deligne tensor to provide a monoidal structure on the objects of the 3-category of finite tensor categories, we need to know that the product $\cC \boxtimes \cD$ is indeed rigid.  To ensure that we make our first and only assumption about the base field $k$, namely that it is perfect; cf. Deligne~\cite[Prop 5.17]{deligne}.\footnote{For example: if $k$ is not perfect, and $l$ is an inseparable field extension of $k$, then the Deligne tensor $(\Mod{l}{l}) \boxtimes (\Mod{l}{l})$ is not rigid.}

\begin{theorem}[\cite{jfs}] \label{thm:tcexists}
Provided the base field $k$ is perfect, there exists a symmetric monoidal $(3,3)$-category $\TC$ whose objects are the finite tensor categories, whose 1-morphisms are finite bimodule categories, whose 2-morphisms are bimodule functors, and whose 3-morphisms are bimodule transformations.  Composition of 1-morphisms is given by the relative Deligne tensor product, and the symmetric monoidal structure is given by the Deligne tensor product.
\end{theorem}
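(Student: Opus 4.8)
The plan is to assemble $\TC$ from its local $2$-categories of bimodule categories, using the relative Deligne tensor product of Theorem~\ref{thm:DelignePrdtOverATCExists} as horizontal composition, and to extract all the higher coherence data for free from the universal property that characterizes that tensor product up to contractible choice; the full proof is carried out in~\cite{3TC}. Concretely: for each ordered pair of finite tensor categories $(\cC,\cD)$ the finite $\cC$--$\cD$-bimodule categories, bimodule functors, and bimodule transformations already form a strict $2$-category (as noted in the excerpt), and these are taken to be the hom-$2$-categories of $\TC$. Horizontal composition is supplied on objects by $-\boxtimes_\cC-$, which exists and preserves finiteness by Theorem~\ref{thm:DelignePrdtOverATCExists}(1); I would promote it to a $2$-functor $\mathrm{Bimod}(\cB,\cC)\times\mathrm{Bimod}(\cC,\cD)\to\mathrm{Bimod}(\cB,\cD)$ by, given bimodule functors and transformations on each side, precomposing with the universal balanced functor $\boxtimes_\cC$ to produce $\cC$-balanced bilinear data out of $\cM\times\cN$ and factoring it through $\cM\boxtimes_\cC\cN$ — parts~(4) and~(5) of Theorem~\ref{thm:DelignePrdtOverATCExists} furnish the needed exactness, and the universal property itself guarantees the induced functor is right exact, hence a bimodule functor in our sense. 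The unit $1$-morphism at $\cC$ is the identity bimodule ${}_\cC\cC_\cC$, and the unitor equivalences $\cC\boxtimes_\cC\cM\simeq\cM$ hold because the action functor $\cC\times\cM\to\cM$ is $\cC$-balanced bilinear and, being initial among such, already exhibits $\cM$ as the relative tensor product.

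The heart of the construction is coherence, and the mechanism is uniform. An $n$-fold iterated relative Deligne tensor product $\cM_1\boxtimes_{\cC_1}\cdots\boxtimes_{\cC_{n-1}}\cM_n$ is governed by a single universal property — it corepresents the simultaneously $\cC_1$-, \dots, $\cC_{n-1}$-balanced multilinear functors out of $\cM_1\times\cdots\times\cM_n$ — so any two bracketings are canonically equivalent through a contractible space of equivalences. This produces the associator, the pentagon filler, the triangle, and every higher coherence cell as the essentially unique element of a contractible space. To upgrade this into an honest $(3,3)$-category one assembles the hom-$2$-categories, composition $2$-functors, units, and these contractible spaces of coherence data into a category weakly enriched in $2$-categories — equivalently, into a complete $3$-fold Segal space whose fibrancy and completeness are exactly the universal properties. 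The symmetric monoidal structure is then obtained by replaying the same argument with the absolute Deligne tensor product $\cM\boxtimes\cN:=\cM\boxtimes_{\Vect_k}\cN$: it preserves finiteness, distributes over the relative product by Theorem~\ref{thm:DelignePrdtOverATCExists}(3), and carries a symmetry together with all associativity and unit coherences via its own universal property.

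I expect two points to be genuinely delicate. The main obstacle is packaging the universal-property-supplied coherence into a bona fide $(3,3)$-category within a fixed model of higher categories: one must verify the relevant contractibility assertions — for spaces of iterated relative Deligne tensor products and for spaces of balanced multilinear functors — at every level, and track throughout that the standing right-exactness convention on linear functors is preserved under all the induced factorizations. The second delicate point is the single place the field hypothesis is used: for $\boxtimes$ to define a monoidal structure on objects, the Deligne tensor product $\cC\boxtimes\cD$ of two finite tensor categories must itself be rigid, hence again a finite tensor category, and this can fail over imperfect fields (the footnote counterexample); it is precisely Deligne's perfectness criterion~\cite[Prop 5.17]{deligne}. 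Everything else — biexactness of module actions, the module-theoretic description of bimodule categories from Theorem~\ref{thm:EGNO2.11.6} — is already in hand from Section~\ref{sec:tc-lincat}.
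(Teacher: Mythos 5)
You should note at the outset that the paper does not prove this statement internally: Theorem~\ref{thm:tcexists} is quoted, and its proof is explicitly deferred to the companion work~\cite{3TC}, so there is no in-paper argument to compare against step by step. Your outline is consistent with the intended construction and gets the key structural points right: the hom-2-categories of bimodule categories, bimodule functors, and transformations; horizontal composition via $-\boxtimes_\cC-$, with functoriality obtained by factoring balanced bilinear data through the universal property and with biexactness from Theorem~\ref{thm:DelignePrdtOverATCExists}(4) guaranteeing the inputs stay within the right-exact convention; the unitor via the observation that the action functor exhibits $\cM$ as $\cC\boxtimes_\cC\cM$; and, importantly, you correctly isolate the one place the perfectness hypothesis enters, namely rigidity of $\cC\boxtimes\cD$ via Deligne's criterion, which is exactly how the paper frames it.

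The genuine gap is the coherence-assembly step, which you assert rather than prove: the claim that the hom-2-categories, the composition data, and the ``contractible spaces of choices'' of iterated relative Deligne products assemble ``into a complete $3$-fold Segal space whose fibrancy and completeness are exactly the universal properties'' is precisely the nontrivial content of~\cite{3TC}, not a formal consequence. Because the relative tensor product is defined only up to contractible ambiguity, what you have a priori is not a category weakly enriched in $2$-categories in any off-the-shelf sense; one must choose a concrete model of symmetric monoidal $(3,3)$-categories, build a specific object of it, and verify the Segal/fibrancy conditions there--and the paper itself warns (in the remark on higher categories) that these model-theoretic issues are exactly what~\cite{3TC} handles. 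The same applies, even more so, to the symmetric monoidal structure: for a $3$-category this is an infinite tower of coherence data (braiding, syllepsis, and higher cells), not merely ``a symmetry together with associativity and unit coherences,'' so ``replaying the same argument'' with $\boxtimes_{\Vect_k}$ does not by itself produce it; one needs a mechanism that extracts all of this functorially from the universal properties. Also, your appeal to a single universal property for the $n$-fold iterated product requires a proof that the iterated binary products corepresent simultaneously balanced multilinear functors, which again uses the biexactness statements and the right-exactness bookkeeping you flag but do not carry out. In short: the blueprint matches the expected strategy, but as a proof it stops where the real work begins.
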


\nid Over any base field, Johnson-Freyd--Scheimbauer's construction of higher categories of higher algebra objects~\cite{jfs} provides an $(\infty,3)$-category $\mathrm{Alg}_1^{\text{strong}}(\mathrm{Rex})$ of algebra objects (with bimodules and strong bimodule functors) in the $(\infty,2)$-category of finitely-cocomplete linear categories (with right exact functors).  When the base field is perfect, the finite tensor categories, finite bimodule categories, bimodule functors, and bimodule transformations form a symmetric monoidal sub-$(3,3)$-category $\TC$ of the larger $(\infty,3)$-category $\mathrm{Alg}_1^{\text{strong}}(\mathrm{Rex})$.
The existence of a symmetric monoidal 3-category of tensor categories was anticipated by Etingof--Nikshych--Ostrik~\cite{0909.3140}, Greenough~\cite{0911.4979}, Douglas--Henriques~\cite{dh-ib}, and Schaumann~\cite{Schaumann-PhD}.

\section{Exact module categories} \label{sec:tc-exact}

When investigating the dualizability properties of a finite tensor category $\cC \in \TC$, we will first construct a dual object, and then build chains of left and right adjoints to the evaluation and coevaluation maps of that duality.  We will then find ourselves wanting to build adjoints to the unit and counit maps of those adjunctions.  By assumption, those unit and counit maps are right exact functors, and so will admit right adjoint functors which however are not obviously right exact, therefore are not obviously adjoints within our 3-category $\TC$.  What we need is a condition on a bimodule category that ensures that all not-necessarily right or left exact bimodule functors out of it are in fact exact.  If we can verify that certain bimodules arising in the duality data for $\cC \in \TC$ satisfy this condition, then we will be able to construct the corresponding adjoints of bimodule functors.

Such a condition, ensuring the exactness of module functors out of a module category, is provided by Etingof and Ostrik's (aptly named) notion of an exact module category.  In this section we summarize the properties of exact module categories, following~\cite{EO-ftc}, and then prove a new result concerning the exactness of Deligne tensor products.

\subsection{Properties of exact module categories over finite tensor categories}

\begin{definition}
Let $\cC$ be a finite tensor category.  A left $\cC$-module category $\cM$ is \emph{exact} 
if the object $p \otimes m \in \cM$ is projective for any projective $p \in \cC$ and any $m \in \cM$.
\end{definition}

\begin{definition}
A $\cC$--$\cD$-bimodule category is \emph{exact} if it is exact when considered as a left $\cC \boxtimes \cD^\mp$-module category.
\end{definition}

The most elementary condition on a module category that ensures functors out of it are exact is of course semisimplicity.  In fact that is the exactness condition in question when the tensor category itself is semisimple:

\begin{example} \label{eg:semiexact}
Let $\cC$ be a semisimple finite tensor category.  A $\cC$-module category is exact if and only it is semisimple.
\end{example}

\nid We will need to know that certain basic bimodules arising in the dualization of $\cC$ are exact:

\begin{example}[{\cite[Ex. 3.3]{EO-ftc} \cite[Ex. 7.5.5]{egno-book}}] \label{ex:exactness}
	A finite tensor category $\cC$ is exact when considered as a $\cC$-, or $\cC^{mp}$-, or $\cC \boxtimes \cC^{mp}$-module category. 
\end{example}

The following omnibus theorem summarizes the properties of exact module categories: 
\begin{theorem}[\cite{EO-ftc, egno-book}] \label{Thm:ExactModCatOmnibus}
	Let $\cC$ be a finite tensor category over an arbitrary field, let $\cM$, $\cM'$, and $\cM''$ be finite exact $\cC$-module categories, and let $\cN$ be any finite $\cC$-module category.
	\begin{enumerate}
		\item Every projective object of $\cM$ is injective, and every injective object of $\cM$ is projective \cite[Cor. 3.6]{EO-ftc} \cite[Cor. 7.6.4]{egno-book}.
		\item Every additive (not a priori right exact) module functor $F:\cM \to \cN$ is exact \cite[Prop. 3.11]{EO-ftc} \cite[Prop. 7.6.9]{egno-book}.		
		\item The category $\Fun_{\cC}(\cM, \cM')$ is finite \cite[Prop. 7.11.6]{egno-book}.
		\item The composition functor $\Fun_{\cC}(\cM, \cM') \times \Fun_{\cC}(\cM', \cM'') \to \Fun_{\cC}(\cM, \cM'')$ is exact in each variable \cite[Lemma 3.20]{EO-ftc} \cite[Lemma 7.11.3]{egno-book}.	
		\item The monoidal category of endofunctors $\Fun_{\cC}(\cM,\cM)$ is rigid, therefore is a finite tensor category.
		\item The category $\cM$ is exact when considered as a $\Fun_{\cC}(\cM,\cM)^\mp$-module category \cite[Lemma 3.25]{EO-ftc}.
	\end{enumerate}
\end{theorem}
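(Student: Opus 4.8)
The statement is a compilation: the plan is to record parts (1)--(4) and (6) as direct citations to Etingof--Ostrik~\cite{EO-ftc} and EGNO~\cite{EGNO}, exactly as indicated, and to supply the short argument for part (5), the one part lacking an external reference.

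For part (5): that $\Fun_\cC(\cM,\cM)$ is a finite linear monoidal category under composition is immediate from part (3) (with $\cM'=\cM$) and part (4) (biexactness of composition, with $\cM'=\cM''=\cM$), so the remaining content is rigidity, i.e.\ the existence of a left and a right dual for an arbitrary $\cF\in\Fun_\cC(\cM,\cM)$. I would argue as follows. By part (2), applied with $\cN=\cM$, every $\cC$-module endofunctor of $\cM$ is exact; in particular $\cF$ is exact. An exact $k$-linear functor between finite linear categories admits both a left and a right adjoint linear functor: realizing $\cM$ as the category of finite-dimensional modules over a finite-dimensional algebra $A$ via Proposition~\ref{prop:catismod}, right exactness of $\cF$ makes it (by Eilenberg--Watts) isomorphic to tensoring with an $A$-bimodule, whose right adjoint is the corresponding $\Hom$ functor, and left exactness dually yields a left adjoint---these adjoints being a priori merely additive, not right exact. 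By Lemma~\ref{lma:module-adjoint} (with $\cD=\Vect_k$) they lift to adjoint $\cC$-module functors $\cF^L\dashv\cF\dashv\cF^R$, again a priori only additive; but applying part (2) once more, every additive $\cC$-module endofunctor of the exact module category $\cM$ is automatically exact, so $\cF^L$ and $\cF^R$ are genuine objects of $\Fun_\cC(\cM,\cM)$. Finally, unwinding the monoidal convention $\cF\otimes\cG:=\cG\circ\cF$ of Definition~\ref{def-tensorcomp} together with the conventions of Section~\ref{sec:conventions}, the right adjoint $\cF^R$ is the right dual $\cF^*$ and the left adjoint $\cF^L$ is the left dual ${}^*\cF$: the evaluation and coevaluation maps of Definition~\ref{def:rigid} are the counit and unit of the respective adjunctions, and the rigidity zigzag equations become the adjunction triangle identities. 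Hence $\Fun_\cC(\cM,\cM)$ is rigid, and being finite it is a finite tensor category.

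I expect the only friction to be conventional bookkeeping: confirming that under the reversed-composition monoidal product ``left adjoint'' matches ``left dual'' in the sense fixed in Section~\ref{sec:conventions}, and that the unit and counit 2-morphisms delivered by Lemma~\ref{lma:module-adjoint} are exactly those needed to verify the zigzag equations of Definition~\ref{def:rigid}. There is no substantive obstacle here---the essential inputs, namely that every module functor out of the exact module category $\cM$ is exact and that linear adjoints lift to module adjoints, are already supplied by part (2) and Lemma~\ref{lma:module-adjoint}.
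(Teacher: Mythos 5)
Your treatment of part (5) is essentially the paper's own argument: the paper likewise notes that every $\cF\in\Fun_\cC(\cM,\cM)$ is exact by part (2), hence has left and right adjoints as a linear functor, and then invokes Lemma~\ref{lma:module-adjoint} to upgrade these to adjoint $\cC$-module functors, which supply the duals. Your extra bookkeeping (Eilenberg--Watts to produce the linear adjoints, a second application of part (2) to see that the adjoints are themselves exact and hence lie in $\Fun_\cC(\cM,\cM)$, and the match-up of adjunction triangle identities with the rigidity zigzags under the convention $\cF\otimes\cG:=\cG\circ\cF$) is all consistent with the paper's two-sentence sketch.

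There is, however, a genuine gap in your plan to dispose of part (2) purely by citation. The references you cite, \cite[Prop.~3.11]{EO-ftc} and \cite[Prop~2.7.8]{EGNO}, prove that module functors out of an exact module category are exact only under the hypothesis that the unit of $\cC$ is simple, whereas the theorem here is asserted for an arbitrary finite tensor category over an arbitrary field, where the unit need not be simple. The paper closes this gap with an explicit argument: choose a short exact sequence $0 \ra k \ra p \ra 1 \ra 0$ in $\cC$ with $p$ projective (possible since $\cC$ has enough projectives), tensor it against a short exact sequence $0 \ra x \ra y \ra z \ra 0$ in $\cM$, apply $F$ to the resulting grid, and run diagram chases using two inputs: the middle row is exact because $p\otimes z$ is projective (exactness of $\cM$) so the sequence of $p\otimes(-)$'s is split, and the columns are exact because the action is biexact (Corollary~\ref{cor:biexact_action}). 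Without this supplement, your part (2) --- and therefore also your part (5), which quotes part (2) with $\cN=\cM$ --- is only established when $1_\cC$ is simple, falling short of the stated generality.
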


\nid The fifth item is seen as follows.  Any functor $\cF \in \Fun_\cC(\cM,\cM)$ is exact, therefore has left and right adjoints as a linear functor.  By Lemma~\ref{lma:module-adjoint}, the functor $\cF$ also has adjoints as a module functor---these adjoints provide the required rigidity.

The second item is proven as \cite[Prop. 3.11]{EO-ftc} and \cite[Prop. 7.6.9]{egno-book} under the assumption that the unit of the tensor category $\cC$ is simple; the property, that functors out of an exact module category are exact, can be seen when $\cC$ is any finite tensor category as follows.  By the finiteness assumption, the tensor category $\cC$ has enough projectives; thus there is a short exact sequence $0 \ra k \ra p \ra 1 \ra 0$, with $p \in \cC$ projective.  Let $0 \ra x \ra y \ra z \ra 0$ be a short exact sequence in $\cM$.  Tensoring each object of the first sequence by the second sequence, and applying the functor $F$ to the resulting grid, produces the following diagram:
	\begin{center}
	\begin{tikzpicture}
			\matrix (m) [matrix of math nodes, row sep = 0.5cm, column sep = 0.75cm, text height=1.5ex, text depth=0.25ex]
			{
				  & 0 & 0 & 0 & \\
				0 & k \otimes F(x) & k \otimes F(y) & k \otimes F(z) & 0 \\
				0 & p \otimes F(x) & p \otimes F(y) & p \otimes F(z) & 0 \\
				0 &  F(x) &  F(y) &  F(z) & 0 \\ 
				  & 0 & 0 & 0 & \\
			};
			\foreach \x in {2,3,4}{ 
				\foreach \y/\z in {1/2,2/3,3/4,4/5}{
					\draw [->] (m-\y-\x) -- (m-\z-\x);
				};		
			};
			\foreach \y in {2,4}{ 
				\foreach \x/\z in {1/2,2/3,3/4,4/5}{
					\draw [->] (m-\y-\x) -- node [above] {} (m-\y-\z);
				};		
			};
			\foreach \x/\z/\text in {1/2,2/3,3/4,4/5}{
				\draw [->] (m-3-\x) -- (m-3-\z);
			};
	\end{tikzpicture}
	\end{center}
Because $\cM$ is exact, the tensor $p \otimes z$ is projective, so the sequence $0 \ra p \otimes x \ra p \otimes y \ra p \otimes z \ra 0$ is split exact; thus, because the functor $F$ is additive, the middle row of this diagram is exact. The columns are all exact because the action map is biexact, by Corollary~\ref{cor:biexact_action}.  A diagram chase shows that $F(y) \ra F(z) \ra 0$ is exact; applying Corollary~\ref{cor:biexact_action}, it follows that $k \otimes F(y) \ra k \otimes F(z) \ra 0$ is exact.  Further diagram chasing then shows that $F(x) \ra F(y) \ra F(z)$ is exact, and so, again by Corollary~\ref{cor:biexact_action}, it follows that $k \otimes F(x) \ra k \otimes F(y) \ra k \otimes F(z)$ is also exact.  A final diagram chase gives that $0 \ra F(x) \ra F(y)$ is exact, completing the argument.

\subsection{The tensor product of exact module categories is exact} \label{sec:tensorexact}

At a crucial moment in our analysis of the dualizability of a finite tensor category $\cC \in \TC$, we will want to construct a right adjoint to a $\cC \boxtimes \cC^\mp$-bimodule map from the identity bimodule to a bimodule $\ev_\cC \boxtimes \ev_\cC^R$, where $\ev_\cC$ is an evaluation for a duality between $\cC$ and $\cC^\mp$, and $\ev_\cC^R$ is a right adjoint of that bimodule.  We would therefore like to know that the bimodule $\ev_\cC \boxtimes \ev_\cC^R$ is exact.  We will know by then that each of the two factors in that bimodule is exact.  We now undertake to prove the general result that any ordinary Deligne tensor product of exact module categories is exact.

We will need the following lemma, ensuring that it suffices to check the exactness condition for a collection of projective generators.

\begin{lemma} \label{lma:Exact_checked_on_proj_gens}
	Let $\cC$ be a finite tensor category and let $\cM$ be a finite left $\cC$-module category. Let $\cP = \{ p_\alpha \}$ be a collection of jointly generating projective objects of  $\cC$. The following conditions are equivalent:
	\begin{enumerate}
		\item for all $m \in \cM$ and all $p_\alpha \in \cP$, the object $p_\alpha \otimes m \in \cM$ is projective;
		\item the $\cC$-module category $\cM$ is exact.
	\end{enumerate}
\end{lemma}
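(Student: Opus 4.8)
The plan is to prove the two implications separately, the reverse being immediate. Since every $p_\alpha \in \cP$ is by hypothesis projective in $\cC$, condition (2) — that $\cM$ is exact — asserts in particular that $p_\alpha \otimes m$ is projective for each $\alpha$ and each $m \in \cM$; this is condition (1), so (2)$\Rightarrow$(1) holds trivially.

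For the substantive direction (1)$\Rightarrow$(2), I would first reduce an arbitrary projective object of $\cC$ to the generators $\cP$. Fix a projective $q \in \cC$ and an arbitrary $m \in \cM$. Because $\cP$ is jointly generating and $\cC$ is a finite linear category, there is an epimorphism $e \colon \bigoplus_{i=1}^{n} p_{\alpha_i} \twoheadrightarrow q$ from a \emph{finite} direct sum of objects of $\cP$ (the finiteness here uses that objects of $\cC$ have finite length and that $\cC$ has finitely many simples, so the generation can be witnessed by a finite sum). Since $q$ is projective, $e$ splits, exhibiting $q$ as a direct summand of $\bigoplus_{i=1}^{n} p_{\alpha_i}$.

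Next I would transport this along the action. The functor $- \otimes m \colon \cC \to \cM$ is additive — indeed exact by Corollary~\ref{cor:biexact_action} — hence preserves finite direct sums and split summands. Applying it to the splitting of $q$ shows that $q \otimes m$ is a direct summand of $\bigoplus_{i=1}^{n}(p_{\alpha_i} \otimes m)$. By hypothesis (1) each $p_{\alpha_i} \otimes m$ is projective in $\cM$, so their finite direct sum is projective, and a direct summand of a projective object is projective. Therefore $q \otimes m$ is projective for every projective $q \in \cC$ and every $m \in \cM$, which is exactly the statement that $\cM$ is exact, completing the equivalence.

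The argument is short and there is no genuine obstacle; the only point requiring a little care is the reduction step — namely that in a finite linear category a jointly generating family of projectives has the property that every projective object is a direct summand of a finite direct sum of members of the family. This follows from the finite-length condition on objects of $\cC$ together with the fact that projectives split every surjection onto them.
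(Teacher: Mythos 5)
Your proof is correct and follows essentially the same route as the paper's: cover the projective $q$ by a (finite) sum of generators, use projectivity of $q$ to split the cover, apply $-\otimes m$ to exhibit $q\otimes m$ as a summand of a projective, the converse being immediate. Your extra care in noting that finite length lets one take a finite sub-sum of the generating family is a worthwhile explicit justification of a point the paper leaves implicit.
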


\noindent (Recall that a family $\cP$ is called {\em jointly generating} if the functor $\prod_\alpha \Hom_\cC(p_\alpha, -): \cC \to \Set$ is faithful.)

\begin{proof}
The second condition certainly implies the first; we prove the converse.  By the joint generation assumption, any object $q \in \cC$ is a quotient $\pi: p \twoheadrightarrow q$ of a sum $p := \oplus p_\alpha$ of objects $p_\alpha \in \cP$ in the generating set.  Suppose $q \in \cC$ is projective, and $m \in \cM$ is any object.  The map $\pi: p \ra q$ is split, and so the map $\pi \otimes \id_m: p \otimes m \ra q \otimes m$ is split.  Thus the product $q \otimes m$ is a summand of the projective object $p \otimes m$, and so is projective as required.
\end{proof}

\begin{theorem}\label{thm:tensor-exactness}
Let $\cC$ and $\cD$ be finite tensor categories over a perfect field.  If $\cM$ is an exact finite left $\cC$-module category, and $\cN$ is an exact finite left $\cD$-module category, then $\cM \boxtimes \cN$ is an exact left $\cC \boxtimes \cD$-module category.
\end{theorem}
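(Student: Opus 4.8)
The plan is to reduce everything, via Lemma~\ref{lma:Exact_checked_on_proj_gens}, to a single convenient family of projectives. Since the base field is perfect, $\cC \boxtimes \cD$ is a finite tensor category (Theorem~\ref{thm:tcexists}) and $\cM \boxtimes \cN$ is a finite left $\cC \boxtimes \cD$-module category (Theorem~\ref{thm:DelignePrdtOverATCExists}(1)), so by Lemma~\ref{lma:Exact_checked_on_proj_gens} it suffices to check the exactness condition on the family $\{p \boxtimes q\}$, with $p$ and $q$ ranging over projective objects of $\cC$ and $\cD$ respectively. This family is jointly generating and consists of projective objects because $\cC \boxtimes \cD$ has a projective generator that is already a pure tensor: writing $\cC$ and $\cD$ as the categories of right modules over finite-dimensional algebras $A$ and $B$ (Proposition~\ref{prop:catismod}), Theorem~\ref{thm:DelignePrdtOverATCExists}(3) applied with the acting tensor categories taken to be $\Vect_k$ identifies $\cC \boxtimes \cD$ with right modules over $A \otimes_k B$, under which $A \boxtimes B$ is the regular representation; a projective $p$ (resp.\ $q$) is a summand of a power of $A$ (resp.\ $B$), so $p \boxtimes q$ is a summand of a power of $A \boxtimes B$. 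The same presentation of $\cM \boxtimes \cN$ as modules over $R \otimes_k S$ (with $\cM$, $\cN$ modules over $R$, $S$) shows that any pure tensor $P \boxtimes Q$ of projective objects $P \in \cM$, $Q \in \cN$ is projective in $\cM \boxtimes \cN$. The theorem thus reduces to the assertion that $(p \boxtimes q) \otimes m$ is projective for all projective $p \in \cC$, $q \in \cD$ and every $m \in \cM \boxtimes \cN$.

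Two facts then do the work. First, the $\cC \boxtimes \cD$-action on $\cM \boxtimes \cN$ is biexact by Corollary~\ref{cor:biexact_action}, so $(p \boxtimes q) \otimes -$ is an exact endofunctor of $\cM \boxtimes \cN$. Second, by the construction of this module structure (\cite{BTP,3TC}), $(p \boxtimes q) \otimes -$ carries a pure tensor $m' \boxtimes n'$ to $(p \otimes m') \boxtimes (q \otimes n')$; since $\cM$ and $\cN$ are exact module categories and $p$, $q$ are projective, $p \otimes m'$ and $q \otimes n'$ are projective, so $(p \boxtimes q) \otimes (m' \boxtimes n')$ is a pure tensor of projectives, hence projective.

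It remains to pass from pure tensors to arbitrary objects, which I would do by induction on the length of $m$. The length-zero case is trivial. If $m$ has length at least two, choose a nonzero proper subobject and apply the exact functor $(p \boxtimes q) \otimes -$ to the resulting short exact sequence; an extension of a projective object by a projective object splits, so the inductive hypothesis finishes this case. The essential case is $m$ simple, and here I would invoke the structural fact that, over a perfect field, every simple object of $\cM \boxtimes \cN$ is a direct summand of a pure tensor $m_0 \boxtimes n_0$ with $m_0 \in \cM$, $n_0 \in \cN$: with $\cM$, $\cN$, $\cM \boxtimes \cN$ presented as modules over $R$, $S$, $R \otimes_k S$, perfectness makes $(R/\operatorname{rad}(R)) \otimes_k (S/\operatorname{rad}(S))$ semisimple, so $\operatorname{rad}(R \otimes_k S) = \operatorname{rad}(R) \otimes_k S + R \otimes_k \operatorname{rad}(S)$ and the simple $R \otimes_k S$-modules are exactly those of this semisimple quotient; a block-by-block Morita computation then identifies $s_i \otimes_k t_j$, for a simple $R$-module $s_i$ and a simple $S$-module $t_j$, with the regular representation of $\End(s_i) \otimes_k \End(t_j)$, which is semisimple and hence contains every simple module of the corresponding block as a summand. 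Granting this, $(p \boxtimes q) \otimes m$ is a summand of $(p \boxtimes q) \otimes (m_0 \boxtimes n_0) = (p \otimes m_0) \boxtimes (q \otimes n_0)$, a pure tensor of projectives, hence projective; this completes the induction.

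The main obstacle is precisely this last structural input — that simple objects of $\cM \boxtimes \cN$ are direct summands of pure tensors — and it is the one place where perfectness of the base field is genuinely used (over an imperfect field $\cC \boxtimes \cD$ need not even be rigid, cf.\ \cite{deligne}); everything else is bookkeeping with properties of $\cM \boxtimes \cN$ established above.
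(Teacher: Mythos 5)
Your proof is correct, but it takes a genuinely different route from the paper's at the crucial step of passing from primitive tensors to arbitrary objects of $\cM \boxtimes \cN$. The paper, after the same reduction via Lemma~\ref{lma:Exact_checked_on_proj_gens} and the same observation that $(p \boxtimes q) \otimes (a \boxtimes b) \cong (p \otimes a) \boxtimes (q \otimes b)$ is projective, handles a general object $x$ by a Hom-theoretic argument: it invokes part (6) of Theorem~\ref{thm:DelignePrdtOverATCExists} (Deligne's result, valid over a perfect field, that a bilinear functor exact in each variable induces an exact functor on $\boxtimes$) to reduce to exactness of $(m,n) \mapsto \Hom(x, ({}^*p \otimes m) \boxtimes ({}^*q \otimes n))$ in each variable separately, notes that the objects $({}^*p \otimes m_i) \boxtimes ({}^*q \otimes n)$ are injective (duals of projectives are projective, projectives are injective in exact module categories, and $\boxtimes$ preserves injectives), and then resolves $x$ by a primitive tensor $0 \to x'' \to x' \to x \to 0$ using Theorem~\ref{thm:EGNO2.11.6} and finishes with diagram chases. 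You instead run an induction on the length of $x$, using that an extension of projectives splits, and dispose of the simple case by the structural fact that over a perfect field every simple object of $\cM \boxtimes \cN$ is a direct summand of a pure tensor of simples, which you prove by presenting everything as modules over $R \otimes_k S$ and computing $\operatorname{rad}(R \otimes_k S) = \operatorname{rad}(R) \otimes_k S + R \otimes_k \operatorname{rad}(S)$ together with a Morita/Wedderburn analysis of the blocks (separability of the relevant division algebras is where perfectness enters for you, alongside rigidity of $\cC \boxtimes \cD$ needed for Corollary~\ref{cor:biexact_action} and Lemma~\ref{lma:Exact_checked_on_proj_gens}). Your argument is more elementary and concrete --- it avoids Deligne's exactness criterion and the injective-resolution diagram chases entirely --- at the cost of leaning harder on the explicit identification of $\cM \boxtimes \cN$ with modules over a tensor product of finite-dimensional algebras and on classical facts about radicals and separable algebras; the paper's argument stays closer to the intrinsic module-category formalism and isolates the perfectness hypothesis in the single citation of Deligne. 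Both proofs are sound, and both make essential use of perfectness, consistent with the counterexample noted in the remark following the theorem.
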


\begin{proof} 
First observe that the Deligne tensor product preserves projective and injective objects in the following sense: if $m \in \cM$ and $n \in \cN$ are both projective, equivalently injective, objects, then $m \boxtimes n \in \cM \boxtimes \cN$ is also projective and injective.  This closure property follows from parts (4) and (6) of Theorem \ref{thm:DelignePrdtOverATCExists}.  (By part (4), the functor $\Hom_{\cM \boxtimes \cN}(m \boxtimes n,-): \cM \boxtimes \cN \to \Vect$ corresponds to the bilinear functor $\Hom_{\cM}(m,-) \otimes \Hom_{\cN}(n,-): \cM \times \cN \to \Vect$.  The latter functor is exact in each variable because $m$ and $n$ are projective; by part (6) the former functor is therefore exact, and thus $m \boxtimes n$ is projective as required.  Injectivity follows similarly.)

Let $\cP$ be the collection of objects of $\cC \boxtimes \cD$ of the form $p \boxtimes q$ for all projective objects $p \in \cC$ and $q \in \cD$.  This collection is a jointly generating family of projectives, as follows from expressing $\cC \boxtimes \cD$ as $(A \otimes B)\textrm{-}\mathrm{Mod}$ (using Proposition~\ref{prop:catismod} and part (2) of Theorem~\ref{thm:DelignePrdtOverATCExists}).

By Lemma \ref{lma:Exact_checked_on_proj_gens}, it suffices to check that for every object $x \in \cM \boxtimes \cN$, and every pair of projective objects $p \in \cC$ and $q \in \cD$, the product $(p \boxtimes q) \otimes x \in \cM \boxtimes \cN$ is projective.  That is, we need to show that the following functor is exact:
\begin{equation*}
	\Hom((p \boxtimes q) \otimes x, -) \cong \Hom(x, ({}^*p \boxtimes {}^*q) \otimes(-) ): \cM \boxtimes \cN \to \Vect
\end{equation*}
We first observe that this is true when $x$ is a primitive tensor $a \boxtimes b$: because $\cM$ and $\cN$ are exact, the products $p \otimes a$ and $q \otimes b$ are projective, and the product $(p \boxtimes q) \otimes (a \boxtimes b) \cong (p \otimes a) \boxtimes (q \otimes b)$ is projective by the aforementioned closure property.

Now for a general object $x \in \cM \boxtimes \cN$, by part (6) of Theorem~\ref{thm:DelignePrdtOverATCExists} it is enough to show that the functor
\begin{align*}
\cF: \cM \times \cN &\rightarrow \Vect \\
m \times n &\mapsto \Hom_{\cM \boxtimes \cN}((p \boxtimes q) \otimes x, m \boxtimes n) \cong \Hom_{\cM \boxtimes \cN}(x, ({}^* p \otimes m) \boxtimes ({}^* q \otimes n))
\end{align*}
is exact in each of the variables $\cM$ and $\cN$ separately.  We show exactness in the variable $\cM$; exactness in the variable $\cN$ follows by the same argument.

Fix an object $n \in \cN$ and consider an exact sequence in $\cM$,
\begin{align*}
	0 \to m_1 \to m_2 \to m_3 \to 0. 
\end{align*}
Set $z_i := ({}^* p \otimes m_i) \boxtimes ({}^* q \otimes n)$ and consider the resulting sequence
\begin{equation*}
	0 \to z_1 \to z_2 \to z_3 \to 0.
\end{equation*}
This sequence is exact because $(c \otimes -): \cM \ra \cM$ is exact for any object $c \in \cC$ (as tensoring with appropriate duals gives the adjoint functors) and $(- \boxtimes \tilde{n}) : \cM \ra \cM \boxtimes \cN$ is exact for any $\tilde{n} \in \cN$ (by part (4) of Theorem~\ref{thm:DelignePrdtOverATCExists}).  Moreover, the objects $z_i$ are all injective: the dual of a projective object of a finite tensor category is projective \cite[Prop. 2.3]{EO-ftc}, hence the objects ${}^* p \otimes m_i$ and ${}^* q \otimes n$ are projective, hence injective by part (1) of Theorem~\ref{Thm:ExactModCatOmnibus}, and so the product $({}^* p \otimes m_i) \boxtimes ({}^* q \otimes n)$ is injective by the closure property.

We are left to show that for any object $x \in \cM \boxtimes \cN$, the sequence $0 \to \Hom(x,z_1) \to \Hom(x,z_2) \to \Hom(x,z_3) \to 0$ is exact.  Using Theorem~\ref{thm:EGNO2.11.6} and part (2) of Theorem~\ref{thm:DelignePrdtOverATCExists}, note that there is a short exact sequence
\begin{equation*}
	0 \to x'' \to x' \to x \to 0
\end{equation*}
where $x'$ is a primitive tensor, that is an object of the form $a \boxtimes b$, for which we already know the desired exactness property.  Consider, finally, the following diagram:
\[
		\begin{tikzpicture} \matrix (m) [matrix of math nodes, row sep={1cm,between origins}] {
		 &[0.5cm] 0 &[1cm] 0 &[1cm] 0 &[0.5cm]  \\ 
		 0 & \Hom(x, z_1) & \Hom(x, z_2) & \Hom(x, z_3) & \\ 
		 0 & \Hom(x', z_1) & \Hom(x', z_2) & \Hom(x', z_3) & 0 \\
		 0 & \Hom(x'', z_1) & \Hom(x'', z_2) & \Hom(x'', z_3) & \\
		& 0 & 0 & 0 & \\
		};
		\foreach \sourcerow/ \sourcecol / \targetrow / \targetcol in 
			{1/2/2/2, 1/3/2/3, 1/4/2/4, 
			2/2/3/2, 2/3/3/3, 2/4/3/4,  
			3/2/4/2, 3/3/4/3, 3/4/4/4,  
			4/2/5/2, 4/3/5/3, 4/4/5/4,  
			2/1/2/2, 2/2/2/3, 2/3/2/4, 
			3/1/3/2, 3/2/3/3, 3/3/3/4, 3/4/3/5, 
			4/1/4/2, 4/2/4/3, 4/3/4/4} 
			\draw [->] (m-\sourcerow-\sourcecol) -- (m-\targetrow-\targetcol);
		\end{tikzpicture}
\]
The columns are exact because the objects $z_i$ are injective.  The top and bottom rows are exact because the functor $\Hom(\tilde{x},-)$ is left exact for any $\tilde{x} \in \cM \boxtimes \cN$.  The middle row is exact because $x'$ is a primitive tensor.  A diagram chase shows that the map $\Hom(x,z_2) \ra \Hom(x,z_3)$ is surjective, and hence $\Hom((p \boxtimes q) \otimes x,-)$ is exact, as required.
\end{proof}

\begin{remark}
Over an imperfect field, the conclusion of this theorem is false even when $\cC \boxtimes \cD$ happens to be rigid.  For example, when $\ell$ is an inseparable field extension of the base field $k$, the linear category $\Vect_\ell$ is semisimple, therefore exact as a $\Vect_k$-module category, but the product $\Vect_\ell \boxtimes \Vect_\ell \simeq \Mod{(\ell \otimes_k \ell)}{}$ is not semisimple, therefore not exact as a $\Vect_k$-module category.
\end{remark}

\section{Dual and functor bimodule categories} \label{sec:tc-bimodules}

In this section we describe a number of operations on bimodule categories (flipping an action, twisting an action, taking a dual category, taking a functor category) and explain how they are related.  We then prove, in Proposition~\ref{prop:FunctorsAsATensorPdt}, that the relative Deligne tensor product can be expressed as a category of functors.  This fact will be crucial, in Section~\ref{sec:df-modules}, in proving the $2$-dualizability of finite tensor categories.  Finally, we describe how the (right) dual of a (left) module category can be interpreted as the category of modules over the double dual of an algebra object.  We will need this interpretation in giving, in Section~\ref{sec:radfordftc}, a topological proof of the quadruple dual theorem.

\subsection{Flips and twists of bimodule categories} \label{sec:fliptwist}

Given a bimodule ${}_A M_B$ between ordinary algebras $A$ and $B$, we can flip the left $A$-action onto the right, producing a module $M_{A^\op \otimes B}$, or flip the right $B$-action onto the left, producing a module ${}_{A \otimes B^\op} M$, or flip both actions to the other side, producing the bimodule ${}_{B^\op} M_{A^\op}$.  More generally, if we have a bimodule of the form ${}_{A \otimes B} M_C$, we can flip just part of the action to obtain the bimodule ${}_A M_{B^\op \otimes C}$, or, of course, flip a bimodule ${}_A M_{B \otimes C}$ to a bimodule ${}_{A \otimes B^\op} M_C$.  The same operations work perfectly well on bimodule categories.

\begin{definition}
Let $\cC$, $\cD$, and $\cE$ be linear monoidal categories.  Given a bimodule category ${}_{\cC \boxtimes \cD} \cM_\cE$, the corresponding bimodule category ${}_{\cC} \cM_{\cD^\mp \boxtimes \cE}$ is called a \emph{flip} of ${}_{\cC \boxtimes \cD} \cM_\cE$.  Similarly, given a bimodule category ${}_\cC \cM_{\cD \boxtimes \cE}$, the corresponding bimodule category ${}_{\cC \boxtimes \cD^\mp} \cM_\cE$ is a flip of ${}_\cC \cM_{\cD \boxtimes \cE}$.
\end{definition}

\noindent We will be particularly interested in the bimodules obtained by flipping the actions of the identity bimodule category ${}_\cC \cC_\cC$.  If we flip the right action to the left, then we have a module category ${}_{\cC \boxtimes \cC^\mp} \cC$; if we flip the left action to the right, then we have $\cC_{\cC^\mp \boxtimes \cC}$; if we flip both actions, we have a bimodule ${}_{\cC^\mp} \cC_{\cC^\mp}$, which is in fact precisely the identity bimodule of the category $\cC^\mp$.

Given a bimodule ${}_A M_B$ and a homomorphism of algebras $f: A' \rightarrow A$, we can twist, that is precompose, the $A$-action by the map $f$ to obtain an $A'$--$B$-bimodule denoted ${}_{\langle f \rangle} M$.  Similarly, given a homomorphism $g: B' \rightarrow B$, we can twist the right action to obtain an $A$--$B'$-bimodule denoted $M_{\langle g \rangle}$.  The same operations can be applied to bimodule categories.
\begin{definition}
Let $\cC$, $\cC'$, $\cD$, and $\cD'$ be linear monoidal categories, let ${}_\cC \cM_\cD$ be a bimodule category, and let $\cF: \cC' \rightarrow \cC$ and $\cG: \cD' \rightarrow \cD$ be monoidal functors.  The $\cC'$--$\cD$-bimodule obtained by precomposition with $\cF$ is called a \emph{twist} of ${}_\cC \cM_\cD$ and is denoted ${}_{\langle \cF \rangle} \cM$; in particular, the action on objects of ${}_{\langle \cF \rangle} \cM$ is $c' \cdot m \cdot d := \cF(c') \otimes m \otimes d$.  Similarly, a twist of the right action by the monoidal functor $\cG: \cD' \rightarrow \cD$ provides a $\cC$--$\cD'$-bimodule category denoted $\cM_{\langle \cG \rangle}$.
\end{definition}

Because of the relations ${}_{\langle f \rangle} A \otimes_A M \cong {}_{\langle f \rangle} M$ and ${}_{\langle f \rangle \langle g \rangle} M \cong {}_{\langle g \circ f \rangle} M$, there is a functor from the category of algebras and algebra homomorphisms to the 2-category of algebras, bimodules, and intertwiners: the functor is the identity on objects and sends a homomorphism to the corresponding twisted bimodule.  The analogous facts for bimodule categories, that ${}_{\langle \cF \rangle} \cC \boxtimes_\cC \cM \simeq {}_{\langle \cF \rangle} \cM$ and that ${}_{\langle \cF \rangle \langle \cG \rangle} \cM \simeq {}_{\langle \cF \otimes \cG \rangle} \cM$, ensure there is a functor from the 2-category of finite tensor categories, tensor functors, and tensor natural transformations, to the 3-category $\TC$ of finite tensor categories, finite bimodule categories, their functors, and their transformations: again the functor is the identity on objects and takes a functor to the corresponding twisted bimodule category.  Similarly, the fact that $\cM \boxtimes_\cD \cD_{\langle \cF \rangle} \simeq \cM_{\langle \cF \rangle}$ and $\cM_{\langle \cF \rangle \langle \cG \rangle} \simeq \cM_{\langle \cG \otimes \cF \rangle}$ ensures there is a functor from the 2-category of finite tensor categories to $\TC^{\op_1}$, the 3-category of tensor categories with the 1-morphisms reversed.

In Section~\ref{sec:topquaddual}, the Radford bordism will provide an isomorphism between two twisted bimodules, and we will want to understand the significance of this isomorphism for the corresponding twisting functors.

\begin{lemma} \label{lem:BimoduleToFunctor}
Let $\cC$ and $\cD$ be linear monoidal categories and let $\cF: \cC \rightarrow \cD$ and $\cG: \cC \rightarrow \cD$ be monoidal functors.  The category of bimodule equivalences $\alpha: {}_\cC ({}_{\langle \cF \rangle} \cD)_\cD \rightarrow {}_\cC ({}_{\langle \cG \rangle} \cD)_\cD$ is equivalent to the category of pairs $(D, \varphi)$, where $D \in \cD$ is a tensor-invertible object and $\varphi$ is a monoidal natural isomorphism from $\cF$ to the monoidal functor $D^{-1} \otimes \cG(-) \otimes D$.
\end{lemma}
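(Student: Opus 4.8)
The plan is to unwind what a bimodule equivalence $\alpha\colon {}_\cC({}_{\langle \cF\rangle}\cD)_\cD \to {}_\cC({}_{\langle \cG\rangle}\cD)_\cD$ actually amounts to. Both sides have the same underlying linear category, namely $\cD$ acting on itself on the right by tensor product; the only difference is the left action, which uses $\cF$ on the source and $\cG$ on the target. First I would note that such an $\alpha$ is in particular a right $\cD$-module functor $\cD \to \cD$, so it is determined up to canonical isomorphism by the object $D := \alpha(1) \in \cD$, via a natural isomorphism $\alpha(-) \cong D \otimes (-)$: indeed for any right $\cD$-module endofunctor of $\cD$ one has $\alpha(d) \cong \alpha(1 \otimes d) \cong \alpha(1) \otimes d = D \otimes d$, using the right-module structure isomorphism. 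Since $\alpha$ is an equivalence, $D$ must be tensor-invertible (its quasi-inverse $\alpha^{-1}$ produces $D^{-1} := \alpha^{-1}(1)$, and the unit/counit of the equivalence supply the isomorphisms $D \otimes D^{-1} \cong 1 \cong D^{-1} \otimes D$). This is the assignment $(D,\varphi) \mapsto \alpha$ in reverse on objects; the functor $\varphi$ will record the compatibility of $\alpha$ with the two left actions.

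Next I would extract $\varphi$ from the left-module structure of $\alpha$. The left-module structure isomorphism of $\alpha$ is a natural family $\alpha(\cF(c) \otimes m) \cong \cG(c) \otimes \alpha(m)$ for $c \in \cC$, $m \in \cD$, satisfying the usual pentagon and triangle. Evaluating at $m = 1$ and transporting along $\alpha(-) \cong D \otimes (-)$ gives, for each $c$, an isomorphism $D \otimes \cF(c) \cong \cG(c) \otimes D$ in $\cD$, i.e.\ an isomorphism $\cF(c) \cong D^{-1} \otimes \cG(c) \otimes D$; naturality in $c$ is immediate, and the pentagon/triangle axioms of the module structure translate exactly into the statement that this family is a \emph{monoidal} natural isomorphism $\varphi\colon \cF \Rightarrow D^{-1}\otimes\cG(-)\otimes D$. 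Conversely, given $(D,\varphi)$ with $D$ tensor-invertible, I would define $\alpha := D \otimes (-)$ with its evident right $\cD$-module structure (associativity of $\otimes$) and with left $\cD$-module structure built from $\varphi$; checking the module axioms is a routine diagram chase using the monoidality of $\varphi$ and the coherence of $\cD$. One then checks that these two assignments, suitably extended to morphisms, are mutually inverse up to natural isomorphism — a morphism of bimodule equivalences $\alpha \to \alpha'$ corresponds to a morphism $D \to D'$ compatible with the $\varphi$'s, and this matching is forced by evaluating at $1 \in \cD$.

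The only genuinely delicate point, and the one I would spend the most care on, is the bookkeeping that turns the module-structure coherence of $\alpha$ into the \emph{monoidality} condition on $\varphi$ (and back), since both $\cF$ and $\cG$ carry their own monoidal structure constraints and $\cD$ carries its associator and unitors; one must verify that the two composite isomorphisms $\cF(c)\otimes\cF(c') \cong \cF(c\otimes c')$ and $D^{-1}\otimes\cG(c)\otimes D \otimes D^{-1}\otimes\cG(c')\otimes D \cong D^{-1}\otimes\cG(c\otimes c')\otimes D$ are intertwined by $\varphi$. Everything else — tensor-invertibility of $D$ from invertibility of $\alpha$, functoriality of the correspondence, essential surjectivity — is formal once the first equivalence $\alpha(-)\cong D\otimes(-)$ is in hand. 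I would present the argument as: (1) reduce $\alpha$ to the data $(D,\text{left-module iso})$ using the right-module structure; (2) identify "left-module iso on $\alpha = D\otimes(-)$" with "monoidal natural iso $\cF \Rightarrow D^{-1}\otimes\cG(-)\otimes D$"; (3) promote this object-level bijection to an equivalence of categories by the same analysis on morphisms.
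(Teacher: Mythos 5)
Your proposal is correct and follows essentially the same route as the paper's proof: identify $\alpha$ with left multiplication by $D=\alpha(1)$ via the right $\cD$-module structure, read off $\varphi$ from the left-module structure evaluated at the unit, and reverse the construction by equipping $D\otimes(-)$ with the left-module structure built from $\varphi$. The only difference is that you spell out the coherence bookkeeping (monoidality of $\varphi$ versus the module-functor pentagon/triangle axioms) that the paper leaves implicit.
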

\begin{proof}
Such a bimodule equivalence $\alpha$ is in particular a right $\cD$-module functor from $\cD$ to $\cD$ and is therefore equivalent to the left multiplication functor $d \mapsto \alpha(1) \otimes d$.  The left $\cC$-module structure of $\alpha$ is a natural isomorphism $\alpha(c \cdot -) \cong c \cdot \alpha(-)$, that is $\alpha(\cF(c) \otimes -) \cong \cG(c) \otimes \alpha(-)$, which therefore provides a natural isomorphism $a_{(c,-)}: \alpha(1) \otimes \cF(c) \otimes - \cong \cG(c) \otimes \alpha(1) \otimes -$.

The functor from bimodule equivalences to pairs takes the equivalence $\alpha$ to the pair $(D,\varphi)$ where $D:=\alpha(1)$ and $\varphi(c) := D^{-1} \otimes a_{(c,1)}$.  Note that $D$ is tensor-invertible because $\alpha$ is an equivalence.  Given a pair $(D,\varphi)$, the associated bimodule equivalence is $\alpha(d) := D \otimes d$ with left module structure map $D \otimes \varphi$.
\end{proof}

\subsection{Duals of bimodule categories}

Given a $\cC$--$\cD$-bimodule category, we can form a new bimodule category by twisting the $\cC$ and $\cD$ actions by the (say left) dual functors $\cC \ra \cC^\mop$ and $\cD \ra \cD^\mop$.  The resulting `dual' bimodule categories will play a central role in our dualizability analysis.

\begin{definition} \label{def:Dual_bimodule_notation}
Given a $\cC$--$\cD$-bimodule category $\cM$, the \emph{right dual} of $\cM$ is a $\cD$--$\cC$-bimodule category denoted $\cM^*$ and defined as follows.  The underlying linear category of $\cM^*$ is $\cM^\op$; an object $m \in \cM$ is denoted $m^*$ when viewed as an object of $\cM^*$.  The bimodule category structure on $\cM^*$ is given by
\[
d \cdot m^* \cdot c := ({}^* c \otimes m \otimes {}^* d)^*
\]
Similarly, the \emph{left dual} of $\cM$ is the $\cD$--$\cC$-bimodule category ${}^* \cM$ with underlying category $\cM^\op$ and actions
\[
d \cdot {}^* m \cdot c := {}^* (c^* \otimes m \otimes d^*).
\]
\end{definition}

\noindent The terminology, referring to these bimodule categories as duals, will be justified later in two ways: first, we will see that the bimodule categories are categories of functors into the base tensor categories, therefore are linear duals in the classical sense, and second, we will see that these bimodule categories are adjoint bimodules.  

There is a quite important, potentially confusing, subtlety arising here, namely that the process of taking a dual bimodule category does not commute with the process of flipping an action between the left and the right of a bimodule; this issue is probably responsible for some of the inaccuracies in the literature concerning dual bimodules.  Because flipping an action changes $\cC$ to $\cC^\mp$, and a left dual in $\cC$ is a right dual in $\cC^\mp$, the failure of commutativity is in fact a double dual.
\begin{lemma} \label{lemma:DualTwist}
The $\cD$--$\cC$-bimodule flip of the right dual of the right $(\cC^\mp \boxtimes \cD)$-module flip of a $\cC$--$\cD$-bimodule category $\cM$ is the right twist by the right double dual functor of the right dual of the bimodule category $\cM$; that is,
\[
{}_\cD ((\cM_{\cC^\mp \boxtimes \cD})^*)_\cC \simeq (({}_\cC \cM_\cD)^*)_{\langle \mathfrak{r r} \rangle},
\]
where $\mathfrak{r}$ denotes the right dual functor.  Similarly
\[
{}_\cD (({}_{\cC \boxtimes \cD^\mp}\cM)^*)_\cC \simeq {}_{\langle \mathfrak{r r} \rangle}(({}_\cC \cM_\cD)^*).
\]

\end{lemma}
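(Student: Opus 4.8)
The plan is to verify the claimed equivalence by unwinding all the actions on objects and morphisms and checking they agree, since all four bimodule categories in question have the same underlying linear category (namely $\cM^\op$); there is nothing to prove at the level of underlying categories, only at the level of the bimodule structures. First I would fix notation: for a $\cC$--$\cD$-bimodule category $\cM$, write $\cM_{\cC^\mp \boxtimes \cD}$ for the right $(\cC^\mp \boxtimes \cD)$-module flip, whose action on objects is $m \cdot (c \boxtimes d) := c \otimes m \otimes d$ (the left $\cC$-action becomes a right $\cC^\mp$-action, with no change of duals, merely a relabelling of which side the action is written). Next I would apply Definition~\ref{def:Dual_bimodule_notation}: the right dual of a right $(\cC^\mp \boxtimes \cD)$-module category is a left $(\cC^\mp \boxtimes \cD)$-module category, with action $(c \boxtimes d) \cdot m^* := (m \cdot {}^*(c \boxtimes d))^*$. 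Here the left dual in $\cC^\mp \boxtimes \cD$ of $c \boxtimes d$ is $({}^{*_{\cC^\mp}} c) \boxtimes ({}^{*_\cD} d)$, and the crucial point---exactly the subtlety flagged before the lemma---is that the left dual in $\cC^\mp$ is the \emph{right} dual in $\cC$, i.e. $c^{*_\cC}$. So $(c \boxtimes d) \cdot m^* = (c^{*} \otimes m \otimes {}^{*}d)^*$, where henceforth $(-)^*$ and ${}^{*}(-)$ without subscripts denote duals in $\cC$ and $\cD$ respectively.

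Then I would perform the final flip: flipping the left $\cC^\mp$-action of this $(\cC^\mp \boxtimes \cD)$-module to a right $\cC$-action turns the left-hand side into a $\cD$--$\cC$-bimodule with action $d \cdot m^* \cdot c = (c^{*} \otimes m \otimes {}^{*}d)^*$. On the other side, by Definition~\ref{def:Dual_bimodule_notation} the right dual $({}_\cC \cM_\cD)^*$ is the $\cD$--$\cC$-bimodule with action $d \cdot m^* \cdot c = ({}^{*}c \otimes m \otimes {}^{*}d)^*$, and twisting the right $\cC$-action by the right double dual functor $\mathfrak{rr} = (-)^{**}$ replaces $c$ by $c^{**}$, giving $d \cdot m^* \cdot c = ({}^{*}(c^{**}) \otimes m \otimes {}^{*}d)^*$. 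The whole lemma thus reduces to the canonical natural isomorphism ${}^{*}(c^{**}) \cong c^{*}$ of objects of $\cC$, which holds because the left dual of the right dual is naturally the identity (more precisely, there is a canonical monoidal natural isomorphism $\mathrm{id}_\cC \cong {}^{*}((-)^{**})$, obtained by composing the canonical isomorphism ${}^{*}((-)^{*}) \cong \mathrm{id}$ with one instance of $(-)^{*}$). I would record that this isomorphism is compatible with the bimodule associators and unit constraints, which is a routine (if slightly tedious) diagram check using the coherence of duals in a rigid monoidal category.

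The second displayed equivalence is proved by the mirror-image argument: starting from a left flip ${}_{\cC \boxtimes \cD^\mp}\cM$, taking the right dual produces a $(\cC \boxtimes \cD^\mp)$-module category, and flipping back to a $\cD$--$\cC$-bimodule introduces a double dual on the \emph{left} $\cD$-action rather than the right $\cC$-action, yielding the twist ${}_{\langle \mathfrak{rr} \rangle}(({}_\cC\cM_\cD)^*)$; the computation is formally identical with the roles of $\cC$ and $\cD$ interchanged. The main obstacle---really the only place care is needed---is bookkeeping the duals correctly: tracking which flip sends a left dual to a right dual and vice versa, and verifying that the two double duals that arise (one from the composition of flips, one from the coherence isomorphism ${}^{*}((-)^{**}) \cong (-)^{*}$) are the \emph{same} double dual and not, say, $\mathfrak{rr}$ versus $\mathfrak{ll}$. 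Once the conventions of Section~\ref{sec:conventions} are adhered to strictly, this identification is forced, and the rest of the proof is a matter of transcribing the action formulas.
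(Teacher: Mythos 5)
Your proposal is correct and is essentially the paper's (unwritten) proof: the paper disposes of this lemma with the remark that it is a careful chase through the definitions of flip, dual bimodule, and twist, and your chase --- including the two key observations that a left dual in $\cC^\mp$ is a right dual in $\cC$, and that the whole statement reduces to the canonical isomorphism ${}^*(c^{**}) \cong c^*$ together with a routine coherence check --- is exactly the intended argument, for both displayed equivalences. One small slip in your parenthetical: the natural isomorphism you construct (whiskering ${}^*((-)^*) \cong \id$ with one instance of $(-)^*$) is $(-)^* \cong {}^*((-)^{**})$, an isomorphism of contravariant functors, not $\id_\cC \cong {}^*((-)^{**})$, which does not typecheck; the isomorphism you actually use in the computation, ${}^*(c^{**}) \cong c^*$, is the correct one.
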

\noindent The proof is simply chasing carefully through the definitions.  Analogous formulas hold for left dual categories.

We will need to know that the duals of exact module categories are exact.  (Recall that a $\cC$-module category $\cM$ is called exact if $p \otimes m \in M$ is projective whenever $p \in \cC$ is projective.)  Toward that end, we first prove an auxiliary result concerning exactness of contravariantly, monoidally-oppositely twisted modules.  For a left $\cC$-module $\cM$ and $\cF: \cC \rightarrow \cC^\mop$ a tensor equivalence, let $\cM^{\op(\cF)}$ denote the right $\cC$-module with underlying linear category $\cM^\op$ and action $m \cdot c := F(c) \otimes m$.
\begin{lemma}
Let $\cC$ be a finite tensor category, and $\cM$ an exact left $\cC$-module category.  For any tensor equivalence $\cF: \cC \rightarrow \cC^\mop$, the right $\cC$-module category $\cM^{\op(\cF)}$ is exact.
\end{lemma}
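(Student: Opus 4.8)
The plan is to unwind the definition of exactness for $\cM^{\op(\cF)}$ and reduce it to exactness of $\cM$ together with the self-injectivity of finite tensor categories. First I would record that a right $\cC$-module category $\cN$ is exact precisely when $n \cdot p$ is projective in $\cN$ for every object $n$ and every projective $p \in \cC$ — this is just the left $\cC^\mp$-module definition transported across the flip, using that $\cC$ and $\cC^\mp$ have the same underlying abelian category, hence the same projectives. Applying this to $\cN = \cM^{\op(\cF)}$, whose underlying category is $\cM^\op$ and whose action is $m \cdot c = \cF(c) \otimes m$, the task becomes: for every $m \in \cM$ and every projective $p \in \cC$, show that $\cF(p) \otimes m$ is projective as an object of $\cM^\op$, that is, injective as an object of $\cM$.

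Next I would dispose of the word ``injective'' using Theorem~\ref{Thm:ExactModCatOmnibus}(1): since $\cM$ is an exact $\cC$-module category, an object of $\cM$ is injective if and only if it is projective, so it suffices to prove $\cF(p) \otimes m$ is \emph{projective} in $\cM$. Because $\cM$ is exact, $q \otimes m$ is projective for any projective $q \in \cC$, so it is enough to show $\cF(p)$ is projective in $\cC$. This is the only step that uses anything about $\cF$: its underlying functor is an equivalence of abelian categories $\cC \to \cC^\op$ (the ``op'' in $\cC^\mop$ reverses morphism composition), and any such equivalence carries projectives to projectives of $\cC^\op$, i.e.\ to injective objects of $\cC$. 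Finally, since $\cC$ is exact as a left $\cC$-module category (Example~\ref{ex:exactness}), Theorem~\ref{Thm:ExactModCatOmnibus}(1) applied to the case $\cM = \cC$ shows that injective and projective objects of $\cC$ coincide, so $\cF(p)$ is projective in $\cC$. Chaining these reductions gives the claim.

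I do not expect a genuine obstacle here — the argument is a short chain of reductions — but the step most prone to error is the variance/convention bookkeeping: one must remember that the underlying functor of a tensor equivalence $\cC \to \cC^\mop$ is \emph{contravariant} on abelian categories, so it exchanges the roles of projective and injective, and that it is precisely the self-injectivity of the finite tensor category $\cC$ (equivalently, the exactness of the regular module $\cC$) that converts this back into preservation of projectivity. It is worth remarking that the monoidal structure on $\cF$ plays no essential role beyond guaranteeing that the underlying functor is an equivalence of abelian categories; the same proof would work for any abelian equivalence $\cC \to \cC^\op$.
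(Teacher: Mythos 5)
Your proof is correct and follows essentially the same route as the paper: both arguments use that the underlying functor of $\cF$ is an (arrow-reversing) equivalence exchanging projectives and injectives, that projectives and injectives coincide in $\cC$ (via Example~\ref{ex:exactness} and Theorem~\ref{Thm:ExactModCatOmnibus}(1)) and in the exact module $\cM$, and then the exactness of $\cM$ itself. The only difference is presentational—you run the chain as a sequence of reductions while the paper runs it forward from $p$ projective to $\cF(p)\otimes m$ projective in $\cM^{\op(\cF)}$.
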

\begin{proof}
Recall from Theorem~\ref{Thm:ExactModCatOmnibus}, using Example~\ref{ex:exactness}, that projective objects in $\cC$ are injective and vice versa, and similarly for objects in $\cM$.  Consider an object $m \in \cM^{\op(\cF)}$ and a projective object $p \in \cC$.  Because $p \in \cC$ is projective, it follows that $\cF(p)$ is projective as an object of $\cC^\mop$, therefore injective as an object of $\cC^\mop$, therefore, by reversing the arrows, projective as an object of $\cC^\mp$.  By the exactness of $\cM$, it follows that $\cF(p) \otimes m$ is projective as an object of $\cM$, therefore injective as an object of $\cM$, and then, by reversing arrows, projective as an object of $\cM^{\op(\cF)}$, as required.
\end{proof}

\noindent Applying this lemma with the functor $\cF$ being an appropriate combination of duals (taking account of Lemma~\ref{lemma:DualTwist}) provides the desired result:

\begin{corollary} \label{cor:adjoint-exactness}
Let $\cC$ and $\cD$ be finite tensor categories and $\cM$ an exact $\cC$--$\cD$-bimodule category.  The dual bimodule categories $\cM^*$ and ${}^* \cM$ are exact $\cD$--$\cC$-bimodule categories.
\end{corollary}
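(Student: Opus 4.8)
The plan is to reduce the statement to the preceding lemma via flips of bimodule actions, using the identification of a dual bimodule category with the $\cM^{\op(\cF)}$ construction. Two preliminary observations make the reduction routine. First, exactness of a bimodule category depends only on the associated module structure over a single Deligne tensor product, and that underlying structure is unchanged by flipping an action between the left and the right; so \emph{flipping actions preserves exactness}. Second, if $\cF\colon\cC'\to\cC$ is a tensor equivalence and $\cM$ is a left $\cC$-module category, then for projective $p\in\cC'$ the object $\cF(p)\in\cC$ is again projective, so $p\otimes^{\cM_{\langle\cF\rangle}}m=\cF(p)\otimes^{\cM}m$ is projective; thus \emph{twisting a (bi)module by a tensor equivalence, in particular by the double dual functor $\mathfrak{rr}$ (a tensor autoequivalence), preserves exactness}.

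Now let $\cM$ be an exact $\cC$--$\cD$-bimodule category, i.e.\ $\cM$ is exact as a left $\cC\boxtimes\cD^\mp$-module category. First flip the left $\cC$-action onto the right, obtaining the still-exact one-sided module $\cM_{\cC^\mp\boxtimes\cD}$. Unwinding Definition~\ref{def:Dual_bimodule_notation}, the right dual of this one-sided module is precisely a category of the form $\cN^{\op(\cF)}$ from the preceding lemma, with $\cN$ the underlying left $(\cC^\mp\boxtimes\cD)^\mp=\cC\boxtimes\cD^\mp$-module (which is just $\cM$ with its defining action) and $\cF$ a dual functor of the tensor category $\cC\boxtimes\cD^\mp$. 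The preceding lemma shows this category is exact. By Lemma~\ref{lemma:DualTwist}, the flip of this category back to a genuine $\cD$--$\cC$-bimodule is the right dual $\cM^*$, up to a right twist by the double right dual functor $\mathfrak{rr}$; since flips and double-dual twists preserve exactness, $\cM^*$ is exact. The case of ${}^*\cM$ is the mirror image: flip the right action first, take a left dual, and invoke the second formula of Lemma~\ref{lemma:DualTwist}. (One may instead bypass Lemma~\ref{lemma:DualTwist} altogether by feeding the preceding lemma the factorwise dual equivalence $\mathfrak{l}_\cC\boxtimes\mathfrak{l}_\cD\colon\cC\boxtimes\cD^\mp\to(\cC\boxtimes\cD^\mp)^\mop$ in place of the literal dual functor of the Deligne tensor product, so that $\cM^{\op(\cF)}$ is identified with $\cM^*$ directly.)

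The conceptual substance is entirely carried by the preceding lemma, namely that twisting an exact left module by a contravariant monoidal-opposite equivalence produces an exact right module, which itself rests on the fact recorded in Theorem~\ref{Thm:ExactModCatOmnibus} and Example~\ref{ex:exactness} that projectives and injectives coincide over an exact module category. The only real care needed here is the bookkeeping: keeping straight which dual functor is used, the symmetric-monoidal swap $\cC^\mp\boxtimes\cD\simeq\cD\boxtimes\cC^\mp$, the flips, and the double-dual discrepancy of Lemma~\ref{lemma:DualTwist}. I expect this bookkeeping to be the only obstacle, and it is a benign one: the double-dual correction is harmless precisely because the double dual is a tensor autoequivalence.
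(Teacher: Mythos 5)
Your proposal is correct and follows essentially the same route as the paper: the paper's (one-sentence) proof is precisely to apply the preceding lemma with $\cF$ an appropriate combination of dual functors, taking account of Lemma~\ref{lemma:DualTwist}, and your write-up just makes the bookkeeping explicit (flips and double-dual twists preserve exactness). Your parenthetical observation that feeding the lemma the factorwise dual equivalence identifies $\cM^{\op(\cF)}$ with $\cM^*$ on the nose, bypassing the $\mathfrak{rr}$ correction, is a nice streamlining but not a different argument.
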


\subsection{The dual bimodule category is the functor dual}

The dual of a finite-dimensional $k$-vector space $V$ can of course be characterized as a dual object in the category of vector spaces, but it can also be constructed explicitly as the linear dual vector space $\Hom_k(V,k)$.  For an ordinary bimodule ${}_A M_B$ (which is finitely generated and projective both as an $A$-module and as a $B$-module), there are left and right adjoints, which may be characterized as adjoint morphisms or may be constructed explicitly as the left and right linear duals $\Hom_A(M,A)$ and $\Hom_B(M,B)$.  The situation for bimodule categories is analogous, as we will see later on: for a bimodule category ${}_\cC \cM_\cD$, the left and right adjoint bimodule categories may be characterized abstractly or may be constructed explicitly as the linear functor dual categories $\Fun_\cC(\cM,\cC)$ and $\Fun_\cD(\cM,\cD)$, respectively.  

We now show that the left and right dual bimodule categories ${}^* \cM$ and $\cM^*$, defined above, are equivalent to the functor left and right dual categories $\Fun_\cC(\cM,\cC)$ and $\Fun_\cD(\cM,\cD)$, respectively; the dual bimodule categories will therefore provide alternative explicit realizations of adjoint bimodules.

We will first show that the dual of a functor bimodule category is itself a functor bimodule category.  For that we will need to temporarily break from our convention and consider functors that are not-necessarily right exact.  Given bimodule categories ${}_\cC \cM_\cD$ and ${}_{\cC} \cN_{\cE}$, let ${}_{\cD} \Fun^L_{\cC}(\cN,\cM)_{\cE}$ denote the $\cD$--$\cE$-bimodule category of left exact (not-necessarily right exact) $\overline{Vect}_k$-enriched $\cC$-module functors.  Bimodule categories of left exact right-module functors will be denoted similarly.

\begin{lemma}
Let ${}_\cC \cM_\cD$ and ${}_\cC \cN_\cE$ be finite bimodule categories between finite tensor categories.  Taking the right adjoint of a functor is an equivalence of $\cD$--$\cE$-bimodule categories from $\Fun_\cC(\cM,\cN)$ to ${}^* \Fun^L_\cC(\cN,\cM)$.  Similarly, if ${}_\cC \cM_\cD$ and ${}_\cE \cN_\cD$ are finite bimodule categories, then taking the right adjoint is an equivalence from $\Fun_\cD(\cM,\cN)$ to $\Fun^L_\cD(\cN,\cM)^*$.
\end{lemma}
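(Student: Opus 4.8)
The plan is to show that ``taking the right adjoint functor'' is the asserted equivalence; I would treat the first statement in detail and then deduce the second formally. The first thing to check is that the operation is well defined on objects. Given a right-exact $\cC$-module functor $\cF\colon\cM\to\cN$ between finite module categories, Proposition~\ref{prop:catismod} presents $\cM$ and $\cN$ as categories of finite-dimensional modules over finite-dimensional algebras, and the Eilenberg--Watts description of right-exact functors then shows that the underlying linear functor of $\cF$ has a right adjoint $\cF^R$, which is automatically left exact and $\overline{\Vect}_k$-enriched. Forgetting the right-hand ($\cD$- and $\cE$-) actions and viewing $\cF$ as a one-sided module functor, Lemma~\ref{lma:module-adjoint} equips $\cF^R$ with a canonical $\cC$-module structure, so $\cF^R\in\Fun^L_\cC(\cN,\cM)$; on $2$-morphisms a $\cC$-module transformation passes to its mate, which reverses direction and remains a module transformation (the module structure on an adjoint being the mate structure). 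Thus the operation is a functor into $\bigl(\Fun^L_\cC(\cN,\cM)\bigr)^{\op}$, which is precisely the underlying category of ${}^*\Fun^L_\cC(\cN,\cM)$ by Definition~\ref{def:Dual_bimodule_notation}.

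Next I would show this is an equivalence of the underlying linear categories. Full faithfulness is the mates correspondence $\Hom(\cF_1,\cF_2)\cong\Hom(\cF_2^R,\cF_1^R)$, which restricts to module transformations. For essential surjectivity I would run the argument backwards: a left-exact $\cC$-module functor $\cG\colon\cN\to\cM$ has a right-exact left adjoint $\cG^L$ (by Proposition~\ref{prop:catismod} together with the analogous description of left-exact functors), which is a module functor by Lemma~\ref{lma:module-adjoint} and satisfies $(\cG^L)^R\simeq\cG$; hence $\cG\mapsto\cG^L$ is a quasi-inverse.

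The substantive step is compatibility with the $\cD$--$\cE$-bimodule structures. The action on $\Fun_\cC(\cM,\cN)$ sends $d\in\cD$ to precomposition with $(-\otimes d)\colon\cM\to\cM$ and $e\in\cE$ to postcomposition with $(-\otimes e)\colon\cN\to\cN$. Passing to right adjoints replaces $(-\otimes d)$ by its right adjoint $(-\otimes d^*)$ and $(-\otimes e)$ by $(-\otimes e^*)$, so $(d\cdot\cF\cdot e)^R$ is obtained from $\cF^R$ by pre- and postcomposition with these dualized functors. The right duals $d^*$ and $e^*$ that appear here are exactly those occurring in the action formula for the \emph{left} dual bimodule category in Definition~\ref{def:Dual_bimodule_notation}, so the two bimodule structures coincide; I would then verify that the associativity and unit constraints correspond, which is a routine diagram chase. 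I expect this last bookkeeping --- reconciling the ``left dual $=$ left adjoint'' convention with the $\op$/$\mp$ flips, exactly the subtlety flagged just before Lemma~\ref{lemma:DualTwist} --- to be the main obstacle.

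Finally, for the second statement I would note that a right $\cD$-module functor $\cM\to\cN$ is the same as a left $\cD^{\mp}$-module functor between $\cM$ and $\cN$ regarded as left $\cD^{\mp}$-module categories, so the first statement applies with $\cD^{\mp}$ in place of $\cC$. Since the left dual of an object of $\cD^{\mp}$ is its right dual in $\cD$, the left dual bimodule category relative to $\cD^{\mp}$ is the right dual bimodule category relative to $\cD$, and the equivalence becomes $\Fun_\cD(\cM,\cN)\simeq\Fun^L_\cD(\cN,\cM)^*$.
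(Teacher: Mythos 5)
Your proposal is correct and follows essentially the same route as the paper: existence of the right adjoint as a linear functor, Lemma~\ref{lma:module-adjoint} to promote it to a $\cC$-module functor, invertibility via taking left adjoints, and the key check that the $\cD$--$\cE$-actions are intertwined using the adjunctions $(-)\otimes{}^*y \dashv (-)\otimes y \dashv (-)\otimes y^*$ (and their left-action analogues), which is exactly why right duals appear in the left-dual bimodule and left duals in the right-dual bimodule. Your extra details (Eilenberg--Watts for existence, mates on $2$-morphisms, and the $\cD^{\mp}$ reduction for the second half, which does work out without any double-dual correction since no action is being flipped across a bimodule) are fine elaborations of steps the paper leaves implicit or dismisses as ``similar.''
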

\begin{proof}
Note that the right adjoint of a $\cC$-module functor from $\cM$ to $\cN$ is a linear functor from $\cN$ to $\cM$, which by Lemma~\ref{lma:module-adjoint} has a canonical $\cC$-module structure.  Taking right adjoints is certainly an equivalence of linear categories, with inverse taking left adjoints.  It therefore suffices to check that this process preserves the $\cD$--$\cE$-bimodule structure.  Observe that in any finite tensor category, we have the following two sequences of adjunctions of tensor product functors:
\begin{alignat*}{5}
	& x^* \otimes (-)  \quad & &  \dashv \quad &&  \quad x \otimes (-)  \quad &&  \dashv &&  \quad {}^*x \otimes (-), \\
	& (-) \otimes {}^* y  \quad & &  \dashv \quad &&  \quad (-) \otimes y  \quad &&  \dashv &&  \quad (-) \otimes y^*. 
\end{alignat*}  
The same adjunctions hold when the tensor represents the action on a module category.  For $\cF \in \Fun_{\cC}(\cM,\cN)$, the left $\cD$ action is by definition $(d \cdot \cF)(m) = \cF(m \otimes d)$.  The right adjoint of the composite functor $\cF(- \otimes d)$ is therefore $\cF^R(-) \otimes d^*$, as required.  The right $\cE$ action is similar, as is the case of right module functors.
\end{proof}

We can now prove the desired equivalence between the dual and functor categories.
\begin{proposition} \label{prop:dual-formula-for-adjoints}
Given a finite bimodule category ${}_\cC \cM_\cD$ between finite tensor categories, there are equivalences ${}^* \cM \simeq \Fun_\cC(\cM,\cC)$ and $\cM^* \simeq \Fun_\cD(\cM,\cD)$.  The first equivalence takes an object ${}^* m \in {}^* \cM$ to the left adjoint of the functor $c \mapsto c \otimes m$; the second equivalence takes an object $m^* \in \cM^*$ to the left adjoint of the functor $d \mapsto m \otimes d$.
\end{proposition}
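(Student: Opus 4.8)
The plan is to construct the equivalence ${}^*\cM \simeq \Fun_\cC(\cM,\cC)$ explicitly using the formula in the statement, check that it is well-defined, fully faithful, and essentially surjective, and then obtain the other equivalence $\cM^* \simeq \Fun_\cD(\cM,\cD)$ by the same argument applied to the opposite side (or equivalently by replacing $\cM$ with a suitable flip). Concretely, for an object $m \in \cM$ consider the linear functor $R_m : \cC \to \cM$ given by $c \mapsto c \otimes m$. This functor is exact (by Corollary~\ref{cor:biexact_action}), hence admits a left adjoint $L_m : \cM \to \cC$, which by Lemma~\ref{lma:module-adjoint} carries a canonical $\cC$-module functor structure, so $L_m \in \Fun_\cC(\cM,\cC)$. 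The candidate functor $\Phi : {}^*\cM \to \Fun_\cC(\cM,\cC)$ sends the object ${}^*m$ to $L_m$, and on morphisms sends ${}^*f : {}^*m \to {}^*m'$ (which by definition of $\cM^\op$ is a morphism $m' \to m$ in $\cM$) to the mate under the adjunctions of $R_{m'} \Rightarrow R_m$.

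The key steps, in order, are: (1) verify that $\Phi$ is a functor, i.e. that taking left adjoints of the natural transformations $R_{m'}\Rightarrow R_m$ respects composition and identities — this is a routine mates calculation. (2) Equip $\Phi$ with its $\cD$--$\cD$... rather, its $\cD$-module-functor structure on the appropriate side: here ${}^*\cM$ carries a right $\cC$-action and a left $\cD$-action twisted by duals (Definition~\ref{def:Dual_bimodule_notation}), while $\Fun_\cC(\cM,\cC)$ carries its natural bimodule structure with $\phi \otimes \cF \otimes \psi := \psi \circ \cF \circ \phi$. The compatibility amounts to the adjunction chains
\[
(-)\otimes {}^*d \dashv (-)\otimes d \dashv (-)\otimes d^*, \qquad {}^*c\otimes(-) \dashv c\otimes(-) \dashv c^*\otimes(-),
\]
exactly as in the proof of the preceding lemma: the left adjoint of $R_{c^*\otimes m\otimes d^*} = (c^*\otimes -\otimes d^*)\circ R_m$ is $L_m$ precomposed and postcomposed with the appropriate tensor functors, which matches the twisted action on ${}^*\cM$. (3) Show $\Phi$ is fully faithful. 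Here I would use the natural isomorphism
\[
\Hom_{\Fun_\cC(\cM,\cC)}(L_m, L_{m'}) \;\cong\; \Hom_{\Fun_\cC(\cM,\cC)}(R_{m'}, R_m)^{??}
\]
--- more carefully: a $\cC$-module natural transformation $L_m \Rightarrow L_{m'}$ corresponds under mates to a $\cC$-module natural transformation $R_{m'} \Rightarrow R_m$, and by evaluating at the unit $1 \in \cC$ (using that $R_m$ and $R_{m'}$ are module functors determined by their value on $1$) this is the same as a morphism $m' \to m$ in $\cM$, i.e. a morphism ${}^*m \to {}^*m'$ in ${}^*\cM$. (4) Show $\Phi$ is essentially surjective: any $\cF \in \Fun_\cC(\cM,\cC)$ has a right adjoint $\cF^R \in \Fun_\cC(\cC,\cM)$ (again by Corollary~\ref{cor:biexact_action} and Lemma~\ref{lma:module-adjoint}), and $\cF^R$, being a $\cC$-module functor out of $\cC$, is naturally isomorphic to $R_m$ for $m := \cF^R(1)$; hence $\cF \cong L_m = \Phi({}^*m)$.

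The main obstacle I expect is step (2), the verification that $\Phi$ intertwines the bimodule structures, because of the notorious left/right-dual and monoidally-opposite bookkeeping: the twisted action on ${}^*\cM$ uses $c^* \otimes m \otimes d^*$ while the naive mates calculation on the functor side produces duals on the other side, and one must check these agree rather than differ by a double dual (cf. the warning around Lemma~\ref{lemma:DualTwist}). Once the conventions of Section~\ref{sec:conventions} are applied consistently this works out, but it is the step where an error would most plausibly hide. Everything else — functoriality, full faithfulness via evaluation at $1$, essential surjectivity via right adjoints — is a direct consequence of the adjoint-existence results already established (Corollary~\ref{cor:biexact_action}, Lemma~\ref{lma:module-adjoint}) together with the elementary fact that a module functor out of the free module $\cC$ is determined by its value on the unit.
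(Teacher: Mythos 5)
Your proposal is correct, and in substance it is the paper's argument with its moving parts inlined rather than quoted: the paper's own proof simply applies the immediately preceding lemma (taking right adjoints is a bimodule equivalence $\Fun_\cC(\cM,\cC) \simeq {}^* \Fun^L_\cC(\cC,\cM)$) and then identifies $\Fun^L_\cC(\cC,\cM)$ with $\cM$, using Example~\ref{ex:exactness} to know that every module functor out of $\cC$ is exact. Your direct construction of $\Phi\colon {}^*m \mapsto L_m$, the mates/adjunction-chain calculation for the $\cD$--$\cC$-actions in your step (2), and the identification of module functors $\cC \to \cM$ with objects of $\cM$ in steps (3)--(4) re-derive exactly those two ingredients, just from the other side of the equivalence; what your version buys is that the object-level formula in the statement is manifest and the auxiliary category $\Fun^L$ never appears, at the cost of redoing the bookkeeping the lemma had already packaged. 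Your worry about step (2) resolves as you expect: the left adjoint of $x \mapsto x \otimes c^* \otimes m \otimes d^*$ is $m' \mapsto L_m(m' \otimes d)\otimes c$, which is precisely the action on $\Fun_\cC(\cM,\cC)$ by precomposition with $-\otimes d$ and postcomposition with $-\otimes c$. One small correction: Corollary~\ref{cor:biexact_action} is not the right justification for the right adjoint $\cF^R$ of a general right exact $\cF \in \Fun_\cC(\cM,\cC)$ in your essential-surjectivity step, since that corollary only concerns the action functors $c\otimes -$ and $-\otimes m$; what you need is the fact, quoted from~\cite{BTP} elsewhere in the paper (e.g.\ in the proofs of Proposition~\ref{prop:bimodfunctadjoints} and Theorem~\ref{thm:TCisRadford}), that a right exact linear functor between finite linear categories admits a (not necessarily right exact) right adjoint, which Lemma~\ref{lma:module-adjoint} then promotes to a module functor adjoint. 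With that reference substituted, the argument is complete.
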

\begin{proof}
By the previous lemma, there is an equivalence between $\Fun_\cC(\cM,\cC)$ and ${}^* \Fun^L_\cC(\cC,\cM)$.  The category of all (not-necessarily right or left exact) $\cC$-module functors from $\cC$ to $\cM$ is certainly equivalent to $\cM$.  However, because $\cC$ is finite tensor, it is exact over itself (as in Example~\ref{ex:exactness}), and therefore all $\cC$-module functors from it are exact, in particular left exact.  The other equivalence is obtained in the same fashion.
\end{proof}

\subsection{The relative Deligne tensor product as a functor category}

In constructing the duality between a (finite-dimensional) vector space $V$ and the linear dual $\Hom_k(V,k)$, it is convenient to know that that $\Hom_k(V,k) \otimes V \cong \Hom_k(V,V)$ (or more generally that $\Hom_k(V,k) \otimes W \cong \Hom_k(V,W)$): the coevaluation of the duality can then be expressed simply as $1 \mapsto \id_V$.  Analogously, in constructing the adjoint of an (appropriately finite) bimodule ${}_A M_B$, it is convenient to know that $\Hom_A(M,A) \otimes_A N \cong \Hom_A(M,N)$ and that $N \otimes_B \Hom_B(M,B) \cong \Hom_B(M,N)$.  Our next endeavor is to prove the analogous facts, namely $\Fun_\cC(\cM,\cC) \boxtimes_\cC \cN \simeq \Fun_\cC(\cM,\cN)$ and $\cN \boxtimes_\cD \Fun_\cD(\cM,\cD) \simeq \Fun_\cD(\cM,\cN)$, for bimodule categories.  When $\cC$ and $\cD$ are finite semisimple tensor categories and ${}_\cC \cM_\cD$ is a finite semisimple bimodule category, these facts are useful, but in the end merely a convenience, for constructing the adjoint of the bimodule category ${}_\cC \cM_\cD$.  However, when $\cC$ and $\cD$ are not semisimple, the reexpression of the relative tensor as a functor category is completely indispensable in constructing the necessary adjoints, and therefore for proving our dualizability results.  (The expression of a relative tensor product as a functor category allows the construction of functors into the relative tensor product, whose universal definition a priori only characterizes functors out.)

As discussed in Remark~\ref{rmk:Deligne_pdt_as_mod_functor} below, the following proposition and corollary are closely related to  \cite[Prop. 3.5 and Rmk. 3.6]{0909.3140} and \cite[Thm. 3.20]{0911.4979}.
\begin{proposition} \label{prop:FunctorsAsATensorPdt}
Let $\cC$, $\cD$, and $\cE$ be finite tensor categories and ${}_\cC \cM_\cD$ and ${}_\cC \cN_\cE$ finite bimodule categories.  The balanced functor $\Fun_\cC(\cM,\cC) \times \cN \ra \Fun_\cC(\cM,\cN)$, $(\cF,n) \mapsto \cF(-) \otimes n$, induces an equivalence, of $\cD$--$\cE$-bimodule categories,
\[
		\Fun_\cC(\cM, \cC) \boxtimes_\cC \cN \simeq \Fun_\cC(\cM,\cN).
\]
Here $\Fun_\cC$ denotes the category of (right exact) $\cC$-module functors.  If instead the bimodule categories are ${}_\cC \cM_\cD$ and ${}_\cE \cN_\cD$, then the analogous balanced functor induces an $\cE$--$\cC$-bimodule category equivalence
\[
\cN \boxtimes_\cD \Fun_{\cD}(\cM,\cD) \simeq \Fun_{\cD}(\cM,\cN).
\]
\end{proposition}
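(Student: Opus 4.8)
The plan is to reduce the statement to a universal property check. Both sides are $\cD$--$\cE$-bimodule categories (on the left, $\cD$ acts via its action on $\cM$ in $\Fun_\cC(\cM,\cC)$ and $\cE$ acts on $\cN$; on the right, by precomposition and postcomposition respectively — exactly as in the module-functor bimodule example in Section~\ref{sec:tc-lincat}), and the balanced functor $(\cF,n)\mapsto \cF(-)\otimes n$ is visibly $\cC$-balanced and bilinear, so by the universal property of $\boxtimes_\cC$ it factors through a linear functor $\Phi\colon \Fun_\cC(\cM,\cC)\boxtimes_\cC\cN \to \Fun_\cC(\cM,\cN)$. One then checks, routinely, that $\Phi$ carries the $\cD$- and $\cE$-module structures, so it is a bimodule functor. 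The whole content is that $\Phi$ is an \emph{equivalence of linear categories}.

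To prove $\Phi$ is an equivalence, the cleanest route is to reduce to the case $\cM=\cC$, where the claim degenerates to $\Fun_\cC(\cC,\cC)\boxtimes_\cC\cN \simeq \Fun_\cC(\cC,\cN)$, i.e.\ $\cC\boxtimes_\cC\cN\simeq\cN$, which holds essentially by definition of the relative Deligne tensor product. To leverage this, I would use Theorem~\ref{thm:EGNO2.11.6} to write $\cM\simeq\Mod{}{A}(\cC)$ for an algebra object $A\in\cC$. Then $\Fun_\cC(\cM,\cN)\simeq\Fun_\cC(\Mod{}{A}(\cC),\cN)\simeq\Mod{A}{}(\cN)$ — module functors out of a category of $A$-modules are computed by internal Hom with $A$, i.e.\ are the same as $A$-modules in the target; this is the categorified Eilenberg--Watts / Ostrik internal-hom dictionary. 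Under this identification the functor $\Phi$ becomes the natural comparison map $\Mod{A}{}(\cC)\boxtimes_\cC\cN\to\Mod{A}{}(\cN)$ which sends $m\boxtimes_\cC n\mapsto m\otimes n$ with its $A$-action on the left tensor factor. Now I invoke part (2) of Theorem~\ref{thm:DelignePrdtOverATCExists}: writing also $\cN\simeq\Mod{}{B}(\cC)$, the left side is $\Mod{A}{B}(\cC)$, the category of $A$--$B$-bimodule objects, and the right side, $\Mod{A}{}(\Mod{}{B}(\cC))$, is visibly the same category. Tracking the comparison map through these identifications shows it is the identity, hence an equivalence.

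The second statement is proved by the mirror-image argument: write $\cM\simeq\Mod{C}{}(\cD)$ as a right $\cD$-module category for an algebra $C\in\cD$, identify $\Fun_\cD(\cM,\cN)$ with $\Mod{}{C}(\cN)$ using the right-handed version of the internal-hom dictionary, and apply part (2) of Theorem~\ref{thm:DelignePrdtOverATCExists} again. Alternatively one can pass to opposite categories ($\cC^\op$, $\cD^\op$, and $\cM^\op$) and quote the first statement directly; I would at least remark on this to avoid duplicating the bookkeeping.

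The main obstacle is the identification $\Fun_\cC(\Mod{}{A}(\cC),\cN)\simeq\Mod{A}{}(\cN)$ and, crucially, verifying that it is \emph{natural} enough to intertwine the functor $\Phi$ with the identity comparison map, \emph{while respecting the residual $\cD$- and $\cE$-bimodule structures}. The underlying equivalence of plain linear categories is standard (it is implicit in Ostrik's internal-hom formalism and is the categorical analog of the classical fact $\Hom_A(A\text{-}\Mod\text{ generated object},-)$), but one must be careful that the $\cD$-action (coming from the right $\cD$-action on $\cM$, hence acting on $A$ or on the bimodule structure) and the $\cE$-action on $\cN$ both transport correctly; sign/side conventions from Section~\ref{sec:conventions} matter here. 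I would isolate the plain-linear equivalence as a lemma (citing \cite{BTP} or reproving via Theorem~\ref{thm:EGNO2.11.6} and right-exactness), then do the bimodule-compatibility check separately — that check is where the genuine work lies, though it is bookkeeping rather than deep. Finally, Remark~\ref{rmk:Deligne_pdt_as_mod_functor} (referenced in the text) presumably records the relation to \cite{0909.3140} and \cite{0911.4979} in the semisimple case, so I would note that the present argument recovers those results over an arbitrary field with no semisimplicity hypothesis, the only new inputs being the existence of $\boxtimes_\cC$ (Theorem~\ref{thm:DelignePrdtOverATCExists}) and Theorem~\ref{thm:EGNO2.11.6}.
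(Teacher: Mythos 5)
Your proposal is correct and follows essentially the same route as the paper: both express $\cM$ and $\cN$ as $\Mod{}{A}(\cC)$ and $\Mod{}{B}(\cC)$ via Theorem~\ref{thm:EGNO2.11.6}, identify both sides with $\Mod{A}{B}(\cC)$ using the module-functors-are-bimodules (Eilenberg--Watts) dictionary together with part~(2) of Theorem~\ref{thm:DelignePrdtOverATCExists}, and check the comparison functor is intertwined. The only cosmetic difference is that the paper proves the dictionary $\Fun_\cC(\cM,\cN)\simeq\Mod{A}{B}(\cC)$ inline, via the explicit functor $M\mapsto -\otimes_A M$ and the observation that every $A$-module is a canonical coequalizer of free modules, rather than deferring it to a cited lemma as you suggest.
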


\begin{proof}
We focus on the first equivalence, as the second is analogous.  The functor induced by the balanced functor is certainly a bimodule functor, so it suffices to see that it is an equivalence of linear categories.  We prove that $\Fun_\cC(\cM, \cC) \boxtimes_\cC \cN$ and $\Fun_\cC(\cM,\cN)$ are both equivalent to a category of the form $\Mod{A}{B}(\cC)$, by equivalences intertwining the functor in question.

Using Theorem~\ref{thm:EGNO2.11.6}, choose algebra objects $A \in \cC$ and $B \in \cC$ and equivalences $\cM \simeq \Mod{}{A}(\cC)$ and $\cN \simeq \Mod{}{B}(\cC)$.  The functor $\tau: \Mod{A}{B}(\cC) \rightarrow \Fun_\cC(\cM,\cN)$, given by $\tau(M) = - \otimes_A M$, is an equivalence, as we will see presently and as observed in \cite[Prop. 7.11.1]{egno-book}.  A functor $\cF \in \Fun_\cC(\cM,\cN)$ is naturally equivalent to the functor $\tau(\cF(A)) = - \otimes_A \cF(A)$, so $\tau$ is essentially surjective.  (To see this note that $\cF$ and $\tau(\cF(A))$ agree on free $A$-modules, and any object $M \in \cM \simeq \Mod{}{A}(\cC)$ is the canonical coequalizer $\textrm{coeq}_\cC(M \otimes A \leftleftarrows M \otimes A \otimes A)$ of free $A$-modules.)  The functor $\tau$ is evidently fully faithful.  A special case of the equivalence $\tau$ gives an equivalence $\Mod{A}{}(\cC) \simeq \Fun_\cC(\cM,\cC)$.  The equivalence between $\Fun_\cC(\cM, \cC) \boxtimes_\cC \cN$ and $\Mod{A}{B}(\cC)$ now follows from Theorem~\ref{thm:DelignePrdtOverATCExists}.
\end{proof}

\begin{corollary} \label{cor:tensasfunct}
For ${}_\cD \cM_\cC$ and ${}_\cC \cN_\cE$ finite bimodule categories over finite tensor categories, there are equivalences of $\cD$--$\cE$-bimodule categories
\begin{align*}
\cM \boxtimes_\cC \cN &\simeq \Fun_{\mod{\cC}{}}(\cM^*,\cN), \\
\cM \boxtimes_\cC \cN &\simeq \Fun_{\mod{}{\cC}}({}^* \cN,\cM).
\end{align*}
\end{corollary}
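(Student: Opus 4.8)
The plan is to deduce both equivalences formally from Proposition~\ref{prop:FunctorsAsATensorPdt} (the relative Deligne tensor product is a module-functor category) together with Proposition~\ref{prop:dual-formula-for-adjoints} (the dual bimodule category is the functor dual), glued by two elementary ``double dual'' identities for bimodule categories. First I would note that taking a dual of a finite bimodule category is harmless for finiteness: the underlying linear category of a dual is an opposite category, still of the form $A\text{-}\mathrm{mod}$ by Proposition~\ref{prop:catismod}, so $\cM^*$ and ${}^*\cN$ are again finite bimodule categories and the two cited propositions apply to them.

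The key preliminary is the pair of identities
\[
{}^*(\cM^*) \simeq \cM \qquad\text{and}\qquad ({}^*\cN)^* \simeq \cN,
\]
as equivalences of bimodule categories, where for our ${}_\cD\cM_\cC$ the category $\cM^*$ is a $\cC$--$\cD$-bimodule so ${}^*(\cM^*)$ is again a $\cD$--$\cC$-bimodule, and for our ${}_\cC\cN_\cE$ the category ${}^*\cN$ is an $\cE$--$\cC$-bimodule so $({}^*\cN)^*$ is again a $\cC$--$\cE$-bimodule. Each identity is proved by unwinding Definition~\ref{def:Dual_bimodule_notation} twice and invoking the natural isomorphisms ${}^*(x^*)\cong x\cong({}^*x)^*$ valid in any rigid monoidal category: the two contravariant passes to the opposite category cancel, and the two successive applications of dual-object functors on $\cC$ (respectively on $\cD$ and $\cE$) compose to the identity, with the associators matching up by naturality. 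This is the only genuinely computational step, and it is routine.

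Granting this, the first equivalence is assembled as follows. Apply Proposition~\ref{prop:dual-formula-for-adjoints} to the finite $\cC$--$\cD$-bimodule category $\cM^*$ to get ${}^*(\cM^*)\simeq\Fun_\cC(\cM^*,\cC)$, hence $\Fun_\cC(\cM^*,\cC)\simeq\cM$ by the first double-dual identity. Then apply the first assertion of Proposition~\ref{prop:FunctorsAsATensorPdt} to the two left $\cC$-module bimodule categories ${}_\cC(\cM^*)_\cD$ and ${}_\cC\cN_\cE$:
\[
\cM\boxtimes_\cC\cN \;\simeq\; \Fun_\cC(\cM^*,\cC)\boxtimes_\cC\cN \;\simeq\; \Fun_\cC(\cM^*,\cN),
\]
as $\cD$--$\cE$-bimodule categories, and $\Fun_\cC$ of two left $\cC$-modules is precisely $\Fun_{\mod{\cC}{}}$. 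For the second equivalence I would run the symmetric argument on the other side: apply the second assertion of Proposition~\ref{prop:FunctorsAsATensorPdt} to the two right $\cC$-module bimodule categories ${}_\cE({}^*\cN)_\cC$ and ${}_\cD\cM_\cC$, use Proposition~\ref{prop:dual-formula-for-adjoints} to identify $\Fun_\cC({}^*\cN,\cC)$ with the right dual $({}^*\cN)^*$, and then apply the second double-dual identity $({}^*\cN)^*\simeq\cN$, obtaining
\[
\cM\boxtimes_\cC\cN \;\simeq\; \cM\boxtimes_\cC({}^*\cN)^* \;\simeq\; \Fun_\cC({}^*\cN,\cM) \;=\; \Fun_{\mod{}{\cC}}({}^*\cN,\cM)
\]
as $\cD$--$\cE$-bimodule categories.

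The substantive content is entirely carried by Proposition~\ref{prop:FunctorsAsATensorPdt}, so no essentially new argument is required; the work is bookkeeping about which action is being dualized and which flipped. I expect the main obstacle to be the second double-dual identity: one must verify that $({}^*\cN)^*$ — which a priori is built from the \emph{right} $\cC$-action of ${}^*\cN$, itself manufactured from the \emph{left} $\cC$-action of $\cN$ via left duals in $\cC$ — recovers $\cN$ together with its original left $\cC$-action, right $\cE$-action, and \emph{all} the bimodule associators, rather than merely a category abstractly equivalent to $\cN$.
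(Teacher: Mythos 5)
Your proposal is correct and follows essentially the same route as the paper: the paper's proof is precisely the chain $\cM \boxtimes_\cC \cN \simeq {}^*(\cM^*) \boxtimes_\cC \cN \simeq \Fun_{\mod{\cC}{}}(\cM^*,\cC) \boxtimes_\cC \cN \simeq \Fun_{\mod{\cC}{}}(\cM^*,\cN)$ (and symmetrically $\cM \boxtimes_\cC \cN \simeq \cM \boxtimes_\cC ({}^*\cN)^* \simeq \cM \boxtimes_\cC \Fun_{\mod{}{\cC}}({}^*\cN,\cC) \simeq \Fun_{\mod{}{\cC}}({}^*\cN,\cM)$), combining Propositions~\ref{prop:dual-formula-for-adjoints} and~\ref{prop:FunctorsAsATensorPdt} exactly as you do. Your only addition is to make explicit the double-dual identities ${}^*(\cM^*)\simeq\cM$ and $({}^*\cN)^*\simeq\cN$ as bimodule categories, which the paper uses without comment and which your sketched verification via Definition~\ref{def:Dual_bimodule_notation} and ${}^*(x^*)\cong x\cong ({}^*x)^*$ handles correctly.
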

\begin{proof}
By Propositions \ref{prop:dual-formula-for-adjoints} and \ref{prop:FunctorsAsATensorPdt}, we have
\[
\cM \boxtimes_\cC \cN \simeq {}^*(\cM^*) \boxtimes_\cC \cN \simeq \Fun_{\mod{\cC}{}}(\cM^*,\cC) \boxtimes_\cC \cN \simeq \Fun_{\mod{\cC}{}}(\cM^*,\cN).
\]
Similarly
\[
\cM \boxtimes_\cC \cN \simeq \cM \boxtimes_\cC ({}^* \cN)^* \simeq \cM \boxtimes_\cC \Fun_{\mod{}{\cC}}({}^* \cN,\cC) \simeq \Fun_{\mod{}{\cC}}({}^* \cN,\cM).\qedhere
\]
\end{proof}

\begin{remark} \label{rmk:Deligne_pdt_as_mod_functor}
This corollary is a correction of \cite[Remark 3.6]{0909.3140} and \cite[Thm. 3.20]{0911.4979}, both of which are off by a twist by a double dual functor.\footnote{Note also that, even appropriately corrected, the proof of \cite[Thm 3.20]{0911.4979} is incomplete.  For instance, it uses \cite[Lemma 3.21]{0911.4979}, which is false.  A counterexample to that Lemma is given by $\cC = \Vect[G]$, the category of $G$-graded vector spaces for a finite group $G$, and $\cM = \cN = \Vect$.  The Lemma would have $\Vect \boxtimes \Vect \simeq \Vect$ equivalent to $\Fun_{\Vect[G]}(\Vect,\Vect) \simeq \Rep(G)$, but no such equivalence exists.} 
\end{remark}

\subsection{Dual bimodule categories as modules over a double dual}

By Theorem \ref{thm:EGNO2.11.6}, any module category ${}_\cC \cM$ can be expressed as a category of modules $\Mod{}{A}(\cC)$ for an algebra object $A \in \cC$.  We now describe explicitly how the left and right duals of the module category ${}_\cC \cM$ can themselves be expressed as module categories.  The following lemma and its corollary are based on calculations in \cite[\S 3]{MR2097289}.

\begin{lemma}\label{lem:dualing-amod}
Let $A \in \cC$ and $B \in \cC$ be algebra objects in a finite tensor category $\cC$, and let $M \in \cC$ be an $A$--$B$-bimodule object.  The objects ${}^{**} A$ and $B^{**}$ are naturally algebra objects, the object $M^*$ is naturally a $B^{**}$--$A$-bimodule object, and the object ${}^* M$ is naturally a $B$--${}^{**} A$-bimodule object.  Moreover, the left and right duals provide equivalences of linear categories:
\begin{equation*}
\begin{array}{ccccc}
	\Mod{B^{**}}{A}(\cC) &\simeq& \Mod{A}{B}(\cC) &\simeq& \Mod{B}{{}^{**}A}(\cC) \\
	M^* &\leftmapsto& M &\rightmapsto& {}^* M
\end{array}
\end{equation*}
\end{lemma}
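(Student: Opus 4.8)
The plan is to reduce every assertion to two standard facts about a rigid monoidal category: first, the double-dual functors $(-)^{**}$ and ${}^{**}(-)$ are mutually inverse monoidal autoequivalences of $\cC$ (so in particular $(X\otimes Y)^{**}\cong X^{**}\otimes Y^{**}$ naturally, and likewise for ${}^{**}(-)$); and second, a single application of a dual functor transposes a module action over an algebra into an action of the suitably dualized algebra on the opposite side. The algebra structures are then immediate: the monoidal functor $(-)^{**}$ carries $B$ to an algebra object $B^{**}$, with multiplication $B^{**}\otimes B^{**}\cong (B\otimes B)^{**}\to B^{**}$ and unit $1\cong 1^{**}\to B^{**}$ induced from those of $B$, and symmetrically ${}^{**}(-)$ makes ${}^{**}A$ an algebra object. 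The same functoriality shows that $M^{**}$ is an $A^{**}$--$B^{**}$-bimodule object and ${}^{**}M$ an ${}^{**}A$--${}^{**}B$-bimodule object; I will use both observations below. (See \cite{EGNO} for the monoidality of the double dual.)

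For the bimodule structure on $M^*$, which exists as an object of $\cC$ by rigidity alone, I would first transpose the left $A$-action $a\colon A\otimes M\to M$ into a right $A$-action on $M^*$ by the usual composite
\begin{align*}
M^*\otimes A &\xra{\id_{M^*}\otimes\id_A\otimes\coev_M} M^*\otimes A\otimes M\otimes M^*\\
&\xra{\id_{M^*}\otimes a\otimes\id_{M^*}} M^*\otimes M\otimes M^* \xra{\ev_M\otimes\id_{M^*}} M^*,
\end{align*}
whose module axioms follow from those of $a$ by the zigzag identities of Definition~\ref{def:rigid}. The left action is where the double dual enters: $M^*$ is the \emph{left} dual of $M^{**}=(M^*)^*$, and, by the previous paragraph, $M^{**}$ carries a right $B^{**}$-action; transposing that action across the left-dual pairing of $M^*$ with $M^{**}$ (by the analogue of the composite above, built now from the left-dual evaluation and coevaluation) produces a left $B^{**}$-action on $M^*$. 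This ``pass to the double dual'' step is exactly what places the double dual on the $B$-side and not on the $A$-side. One then checks, again by zigzag manipulations, that the right $A$-action and left $B^{**}$-action commute up to the bimodule associator. The assertions about ${}^*M$ are the mirror image: here $M^*$ is replaced by ${}^*M=({}^{**}M)^*$, the roles of $(-)^*$ and ${}^*(-)$ are interchanged, and left is swapped with right throughout, producing the left $B$-action directly and the right ${}^{**}A$-action via the double-dual detour.

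For the equivalences, the dual functors are mutually pseudo-inverse via the canonical isomorphisms ${}^*(M^*)\cong M$ and $({}^*N)^*\cong N$, so it suffices to check that these isomorphisms intertwine the bimodule structures just constructed. Granting that, $M\mapsto M^*$ and $N\mapsto {}^*N$ are mutually inverse and yield $\Mod{B^{**}}{A}(\cC)\simeq\Mod{A}{B}(\cC)$, and $\Mod{A}{B}(\cC)\simeq\Mod{B}{{}^{**}A}(\cC)$ is obtained identically with left and right duals swapped. (Since $(-)^*$ reverses morphisms this is strictly an equivalence out of the opposite category, but that is immaterial for the uses in this section, where the dual bimodule category $\cM^*$ of Definition~\ref{def:Dual_bimodule_notation} already carries the opposite-category structure.)

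I expect the only real obstacle to be the bookkeeping: keeping straight which of $B$, $B^*$, $B^{**}$ acts on which side, and discharging the various coherence (zigzag) verifications that the transposed operations are commuting module structures and that the duality isomorphisms respect them. None of it is deep, but the left/right asymmetry --- the very subtlety flagged earlier as a recurring source of errors in the literature on dual bimodules --- makes it easy to get wrong, so it is the part to set down with care.
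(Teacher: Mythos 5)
Your proposal is correct, and it runs on the same two engines as the paper's own proof---monoidality of the double-dual functors and transposition of actions across evaluation/coevaluation pairings---but the packaging differs in a way worth noting. The paper's argument is more compressed: it dualizes the single three-fold action map $\mu: A \otimes M \otimes B \to M$ to obtain $\mu^*: M^* \to B^* \otimes M^* \otimes A^*$, and then uses that the left adjoint of the functor $B^* \otimes (-) \otimes A^*$ is $B^{**} \otimes (-) \otimes A$, producing the full bimodule action $B^{**} \otimes M^* \otimes A \to M^*$ in one stroke; with that packaging the compatibility of the two one-sided actions never has to be checked separately. Your route builds the right $A$-action by a direct mate and the left $B^{**}$-action by passing through $M^{**}$ and the monoidal functor $(-)^{**}$; this is correct (the construction agrees with the paper's, both being mates of the dualized action under the adjunctions $x^* \otimes (-) \dashv x \otimes (-)$ and $(-) \otimes {}^*y \dashv (-) \otimes y$), but it leaves you the additional zigzag verification that the two actions commute---exactly the step you flag and defer, and which the paper's formulation avoids having to state. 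Your parenthetical about contravariance is also apt: $M \mapsto M^*$ is an anti-equivalence, and the lemma is applied in Corollary~\ref{cor:dualamod} precisely through the dual bimodule category of Definition~\ref{def:Dual_bimodule_notation}, whose underlying linear category is already the opposite category, so nothing is lost in the intended use.
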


\begin{proof}
The left and right double dual functors are tensor equivalences, thus take algebra objects to algebra objects.  The right dual of the action map $\mu: A \otimes M \otimes B \rightarrow M$ is $\mu^*: M^* \rightarrow B^* \otimes M^* \otimes A^*$; taking the left adjoint of the functor $B^* \otimes - \otimes A^*$, we have a map $B^{**} \otimes M^* \otimes A \rightarrow M^*$, as desired.  The action on ${}^* M$ is similar, and it is clear that the duals provide equivalences as stated.
\end{proof}

\begin{corollary} \label{cor:dualamod}
Let $A \in \cC$ be an algebra object in a finite tensor category $\cC$.  There is an equivalence of right $\cC$-module categories
	\begin{align*}
		(\Mod{}{A}(\cC))^* & \simeq \Mod{A^{**}}{}(\cC) \\
		(L)^* & \mapsto L^*.
	\end{align*}
Here $(L)^*$ denotes the object $L$ viewed as an object of $(\Mod{}{A}(\cC))^*$ as in Definition~\ref{def:Dual_bimodule_notation}, whereas $L^*$ denotes the right dual of $L$ as an object of $\cC$.  There are analogous equivalences of $\cC$-module categories
\begin{align*}
(\Mod{A}{}(\cC))^* &\simeq \Mod{}{A}(\cC), \\
{}^*(\Mod{}{A}(\cC)) &\simeq \Mod{A}{}(\cC), \\
{}^*(\Mod{A}{}(\cC)) &\simeq \Mod{}{{}^{**}A}(\cC).
\end{align*}
\end{corollary}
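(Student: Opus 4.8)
The plan is to deduce all four equivalences from Lemma~\ref{lem:dualing-amod} by specializing one of its two algebra objects to the unit $1 \in \cC$, and then, in each case, to check that the bimodule-category structure produced by Definition~\ref{def:Dual_bimodule_notation} agrees with the standard $\cC$-module structure on the relevant category of module objects. In other words, the content of the corollary is entirely bookkeeping on top of Lemma~\ref{lem:dualing-amod}, and the proof should make that explicit.

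First I would record the elementary identifications $1^{**} \cong 1 \cong {}^{**}1$ as algebra objects, under which a $1$--$A$-bimodule object in $\cC$ is just a right $A$-module object and an $A$--$1$-bimodule object is just a left $A$-module object. Applying Lemma~\ref{lem:dualing-amod} with left algebra $1$ and right algebra $A$ then says: a right $A$-module object $L$ has right dual $L^*$ naturally a left $A^{**}$-module object, and the assignment $M \mapsto M^*$ is an equivalence of linear categories $\Mod{}{A}(\cC) \simeq \Mod{A^{**}}{}(\cC)$, whose inverse $M^* \leftmapsto M$ is the direction $(L)^* \mapsto L^*$ recorded in the corollary. Taking instead left algebra $A$ and right algebra $1$, and invoking the left-dual halves of Lemma~\ref{lem:dualing-amod} where needed, produces in the same way the linear equivalences underlying $(\Mod{A}{}(\cC))^* \simeq \Mod{}{A}(\cC)$, ${}^*(\Mod{}{A}(\cC)) \simeq \Mod{A}{}(\cC)$, and ${}^*(\Mod{A}{}(\cC)) \simeq \Mod{}{{}^{**}A}(\cC)$.

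It then remains to promote each of these to an equivalence of $\cC$-module categories, which I would do by unwinding Definition~\ref{def:Dual_bimodule_notation}. For instance, regarding $\Mod{}{A}(\cC)$ as a $\cC$--$\Vect$-bimodule category, its right dual carries the right $\cC$-action $(L)^* \cdot c = ({}^*c \otimes L)^*$; under $L \mapsto L^*$ this becomes $L^* \otimes ({}^*c)^* \cong L^* \otimes c$ by the rigidity isomorphisms $(x \otimes y)^* \cong y^* \otimes x^*$ and $({}^*x)^* \cong x$, which is exactly the right $\cC$-action $N \mapsto N \otimes c$ on $\Mod{A^{**}}{}(\cC)$. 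The remaining three cases are identical, using ${}^*(x \otimes y) \cong {}^*y \otimes {}^*x$ and ${}^*(x^*) \cong x$ wherever left duals occur. The only real hazard — and the place an error would most plausibly hide — is keeping the conventions straight: which action is being dualized, the $\cC^\mp$-versus-$\cC^\op$ distinction, and the failure of dualization to commute with flipping noted in Lemma~\ref{lemma:DualTwist}. Because one acting category is always $\Vect$ here, the flipping subtlety never actually bites, but I would still state in the proof precisely which bimodule-category structure on $\Mod{}{A}(\cC)$ and $\Mod{A}{}(\cC)$ is intended so the match against Definition~\ref{def:Dual_bimodule_notation} is unambiguous.
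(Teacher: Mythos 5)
Your proposal is correct and takes essentially the same route as the paper, which treats the corollary as immediate from Lemma~\ref{lem:dualing-amod}: specialize one of the two algebras to the unit $1$ (using $1^{**}\cong 1\cong{}^{**}1$) and verify, via the rigidity isomorphisms $(x\otimes y)^*\cong y^*\otimes x^*$ and $({}^*x)^*\cong x$, that the actions prescribed by Definition~\ref{def:Dual_bimodule_notation} correspond to the standard $\cC$-actions on the categories of one-sided module objects. The only slight imprecision is your phrase ``whose inverse $M^*\leftmapsto M$ is the direction $(L)^*\mapsto L^*$'': the lemma's assignment $M\mapsto M^*$ is contravariant, and this is absorbed by the fact that the underlying linear category of the dual bimodule category is the opposite category, so the corollary's functor $(L)^*\mapsto L^*$ is that same assignment read covariantly rather than its inverse.
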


\section{Separable module categories and separable tensor categories} \label{sec:tc-separable}

In the previous section, we laid the foundation for understanding when a bimodule category ${}_\cC \cM_\cD$ has adjoints.  However, because we are studying dualizability in a 3-category, we care not only about whether a bimodule category has adjoints, but also whether the units and counits of those adjunctions themselves have adjoints---in other words, we need two additional layers of dualizability above the bimodule category itself.  For inspiration, we can recall the situation governing the 2-dualizability of ordinary algebras: a finite-dimensional $k$-algebra $A$ is 2-dualizable if and only if it is projective as an $A \otimes A^\op$-module.  This projectivity condition is called `separability' and is equivalent to requiring that the multiplication map $\mu: A \otimes A \rightarrow A$ splits as an $A$--$A$-bimodule map.  

Recall that the bimodule category ${}_\cC \cM_\cD$ can be written (as a $\cC$-module) as $\Mod{}{A}(\cC)$ for an algebra object $A \in \cC$, or (as a $\cD$-module) as $\Mod{B}{}(\cD)$ for an algebra object $B \in \cD$.  These expressions of bimodule categories as categories of modules for algebra objects invites the following definitions.
\begin{definition}
An algebra object $A \in \cC$ in a semisimple finite tensor category $\cC$ is \emph{separable} if the multiplication $\mu: A \otimes A \ra A$ splits as an $A$--$A$-bimodule map, or equivalently if $A$ is projective as an $A$--$A$-bimodule.
\end{definition}
\noindent (That these two definitions of separability of an algebra object are equivalent is seen as follows: the projectivity condition certainly implies the splitting condition; conversely, because $\cC$ is semisimple, the unit $1 \in \cC$ is projective, which implies that $A \otimes A$ is projective as an $A$--$A$-bimodule, and using the splitting this implies that $A$ is projective as an $A$--$A$-bimodule.)
\begin{definition}
A finite left module category ${}_\cC \cM$ over a semisimple finite tensor category $\cC$ is \emph{separable} if it is equivalent as a $\cC$-module to the category of modules $\Mod{}{A}(\cC)$ over a separable algebra object $A \in \cC$.  Similarly, a finite right module category $\cM_\cD$ is separable if it is equivalent to $\Mod{B}{}(\cD)$ for a separable algebra object $B \in \cD$.  A finite bimodule category ${}_\cC \cM_\cD$ is separable if it is separable as a $\cC$-module and it is separable as a $\cD$-module.
\end{definition}
\noindent (Separability is a Morita-invariant notion, in the sense that if ${}_\cC \cM$ is equivalent to $\Mod{}{A}(\cC)$ and to $\Mod{}{B}(\cC)$, then $A$ is separable if and only if $B$ is separable.)  We will see later, in Section~\ref{sec:separableisfd}, that this separability condition is, as hoped, exactly what is needed to ensure a bimodule category is maximally dualizable.  

In this section, we prove two crucial properties of separable module categories, namely (1) that a module category is separable if and only if its category of endofunctors is semisimple and (2) that the relative Deligne tensor product of two separable bimodule categories is a separable bimodule category.  The second is a generalization to arbitrary fields of Etingof--Nikshych--Ostrik's theorem, over an algebraically closed field of characteristic zero, that a functor category between finite semisimple module categories is semisimple~\cite[Theorem 2.16]{MR2183279}.  Using a simple case of the second result, the first result provides a generalization to arbitrary perfect fields (and non-spherical categories) of M\"uger's theorem, over algebraically closed fields, that the Drinfeld center of a spherical finite semisimple tensor category, with simple unit and nonzero global dimension, is semisimple~\cite[Theorem 3.16]{MR1966525}.    These results are inspired by a suggestion of Ostrik and by results in \cite[\S 2.4]{MR3039775} in characteristic zero.  The reader unconcerned with finite characteristic can safely skip this and the next section, consulting Corollaries~\ref{cor:charzerosep} and~\ref{cor:charzeromodulesep} for characterizations of separability in the characteristic zero case, and keeping in mind Theorem~\ref{thm:compositeOfSep} and Corollary~\ref{cor:septc} at appropriate moments in Section~\ref{sec:separableisfd}.

\subsection{Separability and semisimplicity} \label{sec:sepandsemi}

Over a perfect field (for instance, a finite field, an algebraically closed field, or a field of characteristic zero), an algebra $A$ is separable if and only if it is finite-dimensional semisimple; over an arbitrary field, separability is a stronger condition than finite-dimensional semisimplicity \cite[Ch.~2]{MR0280479}.  The corresponding statements hold for linear categories: over a perfect field, a finite $\Vect$-module category is separable if and only if it is finite semisimple, while over an arbitrary field separability merely implies finite semisimplicity.  In general, a finite linear category $\cL$ is separable (as a $\Vect$-module) if and only if for all simple objects $S \in \cL$, the extension from the base field to the center of the division ring of endomorphisms of $S$ is a finite separable field extension.  In other words, a finite linear category is separable if and only if it remains semisimple after arbitrary base changes---we refer to this condition as `absolute semisimplicity' and for clarity use that term instead of `separable as a $\Vect$-module'.\footnote{For a semisimple $k$-linear category $\cL$ and an extension $l$ of the field $k$, let $[\cL_l]$ denote the category with the same objects as $\cL$ and with morphism spaces $\Hom_{[\cL_l]}(a,b) := \Hom_{\cL}(a,b) \otimes_k l$.  The base change from $k$ to $l$ of the category $\cL$ is, by definition, the idempotent completion $\cL_l$ of the category $[\cL_l]$.  When the category $\cL$ is finite and the field extension is finite, the base change $\cL_l$ of $\cL$ is equivalent to the product $\cL \boxtimes_{\Vect_k} \Vect_l$.}

For tensor categories $\cC$ other than $\Vect$, a separable $\cC$-module is still semisimple:
\begin{proposition} \label{prop:sepmodissemi}
A separable module category ${}_\cC \cM$, over any finite semisimple tensor category $\cC$, is semisimple.
\end{proposition}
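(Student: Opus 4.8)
The plan is to reduce the statement to the known fact about algebra objects: an object of a semisimple finite tensor category $\cC$ that is projective as a bimodule over itself has the property that its module category is semisimple. By hypothesis, ${}_\cC \cM$ is separable, so there is a separable algebra object $A \in \cC$ with ${}_\cC \cM \simeq \Mod{}{A}(\cC)$; since semisimplicity of a linear category is preserved under equivalence, it suffices to prove that $\Mod{}{A}(\cC)$ is semisimple. First I would record the key structural inputs: $\cC$ is semisimple, so its unit $1 \in \cC$ is projective, hence every object of $\cC$ is projective (being a direct sum of simples, each a summand of a suitable tensor power or isotypic piece), and in particular every object of $\cC$ is both projective and injective. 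Consequently the free module functor $\cC \to \Mod{}{A}(\cC)$, $X \mapsto X \otimes A$, sends every object to a projective object of $\Mod{}{A}(\cC)$, because the forgetful functor $\Mod{}{A}(\cC) \to \cC$ has the free functor as left adjoint and $\cC$-projectivity of $X$ lifts.

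Next I would exploit separability of $A$ directly. Separability says the multiplication $\mu: A \otimes A \to A$ splits as an $A$--$A$-bimodule map, equivalently (as noted in the excerpt) $A$ is projective as an $A$--$A$-bimodule. The standard consequence is that every $A$-module $M$ is a retract of the free module $M \otimes A$: the action map $M \otimes A \to M$ (a map of right $A$-modules when we use the outer $A$-action on $M \otimes A$) admits a right $A$-module section built from the chosen bimodule splitting $A \to A \otimes A$ of $\mu$, precisely as in the classical argument that separable algebras have the property that their module categories are semisimple. Since $M \otimes A$ is projective in $\Mod{}{A}(\cC)$ by the previous paragraph, it follows that every object $M$ of $\Mod{}{A}(\cC)$ is projective. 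A category with enough projectives in which every object is projective is such that every short exact sequence splits; hence $\Mod{}{A}(\cC)$ is semisimple, as $\Mod{}{A}(\cC)$ is a finite linear category (by Theorem~\ref{thm:EGNO2.11.6} and Proposition~\ref{prop:catismod}, it is $B$--$\mathrm{mod}$ for a finite-dimensional $k$-algebra $B$) and a finite linear category in which every object is projective splits every object as a direct sum of simples.

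The main obstacle is verifying cleanly that $M \otimes A$ is projective in $\Mod{}{A}(\cC)$ and that the retraction $M \otimes A \to M$ is a map of $A$-modules with an $A$-linear section; this is where the separability splitting of $A$ is actually used, and one must be careful about which $A$-action is being used on $M \otimes A$ (the argument uses the ``outer'' right $A$-action coming from $A \otimes A$, not the diagonal one) and about the fact that rigidity of $\cC$ makes $- \otimes A$ biexact, so that retracts behave well. Once that lemma is in place—which is a direct categorification of the classical separable-algebra computation—the rest is formal. An alternative and perhaps slicker route, if one prefers to avoid the module-theoretic bookkeeping, is to invoke the characterization that will be established as Theorem~\ref{thm:SepModCats}: ${}_\cC \cM$ separable iff $\Fun_\cC(\cM,\cM)$ is semisimple; but since that theorem presumably depends on the present proposition (or on closely related facts), I would keep the argument self-contained via the free-module retraction.
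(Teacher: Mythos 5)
Your proposal is correct and follows essentially the same route as the paper's proof: express ${}_\cC\cM$ as $\Mod{}{A}(\cC)$ for a separable algebra object $A$, use semisimplicity of $\cC$ (projectivity of the unit) to see that free $A$-modules are projective, and use the bimodule splitting $s\colon A \to A\otimes A$ to exhibit every module $M$ as a retract of a free module via $M \cong M\otimes_A A \to M\otimes_A(A\otimes A)$, whence every object is projective and the category is semisimple. The bookkeeping you flag (outer $A$-action, $A$-linearity of the section) is exactly what the paper's one-line formula $M \otimes_A A \xra{\id \otimes s} M \otimes_A (A \otimes A)$ handles.
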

\begin{proof}
It suffices to show that all objects of $\cM$ are projective.  Express ${}_\cC \cM$ as $\Mod{}{A}(\cC)$ for a separable algebra object $A$.  Because $\cC$ is semisimple, the unit $1 \in \cC$ is projective, which implies any free right $A$-module is projective in $\Mod{}{A}(\cC)$.  Any object $M \in \Mod{}{A}(\cC)$ is a summand of a free $A$-module, via $M \otimes_A A \xra{\id \otimes s} M \otimes_A (A \otimes A)$ where $s$ is the bimodule splitting, and therefore is projective as required.
\end{proof}
\noindent Thus, by Example~\ref{eg:semiexact}, separable module categories are exact.

We can moreover characterize the separability of $\cC$-module categories in terms of a semisimplicity condition, though on the category of endofunctors.
\begin{theorem} \label{thm:SepModCats}
Let $\cC$ be a finite semisimple tensor category and let ${}_\cC \cM$ be a finite $\cC$-module category.  The following conditions are equivalent:
\begin{enumerate}
\item the module category ${}_\cC \cM$ is separable,
\item the linear category $\Fun_{\cC}(\cM,\cM)$ is semisimple.  
\end{enumerate}
The same statements hold for right $\cC$-module categories.
\end{theorem}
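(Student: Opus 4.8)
The plan is to translate the statement about the functor category into a statement about bimodule objects in $\cC$, and then to invoke the classical dictionary between separability and semisimplicity for algebra objects. First I would use Theorem~\ref{thm:EGNO2.11.6} to choose an algebra object $A \in \cC$, with unit $\eta\colon 1 \to A$ and multiplication $\mu\colon A\otimes A \to A$, together with an equivalence ${}_\cC \cM \simeq \Mod{}{A}(\cC)$ of left $\cC$-module categories. Under this equivalence, the functor $\tau$ appearing in the proof of Proposition~\ref{prop:FunctorsAsATensorPdt} (equivalently \cite[Prop.~2.12.2]{EGNO}) yields an equivalence $\Fun_\cC(\cM,\cM) \simeq \Mod{A}{A}(\cC)$, the category of $A$--$A$-bimodule objects in $\cC$, under which the forgetful functor $U\colon \Mod{A}{A}(\cC) \to \cC$ is faithful and exact and has left adjoint the free bimodule functor $X \mapsto A\otimes X \otimes A$. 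Since by definition ${}_\cC\cM$ is separable precisely when $A$ is a separable algebra object, the theorem reduces to: $A$ is separable if and only if $\Mod{A}{A}(\cC)$ is semisimple.

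For the direction ``$\Mod{A}{A}(\cC)$ semisimple $\Rightarrow$ $A$ separable'', I would observe that $\mu\colon A\otimes A \to A$ is a morphism of $A$--$A$-bimodule objects, where $A\otimes A$ carries the outer bimodule structure, and that it is an epimorphism: the identity $\mu\circ(\id_A \otimes \eta) = \id_A$ exhibits $\mu$ as a split epimorphism already in $\cC$, and $U$ reflects epimorphisms. In a semisimple abelian category every epimorphism splits, so $\mu$ admits an $A$--$A$-bimodule section, which is exactly the separability of $A$.

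For the converse, assume $A$ is separable with bimodule section $s\colon A \to A\otimes A$ of $\mu$. The key observations are: (i) every free bimodule $A\otimes X\otimes A$ is projective in $\Mod{A}{A}(\cC)$, since $\Hom_{\Mod{A}{A}(\cC)}(A\otimes X\otimes A,-) \cong \Hom_\cC(X,U(-))$ is exact because $\cC$ is semisimple (so $\Hom_\cC(X,-)$ is exact) and $U$ is exact; and (ii) for an arbitrary bimodule object $M$, the action map $A\otimes U(M)\otimes A \to M$ is split by the morphism $M \cong A\otimes_A M\otimes_A A \xrightarrow{\,s\otimes\id_M\otimes s\,} (A\otimes A)\otimes_A M \otimes_A (A\otimes A) \cong A\otimes U(M)\otimes A$, using $\mu\circ s = \id_A$. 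Thus every $M$ is a direct summand of a projective, hence is itself projective; and a finite linear category in which every object is projective is semisimple, since for every subobject $Y \subseteq X$ the epimorphism $X \twoheadrightarrow X/Y$ splits as $X/Y$ is projective, so $Y$ is a summand of $X$. (That $\Mod{A}{A}(\cC)$ is indeed a finite linear category can be seen here by noting that ${}_\cC\cM$, being separable, is semisimple by Proposition~\ref{prop:sepmodissemi}, hence exact by Example~\ref{eg:semiexact}, whence $\Fun_\cC(\cM,\cM)$ is finite by Theorem~\ref{Thm:ExactModCatOmnibus}(3).) Therefore $\Mod{A}{A}(\cC)$ is semisimple.

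The right-module case follows by applying the left-module case to the finite semisimple tensor category $\cC^\mp$, a right $\cC$-module category being the same thing as a left $\cC^\mp$-module category with $\Fun_\cC(\cM,\cM) = \Fun_{\cC^\mp}(\cM,\cM)$. I expect no serious obstacle: the only delicate point is correctly tracking the $A$--$A$-bimodule-object structures and verifying that the composite in (ii) is the identity; everything else is a transcription of the algebra-object/bimodule-category dictionary together with the elementary fact relating projectivity of all objects to semisimplicity.
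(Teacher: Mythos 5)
Your proof is correct, and its skeleton (identify ${}_\cC\cM\simeq\Mod{}{A}(\cC)$ and $\Fun_\cC(\cM,\cM)\simeq\Mod{A}{A}(\cC)$, then compare separability of $A$ with semisimplicity of the bimodule category) matches the paper's, as does the easy direction: semisimplicity makes the multiplication (equivalently, makes $A$) split/projective as a bimodule. Where you genuinely diverge is the converse. The paper argues that $\cM$, being separable, is semisimple and hence exact, so $\Fun_\cC(\cM,\cM)$ is \emph{rigid} by Theorem~\ref{Thm:ExactModCatOmnibus}(5); projectivity of the unit (i.e.\ of $A$ as a bimodule) then propagates to every object via $\Hom(F,-)\simeq\Hom(1,{}^*F\otimes-)$. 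You instead run the classical separable-algebra argument internally to $\cC$: the free--forgetful adjunction plus semisimplicity of $\cC$ makes every free bimodule $A\otimes X\otimes A$ projective, and the separability section $s$ exhibits every bimodule as a retract of a free one (your verification that the composite is the identity is the standard relative-projectivity computation and is correct), so all objects are projective and a finite linear category with all objects projective is semisimple. The trade-off: the paper's route is shorter and sidesteps the finite-length induction, but leans on the rigidity of the functor category and hence on the exact-module-category machinery; yours is more elementary and avoids rigidity entirely, at the cost of needing finiteness of $\Mod{A}{A}(\cC)$, which you correctly supply (and which one could also read off from Theorem~\ref{thm:DelignePrdtOverATCExists}). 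Your appeal to Morita invariance of separability when passing from "$\cM$ separable" to "the chosen $A$ is separable" is the same implicit step the paper makes, so no gap there.
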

\begin{proof}
Stitching together Theorem~\ref{thm:DelignePrdtOverATCExists}, Corollary~\ref{cor:tensasfunct}, and Corollary \ref{cor:dualamod}, we can write ${}_\cC \cM$ as $\Mod{}{A}(\cC)$ and $\Fun_{\cC}(\cM,\cM)$ as $\Mod{A}{A}(\cC)$ for an algebra object $A \in \cC$.  

If we assume $\Mod{A}{A}(\cC)$ is semisimple, then all its objects are projective; in particular $A$ is projective as a bimodule object, and so ${}_\cC \cM$ is separable.  Conversely, assume that $A$ is projective as an $A$--$A$-bimodule, in other words that the unit $1 \in \Fun_{\cC}(\cM,\cM)$ is projective.  Because $\Fun_{\cC}(\cM,\cM)$ is rigid (by item (5) of Theorem~\ref{Thm:ExactModCatOmnibus}), for any object $F \in \Fun_{\cC}(\cM,\cM)$, the functor $\Hom(F,-) \simeq \Hom(1,{}^* F \otimes -)$ is right exact and so $F$ is projective; thus the functor category is semisimple, as required.
\end{proof}

A semisimple module category over a semisimple tensor category need not be separable.  For example, in characteristic $p$, the trivial module category $\Vect$, over the tensor category $\Vect[\ZZ/p]$ of $\ZZ/p$-graded vector spaces, is not separable (because the functor category $\Fun_{\Vect[\ZZ/p]}(\Vect,\Vect) \simeq \Rep(\ZZ/p)$ is not semisimple).

\subsection{Separable bimodules compose}

Over an algebraically closed field of characteristic zero, finite semisimple bimodule categories over finite semisimple tensor categories have the excellent feature that they compose, that is the relative Deligne tensor of two such is again finite semisimple---this is a reformulation of~\cite[Theorem 2.16]{MR2183279}.  We generalize that result to an arbitrary field.\footnote{Kuperberg~\cite[\S 5]{MR1995781} observed that over an imperfect field, the Deligne tensor $\cM \boxtimes \cN$ of semisimple linear categories need not be semisimple, but that the product is semisimple provided the categories $\cM$ and $\cN$ are absolutely semisimple.}
\begin{theorem} \label{thm:compositeOfSep}
Let ${}_\cB \cM_\cC$ and ${}_\cC \cN_\cD$ be separable bimodule categories over finite semisimple tensor categories.  The relative Deligne tensor product ${}_{\cB} \cM \boxtimes_\cC \cN_\cD$ is a separable bimodule category.
\end{theorem}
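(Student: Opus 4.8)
The plan is to reduce the statement to a categorical form of the transitivity of separability for algebra objects, by realizing the composite bimodule explicitly as a category of modules over a separable algebra. First I would observe that it suffices to prove that $\cM \boxtimes_\cC \cN$ is separable as a \emph{left $\cB$-module} category: a separable bimodule is by definition separable on each side, and separability as a right $\cD$-module follows from the left-module statement applied to the left--right flips of $\cM$ and $\cN$ (flipping both actions carries ``separable as a right $\cD$-module'' to ``separable as a left $\cD^\mp$-module'' and carries $\cM \boxtimes_\cC \cN$ to the relative Deligne tensor product of the flipped bimodules). Since separable module categories are semisimple (Proposition~\ref{prop:sepmodissemi}), hence exact (Example~\ref{eg:semiexact}), and $\cM \boxtimes_\cC \cN$ is finite (Theorem~\ref{thm:DelignePrdtOverATCExists}), all the module-theoretic manipulations below are legitimate.

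Next I would set up the algebra object explicitly. As $\cM$ is separable as a left $\cB$-module, write $\cM \simeq \Mod{}{A}(\cB)$ for a separable algebra object $A \in \cB$; as $\cN$ is separable as a left $\cC$-module, write $\cN \simeq \Mod{}{B}(\cC)$ for a separable algebra object $B \in \cC$. Under the equivalence $\Fun_\cB(\cM,\cM) \simeq \Mod{A}{A}(\cB)$ from Theorem~\ref{thm:EGNO2.11.6} (as used in the proof of Proposition~\ref{prop:FunctorsAsATensorPdt}), the right $\cC$-action making $\cM$ a $\cB$--$\cC$-bimodule is classified by a monoidal functor $\Phi\colon \cC \to (\Mod{A}{A}(\cB),\otimes_A)$. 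The monoidal unit of $(\Mod{A}{A}(\cB),\otimes_A)$ is $A$, so an algebra object there is precisely a $\cB$-algebra $\hat B$ equipped with an algebra homomorphism $A \to \hat B$; in particular $\hat B := \Phi(B)$ is such an object. The key identification I would then establish is that, as a left $\cB$-module category,
\[
\cM \boxtimes_\cC \cN \;\simeq\; \cM \boxtimes_\cC \Mod{}{B}(\cC) \;\simeq\; \Mod{}{\hat B}(\cB),
\]
where the middle term, read through the second equivalence, is the category of right $\Phi(B)$-modules in $\cM = \Mod{}{A}(\cB)$ formed via the right $\cC$-action --- and a right $\Phi(B)$-module in $\Mod{}{A}(\cB)$ is the same thing as a right $\hat B$-module in $\cB$. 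The first equivalence is the left-$\cB$-equivariant refinement of part~(2) of Theorem~\ref{thm:DelignePrdtOverATCExists} (``relative Deligne tensor with $\Mod{}{B}(\cC)$ $=$ form $B$-modules,'' now carrying the commuting $\cB$-action along).

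It then remains to check that $\hat B$ is separable in $\cB$, and for this I would use transitivity of separability. Because $\Phi$ is monoidal and $B$ is separable in $\cC$, applying $\Phi$ to a $B$--$B$-bimodule splitting of the multiplication $B \otimes B \to B$ shows that $\hat B = \Phi(B)$ is separable as an algebra in $(\Mod{A}{A}(\cB),\otimes_A)$, i.e.\ the map $\hat B \otimes_A \hat B \to \hat B$ splits as an $\hat B$--$\hat B$-bimodule map in $\cB$. On the other hand, separability of $A$ in $\cB$ provides an $A$--$A$-bimodule splitting of its multiplication, which induces an $\hat B$--$\hat B$-bimodule splitting of the canonical surjection $\hat B \otimes \hat B \twoheadrightarrow \hat B \otimes_A \hat B$; composing the two splittings exhibits $\hat B$ as an $\hat B$--$\hat B$-bimodule summand of $\hat B \otimes \hat B$, so $\hat B$ is separable in $\cB$. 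Hence $\cM \boxtimes_\cC \cN \simeq \Mod{}{\hat B}(\cB)$ is separable as a left $\cB$-module, and, by the symmetric argument, as a right $\cD$-module --- proving the first assertion. For the second assertion, $\TCss$ is obtained as the sub-$3$-category of $\TC$ (which exists over the perfect field $k$ by Theorem~\ref{thm:tcexists}) on finite semisimple tensor categories and separable bimodule categories: it is closed under composition by what was just proved, and under the Deligne tensor product by a similar and simpler argument (over a perfect field $\cB \boxtimes \cD$ is again finite semisimple, and separability is preserved).

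The step I expect to be the main obstacle is the left-$\cB$-equivariant strengthening of Theorem~\ref{thm:DelignePrdtOverATCExists}(2) invoked above: that for a $\cB$--$\cC$-bimodule category $\cM$ and an algebra $B \in \cC$, forming $\cM \boxtimes_\cC \Mod{}{B}(\cC)$ and forming right $B$-modules in $\cM$ (via the right $\cC$-action) yield equivalent \emph{left $\cB$-module} categories. Conceptually this is just ``extending scalars along $\Phi$ commutes with restricting the commuting $\cB$-action,'' but making it precise requires carefully unwinding the universal property defining the relative Deligne tensor product while tracking the $\cB$-action throughout; everything else is either already available in the excerpt or a short computation with splittings.
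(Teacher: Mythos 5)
Your proposal is correct and follows essentially the same route as the paper: the paper likewise presents $\cM \simeq \Mod{}{A}(\cB)$ and $\cN \simeq \Mod{}{C}(\cC)$ with $A$ and $C$ separable, identifies ${}_\cB(\cM \boxtimes_\cC \cN)$ with modules over an algebra object in $\cB$ built from the right $\cC$-action on $\cM$ (phrased via the $\cB$-linear monad $T = - \otimes C$ rather than your monoidal functor $\Phi$, so your $\hat B = \Phi(B)$ is exactly the paper's $T(A)$), and then splices the two bimodule splittings exactly as in your transitivity argument. The step you flag as the main obstacle --- the $\cB$-equivariant identification of the relative tensor product with $\hat B$-modules --- is indeed the bulk of the paper's proof, carried out there by observing that $T$ is an algebra in $\Fun_\cB(\cM,\cM)$ and checking explicitly that $T$-algebras in $\Mod{}{A}(\cB)$ coincide with right $[T(A)]$-modules in $\cB$.
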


\begin{proof}
We prove that the product is separable as a $\cB$-module; separability as a $\cD$-module is similar.  Choose separable algebra objects $A \in \cB$ and $C \in \cC$ and an algebra object $B \in \cC$, such that
\begin{itemize}
\item $\cM \simeq \Mod{}{A}(\cB)$ as left $\cB$-module categories
\item $\cM \simeq \Mod{B}{}(\cC)$ as right $\cC$-module categories
\item $\cN \simeq \Mod{}{C}(\cC)$ as left $\cC$-module categories
\end{itemize}
It follows that $\cM \boxtimes_{\cC} \cN \simeq \Mod{B}{C}(\cC)$.  We suppress all four of these equivalences in what follows, freely transporting objects from $\Mod{}{A}(\cB)$ to $\cM$ to $\Mod{B}{}(\cC)$, and such, without comment or notation.  For instance, ``$A$" refers both to the algebra object $A \in \cB$ and as usual to the corresponding module object $A_A \in \Mod{}{A}(\cB)$, and therefore also to an object $A \in \cM$ and to a module object ${}_B A \in \Mod{B}{}(\cC)$.  We will however sometimes indicate by the notation $[-]_\cE$ when we forget to an ambient tensor category $\cE$.  Thus, for instance, the expression $[A]_\cC$ would be the object of $\cC$ obtained by forgetting the left $B$-action on ${}_B A \in \Mod{B}{}(\cC)$.  Moreover, we will drop the subscript from $[-]$ when the ambient tensor category in question is clear from context.

To see that the product $\cM \boxtimes_\cC \cN \simeq \Mod{B}{C}(\cC)$ is separable, we need to identify it (as a left $\cB$-module) as a category of modules for a separable algebra object in $\cB$.  As a linear category, the product is certainly the category of algebras (that is, modules) for the monad $T := - \otimes C : \cM \ra \cM$.  Here, a priori, $T$ is a monad on $\cM$ considered as a linear category (that is, $T$ is an algebra object in $\Fun(\cM,\cM)$).  However because the left $\cB$-action commutes with the right $\cC$-action on $\cM$, in fact $T$ is a monad on $\cM$ considered as a left $\cB$-module (that is, $T$ is an algebra object in $\Fun_\cB(\cM,\cM)$); the product $\cM \boxtimes_\cC \cN$ is, as a left $\cB$-module, the category of algebras for the monad $T$ acting on $\cM$ as a left $\cB$-module.  

We can reformulate the category of algebras for this monad as a category of modules for an algebra object.  We give $[T(A)]_\cB$ the structure of an algebra objects in $\cB$, as follows.  The multiplication is the composite
\[
[T(A)] \otimes [T(A)] \cong [[T(A)] \otimes T(A)] \cong [T([T(A)] \otimes A)] \xra{[T(\alpha)]} [T(T(A))] \ra [T(A)]
\]
The first isomorphism exists because the forgetful functor from $\Mod{}{A}(\cB)$ to $\cB$ is left $\cB$-linear.  The second isomorphism comes from the left $\cB$-linearity of the monad $T$.  For any object $M \in \cM$, the object $[M] \in \cB$ is a right $A$-module; the action map $[M] \otimes A \ra [M]$ (in $\cB$) is a right $A$-module map, so there is a corresponding morphism $[M] \otimes A \ra M$ in $\cM$.  In particular, there is the morphism $\alpha: [T(A)] \otimes A \ra T(A)$ used in the third map above.  The fourth map is the composition of the monad.  The unit of the algebra object $[T(A)]$ is simply the composite $1 \ra A \ra [T(A)]$, where the second map is obtained, from the unit $A \ra T(A)$ of the monad, by forgetting to $\cB$.

We know that ${}_\cB \cM \boxtimes_\cC \cN$ is the category of $T$-algebras in ${}_\cB (\Mod{}{A}(\cB))$; we now check that that this category of $T$-algebras is precisely ${}_\cB (\Mod{}{[T(A)]}(\cB))$, the left $\cB$-module category of right $[T(A)]$-modules in $\cB$.  As we now work exclusively within $\cB$, we will dispense with the $[-]$ notation.  Observe that because the monad $T$ is right exact and left $\cB$-linear, there is, for any object $M \in \Mod{}{A}(\cB)$, an isomorphism $T(M) \cong M \otimes_A T(A)$.  Therefore, on the one hand, given a $T$-algebra in $\Mod{}{A}(\cB)$, we have a (right $A$-module) action map $M \otimes_A T(A) \ra M$, which determines a (right $A$-module) action map $M \otimes T(A) \ra M$ by precomposition along $M \otimes T(A) \ra M \otimes_A T(A)$.  On the other hand, the unit map $A \ra T(A)$ is a homomorphism of algebra objects in $\cB$, and it follows that every action map $M \otimes T(A) \ra M$ is $A$-balanced and so determines an action map $M \otimes_A T(A) \ra M$, that is, a $T$-algebra structure.

It remains only to check that $T(A) \in \cB$ is a separable algebra object.  Pick bimodule splittings $s: A \ra A \otimes A$ and $\sigma: C \ra C \otimes C$ of the multiplication maps of $A$ and $C$.  These combine to provide a bimodule splitting of the multiplication map on $T(A)$, as required:
\[
T(A) \xra{\id_A \otimes \sigma} T(T(A)) \cong T(T(A) \otimes_A A) \xra{T(\id_{T(A)} \otimes_A s)} T(T(A) \otimes A) \cong T(A) \otimes T(A). \qedhere
\]

\end{proof}

\begin{corollary} \label{cor:funsemi}
For absolutely semisimple, separable $\cC$-modules ${}_\cC \cM$ and ${}_\cC \cN$ over a finite semisimple tensor category $\cC$, the functor category $\Fun_{\cC}(\cM,\cN)$ is absolutely semisimple.
\end{corollary}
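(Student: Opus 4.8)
The plan is to deduce Corollary~\ref{cor:funsemi} from Theorem~\ref{thm:compositeOfSep} together with the reexpression of functor categories as relative Deligne tensor products and the characterization of separability via semisimplicity of endofunctor categories. The key observation is that for separable $\cC$-modules ${}_\cC\cM$ and ${}_\cC\cN$, the functor category $\Fun_\cC(\cM,\cN)$ is, by Corollary~\ref{cor:tensasfunct}, equivalent to a relative Deligne tensor product of bimodule categories, and Theorem~\ref{thm:compositeOfSep} shows this product is separable. But we want \emph{absolute} semisimplicity, not just semisimplicity, so I need to be careful to track absolute semisimplicity through the argument.

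First I would make ${}_\cC\cM$ into a $\Vect$--$\cC$-bimodule category (every linear category is canonically a $\Vect$-module on the left) and ${}_\cC\cN$ into a $\cC$--$\Vect$-bimodule category; the separability hypotheses say precisely that these are separable as bimodule categories (separable as a $\cC$-module, and absolutely semisimple as a $\Vect$-module since the categories are absolutely semisimple). Then Corollary~\ref{cor:tensasfunct} gives an equivalence of $\Vect$--$\Vect$-bimodule categories
\[
\Fun_\cC(\cM,\cN) \simeq \Fun_{\mod{\cC}{}}(\cM^*,\cN) \simeq \cM^{**} \boxtimes_\cC \cN,
\]
or more directly, using Proposition~\ref{prop:FunctorsAsATensorPdt} in the form $\Fun_\cC(\cM,\cN) \simeq \Fun_\cC(\cM,\cC)\boxtimes_\cC \cN \simeq {}^*\cM \boxtimes_\cC \cN$ via Proposition~\ref{prop:dual-formula-for-adjoints}. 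Now ${}^*\cM$ is a separable $\cC$-module (the dual of a separable module is separable, since duality is realized by a double-dual twist which preserves the separable algebra object, cf.\ the discussion around Corollary~\ref{cor:dualamod} and Corollary~\ref{cor:adjoint-exactness}), and it is absolutely semisimple as a $\Vect$-module because $\cM^{\op}$ is absolutely semisimple whenever $\cM$ is. Thus both ${}^*\cM$ and $\cN$ are separable $\Vect$--$\cC$- and $\cC$--$\Vect$-bimodule categories respectively, and Theorem~\ref{thm:compositeOfSep} applies to conclude ${}^*\cM\boxtimes_\cC\cN$ is a separable $\Vect$--$\Vect$-bimodule category. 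Separable as a $\Vect$-module means exactly absolutely semisimple, so $\Fun_\cC(\cM,\cN)$ is absolutely semisimple.

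The main obstacle is the bookkeeping around the $\Vect$-module structures and the identification of ``separable as a $\Vect$-module'' with ``absolutely semisimple'' --- this is essentially the content of the discussion preceding Proposition~\ref{prop:sepmodissemi}, so it is available, but one must verify that Theorem~\ref{thm:compositeOfSep}, which is stated for bimodule categories over finite semisimple tensor categories, legitimately covers the case $\cB = \cD = \Vect$. Since $\Vect$ is a finite semisimple tensor category and the separability of a $\Vect$-module is precisely absolute semisimplicity, this is fine, though one should note that over an imperfect field $\Vect$-modules that are merely semisimple need not be separable (Kuperberg's example), which is exactly why the hypothesis of absolute semisimplicity of $\cM$ and $\cN$ cannot be dropped. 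A secondary point to check is that the chain of equivalences in Corollary~\ref{cor:tensasfunct} is compatible with the $\Vect$-bimodule structures, i.e.\ that we may regard $\Fun_\cC(\cM,\cN)$ as a composite of bimodules with $\Vect$ on the outside; this follows because every linear category and every linear functor is canonically $\Vect$-linear, and the equivalences in question are equivalences of linear categories. This completes the proof.

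\begin{proof}
Regard ${}_\cC \cM$ as a $\Vect$--$\cC$-bimodule category and ${}_\cC \cN$ as a $\Vect$--$\cC$-bimodule category, using the canonical (essentially unique) $\Vect$-module structure on any linear category.  Since $\cM$ and $\cN$ are absolutely semisimple, these are separable as $\Vect$-modules, and by hypothesis they are separable as $\cC$-modules; hence they are separable bimodule categories.  By Proposition~\ref{prop:dual-formula-for-adjoints}, there is an equivalence ${}^* \cM \simeq \Fun_\cC(\cM,\cC)$, and by Proposition~\ref{prop:FunctorsAsATensorPdt} there is an equivalence of $\Vect$--$\Vect$-bimodule categories
\[
\Fun_\cC(\cM,\cN) \simeq \Fun_\cC(\cM,\cC) \boxtimes_\cC \cN \simeq {}^* \cM \boxtimes_\cC \cN.
\]
Now ${}^* \cM$ is separable as a $\cC$-module (by Corollary~\ref{cor:dualamod}, its associated algebra object is a double dual of a separable algebra object, hence separable, as the double dual functor is a tensor equivalence) and absolutely semisimple as a $\Vect$-module (as $\cM^\op$ is absolutely semisimple whenever $\cM$ is), so ${}^* \cM$ is a separable $\Vect$--$\cC$-bimodule category.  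Applying Theorem~\ref{thm:compositeOfSep} to the separable bimodule categories ${}^* \cM$ and ${}_\cC \cN_\Vect$ shows that ${}^* \cM \boxtimes_\cC \cN$ is a separable $\Vect$--$\Vect$-bimodule category, that is, it is absolutely semisimple.  Therefore $\Fun_\cC(\cM,\cN)$ is absolutely semisimple.
\end{proof}
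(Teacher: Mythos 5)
Your proof is correct and takes essentially the same route the paper intends for this corollary: rewrite $\Fun_\cC(\cM,\cN)$ as a relative Deligne tensor product ${}^*\cM \boxtimes_\cC \cN$ of bimodule categories with $\Vect$ on the outside, apply Theorem~\ref{thm:compositeOfSep} with $\cB=\cD=\Vect$, and identify separability over $\Vect$ with absolute semisimplicity. One harmless slip: by Corollary~\ref{cor:dualamod} one has ${}^*\cM \simeq \Mod{A}{}(\cC)$ with no double dual appearing in this case, though your double-dual argument would equally yield separability since the double dual is a tensor equivalence.
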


\begin{corollary} \label{cor:septc}
Assume the base field is perfect.  Finite semisimple tensor categories, separable bimodule categories, bimodule functors, and bimodule transformations, form a symmetric monoidal sub-3-category $\TCss$ of $\TC$.
\end{corollary}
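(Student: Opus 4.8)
The plan is to check that finite semisimple tensor categories, separable bimodule categories, bimodule functors, and bimodule transformations are closed under composition, identities, and the symmetric monoidal structure of the ambient $3$-category $\TC$ of Theorem~\ref{thm:tcexists}; since all $2$- and $3$-morphisms are retained in full, only the objects and $1$-morphisms require attention, and most of the real content is already supplied by Theorem~\ref{thm:compositeOfSep}.

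First I would dispatch identities and composition. For a finite semisimple tensor category $\cC$, the regular bimodule ${}_\cC\cC_\cC$, viewed as a one-sided module, is $\Mod{}{1}(\cC)$, and the unit object $1$ is a separable algebra object because its multiplication $1 \otimes 1 \xra{\sim} 1$ is an isomorphism of $1$--$1$-bimodules, hence split; so ${}_\cC\cC_\cC$ is separable on each side, i.e.\ a separable bimodule category. For composition I would simply invoke Theorem~\ref{thm:compositeOfSep}: the relative Deligne tensor product of separable bimodule categories over finite semisimple tensor categories is separable, and its endpoints are unchanged, hence remain finite semisimple. As composition, identities, and the higher morphism structure are inherited verbatim from $\TC$, this already yields a sub-$3$-category $\TCss \subseteq \TC$.

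Next I would verify closure under the Deligne tensor product $\boxtimes = \boxtimes_{\Vect_k}$ furnishing the symmetric monoidal structure. The unit $\Vect_k$ is finite semisimple tensor. For objects, I would write finite semisimple $\cC \simeq \Mod{}{A}(\Vect_k)$ and $\cD \simeq \Mod{}{B}(\Vect_k)$ with $A,B$ finite-dimensional semisimple $k$-algebras (Proposition~\ref{prop:catismod}) and use part~(3) of Theorem~\ref{thm:DelignePrdtOverATCExists} (together with $\Vect_k \boxtimes \Vect_k \simeq \Vect_k$) to identify $\cC \boxtimes \cD \simeq \Mod{}{(A \otimes_k B)}(\Vect_k)$; perfection of $k$ makes $A \otimes_k B$ semisimple, so $\cC \boxtimes \cD$ is finite semisimple tensor. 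For $1$-morphisms, given separable bimodule categories ${}_\cC \cM_\cD$ and ${}_\cE \cN_\cF$, I would write $\cM \simeq \Mod{}{A}(\cC)$ and $\cN \simeq \Mod{}{A'}(\cE)$ for separable algebra objects $A,A'$, present $\cM \boxtimes \cN \simeq \Mod{}{(A \boxtimes A')}(\cC \boxtimes \cE)$ via part~(3) of Theorem~\ref{thm:DelignePrdtOverATCExists}, and observe that the external product of bimodule splittings of the multiplications of $A$ and $A'$ splits the multiplication of $A \boxtimes A'$; the symmetric argument on the right side then shows $\cM \boxtimes \cN$ is a separable $(\cC \boxtimes \cE)$--$(\cD \boxtimes \cF)$-bimodule. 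Finally I would note that the associator, unit, and braiding equivalences of $\TC$, and all their coherence data, lie in $\TCss$: each such structure $1$-morphism is a twist by a monoidal equivalence of a regular bimodule ${}_\cC\cC_\cC$ with $\cC$ a finite semisimple tensor category (such as $(\cC\boxtimes\cD)\boxtimes\cE$, which is finite semisimple tensor by the preceding paragraph), hence separable as in the identity case; and the higher coherences are bimodule functors and transformations. Together these facts make the inclusion $\TCss \hookrightarrow \TC$ symmetric monoidal.

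I do not expect a genuine obstacle: the corollary is essentially a repackaging of Theorems~\ref{thm:tcexists} and~\ref{thm:compositeOfSep}. The only points deserving a moment's care are the semisimplicity of $A \otimes_k B$ over a perfect field (which is exactly why perfection is hypothesized, cf.\ the footnote to Theorem~\ref{thm:tcexists}) and the observation that the monoidal structure equivalences are themselves separable bimodule categories; both are short.
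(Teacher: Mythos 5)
Your proposal is correct and follows essentially the paper's own route: closure under composition is exactly Theorem~\ref{thm:compositeOfSep}, monoidal closure of $1$-morphisms is the separability of the external product of separable algebra objects combined with part~(3) of Theorem~\ref{thm:DelignePrdtOverATCExists}, and closure of objects rests on perfection of the base field (the paper packages this as absolute semisimplicity plus Theorem~\ref{thm:compositeOfSep} applied over $\Vect$, you as semisimplicity of $A \otimes_k B$ --- the same underlying fact). Your extra verifications that the identity bimodules ${}_\cC\cC_\cC$ and the coherence equivalences of the symmetric monoidal structure are separable are correct and harmless additions that the paper leaves implicit.
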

\begin{proof}
Theorem~\ref{thm:compositeOfSep} ensures that the specified collection of 1-morphisms is closed under composition, and so we have a sub-3-category.  Because of the perfection assumption, every finite semisimple linear category is absolutely semisimple.  By the same theorem, the ordinary Deligne tensor of two such categories is absolutely semisimple, so in particular semisimple; the collection of objects is therefore closed under the monoidal structure.  To see that the morphisms are closed under the monoidal structure, observe that for $B \in \cC$ and $C \in \cD$ separable algebra objects, the product $B \boxtimes C \in \cC \boxtimes \cD$ is separable; the result therefore follows from part (3) of Theorem~\ref{thm:DelignePrdtOverATCExists}.
\end{proof}

As mentioned, we will see that separable bimodule categories ${}_\cC \cM_\cD$ are maximally dualizable.  What we really care about, though, is the dualizability of tensor categories.  Thus, what concerns us most is the separability of the bimodules ${}_{\cC \boxtimes \cC^\mp} \cC$ and $\cC_{\cC^\mp \boxtimes \cC}$ arising in the 1-dualizability of $\cC$.  This suggests the following definition.
\begin{definition} \label{def:sepcat}
A finite semisimple tensor category $\cC$ over a perfect field is \emph{separable} if it is separable as a $\cC \boxtimes \cC^\mp$--$\Vect$-bimodule.
\end{definition}

Recall that the Drinfeld center of a finite tensor category $\cC$ is by definition the linear category $\cZ(\cC) := \Fun_{\cC \boxtimes \cC^\mp}(\cC,\cC)$.  By Theorem~\ref{thm:compositeOfSep}, if $\cC$ is absolutely semisimple, then $\cC \boxtimes \cC^\mp$ is absolutely semisimple.  As a corollary of Theorem~\ref{thm:SepModCats}, we now have the generalization of M\"uger's theorem on the semisimplicity of Drinfeld centers.
\begin{corollary} \label{cor:Sep=semisimplecenter}
Let $\cC$ be a finite semisimple tensor category over a perfect field.  The following conditions are equivalent:
\begin{enumerate}
\item the tensor category $\cC$ is separable,
\item the Drinfeld center $\cZ(\cC)$ is semisimple.
\end{enumerate}
\end{corollary}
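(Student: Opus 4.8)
The plan is to deduce this corollary directly from Theorem~\ref{thm:SepModCats}, applied to the tensor category $\cC \boxtimes \cC^\mp$ and the module category ${}_{\cC \boxtimes \cC^\mp} \cC$. First I would check that the hypotheses of Theorem~\ref{thm:SepModCats} are satisfied in this situation. Since the base field is perfect, $\cC$ is not merely finite semisimple but absolutely semisimple, so Theorem~\ref{thm:compositeOfSep} shows that the Deligne tensor product $\cC \boxtimes \cC^\mp$ is absolutely semisimple, in particular a finite semisimple linear category; it is rigid because $\cC$ is, hence it is a finite semisimple tensor category. The identity bimodule ${}_\cC \cC_\cC$, with its right $\cC$-action flipped onto the left as in Section~\ref{sec:fliptwist}, is a finite left $\cC \boxtimes \cC^\mp$-module category, the underlying linear category being $\cC$ itself and hence finite.

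Now apply Theorem~\ref{thm:SepModCats} with $\cD := \cC \boxtimes \cC^\mp$ and $\cM := {}_{\cC \boxtimes \cC^\mp} \cC$: the module category $\cM$ is separable if and only if the linear category $\Fun_{\cC \boxtimes \cC^\mp}(\cC, \cC)$ is semisimple. By Definition~\ref{def:sepcat}, the statement that ${}_{\cC \boxtimes \cC^\mp} \cC$ is separable is precisely the statement that $\cC$ is a separable tensor category, which is condition (1). And by definition of the Drinfeld center, $\cZ(\cC) = \Fun_{\cC \boxtimes \cC^\mp}(\cC, \cC)$, so the statement that this functor category is semisimple is precisely condition (2). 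Matching these two equivalences yields (1) $\Leftrightarrow$ (2).

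The only content beyond unwinding definitions is the verification that $\cC \boxtimes \cC^\mp$ is again a finite semisimple tensor category — this is where perfectness enters, and it is supplied by Theorem~\ref{thm:compositeOfSep} — together with the observation from Section~\ref{sec:tc-bimodules} that the flipped identity bimodule genuinely is a module category over $\cC \boxtimes \cC^\mp$. Neither step presents a real obstacle, so I expect the proof to be very short; the main point requiring care is simply to keep straight the conventions for $\cC^\mp$ versus $\cC^\op$ and the direction of the flip, so that $\Fun_{\cC \boxtimes \cC^\mp}(\cC,\cC)$ is literally the category of objects of $\cC$ equipped with half-braidings, i.e. the Drinfeld center in the usual sense.
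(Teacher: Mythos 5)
Your proposal is correct and is essentially the paper's own argument: the paper likewise uses perfectness together with Theorem~\ref{thm:compositeOfSep} to see that $\cC \boxtimes \cC^\mp$ is (absolutely) semisimple, and then applies Theorem~\ref{thm:SepModCats} to the module category ${}_{\cC \boxtimes \cC^\mp} \cC$, identifying $\Fun_{\cC \boxtimes \cC^\mp}(\cC,\cC)$ with $\cZ(\cC)$ and separability of that module with Definition~\ref{def:sepcat}. No gaps.
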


(A semisimple finite tensor category need not be separable.  Note that the representation category $\Rep(G)$ of a finite group appears as a tensor subcategory of the center $\cZ(\Vect[G])$.  In characteristic $p$, the category $\Rep(\ZZ/p)$ is not semisimple, so the center $\cZ(\Vect[\ZZ/p])$ is not semisimple, and thus $\Vect[\ZZ/p]$ is semisimple but not separable.)

Equipped with this characterization of separable tensor categories, we can also characterize separable module categories over separable tensor categories:

\begin{proposition} \label{prop:SSModuleCatsAreSep}
Let $\cC$ be a separable tensor category over any perfect field.  A finite module category ${}_\cC \cM$ is separable if and only if it is semisimple.
\end{proposition}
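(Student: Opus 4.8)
The plan is to prove the two implications separately; the forward direction is immediate and essentially all the work is in the converse.

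For "separable $\Rightarrow$ semisimple": a separable tensor category is, by Definition~\ref{def:sepcat}, in particular a finite semisimple tensor category, so this is a direct application of Proposition~\ref{prop:sepmodissemi}, which says that separable module categories over finite semisimple tensor categories are semisimple. For the converse, suppose ${}_\cC\cM$ is finite semisimple. By Theorem~\ref{thm:SepModCats} (applicable since $\cC$, being separable, is finite semisimple tensor), it suffices to show that the linear category $\Fun_\cC(\cM,\cM)$ is semisimple, as that forces $\cM$ to be separable.

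So the goal is to show $\cD:=\Fun_\cC(\cM,\cM)$ is semisimple. Since $\cM$ is semisimple over the semisimple category $\cC$, it is exact (Example~\ref{eg:semiexact}), hence by part (5) of Theorem~\ref{Thm:ExactModCatOmnibus} the category $\cD$ is a finite tensor category. Next I reduce to the case where $\cC$ has simple unit and $\cM\neq 0$: decomposing the semisimple commutative algebra $\End_\cC(1)$ into a product of fields writes $\cC$ as a finite product $\prod_i\cC_i$ of tensor categories with simple unit, each separable (since $\cZ(\cC)=\prod_i\cZ(\cC_i)$ is semisimple, so each $\cZ(\cC_i)$ is, and each $\cC_i$ is finite semisimple tensor over a perfect field, hence separable by Corollary~\ref{cor:Sep=semisimplecenter}), with $\cM=\bigoplus_i\cM_i$ and $\cD=\bigoplus_i\Fun_{\cC_i}(\cM_i,\cM_i)$; a finite direct sum of semisimple categories is semisimple, so we may work one factor at a time. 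Now, when $\cC$ has simple unit, any nonzero $\cC$-module category is faithful: if $c\in\cC$ is nonzero then the left-dual coevaluation $1\to{}^*c\otimes c$ is nonzero, hence a split monomorphism ($1$ being simple and $\cC$ semisimple), so every nonzero $m$ satisfies $c\otimes m\neq 0$.

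The crux is then: since $\cM$ is a faithful exact $\cC$-module category, $\cC$ and $\cD=\Fun_\cC(\cM,\cM)$ are categorically Morita equivalent, and in particular their Drinfeld centers are equivalent, $\cZ(\cD)\simeq\cZ(\cC)$ (a standard consequence of the theory of exact module categories). As $\cC$ is separable, $\cZ(\cC)$ is semisimple by Corollary~\ref{cor:Sep=semisimplecenter}, so $\cZ(\cD)$ is semisimple. Finally I deduce that $\cD$ itself is semisimple from the semisimplicity of its center: the forgetful functor $U:\cZ(\cD)\to\cD$ is a tensor functor with $U(1_{\cZ(\cD)})=1_\cD$ that preserves projective objects (its right adjoint is exact), so semisimplicity of $\cZ(\cD)$ makes $1_{\cZ(\cD)}$ projective and hence $1_\cD=U(1_{\cZ(\cD)})$ projective in $\cD$; then for any $X\in\cD$ the functor $\Hom_\cD(X,-)\simeq\Hom_\cD(1_\cD,{}^*X\otimes-)$ is exact (${}^*X\otimes-$ is exact by rigidity, $1_\cD$ is projective), so every object of $\cD$ is projective and $\cD$ is semisimple. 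Thus $\Fun_\cC(\cM,\cM)$ is semisimple and Theorem~\ref{thm:SepModCats} yields the separability of $\cM$.

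The main obstacle is the bootstrapping: both of the obvious tools, the criterion "$\cM$ separable $\iff\Fun_\cC(\cM,\cM)$ semisimple" (Theorem~\ref{thm:SepModCats}) and the composition theorem for separable bimodules (Theorem~\ref{thm:compositeOfSep}), take "$\cM$ separable" as a hypothesis rather than producing it, so one cannot run the natural argument "$\cM$ separable as a $\Vect$-module and as a $\cC$-module, hence $\cM^*\boxtimes_\cC\cM\simeq\Fun_\cC(\cM,\cM)$ is separable" — the $\cC$-separability is exactly what is to be proved. The real content, and the step requiring care, is therefore establishing that separability of $\cC$ (equivalently, by Corollary~\ref{cor:Sep=semisimplecenter}, semisimplicity of $\cZ(\cC)$) is strong enough to upgrade semisimplicity of $\cM$ to semisimplicity of the enveloping tensor category $\Fun_\cC(\cM,\cM)$, for which the Morita-invariance of the Drinfeld center is the essential mechanism.
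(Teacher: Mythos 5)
Your overall skeleton is the same as the paper's: forward direction by Proposition~\ref{prop:sepmodissemi}; converse by reducing, via Theorem~\ref{thm:SepModCats}, to semisimplicity of $\Fun_\cC(\cM,\cM)$, which is a finite tensor category by exactness of ${}_\cC\cM$ (Example~\ref{eg:semiexact} and Theorem~\ref{Thm:ExactModCatOmnibus}(5)), then invoking the Morita invariance of the Drinfeld center $\cZ(\cC)\simeq\cZ(\Fun_\cC(\cM,\cM))$ together with Corollary~\ref{cor:Sep=semisimplecenter}. However, your reduction step contains a genuine error. Splitting $\End_\cC(1)$ into a product of fields does \emph{not} write $\cC$ as a product $\prod_i\cC_i$ of tensor categories with simple unit: decomposing the unit as $1=\oplus_i 1_i$ only gives a decomposition of the underlying linear category as $\oplus_{i,j}\,1_i\otimes\cC\otimes 1_j$, and the off-diagonal components need not vanish. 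A ``matrix'' (multifusion-type) category, e.g.\ $2\times 2$ matrices of vector spaces with matrix-multiplication tensor product, is indecomposable as a tensor category even though $\End(1)=k\times k$. Consequently your faithfulness argument, which uses simplicity of the unit, does not cover indecomposable tensor categories with non-simple unit, and the hypothesis of the center-equivalence you invoke is not verified in that case. The flaw is repairable — decompose $\cC$ into indecomposable tensor summands (blocks) rather than via idempotents of $\End_\cC(1)$, and check that a nonzero module over an indecomposable summand is automatically faithful — but as written the step fails. (The paper sidesteps this by directly citing Schauenburg and Etingof--Ostrik for $\cZ(\cC)\simeq\cZ(\Fun_\cC(\cM,\cM))$ for exact module categories.)

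Your final step also deviates from the paper and rests on an unproved assertion: that the forgetful functor $\cZ(\cD)\to\cD$, $\cD=\Fun_\cC(\cM,\cM)$, preserves projectives because ``its right adjoint is exact.'' This is true for finite tensor categories, but it is a nontrivial fact of essentially the same depth as the one the paper uses instead, namely that every finite tensor category is an exact module category over its center (EGNO, Lemma 2.14.4); with that citation, semisimplicity of $\cZ(\cD)$ immediately gives semisimplicity of $\cD$ by Example~\ref{eg:semiexact}, which is how the paper concludes. So your argument for ``$\cZ(\cD)$ semisimple $\Rightarrow$ $\cD$ semisimple'' is fine in outline (projective unit plus rigidity does imply semisimplicity), but the projectivity-preservation claim needs justification or should be replaced by the exactness-over-the-center citation.
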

\begin{proof}
Proposition~\ref{prop:sepmodissemi} shows separability implies semisimplicity.  By Theorem~\ref{thm:SepModCats}, the module category $\cM$ is separable provided the linear category $\Fun_\cC(\cM,\cM)$ is semisimple.  By Example~\ref{eg:semiexact}, the module category ${}_\cC \cM$ is exact and so by part (5) of Theorem~\ref{Thm:ExactModCatOmnibus}, the monoidal category $\Fun_\cC(\cM,\cM)$ is rigid. Every tensor category $\cD$ is exact over its center, by~\cite[Lemma 7.12.7]{egno-book} applied to the exact module category ${}_{\cD \boxtimes \cD^\mp} \cD$; thus $\Fun_\cC(\cM,\cM)$ is an exact $\cZ(\Fun_\cC(\cM,\cM))$-module category.  By \cite{scha} and \cite[Cor. 3.35]{EO-ftc}, there is an equivalence $\cZ(\cC) \simeq \cZ(\Fun_\cC(\cM,\cM))$.  The center $\cZ(\cC)$ is semisimple by Corollary~\ref{cor:Sep=semisimplecenter}, and so $\Fun_\cC(\cM,\cM)$ is semisimple as required.
\end{proof}

The full sub-3-category of $\TCss$ on the separable tensor categories is symmetric monoidal and will be of special significance: in Section~\ref{sec:separableisfd} we will prove it is a fully dualizable 3-category. 
\begin{corollary} \label{cor:tcsepexists}
Over a perfect field, separable tensor categories, finite semisimple bimodule categories, bimodule functors, and bimodule transformations form a symmetric monoidal sub-3-category $\TCsep$ of $\TCss$.
\end{corollary}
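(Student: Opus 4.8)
The plan is to recognize $\TCsep$ as the full sub-3-category of $\TCss$ supported on the separable tensor categories, and then to check that this class of objects is closed under the symmetric monoidal structure; essentially everything then follows from results already in hand.

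First I would observe that, between separable tensor categories over the (perfect) base field, a finite bimodule category is semisimple if and only if it is separable. Indeed, for separable $\cC$ and $\cD$, the bimodule ${}_\cC\cM_\cD$ is by definition a separable bimodule precisely when $\cM$ is separable both as a $\cC$-module and as a $\cD$-module, and by Proposition~\ref{prop:SSModuleCatsAreSep} each of these conditions is equivalent to the underlying linear category of $\cM$ being semisimple. Hence the $1$-morphisms of $\TCss$ with separable source and target are exactly the finite semisimple bimodule categories, and $\TCsep$ --- with all bimodule functors as $2$-morphisms and all bimodule transformations as $3$-morphisms --- is precisely the full sub-3-category of $\TCss$ on the separable tensor categories. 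In particular it is automatically a sub-3-category, and closure of $1$-morphisms under composition is inherited: for separable bimodules ${}_\cB\cM_\cC$ and ${}_\cC\cN_\cD$ with $\cB,\cC,\cD$ separable, Theorem~\ref{thm:compositeOfSep} shows $\cM\boxtimes_\cC\cN$ is again separable, and its source $\cB$ and target $\cD$ are separable tensor categories.

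It remains to see that the symmetric monoidal structure of $\TCss$ restricts, for which the only new content is that $\Vect_k$ is a separable tensor category and that the Deligne tensor product of two separable tensor categories is separable. The unit is immediate: $\cZ(\Vect_k)\simeq\Vect_k$ is semisimple, so $\Vect_k$ is separable by Corollary~\ref{cor:Sep=semisimplecenter}. For the tensor product, given separable $\cC$ and $\cD$ I would invoke Definition~\ref{def:sepcat} to write $\cC\simeq\Mod{}{R_\cC}(\cC\boxtimes\cC^\mp)$ and $\cD\simeq\Mod{}{R_\cD}(\cD\boxtimes\cD^\mp)$ for separable algebra objects $R_\cC$ and $R_\cD$. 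Since $\TCss$ is symmetric monoidal (Corollary~\ref{cor:septc}), $\cC\boxtimes\cD$ is again a finite semisimple tensor category, so it makes sense to ask whether it is separable. Applying part~(3) of Theorem~\ref{thm:DelignePrdtOverATCExists} to the two presentations above identifies $\cC\boxtimes\cD$ with $\Mod{}{(R_\cC\boxtimes R_\cD)}\bigl((\cC\boxtimes\cC^\mp)\boxtimes(\cD\boxtimes\cD^\mp)\bigr)$ as a module category over $(\cC\boxtimes\cC^\mp)\boxtimes(\cD\boxtimes\cD^\mp)$; reordering the Deligne factors and using $(\cC\boxtimes\cD)^\mp\simeq\cC^\mp\boxtimes\cD^\mp$, this acting category is equivalent to $(\cC\boxtimes\cD)\boxtimes(\cC\boxtimes\cD)^\mp$, under which the transported module structure is the canonical one exhibiting $\cC\boxtimes\cD$ as a $(\cC\boxtimes\cD)\boxtimes(\cC\boxtimes\cD)^\mp$-module, i.e.\ the identity bimodule ${}_{\cC\boxtimes\cD}(\cC\boxtimes\cD)_{\cC\boxtimes\cD}$ with both actions flipped to the left. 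Since $R_\cC\boxtimes R_\cD$ is a separable algebra object (the Deligne tensor of separable algebra objects is separable, as already used in the proof of Corollary~\ref{cor:septc}), this presents $\cC\boxtimes\cD$ as separable in the sense of Definition~\ref{def:sepcat}.

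The one step requiring genuine care is this last compatibility check: verifying that, after reassociating the Deligne tensor factors, the module structure on $\cC\boxtimes\cD$ produced by part~(3) of Theorem~\ref{thm:DelignePrdtOverATCExists} really coincides with the canonical one obtained by flipping both actions of the identity bimodule of $\cC\boxtimes\cD$. This is a routine but slightly intricate diagram chase with the associativity and flip constraints of the Deligne tensor product. Everything else --- the identification of separable with semisimple bimodule categories over separable tensor categories, closure of $1$-morphisms under composition, and the separability of the unit object --- follows directly from Proposition~\ref{prop:SSModuleCatsAreSep}, Theorem~\ref{thm:compositeOfSep}, and Corollaries~\ref{cor:Sep=semisimplecenter} and~\ref{cor:septc}.
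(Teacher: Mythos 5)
Your proposal is correct and follows essentially the same route as the paper: the paper's (one-sentence) proof likewise reduces everything to Corollary~\ref{cor:septc} --- i.e.\ to the fact that the Deligne tensor of separable module categories (equivalently, of separable algebra objects, via part~(3) of Theorem~\ref{thm:DelignePrdtOverATCExists}) is separable --- with Proposition~\ref{prop:SSModuleCatsAreSep} implicitly identifying semisimple bimodule categories between separable tensor categories with the separable ones. You merely spell out the reassociation/flip bookkeeping that the paper leaves implicit.
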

\nid That the ordinary Deligne tensor of separable tensor categories is separable follows from the fact, established in Corollary~\ref{cor:septc}, that the tensor of separable module categories is separable.

\section{Separability and global dimension} \label{sec:tc-fusion}

In this section we give a computable criterion for checking the separability of a finite semisimple tensor category, at least when the base field is algebraically closed and the unit is simple.  Specifically, we show that such a tensor category is separable if and only if its global dimension (a sum of certain products of quantum traces on its simple objects) is nonzero.\footnote{When the tensor category is the representation category $\Rep(G)$ of a finite group, this result reduces to Maschke's classical observation that $\Rep(G)$ is semisimple precisely when the characteristic of the base field does not divide the order of the group $G$~\cite{maschke}.}  Over algebraically closed fields of characteristic zero, the global dimension is always nonzero, and so the characterization of separability simplifies substantially; in fact, all finite semisimple tensor categories over a field of characteristic zero are separable.

As we are investigating the separability condition on tensor categories, we need to assume the base field is perfect, but in fact in this section only we will assume the base field is algebraically closed.  The global dimension is constructed from endomorphisms of the unit and will be a numerical invariant when the unit is simple.  Recall that a finite semisimple tensor category (over an algebraically closed field) with simple unit is called a \emph{fusion category}.  Combining Corollary~\ref{cor:Sep=semisimplecenter} with the nonzero global dimension criterion for separability (Theorem~\ref{thm:NonzeroDimension} below) shows that a fusion category has semisimple Drinfeld center if and only if it has nonzero global dimension.  This result, which assumes nothing about the pivotality or sphericality of the category nor about the characteristic of the base field, sharpens results of M\"uger, Etingof--Nikshych--Ostrik, and Brugui\`eres--Virelizier.  M\"uger proved that a spherical fusion category of nonzero global dimension has semisimple center~\cite[Thm. 3.16]{MR1966525}; ENO proved that, in characteristic zero, fusion categories of nonzero global dimension have semisimple center~\cite[Thm. 2.15]{MR2183279}---see also Section 9 of that paper for a discussion of the positive characteristic situation; BV proved that pivotal fusion categories with semisimple center have nonzero global dimension~\cite{MR3079759}.  Victor Ostrik suggested many of the ideas contained in this section.

\subsection{Global dimension via quantum trace} \label{sec:gdim}

Fusion categories abstract the structure present in the representation category of a finite group.  Provided the base field is algebraically closed, the order of a finite group can be computed from its representation category as the sum of the squares of the dimensions of the irreducible representations.\footnote{This connection between the size of the group and its representation theory certainly depends on the algebraically closed assumption: the group $\ZZ/3$ has two irreducible real representations, one of dimension 1 and one of dimension 2.}  The global dimension of a fusion category, introduced by M\"uger~\cite{MR1966524}, is similarly defined as a sum of `squared norms' of simple objects of the category.  

The notion of squared norm of simple objects of a fusion category is defined using the notion of quantum trace:
\begin{definition}
Let $\cC$ be a fusion category over $k$, with unit $1 \in \cC$.  Let $x \in \cC$ be an object and choose morphisms $\ev_x: x \otimes {}^* x \ra 1$ and $\coev_x : 1 \ra {}^* x \otimes x$ witnessing ${}^* x$ as a left dual of $x$; also choose morphisms $\ev_{({}^* x)} : {}^* x \otimes {}^{**} x \ra 1$ and $\coev_{({}^* x)}: 1 \ra {}^{**} x \otimes {}^* x$ witnessing ${}^{**} x$ as a left dual of ${}^* x$.  The \emph{quantum trace} of a morphism $a: {}^{**} x \ra x$ is
\[
\Tr(a) := \ev_x \circ (a \otimes \id_{({}^* x)}) \circ \coev_{({}^* x)} \in \End_{\cC}(1) \cong k.
\]
\end{definition}
\nid The quantum trace of the morphism $a$ depends not only on that morphism but also on the choices of the evaluation map $\ev_x$ and coevaluation map $\coev_{({}^* x)}$.  Indeed we can change either that evaluation or coevaluation independently by any nonzero scalar, and so altogether the quantum trace appears to do nothing more than detect whether the morphism $a$ is zero or not.  

However, when the object $x$ is simple, we can eliminate the dependency on the evaluation and coevaluation, and even on the morphism $a$, by judiciously combining two distinct quantum traces.  Note that the simple object $x$ is non-canonically isomorphic to its double dual ${}^{**} x$.  (The isomorphism class of a simple object $y \in \cC$ is determined by the condition $\dim \Hom(x \otimes y,1) = 1$, and both ${}^* x$ and $x^*$ satisfy that condition; this uses the fact that $\dim \Hom(x \otimes x^*,1) = \dim \Hom(1, x \otimes x^*)$.)  We can therefore take the quantum trace of an isomorphism $a: {}^{**} x \ra x$ and combine it with the quantum trace of a dual of the inverse isomorphism:
\begin{definition}
Pick evaluation and coevaluation maps witnessing ${}^* x$ as a left dual of $x$, and maps witnessing ${}^{**} x$ as a left dual of ${}^* x$, and maps witnessing ${}^{***} x$ as a left dual of ${}^{**} x$.  The \emph{squared norm} of a simple object $x \in \cC$ of a fusion category is
\[
\lVert x \rVert := \Tr(a) \cdot \Tr({}^*(a^{-1})) \in k,
\]
where $a: {}^{**} x \ra x$ is any choice of isomorphism, and the dual ${}^*(a^{-1})$ of $a^{-1}$ and both the quantum traces are computed using the chosen evaluation and coevaluation maps.
\end{definition}
\nid The terminology is somewhat unfortunate, as in general the squared norm is not canonically the square of anything.  Note that the squared norm is independent of the choices of evaluation and coevaluation maps and of the choice of isomorphism $a$: each such choice is fixed up to a scalar, and each such scalar appears twice in the squared norm expression with opposite multiplicative signs.  The squared norm is invariant under isomorphism and the squared norm of an object is isomorphic to the squared norm of its dual: $\lVert x \rVert = \lVert {}^* x \rVert$.
\begin{definition}
The \emph{global dimension} of a fusion category $\cC$ is
\[
\dim(\cC) := \sum_{x} \lVert x \rVert,
\]
where the sum ranges over a choice of representatives of the isomorphism classes of simple objects of $\cC$.
\end{definition}

We can characterize the squared norm of a simple object $x$ more abstractly in terms of the properties of evaluation and coevaluation maps for dualities among simple objects, as follows.  Let $x$ and $y$ be simple objects of $\cC$ such that $\dim \Hom(x \otimes y,1)=1$.  For any nonzero map $v: x \otimes y \ra 1$, there is a unique map $\gamma(v): 1 \ra y \otimes x$ such that $(v,\gamma(v))$ is a coevaluation and evaluation pair witnessing $y$ as a left dual of $x$.  Similarly, for any nonzero map $u: 1 \ra y \otimes x$, there is a unique map $\gamma(u): x \otimes y \ra 1$ such that $(\gamma(u),u)$ is a coevaluation and evaluation pair witnessing $y$ as a left dual of $x$.  In other words, we denote by $\gamma$ both the association to a coevaluation of the corresponding evaluation, and the association to an evaluation of the corresponding coevaluation.  Note that this association $\gamma$ is homogeneous of degree $-1$, that is $\gamma(\lambda f) = \lambda^{-1} \gamma(f)$ for $\lambda \in k^\times$.  

This pairing $\gamma$ between evaluations and coevaluations commutes with taking the dual of a map: for any nonzero map $f: x \otimes y \ra 1$, we have
\[
{}^*(\gamma(f)) = \gamma({}^* f).
\]
However the pairing $\gamma$ does not intertwine the left and right inverse operations.  Given a map $f$ with a right inverse, we will denote the right inverse by $f^{-}$, and given a map $g$ with a left inverse, we will denote the left inverse by ${}^- \!g$.  Indeed, the failure of commutation between the evaluation--coevaluation pairing and the left/right inverse is exactly the squared norm:
\[
\gamma(f^{-}) = \lVert x \rVert \;{}^-\!(\gamma(f)).
\]
To see this is suffices to check that $\gamma(f^{-}) \circ \gamma(f) = \lVert x \rVert$.  For this, note that $\gamma(f^{-}) \circ \gamma(f) = (f \circ f^{-}) \cdot (\gamma(f^{-}) \circ \gamma(f))$ and any expression at all of the form $(f \circ g) \cdot (\gamma(g) \circ \gamma(f))$, for $f: x \otimes y \ra 1$ and $g: 1 \ra x \otimes y$ nonzero, gives the squared norm.

\subsection{The algebra of enriched endomorphisms of the unit} \label{sec:enrichedendo}

Recall that we aim to characterize the separability of a fusion category in terms of its global dimension.  A fusion category $\cC$ is separable precisely when it is equivalent as a left $\cC \boxtimes \cC^\mp$-module category to the category of modules $\Mod{}{A}(\cC \boxtimes \cC^\mp)$ for a separable algebra object $A \in \cC \boxtimes \cC^\mp$.  In this subsection, following \cite{MR2097289,MR1966524}, we describe explicitly an algebra object $A \in \cC \boxtimes \cC^\mp$ realizing any fusion category $\cC$ as a category of modules.  The construction is based on Ostrik's notion of enriched $\Hom$ for module categories over finite tensor categories: 
\begin{definition}
Let $\cC$ be a finite tensor category and let $\cM$ be a finite left $\cC$-module category.  For objects $m \in \cM$ and $n \in \cM$, the \emph{enriched Hom object} $\IHom_\cC(m,n) \in \cC$ is the object representing the functor $\Hom_\cM(- \otimes m,n): \cC \ra \Vect$.
\end{definition}
\nid See \cite{MR1976459, EO-ftc, egno-book, BTP} for further discussion about enriched endomorphisms for module categories, and in particular for a proof that the enrichment indeed exists.

We specialize to the situation of interest, where the tensor category is the product $\cC \boxtimes \cC^\mp$ and the left module category is the finite tensor category $\cC$ itself.  We will denote by $A := \IHom_{\cC \boxtimes \cC^\mp}(1_\cC,1_\cC) \in \cC \boxtimes \cC^\mp$ the algebra of enriched endomorphism of the unit $1_\cC$ of $\cC$.  Note that by the definition of the enriched Hom object, we have, for any $x \in \cC$ and $y \in \cC^\mp$, natural isomorphisms
\[
\Hom_{\cC \boxtimes \cC^\mp}(x \boxtimes y, \IHom_{\cC \boxtimes \cC^\mp}(a,b)) \cong \Hom_\cC(x \otimes a \otimes y,b).
\]
The object $A \in \cC \boxtimes \cC^\mp$ has a canonical algebra structure and there is an equivalence of left $\cC \boxtimes \cC^\mp$-module categories
\[
\IHom_{\cC \boxtimes \cC^\mp}(1_\cC,-) : \cC \ra \Mod{}{A}(\cC \boxtimes \cC^\mp);
\]
see~\cite[\S 2]{MR2097289} for a more detailed treatment of the algebra $A$ and its properties.

When the category $\cC$ is fusion, we can describe the algebra object $A \in \cC \boxtimes \cC^\mp$ more explicitly in terms of simple objects of $\cC$.  Let $\{L_i\}$ denoted a fixed collection of representatives of the isomorphism classes of simple objects of $\cC$, with the representative $L_1$ of the isomorphism class of the unit chosen to actually be the unit object $1_\cC$.  Also fix choices of dual objects $\{{}^* L_i\}$ together with evaluation and coevaluation maps witnessing each ${}^* L_i$ as a left dual of $L_i$.  Observe that, for any objects $x \in \cC$ and $y \in \cC^\mp$, we have isomorphisms
\[
\begin{split}
\Hom_{\cC \boxtimes \cC^\mp}( x \boxtimes y , \oplus_i L_i \boxtimes {}^* L_i)
\cong
\oplus_i \Hom_{\cC}(x, L_i) \otimes \Hom_{\cC}(y, {}^*L_i) \hspace*{.6in}\\\hspace*{.6in}
\cong
\oplus_i \Hom_{\cC}(x, L_i) \otimes \Hom_{\cC}(L_i, y^*)
\cong
\Hom_{\cC}(x, y^*)
\cong
\Hom_{\cC}(x \otimes y, 1).
\end{split}
\]
That is, the sum $\oplus_i \, L_i \boxtimes {}^* L_i$ satisfies the defining representing property of $A := \IHom_{\cC \boxtimes \cC^\mp}(1_\cC,1_\cC)$ and so
\[
A \cong \oplus_i \, L_i \boxtimes {}^* L_i;
\]
this isomorphism is canonically determined by the previous choice of witnesses for the dual objects $\{{}^* L_i\}$.

We can describe the algebra structure explicitly in terms of this decomposition.  The unit map $u: 1 \boxtimes 1 \ra A$ is simply the inclusion of the factor $1 \boxtimes 1 = 1 \boxtimes {}^*1$.  Fix a basis $\{e^r_{ij,k}\}_{r \in I(i,j,k)}$ for each of the vector spaces $\Hom_\cC(L_i \otimes L_j,L_k)$; here $I(i,j,k)$ is a finite index set.  Let $\{\hat{e}^r_{ij,k}\}$ denote the basis for $\Hom_\cC(L_k,L_i \otimes L_j)$ defined by $e^r_{ij,k} \circ \hat{e}^s_{ij,k} = \delta_{rs} \, \id_{L_k}$.  There is an associated collection of left dual maps $\{{}^*\hat{e}^r_{ij,k}\}$ forming a basis for $\Hom_\cC({}^* L_j \otimes {}^* L_i, {}^* L_k)$.  The multiplication map
\[
m: A \otimes A \cong \oplus_{i,j} \, (L_i \otimes L_j) \boxtimes ({}^* L_j \otimes {}^* L_i) \ra \oplus_k \, L_k \boxtimes {}^* L_k \cong A
\]
is given by the sum $\sum_r (e^r_{ij,k}) \boxtimes ({}^*\hat{e}^r_{ij,k})$; note that here the tensor product ${}^* L_j \otimes {}^* L_i$ has been taken in $\cC$, not in $\cC^\mp$.

\subsection{Fusion categories are modules over a Frobenius algebra}

In the last subsection, we saw that for any finite tensor category $\cC$, the module ${}_{\cC \boxtimes \cC^\mp} \cC$ can be presented as the category of modules $\Mod{}{A}(\cC \boxtimes \cC^\mp)$, where the algebra object $A$ is the enriched endomorphism object $\IHom_{\cC \boxtimes \cC^\mp}(1_\cC,1_\cC)$.  When $\cC$ is fusion, the algebra object $A$ admits canonically the structure of a Frobenius algebra object---M\"uger proved this for spherical fusion categories of nonzero global dimension~\cite{MR1966525}.  Recall that an object $R$ of a monoidal category is a Frobenius algebra object given maps
\begin{equation*}
	\begin{array}{l c  r c c c}
		\textrm{(unit)} & \quad & u: & 1 & \to & R \\
		\textrm{(multiplication)} && m: & R \otimes R & \to & R \\
		\textrm{(counit)} && \lambda: & R &\to &1 \\
		\textrm{(comultiplication)} && \Delta: & R &\to& R \otimes R,
	\end{array}
\end{equation*}
such that the pair $(u,m)$ gives $R$ the structure of an algebra object, the pair $(\lambda, \Delta)$ gives $R$ the structure of a coalgebra object, and the comultiplication $\Delta$ is an $R$--$R$-bimodule map.  From these structure maps we can form the composites
\begin{equation*}
	\begin{array}{l c  r c c c}
		\textrm{(pairing)} && b := \lambda \circ m: & R \otimes R& \to& 1 \\
		\textrm{(copairing)} && c := \Delta \circ u: & 1&\to&  R \otimes R,
	\end{array}
\end{equation*}
which satisfy the usual zig-zag equations.  Note that the comultiplication $\Delta$ is uniquely determined by the algebra structure $(u,m)$ and the counit $\lambda$.

For the algebra object $A \cong \oplus_i \, L_i \boxtimes {}^* L_i \in \cC \boxtimes \cC^\mp$, there is an obvious candidate for a counit map $\lambda$, namely the projection onto the factor $1 \boxtimes 1$.  Indeed, up to a scalar, this is the only possibility for the counit, and we can characterize this choice of counit as the one satisfying the condition $\lambda \circ u = \id_{1 \boxtimes 1}$; we refer to such a counit as \emph{normalized}.  This counit does indeed provide a Frobenius algebra structure on $A$:
\begin{proposition}
Let $\cC$ be a fusion category and let $A = \IHom_{\cC \boxtimes \cC^\mp}(1_\cC,1_\cC) \in \cC \boxtimes \cC^\mp$ be the algebra object of enriched endomorphisms of the unit.  The normalized counit map $\lambda: A \ra 1 \boxtimes 1$, with $\lambda \circ u = \id_{1 \boxtimes 1}$, determines the structure of a Frobenius algebra on $A$.
\end{proposition}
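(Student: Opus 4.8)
The plan is to reduce the claim to the nondegeneracy of a single pairing and then verify that nondegeneracy by a direct computation in the decomposition $A \cong \oplus_i L_i \boxtimes {}^*L_i$.

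First I would record the standard fact about Frobenius algebra objects in a monoidal category: an algebra object $(A,u,m)$ together with a map $\lambda : A \ra 1$ underlies a Frobenius algebra with counit $\lambda$ if and only if the pairing $b := \lambda \circ m : A \otimes A \ra 1$ is nondegenerate, meaning there is a copairing $c : 1 \ra A \otimes A$ with
\[
(\id_A \otimes b) \circ (c \otimes \id_A) = \id_A = (b \otimes \id_A) \circ (\id_A \otimes c).
\]
Given such a $c$, one sets $\Delta := (m \otimes \id_A) \circ (\id_A \otimes c) = (\id_A \otimes m) \circ (c \otimes \id_A)$; associativity of $m$ together with the two zigzag identities then forces $(\lambda, \Delta)$ to be a counital coassociative coalgebra with $\Delta$ an $A$--$A$-bimodule map, and $b \circ (\id_A \otimes u) = \lambda$ recovers the prescribed counit. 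So the entire content of the proposition is the nondegeneracy of $b = \lambda \circ m$, here a pairing in $\cC \boxtimes \cC^\mp$ valued in $1 \boxtimes 1$.

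Next I would unwind $b$. Since $\lambda$ is the projection onto the summand $L_1 \boxtimes {}^*L_1 = 1 \boxtimes 1$ and the multiplication is $\sum_r e^r_{ij,k} \boxtimes {}^*\hat e^r_{ij,k}$, the pairing $b$ is supported precisely on the summands $(L_i \otimes L_j) \boxtimes ({}^*L_j \otimes {}^*L_i)$ of $A \otimes A$ with $\Hom_\cC(L_i \otimes L_j, 1) \neq 0$, i.e. with $L_j \cong {}^*L_i$. Fixing for each $i$ the index $i^\dagger$ with $L_{i^\dagger} \cong {}^*L_i$ together with an explicit such isomorphism, $b$ restricted to the pairing of $L_i \boxtimes {}^*L_i$ against $L_{i^\dagger} \boxtimes {}^*L_{i^\dagger}$ is a nonzero scalar multiple of a tensor product of evaluation maps for $L_i$ and for ${}^*L_i$, while the pairing of $L_i \boxtimes {}^*L_i$ against $L_j \boxtimes {}^*L_j$ vanishes for $j \neq i^\dagger$. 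In block form the matrix of $b$ is therefore a permutation-type matrix whose $(i, i^\dagger)$ entry is nondegenerate (being built from duality data), so $b$ itself is nondegenerate. Dually, the copairing $c$ is obtained by assembling over $i$ the matching coevaluation maps, rescaled to be compatible with the normalizations $e^r_{ij,k} \circ \hat e^s_{ij,k} = \delta_{rs}\,\id_{L_k}$. This is where the fusion hypothesis is used: semisimplicity supplies the decomposition into simples, and simplicity of the unit gives $\End(1) \cong k$, so that $\lambda$ is literally a projection and the normalization $\lambda \circ u = \id_{1 \boxtimes 1}$ is meaningful.

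The main obstacle is the scalar bookkeeping in that last step: tracking the dual and double-dual identifications $L_{i^\dagger} \cong {}^*L_i$, the bases $\{e^r_{ij,k}\}$, $\{\hat e^r_{ij,k}\}$, $\{{}^*\hat e^r_{ij,k}\}$, and — most delicately — the squared-norm discrepancies of Section~\ref{sec:gdim} (the failure of the evaluation--coevaluation pairing $\gamma$ to intertwine the left and right inverse), so that the zigzag identities for $b$ and $c$ hold exactly. I expect the squared-norm factor $\lVert L_i\rVert$ to appear once along the $b$-strand and once along the $c$-strand with opposite multiplicative sign and cancel, which is exactly why no nonvanishing-global-dimension hypothesis is needed (in contrast with M\"uger's special Frobenius structure in the spherical case). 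To minimize the bookkeeping I would first establish nondegeneracy of $b$ block by block — which already proves existence of a Frobenius structure by the reduction above — and only afterwards, if an explicit $\Delta$ is desired, pin down the precise copairing and its normalization.
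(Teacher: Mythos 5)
Your proposal is correct and follows essentially the same route as the paper: reduce the Frobenius structure to exhibiting a copairing for $b = \lambda \circ m$, compute $b$ blockwise on $A \cong \oplus_i \, L_i \boxtimes {}^*L_i$, and use that a nonzero pairing between simple objects is automatically nondegenerate. The only divergence is that the paper avoids the scalar bookkeeping you flag by taking the copairing blockwise to be the $\gamma$-partner of the evaluation data appearing in $b$, so the zigzag identities hold by the very definition of $\gamma$ and no squared-norm cancellation is needed --- the factors $\lVert L_i \rVert$ enter only afterwards, in the computation of the window element.
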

\begin{proof}
It suffices to define a copairing $c : 1 \boxtimes 1 \ra A \otimes A$ satisfying the zig-zag relations with the pairing map $b = \lambda \circ m : A \otimes A \ra 1 \boxtimes 1$; there is at most one such copairing.  Recall that $\{e^r_{ij,k}\}$ is a chosen basis for $\Hom_\cC(L_i \otimes L_j,L_k)$ and $\{\hat{e}^r_{ij,k}\}$ is the `dual basis' for $\Hom_\cC(L_k,L_i \otimes L_j)$.  For each index $i$, we denote by $\overline{i}$ the unique index such that the morphism space $\Hom_\cC(L_i \otimes L_{\overline{i}},1)$ is nonzero.  We will use the abbreviation $e_i := e^1_{i\overline{i},1} : L_i \otimes L_{\overline{i}} \ra 1$.  Note that the right inverse of $e_i$ is $e_i^{-} = \hat{e}^1_{i\overline{i},1}$.

As mentioned, the normalized counit map $\lambda: A \ra 1 \boxtimes 1$ is the projection $\oplus_i \, L_i \boxtimes {}^* L_i \ra 1 \boxtimes 1$.  Thus the pairing $\lambda \circ m: A \otimes A \ra 1 \boxtimes 1$ is the map $\oplus_{i,j} \, (L_i \otimes L_j) \boxtimes ({}^* L_j \otimes {}^* L_i) \ra 1 \boxtimes 1$ that first projects onto the summands with $\{i,j\}$ of the form $\{i,\overline{i}\}$ and then applies the map $e_i \boxtimes {}^* (e_i^{-})$ on the $\{i,\overline{i}\}$ factor.  Define the copairing $c : 1 \boxtimes 1 \ra A \otimes A$ to be the composite
\[
1 \boxtimes 1 \ra \oplus_i \, (L_{\overline{i}} \otimes L_i) \boxtimes ({}^* L_i \otimes {}^* L_{\overline{i}}) \hookrightarrow \oplus_{i,j}  \, (L_i \otimes L_j) \boxtimes ({}^* L_j \otimes {}^* L_i) \cong A \otimes A
\]
where the first map is given by $\gamma(e_{\overline{i}}) \boxtimes \gamma({}^*(e_{\overline{i}}^{-}))$ on the $i$-th component.  By the definition of the association $\gamma$, this copairing satisfies the necessarily zig-zag relations with the pairing.
\end{proof}
\nid When referring to a Frobenius algebra structure on the enriched endomorphism object $A$, we will always mean the normalized Frobenius structure given by this proposition.

\subsection{The window element of the representing Frobenius algebra}

In any Frobenius algebra object $R$, there is a distinguished map $w: 1 \ra R$, called the \emph{window element}, with the property that the multiplication of the comultiplication $m \circ \Delta: R \ra R$ is given by (left or right) multiplication by $w$; the window element is always given by the composite $w = m \circ \Delta \circ u: 1 \ra R$.  For the algebra object $A \cong \oplus_i L_i \boxtimes {}^* L_i$ of a fusion category, we have that $\Hom_{\cC \boxtimes \cC^\mp}(1 \boxtimes 1, A)$ is 1-dimensional, and so the window element is a scalar multiple of the unit map; by slight abuse of notation we will also denote this scalar by $w \in k$.  Because of the normalization condition $\lambda \circ u = \id_{1 \boxtimes 1}$ on our Frobenius algebra structure on $A$, note that the scalar window element is given by the composite $\lambda \circ m \circ \Delta \circ u \in \End(1 \boxtimes 1) = k$.
\begin{proposition}
Let $A \in \cC \boxtimes \cC^\mp$ be the normalized Frobenius algebra object of enriched endomorphisms of the unit of a fusion category $\cC$.  The window element of $A$ is the global dimension of $\cC$.
\end{proposition}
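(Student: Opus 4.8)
The plan is to compute the composite $\lambda \circ m \circ \Delta \circ u \in \End(1 \boxtimes 1) = k$ directly using the explicit bases and the explicit formulas for $u$, $m$, $\lambda$, and the copairing $c = \Delta \circ u$ established in the preceding propositions, and to recognize the answer as $\sum_x \lVert x \rVert = \dim(\cC)$. First I would recall that, by the definition of the window element and the normalization $\lambda \circ u = \id_{1 \boxtimes 1}$, the scalar $w$ equals $\lambda \circ m \circ c$, where $c: 1 \boxtimes 1 \ra A \otimes A$ is the copairing constructed in the previous proposition. Since $c$ is given on the $i$-th component by $\gamma(e_{\overline{i}}) \boxtimes \gamma({}^*(e_{\overline{i}}^{-}))$, landing in the summand $(L_{\overline{i}} \otimes L_i) \boxtimes ({}^* L_i \otimes {}^* L_{\overline{i}})$, and $\lambda \circ m$ is the map that projects $\oplus_{i,j} (L_i \otimes L_j) \boxtimes ({}^* L_j \otimes {}^* L_i) \to 1 \boxtimes 1$ onto the summands indexed by pairs $\{i, \overline{i}\}$ and then applies $e_i \boxtimes {}^*(e_i^{-})$, I would match up indices: the $c$-summand indexed by $i$ is $(L_{\overline{i}} \otimes L_i) \boxtimes ({}^* L_i \otimes {}^* L_{\overline{i}})$, which is the $(L_a \otimes L_b) \boxtimes ({}^* L_b \otimes {}^* L_a)$ summand with $a = \overline{i}$, $b = i$, hence is hit by $\lambda \circ m$ via $e_{\overline{i}} \boxtimes {}^*(e_{\overline{i}}^{-})$.

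Carrying out the composition componentwise, the contribution of the $i$-th summand to $w$ is therefore
\[
\bigl(e_{\overline{i}} \circ \gamma(e_{\overline{i}})\bigr) \cdot \bigl({}^*(e_{\overline{i}}^{-}) \circ \gamma({}^*(e_{\overline{i}}^{-}))\bigr) \in \End_\cC(1) \otimes \End_{\cC^\mp}(1) \cong k,
\]
where I am using that the tensor product ${}^* L_i \otimes {}^* L_{\overline{i}}$ in the $\cC^\mp$-slot composes in $\cC^\mp$, so this really reads $\gamma({}^*(e_{\overline{i}}^{-})) \circ {}^*(e_{\overline{i}}^{-})$ when transported to $\cC$. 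Now I invoke the key identities from Section~\ref{sec:gdim}: first, ${}^*(\gamma(f)) = \gamma({}^*f)$, which lets me rewrite the second factor as $\gamma({}^*(e_{\overline{i}}^{-})) \circ {}^*(e_{\overline{i}}^{-}) = {}^*\bigl(e_{\overline{i}}^{-} \circ \gamma(e_{\overline{i}}^{-})\bigr)$ — wait, more carefully: I would use the identity $\gamma(f^{-}) = \lVert x \rVert \, {}^-\!(\gamma(f))$ together with the fact that for nonzero $f: x \otimes y \ra 1$ and $g: 1 \ra x \otimes y$ any expression of the form $(f \circ g) \cdot (\gamma(g) \circ \gamma(f))$ equals $\lVert x \rVert$. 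Applying this with $x = L_{\overline{i}}$, the factor $e_{\overline{i}} \circ \gamma(e_{\overline{i}})$ from the $\cC$-slot and the dual factor from the $\cC^\mp$-slot combine — via the compatibility of $\gamma$ with duals and the degree $-1$ homogeneity — to $\lVert L_{\overline{i}} \rVert$. Thus $w = \sum_i \lVert L_{\overline{i}} \rVert = \sum_i \lVert L_i \rVert = \dim(\cC)$, using the reindexing $i \mapsto \overline{i}$ (a bijection on isomorphism classes) and the invariance $\lVert x \rVert = \lVert {}^* x \rVert$.

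The main obstacle I anticipate is bookkeeping the $\cC^\mp$ versus $\cC$ tensor orders and making sure the various $\gamma$-pairings, left/right inverses, and dual maps are threaded together with the correct scalars — in particular, confirming that the scalars from the choices of $e_{\overline{i}}$, $\hat e^1_{\overline{i} i, 1}$, and the witnesses for the duals ${}^* L_i$ cancel exactly so that only $\lVert L_i \rVert$ survives. This is precisely the kind of computation where a single misplaced inverse changes $\dim(\cC)$ to $\dim(\cC)^{-1}$ times a spurious factor, so I would double-check it against the spherical fusion category case, where M\"uger's computation~\cite{MR1966525} already gives that the window element is the global dimension; our normalized Frobenius structure restricts to his in that case, which serves as a consistency check. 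Once the componentwise contribution is pinned down as $\lVert L_i \rVert$, the rest is the immediate sum, so I expect the proof to be short modulo this careful diagram chase.
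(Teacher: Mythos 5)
Your overall strategy is the same as the paper's—compute $w=\lambda\circ m\circ\Delta\circ u=b\circ c$ componentwise from the explicit pairing and copairing, identify each component as a squared norm via the $\gamma$-identities of Section~\ref{sec:gdim}, and sum—but the componentwise bookkeeping as you set it up does not go through, and the step you defer is exactly the crux. Under the convention $\gamma(v)\colon 1\to y\otimes x$ for $v\colon x\otimes y\to 1$, the $i$-th component of the copairing, $\gamma(e_{\overline{i}})\boxtimes\gamma({}^*(e_{\overline{i}}^{-}))$, lands in the summand $(L_i\otimes L_{\overline{i}})\boxtimes({}^*L_{\overline{i}}\otimes{}^*L_i)$ (the display in the Frobenius proposition indexes this summand the other way, which is what seems to have misled you), so it is paired by $e_i\boxtimes{}^*(e_i^{-})$, not by $e_{\overline{i}}\boxtimes{}^*(e_{\overline{i}}^{-})$: your composites $e_{\overline{i}}\circ\gamma(e_{\overline{i}})$ and ${}^*(e_{\overline{i}}^{-})\circ\gamma({}^*(e_{\overline{i}}^{-}))$ are not even composable, since $\gamma(e_{\overline{i}})$ has target $L_i\otimes L_{\overline{i}}$ while $e_{\overline{i}}$ has source $L_{\overline{i}}\otimes L_i$; note also that passing to the $\cC^\mp$ slot does not reverse composition, only the tensor order. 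The correct component is $(e_i\circ\gamma(e_{\overline{i}}))\cdot({}^*(e_i^{-})\circ\gamma({}^*(e_{\overline{i}}^{-})))$, in which neither factor is canonical—each is an arbitrary nonzero scalar depending on the independent normalizations of $e_i$ and $e_{\overline{i}}$—and the entire content of the proof is that their product equals $\lVert L_i\rVert$.

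This is also where your appeal to the identity $(f\circ g)\cdot(\gamma(g)\circ\gamma(f))=\lVert x\rVert$ misfires: taking $f=e_{\overline{i}}$ and $g=\gamma(e_{\overline{i}})$ gives $\gamma(g)=e_{\overline{i}}$ and $\gamma(f)=\gamma(e_{\overline{i}})$, so (even granting the typing) that identity controls the \emph{square} of your first factor, not the product of your two different slot factors; the two slot factors are not equal in general, since their ratio involves the arbitrary scalar relating $\gamma(e_{\overline{i}})$ to $e_i^{-}$. To make the identity usable you must first convert the $\cC^\mp$-slot factor, i.e.\ run the paper's manipulation: rewrite $\gamma({}^*(e_{\overline{i}}^{-}))={}^*(\gamma(e_{\overline{i}}^{-}))=\lVert L_i\rVert\,{}^*({}^-(\gamma(e_{\overline{i}})))$ using ${}^*\gamma=\gamma\,{}^*$ and $\gamma(f^{-})=\lVert x\rVert\,{}^-(\gamma(f))$, and then observe that the resulting expression is homogeneous of degree zero in $\gamma(e_{\overline{i}})$, so that $\gamma(e_{\overline{i}})$ may be replaced by $e_i^{-}$, leaving exactly $\lVert L_i\rVert$. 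That homogeneity/replacement argument is precisely the ``careful diagram chase'' you postpone, so as written your proof has a gap at the one step the paper's proof is actually devoted to; once it is supplied, the conclusion $w=\sum_i\lVert L_i\rVert=\dim\cC$ follows as you say.
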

\begin{proof}
In the proof of the previous proposition, we saw that the copairing was given by the inclusion of $\oplus_i \, \gamma(e_{\overline{i}}) \boxtimes \gamma({}^*(e_{\overline{i}}^{-}))$ into $\oplus_{i,j} \, (L_i \otimes L_j) \boxtimes ({}^* L_j \otimes {}^* L_i)$, and the pairing was giving by projection onto the $\{i,\overline{i}\}$ factors followed by the map $\oplus_i \, e_i \boxtimes {}^*(e_i^{-})$.  The window element is therefore
\begin{align*}
w &= \sum_i (e_i \circ \gamma(e_{\overline{i}})) \cdot ({}^*(e_i^{-}) \circ \gamma({}^*(e_{\overline{i}}^{-}))) \\
&= \sum_i \lVert L_i \rVert (e_i \circ \gamma(e_{\overline{i}})) \cdot ({}^*(e_i^{-}) \circ {}^* ({}^-\!(\gamma(e_{\overline{i}}))))
= \sum_i \lVert L_i \rVert = \dim \cC.
\end{align*}
Here the second equality follows from the commutation relations mentioned at the end of Section~\ref{sec:gdim}.  For the third equality, note that $(e_i \circ \gamma(e_{\overline{i}})) \cdot ({}^*(e_i^{-}) \circ {}^* ({}^-\!(\gamma(e_{\overline{i}}))))$ is homogeneous of degree zero in the term $\gamma(e_{\overline{i}}): 1 \ra L_i \otimes L_{\overline{i}}$, which term may therefore be replaced by the term $e_i^{-}: 1 \ra L_i \otimes L_{\overline{i}}$.
\end{proof}

We are now in a position to characterize the separability of a fusion category in terms of its global dimension.
\begin{theorem} \label{thm:NonzeroDimension}
A fusion category is separable if and only if its global dimension is nonzero.
\end{theorem}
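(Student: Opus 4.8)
The plan is to reduce the theorem to a statement about the Frobenius algebra object $A = \IHom_{\cC\boxtimes\cC^\mp}(1_\cC,1_\cC)\cong\bigoplus_i L_i\boxtimes{}^*L_i$ in $\cC\boxtimes\cC^\mp$, equipped with its normalized Frobenius structure (the counit $\lambda$ with $\lambda\circ u=\id_{1\boxtimes 1}$), whose window element was just identified with $\dim\cC$. Since $\cC\simeq\Mod{}{A}(\cC\boxtimes\cC^\mp)$ as a $\cC\boxtimes\cC^\mp$-module category, and separability is a Morita-invariant notion (equivalently, by Corollary~\ref{cor:Sep=semisimplecenter} together with $\cZ(\cC)\simeq\Mod{A}{A}(\cC\boxtimes\cC^\mp)$ and the semisimplicity of $\cC\boxtimes\cC^\mp$), the tensor category $\cC$ is separable if and only if this particular $A$ is separable as an algebra object, i.e.\ if and only if the multiplication $m\colon A\otimes A\to A$ splits as a map of $A$--$A$-bimodule objects. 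So it suffices to show that $m$ admits a bimodule splitting if and only if $\dim\cC\neq 0$. The whole argument turns on the fact that, because the unit of $\cC$ is simple, $A$ is \emph{haploid}, that is $\dim_k\Hom_{\cC\boxtimes\cC^\mp}(1\boxtimes 1,A)=1$; in particular the window element is a scalar $w\in k$ with $m\circ\Delta=w\cdot\id_A$, and by the preceding proposition $w=\dim\cC$.

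If $\dim\cC\neq 0$, then $s:=w^{-1}\cdot\Delta\colon A\to A\otimes A$ is a well-defined morphism of $\cC\boxtimes\cC^\mp$; it is an $A$--$A$-bimodule map, being a scalar multiple of the bimodule map $\Delta$, and $m\circ s=w^{-1}(m\circ\Delta)=\id_A$. Hence $m$ splits, $A$ is separable, and $\cC$ is separable by Definition~\ref{def:sepcat}. This is just the usual observation that a \emph{special} Frobenius algebra is separable.

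Conversely, suppose $m$ has an $A$--$A$-bimodule splitting $s\colon A\to A\otimes A$. The key point is that the space of $A$--$A$-bimodule morphisms $A\to A\otimes A$ --- with $A$ acting on the two outer factors of $A\otimes A$, the structure for which $\Delta$ is a bimodule map --- is one-dimensional. I would deduce this from haploidness together with the rigidity of $\cC\boxtimes\cC^\mp$: dualizing the Hom-space and using that $A\otimes A$ is the free $A$--$A$-bimodule on the unit object reduces $\Hom_{A\text{-}A}(A,A\otimes A)$ to a Hom-space in $\cC\boxtimes\cC^\mp$ built from the $1\boxtimes 1$-isotypic component of $A$, which is $k$ by the simplicity of the unit; equivalently, via the induction adjunction for $\cZ(\cC)\simeq\Mod{A}{A}(\cC\boxtimes\cC^\mp)$, this space is $\Hom_{\cZ(\cC)}(\mathbf 1,I(1_\cC))\cong\Hom_\cC(1_\cC,1_\cC)=k$, where $I$ denotes induction. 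Since $\Delta$ is a nonzero element of this space, because $(\lambda\otimes\id_A)\circ\Delta=\id_A$ is nonzero, we get $s=\mu\cdot\Delta$ for a scalar $\mu$, necessarily nonzero because $s$ is a splitting. Then $\id_A=m\circ s=\mu\cdot(m\circ\Delta)=\mu w\cdot\id_A$, so $\mu w=1$ and $w=\dim\cC\neq 0$.

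The main obstacle is establishing the one-dimensionality of $\Hom_{A\text{-}A}(A,A\otimes A)$ used in the converse --- equivalently, the statement that a haploid separable Frobenius algebra object is automatically special. This is a routine consequence of the haploidness of $A$ (which rests on the simplicity of the unit of $\cC$) and the rigidity of $\cC\boxtimes\cC^\mp$, but it is the one step that is not immediate; everything else reduces to the already-proven identification of the window element of $A$ with $\dim\cC$.
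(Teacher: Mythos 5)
Your forward direction is exactly the paper's: since the window element of the normalized Frobenius structure on $A$ is $\dim\cC$, the bimodule map $\Delta/\dim\cC$ splits $m$ when $\dim\cC\neq 0$. The converse is where you genuinely diverge. The paper never proves your unconditional claim that $\Hom_{\Mod{A}{A}}(A,A\otimes A)$ is one-dimensional; instead it uses the hypothesis: separability of $\cC$ gives semisimplicity of $\cZ(\cC)\simeq\Mod{A}{A}(\cC\boxtimes\cC^\mp)$ by Corollary~\ref{cor:Sep=semisimplecenter}, the freeness isomorphism $\Hom_{\Mod{A}{A}}(A\otimes A,-)\cong\Hom_{\cC\boxtimes\cC^\mp}(1\boxtimes 1,-)$ gives $\Hom_{\Mod{A}{A}}(A\otimes A,A)\cong k$, and semisimplicity is what lets one flip this to $\Hom_{\Mod{A}{A}}(A,A\otimes A)\cong k$; the endgame (any splitting is a nonzero multiple of $\Delta$, hence $w=\dim\cC\neq0$) is then the same as yours.

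The step you flag is the real issue, and your stated justification does not work as written: haploidness and rigidity alone are not enough. Freeness of $A\otimes A$ computes maps \emph{out of} $A\otimes A$, not maps into it, and in a possibly non-semisimple $\Mod{A}{A}$ you cannot ``dualize'' $\Hom(A\otimes A,A)\cong k$ into $\Hom(A,A\otimes A)\cong k$; likewise your reformulation $\Hom_{\cZ(\cC)}(\mathbf 1,I(1_\cC))\cong\Hom_\cC(1_\cC,1_\cC)$ tacitly uses that $I$ is a \emph{right} adjoint of the forgetful functor, which is not the freeness adjunction. The missing input is precisely the Frobenius structure on $A$: the nondegenerate pairing $\lambda\circ m$ identifies $A$ with its dual as a one-sided module, so induction and coinduction of $A$--$A$-bimodules agree, hence $A\otimes A$ is also the \emph{co}free bimodule on $1\boxtimes 1$ and
$\Hom_{\Mod{A}{A}}(A,A\otimes A)\cong\Hom_{\cC\boxtimes\cC^\mp}(A,1\boxtimes 1)\cong k$,
the last step using semisimplicity of $\cC\boxtimes\cC^\mp$ and multiplicity one of $1\boxtimes 1$ in $A\cong\oplus_i\, L_i\boxtimes {}^*L_i$. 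With that correction your converse goes through and is indeed the standard ``haploid separable Frobenius algebra is special'' argument; it buys an unconditional lemma (every bimodule splitting of $m$ is proportional to $\Delta$, with no separability assumed), whereas the paper's route is cheaper because the separability hypothesis hands it the semisimplicity of the center and it never needs the ambidexterity of induction.
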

\begin{proof}
Suppose the global dimension of the fusion category $\cC$ is nonzero.  The comultiplication (of the Frobenius algebra $A \in \cC \boxtimes \cC^\mp$ of enriched endomorphisms of the unit $1_\cC$) is a bimodule map $\Delta: {}_A A_A \ra {}_A A \otimes A_A$, and the composite $m \circ \Delta$ is multiplication by the scalar $\dim \cC$.  Thus $\Delta / \dim \cC$ is a bimodule splitting of the multiplication map and so the algebra $A$ is separable; hence the fusion category $\cC$ is separable.

Now assume the fusion category $\cC$ is separable.  By Corollary~\ref{cor:Sep=semisimplecenter}, the center $\cZ(\cC) = \Fun_{\cC \boxtimes \cC^\mp}(\cC,\cC)$ is semisimple.  Note that this center is equivalent to the category $\Mod{A}{A}(\cC \boxtimes \cC^\mp)$ of $A$--$A$-bimodules in $\cC \boxtimes \cC^\mp$.  Observe that for any object $M \in \cC \boxtimes \cC^\mp$, there is a canonical isomorphism $\Hom_{\Mod{A}{A}}(A \otimes A,M) \cong \Hom_{\cC \boxtimes \cC^\mp}(1 \boxtimes 1,M)$; that is, the bimodule ${}_A A \otimes A_A$ is the free $A$--$A$-bimodule generated by $1 \boxtimes 1$.  In particular it follows that $\Hom_{\Mod{A}{A}}(A \otimes A, A) \cong k$ and, by the semisimplicity of the category of $A$--$A$-bimodules, therefore $\Hom_{\Mod{A}{A}}(A, A \otimes A) \cong k$.  The comultiplication $\Delta \in \Hom_{\Mod{A}{A}}(A, A \otimes A)$ of the Frobenius algebra $A$ is certainly nonzero.  By the separability of $A$, there exists some element of $\Hom_{\Mod{A}{A}}(A, A \otimes A)$ that splits the multiplication $m: A \otimes A \ra A$.  It follows that up to a nonzero scalar, the comultiplication $\Delta$ splits the multiplication, and therefore the global dimension of $\cC$ is nonzero, as required.
\end{proof}

Etingof, Nikshych, and Ostrik proved that every fusion category over an algebraically closed field of characteristic zero has nonzero global dimension~\cite[Thm. 2.3]{MR2183279}.  Thus over such a field, every fusion category is separable.  In fact the assumption of simplicity of the unit is immaterial: every finite semisimple tensor category over an algebraically closed field of characteristic zero is separable---this can be seen by combining~\cite[Thm 2.18]{MR2183279} with Corollary~\ref{cor:Sep=semisimplecenter}.  Even better, by noting that taking the Drinfeld center commutes with base change, by~\cite[Lemma 5.1]{1002.0168}, and that over a perfect field semisimplicity is preserved by descent, we can drop the assumption of algebraic closure:
\begin{corollary} \label{cor:charzerosep}
Every finite semisimple tensor category over an arbitrary field of characteristic zero is separable.
\end{corollary}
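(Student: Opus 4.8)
The plan is to reduce the statement over an arbitrary field $k$ of characteristic zero to the algebraically closed case, using the already-established results. First I would recall the two facts cited just before the statement: over an algebraically closed field of characteristic zero, every finite semisimple tensor category is separable (this follows from combining \cite[Thm 2.18]{MR2183279} with Corollary~\ref{cor:Sep=semisimplecenter}, as noted in the text), and by Corollary~\ref{cor:Sep=semisimplecenter} separability is equivalent to semisimplicity of the Drinfeld center. So the task is purely to pass from $\cC$ over $k$ to $\cC_{\overline{k}}$ over the algebraic closure $\overline{k}$ and transport the conclusion back.

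The key steps, in order. Step one: given a finite semisimple tensor category $\cC$ over $k$ (a perfect field, since $\mathrm{char}\,k = 0$), observe that by Corollary~\ref{cor:Sep=semisimplecenter} it suffices to prove that $\cZ(\cC)$ is semisimple. Step two: invoke the compatibility of the Drinfeld center with base change, \cite[Lemma 5.1]{1002.0168}, to get an equivalence $\cZ(\cC)_{\overline{k}} \simeq \cZ(\cC_{\overline{k}})$ (or at least a relationship close enough to transfer semisimplicity); here I would also need that $\cC_{\overline{k}}$ is again a finite semisimple tensor category, which holds because base change along $k \to \overline{k}$ preserves rigidity and finiteness, and preserves semisimplicity in characteristic zero. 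Step three: apply the algebraically closed case to conclude $\cZ(\cC_{\overline{k}})$ is semisimple, hence $\cZ(\cC)_{\overline{k}}$ is semisimple. Step four: use that over a perfect field semisimplicity descends — that is, a finite linear category $\cL$ over $k$ with $\cL_{\overline{k}}$ semisimple is itself semisimple (indeed absolutely semisimple) — to conclude $\cZ(\cC)$ is semisimple. Step five: by Corollary~\ref{cor:Sep=semisimplecenter} again, $\cC$ is separable.

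The main obstacle is getting Step two exactly right: making sure the base-change statement for centers is applicable in the generality needed (non-spherical, unit not assumed absolutely simple) and that the equivalence $\cZ(\cC)_{\overline{k}} \simeq \cZ(\cC_{\overline{k}})$ is as linear categories in a way that lets semisimplicity pass both ways. A secondary subtlety is Step four, the descent of semisimplicity: one must know that if $\cL \boxtimes_{\Vect_k} \Vect_{\overline{k}}$ is semisimple then $\cL$ is semisimple, which uses that $\overline{k}/k$ is a separable (here even characteristic zero) extension and the structure theory of finite linear categories as module categories over finite-dimensional algebras (Proposition~\ref{prop:catismod}), together with the fact that an algebra $B$ over $k$ with $B \otimes_k \overline{k}$ semisimple is itself semisimple in characteristic zero. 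Everything else is bookkeeping: assembling the cited lemmas and the equivalence $\cZ(\cC) \simeq \cZ(\cC)$ compatibly with $- \boxtimes_{\Vect_k} \Vect_{\overline{k}}$, and then reading off separability from the semisimple center.
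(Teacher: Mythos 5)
Your proposal is correct and follows essentially the same route as the paper: reduce to the algebraically closed case (ENO's result combined with Corollary~\ref{cor:Sep=semisimplecenter}) by base changing to $\overline{k}$, using that the Drinfeld center commutes with base change (\cite[Lemma 5.1]{1002.0168}) and that semisimplicity descends over a perfect field, then conclude separability from semisimplicity of the center. The subtleties you flag (applicability of the base-change lemma, descent of semisimplicity) are exactly the two ingredients the paper cites.
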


\nid Thus in characteristic zero, Proposition~\ref{prop:SSModuleCatsAreSep} reduces to the following characterization:
\begin{corollary} \label{cor:charzeromodulesep}
Let $\cC$ be a finite semisimple tensor category over an arbitrary field of characteristic zero.  A finite module category ${}_\cC \cM$ is separable if and only if it is semisimple.
\end{corollary}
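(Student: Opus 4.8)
The plan is to deduce this corollary directly from Proposition~\ref{prop:SSModuleCatsAreSep} once we know that a finite semisimple tensor category $\cC$ over a field of characteristic zero is in fact \emph{separable} in the technical sense of Definition~\ref{def:sepcat}, not merely semisimple. The forward direction---separable implies semisimple---is already handled in full generality by Proposition~\ref{prop:sepmodissemi}, so no characteristic hypothesis is needed there; the only content of the corollary is the reverse implication.

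First I would invoke Corollary~\ref{cor:charzerosep}: over an arbitrary field of characteristic zero, every finite semisimple tensor category $\cC$ is separable. This is precisely the hypothesis that unlocks Proposition~\ref{prop:SSModuleCatsAreSep}, whose statement requires $\cC$ to be a separable tensor category over a perfect field. Since all fields of characteristic zero are perfect, both hypotheses of that proposition are met. Applying Proposition~\ref{prop:SSModuleCatsAreSep} to our $\cC$ then yields immediately that a finite module category ${}_\cC \cM$ is separable if and only if it is semisimple. That is exactly the assertion of the corollary, so the argument is complete in two lines: cite Corollary~\ref{cor:charzerosep} to verify separability of $\cC$, then cite Proposition~\ref{prop:SSModuleCatsAreSep}.

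There is essentially no obstacle here beyond the bookkeeping of hypotheses: the corollary is a specialization, not a strengthening. The one point worth a sentence of care is that Proposition~\ref{prop:SSModuleCatsAreSep} is stated for $\cC$ \emph{separable}, whereas the corollary's hypothesis is only that $\cC$ be finite semisimple over a characteristic-zero field; the bridge between the two is exactly Corollary~\ref{cor:charzerosep}, which is why the characteristic-zero assumption is indispensable (in finite characteristic a semisimple tensor category such as $\Vect[\ZZ/p]$ need not be separable, and indeed $\Vect$ over $\Vect[\ZZ/p]$ is semisimple but not separable, so the corollary genuinely fails outside characteristic zero). I would therefore phrase the proof as: ``By Corollary~\ref{cor:charzerosep}, the finite semisimple tensor category $\cC$ over a field of characteristic zero is separable, and characteristic-zero fields are perfect, so Proposition~\ref{prop:SSModuleCatsAreSep} applies and gives the claimed equivalence.''
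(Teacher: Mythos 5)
Your proposal is correct and matches the paper's own reasoning: the corollary is stated as an immediate specialization of Proposition~\ref{prop:SSModuleCatsAreSep}, using Corollary~\ref{cor:charzerosep} to verify that any finite semisimple tensor category in characteristic zero is separable (and that characteristic-zero fields are perfect). Nothing further is needed.
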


\begin{remark} \label{rem:nonsimp}
We expect one can generalize Theorem~\ref{thm:NonzeroDimension} to all perfect fields and all semisimple tensor categories, as follows.  Let $\cC$ be a semisimple tensor category over a perfect field, and let $\overline{\cC}$ denote its base extension to the algebraic closure.  Semisimplicity of the center is invariant under base change, so $\cC$ is separable if and only if $\overline{\cC}$ is separable.  The unit of $\overline{\cC}$ splits as a direct sum $\oplus \, 1_i$ of simple objects, and this provides a decomposition of the linear category $\overline{\cC}$ as a sum $\oplus \,\overline{\cC}_{ij}$, where $\overline{\cC}_{ij} := 1_i \otimes\overline{\cC}\otimes 1_j$, as in~\cite[\S 2.4]{MR2183279}.  When $\overline{\cC}$ is indecomposable, the $\overline{\cC}$--$\overline{\cC}_{ii}$-bimodule $\oplus_j \,\overline{\cC}_{ji}$ provides a Morita equivalence between $\overline{\cC}$ and $\overline{\cC}_{ii}$; more generally, $\overline{\cC}$ is Morita equivalent to a sum $\oplus_I \,\overline{\cC}_{ii}$, where the set $I$ contains one index from each indecomposable component of $\overline{\cC}$.  Thus the center of $\overline{\cC}$ is the sum $\oplus_I \,\cZ(\overline{\cC}_{ii})$.  Altogether, a semisimple tensor category $\cC$ is separable if and only if all the fusion categories $\overline{\cC}_{ii}$ have nonzero global dimension.
\end{remark}

\chapter{Dualizability} \label{sec:dualizability}

Every algebra is 1-dualizable, with dual the opposite algebra, and similarly every tensor category is 1-dualizable, with dual the monoidally-opposite tensor category---we show this in Section~\ref{sec:df-objects} and then describe the resulting twice categorified 1-dimensional field theory associated to a tensor category.  By contrast, 2-dualizability is a substantive condition on a tensor category.  In Section~\ref{sec:df-morphisms} we prove that the functor dual of a finite bimodule category between finite tensor categories provides an adjoint bimodule category, and thereby we see that every finite tensor category is 2-dualizable.  There is therefore a categorified 2-dimensional field theory associated to any finite tensor category.  We show that the value of the Serre bordism under this field theory is the bimodule associated to the double dual automorphism of the tensor category---this calculation provides the fundamental connection between the topology of low-dimensional framed manifolds and the algebra of duality in tensor categories.  

It is certainly not the case that every finite tensor category is 3-dualizable, but nevertheless the 2-dimensional field theory associated to a finite tensor category does always extend to take values on certain 3-manifolds and therefore provides a kind of `non-compact' 3-dimensional field theory.  We can indicate which 3-manifolds are allowed as follows.  Recall that the Radford bordism is a 3-framed 2-dimensional bordism from the inverse Serre bordism to the Serre bordism.  The Radford bordism is an equivalence: there is an `inverse Radford bordism' and 3-framed 3-manifolds---let's call them Radford witnesses---witnessing that the Radford bordism and the inverse Radford bordism are indeed inverse.  The field theory associated to a finite tensor category can take values on 3-manifolds built using the 3-framed 3-handles arising in the Radford witnesses.  We prove all this, formalized in the statement that finite tensor categories are Radford objects of the 3-category of tensor categories, in Section~\ref{sec:radfordftc}.  This provides an immediate, transparent topological proof of the quadruple dual theorem for finite tensor categories: the associated field theory takes the inverse Serre bordism to the left double dual bimodule and takes the Serre bordism to the right double dual bimodule, and the image of the Radford bordism therefore provides an equivalence between those bimodules.

In Section~\ref{sec:separableisfd} we finally investigate the existence of adjoints for functors between bimodule categories.  We prove our main result, that separable tensor categories are fully dualizable.  This provides a 3-framed 3-dimensional topological field theory associated to any separable tensor category, thus in particular to any fusion category of nonzero global dimension over an algebraically closed field.  We also prove a converse result, showing that fully dualizable finite tensor categories must be separable, and in fact we identify the maximal fully dualizable 3-category of finite tensor categories---this supplies a classification of local framed field theories with defects.

We conclude in Section~\ref{sec:spherical} by describing the connection between pivotal and spherical structures and homotopy fixed point structures on finite tensor categories.  We observe that a pivotal structure on a finite tensor category (that is a monoidal trivialization of the double dual functor) provides a trivialization of the Serre automorphism of the tensor category---such a trivialization is exactly the data of an $\Omega S^2$ homotopy fixed point structure on the tensor category.  A pivotal structure trivializes the double dual, so the square of a pivotal structure trivializes the quadruple dual; but we already have a trivialization of the quadruple dual, namely the one provided by the Radford equivalence.  We say that a pivotal finite tensor category is spherical if the square of the pivotal structure is the Radford equivalence.  (This notion agrees with the classical notion of sphericality when the tensor category is semisimple, but provides the correct generalization to the non-semisimple case.)  A spherical tensor category has canonically the structure of an $\Omega \Sigma \RP^2$ homotopy fixed point.  We close with various questions and conjectures concerning fixed points and descent properties of the associated field theories.  For instance, we conjecture that every pivotal fusion category admits not only the structure of an $\Omega S^2$ fixed point, but the structure of an $SO(2)$ fixed point, and therefore provides a combed local field theory.  We also conjecture that every spherical fusion category admits not only the structure of an $\Omega \Sigma \RP^2$ fixed point but in fact the structure of an $SO(3)$ fixed point, and therefore provides an oriented local field theory.

Throughout this section, we assume the base field is perfect, and we work entirely within the symmetric monoidal 3-category $\TC$ of finite tensor categories, finite bimodule categories, right exact bimodule functors, and bimodule transformations.

\section{Duals of tensor categories and invariants of 1-framed bordisms} \label{sec:df-objects}

In this section, we show that $\cC^\mp$ is a dual to $\cC$, for any tensor category $\cC$.  This result follows readily from the basic structure of the relative Deligne tensor product, and does not depend on any substantive results about finite tensor categories---indeed, if one defined a relative Deligne tensor product in the context of bimodules between not-necessarily finite tensor categories, then even a non-finite tensor category $\cC$ would have $\cC^\mp$ as its dual.  We then describe the 0- and 1-manifold invariants given by the 1-dimensional field theory associated to a tensor category.

\subsection{The dual tensor category is the monoidal opposite}

The relevant property of the Deligne tensor product is that the process of flipping an action on a bimodule, for instance from ${}_{\cC \boxtimes \cD} \cM_\cE$ to ${}_\cC \cM_{\cD^\mp \boxtimes \cE}$ or vice versa, can be implemented by tensoring with a bimodule.
\begin{lemma} \label{lemma:flip}
For a bimodule ${}_{\cC \boxtimes \cD} \cM_\cE$, the flipped bimodule ${}_\cC \cM_{\cD^\mp \boxtimes \cE}$ is naturally equivalent, as a $\cC$--$(\cD^\mp \boxtimes \cE)$-bimodule to
\[
(\cC \boxtimes \cD) \boxtimes_{\cC \boxtimes \cD^\mp \boxtimes \cD} (\cD^\mp \boxtimes \cM).
\]
Here, the left factor is $({}_\cC \cC_\cC) \boxtimes ({}_\Vect \cD_{\cD^\mp \boxtimes \cD})$ and the right factor is $({}_{\cD^\mp} {\cD^\mp}_{\cD^\mp}) \boxtimes ({}_{\cC \boxtimes \cD} \cM_\cE)$; notice that the action of $\cC \boxtimes \cD^\mp \boxtimes \cD$ on the right factor uses the canonical symmetric monoidal switch $\cC \boxtimes \cD^\mp \ra \cD^\mp \boxtimes \cC$.

Similarly, for a bimodule ${}_\cC \cM_{\cD \boxtimes \cE}$, the flipped bimodule ${}_{\cC \boxtimes \cD^\mp} \cM_\cE$ is naturally equivalent as a bimodule to
\[
(\cM \boxtimes \cD^\mp) \boxtimes_{\cD \boxtimes \cD^\mp \boxtimes \cE} (\cD \boxtimes \cE).
\]
Here the left factor is $({}_\cC \cM_{\cD \boxtimes \cE}) \boxtimes ({}_{\cD^\mp} {\cD^\mp}_{\cD^\mp})$ and the right factor is $({}_{\cD \boxtimes \cD^\mp} \cD_\Vect) \allowbreak \boxtimes ({}_\cE \cE_\cE)$; there is a symmetric monoidal switch in the action of $\cD \boxtimes \cD^\mp \boxtimes \cE$ on the left factor, between $\cD^\mp$ and $\cE$.
\end{lemma}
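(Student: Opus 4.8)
The plan is to establish the first equivalence directly from the universal property defining the relative Deligne tensor product; the second equivalence is its mirror image, obtained by interchanging the roles of left and right module structures and replacing $\cD$ by $\cD^\mp$ (equivalently, by passing to $\cC^{\op_1}$), and I would dispatch it in one sentence. All of the relative Deligne tensor products appearing in the statement exist and are finite linear categories by Theorem~\ref{thm:DelignePrdtOverATCExists}, so the content is purely the identification of the composite with the flipped bimodule.

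For the first claim I would unwind the right-hand side: by the defining property, a right-exact linear functor from $(\cC\boxtimes\cD)\boxtimes_{\cC\boxtimes\cD^\mp\boxtimes\cD}(\cD^\mp\boxtimes\cM)$ to a linear category $\cL$ is the same data as a $(\cC\boxtimes\cD^\mp\boxtimes\cD)$-balanced bilinear right-exact functor $G\colon(\cC\boxtimes\cD)\times(\cD^\mp\boxtimes\cM)\to\cL$, which by bilinearity descends to a right-exact functor $\bar G\colon\cC\boxtimes\cD\boxtimes\cD^\mp\boxtimes\cM\to\cL$. The three balancing isomorphisms --- for the $\cC$-slot (which acts on the $\cC$-tensor-factor on one side and on $\cM$ on the other), the $\cD$-slot (which acts on the $\cD$-tensor-factor on one side and on $\cM$ on the other), and the $\cD^\mp$-slot (which acts on the $\cD$-tensor-factor on one side and on the $\cD^\mp$-tensor-factor on the other) --- permit one to successively absorb the $\cC$-factor into $\cM$ via its left $\cC$-action, the $\cD^\mp$-factor into the $\cD$-factor of the left term, and then the $\cD$-factor into $\cM$ via its left $\cD$-action; this is the same computation that shows the identity bimodule is a two-sided unit for $\boxtimes_\cC$. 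What remains is an equivalence, natural in $\cL$, between such balanced functors and plain right-exact linear functors $\cM\to\cL$, whence the right-hand side is equivalent to $\cM$ as a linear category. Tracing the residual actions through this equivalence then supplies the bimodule structure: the residual left $\cC$-action is the left $\cC$-action on $\cM$, the residual right $\cE$-action is the right $\cE$-action on $\cM$, and the residual right $\cD^\mp$-action (coming from the right $\cD^\mp$-action on the identity bimodule ${}_{\cD^\mp}\cD^\mp_{\cD^\mp}$) becomes, after the absorptions, the left $\cD$-action on $\cM$ relabeled as a right $\cD^\mp$-action --- i.e. exactly the flipped bimodule ${}_\cC\cM_{\cD^\mp\boxtimes\cE}$.

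One can also organize this step-by-step rather than as a single universal-property computation, using the unit identities ${}_{\langle\cF\rangle}\cC\boxtimes_\cC\cM\simeq{}_{\langle\cF\rangle}\cM$ and $\cM\boxtimes_\cD\cD_{\langle\cF\rangle}\simeq\cM_{\langle\cF\rangle}$ of Section~\ref{sec:fliptwist} in the case $\cF=\id$ together with the compatibility of the relative Deligne tensor product with the symmetric monoidal structure $\boxtimes$; both of these again reduce to the universal property. Either way, the argument is conceptually trivial --- morally the right-hand side is just the identity $1$-morphism of $\cC$ and the (flipped) identity $1$-morphism of $\cD$ tensored onto $\cM$ --- and the only real work is bookkeeping: one must keep precise track of which of the several $\cC$-, $\cD$-, $\cD^\mp$-, and $\cE$-actions is contracted against which, insert the symmetric braiding $\cC\boxtimes\cD^\mp\to\cD^\mp\boxtimes\cC$ flagged in the statement in order to line up the left $(\cC\boxtimes\cD)$-action on $\cM$ with the appropriate tensor factors before contracting, and verify that the coherence constraints match so that the resulting identification is one of bimodule categories, not merely of underlying linear categories. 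The second displayed equivalence follows by the identical bookkeeping with the two module structures exchanged.
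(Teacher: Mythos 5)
Your argument is correct and is essentially the paper's: both unwind the universal property of the relative Deligne tensor product and reduce the identification to the unit/absorption computation for identity bimodules, after which the residual actions are traced to give the flipped bimodule structure. The only cosmetic difference is that the paper exhibits the explicit balanced functor $(d,e,m)\mapsto e\otimes d\otimes m$ in the simplified case $\cC=\cE=\Vect$ (declaring the general case and the other flip entirely similar), whereas you phrase the same reduction in the general case via functors out of both sides to an arbitrary target $\cL$, natural in $\cL$.
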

\begin{proof}
For simplicity, we restrict attention to the special case of a flip from ${}_\cD \cM_\cE$ to ${}_\Vect \cM_{\cD^\mp \boxtimes \cE}$---that is, we will construct an equivalence of $\Vect$--$(\cD^\mp \boxtimes \cE)$-bimodules between $\cD \boxtimes_{\cD^\mp \boxtimes \cD} (\cD^\mp \boxtimes \cM)$ and $\cM$; the general case, and flipping the other way, are entirely similar.  

A functor $\cD \boxtimes_{\cD^\mp \boxtimes \cD} (\cD^\mp \boxtimes \cM) \ra \cM$ is determined by a $(\cD^\mp \boxtimes \cD)$-balanced bilinear functor $\cD \times (\cD^\mp \boxtimes \cM) \ra \cM$.  By definition, the data of such a balanced functor is a bilinear functor $\cD \times (\cD^\mp \boxtimes \cM) \ra \cM$ together with a coherent natural isomorphism between the two obvious trilinear functors $\cD \times (\cD^\mp \boxtimes \cD) \times (\cD^\mp \boxtimes \cM) \ra \cM$.  That data, however, is determined by a trilinear functor $\cD \times (\cD^\mp \times \cM) \ra \cM$ together with a coherent natural isomorphism of the two obvious pentalinear functors $\cD \times (\cD^\mp \times \cD) \times (\cD^\mp \times \cM) \ra \cM$.  The trilinear functor
\begin{align*}
\cD \times (\cD^\mp \times \cM) & \ra \cM \\
(d,e,m) & \mapsto e \otimes d \otimes m,
\end{align*}
together with the obvious natural isomorphism, defines the desired equivalence.
\end{proof}

We can now verify the zigzag equations of the duality between $\cC$ and $\cC^\mp$.
\begin{proposition} \label{prop:onedual}
The bimodule categories ${}_{\cC \boxtimes \cC^\mp} \cC_\Vect$ and ${}_\Vect \cC_{\cC^\mp \boxtimes \cC}$ form the evaluation and coevaluation of a duality between the tensor categories $\cC$ and $\cC^\mp$.  Thus, the 3-category $\TC$ is 1-dualizable.
\end{proposition}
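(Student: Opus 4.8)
The plan is to verify the two zigzag (triangle) equations for the claimed duality, using Lemma~\ref{lemma:flip} as the central computational tool. Recall that the relevant composite in the first zigzag equation is
\[
\cC \simeq \cC \boxtimes_\Vect \Vect \xra{\id_\cC \boxtimes \coev} \cC \boxtimes (\cC \boxtimes_{\cC^\mp \boxtimes \cC} \cC) \simeq (\cC \boxtimes_{\cC \boxtimes \cC^\mp} \cC) \boxtimes \cC \xra{\ev \boxtimes \id_\cC} \Vect \boxtimes_\Vect \cC \simeq \cC,
\]
where I have written $\ev := {}_{\cC \boxtimes \cC^\mp} \cC_\Vect$ and $\coev := {}_\Vect \cC_{\cC^\mp \boxtimes \cC}$, and I must exhibit an equivalence of $\cC$--$\cC$-bimodules between this composite and the identity bimodule ${}_\cC \cC_\cC$; and symmetrically for $\cC^\mp$ in place of $\cC$.

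First I would set up the bookkeeping: the composite $1$-morphism in $\TC$ is, by definition, an iterated relative Deligne tensor product, and the key point is that each ``flip'' of an action appearing implicitly in the statement of the zigzag (moving a $\cC^\mp$-action from one side to the other so that the tensor products are taken over the right monoidal categories) is precisely the operation that Lemma~\ref{lemma:flip} identifies with tensoring by a standard bimodule. So the strategy is to rewrite the whole composite as a single multi-fold relative Deligne tensor product
\[
\cC \boxtimes_{?} \cC \boxtimes_{?} \cdots \boxtimes_{?} \cC
\]
over an appropriate product of copies of $\cC$ and $\cC^\mp$, and then collapse it. The collapse is carried out by repeatedly using the universal property of $\boxtimes_\cC$: a functor out of $\cM \boxtimes_\cC \cN$ is the same as a $\cC$-balanced bilinear functor out of $\cM \times \cN$, and the canonical equivalence ${}_{\langle \id \rangle}\cC \boxtimes_\cC \cN \simeq \cN$ (a degenerate case of the flip, with the balanced functor $(c, n) \mapsto c \otimes n$) lets me absorb each copy of $\cC$ into the next. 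Carrying this out carefully, with attention to which symmetric monoidal switches are used, should produce the identity bimodule ${}_\cC \cC_\cC$ on the nose up to the unique natural isomorphism. Both zigzag equations are then instances of the same computation: the second is obtained from the first by replacing $\cC$ with $\cC^\mp$, noting $(\cC^\mp)^\mp \simeq \cC$, and using that ${}_\Vect \cC_{\cC^\mp \boxtimes \cC}$ and ${}_{\cC \boxtimes \cC^\mp}\cC_\Vect$ are interchanged by this substitution together with the symmetric monoidal switch on $\cC \boxtimes \cC^\mp$.

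Since the bordism category is symmetric monoidal, a right dual is automatically also a left dual, so verifying this single duality suffices to conclude that every object of $\TC$ has a dual, hence that $\TC$ is $1$-dualizable. The main obstacle I anticipate is purely organizational rather than conceptual: keeping track of the several symmetric monoidal switches between factors of $\cC$ and $\cC^\mp$ (and between these and the $\Vect$ factors) as the relative tensor products are reassociated, so that the final identification is genuinely with the identity bimodule ${}_\cC \cC_\cC$ and not, say, with a twist of it by a nontrivial monoidal autoequivalence. The flip lemma is designed exactly to make these switches explicit and thereby to control this; nonetheless the triangle identities require that the two composites fit together coherently, and it is the coherence of the chosen natural isomorphisms (rather than their bare existence) that demands care. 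I expect no appeal to finiteness or rigidity beyond what is needed for the relative Deligne tensor product to exist (Theorem~\ref{thm:DelignePrdtOverATCExists}), consistent with the remark that even a non-finite tensor category would have $\cC^\mp$ as its dual in a suitable setting.
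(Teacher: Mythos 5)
Your proposal is correct and takes essentially the same route as the paper: the paper also proves the proposition by verifying the zigzag equations via Lemma~\ref{lemma:flip}, the only (cosmetic) difference being direction---the paper applies the flip lemma to the evaluation bimodule ${}_{\cC \boxtimes \cC^\mp}\cC_\Vect$ and then recognizes the resulting expression, after identifying ${}_\Vect {\cC^\mp}_{\cC \boxtimes \cC^\mp}$ as the $\sigma$-twist of the coevaluation, as the zigzag composite, whereas you propose to start from the zigzag composite and collapse it, with the same careful bookkeeping of symmetric monoidal switches. Note only that your worry about coherence of the triangle isomorphisms is unnecessary here, since in this setting the zigzag equations are required to hold merely up to equivalence with no coherence conditions imposed.
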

\begin{proof}
Applying Lemma~\ref{lemma:flip} with the bimodule ${}_{\cC \boxtimes \cD} \cM_\cE$ being ${}_{\cC \boxtimes \cC^\mp} \cC_{\Vect}$ we obtain an equivalence
\[
{}_\cC \cC_\cC
\simeq
[({}_\cC \cC_\cC) \boxtimes ({}_\Vect {\cC^\mp}_{\cC \boxtimes \cC^\mp})] \boxtimes_{\cC \boxtimes \cC \boxtimes \cC^\mp} [({}_\cC \cC_\cC) \boxtimes ({}_{\cC \boxtimes \cC^\mp} \cC_\Vect)].
\]
Note crucially that here the action of $\cC \boxtimes \cC \boxtimes \cC^\mp$ on the term $({}_\cC \cC_\cC) \boxtimes ({}_{\cC \boxtimes \cC^\mp} \cC_\Vect)$ uses a symmetric monoidal switch between the two factors of $\cC$.  

The term $({}_\cC \cC_\cC) \boxtimes ({}_{\cC \boxtimes \cC^\mp} \cC_\Vect)$ has an identity and an evaluation, as desired, but the term $({}_\cC \cC_\cC) \boxtimes ({}_\Vect {\cC^\mp}_{\cC \boxtimes \cC^\mp})$ does not yet explicitly use the coevaluation.  Let $\sigma: \cC^\mp \boxtimes \cC \ra \cC \boxtimes \cC^\mp$ denote the symmetric monoidal switch.  Observe that the identity functor on the underlying category is a bimodule equivalence from  ${}_\Vect {\cC^\mp}_{\cC \boxtimes \cC^\mp}$ to the twisted bimodule $({}_\Vect \cC_{\cC^\mp \boxtimes \cC})_{\langle \sigma \rangle}$.  The product in question is therefore equivalent to
\begin{align*}
&[({}_\cC \cC_\cC) \boxtimes ({}_\Vect {\cC}_{\cC^\mp \boxtimes \cC})_{\langle \sigma \rangle}] \boxtimes_{\cC \boxtimes \cC \boxtimes \cC^\mp} [({}_\cC \cC_\cC) \boxtimes ({}_{\cC \boxtimes \cC^\mp} \cC_\Vect)] \\
& \hspace{2cm} \simeq
[({}_\cC \cC_\cC) \boxtimes ({}_\Vect {\cC}_{\cC^\mp \boxtimes \cC})] \boxtimes_{\cC \boxtimes \cC^\mp \boxtimes \cC} [({}_{\cC \boxtimes \cC^\mp} \cC_\Vect) \boxtimes ({}_\cC \cC_\cC)].
\end{align*}
This last equivalence is the identity on the first two copies of $\cC$ and the symmetric switch on the last two copies of $\cC$, and in the second line both the left and right actions of $\cC \boxtimes \cC^\mp \boxtimes \cC$ are without any switches.  This second line is therefore one of the usual duality zigzags for the given evaluation and coevaluation.  The other zigzag is similar.
\end{proof}

\subsection[The twice categorified 1-dimensional field theory associated to a tensor category]{\for{toc}{The twice categorified 1-dim field theory associated to a tensor cat}\except{toc}{The twice categorified 1-dimensional field theory associated to a tensor category}}

By the cobordism hypothesis, the 1-dualizability of the 3-category $\TC$ implies that there is a twice categorified 1-dimensional field theory associated to every finite tensor category.  Here ``twice categorified" means that the associated invariants of closed 1-manifolds are not number (objects of a 0-category), but linear categories (objects of a 2-category).  More precisely, we have the following.
\begin{corollary}
For each finite tensor category $\cC$, there is a unique (up to equivalence) symmetric monoidal functor
\[
\cF_\cC: \FrBord_1 \ra \TC_{(3,1)}
\]
whose value $\cF_\cC(\pt_+)$ on the standard positively framed point is the tensor category $\cC$.\footnote{A priori, the cobordism hypothesis only guarantees that there is a field theory whose value on the standard positive point is equivalent to the tensor category $\cC$.  However, as explained further in Remark~\ref{rem:hep}, by working with a cofibrant model of the bordism category and a fibrant model of $\TC$, we can choose a field theory whose value on the standard positive point is exactly $\cC$.  We suppress this distinction in the remainder of this book.\label{ftn:exactly}}  Here $\FrBord_1$ is the $(\infty,1)$-category of 1-framed 0- and 1-manifolds, and $\TC_{(3,1)}$ is the maximal sub-$(3,1)$-category of the 3-category $\TC$ of tensor categories.
\end{corollary}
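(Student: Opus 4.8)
The plan is to obtain this corollary as an immediate application of the one-dimensional case of the cobordism hypothesis, fed by the 1-dualizability of $\TC$ proved in Proposition~\ref{prop:onedual}. First I would observe that $\TC_{(3,1)}$, being the maximal sub-$(3,1)$-category of the symmetric monoidal 3-category $\TC$ supplied by Theorem~\ref{thm:tcexists}, is itself a symmetric monoidal $(\infty,1)$-category: its objects are the finite tensor categories, its 1-morphisms the finite bimodule categories, and its mapping spaces the 2-groupoids of invertible bimodule functors and bimodule transformations; the symmetric monoidal structure is inherited from $\TC$, since passing to the maximal sub-$(3,1)$-category is functorial and monoidal. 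By Proposition~\ref{prop:onedual}, every finite tensor category $\cC$ is a 1-dualizable object of this category, with dual $\cC^\mp$ and with evaluation and coevaluation 1-morphisms the bimodule categories ${}_{\cC \boxtimes \cC^\mp}\cC_\Vect$ and ${}_\Vect\cC_{\cC^\mp \boxtimes \cC}$.

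Next I would invoke the cobordism hypothesis in dimension one, which asserts that for any symmetric monoidal $(\infty,1)$-category $\cD$ the evaluation-at-$\pt_+$ map identifies the space of symmetric monoidal functors $\FrBord_1 \to \cD$ with the maximal $\infty$-subgroupoid of $\cD$ on the 1-dualizable objects; equivalently, $\FrBord_1$ is the free symmetric monoidal $(\infty,1)$-category generated by a single dualizable object. (In dimension one this is elementary and does not require the full machinery of \cite{lurie-ch}, but I would cite it from there for definiteness, referring the reader to the Appendix for the precise statement.) Applying this with $\cD = \TC_{(3,1)}$ and using the 1-dualizability of $\cC$ produces a symmetric monoidal functor $\cF_\cC$ with $\cF_\cC(\pt_+) \simeq \cC$; uniqueness up to symmetric monoidal natural equivalence follows because the set of equivalence classes of such functors with $\cF_\cC(\pt_+)$ lying in the equivalence class of $\cC$ is the set of connected components of the space of 1-dualizable objects in the component of $\cC$, which is a single point. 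To upgrade the equivalence $\cF_\cC(\pt_+) \simeq \cC$ to a literal equality, I would run the construction with a cofibrant model of $\FrBord_1$ and a fibrant model of $\TC$, as in Remark~\ref{rem:hep} and footnote~\ref{ftn:exactly}; this lifts the equivalence to an identification without disturbing anything else.

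The main obstacle is not mathematical depth but hygiene: one must confirm that the target $\TC_{(3,1)}$ genuinely satisfies the hypotheses of the one-dimensional cobordism hypothesis, in particular that it is a symmetric monoidal $(\infty,1)$-category (immediate from Theorem~\ref{thm:tcexists}) and that \emph{every} object of $\TC$ -- not merely the chosen $\cC$ -- is 1-dualizable, so that the relevant space of dualizable objects is the whole maximal sub-$\infty$-groupoid of $\TC_{(3,1)}$; this last point is precisely the content of Proposition~\ref{prop:onedual}. With those checks in hand, the corollary is a direct consequence of the cited results, and no further tensor-categorical input is needed.
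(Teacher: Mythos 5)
Your proposal is correct and follows exactly the paper's route: the corollary is obtained by feeding the 1-dualizability of $\TC$ from Proposition~\ref{prop:onedual} into the framed cobordism hypothesis in dimension one, with the strictification of $\cF_\cC(\pt_+)\simeq\cC$ to an equality handled by the cofibrant/fibrant models of Remark~\ref{rem:hep}. Your extra hygiene checks (that $\TC_{(3,1)}$ inherits the symmetric monoidal structure and that all objects, not just $\cC$, are 1-dualizable) are exactly the points the paper leaves implicit.
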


The values of the field theory $\cF_\cC$ on certain elementary bordisms are listed in Table~\ref{table:egdimone}.  There, following the notation established in Section~\ref{sec:notation}, the gray fuzz on the points indicates the normal framing of the immersion in $\RR^1$.  The arrows (or lack thereof) on the intervals indicate which boundary components are outgoing (or, respectively, incoming).  The picture of the circle does not follow our usual notation using normally framed immersions: there, instead, the framing is specified directly by arrows indicating a trivialization of the tangent bundle.  Note that this is the unique 1-framed circle.

\begin{table}[!h]
\begin{tabular}{c|c}
\cb{
\begin{tikzpicture}
\draw[emptylinestyle,fuzzrightwide] (.02,-.01*\pointrad) -- (.02,.01*\pointrad);
\filldraw (0,0) circle (\pointrad); 
\end{tikzpicture}
}
& $\cC$ \\[3pt]
\cb{
\begin{tikzpicture}
\draw[emptylinestyle,fuzzrightwide] (-.02,.01*\pointrad) -- (-.02,-.01*\pointrad);
\filldraw (0,0) circle (\pointrad); 
\end{tikzpicture}
}
& $\cC^\mp$ \\[3pt]
\cb{
\begin{tikzpicture}
\draw[linestyle] (0,0) -- (1,0);
\end{tikzpicture}
}
& $\bimod{\cC \btimes \cC^\mp}{\cC}{\Vect}$ \\[3pt]
\cb{
\begin{tikzpicture}
\draw[emptylinestyle, white] (0,-.1) -- (0,.1);
\draw[linestyle] (0,0) -- (1,0);
\begin{pgfonlayer}{background}
\draw[->,outstyle] (0,0) -- +(-\arrowlength,0);
\draw[->,outstyle] (1,0) -- +(\arrowlength,0);
\end{pgfonlayer}
\end{tikzpicture}
}
& $\bimod{\Vect}{\cC}{\cC^\mp \btimes \cC}$ \\[3pt]
\cb{
\begin{tikzpicture}
\draw[emptylinestyle,white] (0,0) -- (0,\circlerad+3);
\draw[emptylinestyle,white] (0,0) -- (0,-\circlerad-3);
\draw[emptylinestyle,white] (0,0) -- (\circlerad+3,0);
\draw[emptylinestyle,white] (0,0) -- (-\circlerad-3,0);
\draw[linestyle] (0,0) circle (\circlerad);
\begin{pgfonlayer}{background}
\draw[-left to,line width=1.25*\arrowwidth,black!50] (\circlerad,0) -- +(-90:1.6*\arrowlength);
\draw[-left to,line width=1.25*\arrowwidth,black!50] (-\circlerad,0) -- +(90:1.6*\arrowlength);
\draw[-left to,line width=1.25*\arrowwidth,black!50] (0,\circlerad) -- +(0:1.6*\arrowlength);
\draw[-left to,line width=1.25*\arrowwidth,black!50] (0,-\circlerad) -- +(180:1.6*\arrowlength);
\end{pgfonlayer}
\end{tikzpicture}
}
& $\cC \btimes_{\cC \btimes \cC^\mp} \cC$
\end{tabular}
\vspace{10pt}
\caption{1-framed manifolds and their corresponding invariants.} \label{table:egdimone}
\end{table}

The invariant of the 1-framed circle deserves special attention.
\begin{definition}
The \emph{trace} of a finite tensor category $\cC$ is the relative Deligne tensor product 
\[
\cT(\cC) := \cC \boxtimes_{\cC \boxtimes \cC^\mp} \cC.
\]
In this product, the two actions are the most naive ones, namely the right action of $c \boxtimes d \in \cC \boxtimes \cC^\mp$ on $m \in \cC$ is $m \cdot (c \boxtimes d) := d \otimes m \otimes c$, and the left action of $c \boxtimes d \in \cC \boxtimes \cC^\mp$ on $m \in \cC$ is $(c \boxtimes d) \cdot m := c \otimes m \otimes d$.
\end{definition}
Contrary perhaps to expectation, the trace $\cT(\cC)$ need not be equivalent to the Drinfeld center $\cZ(\cC)$.\footnote{See~\cite{bzfn} for a comparison of derived centers and traces of categories of quasicoherent sheaves.}  Indeed, if $\cC$ is not pivotal, there is no a priori reason for $\cT(\cC)$ to even be a monoidal category.  (From a field-theoretic perspective, the monoidal structure on $\cZ(\cC)$ comes from a 2-framed pair-of-pants bordism, all of whose boundary circles are the 2-framed circle shown as the target in Example~\ref{ex:disk_bordism_immersed}.  There is no 2-framed pair-of-pants bordism, all of whose boundary circles are the 2-framed stabilization of the 1-framed circle.)  We can see the precise difference between the trace and the center by expressing the trace as a functor category.
\begin{corollary} \label{cor:trace}
For $\cC$ a finite tensor category, there is an equivalence of linear categories
\[
\cT(\cC) \simeq \Fun_{\mod{\cC}{\cC}}(({}_\cC \cC_\cC)_{\langle \mathfrak{r r} \rangle},{}_\cC \cC_\cC).
\]
An object of $\cT(\cC)$ can therefore be described as an object $x \in \cC$ together with a ``twisted half-braiding", that is a natural isomorphism between the endofunctors $\cC \ra \cC$ given by $c \mapsto x \otimes c$ and $c \mapsto c^{**} \otimes x$.
\end{corollary}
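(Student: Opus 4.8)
The plan is to chain together the functor-category identities developed in Section~\ref{sec:tc-bimodules}. First I would apply Corollary~\ref{cor:tensasfunct} to the relative Deligne tensor product $\cT(\cC) = \cC \boxtimes_{\cC \boxtimes \cC^\mp} \cC$. To do this I must identify the bimodule categories being tensored over $\cE := \cC \boxtimes \cC^\mp$. The right-hand $\cC$ is the identity bimodule ${}_{\cC \boxtimes \cC^\mp}\cC_\Vect$ obtained by flipping the right action onto the left as described in Section~\ref{sec:fliptwist}; the left-hand $\cC$ is the flip ${}_\Vect \cC_{\cC \boxtimes \cC^\mp}$ of the identity bimodule, but with the actions as specified in the definition of $\cT(\cC)$. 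The key subtlety---exactly the one flagged in Lemma~\ref{lemma:DualTwist}---is that applying the dual and flipping actions do not commute, so a double-dual twist appears. Concretely, the relevant left $\cE$-module $\cC$ (with $(c \boxtimes d)\cdot m := c \otimes m \otimes d$) is, as a right $\cE^\mp = \cC^\mp \boxtimes \cC$-module after flipping, the twist $(({}_\cC \cC_\cC)^*)_{\langle \mathfrak{r r}\rangle}$; tracking this carefully through Definition~\ref{def:Dual_bimodule_notation} and Lemma~\ref{lemma:DualTwist} is the computational heart of the argument.

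Once the two tensor factors are correctly identified, Corollary~\ref{cor:tensasfunct} gives $\cT(\cC) \simeq \Fun_{\mod{\cC}{\cC}}(\cN^*, \cM)$ where $\cM$ is the identity bimodule and $\cN$ is the twisted bimodule, and the twist survives the dualization by the homogeneity of $\mathfrak{r r}$ as a monoidal autoequivalence (squaring the right-dual functor, which is monoidal, so the twist by $\mathfrak{r r}$ is unaffected by taking $\cN^*$). This yields the displayed equivalence
\[
\cT(\cC) \simeq \Fun_{\mod{\cC}{\cC}}\bigl(({}_\cC \cC_\cC)_{\langle \mathfrak{r r}\rangle}, {}_\cC \cC_\cC\bigr).
\]
For the second sentence of the corollary, I would unwind what a $\cC$--$\cC$-bimodule functor $\cF: ({}_\cC\cC_\cC)_{\langle \mathfrak{r r}\rangle} \to {}_\cC\cC_\cC$ amounts to. As a right $\cC$-module functor from $\cC$ to $\cC$ it is determined by $x := \cF(1) \in \cC$, via $\cF(-) \cong x \otimes (-)$ (this is the standard fact that right $\cC$-module endofunctors of $\cC$ are left multiplications, the module-category analog of $\Fun_\cC(\cC,\cC) \simeq \cC$ used in the proof of Proposition~\ref{prop:FunctorsAsATensorPdt}). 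The left $\cC$-module structure on $\cF$ is then a natural isomorphism $\cF(c \cdot_{\langle \mathfrak{r r}\rangle} m) \cong c \cdot \cF(m)$; since the source action is twisted by $\mathfrak{r r}$, i.e. $c \cdot_{\langle \mathfrak{r r}\rangle} m = c^{**} \otimes m$, this unwinds to a natural isomorphism between $c \mapsto x \otimes c$ and $c \mapsto c^{**} \otimes x$, which is precisely the ``twisted half-braiding" asserted.

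The main obstacle will be bookkeeping the variance and the placement of the double-dual twist: there are several conventions in play (the $\mod{\cC}{\cC}$ versus $\mod{}{\cC}$ versions of Corollary~\ref{cor:tensasfunct}, the left versus right dual of a bimodule, flipping left-to-right versus right-to-left), and it is easy to land on $(-)^{**}$ when one wants ${}^{**}(-)$, or on the opposite side of the tensor product. The safeguard is to check the answer against a sanity case---when $\cC$ is pivotal, $\mathfrak{r r} \cong \id$ and the formula should recover $\cT(\cC) \simeq \cZ(\cC)$ via the usual half-braiding description---and to cross-reference with Corollary~\ref{cor:dualamod} by writing $\cC \simeq \Mod{}{A}(\cC \boxtimes \cC^\mp)$ for the enriched-endomorphism algebra $A$ and verifying the double dual $A^{**}$ appears exactly where Lemma~\ref{lem:dualing-amod} predicts. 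Everything else is a routine application of the already-established machinery.
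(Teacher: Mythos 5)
Your strategy is the paper's own: apply Corollary~\ref{cor:tensasfunct} to $\cC \boxtimes_{\cC \boxtimes \cC^\mp} \cC$, use Lemma~\ref{lemma:DualTwist} to identify the dualized, flipped regular bimodule as $(({}_\cC \cC_\cC)^*)_{\langle \mathfrak{r r} \rangle}$, strip the residual dual, and then unwind bimodule functors $({}_\cC \cC_\cC)_{\langle \mathfrak{r r} \rangle} \to {}_\cC \cC_\cC$. However, two of your steps are wrong as written, and they are exactly the bookkeeping points you flagged. The step that removes the dual is not justified by ``the twist by $\mathfrak{r r}$ is unaffected by taking $\cN^*$'': dualizing does \emph{not} commute with twisting --- a direct check with Definition~\ref{def:Dual_bimodule_notation} gives $(({}_\cC \cC_\cC)_{\langle \mathfrak{r r} \rangle})^* \simeq ({}_\cC \cC_\cC)_{\langle \mathfrak{l l} \rangle}$, so if the twist sat inside the dual this reasoning would land on the wrong category. (Relatedly, flipping alone introduces no dual and no twist; the dual appears only when Corollary~\ref{cor:tensasfunct} is invoked, and Lemma~\ref{lemma:DualTwist} then puts it in the twist-outside-the-dual form.) What you actually need, and should cite, is the $\cC$--$\cC$-bimodule equivalence $({}_\cC \cC_\cC)^* \simeq {}_\cC \cC_\cC$ sending $(c)^*$ to the dual object $c^*$; since in $(({}_\cC \cC_\cC)^*)_{\langle \mathfrak{r r} \rangle}$ the twist is applied after the dual, twisting that equivalence yields the displayed formula. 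This is precisely the third equivalence in the paper's proof.

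In the unwinding you place the twist on the left action, writing $c \cdot_{\langle \mathfrak{r r} \rangle} m = c^{**} \otimes m$; but the subscript in $({}_\cC \cC_\cC)_{\langle \mathfrak{r r} \rangle}$ twists the \emph{right} action, $m \cdot d = m \otimes d^{**}$, leaving the left action untwisted. With your placement and $\cF \cong x \otimes (-)$, the left module structure gives $x \otimes c^{**} \cong c \otimes x$, i.e.\ a natural isomorphism $x \otimes (-) \cong {}^{**}(-) \otimes x$ --- the \emph{left} double dual, not the claimed $(-)^{**} \otimes x$ --- so your stated conclusion does not follow from your own intermediate steps. The clean unwinding (the paper's) uses the untwisted left actions to write $\cF \cong (-) \otimes x$ with $x = \cF(1)$, and then the right module structure reads, at $m = 1$, $d^{**} \otimes x \cong x \otimes d$, which is exactly the asserted twisted half-braiding. (One can instead start from the right module structure, but then $\cF \cong x \otimes {}^{**}(-)$ rather than $x \otimes (-)$, and the same answer results.) So: right approach, identical in outline to the paper's, but both the dual-removal step and the half-braiding unwinding need the corrections above.
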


\begin{proof}
We have equivalences
\begin{align*}
\cC \boxtimes_{\cC \boxtimes \cC^\mp} \cC
&\simeq
\Fun_{\mod{\cC \boxtimes \cC^\mp}{}}((\cC_{\cC \boxtimes \cC^\mp})^*,{}_{\cC \boxtimes \cC^\mp} \cC) \\
&\simeq
\Fun_{\mod{\cC}{\cC}}((({}_\cC \cC_\cC)^*)_{\langle \mathfrak{r r} \rangle},{}_\cC \cC_\cC) \\
&\simeq
\Fun_{\mod{\cC}{\cC}}(({}_\cC \cC_\cC)_{\langle \mathfrak{r r} \rangle},{}_\cC \cC_\cC).
\end{align*}
The first equivalence is by Corollary~\ref{cor:tensasfunct}, the second equivalence is by Lemma \ref{lemma:DualTwist}, and the third equivalence is the fact that there is a $\cC$--$\cC$-bimodule equivalence between $({}_\cC \cC_\cC)^*$ and ${}_\cC \cC_\cC$, namely the functor taking $(c)^* \in \cC^*$ (the object $c \in \cC^\op$ viewed as an object of $\cC^*$) to $c^* \in \cC$ (the dual of the object $c \in \cC$).  The last statement of the corollary follows simply by expressing a left module functor ${}_\cC \cC \ra {}_\cC \cC$ as $- \mapsto - \otimes x$ and then recording the data of a right $\cC$-module structure on that functor.
\end{proof}

\begin{remark}
Though the bordisms listed in Table~\ref{table:egdimone} generate all isomorphism classes of 1-framed 0- and 1-manifolds, it is not the case that specifying the invariants of those bordisms is sufficient to determine the 1-dimensional field theory $\cF_\cC : \FrBord_1 \ra \TC_{(3,1)}$ associated to a finite tensor category $\cC$.  The trouble is that there are moduli spaces of bordisms, and because the target $\TC_{(3,1)}$ is a 3-category, it can detect the topology of those moduli spaces as higher automorphisms of the invariants.  In fact, the field theory is specified by the invariants in the table, together with one additional piece of data (coming from the 2-dimensional homotopy class in $B\mathrm{Diff}(S^1)$), namely a natural automorphism of the identity endofunctor of the trace $\cT(\cC)$.
\end{remark}

\begin{remark} \label{rem:hep}
In the above discussion (for instance in Table~\ref{table:egdimone}), and for the remainder of the book, when we say, ``the values of the field theory $\cF$ on the bordisms $M$, $N$, $P$, etc. are $C$, $D$, $E$, etc.", what we mean is that there are obvious canonical equivalences $\cF(M) \simeq C$, $\cF(N) \simeq D$, $\cF(P) \simeq E$, etc. and moreover that these equivalences commute with taking the source and target of the bordisms, respectively invariants.  In fact one can do better: in any such situation, the functor $\cF$ is equivalent to another functor $\cF'$ such that $\cF'(M) = C$, $\cF'(N) = D$, $\cF'(P) = E$, etc.---in other words, the latter functor takes exactly the desired values.  The existence of such a functor follows from a homotopy extension property for field theories; 
that property is not formal, but depends essentially on constructing and using a cofibrant model for the framed bordism category and a fibrant model for the target category $\TC$.  We can safely ignore this issue, as none of our calculations will depends on specifying invariants exactly rather than merely up to equivalence.
\end{remark}

\section[Adjoints of bimodule cats and invariants of 2-framed bordisms]{Adjoints of bimodule categories and invariants of 2-framed bordisms}   \label{sec:df-morphisms}

Next we prove that any finite bimodule category ${}_\cC \cM_\cD$ between finite tensor categories $\cC$ and $\cD$ has both a left and a right adjoint.  It follows that any finite tensor category is 2-dualizable and we describe the 0-, 1-, and 2-manifold invariants of the associated 2-dimensional field theory.

\subsection{The adjoint bimodule category is the functor dual} \label{sec:df-modules}

The left adjoint of ${}_\cC \cM_\cD$ is the functor category $\Fun_{\mod{\cC}{}}(\cM,\cC)$; this linear category has the structure of a left $\cD$-module category by precomposition by right multiplication, and has the structure of a right $\cC$-module category by postcomposition by right multiplication.  The right adjoint of ${}_\cC \cM_\cD$ is the functor category $\Fun_{\mod{}{\cD}}(\cM,\cD)$.  Recall that all functors, in particular the objects of both $\Fun_{\mod{\cC}{}}(\cM,\cC)$ and $\Fun_{\mod{}{\cD}}(\cM,\cD)$, are assumed to be right exact.

The counits of the desired adjunctions are straightforward to define as evaluation functors.  The first counit
\[
\varepsilon_1 : \cM \boxtimes_\cD \Fun_\cC(\cM,\cC) \ra \cC
\]
is induced by the $\cD$-balanced bilinear functor $\cM \times \Fun_\cC(\cM,\cC)$ given by $(m,f) \mapsto f(m)$.  Observe that $\varepsilon_1$ naturally has the structure of a $\cC$--$\cC$-bimodule functor.  The second counit
\[
\varepsilon_2 : \Fun_\cD(\cM,\cD) \boxtimes_\cC \cM \ra \cD
\]
is induced by the $\cC$-balanced functor $(f,m) \mapsto f(m)$, and naturally has the structure of a $\cD$--$\cD$-bimodule functor.

The units of the adjunctions are more subtle.  They are maps into a relative Deligne tensor product, and so cannot simply be induced by a multilinear functor.  Instead, we rely crucially on Proposition~\ref{prop:FunctorsAsATensorPdt} to reexpress the tensor product as a functor category.  The first unit is the composite
\[
\eta_1 : \cD \ra \Fun_\cC(\cM,\cM) \simeq \Fun_\cC(\cM,\cC) \boxtimes_\cC \cM,
\]
where the first map sends $d \in \cD$ to the functor $- \mapsto - \otimes d$; this map, and thus the composite, has the structure of a $\cD$--$\cD$-bimodule functor.  The second unit is the composite
\[
\eta_2 : \cC \ra \Fun_\cD(\cM,\cM) \simeq \cM \boxtimes_\cD \Fun_\cD(\cM,\cD),
\]
where the first map sends $c \in \cC$ to the functor $- \mapsto c \otimes -$; this map, and thus the composite, has the structure of a $\cC$--$\cC$-bimodule functor.
\begin{proposition} \label{prop:evcoev}
Let ${}_\cC \cM_\cD$ be a finite bimodule category between finite tensor categories.  The pair of functors $(\eta_1,\varepsilon_1)$ described above forms the unit and counit of an adjunction
\[
\Fun_\cC(\cM,\cC) \dashv \cM.
\]
Similarly, the pair of functors $(\eta_2,\varepsilon_2)$ forms the unit and counit of an adjunction
\[
\cM \dashv \Fun_\cD(\cM,\cD).
\]
\end{proposition}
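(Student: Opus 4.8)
The plan is to verify the two zigzag (triangle) identities for each of the two proposed adjunctions; by symmetry it suffices to treat the pair $(\eta_1, \varepsilon_1)$ witnessing $\Fun_\cC(\cM,\cC) \dashv \cM$, since the pair $(\eta_2, \varepsilon_2)$ is handled by the same argument with the roles of left and right modules (and $\cC$ and $\cD$) interchanged. Concretely, writing $\cN := \Fun_\cC(\cM,\cC)$, I must produce isomorphisms of bimodule functors
\[
(\varepsilon_1 \circledcirc \id_\cM) \circ (\id_\cM \circledcirc \eta_1) \cong \id_\cM, \qquad (\id_\cN \circledcirc \varepsilon_1) \circ (\eta_1 \circledcirc \id_\cN) \cong \id_\cN,
\]
where the composites are formed in the appropriate morphism 2-categories of $\TC$. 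Since $\cN \boxtimes_\cC \cM$ is characterized by a universal property for functors \emph{out} of it, the genuinely new input is the reexpression $\cN \boxtimes_\cC \cM \simeq \Fun_\cC(\cM,\cM)$ of Proposition~\ref{prop:FunctorsAsATensorPdt}, which lets me describe $\eta_1$ as a concrete functor (sending $d$ to right-multiplication $-\otimes d$) and lets me compute composites through the functor-category picture rather than fighting the balanced tensor product directly.

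First I would fix, via Theorem~\ref{thm:EGNO2.11.6}, algebra objects $A \in \cC$ and $B \in \cD$ and equivalences $\cM \simeq \mod{}{A}(\cC)$ and $\cM \simeq \mod{B}{}(\cD)$, so that by Theorem~\ref{thm:DelignePrdtOverATCExists}(2) the tensor product $\cN \boxtimes_\cC \cM$ becomes a category of $A$--$A$-bimodules and the equivalence of Proposition~\ref{prop:FunctorsAsATensorPdt} becomes the standard equivalence $\mod{A}{A}(\cC) \simeq \Fun_\cC(\mod{}{A}(\cC), \mod{}{A}(\cC))$ given by $M \mapsto (- \otimes_A M)$; under this identification $\eta_1$ corresponds to the algebra map $B \to \bimod{A}{ }{A}$-data realizing the right $\cD$-action, and $\varepsilon_1$ is the evaluation $\cN \boxtimes_\cC \cM \to \cC$, i.e.\ the forgetful/underlying-object functor $\mod{A}{A}(\cC) \to \cC$ on the $A$--$A$-bimodule side composed appropriately. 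Translating the two zigzag composites into this language reduces them to two purely algebraic statements about bimodules over an algebra object in a finite tensor category: that the canonical map expressing any right $A$-module $M$ as the coequalizer $\coeq_\cC(M \otimes A \otimes A \rightrightarrows M \otimes A)$ of free modules is compatible with the evaluation and unit functors. These are exactly the identities that make $\tau(M) = -\otimes_A M$ an equivalence, which is recalled (with reference to \cite[Prop 2.12.2]{EGNO}) inside the proof of Proposition~\ref{prop:FunctorsAsATensorPdt}, so the zigzags follow by unwinding that equivalence together with the defining $\cC$-balancing isomorphism of $\boxtimes_\cC$.

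The last pieces to check are that all four functors $\eta_1, \varepsilon_1, \eta_2, \varepsilon_2$ are genuinely morphisms in $\TC$, i.e.\ right exact bimodule functors, and that the triangle isomorphisms I construct are themselves bimodule transformations. Right exactness of $\varepsilon_1$ is clear since it is induced by a bilinear functor exact in each variable (Theorem~\ref{thm:DelignePrdtOverATCExists}(4)); right exactness of $\eta_1$ follows because $-\otimes d : \cM \to \cM$ is exact by Corollary~\ref{cor:biexact_action} and the equivalence $\Fun_\cC(\cM,\cM) \simeq \cN \boxtimes_\cC \cM$ is an equivalence of linear categories. The bimodule structures on $\eta_1$ and $\varepsilon_1$ were already indicated in the statement (they come from the outer $\cC$- and $\cD$-actions, which commute with the operations being performed), and checking the triangle isomorphisms respect these structures is a routine diagram chase using the coherence of the balancing isomorphism and of the module associators.

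The main obstacle, and the place where care is genuinely required, is the first zigzag $(\varepsilon_1 \circledcirc \id_\cM) \circ (\id_\cM \circledcirc \eta_1) \cong \id_\cM$: the composite $\id_\cM \circledcirc \eta_1$ lands in $\cM \boxtimes_\cD \cN \boxtimes_\cC \cM$, and to even write down and then identify this composite with the identity I must commute $\eta_1$ past a tensor factor using the functor-category reexpression on the correct side, i.e.\ I need \emph{both} halves of Proposition~\ref{prop:FunctorsAsATensorPdt} (the $\cN \boxtimes_\cC -$ version and the $- \boxtimes_\cD \Fun_\cD(-,-)$ version) and their compatibility. Keeping the left/right module bookkeeping straight here — particularly making sure the evaluation $\varepsilon_1$ is being applied to the $\cN$-and-$\cM$ factors produced by $\eta_1$ and not to the outer $\cM$ — is the subtle bookkeeping step; once the identifications are set up correctly the identity drops out of the coequalizer presentation of modules over $A$ together with unitality of that algebra object.
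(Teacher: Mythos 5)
Your overall plan (verify the triangle identities, using Proposition~\ref{prop:FunctorsAsATensorPdt} to get a handle on the unit) is the paper's strategy, but your description of the counit, and hence of the key zigzag, is wrong as written. The counit $\varepsilon_1$ is a functor $\cM \boxtimes_\cD \Fun_\cC(\cM,\cC) \ra \cC$, induced by the $\cD$-balanced functor $(m,f)\mapsto f(m)$; it is \emph{not} a functor $\Fun_\cC(\cM,\cC)\boxtimes_\cC\cM \ra \cC$, and it is not the forgetful functor $\Mod{A}{A}(\cC)\ra\cC$. The category $\Fun_\cC(\cM,\cC)\boxtimes_\cC\cM \simeq \Fun_\cC(\cM,\cM)\simeq\Mod{A}{A}(\cC)$ is the \emph{target of the unit} $\eta_1$; it is a $\cD$--$\cD$-bimodule, so it cannot be the source of the counit of this adjunction. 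Correspondingly, your flagged ``subtle bookkeeping'' point is stated backwards: in the triple composite $\cM\boxtimes_\cD\Fun_\cC(\cM,\cC)\boxtimes_\cC\cM$ arising in the zigzag for $\cM$, the evaluation must consume the \emph{outer} (left) copy of $\cM$ together with the $\Fun_\cC(\cM,\cC)$ factor produced by $\eta_1$, leaving the copy of $\cM$ produced by $\eta_1$ untouched. Applying $\varepsilon_1$ to the two factors produced by $\eta_1$, as you write, does not even typecheck (those factors are tensored over $\cC$ and in the opposite order, while $\varepsilon_1$ is balanced over $\cD$), and pairing a unit with its own output would in any case compute a dimension-type composite, not the identity. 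This confusion is also why you thought both halves of Proposition~\ref{prop:FunctorsAsATensorPdt} were needed: once the factors are paired correctly, only the first half (the equivalence $\Fun_\cC(\cM,\cC)\boxtimes_\cC\cM\simeq\Fun_\cC(\cM,\cM)$) enters this zigzag.

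With the bookkeeping fixed, the check is also shorter than your proposed reduction to algebra objects and coequalizers: since $\eta_1$ is by definition $d\mapsto(-\otimes d)$ followed by the inverse of the equivalence of Proposition~\ref{prop:FunctorsAsATensorPdt}, and that equivalence is induced by $(\cF,m')\mapsto \cF(-)\otimes m'$, the composite
\[
\cM\simeq\cM\boxtimes_\cD\cD\xra{\;\id\boxtimes\eta_1\;}\cM\boxtimes_\cD\Fun_\cC(\cM,\cC)\boxtimes_\cC\cM\xra{\;\varepsilon_1\boxtimes\id\;}\cC\boxtimes_\cC\cM\simeq\cM
\]
agrees with the composite $m\boxtimes d\mapsto m\boxtimes(-\otimes d)\mapsto m\otimes d$ through $\cM\boxtimes_\cD\Fun_\cC(\cM,\cM)$, which is naturally isomorphic to the identity; this single commuting diagram is the paper's entire proof, and no choice of $A\in\cC$, $B\in\cD$, or coequalizer presentation is needed at this stage (those appear only inside the proof of Proposition~\ref{prop:FunctorsAsATensorPdt}). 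The remaining three zigzag relations are handled in the same way.
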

\begin{proof}
Observe, using the definition of $\eta_1$ and the construction of the equivalence in Proposition~\ref{prop:FunctorsAsATensorPdt}, that the following diagram commutes:
		\begin{center}
			\begin{tikzpicture}[align=left]	
				\node (A) at (0in,2in) {${}_\cC \cM_\cD$};
				\node (B) at (0in,1.5in) {${}_\cC \cM \boxtimes_\cD \cD_\cD$};
				\node (C) at (0in,1in) {${}_\cC \cM \boxtimes_\cD \Fun_{\cC\text{-mod}}(\cM,\cC) \boxtimes_\cC \cM_\cD$};
				\node (D) at (2.5in,1in) {${}_\cC \cM \boxtimes_\cD \Fun_{\cC\text{-mod}}(\cM,\cM)_\cD$};
				\node (E) at (0in, .5in) {${}_\cC \cC \boxtimes_\cC \cM_\cD$};
				\node (F) at (0in,0in) {${}_\cC \cM_\cD$};
				\draw [<-] (A) to node [left] {$\simeq$} (B);
				\draw [->] (B) to node [left=.1cm] {$\id \boxtimes \eta_1$}  (C);
				\draw [->,bend left=14] (B) to node [above=.1cm]{$\id \boxtimes (d \mapsto (- \otimes d))$} (D);
				\draw [->] (C) to node [above] {$\simeq$} (D);
				\draw [->] (C) to node [left=.1cm]{$\varepsilon_1 \boxtimes \id$} (E);
				\draw [->] (E) to node [left] {$\simeq$} (F);
				\draw [->,bend left=22] (D) to node [right=.25cm]{$m \boxtimes f \mapsto f(m)$} (F);
			\end{tikzpicture}
		\end{center}
The composite along the right is certainly naturally equivalent to the identity, while the composite along the left is the required adjunction zigzag.  The other three zigzag relations are similar.
\end{proof}

\subsection[The categorified 2-dimensional field theory associated to a finite tensor category]{\for{toc}{The categorified 2-dim field theory associated to a finite tensor cat}\except{toc}{The categorified 2-dimensional field theory associated to a finite tensor category}}

Combining Propositions~\ref{prop:onedual} and~\ref{prop:evcoev}, we obtain the following.
\begin{theorem} \label{thm:TC_is_2Dualizable}
The 3-category $\TC$ of finite tensor categories, finite bimodule categories, their functors, and transformations, is 2-dualizable.
\end{theorem}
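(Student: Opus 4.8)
The plan is to assemble Theorem~\ref{thm:TC_is_2Dualizable} directly from the two structural results already in place, following the two-stage formulation of dualizability from Section~\ref{sec:lft}. Recall that a symmetric monoidal $3$-category $\cC$ is $2$-dualizable if it is $1$-dualizable and moreover every $1$-morphism admits both a left and a right adjoint (in the homotopy $2$-category). So there are exactly two things to verify: (i) $\TC$ is $1$-dualizable, and (ii) every finite bimodule category between finite tensor categories has a left and a right adjoint as a $1$-morphism of $\TC$.

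First I would invoke Proposition~\ref{prop:onedual}: for any finite tensor category $\cC$, the bimodule categories ${}_{\cC \boxtimes \cC^\mp} \cC_\Vect$ and ${}_\Vect \cC_{\cC^\mp \boxtimes \cC}$ serve as evaluation and coevaluation witnessing that $\cC^\mp$ is a dual of $\cC$ in the symmetric monoidal $1$-category underlying $\TC$. Since the monoidal structure is symmetric, a right dual is automatically a left dual, so $\TC$ is $1$-dualizable. This step requires nothing beyond quoting the proposition.

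Second I would invoke Proposition~\ref{prop:evcoev}: for a finite bimodule category ${}_\cC \cM_\cD$ between finite tensor categories, the functor dual $\Fun_{\mod{\cC}{}}(\cM,\cC)$, equipped with the unit $\eta_1$ and counit $\varepsilon_1$ constructed there, is a left adjoint to ${}_\cC \cM_\cD$ as a $1$-morphism $\cC \to \cD$ of $\TC$, and the functor dual $\Fun_{\mod{}{\cD}}(\cM,\cD)$, with $(\eta_2,\varepsilon_2)$, is a right adjoint. Thus every $1$-morphism of $\TC$ has both adjoints. Combining this with $1$-dualizability yields $2$-dualizability by the definition of the latter, and the theorem follows. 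I would also remark, for the reader, that the units $\eta_1,\eta_2$ genuinely require the reexpression of relative Deligne tensor products as functor categories (Proposition~\ref{prop:FunctorsAsATensorPdt}), since a priori only functors \emph{out} of a relative Deligne tensor product are easy to construct from its universal property.

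The main subtlety here is not a calculation but a bookkeeping point: one must check that the adjunctions produced object-by-object by Proposition~\ref{prop:evcoev} really assemble into adjunctions in the homotopy $2$-category of $\TC$ in the uniform sense demanded by the definition of $2$-dualizability, i.e.\ that the constructions are natural enough and that the zigzag identities hold up to (invertible) bimodule transformations. But Proposition~\ref{prop:evcoev} already provides the units, counits, and the verification of the zigzag relations as bimodule functors and transformations, so this amounts only to noting that these data live in $\TC$ and that passing to isomorphism classes of $2$-morphisms (to form the homotopy $2$-category) preserves the adjunction. Consequently the proof is essentially immediate from the cited results, and I expect no genuine obstacle — the work was done in establishing Propositions~\ref{prop:onedual} and~\ref{prop:evcoev}.
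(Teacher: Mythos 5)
Your proof is correct and is exactly the paper's argument: the paper deduces Theorem~\ref{thm:TC_is_2Dualizable} by combining Proposition~\ref{prop:onedual} (duals of objects) with Proposition~\ref{prop:evcoev} (functor duals as left and right adjoints of bimodule categories, whose units indeed rely on Proposition~\ref{prop:FunctorsAsATensorPdt}). Your additional remark about assembling the adjunctions in the homotopy $2$-category is a fair gloss on the definition of $2$-dualizability used in Section~\ref{sec:lft} and introduces no new content beyond what the cited propositions already supply.
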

\begin{corollary} \label{cor:2dualtft}
For each finite tensor category $\cC$, there is a unique (up to equivalence) symmetric monoidal functor
\[
\cF_\cC : \FrBord_2 \ra \TC_{(3,2)},
\]
whose value $\cF_\cC(\pt_+)$ on the standard positively 2-framed point is the tensor category $\cC$.\footnote{As in Footnote~\ref{ftn:exactly}, to ensure that the field theory takes exactly the value $\cC$ on the standard positive point (rather than merely a value equivalent to $\cC$), requires a particular choice of model of the 3-category of tensor categories.}  Here $\FrBord_2$ is the 2-framed bordism $(\infty,2)$-category and $\TC_{(3,2)}$ is the maximal sub-$(3,2)$-category of the 3-category $\TC$ of tensor categories.
\end{corollary}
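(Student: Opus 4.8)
The plan is to derive the corollary directly from Theorem~\ref{thm:TC_is_2Dualizable} by invoking the framed cobordism hypothesis, in exactly the form stated in Section~\ref{sec:lft}. First I would note that Theorem~\ref{thm:TC_is_2Dualizable} establishes that $\TC$ is a 2-dualizable 3-category, so in particular its maximal sub-$(3,2)$-category $\TC_{(3,2)}$ is a 2-dualizable $(\infty,2)$-category, i.e.\ fully dualizable as an $(\infty,2)$-category. Concretely: by Proposition~\ref{prop:onedual} every object of $\TC$ has a dual (namely $\cC^\mp$ for $\cC$), and by Proposition~\ref{prop:evcoev} every $1$-morphism has both a left and a right adjoint, given by the functor duals $\Fun_{\cC}(\cM,\cC)$ and $\Fun_{\cD}(\cM,\cD)$; passing to $\TC_{(3,2)}$ discards the $3$-morphisms but retains all objects, $1$-morphisms, and $2$-morphisms, and these duals and adjoints live in the sub-$(3,2)$-category, so $\TC_{(3,2)}$ is fully $2$-dualizable.

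Next I would apply the cobordism hypothesis as stated in Section~\ref{sec:lft}: $2$-dimensional local framed topological field theories with target a symmetric monoidal $(\infty,2)$-category are in one-to-one correspondence with the $2$-dualizable objects of that category, and more precisely the space of such field theories is homotopy equivalent to the space of $2$-dualizable objects. Since $\TC_{(3,2)}$ is fully dualizable, \emph{every} object is $2$-dualizable; in particular the finite tensor category $\cC$, viewed as an object of $\TC_{(3,2)}$, is $2$-dualizable. Applying the cobordism hypothesis to $\cC$ therefore yields a symmetric monoidal functor $\cF_\cC : \FrBord_2 \ra \TC_{(3,2)}$, unique up to equivalence among those functors whose value on $\pt_+$ is (equivalent to) $\cC$; the uniqueness is precisely the one-to-one correspondence (equivalently, contractibility of the relevant fiber of the evaluation-at-a-point map). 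The footnote concerning exact versus up-to-equivalence values would be handled by citing the homotopy extension property of~\cite{3TC} as in Remark~\ref{rem:hep}, allowing us to replace $\cF_\cC$ by an equivalent functor taking the value $\cC$ on the nose.

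The only genuine subtlety — and the step I expect to need the most care — is the compatibility between the two formulations of dualizability used in the paper: the ``$k$-dualizable $3$-category'' language of Section~\ref{sec:lft} versus the ``$k$-dualizable object'' language in which the cobordism hypothesis is classically phrased, together with the passage from the $(\infty,3)$-category $\TC$ to the $(\infty,2)$-category $\TC_{(3,2)}$. I would address this by appealing to the equivalence of the two formulations asserted in Section~\ref{sec:lft} (and reviewed in the Appendix), observing that adjoints of $1$-morphisms computed in $\TC$ via Proposition~\ref{prop:evcoev} genuinely lie in $\TC_{(3,2)}$ since they are built from finite bimodule categories and right exact bimodule functors, and noting that truncating the $3$-morphisms cannot obstruct $2$-dualizability (which only references objects, $1$-morphisms, $2$-morphisms, and adjunctions thereof). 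With that compatibility in hand the corollary is a formal consequence; no new calculation is required beyond what Theorems~\ref{thm:TC_is_2Dualizable} already provide.
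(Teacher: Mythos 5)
Your proposal is correct and matches the paper's own treatment: the corollary is presented there as an immediate consequence of Theorem~\ref{thm:TC_is_2Dualizable} (via Propositions~\ref{prop:onedual} and~\ref{prop:evcoev}) together with the cobordism hypothesis, with the ``exactly $\cC$ on the nose'' refinement deferred to the model-dependent homotopy extension property of Remark~\ref{rem:hep}, just as you do. Your extra care about passing from $\TC$ to $\TC_{(3,2)}$ is sound but not a genuinely different route.
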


\nid Note that this field theory is `categorified' in the sense that the invariants of closed 2-manifolds are not numbers, but vector spaces, and in the sense that it assigns an isomorphism of vector spaces to each isotopy class of diffeomorphisms of a surface.

As in Section~\ref{sec:framed-duality}, we focus on the following standard positively 2-framed point $\pt_+$, together with the following chosen negatively 2-framed point, denoted $\pt_-$, and evaluation and coevaluation bordisms $\ev$ and $\coev$ witnessing a duality between $\pt_+$ and $\pt_-$:
$$\cb{
\begin{tikzpicture}
\filldraw (0,0) circle (\pointrad);
\begin{pgfonlayer}{background}
\draw[->,outstyle] (0,0) -- +(0:\arrowlength) node[anchor=south west,inner sep=1pt] {\tiny 1};
\draw[->,outstyle] (0,0) -- +(90:\arrowlength) node[anchor=south west,inner sep=1pt] {\tiny 2};
\draw[->,outstyle,white] (0,0) -- +(-90:\arrowlength) node[anchor=north west,inner sep=1pt] {\tiny 2};
\end{pgfonlayer}
\end{tikzpicture}
}
,
\cb{
\begin{tikzpicture}
\filldraw (0,0) circle (\pointrad);
\begin{pgfonlayer}{background}
\draw[->,outstyle] (0,0) -- +(0:\arrowlength) node[anchor=north west,inner sep=1pt] {\tiny 1};
\draw[->,outstyle] (0,0) -- +(-90:\arrowlength) node[anchor=north west,inner sep=1pt] {\tiny 2};
\draw[->,outstyle,white] (0,0) -- +(90:\arrowlength) node[anchor=south west,inner sep=1pt] {\tiny 2};
\end{pgfonlayer}
\end{tikzpicture}
}
,
\cb{
\begin{tikzpicture}
\draw[linestyle,fuzzright] (0,0) arc (-90:90:\smcirclerad);
\end{tikzpicture}
}
,
\cb{
\begin{tikzpicture}
\draw[emptylinestyle, white] (0,.1) -- (0,-2*\smcirclerad) -- +(0,-.1);
\draw[linestyle,fuzzleft] (0,0) arc (90:270:\smcirclerad);
\begin{pgfonlayer}{background}
	\draw[->,outstyle] (0,0) -- +(0:\arrowlength);
	\draw[->,outstyle] (0,-2*\smcirclerad) -- +(0:\arrowlength);
\end{pgfonlayer}
\end{tikzpicture}
}$$

\nid We also focus on the particular adjoints to $\ev$ and $\coev$ drawn in Figure~\ref{fig:adjointchains} in Section~\ref{sec:framed-duality}, and on various composites of these particular bordisms.  

The invariants of the field theory $\cF_\cC$ on these various 2-framed manifolds are listed in Table~\ref{table:intervals}.\footnote{We are considering a field theory $\cF_\cC$ whose value on the standard positive point $\pt_+$ is $\cC$.  As the functor $\cF_\cC$ preserves dualities, it send our chosen negative point $\pt_-$ to a dual of $\cC$.  Note, though, that the value $\cF_\cC(\pt_-)$ is canonically equivalent to $\cC^\mp$: the maps $\cF_\cC(\ev)$ and $\cF_\cC(\coev)$ witness a duality between $\cF_\cC(\pt_+) = \cC$ and $\cF_\cC(\pt_-)$, and we already have a chosen dual $\cC^\mp$ to $\cC$ witnessed by ${}_{\cC \boxtimes \cC^\mp} \cC_\Vect$ and ${}_\Vect \cC_{\cC^\mp \boxtimes \cC}$, and therefore we have a canonical equivalence between $\cF_\cC(\pt_-)$ and $\cC^\mp$.  The existence of this canonical equivalence is the meaning of listing $\cC^\mp$ as the value of the field theory in the table.  Similarly, for all the other manifolds $M$ in the table, there is a canonical equivalence between $\cF_\cC(M)$ and the listed value.  More directly, if brutally, we may simply homotope the field theory $\cF_\cC$ to a theory that takes exactly the listed values, as mentioned in Remark~\ref{rem:hep}.}  In that table, the equivalence in each of the last four rows is an application of Proposition~\ref{prop:dual-formula-for-adjoints}.  The values of the adjoints $\ev^L$, $\coev^L$, $\ev^R$, and $\coev^R$ are determined from the values of $\ev$ and $\coev$ by Proposition~\ref{prop:evcoev}.

\begin{table}[!hbt] 
\begin{tabular}{ccc|rcl}
$\pt_+$ & $=$ & 
\cb{
\begin{tikzpicture}
\filldraw (0,0) circle (\pointrad);
\begin{pgfonlayer}{background}
\draw[->,outstyle] (0,0) -- +(0:\arrowlength) node[anchor=south west,inner sep=1pt] {\tiny 1};
\draw[->,outstyle] (0,0) -- +(90:\arrowlength) node[anchor=south west,inner sep=1pt] {\tiny 2};
\draw[->,outstyle,white] (0,0) -- +(-90:.8*\arrowlength) node[anchor=north west,inner sep=1pt] {\tiny 2};
\end{pgfonlayer}
\end{tikzpicture}
}
& \multicolumn{1}{c}{$\cC$} &&\\[-10pt]
$\pt_-$ & $=$ & 
\cb{
\begin{tikzpicture}
\filldraw (0,0) circle (\pointrad);
\begin{pgfonlayer}{background}
\draw[->,outstyle] (0,0) -- +(0:\arrowlength) node[anchor=north west,inner sep=1pt] {\tiny 1};
\draw[->,outstyle] (0,0) -- +(-90:\arrowlength) node[anchor=north west,inner sep=1pt] {\tiny 2};
\draw[->,outstyle,white] (0,0) -- +(90:.8*\arrowlength) node[anchor=south west,inner sep=1pt] {\tiny 2};
\end{pgfonlayer}
\end{tikzpicture}
}
& \multicolumn{1}{c}{$\cC^\mp$} &&\\[10pt]
$\ev$ & $=$ & \cb{
\begin{tikzpicture}
\draw[linestyle,fuzzright] (0,0) arc (-90:90:\smcirclerad);
\end{tikzpicture}
}
& \multicolumn{1}{c}{$\bimod{\cC \btimes \cC^\mp}{\cC}{\Vect}$} 
& &  \\[8pt]
$\coev$ & $=$ & \cb{
\begin{tikzpicture}
\draw[emptylinestyle, white] (0,.1) -- (0,-2*\smcirclerad) -- +(0,-.1);
\draw[linestyle,fuzzleft] (0,0) arc (90:270:\smcirclerad);
\begin{pgfonlayer}{background}
	\draw[->,outstyle] (0,0) -- +(0:\arrowlength);
	\draw[->,outstyle] (0,-2*\smcirclerad) -- +(0:\arrowlength);
\end{pgfonlayer}
\end{tikzpicture}
}
& \multicolumn{1}{c}{$\bimod{\Vect}{\cC}{\cC^\mp \btimes \cC}$} 
& 
& \\[8pt]
$\ev^L$ & $=$ & \cb{
\begin{tikzpicture}
\draw[linestyle,fuzzright] (0,0) arc (90:270:\smcirclerad);
\begin{pgfonlayer}{background}
	\draw[->,outstyle] (0,0) -- +(0:\arrowlength);
	\draw[->,outstyle] (0,-2*\smcirclerad) -- +(0:\arrowlength);
\end{pgfonlayer}
\end{tikzpicture}
}
& $\Fun_{\mod{\cC \boxtimes \cC^\mp}{}}(\cC,\cC \boxtimes \cC^\mp)$
& $\simeq$ & ${}^*({}_{\cC \boxtimes \cC^\mp} \cC_\Vect)$
\\[8pt]
$\coev^L$ & $=$ & \cb{
\begin{tikzpicture}
\draw[linestyle,fuzzleft] (0,0) arc (-90:90:\smcirclerad);
\end{tikzpicture}
} 
& $\Fun_{\Vect}(\cC,\Vect)$
& $\simeq$ & ${}^*({}_\Vect \cC_{\cC^\mp \boxtimes \cC})$ 
\\[10pt]
$\ev^R$ & $=$ & 
\setlength{\linewid}{15pt}
\setlength{\fuzzwidth}{25pt}
\setlength{\arrowlength}{80pt}
\setlength{\arrowwidth}{7.5pt}
\cb{
\scalebox{.1}{
\begin{tikzpicture}
\draw[linestyle,fuzzright]
(0,5) to [out=180, in=70] (-4,2.75)
	to [looseness=1.6, out=-110, in=-90] (-7,2.75)
	to [looseness=1.6, out=90, in=110] (-4,2.75)
	to [out=-70, in=70] (-4,-2.75)
	to [looseness=1.6, out=-110, in=-90] (-7,-2.75)
	to [looseness=1.6, out=90, in=110] (-4,-2.75)
	to [out=-70, in=180] (0,-5);
\begin{pgfonlayer}{background}
	\draw[-scalehead,outstyle] (0,5) -- +(0:\arrowlength);
	\draw[-scalehead,outstyle] (0,-5) -- +(0:\arrowlength);
\end{pgfonlayer}
\end{tikzpicture}
}
}
\setlength{\linewid}{1.5pt}
\setlength{\fuzzwidth}{2.5pt}
\setlength{\arrowlength}{8pt}
\setlength{\arrowwidth}{.75pt}
& $\Fun_{\Vect}(\cC,\Vect)$
& $\simeq$ & $({}_{\cC \boxtimes \cC^\mp} \cC_\Vect)^*$
\\[16pt]
$\coev^R$ & $=$ & 
\setlength{\linewid}{15pt}
\setlength{\fuzzwidth}{25pt}
\setlength{\arrowlength}{80pt}
\setlength{\arrowwidth}{7.5pt}
\cb{
\scalebox{.1}{
\begin{tikzpicture}
\draw[linestyle,fuzzright]
(0,5) to [out=0, in=110] (4,2.75)
	to [looseness=1.6, out=-70, in=-90] (7,2.75)
	to [looseness=1.6, out=90, in=70] (4,2.75)
	to [out=-110, in=110] (4,-2.75)
	to [looseness=1.6, out=-70, in=-90] (7,-2.75)
	to [looseness=1.6, out=90, in=70] (4,-2.75)
	to [out=-110, in=0] (0,-5);
\end{tikzpicture}
}
}
\setlength{\linewid}{1.5pt}
\setlength{\fuzzwidth}{2.5pt}
\setlength{\arrowlength}{8pt}
\setlength{\arrowwidth}{.75pt}
& $\Fun_{\mod{}{\cC^\mp \boxtimes \cC}}(\cC,\cC^\mp \boxtimes \cC)$
& $\simeq$ & $({}_\Vect \cC_{\cC^\mp \boxtimes \cC})^*$ \\
\end{tabular}
\vspace{6pt}
\caption{Invariants associated to 2-framed points and intervals.} \label{table:intervals}
\end{table}

We are now, finally, in a position to calculate the Serre automorphism of a finite tensor category; the result of this calculation is the central connection between the topology of framed bordisms and the algebra of duality in tensor categories.
\begin{theorem} \label{thm:serreisdoubledual}
The Serre automorphism $\cS_\cC$ of the finite tensor category $\cC$ is the double-right-dual twist of the identity bimodule of $\cC$; the inverse Serre automorphism $\cS^{-1}_\cC$ is the double-left-dual twist of the identity bimodule of $\cC$.  In other words, the 2-dimensional 2-framed field theory $\cF_\cC$ associated to $\cC$ has the following values on the Serre $\cS$ and inverse Serre $\cS^{-1}$ bordisms, respectively:
\begin{align*}
\cF_\cC\left(
\cb{
\begin{tikzpicture}
\draw[emptylinestyle, white] (.7,.1) -- (.7,-.1);
\draw[linestyle,fuzzright] 
(.7,0) to [out=180, in=-20] (0,.1)
	to [looseness=1.6, out=160, in=180] (0,.4)
	to [looseness=1.6, out=0, in=20] (0,.1)
	to [out=-160, in=0] (-.7,0);
\begin{pgfonlayer}{background}
	\draw[->,outstyle] (.7,0) -- +(0:\arrowlength);
\end{pgfonlayer}
\end{tikzpicture}
}
\right)
&=
{}_{\langle \mathfrak{r r} \rangle \cC} \cC_\cC \\
\cF_\cC\left(
\cb{
\begin{tikzpicture}
\draw[linestyle,fuzzright] 
(.7,0) to [out=180, in=20] (0,-.1)
	to [looseness=1.6, out=-160, in=180] (0,-.4)
	to [looseness=1.6, out=0, in=-20] (0,-.1)
	to [out=160, in=0] (-.7,0);
\begin{pgfonlayer}{background}
	\draw[->,outstyle] (.7,0) -- +(0:\arrowlength);
\end{pgfonlayer}
\end{tikzpicture}
}
\right)
&=
{}_{\langle \mathfrak{l l} \rangle \cC} \cC_\cC
\end{align*}
\end{theorem}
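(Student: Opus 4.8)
The plan is to evaluate the field theory $\cF_\cC$ on the decomposition of the loop bordism recorded in Figure~\ref{fig:serrebordism}; equivalently, to apply the categorical formula of Proposition~\ref{prop:serrecalc} to the $2$-dualizable object $x := \cF_\cC(\pt_+) = \cC$ of $\TC$ (which is $2$-dualizable by Theorem~\ref{thm:TC_is_2Dualizable}), together with the explicit dual $\overline{x} = \cC^\mp$, the evaluation $\ev_\cC = {}_{\cC\boxtimes\cC^\mp}\cC_\Vect$, and the coevaluation $\coev_\cC = {}_\Vect\cC_{\cC^\mp\boxtimes\cC}$ furnished by Proposition~\ref{prop:onedual}. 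This reduces the theorem to identifying the composite $1$-morphism
\[
\cS_\cC \;\simeq\; (\id_\cC \boxtimes \ev_\cC)\circ(\tau_{\cC,\cC}\boxtimes\id_{\cC^\mp})\circ(\id_\cC\boxtimes\ev_\cC^R)
\]
of $\TC$, where $\circ$ denotes the relative Deligne tensor product (here over $\cC\boxtimes\cC\boxtimes\cC^\mp$), $\boxtimes$ the Deligne tensor product, and $\ev_\cC^R$ a right adjoint of $\ev_\cC$. The inverse Serre bordism decomposes the same way with $\ev_\cC^R$ replaced by a left adjoint $\ev_\cC^L$, so both assertions of the theorem will come from one calculation.

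The next step is to make the adjoints explicit and to see where a double dual must enter. By Proposition~\ref{prop:evcoev} a right adjoint of ${}_{\cC\boxtimes\cC^\mp}\cC_\Vect$ is the functor dual $\Fun_\Vect(\cC,\Vect)$, which by Proposition~\ref{prop:dual-formula-for-adjoints} is the right dual bimodule $({}_{\cC\boxtimes\cC^\mp}\cC_\Vect)^*$, so $\ev_\cC^R \simeq ({}_{\cC\boxtimes\cC^\mp}\cC_\Vect)^*$ as in Table~\ref{table:intervals}, and similarly $\ev_\cC^L \simeq {}^*({}_{\cC\boxtimes\cC^\mp}\cC_\Vect)$. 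The crucial point is that the composite for $\cS_\cC$ also flips actions between the left and right of a bimodule --- this is exactly what the Deligne tensor with $\id_\cC$ and the symmetric switch $\tau_{\cC,\cC}$ amount to --- and Lemma~\ref{lemma:DualTwist} records that flipping an action and taking a dual bimodule fail to commute by precisely a right double dual. Thus, when the triple composite is reorganized to exhibit the $\cC$--$\cC$-bimodule structure of $\cS_\cC$, exactly one double-right-dual twist $\mathfrak{rr}$ is forced onto one of the $\cC$-actions; the mirror reorganization for $\cS^{-1}_\cC$, using ${}^*({}_{\cC\boxtimes\cC^\mp}\cC_\Vect)$ and the left-dual analogue of Lemma~\ref{lemma:DualTwist}, forces a double-left-dual twist $\mathfrak{ll}$ instead.

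It remains to carry out that reorganization. Here I would reexpress the relative Deligne tensor products as module-functor categories via Proposition~\ref{prop:FunctorsAsATensorPdt} and Corollary~\ref{cor:tensasfunct}, or equivalently present all of the bimodule categories as categories of (bi)module objects over an algebra in $\cC\boxtimes\cC^\mp$ using Theorem~\ref{thm:EGNO2.11.6}, Corollary~\ref{cor:dualamod}, and part~(2) of Theorem~\ref{thm:DelignePrdtOverATCExists}, and then compute; the switch $\tau_{\cC,\cC}$ merely permutes which copy of $\cC$ acts on which tensor factor. After the three relative products collapse, the underlying category is $\cC$ with its standard right $\cC$-action and with the left $\cC$-action precomposed by $\mathfrak{rr}$ (respectively $\mathfrak{ll}$), so $\cF_\cC(\cS) \simeq {}_{\langle\mathfrak{rr}\rangle\cC}\cC_\cC$ and $\cF_\cC(\cS^{-1}) \simeq {}_{\langle\mathfrak{ll}\rangle\cC}\cC_\cC$. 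I expect this reorganization --- keeping $\cC^\op$, $\cC^\mp$, and $\cC^\mop$ distinct, tracking each action through the flips forced by the monoidal structure, and checking that the double-dual twist survives once, on the correct side, and with the correct handedness --- to be the main obstacle: conceptually everything is driven by Lemma~\ref{lemma:DualTwist}, but the accounting is delicate, and it is precisely this accounting that realizes the advertised bridge between the topology of framed bordisms and the algebra of duality in tensor categories. As a consistency check, $\cS^{-1}_\cC$ must be the relative-tensor inverse of $\cS_\cC$ and $\mathfrak{ll}$ is the functor inverse of $\mathfrak{rr}$, so by the relation ${}_{\langle\cF\rangle\langle\cG\rangle}\cC \simeq {}_{\langle\cF\otimes\cG\rangle}\cC$ of Section~\ref{sec:fliptwist} the two twisted bimodules are mutually inverse, as they must be.
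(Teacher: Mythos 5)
Your proposal is correct and follows essentially the same route as the paper's proof: start from the formula of Proposition~\ref{prop:serrecalc} applied to $\cC$ with the duality data of Proposition~\ref{prop:onedual}, identify $\ev_\cC^R$ and $\ev_\cC^L$ with the dual bimodule categories $({}_{\cC \boxtimes \cC^\mp} \cC_\Vect)^*$ and ${}^*({}_{\cC \boxtimes \cC^\mp} \cC_\Vect)$ via Propositions~\ref{prop:evcoev} and~\ref{prop:dual-formula-for-adjoints}, and then obtain the twist from Lemma~\ref{lemma:DualTwist} together with the equivalence $({}_\cC \cC_\cC)^* \simeq {}_\cC \cC_\cC$, which is exactly how the paper lands on ${}_{\langle \mathfrak{r r} \rangle \cC} \cC_\cC$ and ${}_{\langle \mathfrak{l l} \rangle \cC} \cC_\cC$. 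The only difference is the bookkeeping step you left open: where you propose to collapse the triple relative Deligne tensor product through functor categories (Proposition~\ref{prop:FunctorsAsATensorPdt}, Corollary~\ref{cor:tensasfunct}) or algebra objects, the paper instead applies Lemma~\ref{lemma:flip} directly to recognize that composite as the flipped bimodule ${}_\cC ({}_\Vect \Fun_\Vect(\cC,\Vect)_{\cC \boxtimes \cC^\mp})_\cC$ (respectively its $\ev_\cC^L$ analogue), i.e.\ precisely the flip-of-a-dual that your invocation of Lemma~\ref{lemma:DualTwist} then converts into the double-dual twist, so the distinction is one of implementation rather than of substance.
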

\begin{proof}
By Proposition~\ref{prop:serrecalc}, we know the Serre and inverse Serre automorphisms of $\cC$ are given by the following composites:
\begin{align*}
\cS_\cC &= (\id_\cC \boxtimes \ev_\cC^R) \boxtimes_{\cC \boxtimes \cC \boxtimes \cC^\mp} (\tau_{\cC,\cC} \boxtimes \id_{\cC^\mp}) \boxtimes_{\cC \boxtimes \cC \boxtimes \cC^\mp} (\id_\cC \boxtimes \ev_\cC), \\
\cS^{-1}_\cC &= (\id_\cC \boxtimes \ev_\cC^L) \boxtimes_{\cC \boxtimes \cC \boxtimes \cC^\mp} (\tau_{\cC,\cC} \boxtimes \id_{\cC^\mp}) \boxtimes_{\cC \boxtimes \cC \boxtimes \cC^\mp} (\id_\cC \boxtimes \ev_\cC).
\end{align*}
Here $\ev_\cC$, $\ev^R_\cC$, and $\ev^L_\cC$ denote the values of the field theory $\cF_\cC$ on the bordisms $\ev$, $\ev^R$, and $\ev^L$.  Substituting these values from Table~\ref{table:intervals}, we have the formulas
\begin{align*}
\cS_\cC &= (({}_\cC \cC_\cC) \boxtimes ({}_\Vect \Fun_\Vect(\cC,\Vect)_{\cC \boxtimes \cC^\mp})) \boxtimes_{\cC \boxtimes \cC \boxtimes \cC^\mp} (({}_\cC \cC_\cC) \boxtimes ({}_{\cC \boxtimes \cC^\mp} \cC_\Vect)), \\
\cS^{-1}_\cC &= (({}_\cC \cC_\cC) \boxtimes ({}_\Vect \Fun_{\mod{\cC \boxtimes \cC^\mp}{}}(\cC,\cC \boxtimes \cC^\mp)_{\cC \boxtimes \cC^\mp})) \boxtimes_{\cC \boxtimes \cC \boxtimes \cC^\mp} \\ &\hspace*{3.3in} (({}_\cC \cC_\cC) \boxtimes ({}_{\cC \boxtimes \cC^\mp} \cC_\Vect)),
\end{align*}
where in both formulas we have now let be implicit the symmetric monoidal switch between the two factors of $\cC$ in the middle product $\boxtimes_{\cC \boxtimes \cC \boxtimes \cC^\mp}$.

By judicious application of Lemma~\ref{lemma:flip}, we recognize the above expressions for $\cS_\cC$ and $\cS^{-1}_\cC$ as, respectively, the flipped bimodules 
\begin{gather*}
{}_\cC ({}_\Vect \Fun_\Vect(\cC,\Vect)_{\cC \boxtimes \cC^\mp})_\cC, \\
{}_\cC ({}_\Vect \Fun_{\mod{\cC \boxtimes \cC^\mp}{}}(\cC,\cC \boxtimes \cC^\mp)_{\cC \boxtimes \cC^\mp})_\cC.
\end{gather*}
By Proposition~\ref{prop:dual-formula-for-adjoints}, Lemma~\ref{lemma:DualTwist}, and the existence of bimodule equivalences between $({}_\cC \cC_\cC)^*$ and ${}^*({}_\cC \cC_\cC)$ and ${}_\cC \cC_\cC$ (see the proof of Corollary~\ref{cor:trace}), we now have equivalences
\begin{alignat*}{3}
{}_\cC ({}_\Vect \Fun_\Vect(\cC,\Vect)_{\cC \boxtimes \cC^\mp})_\cC 
& \simeq  {}_\cC(({}_{\cC \boxtimes \cC^\mp} \cC_{\Vect})^*)_\cC
&& \simeq  \bimod{\langle \mathfrak{r r} \rangle \cC}{\cC}{\cC}, &&\\
{}_\cC ({}_\Vect \Fun_{\mod{\cC \boxtimes \cC^\mp}{}}(\cC,\cC \boxtimes \cC^\mp)_{\cC \boxtimes \cC^\mp})_\cC
& \simeq  {}_\cC ({}^*(\bimod{\cC \boxtimes \cC^\mp}{\cC}{\Vect}))_\cC
&& \simeq  \bimod{\langle \mathfrak{l l} \rangle \cC}{\cC}{\cC}, &&
\end{alignat*}
as required.  
\end{proof}

By composing the invariants of the intervals in Table~\ref{table:intervals}, we can compute the invariants for various 2-framed circles; the three most important such circles and their resulting invariants are listed in Table~\ref{table:circles}.  In that table, the first equivalence is by Proposition~\ref{prop:FunctorsAsATensorPdt}; the second and third equivalences are by Corollary~\ref{cor:tensasfunct} and Lemma~\ref{lemma:DualTwist}.  Finally, in Table~\ref{table:discs}, we record the values of the 2-framed discs arising as the units and counits of the adjunctions $\ev^L_\cC \dashv \ev_\cC$ and $\coev^L_\cC \dashv \coev_\cC$.
\begin{table}[!ht]
\begin{tabular}{c|rcl}
\cb{
\begin{tikzpicture}
\draw[linestyle,fuzzright] (0,0) circle (\smcirclerad);
\end{tikzpicture}
}
& $\Fun_{\cC \boxtimes \cC^\mp}(\cC,\cC \boxtimes \cC^\mp)\boxtimes_{\cC \btimes \cC^\mp} \cC$ 
& $\simeq$ & $\Fun_{\cC\text{-mod-}\cC}(\cC, \cC) =: \cZ(\cC)$ \\[6pt]
\cb{
\begin{tikzpicture}
\draw[rotate=-90,linestyle,fuzzleft,looseness=2]
(0,.5) to [out=0, in=10] (0,0)
	to [out=-170, in=180] (0,-.5)
	to [out=0, in=-10] (0,0)
	to [out=170, in=180] (0,.5);
\end{tikzpicture}
}
& $\cT(\cC) := \cC \boxtimes_{\cC \boxtimes \cC^\mp} \cC$
& $\simeq$ & $\Fun_{\cC\text{-mod-}\cC}(\cC_{\langle \mathfrak{r r} \rangle}, \cC)$\\[8pt]
\cb{
\begin{tikzpicture}
\draw[linestyle,fuzzleft] (0,0) circle (\smcirclerad);
\end{tikzpicture}
}
& $\cC \boxtimes_{\cC^\mp \boxtimes \cC} \Fun(\cC, \Vect)$ 
& $\simeq$ & $\Fun_{\cC\text{-mod-}\cC}(\cC_{\langle \mathfrak{r r} \rangle}, \cC_{\langle \mathfrak{l l} \rangle}) =: \coZ(\cC)$
\end{tabular}
\vspace{6pt}
\caption{Invariants associated to 2-framed circles.} \label{table:circles}
\end{table}

\begin{table}[!hbt] 
\begin{tabular}{c|l}
\cb{
\begin{tikzpicture}
\filldraw[linestyle,fuzzright,fill=\fillcolor] (0,0) circle (\circlerad);
\end{tikzpicture}
}
& $\Vect \xra{k \mapsto \id} {\Fun_{\cC\text{-mod-}\cC}(\cC,\cC)}$ \\[6pt]
\cb{
\begin{tikzpicture}
\filldraw[linestyle,fill=\fillcolor] 
	(0,0) .. controls (.25,.25) and (.75,.25) .. (1,0)
		.. controls (.75,.25) and (.75,.75) .. (1,1)
		.. controls (.75,.75) and (.25,.75) .. (0,1)
		.. controls (.25,.75) and (.25,.25) .. (0,0);
\draw[linestyle,fuzzright]
	(0,0) .. controls (.25,.25) and (.75,.25) .. (1,0);
\draw[linestyle,fuzzleft]
	(0,1) .. controls (.25,.75) and (.75,.75) .. (1,1);
\begin{pgfonlayer}{background}
	\draw[->,outstyle] (1,1) -- +(45:\arrowlength);
	\draw[->,outstyle] (1,0) -- +(-45:\arrowlength);
\end{pgfonlayer}
\end{tikzpicture}
}
& $\cC \boxtimes \Fun_{\cC \btimes \cC^\mp\text{-mod}}(\cC,\cC \btimes \cC^\mp) \xra{\mathrm{eval}} \cC \btimes \cC^\mp$ \\[6pt]
\cb{
\begin{tikzpicture}
\filldraw[linestyle,fill=\fillcolor] 
	(0,0) .. controls (.25,.25) and (.75,.25) .. (1,0)
		.. controls (.75,.25) and (.75,.75) .. (1,1)
		.. controls (.75,.75) and (.25,.75) .. (0,1)
		.. controls (.25,.75) and (.25,.25) .. (0,0);
\draw[linestyle, fuzzleft]
	(0,0) .. controls (.25,.25) and (.25,.75) .. (0,1);
\draw[linestyle, fuzzright]
	(1,0) .. controls (.75,.25) and (.75,.75) .. (1,1);
\begin{pgfonlayer}{background}
	\draw[->,outstyle] (1,1) -- +(45:\arrowlength);
	\draw[->,outstyle] (1,0) -- +(-45:\arrowlength);
\end{pgfonlayer}
\end{tikzpicture}
}
& $\cC^\mp \btimes \cC \xra{1 \btimes 1 \mapsto \id} \Fun_{\Vect}(\cC,\cC)$ \\[6pt]
\cb{
\begin{tikzpicture}
\filldraw[linestyle,fill=\fillcolor] (0,0) circle (\circlerad);
\end{tikzpicture}
}& $\cC \boxtimes_{\cC^\mp \boxtimes \cC} \Fun_{\Vect}(\cC,\Vect) \xra{\mathrm{eval}} \Vect$
\end{tabular}
\vspace{6pt}
\caption{Invariants associated to 2-framed discs.} \label{table:discs}
\end{table}

\vspace{-10pt}
\begin{remark}
For any finite tensor category $\cC$, the center $\cZ(\cC)$, as a category of endofunctors, is a monoidal category.  The unit, namely the identity functor, is the value of the field theory $\cF_\cC$ on the first 2-framed disc in Table~\ref{table:discs}; the tensor product, namely composition, is the value of the field theory on the 2-framed pair-of-pants obtained by removing two discs from the interior of that unit disc.  However, the monoidal category $\cZ(\cC)$ is not, a priori, even weakly rigid: there is no 2-framing of the annular bordism from the empty set to two circles (or from two circles to the empty set), such that both circles inherit the 2-framing corresponding to $\cZ(\cC)$.  

The trace $\cT(\cC)$ is not a priori monoidal, as there is no 2-framed pair-of-pants bordism from two circles to one circle such that all three circles inherit the 2-framing corresponding to $\cT(\cC)$.  The cocenter $\coZ(\cC)$, defined in Table~\ref{table:circles}, is naturally a comonoidal category.  The counit is the last 2-framed disc in Table~\ref{table:discs}, and the coproduct is that disc with two smaller discs removed.  Though a priori $\cZ(\cC)$ has no pairing operation, there is a pairing between the center and the cocenter: the field theory assigns a functor $\cZ(\cC) \boxtimes \coZ(\cC) \ra \Vect$ to the planar annulus with both boundaries incoming.

In the next section we will prove that a finite tensor category $\cC$ is not only 2-dualizable, but is actually a Radford object.  By Theorem~\ref{thm:Cat_Radford}, there is therefore a canonical isomorphism between the adjoints $\ev^L_\cC$ and $\ev^R_\cC$.  It follows that there is a canonical isomorphism between the center $\cZ(\cC)$ and the cocenter $\coZ(\cC)$, and therefore that the center does indeed have a pairing operation.  Implicitly, the field theory $\cF_\cC$ is taking values on certain 3-framed manifolds: there are only two distinct 3-framed circles (with representatives corresponding to $\cZ(\cC)$ and $\cT(\cC)$) and there is a 3-framed annulus with both outgoing boundary components giving $\cZ(\cC)$, namely the annulus immersed in $\RR^3$ as a macaroni.
\end{remark}

\section{The Radford adjoints and the quadruple dual} \label{sec:radfordftc}

Though in general a finite tensor category is not 3-dualizable and therefore does not provide a full 3-framed 3-dimensional field theory, it is nevertheless the case that the 2-dimensional field theory associated to a finite tensor category can always be extended to take values on \emph{some} 3-dimensional bordisms, providing a kind of ``non-compact" 3-dimensional field theory.  This theory, and the terminology, is in the same spirit as work of Costello~\cite{MR2298823} and Lurie~\cite[\S 4.2]{lurie-ch} on non-compact field theories in dimension 2.

In particular, the field theory associated to a finite tensor category extends to take values on the 3-manifolds arising as the units and counits of the adjunctions $u_2 \dashv u_2^R$ and $v_2 \dashv v_2^R$, where $u_2$ and $v_2$ are the 2-manifolds drawn in Example~\ref{eg:evrevadj}.  The existence of those two ``Radford" adjunctions is formalized in the notion of a Radford object, from Definition~\ref{def:Radford-Object}.  In Section~\ref{sec:finiteradford}, we prove that every finite tensor category $\cC$ is a Radford object, and as a consequence that the categorified 2-framed 2-dimensional field theory $\cF_\cC$ extends to a categorified 3-framed 2-dimensional field theory $\widetilde{\cF_\cC}$.  

In Section~\ref{sec:topquaddual}, evaluating the field theory $\widetilde{\cF_\cC}$ on the 3-framed Radford bordism provides a transparent topological proof of the theorem, originally due to Radford~\cite{MR0407069} and Etingof-Nikshych-Ostrik~\cite{MR2097289}, that the quadruple dual functor is (nearly) trivial.  Roughly speaking the argument is simply: a 360 degree rotation implements the double dual, so the Dirac belt trick trivializes the quadruple dual.  In Section~\ref{sec:computeradford}, we explicitly compute the trivialization in question by reexpressing the various module categories involved as categories of internal modules.

The reader who is content to concentrate on separable tensor categories (for instance by restricting to fusion categories of nonzero global dimension over an algebraically closed field (see Theorem~\ref{thm:NonzeroDimension}), or by restricting to finite semisimple tensor categories over a field of characteristic zero (see Corollary~\ref{cor:charzerosep})) can safely skip this section.  In Section~\ref{sec:separableisfd} we will prove that separable tensor categories are fully dualizable, and the results of this section for separable tensor categories (that they are Radford, the existence of an associated 3-framed 2-dimensional field theory, and the triviality of the quadruple dual) are immediate consequences.

\subsection{Finite tensor categories are Radford objects} \label{sec:finiteradford}

In the previous section we saw that every finite tensor category $\cC$ is 2-dualizable.  In particular, there are two infinite chains of adjunctions $\cdots \dashv \ev_\cC^{LL} \dashv \ev_\cC^L \dashv \ev_\cC \dashv \ev_\cC^R \dashv \ev_\cC^{RR} \dashv \cdots$ and $\cdots \dashv \coev_\cC^{LL} \dashv \coev_\cC^L \dashv \coev_\cC \dashv \coev_\cC^R \dashv \coev_\cC^{RR} \dashv \cdots$, where as before $\ev_\cC$ and $\coev_\cC$ are the standard evaluation and coevaluation maps witnessing the duality between $\cC$ and $\cC^\mp$.  It is not the case that all the units and counits of the adjunctions in these chains admit left and right adjoints (in which case the tensor category would be 3-dualizable), but some of these adjoints do exist.  The most important of these adjoints are isolated in the notion of a Radford object: recall from Definition~\ref{def:Radford-Object} that a 2-dualizable tensor category $\cC$ is Radford if the unit and the counit of the adjunction $\ev_\cC \dashv \ev_\cC^R$ both have right adjoints.  The perhaps surprising fact is that all finite tensor categories admits these adjoints.  The proof relies crucially on Theorem~\ref{thm:tensor-exactness}, that tensor products of exact module categories are exact.
\begin{theorem} \label{thm:TCisRadford}
Every finite tensor category is a Radford object of the 3-category $\TC$ of tensor categories.
\end{theorem}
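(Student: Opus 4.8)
The plan is to verify Definition~\ref{def:Radford-Object} directly: starting from the standard evaluation bimodule $\ev_\cC = {}_{\cC\boxtimes\cC^\mp}\cC_\Vect$ witnessing the $1$-dualizability of $\cC$ (Proposition~\ref{prop:onedual}), I would exhibit its right adjoint $\ev_\cC^R$ together with unit $u_\cC$ and counit $v_\cC$ (these come from Proposition~\ref{prop:evcoev}, with $\ev_\cC^R \simeq ({}_{\cC\boxtimes\cC^\mp}\cC_\Vect)^* \simeq \Fun_\Vect(\cC,\Vect)$), and then show that both the $(\cC\boxtimes\cC^\mp)$--$(\cC\boxtimes\cC^\mp)$-bimodule functor $u_\cC : \id_{\cC\boxtimes\cC^\mp}\text{-bimodule} \to \ev_\cC^R \circledcirc \ev_\cC$ and the $\Vect$--$\Vect$-bimodule functor $v_\cC : \ev_\cC \circledcirc \ev_\cC^R \to \id_\Vect\text{-bimodule}$ admit right adjoints inside $\TC$. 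Since adjoints of $2$-morphisms in $\TC$ are just adjoints of bimodule functors, and a bimodule functor has an adjoint in $\TC$ precisely when the underlying right-exact linear functor has a right adjoint that is again right exact (Lemma~\ref{lma:module-adjoint}), the whole problem reduces to an exactness statement about the relevant bimodule categories.

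The key steps, in order: first, identify the source and target bimodule categories of $u_\cC$ and $v_\cC$ explicitly. For $v_\cC$ the target is the identity $\Vect$-bimodule and the source is $\ev_\cC \circledcirc \ev_\cC^R = \cC \boxtimes_{\cC\boxtimes\cC^\mp} (\ev_\cC^R)$; using Corollary~\ref{cor:tensasfunct} and the identification of $\ev_\cC^R$ as a functor dual, this source is the trace-type category $\cT(\cC) \simeq \Fun_{\cC\text{-mod-}\cC}(\cC_{\langle\mathfrak{rr}\rangle},\cC)$ from Table~\ref{table:circles}. For $u_\cC$ the relevant source is the identity bimodule ${}_{\cC\boxtimes\cC^\mp}(\cC\boxtimes\cC^\mp)_{\cC\boxtimes\cC^\mp}$ and the target is $\ev_\cC^R \circledcirc \ev_\cC \simeq \Fun_{\cC\boxtimes\cC^\mp}(\cC,\cC\boxtimes\cC^\mp) \boxtimes_{\cC\boxtimes\cC^\mp} \cC$, which by Proposition~\ref{prop:FunctorsAsATensorPdt} is a category of module functors. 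Second, invoke Lemma~\ref{lma:module-adjoint}: a right adjoint $2$-morphism to $u_\cC$ (resp.\ $v_\cC$) exists in $\TC$ as soon as the underlying linear functor has an honest (not-necessarily-right-exact) right adjoint linear functor \emph{and} that adjoint happens to be exact, i.e.\ right exact. Third — and this is where Theorem~\ref{thm:tensor-exactness} enters — observe that the source bimodule of $u_\cC$, namely the identity bimodule $\cC\boxtimes\cC^\mp$ viewed as a $(\cC\boxtimes\cC^\mp)$--$(\cC\boxtimes\cC^\mp)$-bimodule, is exact (Example~\ref{ex:exactness}), and more to the point, the bimodule categories $\ev_\cC^R = \Fun_\Vect(\cC,\Vect)$ and $\ev_\cC^R\circledcirc\ev_\cC$ are exact: $\ev_\cC^R$ is a dual of the exact bimodule ${}_{\cC\boxtimes\cC^\mp}\cC_\Vect$, hence exact by Corollary~\ref{cor:adjoint-exactness}, and $\ev_\cC^R\circledcirc\ev_\cC$, being a Deligne tensor product (over $\cC\boxtimes\cC^\mp$, reexpressible via the ordinary Deligne product) of exact module categories, is exact by Theorem~\ref{thm:tensor-exactness} and part~(5) of Theorem~\ref{thm:DelignePrdtOverATCExists}. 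By item~(2) of Theorem~\ref{Thm:ExactModCatOmnibus}, every additive functor out of an exact module category is automatically exact; hence any right adjoint linear functor to $u_\cC$ or $v_\cC$ — which a priori is merely additive — is in fact exact, and therefore is a right adjoint in $\TC$. This gives the Radford condition.

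The main obstacle I anticipate is bookkeeping rather than conceptual: one has to be careful about \emph{which} bimodule category plays the role of the exact module category for each adjunction, because the right adjoint of a functor $F : \cM \to \cN$ is a functor $\cN \to \cM$, so it is exactness of the \emph{target} $\cN$ (the domain of the adjoint) that is needed, and for the two-fold iterated adjoints involved in Definition~\ref{def:Radford-Object} one must track exactness of $\ev_\cC$, $\ev_\cC^R$, and their horizontal and vertical composites simultaneously, keeping the flips (Lemma~\ref{lemma:flip}) and double-dual twists (Lemma~\ref{lemma:DualTwist}, Corollary~\ref{cor:adjoint-exactness}) consistent. A second, minor point is that $u_\cC$ and $v_\cC$ are bimodule functors, not just linear functors, so after producing the adjoint linear functor one must apply Lemma~\ref{lma:module-adjoint} to upgrade it to a bimodule functor — but that lemma is stated for exactly this purpose, so no new work is required. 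Once the exactness of the relevant bimodule categories is in hand, the existence of the two right adjoints, and hence the Radford property, follows formally.
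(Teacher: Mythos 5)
Your proposal is correct and follows essentially the same route as the paper's proof: reduce to producing right adjoints of the unit $u$ and counit $v$ of $\ev_\cC \dashv \ev_\cC^R$, use Lemma~\ref{lma:module-adjoint} to promote the linear right adjoints to bimodule functors, and force their right exactness via part (2) of Theorem~\ref{Thm:ExactModCatOmnibus} applied to the exactness of the targets of $u$ and $v$ — namely $\Vect$ (trivially) and $\ev_\cC \boxtimes \ev_\cC^R$, the latter via Example~\ref{ex:exactness}, Corollary~\ref{cor:adjoint-exactness}, and Theorem~\ref{thm:tensor-exactness}. The only slips are cosmetic and unused in your argument: the source of $v$ is the composite over $\cC \boxtimes \cC^\mp$, which is the cocenter $\coZ(\cC)$ rather than the trace $\cT(\cC)$, whereas the target of $u$ is the \emph{ordinary} Deligne product $\ev_\cC \boxtimes \ev_\cC^R$ over $\Vect$ — which is exactly the setting where Theorem~\ref{thm:tensor-exactness} applies.
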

\begin{proof}
We need to construct right adjoints to the $\cC \boxtimes \cC^\mp$--$\cC \boxtimes \cC^\mp$-bimodule unit map $u: \cC \boxtimes \cC^\mp \ra \ev_\cC \boxtimes \ev_\cC^R$ and to the $\Vect$--$\Vect$-bimodule counit map $v: \ev_\cC^R \boxtimes_{\cC \boxtimes \cC^\mp} \ev_\cC \ra \Vect$.

By assumption, the bimodule functors $u$ and $v$ in question are right exact linear functors, and therefore as linear functors have not-necessarily-right-exact right adjoints $u^R$ and $v^R$.  By Lemma~\ref{lma:module-adjoint}, these functors $u^R$ and $v^R$ can be promoted to right adjoints as bimodule functors.  Now recall from Theorem~\ref{Thm:ExactModCatOmnibus} that every not-necessarily-right-exact module functor out of an exact module category is exact, in particular right exact.  Therefore, provided the sources of $u^R$ and $v^R$ are exact bimodule categories, then $u$ and $v$ have right adjoints in $\TC$, as required.

The $\Vect$--$\Vect$-bimodule $\Vect$ is evidently exact.  By Example~\ref{ex:exactness}, the $\cC \boxtimes \cC^\mp$--$\Vect$-bimodule evaluation $\ev_\cC$ is exact.  Combining Proposition~\ref{prop:evcoev}, Proposition~\ref{prop:dual-formula-for-adjoints}, and Corollary~\ref{cor:adjoint-exactness}, it follows that the adjoint $\Vect$--$\cC \boxtimes \cC^\mp$-bimodule $\ev_\cC^R$ is exact.  Applying Theorem~\ref{thm:tensor-exactness}, we conclude that the composite bimodule $\ev_\cC \boxtimes \ev_\cC^R$ is exact, as required.
\end{proof}

\nid The reasoning in the above proof, together with further similar arguments, provides the following general condition for the existence of adjoints of bimodule functors:
\begin{proposition}
Let ${}_\cC \cM_\cD$ and ${}_\cC \cN_\cD$ be finite bimodule categories between finite tensor categories, and let $\cF : \cM \ra \cN$ be a (right exact) bimodule functor.  If the bimodule category $\cN$ is exact, then $\cF$ has a right adjoint (right exact) bimodule functor.  If the bimodule category $\cM$ is exact, then $\cF$ has a left adjoint (right exact) bimodule functor.
\end{proposition}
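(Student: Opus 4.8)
The plan is to reduce the statement to the two ingredients that were assembled in the proof of Theorem~\ref{thm:TCisRadford}: first, the fact (Lemma~\ref{lma:module-adjoint}) that a linear adjoint of a bimodule functor automatically carries a canonical bimodule structure, and second, the fact (item (2) of Theorem~\ref{Thm:ExactModCatOmnibus}) that any not-necessarily-right-exact module functor out of an exact module category is exact, hence in particular right exact, hence a legitimate 1-morphism of $\TC$. The only work is to string these together correctly and to track which of the two hypotheses ($\cN$ exact versus $\cM$ exact) gives which adjoint.

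First I would treat the case where $\cN$ is exact. Since $\cF : \cM \to \cN$ is right exact and $\cM, \cN$ are finite linear categories (hence categories of finite-dimensional modules over a finite-dimensional algebra, by Proposition~\ref{prop:catismod}), the underlying linear functor of $\cF$ admits a right adjoint linear functor $\cF^R : \cN \to \cM$, which however need not be right exact. By Lemma~\ref{lma:module-adjoint} (applied with the pair of acting tensor categories $\cC$ and $\cD$), this $\cF^R$ is canonically a $\cC$--$\cD$-bimodule functor — still not obviously right exact. But $\cF^R$ is a module functor whose source is the exact bimodule category $\cN$, so by item (2) of Theorem~\ref{Thm:ExactModCatOmnibus} (in its form valid for bimodule categories, i.e.\ viewing $\cN$ as an exact left $\cC \boxtimes \cD^\mp$-module category and noting exactness is tested on the same underlying abelian category), $\cF^R$ is exact, in particular right exact. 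Therefore $\cF^R$ is a morphism of $\TC$ and realizes $\cF$ as a left adjoint within $\TC$; that is, $\cF$ has a right adjoint bimodule functor. The case where $\cM$ is exact is entirely parallel: the left adjoint linear functor $\cF^L : \cN \to \cM$ exists (again by finiteness), is promoted to a $\cC$--$\cD$-bimodule functor by the ``left'' half of Lemma~\ref{lma:module-adjoint}, and since its source for the purpose of the exactness criterion is $\cM$ — wait, more carefully: $\cF^L$ is a module functor $\cN \to \cM$, and one instead applies item (2) using that $\cF^L$ has a \emph{right} adjoint, namely $\cF$ itself, so that the relevant exact source is $\cM$ read through the adjunction; alternatively and more cleanly, one passes to the opposite categories $\cC^{\op_2}$-style as in the discussion following Lemma~\ref{lem-ambiadjoints}, reversing the roles of left and right, and concludes that $\cF$ has a left adjoint right-exact bimodule functor.

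The one point requiring a sentence of care — and the only place where the argument could go wrong if stated carelessly — is the direction of the exactness input: in the first case it is \emph{right exactness of $\cF$} that produces the \emph{right} adjoint linear functor, and then \emph{exactness of $\cN$} (the source of that adjoint) that upgrades it to a $\TC$-morphism; in the second case one must instead invoke that $\cF^L$ is left exact because its \emph{right} adjoint $\cF$ exists as an exact-in-the-relevant-direction functor, together with exactness of $\cM$. I expect this bookkeeping of ``source versus target of the adjoint'' to be the main (mild) obstacle, and it is precisely the content that was already exercised in the proof of Theorem~\ref{thm:TCisRadford}, so the proposition follows by the same reasoning applied to a general bimodule functor rather than to the specific unit and counit maps $u$ and $v$.
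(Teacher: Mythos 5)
Your first paragraph (the right adjoint when $\cN$ is exact) is correct and is precisely the paper's reasoning, as carried out in the proof of Theorem~\ref{thm:TCisRadford}: the right exact functor $\cF$ has a not-necessarily-right-exact linear right adjoint, Lemma~\ref{lma:module-adjoint} promotes it to a bimodule functor, and exactness of its source $\cN$ together with item (2) of Theorem~\ref{Thm:ExactModCatOmnibus} makes it exact, hence a 2-morphism of $\TC$.

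The left-adjoint case, however, has a genuine gap, and it is not ``entirely parallel.'' The assertion that the left adjoint linear functor $\cF^L$ exists ``again by finiteness'' is false: a merely right exact functor between finite linear categories need not admit a left adjoint, since any functor possessing a left adjoint preserves kernels, i.e.\ is left exact (for instance $- \otimes_A k$ on finite-dimensional modules over $A = k[x]/x^2$ has no left adjoint). The hypothesis that $\cM$ is exact is needed exactly at this point: by item (2) of Theorem~\ref{Thm:ExactModCatOmnibus}, the functor $\cF$ itself --- a module functor out of the exact module category $\cM$ --- is exact, in particular left exact, and only then does a left adjoint linear functor $\cF^L : \cN \ra \cM$ exist. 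Once it exists, Lemma~\ref{lma:module-adjoint} makes it a bimodule functor, and its right exactness is automatic for the simple reason that it admits a right adjoint (namely $\cF$) and therefore preserves colimits; no exactness input is needed here, and item (2) cannot be applied to $\cF^L$ in any case, since its source is $\cN$, which is not assumed exact --- ``the relevant exact source is $\cM$ read through the adjunction'' is not an argument. The proposed alternative of passing to opposites and ``reversing the roles of left and right'' also does not work as stated: the two halves of the proposition are genuinely asymmetric (exactness of $\cN$ is used to make an already-existing adjoint right exact, whereas exactness of $\cM$ is used to make $\cF$ itself left exact so that the adjoint exists at all), so no formal reversal derives one from the other. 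Finally, your closing remark has the implication backwards: having a right adjoint makes $\cF^L$ right exact, not left exact.
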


The Radford property of a finite tensor category allows us to promote the associated 2-framed 2-dimensional field theory to a 3-framed 2-dimensional field theory.  (This provides, for instance, projective representations of the spin mapping class groups of spin surfaces.)

\begin{corollary} \label{cor:3fr2d}
For each finite tensor category $\cC$, there is a unique (up to equivalence) symmetric monoidal functor
\[
\widetilde{\cF_\cC} : \mathrm{Bord}^{3\text{-}\mathrm{fr}}_2 \ra \TC_{(3,2)},
\]
extending the 2-framed 2-dimensional field theory $\cF_\cC: \FrBord_2 \ra \TC_{(3,2)}$ to the $(\infty,2)$-category $\mathrm{Bord}^{3\text{-}\mathrm{fr}}_2$ of 3-framed 0-, 1-, and 2-dimensional bordisms.
\end{corollary}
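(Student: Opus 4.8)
\textbf{Proof plan for Corollary~\ref{cor:3fr2d}.}

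The plan is to deduce this corollary from the structured (or rather, here, the reduced-structure-group) version of the cobordism hypothesis, exactly as Corollary~\ref{cor:2dualtft} was deduced from the framed cobordism hypothesis, the only new input being Theorem~\ref{thm:TCisRadford}. First I would recall the general principle: an $(\infty,n)$-category of $r$-stably-framed bordisms $\mathrm{Bord}_n^{r\text{-}\mathrm{fr}}$ (with $r \geq n$) sits between $\FrBord_n$ and the oriented/unoriented bordism categories, and a symmetric monoidal functor out of it into a target $(\infty,n)$-category $\cT$ is classified by the data of an $n$-dualizable object of $\cT$ together with a homotopy-coherent action of the appropriate truncation of $O(r)$ relevant to the bordisms in question — but crucially, in the range $n' < n$ one does not need full $n$-dualizability, only the partial dualizability encoded by the geometry of the bordisms that actually appear. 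Concretely: $\mathrm{Bord}^{3\text{-}\mathrm{fr}}_2$ is generated under composition, duals, and adjoints by the elementary $0$-, $1$-, and $2$-dimensional $3$-framed bordisms discussed in Sections~\ref{sec:notation}, \ref{sec:framed-duality}, \ref{sec:Serre}, and \ref{sec:Radford}, and the only new generator relative to $\FrBord_2$ is the Radford bordism $\cR$ together with its inverse and the $3$-manifold witnesses of invertibility — equivalently, the elementary pieces $v_2^R$, $u_2^R$, $v_1^R$, etc., that is, the right adjoints of the unit and counit of the adjunction $\ev \dashv \ev^R$ and the associated zigzag $3$-morphisms.

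The key steps, in order, are as follows. First, observe that $\cF_\cC : \FrBord_2 \to \TC_{(3,2)}$ exists by Corollary~\ref{cor:2dualtft} and that $\cC = \cF_\cC(\pt_+)$ is $2$-dualizable. Second, invoke Theorem~\ref{thm:TCisRadford}: $\cC$ is a Radford object of $\TC$, so the unit $u_\cC$ and counit $v_\cC$ of the adjunction $\ev_\cC \dashv \ev_\cC^R$ admit right adjoints in $\TC_{(3,2)}$ — precisely the dualizability data needed to assign values to the elementary $3$-framed bordisms beyond $\FrBord_2$. Third, apply the version of the cobordism hypothesis with structure group the $2$-truncation of $O(3)$ (equivalently, the relevant tangential structure on stably-$3$-framed $\leq 2$-manifolds), in the form recalled in the Appendix: this produces a unique-up-to-equivalence symmetric monoidal extension $\widetilde{\cF_\cC} : \mathrm{Bord}^{3\text{-}\mathrm{fr}}_2 \to \TC_{(3,2)}$ of $\cF_\cC$, because the forgetful map $\mathrm{Bord}^{3\text{-}\mathrm{fr}}_2 \to$ (the space classifying such structures) and the fact that $\cC$ admits the Radford adjoints together guarantee both existence and uniqueness of the lift. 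Fourth, note uniqueness: any two such extensions agree on $\FrBord_2$ by construction, and the $O(3)$-action on the space of $2$-dualizable-plus-Radford objects is determined, so the space of extensions is contractible; this is the same homotopy-extension reasoning as in Remark~\ref{rem:hep} and \cite{3TC}.

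The main obstacle — and the only genuinely nontrivial point — is verifying that the partial dualizability packaged in ``Radford object'' is \emph{exactly} the amount needed for the cobordism-hypothesis machinery to produce a functor out of $\mathrm{Bord}^{3\text{-}\mathrm{fr}}_2$, i.e., that no further adjoints (of $3$-morphisms, or of other units/counits in the infinite adjoint chains) are secretly required by some $3$-framed $2$-bordism or its moduli. I would handle this by the explicit generators-and-relations description of $\mathrm{Bord}^{3\text{-}\mathrm{fr}}_2$: every $3$-framed $\leq 2$-manifold bordism decomposes, via a Morse function, into the elementary pieces catalogued in Sections~\ref{sec:framed-duality}–\ref{sec:Radford}, and each such piece is either already in the image of $\cF_\cC$ or is one of the Radford adjoints $u_\cC^R$, $v_\cC^R$ (and their duals/flips), whose existence is Theorem~\ref{thm:TCisRadford}; the relations among these pieces correspond to the zigzag identities of Lemma~\ref{lem-ambiadjoints} and the invertibility of $\cR$ established in Theorem~\ref{thm:Cat_Radford}, all of which hold in $\TC_{(3,2)}$. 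Once this matching is in place, the corollary follows formally. Alternatively, and perhaps more cleanly for the write-up, I would phrase the whole argument as: the inclusion $\FrBord_2 \hookrightarrow \mathrm{Bord}^{3\text{-}\mathrm{fr}}_2$ exhibits the latter as obtained from the former by freely adjoining the Radford bordism and inverting it, so extending $\cF_\cC$ is equivalent to choosing a lift of $\cC$ to a Radford object — which exists and is essentially unique by Theorem~\ref{thm:TCisRadford} and Theorem~\ref{thm:Cat_Radford}.
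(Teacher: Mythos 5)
You have assembled the right ingredients (the structured cobordism hypothesis and the Radford property from Theorem~\ref{thm:TCisRadford}), but the way you combine them has a genuine gap: you misidentify what descent datum the cobordism hypothesis actually asks for. A 3-framing on a manifold of dimension $\leq 2$ is an $(X,\xi)$-structure with $X = O(3)/O(2) \simeq S^2$, so by Theorem~\ref{thm:chstruc} a symmetric monoidal functor $\mathrm{Bord}^{3\text{-}\mathrm{fr}}_2 \ra \TC_{(3,2)}$ corresponds to an $\Omega S^2$-homotopy fixed point among the $2$-dualizable objects, where $\Omega S^2 \simeq \Omega(O(3)/O(2))$ acts through $SO(2)$, hence through the \emph{square} of the Serre automorphism. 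Since $\Omega S^2 = \Omega\Sigma S^1$, such a fixed point structure on $\cC$ is precisely a null homotopy of $\cS_\cC^2$ (equivalently an equivalence $\cS_\cC^{-1} \simeq \cS_\cC$), and this is exactly what Corollary~\ref{cor:serreinvserre}, via Theorem~\ref{thm:Cat_Radford}, extracts from the Radford adjoints. Your proposal never makes this identification: you treat the descent datum as being the adjoints $u^R$, $v^R$ themselves, i.e.\ as an extra layer of dualizability, and your ``main obstacle'' (whether further adjoints are secretly required) is a symptom of that conflation. For a 2-dimensional structured theory, 2-dualizability (Theorem~\ref{thm:TC_is_2Dualizable}) is already all the dualizability the cobordism hypothesis demands; what is missing is homotopy fixed point data, and the Radford property enters only because it produces that data.

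The step you propose in order to close this gap is itself unproven and is where all the difficulty would sit: the claim that $\mathrm{Bord}^{3\text{-}\mathrm{fr}}_2$ is obtained from $\FrBord_2$ by ``freely adjoining the Radford bordism and inverting it'' is a presentation of an $(\infty,2)$-category of structured bordisms that neither the paper nor your sketch establishes. A Morse decomposition of each individual 3-framed surface into elementary pieces does not by itself yield a functor out of an $(\infty,2)$-category; one must also control how decompositions vary in families (Cerf-theoretic coherence), which is precisely the work the structured cobordism hypothesis is invoked to avoid. (Note also that $\mathrm{Bord}^{3\text{-}\mathrm{fr}}_2$ contains no 3-manifolds, so ``3-manifold witnesses of invertibility'' cannot occur among its generators; invertibility of the Radford 2-morphism there is a statement about the space of 3-framed surfaces.) The intended argument is the short one: $\cC$ is 2-dualizable, the $\Omega(O(3)/O(2))$-action on $\TC$ is through the square of the Serre automorphism, and the Radford equivalence of Corollary~\ref{cor:serreinvserre} supplies the required trivialization, whence Theorem~\ref{thm:chstruc} gives the extension $\widetilde{\cF_\cC}$.
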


\nid The proof assumes more familiarity with the cobordism hypothesis than we have heretofore presumed (see Theorem~\ref{thm:chstruc}), and the reader can skip it without consequence for the remainder of the book.

\begin{proof}
As before, let $\cF_\cC$ denote the 2-framed 2-dimensional field theory associated to the finite tensor category $\cC$.  By the cobordism hypothesis, a descent of $\cF_\cC$ to a 3-framed theory is given by providing $\cC$ with the structure of an $\Omega(O(3)/O(2))$-homotopy fixed point.  Here $\Omega(O(3)/O(2))$ acts on $\TC$ via the map $\Omega(O(3)/O(2)) \simeq \Omega S^2 \xra{2} SO(2) \ra \mathrm{Aut}(\TC)$, where the $SO(2)$ action on $\TC$ is via the Serre automorphism.  Such a homotopy fixed point is precisely determined by a null homotopy of the square of the Serre automorphism, which is provided for any Radford object by Corollary~\ref{cor:serreinvserre}.
\end{proof}

The next step is to study the invariants of the 3-framed 2-dimensional field theory $\widetilde{\cF_\cC}$ given by this corollary.  However, for the 3-framed manifolds of primary interest, we can directly construct the invariants in question without appealing to the existence of a full-fledged field theory; in particular, none of the results in the remainder of Section~\ref{sec:radfordftc} depend on the cobordism hypothesis.

\subsection{A topological proof of the quadruple dual theorem} \label{sec:topquaddual}

For our purposes, the most important 3-framed 2-manifold is the Radford bordism, depicted as an immersed surface in Figure~\ref{fig:Radford_bordism}.  Recall that this is the unique genus-zero 3-framed bordism from the left adjoint $\ev^L$ of the evaluation bordism to the right adjoint $\ev^R$ of the evaluation bordism.  As described in Section~\ref{sec:Serre}, the inverse loop and loop bordisms are both obtained by composing the bordisms $\ev^L$ and $\ev^R$, respectively, with the evaluation bordism $\ev$.  As such, the Radford bordism may just as well be viewed as a bordism from the inverse loop bordism to the loop bordism.  

By Theorem~\ref{thm:serreisdoubledual}, the field theory $\cF_\cC$ takes the inverse loop and loop bordisms to the identity bimodule twisted, respectively by the double left and double right dual: $\cF_\cC(\cS^{-1}) = {}_{\langle \mathfrak{l l} \rangle \cC} \cC_\cC$ and $\cF_\cC(\cS) = {}_{\langle \mathfrak{r r} \rangle \cC} \cC_\cC$.  The field theory $\widetilde{\cF_\cC}$ therefore takes the Radford bordism to a bimodule isomorphism $\cR_\cC : {}_{\langle \mathfrak{l l} \rangle \cC} \cC_\cC \xra{\simeq} {}_{\langle \mathfrak{r r} \rangle \cC} \cC_\cC$, called the Radford equivalence.  As described in Lemma~\ref{lem:BimoduleToFunctor}, any such bimodule isomorphism provides a comparison between the two twisting functors, in this case between the double left and double right dual.  The quadruple dual theorem follows immediately:
\begin{theorem} \label{thm:quaddual}
Let $\cC$ be a finite tensor category.  There is a canonical tensor-invertible object $D \in \cC$ and a canonical monoidal natural isomorphism
\[
{}^{**}(-) \simeq D^{-1} \otimes (-)^{**} \otimes D.
\]
Equivalently, there is a canonical monoidal natural isomorphism
\[
D \otimes (-) \otimes D^{-1} \simeq (-)^{****}.
\] 
\end{theorem}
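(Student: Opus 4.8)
The plan is to read the statement off from the Radford equivalence together with the identification of the Serre automorphism of $\cC$ as a double-dual twist. First, by Theorem~\ref{thm:TCisRadford} the finite tensor category $\cC$ is a Radford object of $\TC$, so Corollary~\ref{cor:serreinvserre} supplies a canonical equivalence $\widetilde{\cR}_\cC : \cS_\cC^{-1} \xra{\simeq} \cS_\cC$ from the inverse Serre automorphism to the Serre automorphism of $\cC$; by the remark following Definition~\ref{def:radfordequiv}, the equivalence class of this map depends only on $\cC$. Next I would invoke Theorem~\ref{thm:serreisdoubledual}, which provides canonical identifications $\cS_\cC \simeq {}_{\langle \mathfrak{rr} \rangle \cC}\cC_\cC$ and $\cS_\cC^{-1} \simeq {}_{\langle \mathfrak{ll} \rangle \cC}\cC_\cC$, where $\mathfrak{r}$ and $\mathfrak{l}$ denote the right and left dual functors. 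Transporting $\widetilde{\cR}_\cC$ across these identifications yields a $\cC$--$\cC$-bimodule equivalence
\[
\cR_\cC : {}_{\langle \mathfrak{ll} \rangle \cC}\cC_\cC \xra{\simeq} {}_{\langle \mathfrak{rr} \rangle \cC}\cC_\cC .
\]

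Now I would apply Lemma~\ref{lem:BimoduleToFunctor} with $\cD = \cC$, $\cF = \mathfrak{ll} = {}^{**}(-)$, and $\cG = \mathfrak{rr} = (-)^{**}$, both of which are genuine tensor autoequivalences of $\cC$ since the double dual is covariant and monoidal. The lemma converts $\cR_\cC$ into a pair $(D,\varphi)$ consisting of a tensor-invertible object $D \in \cC$ and a monoidal natural isomorphism $\varphi : {}^{**}(-) \xra{\simeq} D^{-1}\otimes(-)^{**}\otimes D$, which is the first assertion of the theorem; canonicity is automatic because Lemma~\ref{lem:BimoduleToFunctor} is an equivalence of categories, so $(D,\varphi)$ is determined up to unique isomorphism by $\cR_\cC$, which in turn depends only on $\cC$. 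For the equivalent formulation, recall that $(-)^{**}$ and ${}^{**}(-)$ are mutually inverse tensor autoequivalences, since $({}^*x)^* \cong x \cong {}^*(x^*)$ monoidally, so $\mathfrak{rr}\circ\mathfrak{ll} \cong \id \cong \mathfrak{ll}\circ\mathfrak{rr}$. Rewriting $\varphi$ as $(-)^{**} \simeq D\otimes{}^{**}(-)\otimes D^{-1}$ and precomposing with $(-)^{**}$---equivalently, evaluating at $x^{**}$ and using ${}^{**}(x^{**})\cong x$ and $(x^{**})^{**}= x^{****}$---then produces the monoidal natural isomorphism $(-)^{****}\simeq D\otimes(-)\otimes D^{-1}$.

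The substantive work has already been carried out in Theorem~\ref{thm:TCisRadford} (the exactness of $\ev_\cC \boxtimes \ev_\cC^R$, hence the existence of the two Radford adjoints, which rests on Theorem~\ref{thm:tensor-exactness}) and in Theorem~\ref{thm:serreisdoubledual} (the explicit computation of the Serre bordism invariant via functor duals, which rests on Propositions~\ref{prop:dual-formula-for-adjoints} and~\ref{prop:FunctorsAsATensorPdt} and Lemma~\ref{lemma:DualTwist}); the present argument is purely an assembly of these results through Corollary~\ref{cor:serreinvserre} and Lemma~\ref{lem:BimoduleToFunctor}. The only point demanding genuine care is the bookkeeping of conventions: one must track the left/right-dual conventions and the side on which Lemma~\ref{lem:BimoduleToFunctor} places the conjugating object so that $D$ acts on the side claimed in the theorem, and one must check that the canonical isomorphisms $\mathfrak{rr}\circ\mathfrak{ll} \cong \id \cong \mathfrak{ll}\circ\mathfrak{rr}$ used in passing from the first formula to the second are monoidal, so that the monoidality of $\varphi$ propagates.
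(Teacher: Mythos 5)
Your proposal is correct and follows essentially the same route as the paper: the paper also obtains the bimodule equivalence $\cR_\cC : {}_{\langle \mathfrak{l l} \rangle \cC} \cC_\cC \xra{\simeq} {}_{\langle \mathfrak{r r} \rangle \cC} \cC_\cC$ from the Radford-object structure of $\cC$ (Theorem~\ref{thm:TCisRadford}) together with the identification of the Serre and inverse Serre automorphisms as the double-dual twists (Theorem~\ref{thm:serreisdoubledual}), and then applies Lemma~\ref{lem:BimoduleToFunctor} to extract the pair $(D,\varphi)$. Your extra remarks---that the equivalence depends only on $\cC$, and that the passage to the second formula uses the monoidal inverse relation between ${}^{**}(-)$ and $(-)^{**}$---are points the paper leaves implicit but are handled correctly.
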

\nid (As described in the proof of Lemma~\ref{lem:BimoduleToFunctor}, the object $D$ is the image of $1 \in \cC$ under the Radford equivalence $\cR_\cC : {}_{\langle \mathfrak{l l} \rangle \cC} \cC_\cC \ra {}_{\langle \mathfrak{r r} \rangle \cC} \cC_\cC$, and the monoidal natural isomorphism in the theorem is simply given by the left $\cC$-module structure of the Radford equivalence.  The object $D$ is usually referred to simply as the ``distinguished invertible object".)

This result was proved for categories of modules over a Hopf algebra by Radford~\cite{MR0407069}, and then proved for finite tensor categories over an algebraically closed field by Etingof--Nikshych--Ostrik~\cite{MR2097289}.  Though in principle our result generalizes ENO's theorem (to perfect fields), the real purpose of discussing the result here is to provide a topological explanation of this a-priori highly algebraic result, by seeing it as an immediate corollary of the Dirac belt trick.

\subsection{A computation of the Radford equivalence} \label{sec:computeradford}

We know more than merely that the Radford bordism exists, though.  As in Figure~\ref{fig:Radford_bordism}, we have an explicit handle decomposition of the Radford bordism as the following composite:
\[
\cb{
\begin{tikzpicture}
						\draw[linestyle,fuzzright] (4,7) to [looseness=1.6,out = 180, in = 180] (4,6);
						\begin{pgfonlayer}{background}
							\draw[->,outstyle] (4,7) -- +(0:\arrowlength);
							\draw[->,outstyle] (4,6) -- +(0:\arrowlength);
						\end{pgfonlayer}
\end{tikzpicture}
}
\quad \xra{v_2^R \:\boxtimes\: \id_{\ev_\cC^L}} \quad
\setlength{\linewid}{15pt}
\setlength{\fuzzwidth}{25pt}
\setlength{\arrowlength}{80pt}
\setlength{\arrowwidth}{7.5pt}
\cb{
\scalebox{.1}{
\begin{tikzpicture}
\draw[linestyle,fuzzright] (15,5) to [looseness=1.6,out=180,in=180] (15,-5);
\begin{pgfonlayer}{background}
	\draw[-scalehead,outstyle] (15,5) -- +(0:\arrowlength);
	\draw[-scalehead,outstyle] (15,-5) -- +(0:\arrowlength);
\end{pgfonlayer}
\draw[linestyle,fuzzright]
(0,5) to [out=180, in=70] (-4,2.75)
	to [looseness=1.6, out=-110, in=-90] (-7,2.75)
	to [looseness=1.6, out=90, in=110] (-4,2.75)
	to [out=-70, in=70] (-4,-2.75)
	to [looseness=1.6, out=-110, in=-90] (-7,-2.75)
	to [looseness=1.6, out=90, in=110] (-4,-2.75)
	to [out=-70, in=180] (0,-5) arc (-90:90:5cm);
\end{tikzpicture}
}
}
\setlength{\linewid}{1.5pt}
\setlength{\fuzzwidth}{2.5pt}
\setlength{\arrowlength}{8pt}
\setlength{\arrowwidth}{.75pt}
\quad \xra{\id_{\ev_\cC^R} \:\boxtimes\: v_1}	 \quad					
\setlength{\linewid}{15pt}
\setlength{\fuzzwidth}{25pt}
\setlength{\arrowlength}{80pt}
\setlength{\arrowwidth}{7.5pt}
\cb{
\scalebox{.1}{
\begin{tikzpicture}
\draw[linestyle,fuzzright]
(15,5) to (0,5) to [out=180, in=70] (-4,2.75)
	to [looseness=1.6, out=-110, in=-90] (-7,2.75)
	to [looseness=1.6, out=90, in=110] (-4,2.75)
	to [out=-70, in=70] (-4,-2.75)
	to [looseness=1.6, out=-110, in=-90] (-7,-2.75)
	to [looseness=1.6, out=90, in=110] (-4,-2.75)
	to [out=-70, in=180] (0,-5) to (15,-5);
\begin{pgfonlayer}{background}
	\draw[-scalehead,outstyle] (15,5) -- +(0:\arrowlength);
	\draw[-scalehead,outstyle] (15,-5) -- +(0:\arrowlength);
\end{pgfonlayer}
\end{tikzpicture}
}
}
\setlength{\linewid}{1.5pt}
\setlength{\fuzzwidth}{2.5pt}
\setlength{\arrowlength}{8pt}
\setlength{\arrowwidth}{.75pt}
\]

\nid By directly analyzing each handle, we can compute the Radford equivalence explicitly.  This computation is somewhat technical and for most readers can safely be skipped.  It is most convenient to express the answer in terms of categories of internal modules, and this will allow a direct comparison with the isomorphism constructed by ENO.

Recall from Section~\ref{sec:enrichedendo} that for a finite tensor category $\cC$, the evaluation bimodule ${}_{\cC \boxtimes \cC^\mp} \cC_\Vect$ is equivalent (as a bimodule) to the category $\Mod{}{A}(\cC \boxtimes \cC^\mp)$, where $A := \IHom_{\mod{\cC \boxtimes \cC^\mp}{}}(1_\cC,1_\cC) \in \cC \boxtimes \cC^\mp$ is the algebra object of enriched endomorphisms of the unit of the $\cC \boxtimes \cC^\mp$-module $\cC$.  For brevity, we abbreviate $\Mod{}{A}(\cC \boxtimes \cC^\mp)$ by $\Mod{}{A}$, and similarly for related categories of modules; that is, all categories of modules occurring in this section occur inside $\cC \boxtimes \cC^\mp$.  By Corollary~\ref{cor:dualamod}, we can express the adjoints to evaluation as follows: $\ev_\cC^L \simeq {}^*(\Mod{}{A}) \simeq \Mod{A}{}$ and $\ev_\cC^R \simeq (\Mod{}{A})^* \simeq \Mod{A^{**}}{}$.  Recall from Lemma~\ref{lem:dualing-amod} that the object $A^* \in \cC \boxtimes \cC^\mp$ naturally has the structure of an internal $A^{**}$--$A$-bimodule.
\begin{proposition} \label{prop:computeradford}
Let $\cC$ be a finite tensor category, and let $A := \linebreak \IHom_{\mod{\cC \boxtimes \cC^\mp}{}}(1_\cC,1_\cC) \in \cC \boxtimes \cC^\mp$ be the algebra object such that ${}_{\cC \boxtimes \cC^\mp} \cC_\Vect \simeq \Mod{}{A}(\cC \boxtimes \cC^\mp)$.  The Radford equivalence $\cR_\cC$ is given by the map
\begin{align*}
\Mod{A}{} &\ra \Mod{A^{**}}{}, \\
M &\mapsto A^* \otimes_A M.
\end{align*}
\end{proposition}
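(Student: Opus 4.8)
The plan is to compute the image under the field theory $\widetilde{\cF_\cC}$ of each of the two handles in the Morse decomposition of the Radford bordism displayed just above the statement, namely $\id_{\ev_\cC^L} \circledcirc v_2^R$ (the index-2 handle) and $v_1 \circledcirc \id_{\ev_\cC^R}$ (the index-1 handle), and then compose. The key is that $\cR_\cC = (\tilde{v}_\cC \circledcirc \id_{\ev_\cC^R}) \circ (\id_{\ev_\cC^L} \circledcirc v_\cC^R)$ by Definition~\ref{def:radfordequiv}, so all I need to identify are the bimodule functor $v_\cC^R$ (the right adjoint of the counit $v_\cC : \ev_\cC \circ \ev_\cC^R \ra \id_\Vect$) and the bimodule functor $\tilde{v}_\cC$ (the counit of the adjunction $\ev_\cC^L \dashv \ev_\cC$), each realized explicitly as a functor between categories of internal modules in $\cC \boxtimes \cC^\mp$, and then trace through the composite.

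First I would set up the internal-module dictionary. By Section~\ref{sec:enrichedendo}, $\ev_\cC \simeq \Mod{}{A}$; by Corollary~\ref{cor:dualamod}, $\ev_\cC^L \simeq {}^*(\Mod{}{A}) \simeq \Mod{A}{}$ and $\ev_\cC^R \simeq (\Mod{}{A})^* \simeq \Mod{A^{**}}{}$; and by Lemma~\ref{lem:dualing-amod}, $A^* \in \cC \boxtimes \cC^\mp$ carries a natural $A^{**}$--$A$-bimodule structure, so $A^* \otimes_A (-) : \Mod{A}{} \ra \Mod{A^{**}}{}$ is a well-defined functor and is in fact an equivalence (its inverse being ${}^{**}A \otimes_{A^{**}} (-)$, using that $A^* \otimes_A {}^{**}A \cong {}^{**}(A \otimes_A A) \cong {}^{**}A$ appropriately — this is the internal analog of the classical dual-bimodule computation). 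The counit $v_\cC : \ev_\cC \circ \ev_\cC^R \ra \id_\Vect$ and unit $u_\cC : \id \ra \ev_\cC^R \circ \ev_\cC$ of the adjunction $\ev_\cC \dashv \ev_\cC^R$ come, via Proposition~\ref{prop:evcoev} and Proposition~\ref{prop:dual-formula-for-adjoints}, from the standard evaluation/coevaluation of the functor dual; concretely $\ev_\cC \circ \ev_\cC^R$ is the monad $- \otimes_? A^{**} \otimes_? A$-type composite on $\cC \boxtimes \cC^\mp$, and I would identify $v_\cC^R$ as the comultiplication-type map for the corresponding Frobenius-ish structure (this is where the calculations in \cite[\S 3]{MR2097289} that Lemma~\ref{lem:dualing-amod} cites become the relevant template). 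Then the counit $\tilde{v}_\cC : \ev_\cC^L \circ \ev_\cC \ra \id$ is the evaluation functor $\Mod{A}{} \boxtimes_{\cC \boxtimes \cC^\mp} \Mod{}{A} \simeq \Mod{A}{A} \ra \cC \boxtimes \cC^\mp$, $(L, M) \mapsto L \otimes_A M$ forgetting the bimodule structure — essentially $- \otimes_A -$.

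Second, I would carry out the composite. Tracking $M \in \Mod{A}{} \simeq \ev_\cC^L$ through $\id_{\ev_\cC^L} \circledcirc v_\cC^R$ produces $M$ tensored with whatever bimodule $v_\cC^R$ names — I expect this to be $A^* \otimes_A(-)$ landing in $\ev_\cC^L \circ \ev_\cC \circ \ev_\cC^R \simeq \Mod{A}{A} \boxtimes \Mod{A^{**}}{}$, represented by $M \mapsto (M, A^*, -)$ — and then applying $\tilde{v}_\cC \circledcirc \id_{\ev_\cC^R}$, i.e. $- \otimes_A -$ on the first two slots, gives $M \otimes_A A^* \otimes_A(-)$, equivalently (rebracketing and using the bimodule structures) $A^* \otimes_A M$ as an object of $\Mod{A^{**}}{} \simeq \ev_\cC^R$. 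So $\cR_\cC(M) \cong A^* \otimes_A M$, which is exactly the claimed formula. Finally I would note, as in Remark~\ref{rmk:Deligne_pdt_as_mod_functor} and the sentence after the statement, that this matches the ENO isomorphism; and I would remark that since $A^*$ viewed via $\cR_\cC$ sends the free module $A$ to $A^* \otimes_A A \cong A^*$, the distinguished invertible object $D$ of Theorem~\ref{thm:quaddual} is recovered as $[A^*]$ with its canonical tensor-invertibility.

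The main obstacle will be the bookkeeping in identifying $v_\cC^R$ precisely — i.e. pinning down the correct adjoint of the counit as a concrete bimodule functor between internal-module categories, including getting the left/right module structures and the double-dual twists (cf.\ Lemma~\ref{lemma:DualTwist}) exactly right, since these are precisely the places where the literature has been off by a double dual. A careful, consistent use of the conventions of Section~\ref{sec:conventions} together with the adjunction chains for $x^* \otimes (-) \dashv x \otimes (-) \dashv {}^*x \otimes(-)$ (and their module-action analogs, as in the proof of the lemma preceding Proposition~\ref{prop:dual-formula-for-adjoints}) should make every such identification forced rather than chosen, but verifying compatibility of all the structure maps through the two-step composite is the delicate part; everything else is diagram-chasing of a kind already exemplified in the excerpt.
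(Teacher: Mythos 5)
Your strategy coincides with the paper's: decompose $\cR_\cC=(\tilde v_\cC\circledcirc\id_{\ev_\cC^R})\circ(\id_{\ev_\cC^L}\circledcirc v_\cC^R)$, translate $\ev_\cC$, $\ev_\cC^L$, $\ev_\cC^R$ into $\Mod{}{A}$, $\Mod{A}{}$, $\Mod{A^{**}}{}$ via Corollary~\ref{cor:dualamod}, and trace an object through the two handles. But the entire content of the proof is the part you defer: you never actually compute $\tilde v_\cC$ (the counit $v_1$ of $\ev_\cC^L\dashv\ev_\cC$), $v_\cC$ (the counit $v_2$ of $\ev_\cC\dashv\ev_\cC^R$), or its right adjoint $v_\cC^R$ --- you only say you ``expect'' $v_\cC^R$ to insert $A^*$, with a Frobenius-comultiplication heuristic in place of an argument. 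The paper pins these down by unwinding the chain Corollary~\ref{cor:dualamod} $\rightarrow$ Proposition~\ref{prop:dual-formula-for-adjoints} $\rightarrow$ Proposition~\ref{prop:evcoev}, evaluating the resulting composites on free $A$-modules, and using the concrete adjunctions (induction $\dashv$ forgetful, with right adjoint $X\mapsto X\otimes A^*$, and $M\mapsto\Hom_{\Mod{}{A}}(M,A^*)^\vee\dashv X\mapsto A^*\otimes X$); this yields $v_1(M\boxtimes N)=M\otimes_A N$, identifies $v_2$ on $\ev_\cC\circ\ev_\cC^R\simeq\Mod{A^{**}}{A}$ as $M\mapsto\Hom_{\Mod{A^{**}}{A}}(M,A^*)^\vee$, and only then lets one read off $v_2^R(k)=A^*$. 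Without some such computation the claimed formula remains an expectation, and it is exactly this computation that excludes the double-dual ambiguities you yourself flag as the danger.

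The bookkeeping slips in your sketch show the deferral is not innocuous. The counit $\tilde v_\cC:\ev_\cC^L\circ\ev_\cC\ra\id_{\cC\boxtimes\cC^\mp}$ has source the composite bimodule $\Mod{}{A}\boxtimes\Mod{A}{}$ (Deligne tensor over $\Vect$), not $\Mod{A}{}\boxtimes_{\cC\boxtimes\cC^\mp}\Mod{}{A}\simeq\Mod{A}{A}$; the latter is $\ev_\cC\circ\ev_\cC^L$, a $\Vect$--$\Vect$ bimodule, and ``forgetting the bimodule structure'' is not the map $-\otimes_A-$. Likewise the middle term of the Radford composite is $\ev_\cC^L\circ\ev_\cC\circ\ev_\cC^R\simeq\Mod{A^{**}}{A}\boxtimes\Mod{A}{}$, not $\Mod{A}{A}\boxtimes\Mod{A^{**}}{}$, and your trace-through ``$M\otimes_A A^*\otimes_A(-)$'' does not typecheck: $M$ is a left $A$-module and $A^*$ an $A^{**}$--$A$-bimodule, so there is nothing to rebracket --- whiskering by $v_2^R$ sends $M$ to $A^*\boxtimes M$ and the $v_1$-contraction gives $A^*\otimes_A M$ directly. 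Since the proposition lives precisely at the level of these handedness and double-dual decorations, these identifications have to be carried out, not gestured at.
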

\nid ENO's equivalence $\Mod{A}{} \simeq \Mod{A^{**}}{}$ was also given by tensoring with the bimodule ${}_{A^{**}} {A^*}_A$; this proposition therefore establishes that our Radford equivalence agrees with ENO's equivalence, and in particular that our distinguished invertible object $D$ agrees with ENO's distinguished invertible object.  We will need these facts to utilize ENO's computation of the distinguished invertible object in the semisimple case and, later on, their characterization of the Radford equivalence via a quantum trace property.
\begin{proof}
By definition, the Radford equivalence is the composite $(\id_{\ev_\cC^R} \boxtimes v_1) \circ (v_2^R \boxtimes \id_{\ev_\cC^L}) : \ev_\cC^L \ra \ev_\cC^R$, where $v_1$ is the counit of the adjunction $\ev_\cC^L \dashv \ev_\cC$, and $v_2$ is the counit of the adjunction $\ev_\cC \dashv \ev_\cC^R$.  Our initial task, therefore, is to compute $v_1$ and $v_2$ explicitly in terms of categories of internal modules.

We first check that the counit map $v_1$ is given by
\begin{align*}
\Mod{}{A} \boxtimes \Mod{A}{} &\ra \cC \boxtimes \cC^\mp \\
M \boxtimes N &\mapsto M \otimes_A N.
\end{align*}
Combining in turn the equivalence in Corollary~\ref{cor:dualamod}, the equivalence in Proposition~\ref{prop:dual-formula-for-adjoints}, and the counit map from Proposition~\ref{prop:evcoev}, we see that the counit $v_1$ is the following composite:
\[
\begin{array}{rcl}
\Mod{}{A} \boxtimes \Mod{A}{} \!\!\!
&\xra{\simeq}& \!\!\! \Mod{}{A} \boxtimes {}^*(\Mod{}{A})   \hspace*{3in}
\\
M \boxtimes N \!\!\!
&\mapsto& \!\!\! M \boxtimes {}^*(N^*) 
\end{array}
\] \nopagebreak \vspace{-6pt}
\[
\begin{array}{rcl}
\hspace*{1.1in}
&\xra{\simeq}& \!\!\! \Mod{}{A} \boxtimes \Fun_{\Mod{\cC \boxtimes \cC^\mp}{}}(\Mod{}{A},\cC \boxtimes \cC^\mp) \xra{\text{eval}} \cC \boxtimes \cC^\mp
\\
&\mapsto& \!\!\! M \boxtimes (- \otimes N^*)^L
\end{array}
\] 
Because every $A$-module is a finite colimit of free $A$-modules, it suffices to check that this (right exact) composite functor agrees with $M \boxtimes N \mapsto M \otimes_A N$ on objects of the form $M \boxtimes A$.  Observe that the $\cC \boxtimes \cC^\mp$-module functor $\cC \boxtimes \cC^\mp \ra \Mod{}{A}$, $X \mapsto X \otimes A^*$ is right adjoint to the forgetful functor $\Mod{}{A} \ra \cC \boxtimes \cC^\mp$, $N \mapsto N$.  (The left adjoint to the forgetful functor is the usual induction functor $M \mapsto M \otimes A$.)  Now compute $v_1(M \boxtimes A)$ to be
\[
M \boxtimes A \mapsto M \boxtimes {}^*(A^*) \mapsto M \boxtimes (- \mapsto -) \mapsto M,
\]
as required.

We next check that the counit map $v_2$ is given by
\begin{align*}
\Mod{A^{**}}{} \boxtimes_{\cC \boxtimes \cC^\mp} \Mod{}{A} \simeq \Mod{A^{**}}{A} &\ra \Vect \\
M &\mapsto \Hom_{\Mod{A^{**}}{A}}(M,A^*)^\vee.
\end{align*}
By the same reasoning as in the previous paragraph, we know that the counit $v_2$ is the following composite:\pagebreak
\[
\begin{array}{rcl}
\Mod{A^{**}}{} \boxtimes \Mod{}{A} \!\!\!
&\xra{\simeq}& \!\!\! (\Mod{}{A})^* \boxtimes \Mod{}{A} \hspace*{3in}
\\
M \boxtimes N \!\!\!
&\mapsto& \!\!\! ({}^* M)^* \boxtimes N 
\end{array}
\] \nopagebreak \vspace{-6pt}
\[
\begin{array}{rcl}
\hspace*{1.9in}
&\xra{\simeq}& \!\!\! \Fun_\Vect(\Mod{}{A},\Vect) \boxtimes \Mod{}{A} 
\xra{\text{eval}} \Vect
\\
&\mapsto& \!\!\! ({}^* M \otimes -)^L \boxtimes N
\end{array}
\] 
Here all of the Deligne tensor products are implicitly over $\cC \boxtimes \cC^\mp$.  It suffices to check that this composite functor agrees with $M \boxtimes N \mapsto \Hom_{\Mod{A^{**}}{A}}(M \otimes N,A^*)^\vee$ on objects of the form $A^{**} \boxtimes N$.  Observe that the linear functor $\Vect \ra \Mod{}{A}$, $X \mapsto A^* \otimes X$ is right adjoint to the functor $\Mod{}{A} \ra \Vect$, $M \mapsto \Hom_{\Mod{}{A}}(M,A^*)^\vee$.  Now compute $v_2(A^{**} \boxtimes N)$ to be
\[
A^{**} \boxtimes N \mapsto (A^*)^* \boxtimes N \mapsto (- \mapsto \Hom_{\Mod{}{A}}(-,A^*)^\vee) \boxtimes N \mapsto \Hom_{\Mod{}{A}}(N,A^*)^\vee,
\]
and note that $\Hom_{\Mod{}{A}}(N,A^*)^\vee \cong \Hom_{\Mod{A^{**}}{A}}(A^{**} \otimes N,A^*)^\vee$.

The right adjoint of the map $\Mod{A^{**}}{A} \ra \Vect$, $M \mapsto \Hom_{\Mod{A^{**}}{A}}(M,A^*)^\vee$ is the map $\Vect \ra \Mod{A^{**}}{A}$, $k \mapsto A^*$.  The Radford equivalence is therefore the composite
\[
M \mapsto A^* \boxtimes M \mapsto A^* \otimes_A N,
\]
as desired.
\end{proof}

Provided we are willing to make strong semisimplicity assumptions about our tensor category, the statement of Theorem~\ref{thm:quaddual} simplifies.  An object of a semisimple finite linear category is called absolutely simple if it is simple and remains so after arbitrary base changes. \begin{corollary} \label{cor:ssquaddual}
Let $\cC$ be a semisimple finite tensor category whose unit $1 \in \cC$ is absolutely simple. There is a canonical monoidal natural equivalence between the identity functor and the quadruple dual functor:
\[
(-) \simeq (-)^{****}.
\]
\end{corollary}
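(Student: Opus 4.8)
The plan is to derive the corollary from Theorem~\ref{thm:quaddual}. That theorem supplies a tensor-invertible object $D \in \cC$ together with a monoidal natural isomorphism $D \otimes (-) \otimes D^{-1} \simeq (-)^{****}$, so it suffices to prove that $D$ is isomorphic to the unit $1_\cC$. Granting this, choose any isomorphism $\varphi \colon D \xrightarrow{\sim} 1_\cC$; the assignment $x \mapsto \varphi \otimes \id_x \otimes \varphi^{-1}$ is then a monoidal natural isomorphism from the conjugation functor $D \otimes (-) \otimes D^{-1}$ to $\id_\cC$. Since $1_\cC$ is absolutely simple we have $\End_\cC(1_\cC) = k$, so the only ambiguity in $\varphi$ is rescaling by $t \in k^\times$; as the scalar appears once with exponent $+1$ and once with exponent $-1$, the resulting natural isomorphism does not depend on the choice of $\varphi$. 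Composing with the isomorphism of Theorem~\ref{thm:quaddual} thus produces a canonical monoidal natural equivalence $\id_\cC \simeq (-)^{****}$.

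It remains to establish $D \cong 1_\cC$, and the first step is a reduction to an algebraically closed base field. Absolute simplicity of the unit is by definition stable under base change, and over a perfect field semisimplicity of a tensor category is both preserved and reflected by base extension (as used in the proof of Corollary~\ref{cor:charzerosep} and discussed in Remark~\ref{rem:nonsimp}); moreover $D$ is built entirely from the duality data of $\cC$ and the adjunctions among the evaluation bimodules, all of which commute with base change. Hence, writing $\overline{\cC}$ for the base extension of $\cC$ to the algebraic closure, it is enough to show $\overline{D} \cong 1_{\overline{\cC}}$, and we may assume $\cC$ is a fusion category over an algebraically closed field.

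In the fusion case, Proposition~\ref{prop:computeradford} identifies the Radford equivalence with the functor $\Mod{A}{} \to \Mod{A^{**}}{}$, $M \mapsto A^* \otimes_A M$, where $A := \IHom_{\cC \boxtimes \cC^\mp}(1_\cC, 1_\cC) \cong \bigoplus_i L_i \boxtimes {}^* L_i$ carries the normalized Frobenius algebra structure of Section~\ref{sec:enrichedendo}; via Lemma~\ref{lem:BimoduleToFunctor} this means our $D$ is precisely the distinguished invertible object attached to $\cC$ by Etingof--Nikshych--Ostrik~\cite{MR2097289}. The assertion $D \cong 1_\cC$ is then equivalent to $A^* \cong A$ as $A^{**}$--$A$-bimodules, so that $A^* \otimes_A (-) \cong \id$. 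When $\dim \cC \neq 0$ (which by the Etingof--Nikshych--Ostrik dimension theorem recalled before Corollary~\ref{cor:charzerosep} is automatic in characteristic zero), Theorem~\ref{thm:NonzeroDimension} exhibits $A$ as a separable Frobenius algebra object of the semisimple tensor category $\cC \boxtimes \cC^\mp$, and the rescaled comultiplication $\Delta/\dim\cC$ furnishes the required bimodule isomorphism, equivalently shows that the Nakayama automorphism of $A$ is inner. For the residual positive-characteristic case with $\dim \cC = 0$ (e.g.\ pointed categories such as $\Vect[\ZZ/p]$ in characteristic $p$, where the double dual is already trivial) one invokes instead the computation of the distinguished invertible object in~\cite{MR2097289}, now legitimately because the identification of objects just made. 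I expect the main obstacle to be precisely this identification: carefully matching $D$ with the Etingof--Nikshych--Ostrik object through the chain of equivalences $\ev_\cC^L \simeq \Mod{A}{}$ and $\ev_\cC^R \simeq \Mod{A^{**}}{}$ of Section~\ref{sec:computeradford}, and keeping track of what the unit object of $\cC$ corresponds to along the way; once that bookkeeping is done, the triviality of $D$ is a short computation about Frobenius algebra objects in semisimple tensor categories.
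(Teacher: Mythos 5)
Your overall skeleton is the same as the paper's: reduce the corollary to showing the distinguished invertible object $D$ of Theorem~\ref{thm:quaddual} is trivial, note that the isomorphism $D \cong 1_\cC$ is unique up to a scalar which cancels in the conjugation functor (the absolute simplicity of the unit), base-change to the algebraic closure, and use the identification (via Proposition~\ref{prop:computeradford}) of $D$ with the Etingof--Nikshych--Ostrik distinguished invertible object. The paper finishes at this point simply by citing \cite[Cor.~6.4]{MR2097289}, which says that for a semisimple finite tensor category with simple unit over an algebraically closed field the distinguished invertible object is trivial --- with no hypothesis on the global dimension. Where you deviate is in trying to reprove this in the case $\dim\cC \neq 0$ by a direct Frobenius-algebra argument, and that is where there is a genuine gap.

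The problematic step is the claim that ``$D \cong 1_\cC$ is equivalent to $A^* \cong A$ as $A^{**}$--$A$-bimodules'' and that the rescaled comultiplication $\Delta/\dim\cC$ ``furnishes the required bimodule isomorphism, equivalently shows that the Nakayama automorphism of $A$ is inner.'' As written this does not even typecheck: $A$ is an $A$--$A$-bimodule while $A^*$ is an $A^{**}$--$A$-bimodule (Lemma~\ref{lem:dualing-amod}), so any comparison requires an algebra map $A^{**} \to A$, and ``$A^*\otimes_A(-)\cong\id$'' compares functors with different targets. More substantively, the separability splitting $\Delta/\dim\cC$ is a map $A \to A\otimes A$ and has no evident bearing on self-duality of $A$; what yields a self-duality is the Frobenius pairing $\lambda\circ m$, which exists for every fusion category (no dimension hypothesis) but only gives a one-sided module isomorphism $A \cong A^*$. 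The failure of that isomorphism to respect the bimodule structures, after the double-dual twists are tracked through the identifications $\ev_\cC^L \simeq \Mod{A}{}$, $\ev_\cC^R \simeq \Mod{A^{**}}{}$ and the composition with $\ev_\cC$, is exactly what $D$ measures, so the ``short computation'' you defer is the entire content of the statement, and the classical slogan ``separable $\Rightarrow$ symmetric $\Rightarrow$ inner Nakayama'' does not transfer without redoing precisely the bookkeeping you postpone. Since in the $\dim\cC = 0$ branch (which is nonempty in positive characteristic, e.g.\ $\Vect[\ZZ/p]$) you fall back on the ENO computation anyway, the clean repair is to cite \cite[Cor.~6.4]{MR2097289} uniformly, as the paper does; alternatively one can compute the image of $1_\cC$ under the Radford equivalence directly, as the paper remarks after Corollary~\ref{cor:ssquaddual}, but that computation must actually be carried out rather than attributed to separability.
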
 
\begin{proof}
When $\cC$ is a semisimple finite tensor category with simple unit, over an algebraically closed field, Etingof--Nikshych--Ostrik prove in~\cite[Cor 6.4]{MR2097289} that the distinguished invertible object $D$ is isomorphic to the unit $1$.  Note, for instance using the semisimplicity of $\cC$, that the property of two objects being isomorphic is invariant under base change.  The result now follows from Theorem~\ref{thm:quaddual}, because the functor $D^{-1} \otimes (-)^{**} \otimes D$ is canonically isomorphic to $(-)^{**}$.
\end{proof}

When $\cC$ is semisimple, it is possible to directly calculate the image of $1 \in \cC$ under the Radford equivalence $\ev_\cC^L \ra \ev_\cC^R$, without reexpressing the module categories as categories of internal modules; such a calculation provides a proof of Corollary~\ref{cor:ssquaddual} that does not rely on Proposition~\ref{prop:computeradford}.

\begin{remark}
We expect the absolute simplicity assumption in Corollary~\ref{cor:ssquaddual} can be removed, as follows.  It suffices to check that for any semisimple finite tensor category $\cC$ over an algebraically closed field, the distinguished invertible object $D$ is trivial.  As in Remark~\ref{rem:nonsimp}, the simple decomposition $\oplus \, 1_i$ of the unit of $\cC$ provides a decomposition of $\cC$ as $\oplus \, \cC_{ij}$.  The result of ENO mentioned in the above proof shows that the distinguished invertible object $D_i$ of each fusion category $\cC_{ii}$ is trivial.  One is left to check only that the distinguished invertible object of $\cC$ is the sum $\oplus_i \, D_i$ of the distinguished objects of the diagonal subtensor categories.
\end{remark}

\section[Adjoints of bimodule functors: sep. tensor cats are dualizable]{\for{toc}{Adjoints of bimodule functors: separable tensor cats are dualizable}\except{toc}{Adjoints of bimodule functors: separable tensor categories are fully dualizable}} \label{sec:separableisfd}

In this section we prove our last two main theorems, which together determine the fully dualizable finite tensor categories: separable tensor categories are fully dualizable, and any fully dualizable finite tensor category is separable.  We also identify the maximal fully dualizable sub-3-category of the 3-category of tensor categories, and thereby obtain a classification of 3-dimensional local framed field theories with defects.  (The reader who is content to restrict attention to characteristic zero can, by consulting Corollaries~\ref{cor:charzerosep} and~\ref{cor:charzeromodulesep}, simply replace the word `separable' by `finite semisimple' in the statements of this section.)

As we have already established the 2-dualizability of all finite tensor categories, it remains only to investigate when bimodule functors have adjoints.
\begin{proposition} \label{prop:bimodfunctadjoints}
Let ${}_\cC \cM_\cD$ and ${}_\cC \cN_\cD$ be finite semisimple bimodule categories between finite tensor categories.  Any bimodule functor $\cM \ra \cN$ admits both left and right adjoint bimodule functors.
\end{proposition}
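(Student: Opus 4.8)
The plan is to reduce the existence of adjoints to a linear (i.e.\ $\Vect$-level) statement, and then upgrade the linear adjoints to bimodule adjoints using the machinery already developed. First I would observe that a bimodule functor $\cF\colon \cM\to\cN$ is in particular a right exact linear functor between finite linear categories; by Proposition~\ref{prop:catismod} each of $\cM$ and $\cN$ is a category of finite-dimensional modules over a finite-dimensional algebra, and any right exact linear functor between such categories is given by tensoring with a bimodule, hence automatically has a right adjoint linear functor (the right adjoint being $\Hom$ out of that bimodule). For the left adjoint, I would use the semisimplicity hypothesis: a finite \emph{semisimple} linear category has all objects projective and injective, so any additive functor between them is exact, and in particular an additive functor out of $\cM$ has a left adjoint (one can construct it, for instance, from the right adjoint and the duality on the semisimple categories, or simply note that exact functors between finite categories have both adjoints). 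Either way, the underlying linear functor of $\cF$ has both a left and a right adjoint linear functor, which are not a priori right exact but that is harmless for what comes next.

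The key step is then to invoke Lemma~\ref{lma:module-adjoint}: for finite tensor categories $\cC,\cD$ and finite $\cC$--$\cD$-bimodule categories $\cM,\cN$, if a (not-necessarily-right-exact) $\cC$--$\cD$-bimodule functor $\cF\colon\cM\to\cN$ has a right (resp.\ left) adjoint as a linear functor, then that adjoint canonically acquires a $\cC$--$\cD$-bimodule structure, making it a $\cC$--$\cD$-bimodule functor (not-necessarily-right-exact) adjoint to $\cF$. Applying this twice — once on each side — produces a left adjoint bimodule functor $\cF^L$ and a right adjoint bimodule functor $\cF^R$, both of which live in the 2-category of $\cC$--$\cD$-bimodule categories, bimodule functors, and bimodule transformations. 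The only remaining point is right exactness: we work throughout in $\TC$, whose morphisms between bimodule categories are required to be right exact. But here the semisimplicity of $\cM$ and $\cN$ again saves us: a bimodule functor out of (or into) a finite semisimple bimodule category is automatically exact, since semisimplicity forces every short exact sequence to split, so any additive functor preserves exactness. Concretely one can cite Example~\ref{eg:semiexact} together with Theorem~\ref{Thm:ExactModCatOmnibus}(2): a semisimple bimodule category is exact as a $\cC\boxtimes\cD^\mp$-module category, and additive module functors out of an exact module category are exact. Hence $\cF^L$ and $\cF^R$ are right exact, so they are genuine 2-morphisms in $\TC$ and genuine adjoints of $\cF$ there.

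So the skeleton of the argument is: (1) underlying linear functors of bimodule functors between finite semisimple linear categories have both one-sided linear adjoints; (2) Lemma~\ref{lma:module-adjoint} promotes these to bimodule functor adjoints; (3) semisimplicity (via exactness, Example~\ref{eg:semiexact} and Theorem~\ref{Thm:ExactModCatOmnibus}(2)) guarantees the resulting adjoint bimodule functors are right exact, hence live in $\TC$. I expect the main obstacle — really the only place where care is needed rather than routine citation — to be step (3), the right-exactness check: one must be precise that the relevant exactness statement applies not merely to one-sided module functors but to \emph{bimodule} functors, which is handled by viewing a $\cC$--$\cD$-bimodule category as a left $\cC\boxtimes\cD^\mp$-module category and noting that semisimple such module categories are exact, so that Theorem~\ref{Thm:ExactModCatOmnibus}(2) applies. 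Everything else is an invocation of results already in the paper; in particular one does not need the heavier separability theory here, since the hypothesis is outright semisimplicity of the bimodule categories, not merely separability of the acting tensor categories.
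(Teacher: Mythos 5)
Your proof is correct and takes essentially the same route as the paper's: the underlying linear functor has both adjoints (the paper gets this by noting that any functor out of the semisimple $\cM$ is exact and that exact linear functors between finite linear categories admit both adjoints), those adjoints are right exact because their source $\cN$ is semisimple, and Lemma~\ref{lma:module-adjoint} upgrades them to bimodule functor adjoints. One parenthetical slip: a right exact functor \emph{into} a finite semisimple category need not be exact in general, but this is harmless since the functors whose exactness you actually need ($\cF^L$ and $\cF^R$) have the semisimple category $\cN$ as their source.
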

\begin{proof}
A functor out of a semisimple category is both left and right exact.  Any exact linear functor between finite linear categories admits both a left and a right adjoint~\cite{BTP}; because $\cN$ is also semisimple, those adjoints are themselves exact, in particular right exact.  By Lemma~\ref{lma:module-adjoint}, those adjoints can be promoted to adjoints as bimodule functors.
\end{proof}

\begin{remark}
In fact, the left and right adjoints of a bimodule functor $\cF: \cM \ra \cN$, between finite semisimple bimodule categories, are isomorphic, and can be described explicitly as a transpose-dual functor.  That is, let $\{m_i\}$ be a set of representatives of the isomorphism classes of simple objects of $\cM$, and similarly $\{n_i\}$ for $\cN$.  The functor $\cF$ is isomorphic to a functor $m_i \mapsto \bigoplus_j F_{ij} n_j$, where $F_{ij} \in \Vect$.  The transpose-dual functor $n_j \mapsto \bigoplus_i F_{ij}^* m_i$ is both a left and a right adjoint for $\cF$, where $F_{ij}^* \in \Vect$ is the ordinary dual vector space to $F_{ij}$.
\end{remark}

With adjoints for bimodule functors at hand, we can now assemble the pieces to prove full dualizability.
\begin{theorem} \label{thm:TC-dualizable}
The 3-category $\TCsep$ (of separable tensor categories, finite semisimple bimodule categories, bimodule functors, and bimodule transformations) is fully dualizable.
\end{theorem}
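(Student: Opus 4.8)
The plan is to verify the three dualizability conditions in turn, citing the prior results that establish each layer. First I would observe that $\TCsep$ is $1$-dualizable: by Proposition~\ref{prop:onedual} every finite tensor category $\cC$ has dual $\cC^\mp$, and since $\cC^\mp$ is separable whenever $\cC$ is (the opposite monoidal structure does not affect the semisimplicity of the Drinfeld center, by Corollary~\ref{cor:Sep=semisimplecenter}), and since by Corollary~\ref{cor:septc} the evaluation and coevaluation bimodules ${}_{\cC \boxtimes \cC^\mp}\cC_\Vect$ and ${}_\Vect\cC_{\cC^\mp \boxtimes \cC}$ are separable (being semisimple bimodule categories over separable tensor categories, using Proposition~\ref{prop:SSModuleCatsAreSep}), these bimodules live in $\TCsep$ and witness the duality there.

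Next I would establish $2$-dualizability. By Proposition~\ref{prop:evcoev}, for any finite bimodule category ${}_\cC\cM_\cD$ the functor duals $\Fun_\cC(\cM,\cC)$ and $\Fun_\cD(\cM,\cD)$ provide left and right adjoints. The remaining point is that these adjoint bimodule categories remain within $\TCsep$ when $\cM$ is separable: a separable bimodule category over separable tensor categories is semisimple (Proposition~\ref{prop:sepmodissemi}), hence exact, hence by Corollary~\ref{cor:adjoint-exactness} and the functor-dual description (Proposition~\ref{prop:dual-formula-for-adjoints}) the adjoints are themselves exact, thus semisimple, thus separable by Proposition~\ref{prop:SSModuleCatsAreSep}. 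I would also need to check that the units and counits $\eta_i,\varepsilon_i$ of those adjunctions are genuinely bimodule transformations in $\TCsep$, which is immediate since the $2$-morphisms of $\TCsep$ are all bimodule transformations.

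The crux is $3$-dualizability: every bimodule functor between (separable, hence semisimple) bimodule categories must admit left and right adjoints, and this is exactly Proposition~\ref{prop:bimodfunctadjoints}. The key observation making this work is that a functor out of a semisimple category is automatically biexact, so its linear adjoints exist and are again exact (the target being semisimple too), and Lemma~\ref{lma:module-adjoint} promotes them to bimodule adjoints. One must confirm that all $2$-morphisms appearing anywhere in the duality data of $\TCsep$ are indeed bimodule functors between semisimple bimodule categories: the objects are separable tensor categories, the $1$-morphisms are by fiat finite semisimple bimodule categories, so every hom-$2$-category $\Hom_{\TCsep}(\cC,\cD)$ has only semisimple objects, and Proposition~\ref{prop:bimodfunctadjoints} applies uniformly.

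The main obstacle I anticipate is purely bookkeeping rather than conceptual: one must track carefully that at each stage the newly-constructed dualizing data (dual objects, adjoint $1$-morphisms, adjoint $2$-morphisms) actually lies in the sub-$3$-category $\TCsep$ and not merely in the ambient $\TC$ --- i.e.\ that separability and semisimplicity are preserved by all the operations involved (flips, twists, functor duals, relative Deligne tensor products). Each such preservation claim is supplied by an earlier result (Corollary~\ref{cor:septc}, Corollary~\ref{cor:tcsepexists}, Proposition~\ref{prop:SSModuleCatsAreSep}, Corollary~\ref{cor:adjoint-exactness}, Proposition~\ref{prop:bimodfunctadjoints}), so the proof should amount to citing these in the right order. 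I would close by noting that $\TCsep$ is symmetric monoidal (Corollary~\ref{cor:tcsepexists}) so that "fully dualizable $3$-category" is meaningful, and that by the formulation of $k$-dualizability in Section~\ref{sec:lft} this is precisely the assertion that every object has a dual, every $1$-morphism has both adjoints, and every $2$-morphism has both adjoints.
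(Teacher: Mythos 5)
Your proposal is correct and follows essentially the same route as the paper: duals of objects from Proposition~\ref{prop:onedual}, adjoints of $1$-morphisms from Propositions~\ref{prop:evcoev} and~\ref{prop:dual-formula-for-adjoints}, adjoints of $2$-morphisms from Proposition~\ref{prop:bimodfunctadjoints}, with the substantive input that $\TCsep$ is a genuine sub-$3$-category (Corollary~\ref{cor:tcsepexists}). The only difference is cosmetic: where you route semisimplicity of the adjoint bimodules through exactness and Corollary~\ref{cor:adjoint-exactness}, the paper simply observes that ${}^*\cM$ and $\cM^*$ have underlying category $\cM^\op$ and hence are semisimple, and the separability of the evaluation bimodule is just the definition of a separable tensor category.
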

\begin{proof}
By Proposition~\ref{prop:onedual}, a separable tensor category $\cC$ has $\cC^\mp$ as a dual tensor category; the tensor category $\cC^\mp$ is evidently also separable, and the unit and counit of the duality are semisimple because $\cC$ itself is semisimple.  Combining Proposition~\ref{prop:evcoev} and Proposition~\ref{prop:dual-formula-for-adjoints}, a finite semisimple bimodule category ${}_\cC \cM_\cD$ has the finite semisimple bimodule categories ${}^* \cM$ and $\cM^*$ as left and right adjoints.  Finally, Proposition~\ref{prop:bimodfunctadjoints} ensures that all bimodule functors between finite semisimple bimodule categories have adjoints.
\end{proof}

\nid Note that Schaumann has independently investigated the duality properties (though not explicitly the dualizability in a cobordism-hypothesis sense) of a 3-category of spherical fusion categories over an algebraically closed field of characteristic zero~\cite{Schaumann-PhD}.

\begin{corollary} \label{cor:septcisdualizable}
Separable tensor categories are 3-, that is fully, dualizable objects of the 3-category of finite tensor categories.
\end{corollary}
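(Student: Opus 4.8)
The plan is to deduce Corollary~\ref{cor:septcisdualizable} directly from Theorem~\ref{thm:TC-dualizable} together with the definition of $k$-dualizability of an object. Recall that, by the conventions set up in Section~\ref{sec:lft}, an object of a symmetric monoidal $3$-category is $3$-dualizable (equivalently, fully dualizable) precisely when it lies in some $3$-dualizable sub-$3$-category. So the argument is essentially one sentence: a separable tensor category $\cC$ is an object of the sub-$3$-category $\TCsep$ of $\TC$, and $\TCsep$ is $3$-dualizable by Theorem~\ref{thm:TC-dualizable}; hence $\cC$ is a $3$-dualizable object of $\TC$.

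The one point that needs a word of justification is that $\TCsep$ really is a \emph{symmetric monoidal} sub-$3$-category of $\TC$ — one wants $\TCsep$ to be closed under the monoidal structure and to contain the monoidal unit, so that the notion of ``dual object'' used inside $\TCsep$ agrees with that used in $\TC$. This is exactly the content of Corollary~\ref{cor:tcsepexists} (building on Corollary~\ref{cor:septc}): over a perfect field, separable tensor categories, finite semisimple bimodule categories, bimodule functors, and bimodule transformations form a symmetric monoidal sub-$3$-category $\TCsep$ of $\TC$. In particular the unit $\Vect_k$ is separable (it is absolutely semisimple over a perfect field) and the Deligne tensor product of separable tensor categories is separable. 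So the inclusion $\TCsep \hookrightarrow \TC$ is a symmetric monoidal functor, and a dual of $\cC$ computed in $\TCsep$ — namely $\cC^\mp$, by Proposition~\ref{prop:onedual} — remains a dual of $\cC$ in $\TC$.

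Concretely, I would phrase the proof as follows. Fix a separable tensor category $\cC$ over the (perfect) base field. By Corollary~\ref{cor:tcsepexists}, $\cC$ is an object of the symmetric monoidal sub-$3$-category $\TCsep \subseteq \TC$. By Theorem~\ref{thm:TC-dualizable}, the $3$-category $\TCsep$ is fully dualizable, i.e.\ $3$-dualizable as a symmetric monoidal $3$-category. By the definition of a $k$-dualizable object recalled in Section~\ref{sec:lft} — an object is $k$-dualizable if it lies in some $k$-dualizable sub-$3$-category — it follows that $\cC$ is a $3$-dualizable, that is fully dualizable, object of $\TC$. This is the assertion of the corollary.

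There is essentially no obstacle here: the corollary is a formal consequence of the preceding theorem and the basic definitions, and all the real work (constructing the dual $\cC^\mp$ in Proposition~\ref{prop:onedual}, the adjoint bimodule categories ${}^*\cM$ and $\cM^*$ via Propositions~\ref{prop:evcoev} and~\ref{prop:dual-formula-for-adjoints}, and the adjoints of bimodule functors via Proposition~\ref{prop:bimodfunctadjoints}) has already been carried out in the proof of Theorem~\ref{thm:TC-dualizable}. The only thing one must be careful about is not to conflate ``$\TCsep$ is fully dualizable'' with ``$\cC$ is fully dualizable \emph{in} $\TC$''; the bridge between them is precisely the definition of $k$-dualizability of an object together with the fact, from Corollary~\ref{cor:tcsepexists}, that $\TCsep$ is a symmetric monoidal sub-$3$-category of $\TC$ (so that its duals, adjoints, and unit are inherited from $\TC$). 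Once that is noted, the proof is immediate.
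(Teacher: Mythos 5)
Your proposal is correct and matches the paper's own treatment: the corollary is stated as an immediate consequence of Theorem~\ref{thm:TC-dualizable} together with the definition of a fully dualizable object (lying in a fully dualizable symmetric monoidal sub-3-category), with the substantive input being that $\TCsep$ really is such a subcategory (Corollary~\ref{cor:tcsepexists}, resting on Theorem~\ref{thm:compositeOfSep}). The paper's remark following the corollary makes exactly the same point you emphasize about where the real work lies, so no further comment is needed.
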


\begin{remark}
Though this corollary is a trivial consequence of the theorem, and the theorem is a straightforward application of the earlier constructions of duals and adjoints, this presentation conceals a substantial aspect of the work involved in proving that separable tensor categories are fully dualizable.  It is essential that the collection of tensor categories, bimodules, and functors that we can prove have duals and adjoints actually form a sub-3-category of the larger category $\TC$; that fact, stated in Corollary~\ref{cor:tcsepexists}, depends at root on Theorem~\ref{thm:compositeOfSep}, that separable bimodule categories compose.  In particular, considering Theorem~\ref{thm:TC-dualizable} over a field of finite characteristic, we cannot replace ``separable tensor categories" by ``finite semisimple tensor categories": the composite of two semisimple bimodules over a semisimple tensor category need not be semisimple.
\end{remark}

\begin{corollary} \label{cor:3dtft}
For each separable tensor category $\cC \in \TC$, there is a unique (up to equivalence) 3-framed 3-dimensional field theory
\[
\cF_\cC : \FrBord_3 \ra \TC
\]
whose value on the standard positively 3-framed point is $\cC$.
\end{corollary}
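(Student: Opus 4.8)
The plan is to deduce Corollary~\ref{cor:3dtft} directly from Corollary~\ref{cor:septcisdualizable} together with the cobordism hypothesis. First I would recall the statement of the cobordism hypothesis as given in Section~\ref{sec:lft}: $n$-dimensional local framed topological field theories with target a symmetric monoidal $(\infty,n)$-category $\cC$ are in one-to-one correspondence with the $n$-dualizable objects of $\cC$, and in fact the space of such field theories is homotopy equivalent to the space of $n$-dualizable objects. Here we take $n = 3$ and the target to be $\TC$ (which exists as a symmetric monoidal $3$-category by Theorem~\ref{thm:tcexists}, using the standing perfection assumption on the base field).

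The key step is simply to observe that a separable tensor category $\cC$ is a fully dualizable object of $\TC$. This is exactly the content of Corollary~\ref{cor:septcisdualizable}: $\cC$ lies in the fully dualizable sub-$3$-category $\TCsep$ of $\TC$ (Theorem~\ref{thm:TC-dualizable}), and hence is a $3$-dualizable object of $\TC$ in the sense of the definition in Section~\ref{sec:lft} (an object is $k$-dualizable if it lies in some $k$-dualizable subcategory). Feeding this $3$-dualizable object into the cobordism hypothesis produces a symmetric monoidal functor $\cF_\cC : \FrBord_3 \ra \TC$ with $\cF_\cC(\pt_+) \simeq \cC$, unique up to equivalence because the correspondence of the cobordism hypothesis is a bijection on equivalence classes (indeed a homotopy equivalence of spaces). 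As in the proofs of Corollaries~\ref{cor:2dualtft} and~\ref{cor:3fr2d}, and as noted in Remark~\ref{rem:hep} and Footnote~\ref{ftn:exactly}, one can further arrange that the value on the standard positive point is exactly $\cC$ rather than merely equivalent to it, by working with a cofibrant model of $\FrBord_3$ and a fibrant model of $\TC$ (the relevant homotopy extension property is established in~\cite{3TC}); we suppress this point as elsewhere.

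The only real content beyond invoking prior results is checking that the hypotheses of the cobordism hypothesis are met---namely that $\TC$ is genuinely a symmetric monoidal $(\infty,3)$-category---which holds by Theorem~\ref{thm:tcexists} under the perfection assumption in force throughout this section, and that $\cC$ is a $3$-dualizable object, which is Corollary~\ref{cor:septcisdualizable}. There is no serious obstacle: this corollary is a formal consequence of full dualizability and the cobordism hypothesis, exactly parallel to the passage from Theorem~\ref{thm:TC_is_2Dualizable} to Corollary~\ref{cor:2dualtft} in the $2$-dimensional case. If anything, the subtlety worth flagging is purely expository---that this is the one family of results (along with Corollaries~\ref{cor2} and~\ref{cor5}) that genuinely depends on the cobordism hypothesis, unlike the dualizability statements themselves---but that is a remark rather than a step in the argument.
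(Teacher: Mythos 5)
Your proposal is correct and matches the paper's (implicit) argument: the corollary is exactly the cobordism hypothesis applied to the $3$-dualizability of separable tensor categories from Corollary~\ref{cor:septcisdualizable}, with $\TC$ a symmetric monoidal $3$-category by Theorem~\ref{thm:tcexists}. Your handling of the ``value exactly $\cC$ versus merely equivalent to $\cC$'' point via Remark~\ref{rem:hep} is also precisely how the paper treats it.
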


\begin{remark}
Note well that this result requires no assumption about the algebraic closure or characteristic of the base field.  In particular, observe the following: given a separable tensor category $\cC$ over a field $l$, if there exists a separable form of $\cC$ over a subfield $k \subset l$ (that is, there is a separable tensor category over $k$ whose base change to $l$ is equivalent to $\cC$), then the 3-manifold invariants of $\cF_\cC$ are all contained in the smaller field $k$.
\end{remark}

\begin{corollary} \label{cor:charzerotft}
There is a 3-framed 3-dimensional field theory associated to any finite semisimple tensor category over a field of characteristic zero.
\end{corollary}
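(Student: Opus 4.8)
The plan is to derive this as an immediate combination of two results already in hand. First I would recall Corollary~\ref{cor:charzerosep}, which asserts that every finite semisimple tensor category over an arbitrary field of characteristic zero is separable (in the sense of Definition~\ref{def:sepcat}, i.e.\ separable as a $\cC \boxtimes \cC^\mp$--$\Vect$-bimodule). So the entire content of the statement reduces to the case of separable tensor categories, where we can invoke the machinery of Section~\ref{sec:separableisfd}.

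Next I would apply Corollary~\ref{cor:3dtft}: for each separable tensor category $\cC$, there is a unique (up to equivalence) $3$-framed $3$-dimensional field theory $\cF_\cC : \FrBord_3 \ra \TC$ whose value on the standard positively $3$-framed point is $\cC$. Feeding the separable tensor category produced by Corollary~\ref{cor:charzerosep} into this corollary yields the desired field theory. One should note that Corollary~\ref{cor:3dtft} itself rests on Corollary~\ref{cor:septcisdualizable} (separable tensor categories are fully dualizable) together with the cobordism hypothesis, and that the full dualizability statement in turn depends essentially on Theorem~\ref{thm:TC-dualizable}, which requires knowing that $\TCsep$ is genuinely a sub-$3$-category of $\TC$ (Corollary~\ref{cor:tcsepexists}, ultimately Theorem~\ref{thm:compositeOfSep}); but all of this is already established, so at this stage nothing further is needed.

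There is essentially no obstacle: the statement is a packaging corollary. The only point requiring a word of care is that Corollary~\ref{cor:charzerosep} is stated over an \emph{arbitrary} field of characteristic zero (not merely an algebraically closed one), which is precisely the generality in which the present corollary is phrased; this is exactly why the corollary invokes \ref{cor:charzerosep} rather than the algebraically closed special case underlying Theorem~\ref{thm:NonzeroDimension}. With that alignment of hypotheses noted, the two-step argument goes through verbatim.

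\begin{proof}
By Corollary~\ref{cor:charzerosep}, any finite semisimple tensor category over a field of characteristic zero is separable.  The result therefore follows from Corollary~\ref{cor:3dtft}.
\end{proof}
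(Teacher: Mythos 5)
Your proposal is correct and matches the paper's reasoning exactly: the corollary is obtained by combining Corollary~\ref{cor:charzerosep} (finite semisimple tensor categories in characteristic zero are separable; note characteristic-zero fields are automatically perfect) with Corollary~\ref{cor:3dtft}. Nothing further is needed.
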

\begin{corollary} \label{cor:fusiontft}
There is a 3-framed 3-dimensional field theory associated to any fusion category of nonzero global dimension over an algebraically closed field.
\end{corollary}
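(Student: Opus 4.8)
The plan is to chain together the separability criterion for fusion categories, the full dualizability of separable tensor categories, and the cobordism hypothesis; essentially all of the work has already been done, and this corollary is pure assembly. First I would note that an algebraically closed field is perfect, so the standing hypotheses of Section~\ref{sec:dualizability} are satisfied: the symmetric monoidal $3$-category $\TC$ exists by Theorem~\ref{thm:tcexists}, the sub-$3$-category $\TCsep$ of separable tensor categories is available, and Corollary~\ref{cor:3dtft} is in force.

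Next, given a fusion category $\cC$ of nonzero global dimension over the algebraically closed field, I would invoke Theorem~\ref{thm:NonzeroDimension} to conclude that $\cC$ is a separable tensor category in the sense of Definition~\ref{def:sepcat}. This is the one genuinely substantive input; it rests on the explicit Frobenius algebra structure on $A = \IHom_{\cC \boxtimes \cC^\mp}(1_\cC, 1_\cC)$, the identification of the window element of $A$ with $\dim(\cC)$, and the fact (via Corollary~\ref{cor:Sep=semisimplecenter}) that a splitting of the multiplication is equivalent to semisimplicity of the Drinfeld center.

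Having placed $\cC$ in $\TCsep$, I would apply Theorem~\ref{thm:TC-dualizable} (equivalently Corollary~\ref{cor:septcisdualizable}) to conclude that $\cC$ is a fully dualizable object of $\TC$, and then Corollary~\ref{cor:3dtft} — itself an application of the cobordism hypothesis to the fully dualizable $3$-category $\TCsep$ — to produce a unique (up to equivalence) symmetric monoidal functor $\cF_\cC : \FrBord_3 \ra \TC$ with $\cF_\cC(\pt_+) = \cC$, which is the desired $3$-framed $3$-dimensional local topological field theory.

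Since every step is a direct citation of an already-established result, there is no obstacle internal to this corollary. The only point requiring attention is confirming that the algebraic-closure hypothesis is the right one: it suffices because algebraically closed fields are perfect, which is all the dualizability machinery of Section~\ref{sec:dualizability} requires, and because Theorem~\ref{thm:NonzeroDimension} is stated precisely for fusion categories over algebraically closed fields. (One could equally well combine this with Corollary~\ref{cor:charzerotft} to note that in characteristic zero the nonzero-global-dimension hypothesis is automatic, but that is not needed here.)
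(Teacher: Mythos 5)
Your proposal is correct and matches the paper's (implicit) argument exactly: the paper states this corollary immediately after Corollary~\ref{cor:3dtft}, relying on precisely the chain you describe---Theorem~\ref{thm:NonzeroDimension} gives separability of a fusion category of nonzero global dimension, Corollary~\ref{cor:septcisdualizable} gives full dualizability, and Corollary~\ref{cor:3dtft} (the cobordism hypothesis applied to $\TCsep$) produces the 3-framed theory, with algebraic closure supplying perfection. No further comment is needed.
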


In fact, when restricting attention to finite tensor categories, the separability condition is also necessary for full dualizability:
\begin{theorem} \label{thm:converse}
Fully dualizable finite tensor categories are separable.
\end{theorem}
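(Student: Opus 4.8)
The plan is to prove the contrapositive: if a finite tensor category $\cC$ is fully dualizable in $\TC$, then $\cC$ is both finite semisimple and, in finite characteristic, separable (equivalently, has semisimple Drinfeld center, by Corollary~\ref{cor:Sep=semisimplecenter}). The starting point is the observation from Section~\ref{sec:df-objects} that $\cC$ is always 1-dualizable with dual $\cC^\mp$, witnessed by the evaluation bimodule $\ev_\cC = {}_{\cC \boxtimes \cC^\mp} \cC_\Vect$, and from Section~\ref{sec:df-morphisms} that the left and right adjoints of $\ev_\cC$ are the functor-dual bimodules ${}^*\ev_\cC$ and $\ev_\cC^R$. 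Full dualizability requires, in particular, that the \emph{unit} $u : {}_{\cC \boxtimes \cC^\mp}(\cC \boxtimes \cC^\mp) \to \ev_\cC \boxtimes_{\Vect} \ev_\cC^R$ of the adjunction $\ev_\cC \dashv \ev_\cC^R$, which is a $1$-morphism in the $2$-category $\Hom_\TC(\cC \boxtimes \cC^\mp,\cC \boxtimes \cC^\mp)$ of bimodule categories, admit a left adjoint as a bimodule functor, and that the \emph{resulting} unit and counit (now $2$-morphisms) themselves admit further adjoints. The key is to trace through what the existence of these adjoints demands of the linear-categorical structure of the bimodules involved.

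First I would identify the relevant bimodule categories concretely. Using Corollary~\ref{cor:dualamod} and the description from Section~\ref{sec:enrichedendo}, write $\ev_\cC \simeq \Mod{}{A}(\cC \boxtimes \cC^\mp)$ for the enriched-endomorphism algebra $A = \IHom_{\cC \boxtimes \cC^\mp}(1_\cC,1_\cC)$, so that $\ev_\cC^R \simeq \Mod{A^{**}}{}(\cC \boxtimes \cC^\mp)$ and the composite $\ev_\cC \boxtimes \ev_\cC^R \simeq \Mod{A^{**}}{A}(\cC \boxtimes \cC^\mp)$ by part (2) of Theorem~\ref{thm:DelignePrdtOverATCExists}. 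The unit functor $u$ then becomes (up to the equivalences) a bimodule functor $\cC \boxtimes \cC^\mp \to \Mod{A^{**}}{A}(\cC \boxtimes \cC^\mp)$, essentially an induction-type functor. Now 3-dualizability requires $u$ to have a left adjoint as a bimodule functor and for the unit/counit of \emph{that} adjunction to have further adjoints in turn. The plan is to show that when one unravels the requirement that these iterated adjoints exist \emph{and are right exact} (so that they live in $\TC$), one is forced to conclude that $A$ is projective as an $A$--$A$-bimodule in $\cC \boxtimes \cC^\mp$ --- which is exactly the separability of $\cC$ --- and moreover that $\cC$ (equivalently $\cC \boxtimes \cC^\mp$, equivalently the center) is semisimple. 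Concretely, I expect to run the argument through the center $\cZ(\cC) = \Fun_{\cC \boxtimes \cC^\mp}(\cC,\cC) \simeq \Mod{A}{A}(\cC \boxtimes \cC^\mp)$: show that full dualizability forces the identity object of $\cZ(\cC)$ (which is $A$ as an $A$--$A$-bimodule) to be projective, hence by rigidity of $\cZ(\cC)$ (item (5) of Theorem~\ref{Thm:ExactModCatOmnibus}) that $\cZ(\cC)$ is semisimple, and then invoke Corollary~\ref{cor:Sep=semisimplecenter} to conclude $\cC$ is separable. For the semisimplicity of $\cC$ itself, one observes that a non-semisimple $\cC$ has a non-split self-extension of some object, which obstructs the existence of a right-exact adjoint at the relevant stage; alternatively, since separable tensor categories are by definition semisimple, it may suffice to show directly that full dualizability of $\cC$ in $\TC$ forces the unit object $1 \in \cC$ to be projective, using that $\ev_\cC^R$ being right exact as a bimodule functor propagates an exactness constraint back through the adjunction chain.

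The mechanism I would use for the crucial implication is the following \emph{finiteness-from-adjoints} principle, dual to the situation for ordinary algebras sketched in Section~\ref{sec:tc-separable}. For an ordinary finite-dimensional algebra $A$, the unit $A \otimes A^{\op} \to \End_k(A)$ dualizes iff $A$ is separable; the categorified analogue is that the bimodule functor $u$ has the required tower of adjoints iff the ``co-unit'' map $\mu: A \otimes_{\cC \boxtimes \cC^\mp}\text{-stuff} \to A$ --- the internal multiplication of the algebra object $A$ in $\cC \boxtimes \cC^\mp$ --- splits as an $A$--$A$-bimodule map, i.e.\ iff $A$ is a separable algebra object. I would make this precise by computing the left adjoint $u^L$ explicitly (it exists as a linear functor since $u$, being induction, is exact; the issue is whether its further adjoints are right exact) and observing that the counit of $u^L \dashv u$ is, under the module identifications, exactly the multiplication $\mu_A$; then the existence of a further left adjoint to this counit, as a \emph{bimodule} functor landing in $\TC$ (so right exact), yields precisely a bimodule-map section of $\mu_A$, which is separability. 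Running the parallel computation on the other side (the counit $v : \ev_\cC^R \boxtimes_{\cC \boxtimes \cC^\mp} \ev_\cC \to \Vect$ and its adjoints) and combining with the fact that a $\Vect$-module category with the requisite adjoints must be absolutely semisimple forces $\cC$ itself to be semisimple.

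The main obstacle, and where the real work lies, is the explicit bookkeeping of \emph{which} unit/counit maps in the infinite adjunction chains must have further adjoints for 3-dualizability, and then computing those maps concretely in the module-category picture to recognize them as the internal multiplication $\mu_A$ (or comultiplication, or the pairing $b = \lambda \circ m$ of a would-be Frobenius structure) --- the identifications involve several applications of Proposition~\ref{prop:dual-formula-for-adjoints}, Lemma~\ref{lemma:DualTwist}, Corollary~\ref{cor:dualamod}, and careful tracking of $\cC$ versus $\cC^\mp$ and left versus right duals, exactly the kind of twist-by-double-dual subtlety flagged in Remark~\ref{rmk:Deligne_pdt_as_mod_functor}. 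A secondary subtlety is ensuring that ``has an adjoint as a linear functor'' is genuinely weaker than ``has an adjoint in $\TC$'': the gap is precisely right exactness, and the argument must exploit that gap --- a non-separable $\cC$ will produce an adjoint linear functor that fails to be right exact, analogously to how the non-separable bimodule situation fails in Theorem~\ref{thm:tensor-exactness}'s remark. Once the correct map is identified as $\mu_A$, the conclusion is immediate: a bimodule section of $\mu_A$ is separability, and then Corollary~\ref{cor:Sep=semisimplecenter} (together with the forced semisimplicity of $\cC$) finishes the proof.
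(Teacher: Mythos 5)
There is a genuine gap at the heart of your mechanism for detecting separability, and it occurs at two levels. First, a type error: in $\TC$ the unit $u$ and counit $v$ of the adjunction $\ev_\cC \dashv \ev_\cC^R$ are 2-morphisms (bimodule functors), so the unit and counit of any adjunction $u^L \dashv u$ are 3-morphisms (bimodule natural transformations) — the top level of the 3-category. Full dualizability imposes no adjoint condition on 3-morphisms, and ``a further left adjoint to this counit'' is not a condition you are entitled to use; so the step in which that hypothetical adjoint ``yields precisely a bimodule-map section of $\mu_A$'' has nothing to run on. Second, and more decisively, the idea that the existence of right-exact adjoints for $u$ and $v$ detects separability is refuted by Theorem~\ref{thm:TCisRadford} of this paper: \emph{every} finite tensor category, including non-semisimple and non-separable ones, is a Radford object, i.e.\ $u$ and $v$ admit right adjoints in $\TC$ unconditionally. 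So these particular adjoints cannot force $A$ to be projective as an $A$--$A$-bimodule. The obstruction to full dualizability of a non-separable $\cC$ must be located among \emph{other} 2-morphisms (full dualizability is a condition on all 2-morphisms of some subcategory containing $\cC$), and you give neither an identification of which adjoint fails nor a mechanism converting ``a right-exact adjoint exists'' into a splitting of the internal multiplication of $A$. The semisimplicity of $\cC$ itself is likewise only gestured at (``a non-split self-extension obstructs\dots'' is not an argument).

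For comparison, the paper avoids this bookkeeping entirely. Since $\cZ(\cC)$ is a composite of the dualization data of $\cC$ (the 2-framed circle invariant), full dualizability of $\cC$ forces $\cZ(\cC)$ to be a fully dualizable object of the 2-category $\End_\TC(\Vect)$, which is equivalent to $\Alg$; choosing a finite-dimensional algebra $B$ with $\Mod{B}{} \simeq \cZ(\cC)$, full dualizability of $B$ in $\Alg$ is exactly separability of $B$, and over a perfect field a separable finite-dimensional algebra is the same as a semisimple one, so $\cZ(\cC)$ is semisimple. Then ${}_{\cC \boxtimes \cC^\mp}\cC$ is exact (Example~\ref{ex:exactness}), hence $\cC$ is exact over its center by part (6) of Theorem~\ref{Thm:ExactModCatOmnibus}, and an exact module category over a semisimple tensor category is semisimple (Example~\ref{eg:semiexact}); so $\cC$ is semisimple, and Corollary~\ref{cor:Sep=semisimplecenter} gives separability. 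If you want to salvage your line of argument, the separability obstruction should be located in the dualizability of the composite $\cZ(\cC) \simeq \Mod{A}{A}(\cC \boxtimes \cC^\mp)$ inside $\End_\TC(\Vect)$ — where ``fully dualizable $=$ separable algebra'' is already known — rather than in the adjoints of the unit and counit of $\ev_\cC \dashv \ev_\cC^R$.
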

\begin{proof}
By Corollary~\ref{cor:Sep=semisimplecenter}, it suffices to show that $\cC$ is semisimple and that $\cZ(\cC)$ is semisimple.  In fact, the semisimplicity of $\cZ(\cC)$ will imply the semisimplicity of $\cC$, as follows.  By part (6) of Theorem~\ref{Thm:ExactModCatOmnibus}, that is~\cite[Lemma 3.25]{EO-ftc}, if ${}_\cC \cM$ is an exact module category over a finite tensor category $\cC$, then ${}_{\Fun_\cC(\cM,\cM)^\mp} \cM$ is an exact module category over the commutant $\Fun_\cC(\cM,\cM)^\mp$; in particular if ${}_{\cC \boxtimes \cC^\mp} \cC$ is exact, then ${}_{\cZ(\cC)} \cC$ is exact.  But indeed ${}_{\cC \boxtimes \cC^\mp} \cC$ is exact by Example~\ref{ex:exactness}.  Finally, by Example~\ref{eg:semiexact}, an exact module category over a semisimple category is semisimple.

A finite tensor category $\cC \in \TC$ is always 2-dualizable, and the center $\cZ(\cC)$ is the composite of elementary dualization data, as in Table~\ref{table:circles}.  If $\cC \in \TC$ is fully dualizable, then $\cZ(\cC) \in \End_\TC(\Vect)$ is fully dualizable as an object of the 2-category $\End_\TC(\Vect)$ of endomorphisms of the unit $\Vect \in \TC$.  That 2-category has as objects the finite linear categories, as morphisms right exact functors, and as 2-morphisms natural transformations; it is therefore equivalent (by taking an algebra to its module category) to the 2-category $\Alg$ of finite-dimensional algebras, bimodules, and intertwiners.  Let $B$ be a finite-dimensional algebra with $\Mod{B}{} \simeq \cZ(\cC)$.  The full dualizability of $\cZ(\cC) \in \End_\TC(\Vect)$ is equivalent to the full dualizability of $B \in \Alg$.  As mentioned at the beginning of Section~\ref{sec:tc-separable}, a finite-dimensional algebra is fully dualizable precisely when it is separable.  Over a perfect field, as mentioned at the beginning of Section~\ref{sec:sepandsemi}, a finite-dimensional algebra is separable if and only if it is semisimple.  Thus the algebra $B$ is semisimple, and so $\Mod{B}{} \simeq \cZ(\cC)$ is semisimple, as required.
\end{proof}

\begin{corollary} \label{cor:maxfd}
The 3-category $\TCsep$ is the maximal fully dualizable sub-3-category of $\TC$.
\end{corollary}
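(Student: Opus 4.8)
The statement has two directions, and only one carries content. That $\TCsep$ is itself fully dualizable is precisely Theorem~\ref{thm:TC-dualizable}, so nothing new is required there. The substance of the corollary is maximality: every fully dualizable sub-$3$-category $\cD \subseteq \TC$ is contained in $\TCsep$. The basic observation enabling this is that full dualizability of $\cD$ is an ``internal'' condition, witnessed by duals of objects, left and right adjoints of $1$-morphisms, left and right adjoints of $2$-morphisms, and the units and counits of all these adjunctions; since $\cD$ is a sub-$3$-category, all of this witnessing data lives not only in $\cD$ but equally in $\TC$. Hence each object (respectively $1$-morphism) of $\cD$ is, already as an object (respectively $1$-morphism) of $\TC$, as dualizable as the ambient full dualizability of $\cD$ demands.

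First I would dispose of the objects and the higher morphisms. An object $\cC$ of $\cD$ is then a fully dualizable finite tensor category, so by Theorem~\ref{thm:converse} it is separable; thus every object of $\cD$ is an object of $\TCsep$. The $2$- and $3$-morphisms of $\cD$ are, by the definition of $\TC$, bimodule functors and bimodule transformations, and these are exactly the $2$- and $3$-morphisms of $\TCsep$ between any prescribed objects and $1$-morphisms; so no constraint on them is needed, and it remains only to control the $1$-morphisms.

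The heart of the proof is to show that each $1$-morphism ${}_\cC \cM_\cD$ of $\cD$ is a finite semisimple bimodule category --- this is the analogue of Theorem~\ref{thm:converse} one categorical level down. Since $\cC$ and $\cD$ are separable by the previous step, $\cC \boxtimes \cD^\mp$ is separable by Corollary~\ref{cor:tcsepexists}; by Proposition~\ref{prop:SSModuleCatsAreSep} it therefore suffices to prove that ${}_{\cC \boxtimes \cD^\mp} \cM$ is separable, equivalently, by Theorem~\ref{thm:SepModCats}, that the category $\Fun_{\cC \boxtimes \cD^\mp}(\cM,\cM)$ is semisimple. Using the dualities between $\cC$ and $\cC^\mp$ and between $\cD$ and $\cD^\mp$ of Proposition~\ref{prop:onedual}, the $1$-morphism ${}_\cC \cM_\cD$ of $\TC$ is identified with the $1$-morphism ${}_{\cC \boxtimes \cD^\mp} \cM_\Vect : \cC \boxtimes \cD^\mp \to \Vect$, compatibly with its left and right adjoints and with the adjoints of the units and counits of those adjunctions --- these ``mates'' being assembled solely from dualization data available in $\TC$. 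Composing this $1$-morphism with its adjoint so as to produce an endomorphism of $\Vect$ yields the finite linear category $\Fun_{\cC \boxtimes \cD^\mp}(\cM,\cM)$, now viewed as an object of the monoidal $2$-category $\End_\TC(\Vect)$, which under the equivalence $\End_\TC(\Vect) \simeq \Alg$ corresponds to a finite-dimensional algebra $B$. Exactly as in the proof of Theorem~\ref{thm:converse}, this composite is built entirely out of elementary dualization data of the $1$-morphism $\cM$ (its adjoint together with the unit and counit of that adjunction and their adjoints, all present because $\cD$ is $3$-dualizable), so it is a fully dualizable, hence separable, object of $\Alg$; over a perfect field a separable algebra is semisimple, whence $\Fun_{\cC \boxtimes \cD^\mp}(\cM,\cM) \simeq \Mod{B}{}$ is semisimple and $\cM$ is semisimple, as needed. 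Assembling the three steps gives $\cD \subseteq \TCsep$, and combined with Theorem~\ref{thm:TC-dualizable} this establishes that $\TCsep$ is the maximal fully dualizable sub-$3$-category of $\TC$.

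I expect the $1$-morphism step to be the main obstacle. Concretely, one must carefully transport the dualizability data of ${}_\cC \cM_\cD$ present in $\cD$ through the ``name'' correspondence $\Hom_\TC(\cC,\cD) \simeq \Hom_\TC(\cC \boxtimes \cD^\mp, \Vect)$ induced by the dualities of Proposition~\ref{prop:onedual}, keeping scrupulous track of the variance conventions fixed in Section~\ref{sec:conventions}, and then verify --- just as Theorem~\ref{thm:converse} does for the center $\cZ(\cC)$ --- that the resulting composite endomorphism of the unit is a $2$-dualizable object of $\End_\TC(\Vect) \simeq \Alg$, so that the finiteness theory of separable algebras over a perfect field applies. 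The rest of the argument is bookkeeping once these identifications are in place.
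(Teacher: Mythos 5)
Your proposal is correct in outline, but for the crucial step --- showing a $1$-morphism ${}_\cC\cM_\cD$ of a fully dualizable subcategory is semisimple --- it takes a genuinely different route from the paper. You run the proof of Theorem~\ref{thm:converse} ``one level down'': transport $\cM$ to ${}_{\cC\boxtimes\cD^\mp}\cM_\Vect$, compose with its adjoint to get the loop $\Fun_{\cC\boxtimes\cD^\mp}(\cM,\cM)\in\End_\TC(\Vect)\simeq\Alg$, deduce from $3$-dualizability of the ambient subcategory that this object of $\Alg$ is $2$-dualizable, hence a separable (over a perfect field, semisimple) algebra, and then invoke Theorem~\ref{thm:SepModCats} and Proposition~\ref{prop:sepmodissemi}. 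The paper instead argues directly with exactness: since $2$-morphisms of the maximal fully dualizable subcategory have adjoints, the counit $\varepsilon:\cM\boxtimes_\cD\Fun_\cC(\cM,\cC)\to\cC$ of the adjunction $\Fun_\cC(\cM,\cC)\dashv\cM$ has a left adjoint and is therefore left exact; combined with the biexactness of $\boxtimes_\cD$ (Theorem~\ref{thm:DelignePrdtOverATCExists}(4)) this forces every right exact $\cC$-module functor $\cM\to\cC$ to be exact, and applying this to ${}^*\IHom(-,m)$ shows that $j\otimes m$ is injective for $j\in\cC$ injective; semisimplicity of $\cC$ (the unit is injective) then makes every object of $\cM$ injective, so $\cM$ is semisimple. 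Your route is more systematic and makes the separability of ${}_{\cC\boxtimes\cD^\mp}\cM$ explicit, but it leans on heavier machinery (the flip/``name'' correspondence, the $\End_\TC(\Vect)\simeq\Alg$ identification, Theorem~\ref{thm:SepModCats}) and requires the transport-of-structure point you flag: it is not enough that the dualization data exist in $\TC$ --- you need the flipped bimodule, its adjoint, and the units and counits (with their adjoints) to live in the fully dualizable, symmetric monoidal subcategory, which works because the subcategory is monoidal, closed under composition, contains $\cC^\mp$, $\cD^\mp$ and the evaluation data (these are already in $\TCsep$ once the objects are known separable), and because adjunction data and semisimplicity transfer across equivalences. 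The paper's argument buys economy --- it uses only the existence of a left adjoint for a single counit $2$-morphism plus elementary homological algebra --- while yours buys a uniform ``dualizability of the loop'' principle that exactly parallels the object-level Theorem~\ref{thm:converse}; both share the object-level step verbatim, and both treat the $2$- and $3$-morphism levels as vacuous, as you do.
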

\begin{proof}
As in the Appendix, let $d\TC$ denote the maximal fully dualizable subcategory of $\TC$.  By the theorem, any object of $d\TC$ is separable.  Let ${}_\cC \cM_\cD$ be a morphism of $d\TC$.  We will show that such a morphism $\cM$ must be a semisimple bimodule category as follows: we first check that any (right exact) $\cC$-module functor from $\cM$ to $\cC$ is in fact exact; we then apply this property to an appropriate internal Hom functor and conclude that all objects of $\cM$ are injective.

By Proposition~\ref{prop:evcoev}, the left adjoint of $\cM$ is $\Fun_\cC(\cM,\cC)$; the counit of this adjunction is the (right exact) evaluation $\cC$--$\cC$-bimodule functor $\varepsilon: \cM \boxtimes_\cD \Fun_\cC(\cM,\cC) \ra \cC$.  Because the functor $\varepsilon$ has a left adjoint, it is also left exact.  The functor $\cM \times \Fun_\cC(\cM,\cC) \ra \cM \boxtimes_\cD \Fun_\cC(\cM,\cC)$ is exact in each variable, by part (4) of Theorem~\ref{thm:DelignePrdtOverATCExists}.  Thus for any (right exact) functor $\cF \in \Fun_\cC(\cM,\cC)$, the composite $\cM \xra{\id \times \cF} \cM \times \Fun_\cC(\cM,\cC) \ra \cM \boxtimes_\cD \Fun_\cC(\cM,\cC) \xra{\varepsilon} \cC$ is exact; that composite just is the functor $\cF$.  

For any object $m \in \cM$, the functor ${}^* \IHom(-,m) : \cM \ra \cC$ is a (right exact) $\cC$-module functor, therefore is exact; thus $\IHom(-,m) : \cM \ra \cC$ itself is an exact contravariant functor.  For any injective object $j \in \cC$, the contravariant functor $\Hom(-,j \otimes m) \cong \Hom(j^* \otimes -,m) \cong \Hom(j^*,\IHom(-,m))$ is therefore exact, so $j \otimes m$ is injective as well; cf.~\cite[Prop. 3.16]{EO-ftc}.  As $\cC$ is semisimple, its unit is injective, so all objects of $\cM$ are injective, thus $\cM$ is semisimple, as required.
\end{proof}

\begin{remark}
This corollary provides a classification of surface, line, and point defects in 3-dimensional topological field theories of Turaev--Viro type.  It supplies a field theory taking values on 3-framed 3-manifolds with any arrangement of codimension-0, -1, -2, and -3 regions labelled, respectively, by separable tensor categories, semisimple bimodule categories, bimodule functors, and bimodule transformations.  Moreover, every local framed defect theory with target tensor categories is of this form.  These results dovetail with existing literature on defects in 3-dimensional topological field theory~\cite{kapustinsaulina,kitaevkong,fsv} and in 2-dimensional conformal field theory~\cite{ffrs-duality,frs-fusion}.
\end{remark}

\section{Spherical structures and structured field theories} \label{sec:spherical}

By Corollary~\ref{cor:fusiontft}, there is in particular a 3-dimensional 3-framed field theory $\cF_\cC$ associated to any fusion category $\cC\in \TC$ over an algebraically closed field of characteristic zero.  This theory differs from a Turaev--Viro field theory, which is an oriented field theory associated to a spherical fusion category, in two reciprocal ways: it does not depend on a choice of any additional structure on the fusion category (namely a spherical structure), but it does depend on a choice of an additional structure on the manifolds (namely a 3-framing).  

By the cobordism hypothesis, the group $SO(3)$ acts on the space of fully dualizable tensor categories, and the field theory $\cF_\cC : \FrBord_3 \ra \TC$ descends from a 3-framed to an oriented field theory $\widetilde{\cF_\cC} : \OrBord_3 \ra \TC$ when $\cC$ is equipped with the structure of an $SO(3)$ homotopy fixed point.  This situation suggests that the choice of a spherical structure should be related to the existence of homotopy fixed point structures.  

In this section, we recall the classical notions of a pivotal structure on a tensor category, and of a spherical structure on a semisimple tensor category.  We introduce a new definition of sphericality that provides the correct generalization of the classical notion to the non-semisimple case.  We explain, largely in the form of a preview, the relationship between pivotal and spherical structures and the existence of certain homotopy fixed point structures, and finally conclude with a number of further conjectures and questions regarding the descent properties of 3-dimensional field theories.

\subsection{Pivotal structures and trivializations of the Serre automorphism}

By Theorem~\ref{thm:TC_is_2Dualizable}, the 3-category $\TC$ of finite tensor categories is 2-dualizable; by the cobordism hypothesis, it follows that there is an $SO(2)$ action on (the maximal sub-3-groupoid of) $\TC$.  That $SO(2)$ action is simple to describe on objects: the 1-cell of $SO(2)$ takes an object $\cC \in \TC$ to the Serre automorphism $\cS_\cC$ of $\cC$.  One might imagine that trivializing the Serre automorphism provides $\cC$ with the structure of an $SO(2)$ homotopy fixed point, but this is not the case: because $BSO(2)$ has infinitely many homology groups, in general an $SO(2)$ homotopy fixed point structure requires a higher coherence condition.  Instead, a trivialization of the Serre automorphism of $\cC$ provides $\cC$ with the structure of a homotopy fixed point for the simpler group $\Omega S^2$.  (Here $\Omega S^2$ acts on $\TC$ via the map $\Omega S^2 \ra \Omega B SO(2) \simeq SO(2)$, and it is simpler in the sense that its classifying space has homology completely concentrated in degree 2.  In fact, the action of $\Omega S^2$ on $\TC$ is completely determined by the Serre automorphism, and so its existence does not depend on the cobordism hypothesis.)

By Theorem~\ref{thm:serreisdoubledual}, the Serre automorphism of a finite tensor category $\cC$ is the bimodule ${}_{\langle \mathfrak{r r} \rangle \cC} \cC_\cC$.  A trivialization of the Serre automorphism is therefore a bimodule equivalence ${}_\cC \cC_\cC \xra{\simeq} {}_{\langle \mathfrak{r r} \rangle \cC} \cC_\cC$.  In general, by Lemma~\ref{lem:BimoduleToFunctor}, such an equivalence is provided by a monoidal natural isomorphism from the identity functor $\id_\cC : \cC \ra \cC$ to the conjugation of the right double dual functor $E^{-1} \otimes (-)^{**} \otimes E: \cC \ra \cC$, for some invertible object $E \in \cC$.  We may focus attention on the special class of such equivalences arising from the case when the invertible object $E$ is trivial---such equivalences are known as pivotal structures:
\begin{definition}
A \emph{pivotal structure} on a tensor category $\cC$ is a choice of monoidal natural isomorphism $P$ from the identity monoidal functor to the right double dual monoidal functor $(-)^{**}: \cC \ra \cC$.
\end{definition}
\nid In particular, we see that a finite tensor category equipped with a pivotal structure has canonically the structure of an $\Omega S^2$ homotopy fixed point.  It is certainly not the case that all $\Omega S^2$ fixed point structures come from pivotal structures: for instance, the tensor category $\Vect[\ZZ/2]$ has four $\Omega S^2$ fixed point structures, but only two pivotal structures.

\subsection{Spherical structures as square roots of the Radford equivalence}

As described in Section~\ref{sec:topquaddual}, for any finite tensor category $\cC$, there is a Radford equivalence $\cR_\cC : {}_{\langle \mathfrak{l l} \rangle \cC} \cC_\cC \xra{\simeq} {}_{\langle \mathfrak{r r} \rangle \cC} \cC_\cC$ from the inverse Serre automorphism to the Serre automorphism; by composing with the Serre automorphism we may equally well think of the Radford equivalence as an equivalence from the identity to the square of the Serre automorphism.  The existence of such an equivalence provides an extension of the $\Omega S^2$ action on $\TC$ to an action of $\Omega \Sigma \RP^2$.  

Given a trivialization $T: {}_\cC \cC_\cC \xra{\simeq} {}_{\langle \mathfrak{r r} \rangle \cC} \cC_\cC$ of the Serre automorphism, and therefore an $\Omega S^2$ fixed point structure on $\cC$, we may ask what additional data is required to promote $\cC$ to having an $\Omega \Sigma \RP^2$ fixed point structure.  It turns out that the required data is an isomorphism from the square of the trivialization $T$ to the Radford equivalence, that is an isomorphism $T \boxtimes_\cC T \ra \cR_\cC$.  When the trivialization $T$ came from a pivotal structure on $\cC$, it makes sense to ask that the square of $T$ be simply equal to the Radford---this condition is called sphericality:
\begin{definition} \label{def:spherical}
A pivotal finite tensor category $(\cC, P: (-) \simeq (-)^{**})$, with absolutely simple unit, is \emph{spherical} if the distinguished invertible object $D_\cC$ is isomorphic to the unit $1_\cC$ and the square of the pivotal structure $P^2: (-) \simeq (-)^{****}$ is equal to the Radford equivalence $\cR_\cC: (-) \simeq (-)^{****}$ canonically associated to $\cC$.
\end{definition}
\nid We will see shortly that when the tensor category $\cC$ is semisimple, this definition agrees with classical notions of sphericality; when $\cC$ is not semisimple, this definition, not the classical notion, is the correct generalization.  (In this definition the absolute simplicity requirement is present to ensure that the functor $D^{-1} \otimes (-)^{**} \otimes D$ is canonically equivalent to $(-)^{**}$ and therefore that the Radford indeed provides a trivialization of the quadruple dual functor.  Also, by the square of the pivotal structure, we mean the natural isomorphism whose value on an object $V$ is the composite $P(V^{**}) \circ P(V)$.)  

To reiterate, a spherical tensor category has canonically the structure of an $\Omega \Sigma \RP^2$ fixed point.  Not all $\Omega \Sigma \RP^2$ fixed point structures come from spherical structures: evidently, any fixed point structure for a tensor category with a nontrivial distinguished invertible object cannot arise from a spherical structure.  An example of a category with nontrivial distinguished object but with an $\Omega \Sigma \RP^2$ fixed point structure is provided by the representation category of the Taft Hopf algebra $H_3 := \CC\langle x,g \rangle / (x^3=0,g^3=1,gx=\zeta xg), \Delta(g) = g \otimes g, \Delta(x) = x \otimes 1 + g \otimes x, S(g)=g^{-1}, S(x)=-g^{-1}x,$ where $\zeta$ is a third root of unity. 

Recall that a trivialization $T: {}_\cC \cC_\cC \xra{\simeq} {}_{\langle \mathfrak{r r} \rangle \cC} \cC_\cC$ of the Serre automorphism is precisely determined by an invertible object $E \in \cC$ together with a monoidal trivialization of the functor $E^{-1} \otimes (-)^{**} \otimes E$.  An isomorphism $T \boxtimes_\cC T \cong \cR_\cC$ is precisely determined by an isomorphism $E^2 \cong D$, from the square of $E$ to the distinguished invertible object $D$, such that the composite isomorphism ${}^{**} (-) \xra{\cong} E^{-1} \otimes (-) \otimes E \xra{\cong} E^{-2} \otimes (-)^{**} \otimes E^2 \xra{\cong} D^{-1} \otimes (-)^{**} \otimes D$ is the Radford equivalence.  It follows, for instance, that a tensor category for which the distinguished invertible object is not a tensor square cannot admit any $\Omega \Sigma \RP^2$ fixed point structures (and so also cannot admit a spherical structure).

\subsection{Semisimple sphericality as a trace condition}

In order to be able to clearly delineate it from Definition~\ref{def:spherical}, we will refer to the classical notion as `trace spherical':
\begin{definition} \label{def:trspherical}
A pivotal tensor category $(\cC, P: (-) \simeq (-)^{**})$ is \emph{trace spherical} if for all objects $V \in \cC$, the quantum trace of the pivotal structure on $V$ agrees with the quantum trace of the pivotal structure on ${}^* V$, that is $\Tr(P_V) = \Tr(P_{({}^* V)})$.
\end{definition} 
\nid In the semisimple case, the two notions correspond precisely:
\begin{proposition}
A semisimple pivotal finite tensor category with absolutely simple unit is spherical if and only if it is trace spherical.
\end{proposition}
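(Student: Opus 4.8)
The plan is to unwind both definitions until they become a statement about the distinguished invertible object and the squared norms of simple objects, and then invoke the computation of the Radford equivalence from Proposition~\ref{prop:computeradford} together with the explicit description of the global dimension Frobenius algebra from Section~\ref{sec:enrichedendo}. First I would note that the question is only sensible when the unit is absolutely simple, so that both sphericality (Definition~\ref{def:spherical}) and the existence of well-behaved quantum traces are available; I would then immediately reduce to the case of an algebraically closed base field, since both conditions -- the equality $P^2 = \cR_\cC$ and the equality $\Tr(P_V) = \Tr(P_{({}^*V)})$ for all $V$ -- can be checked after base change (using that the pivotal structure, the Radford equivalence, and quantum traces are all preserved by base extension, as in Corollary~\ref{cor:ssquaddual}).

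Second, on an algebraically closed field, I would use the result of Etingof--Nikshych--Ostrik cited in the proof of Corollary~\ref{cor:ssquaddual} (namely that in the semisimple case the distinguished invertible object $D_\cC$ is canonically trivial), so that the condition $D_\cC \cong 1_\cC$ in Definition~\ref{def:spherical} is automatic; what remains is precisely the assertion that $P^2$ equals the Radford equivalence $\cR_\cC : (-) \simeq (-)^{****}$. By Proposition~\ref{prop:computeradford}, together with the remark following it that our Radford equivalence agrees with ENO's, the Radford equivalence is characterized by its internal-module description $M \mapsto A^* \otimes_A M$; ENO gave, in the semisimple setting, a characterization of this equivalence via a quantum trace property (see the end of Section~\ref{sec:computeradford}). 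The key step is therefore to identify, for a semisimple pivotal category, the condition "$P^2$ is the Radford equivalence" with the trace condition "$\Tr(P_V) = \Tr(P_{({}^*V)})$". Concretely, a pivotal structure $P$ gives, for each object $V$, two a priori different left traces $\Tr(P_V)$ and $\Tr(P_{({}^*V)})$ (the left and right quantum dimensions), and their ratio $\Tr(P_{({}^*V)})/\Tr(P_V)$ is exactly (for $V$ simple) the scalar by which the Radford equivalence composed with $P^{-2}$ acts on $V$; this is essentially the content of ENO's quantum trace characterization of the distinguished invertible object. So $P^2 = \cR_\cC$ if and only if that ratio is $1$ for every simple object, which is exactly trace sphericality.

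The main obstacle will be making precise and rigorous the correspondence in that last step -- that the failure of $P^2$ to equal $\cR_\cC$, measured as a monoidal automorphism of the identity functor, is detected on each simple object $V$ by the ratio of the two quantum traces $\Tr(P_V)$ and $\Tr(P_{({}^*V)})$. This requires carefully tracking the normalizations: the Radford equivalence, via Proposition~\ref{prop:computeradford}, is tensoring with the internal bimodule ${}_{A^{**}} A^*{}_A$, and one must compare the induced map on the underlying object of $\cC$ with the composite $P(V^{**}) \circ P(V)$. I expect this to follow from a diagram chase using the explicit description of $A \cong \oplus_i L_i \boxtimes {}^* L_i$ and its Frobenius structure, reducing to the scalar identities among $e_i$, $\gamma(e_i)$, and the squared norms established at the end of Section~\ref{sec:gdim}; alternatively, one can simply quote ENO's Corollary~6.4 and their trace characterization of the distinguished invertible object directly, in which case the proof is short: $P^2 = \cR_\cC$ holds iff the pivotal dimension function $V \mapsto \Tr(P_V)$ is symmetric under taking duals, which is precisely Definition~\ref{def:trspherical}. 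Either way, I would present the clean version: reduce to algebraic closure, invoke that $D_\cC$ is trivial, and then cite the ENO quantum-trace characterization of $\cR_\cC$ to conclude that $P^2 = \cR_\cC \iff \Tr(P_V) = \Tr(P_{({}^*V)})$ for all $V$.
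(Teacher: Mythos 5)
Your proposal is correct and follows essentially the same route as the paper: reduce to an algebraically closed field by base-change invariance, use the triviality of the distinguished invertible object in the semisimple case, and invoke ENO's quantum-trace characterization of the Radford equivalence to identify $P^2 = \cR_\cC$ with the condition $\Tr(P_V) = \Tr(P_{({}^*V)})$ for all $V$. The one step you defer---tracking the normalization relating the two traces to the scalar by which $\cR_\cC$ differs from $P^2$ on simple objects---is exactly what the paper settles with a one-line computation using the identity $(P_V)^* = (P_{V^*})^{-1}$, as you anticipated.
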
 
\begin{proof}
Both the property of sphericality and the property of trace sphericality are invariant under base change, so it suffices to compare them over an algebraically closed field.  When the base field is algebraically closed, Etingof-Nikshych-Ostrik prove in \cite[Thm 7.3 and Cor 7.4]{MR2097289} that the Radford equivalence $\cR_\cC : (-) \xra{\simeq} (-)^{****}$ is characterized by a property that is equivalent to the following property: for any object $V \in \cC$ and any morphism $f_V: V \ra V^{**}$, there is an equality of traces $\Tr(f_V) = \Tr((f_V)^* \circ \cR_{({}^* V)})$.  Here we have abbreviated $\cR_\cC({}^* V)$ as $\cR_{({}^* V)}$.  When $(\cC,P)$ is a pivotal tensor category, we have $(P_V)^* = (P_{V^*})^{-1}$ 
\cite[Lemma 4.11; SR72, Prop 5.2.3]{0908.3347}. \nocite{rivano}  
If $(\cC,P)$ is spherical, it follows that
\[
\Tr(P_V) = \Tr((P_V)^* \circ \cR_{({}^* V)}) 
= \Tr(P_{V^*}^{-1} \circ P_{V^*} \circ P_{({}^* V)}) = \Tr(P_{({}^* V)});
\]
that is, $(\cC,P)$ is trace spherical.  The converse follows by cyclically permuting this equation.
\end{proof}

In the setting of non-semisimple finite tensor categories, it is neither the case that spherical implies trace spherical, nor the case that trace spherical implies spherical, as the following two examples illustrate.

\vspace{5pt}
\begin{example}
Let $q$ be a primitive $p$-th root of unity.  Consider the Hopf algebra described in~\cite[Rem. 7.5]{MR2097289}: $H := \CC\langle E, F, K \rangle / (KE = q^2 EK, KF = q^{-2} FK, EF=FE, E^p = 0, F^p = 0, K^p =1)$, $\Delta(K) = K \otimes K$, $\Delta(E) = E \otimes K + 1 \otimes E$, $\Delta(F) = F \otimes 1 + K^{-1} \otimes F$, $S(K) = K^{-1}$, $S(E) = -EK^{-1}$, $S(F) = -KF$---this differs from the quantum group $U_q(sl_2)$ only in that $E$ and $F$ commute.  Observe that the natural transformation $V \ra V^{**}$, $v \mapsto K \cdot v$ is a pivotal structure on $\Rep(H)$.  One can check that the Radford equivalence $V \ra V^{****}$ is given by $v \mapsto K^2 \cdot v$, and therefore this pivotal structure is spherical.  

Given any Hopf algebra $G$ with an object $g$ such that $\Delta(g) = g \otimes g$, $S(g) = g^{-1}$, and $S^2$ equal to conjugation by $g$, we have that left multiplication $l_g$ by $g$ is a pivotal structure on $\Rep(G)$.  Barrett--Westbury show that this pivotal structure is trace spherical if and only if $\tr_V(l_g \theta) = \tr_V(l_{g^{-1}} \theta)$ for all representations $V$ and all endomorphisms $\theta \in \Hom_{\Rep(G)}(V,V)$ \cite{MR1686423}; note that here $\tr_V$ is the trace of a vector space endomorphism of $V$, not the quantum trace.  The Hopf algebra $H$ above has 1-dimensional representations where this trace equation certainly fails, and therefore the aforementioned pivotal structure on $\Rep(H)$ is not trace spherical.
\end{example}

\vspace{5pt}
\begin{example}
Consider Sweedler's 4-dimensional Hopf algebra $H := \CC \langle x, g \rangle /\allowbreak (x^2 = 0, g^2 = 1, gx = - xg)$, $\Delta(g) = g \otimes g$, $\Delta(x) = 1 \otimes x + x \otimes g$, $S(g) = g^{-1}$, and $S(x) = - xg^{-1}$.  This Hopf algebra has four indecomposable representations: there are two irreducible 1-dimensional representations $V_\pm$ (with actions $v \xmapsto{x} 0$ and $v \xmapsto{g} \pm v$), and there are two 2-dimensional projective representations $W_\pm = \CC\{w_0, w_1\}$ (with actions $w_1 \xmapsto{x} w_0 \xmapsto{x} 0$, $w_1 \xmapsto{g} \pm w_1$, and $w_0 \xmapsto{g} \minusplus w_0$).

Note that by the Barrett--Westbury condition mentioned in the previous example, we immediately see that the category of representations $\Rep(H)$ of Sweedler's Hopf algebra, with the pivotal structure given by left multiplication by $g$, is trace spherical.  Recall from~\cite{MR2097289} that in a finite tensor category over an algebraically closed field, the projective cover of the unit is dual to the projective cover of the dual of the distinguished invertible object.  The unit of $\Rep(H)$ is $V_+$; the projective cover of $V_+$ is $W_+$; the dual of $W_+$ is $W_-$; the projective cover of $V_-$ is $W_-$; and the dual of $V_-$ is $V_-$ itself.  The distinguished invertible object is therefore $V_-$, but note that $V_+$ and $V_-$ both square to the unit, so there is certainly no tensor square root of the distinguished invertible.  Thus, this representation category cannot have an $\Omega \Sigma \RP^2$ fixed point structure, and so in particular cannot have a spherical structure.
\end{example}

\begin{remark}
Because the representation category $\Rep(H) \in \TC$ of Sweed\-ler's Hopf algebra is not $\Omega \Sigma \RP^2$-fixed, it cannot be $SO(3)$ fixed for any action of $SO(3)$ on $\TC$ restricting to the $\Omega \Sigma \RP^2$ action provided by the combination of the Serre automorphism and the Radford equivalence.  Thus the associated field theory invariants cannot possibly be oriented invariants.  At best, $\Rep(H)$ could be an $SO(2)$-fixed point and thus provide a combed field theory.  (A combed field theory is by definition one taking values on $SO(2)$-structured manifolds; an $SO(2)$-structured 3-manifold is precisely a 3-manifold equipped with a nonvanishing vector field, that is a combing.)  This explains the necessity of the combing on the manifolds in Kuperberg's invariant~\cite{MR1394749}.  In particular, we see that, unlike for ordinary Turaev--Viro invariants in the semisimple case, trace sphericality is in general insufficient to produce an oriented invariant.
\end{remark}

\subsection{Oriented, combed, and spin field theory descent conjectures} \label{sec:descconj}

For definiteness, in this last section we presume the base field is algebraically closed of characteristic zero.  It is a well-known open problem to determine whether all fusion categories admit pivotal structures.  As we saw, the existence of a pivotal structure provides an $\Omega S^2$ fixed point structure, so one might ask directly whether all fusion categories admit an $\Omega S^2$ fixed point structure.  Somewhat stronger and more interesting is to ask about $SO(2)$ fixed point structures:
\begin{question}
Does every local 3-framed field theory with target tensor categories descend to a combed field theory?
\end{question}
\nid We might ask the yet stronger:
\begin{question}
Does every local 3-framed field theory with target tensor categories descend to an oriented field theory?
\end{question}

Instead of asking about the descent properties of an arbitrary field theory, we may equip our tensor category itself with extra structure and expect that structure to provide descent information.  Thus less speculatively we have the following trio of anomaly-vanishing conjectures:
\begin{conjecture}
Every spherical fusion category admits the structure of an $SO(3)$ homotopy fixed point, and therefore provides an oriented local field theory.
\end{conjecture}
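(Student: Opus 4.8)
\nid We sketch the approach we expect to work, and flag where the real difficulty lies. By Corollary~\ref{cor:fusiontft}, such a $\cC$ carries a $3$-framed $3$-dimensional local field theory $\cF_\cC$, and by the structured cobordism hypothesis (Theorem~\ref{thm:chstruc}) an $SO(3)$ homotopy fixed point structure on $\cC$ is precisely the datum needed to descend $\cF_\cC$ to an oriented theory. Concretely, writing $X$ for the maximal sub-$\infty$-groupoid of $\TCsep$, such a structure is a section through $[\cC]$ of the bundle $X_{hSO(3)} \to BSO(3)$. Since $X$ is a homotopy $3$-type (it underlies a $3$-groupoid), the section is built cell by cell over a CW model of $BSO(3)$ and only the cells of dimension at most $4$ matter. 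The relevant skeleton is supplied by the observation that the inclusion $\RP^2 \hookrightarrow \RP^3 = \Omega BSO(3)$ deloops to a $3$-connected map $\Sigma\RP^2 \to BSO(3)$ (an isomorphism on $\pi_2 \cong \ZZ/2$, with $\pi_3 BSO(3) = 0$); thus the Moore space $\Sigma\RP^2 = M(\ZZ/2,2)$ is a $3$-skeleton of $BSO(3)$, to which exactly one $4$-cell, and then cells of dimension $\geq 5$, are attached.

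A spherical structure supplies the section over this $3$-skeleton. By Theorem~\ref{thm:serreisdoubledual} a pivotal structure trivializes the Serre automorphism and so, as discussed above, equips $\cC$ with an $\Omega S^2$ homotopy fixed point structure; the sphericality condition of Definition~\ref{def:spherical} --- that the square of the pivotal structure equals the Radford equivalence of Theorem~\ref{thm:TCisRadford}, cf.\ Corollary~\ref{cor:serreinvserre} --- refines this to an $\Omega\Sigma\RP^2$ homotopy fixed point structure, that is, to a section of $X_{hSO(3)}$ over $\Sigma\RP^2$, using that the $\Omega\Sigma\RP^2$-action on $X$ is the restriction of the $SO(3)$-action along $\Omega\Sigma\RP^2 \to SO(3)$. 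Extending over the cells of dimension $\geq 5$ is automatic and unique because $\pi_i(X,\cC) = 0$ for $i \geq 4$, so the entire conjecture reduces to extending the section across the single $4$-cell.

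The obstruction to that extension lies in $\pi_3(X, \cC)$, which for a fusion category is the group $k^\times$ of invertible bimodule transformations of $\id_{{}_\cC \cC_\cC}$ (these are scalars, as $\End_\cC(1) = k$). Because the base field is algebraically closed, $k^\times$ is divisible, hence $H^3(BSO(3); k^\times) = 0$; feeding this into the homotopy fixed point spectral sequence $H^s(BSO(3); \pi_t X) \Rightarrow \pi_{t-s}(X^{hSO(3)})$ kills the line $t = 3$, and the line $t = 4$ vanishes since $\pi_4(X) = 0$, so the residual obstruction to lifting $[\cC]$ is controlled by $H^2(BSO(3); \pi_2 X)$ --- with $\pi_2(X, \cC) \cong \mathrm{Pic}(\cZ(\cC))$, the finite group of invertible objects of the Drinfeld center --- and its $E_2$-differentials, ultimately a single class valued in the $2$-torsion of $\mathrm{Pic}(\cZ(\cC))$ (equivalently, reading the $4$-cell obstruction directly, a single fourth root of unity, since the attaching map generates $\pi_3(\Sigma\RP^2) \cong \ZZ/4$). \textbf{Showing that this class vanishes for every spherical fusion category is the crux, and is why the statement is still a conjecture.}

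We see three complementary routes to the vanishing. First, combine the $\Omega\Sigma\RP^2$ structure above with the conjectured $SO(2)$ homotopy fixed point structure of a pivotal fusion category: an $SO(2)$ fixed point restricts, along $BSO(2) = \CC\mathrm{P}^\infty \to BSO(3)$, to a section over $\CC\mathrm{P}^2$, which carries the $4$-cell of $BSO(3)$ (the Pontryagin class pulls back to $c_1^2$), so one is left to check that this section and the one over $\Sigma\RP^2$ agree there --- this is precisely the sense in which, as noted above, having both structures is ``nearly enough''. Second, a hands-on computation: identify the contribution of the top cell of $BSO(3)$ with an explicit composite of elementary dualization data in the spirit of Figure~\ref{fig:Radford_bordism}, compute its image under $\cF_\cC$ as in Proposition~\ref{prop:computeradford}, and show it acts as the identity using the quantum-trace characterization of the Radford equivalence of Etingof--Nikshych--Ostrik~\cite[Thm 7.3 and Cor 7.4]{MR2097289} together with the positivity of quantum dimensions available in characteristic zero; the pivotal-but-not-spherical examples (where the obstruction is typically nonzero) give a useful sanity check. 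Third, a global approach: construct a direct equivalence between $\cF_\cC$ and a fully local form of the Turaev--Viro--Barrett--Westbury oriented field theory~\cite{MR1191386, MR1292673, MR1686423} attached to $\cC$; any such equivalence exhibits $\cF_\cC$ as pulled back along $\FrBord_3 \to \OrBord_3$ and, by uniqueness of descent in the cobordism hypothesis, produces the $SO(3)$ fixed point structure and identifies $\widetilde{\cF_\cC}$ with Turaev--Viro. Carrying out any of these rigorously --- equivalently, the single vanishing statement above --- is the main obstacle I anticipate.
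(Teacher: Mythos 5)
This statement is not proven in the paper: it appears only as a conjecture (both in the introduction and in Section~\ref{sec:descconj}), with a detailed treatment explicitly deferred to future work, so there is no proof of the paper's to compare yours against. Your proposal is honest about this, and its overall shape is consistent with the paper's own outline: sphericality canonically yields an $\Omega \Sigma \RP^2$ homotopy fixed point structure, $B(\Omega\Sigma\RP^2) \simeq \Sigma\RP^2$ is a $3$-skeleton of $BSO(3)$, the ambient space of fully dualizable objects is a $3$-type so cells of dimension $\geq 5$ contribute nothing, and everything therefore hinges on the single $4$-cell --- which is exactly the sense in which the paper says that an $\Omega\Sigma\RP^2$ fixed point plus the (also conjectural) $SO(2)$ fixed point of a pivotal category is ``nearly enough.'' But the crux you isolate --- vanishing of that last obstruction, equivalently producing the genuine $SO(2)$/$SO(3)$ coherence data --- is precisely the open content of the conjecture, and none of your three routes establishes it (route one assumes the other open conjecture; routes two and three are programmatic). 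So the proposal is a reasonable reduction, not a proof, and cannot be graded as one.

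Two technical points in your middle paragraph need repair even at the level of the reduction. First, the spectral-sequence step is off: the obstruction coming from $\pi_3$ of the fiber to extending a section over the $4$-skeleton lives in $H^4(BSO(3);k^\times) \cong \Hom(H_4(BSO(3);\ZZ),k^\times) \cong k^\times$, not in $H^3$; divisibility of $k^\times$ kills the $\mathrm{Ext}$ term but not this $\Hom$ term, so no cohomological vanishing is available there, and the group $H^2(BSO(3);\pi_2)$ you fall back on governs cells you have already crossed using the spherical structure. The only correct residue of that paragraph is your cell-level statement that the obstruction is the image of a generator of $\pi_3(\Sigma\RP^2)\cong\ZZ/4$, hence an element of $k^\times$ of order dividing $4$. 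Second, that element is not an invariant of $\cC$ alone: it depends on the chosen section over the $3$-skeleton (i.e.\ on the $\Omega\Sigma\RP^2$ fixed point structure, and on lower choices), and the conjecture only requires that \emph{some} choice extend; so the problem is not ``show this class vanishes'' but ``show the class can be made to vanish for a suitable choice,'' which changes what one would actually have to compute.
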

\begin{conjecture}
Every pivotal fusion category admits the structure of an $SO(2)$ homotopy fixed point, and therefore provides a combed local field theory.
\end{conjecture}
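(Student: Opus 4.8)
\noindent The plan is to prove this by obstruction theory, via the structured form of the cobordism hypothesis. By Corollary~\ref{cor:charzerosep} a pivotal fusion category $\cC$ over an algebraically closed field of characteristic zero is separable, hence fully dualizable (Corollary~\ref{cor:septcisdualizable}), and by Corollary~\ref{cor:fusiontft} it carries a $3$-framed $3$-dimensional field theory $\cF_\cC$. Let $X$ denote the space of fully dualizable objects of $\TC$, that is the maximal sub-$3$-groupoid on them, with its $SO(3)$-action. Descending $\cF_\cC$ to a combed field theory is, by the cobordism hypothesis, the same as refining $\cC$ to a homotopy fixed point for the restricted action of $SO(2) \hookrightarrow SO(3)$ on $X$; since that $SO(2)$ acts through $SO(2) \to SO(3) \to \Aut(X)$, whose effect on $\pi_1$ carries the positive generator to the class of the Serre automorphism $\cS_\cC$ (the $\Omega\Sigma\RP^2$-refinement from the Radford equivalence, of Corollary~\ref{cor:serreinvserre}, being the $2$-cell of this action), the task is to produce an $SO(2)$-homotopy fixed point structure on $\cC$ ``by $\cS_\cC$'' that refines the given pivotal structure $P$.

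Concretely, such a structure is a section through $\cC$ of the Borel fibration $X_{hSO(2)} \to B SO(2) = \CC\mathrm{P}^\infty$, and I would construct it cell by cell over the standard $CW$-structure on $\CC\mathrm{P}^\infty$, which has cells only in even dimensions $0,2,4,6,\dots$. The decisive finiteness input is that $X$, the core of the $3$-category $\TC$, is $3$-truncated: $\pi_k(X,\cC) = 0$ for $k \geq 4$. Hence extending a partial section from the $(2k-2)$-skeleton over the $2k$-cells meets an obstruction in $H^{2k}(\CC\mathrm{P}^\infty;\pi_{2k-1}(X,\cC))$ (coefficients twisted by the $\pi_1$-action, which one checks is trivial on the relevant groups), and these vanish once $2k-1 \geq 4$, i.e. for $2k \geq 6$. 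So only two obstructions remain. The first, in $H^2(\CC\mathrm{P}^\infty;\pi_1(X,\cC))$, is exactly the requirement that $\cS_\cC$ be trivialized; by Theorem~\ref{thm:serreisdoubledual} and Lemma~\ref{lem:BimoduleToFunctor} the pivotal structure $P$ supplies such a trivialization, extending the section canonically over the $2$-skeleton $\CC\mathrm{P}^1 = S^2$. The group $\pi_2(X,\cC)$ — namely the group of invertible objects of $\cC$ — can obstruct only an extension over a $3$-cell, of which $\CC\mathrm{P}^\infty$ has none, and so plays no role in existence (it governs only the set of fixed-point structures, which form a torsor under a quotient of it at the $S^2$-stage).

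There remains a single genuine obstruction, in $H^4(\CC\mathrm{P}^\infty;\pi_3(X,\cC))$. I would identify $\pi_3(X,\cC)$ with the group of invertible bimodule natural transformations of the identity $\cC$--$\cC$-bimodule; imposing $\cC$-bilinearity and using that $1_\cC$ is absolutely simple forces such a transformation to be multiplication by a scalar, so $\pi_3(X,\cC) \cong \End_\cC(1_\cC)^\times = k^\times$ and the obstruction lies in $H^4(\CC\mathrm{P}^\infty;k^\times) \cong k^\times$. Showing this class is trivial — possibly after re-choosing the section over $S^2$, using the $\mathrm{Inv}(\cC)$-worth of freedom there — is the heart of the matter and the main obstacle. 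It requires an explicit model for the higher coherence of the $SO(2)$-action near $\cC$, that is an honest formula for the action-induced $2$-cell witnessing $\cS_\cC^2 \simeq \id$ (geometrically, the Radford bordism $\cR_\cC$), together with a computation expressing the $k^\times$-obstruction in terms of $P$ and $\cR_\cC$, plausibly as a Frobenius--Schur-type scalar. I expect this computation to be the crux: a spherical structure would make the obstruction vanish on the nose (this is the $SO(3)$-case), whereas for a merely pivotal category one must argue that the weaker $SO(2)$-coherence always admits a fill, i.e. that the $k^\times$-valued class is always trivial — and providing that argument is precisely what keeps the statement a conjecture.
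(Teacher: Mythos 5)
This statement is one of the paper's stated \emph{conjectures}: the paper offers no proof of it, only the surrounding discussion in Section~\ref{sec:spherical}, which observes that a pivotal structure trivializes the Serre automorphism and hence yields an $\Omega S^2$ homotopy fixed point, and explicitly warns that a genuine $SO(2)$ fixed point requires further coherence data precisely because $BSO(2)$ has cells in all even dimensions. Your obstruction-theoretic framing is consistent with that discussion and the bookkeeping is essentially right: the core of $\TC$ is $3$-truncated, so for a section of the Borel fibration over $\CC\mathrm{P}^\infty$ the only possible obstructions sit over the $2$-cell and the $4$-cell; the $2$-cell extension is exactly a trivialization of $\cS_\cC$, which the pivotal structure $P$ supplies via Theorem~\ref{thm:serreisdoubledual} and Lemma~\ref{lem:BimoduleToFunctor}; and, since the unit is simple, $\pi_3$ of the core based at $\cC$ is indeed $k^\times$, so the last obstruction is a single class in $H^4(\CC\mathrm{P}^\infty;k^\times)\cong k^\times$.

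But this is a reduction, not a proof, and you say so yourself: the triviality of that $k^\times$-valued class is never established, and it is exactly the ``higher coherence condition'' the paper identifies as the missing ingredient. To close it you would need an explicit model of the $SO(2)$-action in a neighborhood of $\cC$ --- in particular the $2$-cell identifying $\cS_\cC^2$ with the identity, which must be compared with the Radford equivalence of Corollary~\ref{cor:serreinvserre} --- and then a computation expressing the $H^4$ obstruction in terms of $P$ and $\cR_\cC$, together with an argument (possibly after re-choosing the trivialization over the $2$-skeleton by an invertible object) that it always vanishes for merely pivotal, not necessarily spherical, categories. None of this is carried out; in addition, the claims that the local coefficient system is untwisted and that the choices over the $2$-skeleton form a torsor under the group of invertible objects are asserted rather than checked. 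So the proposal is a sensible strategy that matches the paper's own framing of the problem, but it leaves the conjecture exactly as open as the paper does.
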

\begin{conjecture}
Every fusion category is a $Spin(3)$ homotopy fixed point, and therefore provides a spin local field theory.
\end{conjecture}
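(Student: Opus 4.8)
The plan is to combine the full dualizability results of this paper with obstruction theory over the classifying space of $\Spin(3)$. First, by Corollary~\ref{cor:fusiontft} every fusion category $\cC$ over an algebraically closed field of characteristic zero is separable, hence by Corollary~\ref{cor:septcisdualizable} it is a fully dualizable object of $\TC$; the cobordism hypothesis therefore equips the space $X$ of $3$-dualizable tensor categories (the maximal sub-$3$-groupoid of $\TCsep$) with an $SO(3)$-action, and pulling back along the double cover $\Spin(3) \to SO(3)$ produces a $\Spin(3)$-action on $X$. A spin field theory extending $\cF_\cC$ is exactly a $\Spin(3)$-homotopy-fixed-point structure on $\cC$, that is, a section through the point $\cC$ of the Borel fibration $X_{h\Spin(3)} \to B\Spin(3)$. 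Since $\Spin(3) \cong SU(2) \cong S^3$, the base is $B\Spin(3) \simeq \HH\mathrm{P}^\infty$, which admits a CW structure with a single cell in each dimension $0, 4, 8, \dots$ and in particular is $3$-connected.

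Next I would run the obstruction theory. The space $X$ is $3$-truncated: it is the maximal sub-$3$-groupoid of a $(3,3)$-category, so $\pi_i(X) = 0$ for $i \geq 4$. For a section of $X_{h\Spin(3)} \to \HH\mathrm{P}^\infty$ there is nothing to choose over the $3$-skeleton (a point) beyond $\cC$ itself; the obstruction to extending over the $4k$-cell lies in $\pi_{4k-1}(X)$, which vanishes for all $k \geq 2$, and the successive difference obstructions live in odd cohomology of $\HH\mathrm{P}^\infty$ or in $\pi_{\geq 4}(X)$ and so also vanish. Hence a $\Spin(3)$-fixed-point structure on $\cC$ exists, and is then essentially unique, if and only if a single class $o(\cC) \in H^4(\HH\mathrm{P}^\infty; \pi_3(X)) \cong \pi_3(X)_\cC$ vanishes. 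Computing $\pi_3$ at $\cC$ in $\TC$ — invertible $\cC$--$\cC$-bimodule categories at the first loop stage, invertible bimodule endofunctors of ${}_\cC\cC_\cC$ at the second, invertible bimodule transformations of $\id_{{}_\cC\cC_\cC}$ at the third — identifies $\pi_3(X)_\cC$ with the group of invertible natural transformations of $\id_\cC$, namely $k^\times$, since the unit is absolutely simple. By naturality in $\cC$, the class $o(\cC)$ is the image of the generator of $\pi_3(\Spin(3)) = \ZZ$ under a homomorphism to $\pi_3(\Aut X) \hookrightarrow k^\times$, so it is controlled by one universal scalar independent of $\cC$.

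The crux, and the main obstacle, is showing that this scalar is $1$. The class $o(\cC)$ is precisely the \emph{framing (gravitational) anomaly} of $\cF_\cC$ relative to a $\Spin(3)$-structure, i.e.\ the obstruction whose nonvanishing would in three-dimensional field-theory language be a nonzero central charge. The plan is to trivialize it using only the canonical dualization data rather than the full cobordism hypothesis: the $SO(3)$-action on $X$ restricts, along the map $\Omega\Sigma\RP^2 \to SO(3)$ induced by $\RP^2 \hookrightarrow \RP^3 \simeq SO(3)$, to the action determined by the Serre automorphism $\cS_\cC = {}_{\langle\mathfrak{rr}\rangle\cC}\cC_\cC$ (Theorem~\ref{thm:serreisdoubledual}) together with the Radford equivalence $\cR_\cC$ (Theorem~\ref{thm:quaddual}), both of which are defined canonically and functorially for every finite tensor category. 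Since the Radford equivalence exhibits exactly the compatibility that trivializes $\cS_\cC^2$, one expects the induced map $\pi_3(\Spin(3)) \to \pi_3(\Aut X)$ to factor through the $2$-torsion of $k^\times$ after the factor of $2$ introduced by $p_1 = -2c_2$ on $B\Spin(3) = BSU(2)$, hence to be trivial in characteristic zero. Concretely I would pin this down by analyzing the Morse-style decomposition of the $3$-framed generator implementing the $\Spin(3)$-rotation of a point, in the spirit of Figures~\ref{fig:serrebordism} and~\ref{fig:Radford_bordism}, or alternatively by comparing $\cF_\cC$ with the once-extended Turaev--Viro theory of $\cC$ and invoking that Drinfeld-double-type theories are anomaly-free. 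Reducing the abstract degree-$4$ obstruction to one of these explicit computations — while checking that no residual subtlety (such as the interaction of the torsion in $H^*(BSO(3);\ZZ)$ with the pullback to $\HH\mathrm{P}^\infty$) interferes — is where the real work lies.
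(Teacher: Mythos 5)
You are attempting to prove a statement that the paper itself records only as a conjecture: it offers no proof, and your proposal does not close the gap either. The sound part of your argument is the reduction. Since $\Spin(3)\cong S^3$, the base $B\Spin(3)\simeq\HH\mathrm{P}^\infty$ is $3$-connected, and the space of fully dualizable objects of $\TC$ is a $3$-type, so a $\Spin(3)$ homotopy fixed point structure on a fusion category $\cC$ exists (and is then essentially unique) if and only if a single obstruction class in $H^4(B\Spin(3);\pi_3)\cong\pi_3\cong k^\times$ vanishes, where $\pi_3$ is the group of invertible bimodule transformations of the identity of ${}_\cC\cC_\cC$. That is a fair reformulation of the conjecture as the vanishing of a framing anomaly, but it is only a reformulation.

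The crux---showing that this class is trivial---is exactly the content of the conjecture, and every step you offer toward it is heuristic or circular. The assertion that naturality forces the obstruction to be one universal scalar independent of $\cC$ is unjustified: the class is a scalar attached to each $\cC$, and a priori it can depend on $\cC$ through invariants of precisely this shape (global dimension, or the central charge of the Drinfeld center); the paper's own discussion of oriented and combed descent in Section~\ref{sec:spherical} shows that such dependence on the particular category is the expected behaviour, not something naturality rules out. The identification of the relevant target with $\pi_3(\Aut X)\hookrightarrow k^\times$ is not established; the inference ``$p_1=-2c_2$, so the map lands in $2$-torsion, hence is trivial in characteristic zero'' is a non sequitur (the $2$-torsion of $k^\times$ is $\{\pm 1\}$, which is not trivial); and ``Drinfeld-double-type theories are anomaly-free'' presupposes an oriented or spin extension of a double-type theory, i.e.\ descent data of exactly the kind being asserted. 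Finally, what the paper actually constructs from the Serre automorphism (Theorem~\ref{thm:serreisdoubledual}) and the Radford equivalence is only an $\Omega\Sigma\RP^2$ fixed point structure; promoting it along $\Omega\Sigma\RP^2\to SO(3)$ or to $\Spin(3)$ requires new coherence data, which neither the paper nor your proposed Morse-style computation supplies. As it stands, your argument identifies where the obstruction lives but does not prove it vanishes.
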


One can formulate similar questions and conjectures for tensor categories that are merely finite rather than fusion.  For instance, we saw that any finite tensor category provides a partial (``non-compact") 3-dimensional 3-framed field theory.  If the finite tensor category is pivotal, we also observed that it is an $\Omega S^2$ fixed point.  We might expect it to be in fact an $SO(2)$ fixed point, and might anticipate that it moreover provides a non-compact 3-dimensional combed 3-framed field theory.  Even if the finite tensor category is not pivotal (there are such examples, see \cite[Rem. 2.11]{1204.5807}), we might still ask if it is necessarily $SO(2)$ fixed.

\appendix
\chapter{The cobordism hypothesis}

The cobordism hypothesis, conjectured by Baez and Dolan~\cite{MR1355899} and later proven by Hopkins and Lurie~\cite{lurie-ch}, provides a classification of local topological field theories.  In the tangentially framed case, this classification is particularly simple to state: the local tangentially framed $n$-dimensional topological field theories with target the symmetric monoidal $(\infty,n)$-category $\cC$ correspond to the fully dualizable objects of $\cC$.  

Here, a tangentially framed $n$-manifold is an $n$-manifold $M$ with a trivialization of its tangent bundle $\tau_M$, and a tangentially ($n$-)framed $k$-manifold, for $k < n$, is a $k$-manifold $N$ with a trivialization of the stabilized-up-to-dimension-$n$ tangent bundle $\tau_N \oplus \RR^{n-k}$; this structure is also referred to simply as an $n$-framing.  There is an $(\infty,n)$-category $\FrBord_n$ with objects being $n$-framed $0$-manifolds, morphisms being $n$-framed bordisms between those objects, 2-morphisms being $n$-framed bordisms between those 1-morphisms, and so on, up to the $n$-morphisms, which are the spaces of $n$-framed $n$-manifold bordisms between the $(n-1)$-morphisms.  A local tangentially framed $n$-dimensional topological field theory is by definition a symmetric monoidal functor from $\FrBord_n$ to a target symmetric monoidal $(\infty,n)$-category $\cC$.

A succinct and elementary definition of full dualizability can be stated in terms of the existence of adjoints in a collection of 2-categories extracted from the target category $\cC$.  For each $k$, with $-1 \leq k \leq n-2$, there is a functorial association
\[
\cC \mapsto h_2^{(k)} \cC
\]
taking a symmetric monoidal $(\infty,n)$-category $\cC$ to the 2-category (that is $(2,2)$-category) of $k$-morphisms of $\cC$.  More specifically, for $0 \leq k$, the 2-category $h_2^{(k)} \cC$ has objects the $k$-morphisms of $\cC$, 1-morphisms the $(k+1)$-morphisms of $\cC$, and 2-morphisms the equivalence classes of $(k+2)$-morphisms of $\cC$.  For $k=-1$, the 2-category $h_2^{(-1)} \cC$ has a single object, has 1-morphisms the objects of $\cC$ with composition given by the monoidal product of $\cC$, and has 2-morphisms the equivalence classes of 1-morphisms of $\cC$.

\begin{appexample}
	In the symmetric monoidal 3-category $\TC$ of tensor categories, the 2-categories $h^{(-1)}_2 \TC$, $h^{(0)}_2 \TC$, and $h^{(1)}_2 \TC$ are respectively (-1) the monoidal category of tensor categories and equivalence classes of bimodule categories, (0) the 2-category of tensor categories, bimodule categories, and natural isomorphism classes of bimodule functors, and (1) the 2-category of bimodule categories, bimodule functors, and natural transformations.
\end{appexample}

We can now recall the notion of full dualizability.  The notion of adjoint functors, given in Definition~\ref{def:Adjoints}, can be transported verbatim into any 2-category:

\begin{appdefinition} \label{def:adjoints_in_bicat}
		A 1-morphism $G: \cA \to \cB$ in a 2-category {\em admits a left adjoint} $F: \cB \to \cA$, or equivalently a 1-morphism $F: \cB \to \cA$ {\em admits a right adjoint} $G: \cA \to \cB$, if there are 2-morphisms, the {\em unit} $\eta: id_{\cB} \to G \circ F$ and the {\em counit} $\varepsilon: F \circ G \to id_{\cA}$, satisfying the following pair of equations:
		\begin{align*}
			(id_{G} \circledcirc \varepsilon  ) \circ (  \eta \circledcirc id_{G}) &= id_{G}, \\
			(\varepsilon \circledcirc id_{F}) \circ (id_{F} \circledcirc \eta) &= id_{F}.
		\end{align*}
	Here $\circledcirc$ denotes the horizontal composite of 2-morphisms.
	We say $F$ is the {\em left adjoint} of $G$ and $G$ is the {\em right adjoint} of $F$, and we denote this situation $F \dashv G$.
\end{appdefinition}

\begin{appdefinition}
	Let $\cC$ be a symmetric monoidal $(\infty,n)$-category. We say that $\cC$ {\em has adjoints for $k$-morphisms} if $h^{(k-1)}_2 \cC$ has both left and right adjoints for all 1-morphisms. 
\end{appdefinition}

\noindent Note that when $k=0$, this property is also referred to as ``having duals for objects", because the notion of having adjoints for 1-morphisms in the bicategory category $h^{(-1)}_2 \cC$ corresponds to the notion of having duals in the monoidal homotopy category of $\cC$.

\begin{appdefinition}
A symmetric monoidal $(\infty,n)$-category $\cC$ is \emph{$m$-dualizable} if it has adjoints for $k$-morphisms, for all $0 \leq k \leq m-1$.  An $n$-dualizable symmetric monoidal $(\infty,n)$-category is called \emph{fully dualizable}.
\end{appdefinition}

\noindent The maximal fully dualizable subcategory of the symmetric monoidal $(\infty,n)$-cate\-gory $\cC$ is denoted $d\cC$. The objects of $d\cC$ are called ``fully dualizable objects".\footnote{This definition differs in a slight technical respect from the definition given in Lurie~\cite{lurie-ch}.  Provided the category $\cC$ is fibrant (as is the case for the example we care about, $\TC$), the two definitions coincide.}  The `space' of fully dualizable objects, denoted $\widetilde{d\cC}$, is the maximal $(\infty,0)$-subcategory of $d\cC$---it is obtained from $d\cC$ simply by discarding all non-invertible $k$-morphisms, for all $k$. 

Because a symmetric monoidal functor takes adjoints to adjoints, and the bordism category $\FrBord_n$ is fully dualizable, a topological field theory with values in $\cC$ must take values in the fully dualizable subcategory $d\cC$.  In fact, field theories are precisely controlled by that subcategory:

\begin{apptheorem}[The Cobordism Hypothesis, framed version {\cite[Thm 2.4.6, Rmk 2.4.8]{lurie-ch}}]
	Let $\FrBord_n$ denote the symmetric monoidal $(\infty,n)$-category of tangentially framed bordisms, and let $\cC$ be a symmetric monoidal $(\infty,n)$-category.  There is an equivalence of $(\infty,n)$-categories
	\begin{equation*}
		\Fun^\otimes(\FrBord_n, \cC) \simeq \widetilde{d\cC}
	\end{equation*} 
from the $(\infty,n)$-category of tensor functors $\FrBord_n \ra \cC$, to the $(\infty,0)$-category $\widetilde{d\cC}$ of fully dualizable objects of $\cC$.  This equivalence takes a local field theory to its value on the standard $n$-framed point.	
\end{apptheorem}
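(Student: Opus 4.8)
The plan is to derive the theorem from the stronger structural assertion that $\FrBord_n$ is the \emph{free} symmetric monoidal $(\infty,n)$-category equipped with all adjoints — equivalently, with duals for objects and adjoints for $k$-morphisms for every $0 \le k \le n-1$ — on a single generating object, namely the standard positively $n$-framed point $\pt_+$. Granting this universal property, the theorem is formal: any symmetric monoidal functor $\FrBord_n \to \cC$ factors through the maximal sub-$(\infty,n)$-category of $\cC$ having all adjoints, which by the definition recalled in this appendix is exactly $d\cC$; the universal property then says the functor is pinned down up to a contractible space of choices by its value on $\pt_+$, and the only constraint on that value is that it be a fully dualizable object. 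Since every $k$-morphism of $\FrBord_n$ is invertible after passing to $\widetilde{d\cC}$ (bordisms have inverses up to higher bordism), one lands in the core $\widetilde{d\cC}$, yielding the stated equivalence $\Fun^\otimes(\FrBord_n,\cC)\simeq\widetilde{d\cC}$ by evaluation at $\pt_+$. So the entire content is the universal property of $\FrBord_n$.

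First I would treat $n=1$ by hand: $\FrBord_1$ has objects the signed framed points and is generated by $\pt_+$, $\pt_-$ together with the bordisms $\ev\colon\pt_+\sqcup\pt_-\to\varnothing$ and $\coev\colon\varnothing\to\pt_-\sqcup\pt_+$. A Morse-theoretic decomposition of $1$-manifolds with boundary shows every morphism is a composite of these, the space of such decompositions is connected, and the only surviving relations are the two zigzag equivalences — so $\FrBord_1$ is free on a dualizable object, giving the case $n=1$ and, after stabilizing framings, a base for an induction on $n$.

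For general $n$ I would present $\FrBord_n$ by generators and relations using Morse and Cerf theory. A generic framed Morse function on an $n$-framed bordism exhibits it as an iterated composite of elementary handle attachments, and an index-$k$ handle, interpreted inside the $(\infty,n)$-category, is precisely a unit or counit $2$-morphism of an adjunction between $k$-morphisms — this is exactly the phenomenon made explicit for small $k$ in Sections~\ref{sec:Serre} and~\ref{sec:Radford}, where $\ev^R$, $\ev^L$, $v_1$, $v_2$ and their partners appear as handles of specific indices. The relations among these generators are the Cerf moves (births and deaths of cancelling handle pairs, handle slides, and the moves recording $1$-parameter families of functions); because all of these hold automatically in any target once the relevant adjoints exist, any fully dualizable object of $\cC$ determines, and is freely determined by, a symmetric monoidal functor out of $\FrBord_n$, and uniqueness follows from the same presentation. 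The inductive mechanism is an ``unfolding'' reconstructing $\FrBord_n$ from $\FrBord_{n-1}$ by adjoining one more layer of morphisms, which reduces coherence questions in dimension $n$ to dimension $n-1$.

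The hard part — the reason this is a theorem of Hopkins--Lurie rather than a formal exercise — is controlling the higher, homotopy-coherent relations: one must know that the space of handle decompositions of a fixed framed bordism, and of families of such bordisms, is highly connected, so that no relations survive beyond those forced by the existence of adjoints. This is supplied by Igusa's theorem on the high connectivity of spaces of \emph{framed} generalized Morse functions, together with Cerf- and Kirby-calculus refinements and the $s$-cobordism theorem used to trivialize families; it is precisely the tangential framing that makes the handle-presentation ambiguity contractible, which is why the answer involves no extra homotopy-fixed-point data. The genuinely delicate points are therefore (i) constructing $\FrBord_n$ and $d\cC$ model-independently as $(\infty,n)$-categories, (ii) matching index-$k$ handles with adjunction data for $k$-morphisms uniformly in $k$, and (iii) invoking framed-function theory to verify there are no leftover obstructions. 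None of these involve the specific $3$-category $\TC$, so for the purposes of this paper the theorem is taken as a black box from~\cite{lurie-ch}, and its role here is only to translate the dualizability results of Section~\ref{sec:dualizability} into statements about field theories.
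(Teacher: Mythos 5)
The paper does not prove this statement at all: it is imported verbatim from Lurie \cite{lurie-ch} (Thm 2.4.6, Rmk 2.4.8) and used as a black box, so there is no in-paper argument to compare yours against. What you have written is a recognizable high-level outline of the Hopkins--Lurie strategy---freeness of $\FrBord_n$ on a single fully dualizable point, a handle/Cerf presentation in which index-$k$ critical points supply units and counits of adjunctions of $k$-morphisms, an inductive ``unfolding'' reconstructing $\FrBord_n$ from $\FrBord_{n-1}$, and Igusa's connectivity theorem for framed generalized Morse functions to rule out residual relations---but, as you yourself concede, the items you label (i)--(iii) are precisely where the content of the theorem lies, and your sketch defers all of them to the cited work. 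So this should be read as a summary of the external proof rather than a proof; within the present paper the correct ``proof'' is simply the citation, and the paper is explicit (see the Remark on higher categories and the cobordism hypothesis in the introduction) that none of its own theorems except Corollaries~2 and~5 depend on it.

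One genuine local error in your formal reduction: your justification for landing in the core $\widetilde{d\cC}$---``bordisms have inverses up to higher bordism''---is false. A typical bordism (a disc, a saddle) is not invertible in $\FrBord_n$; it only admits left and right adjoints, which is the whole point of the dualizability discussion in Sections~\ref{sec:framed-duality}--\ref{sec:Radford}. The actual mechanism is that any symmetric monoidal natural transformation between symmetric monoidal functors out of an $(\infty,n)$-category having duals for objects and adjoints for all higher morphisms is automatically invertible; this is what forces $\Fun^\otimes(\FrBord_n,\cC)$ to be an $\infty$-groupoid, so that evaluation at $\pt_+$ can take values in $\widetilde{d\cC}$. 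With that correction, your reduction of the theorem to the universal property of $\FrBord_n$ is the standard one, but establishing that universal property is exactly the Hopkins--Lurie theorem and is not something this paper (or your sketch) supplies.
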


\noindent In particular, there are no non-invertible transformations (or higher transformations) between local field theories, and a tangentially framed local field theory is completely determined, up to equivalence, by its value on a point.

Often, one is interested not only in tangentially framed field theories, but in field theories where the manifolds have a structure weaker than a framing, such as an orientation, or a spin structure, or a string structure.  Many topological structures of interest can be described in the following framework.  Given a map $\xi : X \ra BO(n)$, an $(X,\xi)$-structure on a $k$-manifold $M$, for $k \leq n$, is a map $M \ra X$ and a homotopy from the composite $M \ra X \ra BO(n)$ to the classifying map $M \xra{\tau_M \oplus \RR^{n-k}} BO(n)$---in other words, it is a homotopy lift along $\xi$ of the stabilized tangent map of $M$.  For example, if $X$ is $BSO(n)$, $BSpin(n)$, $BString(n)$, or a point, then we recover the usual notions of orientation, spin structure, string structure, or tangential framing, respectively.  There is an $(\infty,n)$-category of $(X,\xi)$-structured bordisms, denoted $\Bord^{(X,\xi)}_n$ and an $(X,\xi)$-structured local field theory is a symmetric monoidal functor from $\Bord^{(X,\xi)}_n$ to a target symmetric monoidal $(\infty,n)$-category $\cC$.

The framed bordism category $\FrBord_n$ has an $O(n)$-action by simultaneously rotating the $n$-framing at every point of all bordisms.  This provides an action, by precomposition, on the $(\infty,n)$-category of functors $\Fun^\otimes(\FrBord_n,\cC)$.  By the framed version of the cobordism hypothesis above, there is therefore an action (really a homotopy action) of $O(n)$ on the space $\widetilde{d\cC}$ for any $(\infty,n)$-category $\cC$.  Composition with the map $\Omega \xi: \Omega X \ra O(n)$ then gives a homotopy action of $\Omega X$ on $\widetilde{d\cC}$.  The structured version of the cobordism hypothesis classifies structured field theories in terms of this action: 
\begin{apptheorem}[The Cobordism Hypothesis, structured version {\cite[Thm 2.4.18, Thm 2.4.26]{lurie-ch}}] \label{thm:chstruc}
Given a map $\xi: X \ra BO(n)$, with $X$ connected, there is an equivalence of $(\infty,n)$-categories
\[
\Fun^\otimes(\Bord^{(X,\xi)}_n,\cC) \simeq \widetilde{d\cC}^{h\Omega X}
\]
from the $(\infty,n)$-category of $(X,\xi)$-structured field theories with target $\cC$ to the space of $\Omega X$ homotopy fixed points in the fully dualizable objects of $\cC$.
\end{apptheorem}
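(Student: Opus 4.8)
The plan is to deduce this structured statement from the framed cobordism hypothesis recorded above, so that the genuinely hard Morse-theoretic input lives entirely in the framed case and only homotopy-theoretic bookkeeping remains here. The first step is to make precise the \emph{space of $\xi$-structures}: form the homotopy pullback $\tilde X := X \times_{BO(n)} EO(n)$, a space carrying a free $O(n)$-action whose homotopy quotient $\tilde X_{hO(n)}$ is canonically $X$. A point of $\tilde X$ is exactly a $\xi$-structure on the standard $\RR^n$, and the $O(n)$-action records how such a structure transforms under a change of tangential framing. Concretely, an $(X,\xi)$-structure on a $k$-manifold $M$ (for $k \leq n$) amounts to a stable tangential framing of $M$ together with a compatible map $M \ra \tilde X$, with equivariance absorbing the choice of framing.

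The core step, and the one I expect to be the main obstacle, is to re-express the structured bordism category in terms of $\FrBord_n$ and $\tilde X$: namely, to prove that a symmetric monoidal functor $\Bord^{(X,\xi)}_n \ra \cC$ is the same datum as an $O(n)$-equivariant map $\tilde X \ra \Fun^\otimes(\FrBord_n, \cC)$, where $O(n)$ acts on the target by rotating the framings of all bordisms. The idea is that locally an $(X,\xi)$-structured bordism is a framed bordism equipped with a $\tilde X$-valued lift, and that gluing these local pictures in an $O(n)$-equivariant way exhibits $\Bord^{(X,\xi)}_n$ as the appropriate Borel twist of $\FrBord_n$ by $\tilde X$. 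Carrying this out rigorously is where all of the $(\infty,n)$-categorical subtlety resides: it demands a workable model of $\Bord^{(X,\xi)}_n$, a verification that $\xi$-structures vary in families exactly as the associated-bundle construction predicts, and the same collar- and handle-decomposition analysis that underpins the framed cobordism hypothesis itself. This is the content of \cite[Thm 2.4.18]{lurie-ch} and the discussion surrounding it.

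Granting that identification, the remainder is formal. By the framed cobordism hypothesis, $\Fun^\otimes(\FrBord_n,\cC) \simeq \widetilde{d\cC}$, and, as noted in the discussion preceding the theorem, this equivalence is $O(n)$-equivariant for the framing-rotation action on the left and the $O(n)$-action on $\widetilde{d\cC}$ on the right. Hence $\Fun^\otimes(\Bord^{(X,\xi)}_n, \cC) \simeq \mathrm{Map}_{O(n)}\bigl(\tilde X, \widetilde{d\cC}\bigr)$. Finally, since $\tilde X$ is a free $O(n)$-space with homotopy quotient $X$, an $O(n)$-equivariant map $\tilde X \ra \widetilde{d\cC}$ is the same as a section over $X$ of the associated bundle $\tilde X \times_{O(n)} \widetilde{d\cC} \ra X$, equivalently a map over $BO(n)$ from $\xi$ to the Borel construction $(\widetilde{d\cC})_{hO(n)}$; because $X$ is connected one may write $X \simeq B(\Omega X)$, whereupon the space of such sections is exactly the homotopy fixed points $\widetilde{d\cC}^{h\Omega X}$ for the $\Omega X$-action obtained from $\Omega\xi : \Omega X \ra \Omega BO(n) \simeq O(n)$. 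Tracking the value of a field theory on the standard $n$-framed point through this chain identifies the composite with the asserted equivalence, and, specialized to suitable $X$, this is the mechanism producing the oriented, combed, and spin descent statements discussed in Section~\ref{sec:spherical}.
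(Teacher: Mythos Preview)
The paper does not prove this theorem at all: it is stated in the Appendix purely as a citation of Lurie's result \cite[Thm~2.4.18, Thm~2.4.26]{lurie-ch}, with no accompanying argument. So there is no ``paper's own proof'' to compare against.

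That said, your sketch is a fair outline of the strategy Lurie uses to deduce the structured statement from the framed one: pull back $EO(n)$ along $\xi$ to obtain the free $O(n)$-space $\tilde X$, identify $(X,\xi)$-structured bordisms with framed bordisms decorated by $\tilde X$ in an $O(n)$-equivariant manner, apply the framed cobordism hypothesis, and reduce to homotopy fixed points via the Borel construction. You are right that the substantive step is the identification $\Fun^\otimes(\Bord^{(X,\xi)}_n,\cC) \simeq \mathrm{Map}_{O(n)}(\tilde X,\Fun^\otimes(\FrBord_n,\cC))$, and that this is where the real $(\infty,n)$-categorical work hides; you appropriately flag this rather than pretending it is obvious. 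One small caution: you implicitly use that the $O(n)$-action on $\widetilde{d\cC}$ produced by the framed cobordism hypothesis is not just an action of the discrete group $\pi_0 O(n)$ but a genuine homotopy-coherent action of the topological group---this is itself a nontrivial consequence of the framed statement (naturality in families), and should be named explicitly if you were writing this up in earnest.
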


\backmatter

\bibliographystyle{amsalpha}
\bibliography{dtci-memo}

\newcommand{\etalchar}[1]{$^{#1}$}
\newcommand{\noopsort}[1]{}\def\cprime{$'$} \def\cprime{$'$} \def\cprime{$'$}
\providecommand{\bysame}{\leavevmode\hbox to3em{\hrulefill}\thinspace}
\providecommand{\MR}{\relax\ifhmode\unskip\space\fi MR }
\providecommand{\MRhref}[2]{%
  \href{http://www.ams.org/mathscinet-getitem?mr=#1}{#2}
}
\providecommand{\href}[2]{#2}
\begin{thebibliography}{DMNO13}

\bibitem[AAG{\etalchar{+}}14]{1204.5807}
Nicol\'as Andruskiewitsch, Iv\'an Angiono, Agust\'in {Garc\'ia Iglesias}, Blas
  Torrecillas, and Cristian Vay, \emph{From {H}opf algebras to tensor
  categories}, Conformal field theories and tensor categories, Math. Lect.
  Peking Univ., Springer, Heidelberg, 2014, pp.~1--31. \MR{3585364}

\bibitem[Ati88a]{atiyah88}
Michael Atiyah, \emph{New invariants of {$3$}- and {$4$}-dimensional
  manifolds}, The mathematical heritage of {H}ermann {W}eyl, Proc. Sympos. Pure
  Math., vol.~48, AMS, Providence, RI, 1988, pp.~285--299. \MR{974342}

\bibitem[Ati88b]{MR1001453}
\bysame, \emph{Topological quantum field theories}, Inst. Hautes \'Etudes Sci.
  Publ. Math. \textbf{68} (1988), 175--186. \MR{1001453}

\bibitem[{Bal}10a]{1010.1222}
Benjamin {Balsam}, \emph{{Turaev--Viro invariants as an extended TQFT II}},
  \arxiv{1010.1222}, 2010.

\bibitem[{Bal}10b]{1012.0560}
\bysame, \emph{{Turaev--Viro invariants as an extended TQFT III}},
  \arxiv{1012.0560}, 2010.

\bibitem[{Bar}08]{0901.3975}
Bruce {Bartlett}, \emph{On unitary 2-representations of finite groups and
  topological quantum field theory}, Ph.D. thesis, University of Sheffield,
  2008, \arxiv{0901.3925}.

\bibitem[BD95]{MR1355899}
John~C. Baez and James Dolan, \emph{Higher-dimensional algebra and topological
  quantum field theory}, J. Math. Phys. \textbf{36} (1995), no.~11, 6073--6105,
  \arxiv{q-alg/9503002}. \MR{1355899}

\bibitem[BDH09]{bdh}
Arthur {Bartels}, Christopher~L. {Douglas}, and Andr\'e {Henriques},
  \emph{{Conformal nets and local field theory}}, \arxiv{0912.5307}, 2009.

\bibitem[Ber11]{MR2742424}
Julia~E. Bergner, \emph{Models for {$(\infty,n)$}-categories and the cobordism
  hypothesis}, Mathematical foundations of quantum field theory and
  perturbative string theory, Proc. Sympos. Pure Math., vol.~83, AMS,
  Providence, RI, 2011, pp.~17--30. \MR{2742424}

\bibitem[BK13]{barkan}
Clark Barwick and Daniel~M. Kan, \emph{{$n$}-relative categories: a model for
  the homotopy theory of {$n$}-fold homotopy theories}, Homology Homotopy Appl.
  \textbf{15} (2013), no.~2, 281--300, \arxiv{1102.0186}. \MR{3138381}

\bibitem[BKJ01]{MR1797619}
Bojko Bakalov and Alexander Kirillov~Jr., \emph{Lectures on tensor categories
  and modular functors}, University Lecture Series, vol.~21, AMS, Providence,
  RI, 2001. \MR{1797619}

\bibitem[BKJ10]{1004.1533}
Benjamin {Balsam} and Alexander {Kirillov}~Jr., \emph{{Turaev--Viro invariants
  as an extended TQFT}}, \arxiv{1004.1533}, 2010.

\bibitem[Bla92]{MR1171303}
Christian Blanchet, \emph{Invariants on three-manifolds with spin structure},
  Comment. Math. Helv. \textbf{67} (1992), no.~3, 406--427. \MR{1171303}

\bibitem[BM96]{MR1387228}
Christian Blanchet and Gregor Masbaum, \emph{Topological quantum field theories
  for surfaces with spin structure}, Duke Math. J. \textbf{82} (1996), no.~2,
  229--267. \MR{1387228}

\bibitem[BR13]{bergrezk}
Julia~E. Bergner and Charles Rezk, \emph{Comparison of models for {$(\infty,
  n)$}-categories, {I}}, Geom. Topol. \textbf{17} (2013), no.~4, 2163--2202.
  \MR{3109865}

\bibitem[BS11]{unicity}
Clark {Barwick} and Christopher {Schommer-Pries}, \emph{On the unicity of the
  homotopy theory of higher categories}, \arxiv{1112.0040}, 2011.

\bibitem[BV13]{MR3079759}
Alain Brugui{\`e}res and Alexis Virelizier, \emph{On the center of fusion
  categories}, Pacific J. Math. \textbf{264} (2013), no.~1, 1--30,
  \arxiv{1203.4180}. \MR{3079759}

\bibitem[BW96]{bw-invariants}
John~W. Barrett and Bruce~W. Westbury, \emph{Invariants of piecewise-linear
  {$3$}-manifolds}, Trans. Amer. Math. Soc. \textbf{348} (1996), no.~10,
  3997--4022. \MR{1357878}

\bibitem[BW99]{MR1686423}
\bysame, \emph{Spherical categories}, Adv. Math. \textbf{143} (1999), no.~2,
  357--375, \arxiv{hep-th/9310164}. \MR{1686423}

\bibitem[BZFN10]{bzfn}
David Ben-Zvi, John Francis, and David Nadler, \emph{Integral transforms and
  {D}rinfeld centers in derived algebraic geometry}, J. Amer. Math. Soc.
  \textbf{23} (2010), no.~4, 909--966. \MR{2669705}

\bibitem[BZN09]{0904.1247}
David Ben-Zvi and David Nadler, \emph{The character theory of a complex group},
  \arxiv{0904.1247}, 2009.

\bibitem[CF94]{cranefrenkel}
Louis Crane and Igor~B. Frenkel, \emph{Four-dimensional topological quantum
  field theory, {H}opf categories, and the canonical bases}, J. Math. Phys.
  \textbf{35} (1994), no.~10, 5136--5154. \MR{1295461}

\bibitem[Cos07]{MR2298823}
Kevin Costello, \emph{Topological conformal field theories and {C}alabi--{Y}au
  categories}, Adv. Math. \textbf{210} (2007), no.~1, 165--214,
  \arxiv{math.QA/0412149}. \MR{2298823}

\bibitem[CP95]{MR1358358}
Vyjayanthi Chari and Andrew Pressley, \emph{A guide to quantum groups},
  Cambridge University Press, Cambridge, 1995. \MR{1358358}

\bibitem[Del90]{deligne}
Pierre Deligne, \emph{Cat\'egories {T}annakiennes}, The {G}rothendieck
  {F}estschrift, {V}ol.\ {II}, Progr. Math., vol.~87, Birkh\"auser Boston,
  Boston, MA, 1990, pp.~111--195. \MR{1106898}

\bibitem[DH12]{dh-ib}
Christopher~L. {Douglas} and Andr\'e {Henriques}, \emph{{Internal
  bicategories}}, \arxiv{1206.4284}, 2012.

\bibitem[DI71]{MR0280479}
Frank DeMeyer and Edward Ingraham, \emph{Separable algebras over commutative
  rings}, Lecture Notes in Mathematics, Vol. 181, Springer-Verlag, Berlin,
  1971. \MR{0280479}

\bibitem[DMNO13]{MR3039775}
Alexei Davydov, Michael M{\"u}ger, Dmitri Nikshych, and Victor Ostrik,
  \emph{The {W}itt group of non-degenerate braided fusion categories}, J. Reine
  Angew. Math. \textbf{677} (2013), 135--177, \arxiv{1009.2117}. \MR{3039775}

\bibitem[Don02]{donaldsonym}
Simon~K. Donaldson, \emph{Floer homology groups in {Y}ang-{M}ills theory},
  Cambridge Tracts in Mathematics, vol. 147, Cambridge University Press,
  Cambridge, 2002. \MR{1883043}

\bibitem[DSPS17]{BTP}
Christopher~L. Douglas, Chris Schommer-Pries, and Noah Snyder,
  \emph{\noopsort{a}{T}he balanced tensor product of module categories}, Kyoto
  J. Math (2017), \arxiv{1406.4204}.

\bibitem[EGNO09]{EGNO}
Pavel Etingof, Shlomo Gelaki, Dmitri Nikshych, and Victor Ostrik, \emph{Tensor
  categories}, Lecture notes for MIT 18.769.
  \href{http://www-math.mit.edu/~etingof/tenscat1.pdf}{http://www-math.mit.edu/{\textasciitilde}etingof/tenscat1.pdf},
  2009.

\bibitem[EGNO15]{egno-book}
\bysame, \emph{Tensor categories}, Mathematical Surveys and Monographs, vol.
  205, American Mathematical Society, Providence, RI, 2015. \MR{3242743}

\bibitem[ENO04]{MR2097289}
Pavel Etingof, Dmitri Nikshych, and Victor Ostrik, \emph{An analogue of
  {R}adford's {$S^4$} formula for finite tensor categories}, Int. Math. Res.
  Not. \textbf{54} (2004), 2915--2933, \arxiv{math.QA/0404504}. \MR{2097289}

\bibitem[ENO05]{MR2183279}
\bysame, \emph{On fusion categories}, Ann. of Math. (2) \textbf{162} (2005),
  no.~2, 581--642, \arxiv{math.QA/0203060v10}. \MR{2183279}

\bibitem[ENO10]{0909.3140}
\bysame, \emph{Fusion categories and homotopy theory}, Quantum Topol.
  \textbf{1} (2010), no.~3, 209--273, With an appendix by Ehud Meir.
  \arxiv{0909.3140}. \MR{2677836}

\bibitem[EO04]{EO-ftc}
Pavel Etingof and Victor Ostrik, \emph{Finite tensor categories}, Mosc. Math.
  J. \textbf{4} (2004), no.~3, 627--654, 782--783, \arxiv{math.QA/0301027}.
  \MR{2119143}

\bibitem[FFRS07]{ffrs-duality}
J{\"u}rg Fr{\"o}hlich, J{\"u}rgen Fuchs, Ingo Runkel, and Christoph Schweigert,
  \emph{Duality and defects in rational conformal field theory}, Nuclear Phys.
  B \textbf{763} (2007), no.~3, 354--430. \MR{2299121}

\bibitem[Flo88]{floer}
Andreas Floer, \emph{An instanton-invariant for {$3$}-manifolds}, Comm. Math.
  Phys. \textbf{118} (1988), no.~2, 215--240. \MR{956166}

\bibitem[Fre13]{MR2994995}
Daniel~S. Freed, \emph{The cobordism hypothesis}, Bull. Amer. Math. Soc. (N.S.)
  \textbf{50} (2013), no.~1, 57--92. \MR{2994995}

\bibitem[FRS08]{frs-fusion}
J{\"u}rgen Fuchs, Ingo Runkel, and Christoph Schweigert, \emph{The fusion
  algebra of bimodule categories}, Appl. Categ. Structures \textbf{16} (2008),
  no.~1-2, 123--140. \MR{2383281}

\bibitem[FS10]{fuchsschweigert}
J{\"u}rgen Fuchs and Christoph Schweigert, \emph{Hopf algebras and finite
  tensor categories in conformal field theory}, Rev. Un. Mat. Argentina
  \textbf{51} (2010), no.~2, 43--90. \MR{2840163}

\bibitem[FSV13]{fsv}
J{\"u}rgen Fuchs, Christoph Schweigert, and Alessandro Valentino,
  \emph{Bicategories for boundary conditions and for surface defects in 3-d
  {TFT}}, Comm. Math. Phys. \textbf{321} (2013), no.~2, 543--575. \MR{3063919}

\bibitem[FT]{FT}
Daniel~S. Freed and Constantin Teleman, \emph{{TQFT} in 4 and 3 dimensions via
  the cobordism hypothesis}, In preparation.

\bibitem[Gre10]{0911.4979}
Justin Greenough, \emph{Monoidal 2-structure of bimodule categories}, J.
  Algebra \textbf{324} (2010), no.~8, 1818--1859, \arxiv{0911.4979v4}.
  \MR{2678824}

\bibitem[Hen96]{hennings}
Mark Hennings, \emph{Invariants of links and {$3$}-manifolds obtained from
  {H}opf algebras}, J. London Math. Soc. (2) \textbf{54} (1996), no.~3,
  594--624. \MR{1413901}

\bibitem[HH09]{MR2559711}
Tobias~J. Hagge and Seung-Moon Hong, \emph{Some non-braided fusion categories
  of rank three}, Commun. Contemp. Math. \textbf{11} (2009), no.~4, 615--637,
  \arxiv{0704.0208}. \MR{2559711}

\bibitem[Hir59]{MR0119214}
Morris~W. Hirsch, \emph{Immersions of manifolds}, Trans. Amer. Math. Soc.
  \textbf{93} (1959), 242--276. \MR{0119214}

\bibitem[JS17]{jfs}
Theo {Johnson-Freyd} and Claudia Scheimbauer, \emph{({O}p)lax natural
  transformations, twisted quantum field theories, and ``even higher'' {M}orita
  categories}, Adv. Math. \textbf{307} (2017), 147--223, \arxiv{1502.06526}.
  \MR{3590516}

\bibitem[Kas95]{MR1321145}
Christian Kassel, \emph{Quantum groups}, Graduate Texts in Mathematics, vol.
  155, Springer-Verlag, New York, 1995. \MR{1321145}

\bibitem[Kho00]{khovanov}
Mikhail Khovanov, \emph{A categorification of the {J}ones polynomial}, Duke
  Math. J. \textbf{101} (2000), no.~3, 359--426. \MR{1740682}

\bibitem[KK12]{kitaevkong}
Alexei Kitaev and Liang Kong, \emph{Models for gapped boundaries and domain
  walls}, Comm. Math. Phys. \textbf{313} (2012), no.~2, 351--373. \MR{2942952}

\bibitem[KL01]{MR1862634}
Thomas Kerler and Volodymyr~V. Lyubashenko, \emph{Non-semisimple topological
  quantum field theories for 3-manifolds with corners}, Lecture Notes in
  Mathematics, vol. 1765, Springer-Verlag, Berlin, 2001. \MR{1862634}

\bibitem[KM91]{MR1117149}
Robion Kirby and Paul Melvin, \emph{The {$3$}-manifold invariants of {W}itten
  and {R}eshetikhin--{T}uraev for {${\rm sl}(2,{\bf C})$}}, Invent. Math.
  \textbf{105} (1991), no.~3, 473--545. \MR{1117149}

\bibitem[KM07]{kronmrowka}
Peter Kronheimer and Tomasz Mrowka, \emph{Monopoles and three-manifolds}, New
  Mathematical Monographs, vol.~10, Cambridge University Press, Cambridge,
  2007. \MR{2388043}

\bibitem[KR95]{kauffrad}
Louis~H. Kauffman and David~E. Radford, \emph{Invariants of {$3$}-manifolds
  derived from finite-dimensional {H}opf algebras}, J. Knot Theory
  Ramifications \textbf{4} (1995), no.~1, 131--162. \MR{1321293}

\bibitem[KS11]{kapustinsaulina}
Anton Kapustin and Natalia Saulina, \emph{Surface operators in 3d topological
  field theory and 2d rational conformal field theory}, Mathematical
  foundations of quantum field theory and perturbative string theory, Proc.
  Sympos. Pure Math., vol.~83, Amer. Math. Soc., Providence, RI, 2011,
  pp.~175--198. \MR{2742429}

\bibitem[Kup96]{MR1394749}
Greg Kuperberg, \emph{Non-involutory {H}opf algebras and {$3$}-manifold
  invariants}, Duke Math. J. \textbf{84} (1996), no.~1, 83--129,
  \arxiv{q-alg/9712047}. \MR{1394749}

\bibitem[Kup03]{MR1995781}
\bysame, \emph{Finite, connected, semisimple, rigid tensor categories are
  linear}, Math. Res. Lett. \textbf{10} (2003), no.~4, 411--421,
  \arxiv{math.QA/0209256}. \MR{1995781}

\bibitem[Lur09a]{lurie-ch}
Jacob Lurie, \emph{\noopsort{a}{O}n the classification of topological field
  theories}, Current developments in mathematics, 2008, Int. Press, Somerville,
  MA, 2009, \arxiv{0905.0465}, pp.~129--280. \MR{2555928}

\bibitem[Lur09b]{luriehtt}
\bysame, \emph{\noopsort{b}{H}igher topos theory}, Annals of Mathematics
  Studies, vol. 170, Princeton University Press, Princeton, NJ, 2009.
  \MR{2522659}

\bibitem[Lur09c]{lurieinf2}
\bysame, \emph{\noopsort{c}$(\infty,2)$-categories and the {G}oodwillie
  calculus {I}}, \arxiv{0905.0462}, 2009.

\bibitem[Mas99]{maschke}
Heinrich Maschke, \emph{Beweis des {S}atzes, dass diejenigen endlichen linearen
  {S}ubstitutionsgruppen, in welchen einige durchgehends verschwindende
  {C}oefficienten auftreten, intransitiv sind}, Math. Ann. \textbf{52} (1899),
  no.~2-3, 363--368. \MR{1511061}

\bibitem[MS90]{MR1159969}
Gregory Moore and Nathan Seiberg, \emph{Lectures on {RCFT}}, Superstrings '89
  ({T}rieste, 1989), World Sci. Publ., River Edge, NJ, 1990, pp.~1--129.
  \MR{1159969}

\bibitem[MS12]{1002.0168}
Scott Morrison and Noah Snyder, \emph{Non-cyclotomic fusion categories}, Trans.
  Amer. Math. Soc. \textbf{364} (2012), no.~9, 4713--4733, \arxiv{1002.0168}.
  \MR{2922607}

\bibitem[M{\"u}g03a]{MR1966524}
Michael M{\"u}ger, \emph{From subfactors to categories and topology. {I}.
  {F}robenius algebras in and {M}orita equivalence of tensor categories}, J.
  Pure Appl. Algebra \textbf{180} (2003), no.~1-2, 81--157,
  \arxiv{math.CT/0111204}. \MR{1966524}

\bibitem[M{\"u}g03b]{MR1966525}
\bysame, \emph{From subfactors to categories and topology. {II}. {T}he quantum
  double of tensor categories and subfactors}, J. Pure Appl. Algebra
  \textbf{180} (2003), no.~1-2, 159--219, \arxiv{math.CT/0111205}. \MR{1966525}

\bibitem[MW12]{1009.5025}
Scott Morrison and Kevin Walker, \emph{Blob homology}, Geom. Topol. \textbf{16}
  (2012), no.~3, 1481--1607. \MR{2978449}

\bibitem[Ocn94]{MR1317353}
Adrian Ocneanu, \emph{Chirality for operator algebras}, Subfactors ({K}yuzeso,
  1993), World Sci. Publ., River Edge, NJ, 1994, pp.~39--63. \MR{1317353}

\bibitem[OS04]{os04}
Peter Ozsv{\'a}th and Zolt{\'a}n Szab{\'o}, \emph{Holomorphic disks and
  topological invariants for closed three-manifolds}, Ann. of Math. (2)
  \textbf{159} (2004), no.~3, 1027--1158. \MR{2113019}

\bibitem[OS06]{os06}
\bysame, \emph{Holomorphic triangles and invariants for smooth four-manifolds},
  Adv. Math. \textbf{202} (2006), no.~2, 326--400. \MR{2222356}

\bibitem[Ost03]{MR1976459}
Victor Ostrik, \emph{Module categories, weak {H}opf algebras and modular
  invariants}, Transform. Groups \textbf{8} (2003), no.~2, 177--206,
  \arxiv{math.QA/0111139}. \MR{1976459}

\bibitem[Rad76]{MR0407069}
David~E. Radford, \emph{The order of the antipode of a finite dimensional
  {H}opf algebra is finite}, Amer. J. Math. \textbf{98} (1976), no.~2,
  333--355. \MR{0407069}

\bibitem[Rez10]{rezkcart}
Charles Rezk, \emph{A {C}artesian presentation of weak {$n$}-categories}, Geom.
  Topol. \textbf{14} (2010), no.~1, 521--571. \MR{2578310}

\bibitem[RT91]{MR1091619}
Nicolai Reshetikhin and Vladimir~G. Turaev, \emph{Invariants of {$3$}-manifolds
  via link polynomials and quantum groups}, Invent. Math. \textbf{103} (1991),
  no.~3, 547--597. \MR{1091619}

\bibitem[Saw02]{MR1880321}
Stephen~F. Sawin, \emph{Invariants of {S}pin three-manifolds from
  {C}hern-{S}imons theory and finite-dimensional {H}opf algebras}, Adv. Math.
  \textbf{165} (2002), no.~1, 35--70, \arxiv{math.QA/9910106}. \MR{1880321}

\bibitem[Sch01]{scha}
Peter Schauenburg, \emph{The monoidal center construction and bimodules}, J.
  Pure Appl. Algebra \textbf{158} (2001), no.~2-3, 325--346. \MR{1822847}

\bibitem[Sch13]{Schaumann-PhD}
Gregor Schaumann, \emph{Duals in tricategories and in the tricategory of
  bimodule categories}, Ph.D. thesis, Friedrich-Alexander-Universit{\"a}t
  Erlangen-N{\"u}rnberg, 2013.

\bibitem[Seg04]{Segal}
Graeme Segal, \emph{The definition of conformal field theory}, Topology,
  geometry and quantum field theory, London Math. Soc. Lecture Note Ser., vol.
  308, CUP, Cambridge, 2004, pp.~421--577. \MR{2079383}

\bibitem[Sel11]{0908.3347}
Peter Selinger, \emph{A survey of graphical languages for monoidal categories},
  New structures for physics, Lecture Notes in Physics, vol. 813, Springer,
  2011, \arxiv{0908.3347}, pp.~289--355. \MR{2767048}

\bibitem[Sim12]{simpson}
Carlos Simpson, \emph{Homotopy theory of higher categories}, New Mathematical
  Monographs, vol.~19, Cambridge University Press, Cambridge, 2012.
  \MR{2883823}

\bibitem[Sma59]{MR0105117}
Stephen Smale, \emph{The classification of immersions of spheres in {E}uclidean
  spaces}, Ann. of Math. (2) \textbf{69} (1959), 327--344. \MR{0105117}

\bibitem[SP09]{schommer-pries-thesis}
Chris Schommer-Pries, \emph{The classification of two-dimensional extended
  topological field theories}, Ph.D. thesis, UC Berkeley, 2009,
  \arxiv{1112.1000}.

\bibitem[SP14]{1308.3574}
\bysame, \emph{Dualizability in low-dimensional higher category theory},
  Topology and Field Theories, Contemporary Mathematics, vol. 613, AMS,
  Providence, RI, 2014, \arxiv{1308.3574}. \MR{3221292}

\bibitem[SR72]{rivano}
Neantro Saavedra~Rivano, \emph{Cat\'egories {T}annakiennes}, Lecture Notes in
  Mathematics, vol. 265, Springer-Verlag, Berlin, 1972. \MR{0338002}

\bibitem[Tam01]{tambara}
Daisuke Tambara, \emph{A duality for modules over monoidal categories of
  representations of semisimple {H}opf algebras}, J. Algebra \textbf{241}
  (2001), no.~2, 515--547. \MR{1843311}

\bibitem[Tur94]{MR1292673}
Vladimir~G. Turaev, \emph{Quantum invariants of knots and 3-manifolds}, de
  Gruyter Studies in Mathematics, vol.~18, Walter de Gruyter \& Co., Berlin,
  1994. \MR{1292673}

\bibitem[TV92]{MR1191386}
Vladimir~G. Turaev and Oleg~Ya. Viro, \emph{State sum invariants of
  {$3$}-manifolds and quantum {$6j$}-symbols}, Topology \textbf{31} (1992),
  no.~4, 865--902. \MR{1191386}

\bibitem[Wal]{kw:tqft}
Kevin Walker, \emph{Topological quantum field theories}, Available at
  \url{http://canyon23.net/math/}.

\bibitem[Wit88]{witten88}
Edward Witten, \emph{Topological quantum field theory}, Comm. Math. Phys.
  \textbf{117} (1988), no.~3, 353--386. \MR{953828}

\bibitem[Wit89]{MR990772}
\bysame, \emph{Quantum field theory and the {J}ones polynomial}, Comm. Math.
  Phys. \textbf{121} (1989), no.~3, 351--399. \MR{990772}

\end{thebibliography}
\end{document}